\documentclass[a4paper]{amsart}

\usepackage[T1]{fontenc}
\usepackage[margin=3cm]{geometry}
\usepackage{amsmath, amsfonts, amssymb, mathtools}
\usepackage{wasysym}  % better bowtie
\usepackage[only,Mapsto]{stmaryrd} % \Mapsto

\usepackage{graphicx}
\usepackage{enumitem}
\usepackage{young}
\usepackage{multirow}
\usepackage[svgnames]{xcolor}
\usepackage{pdflscape}  % Allow switch to landscape
\usepackage{multido, xargs, colortbl, multicol, multirow}
\usepackage{calc, ifthen, xstring, blkarray, stackrel}
\usepackage{hvfloat}
\usepackage[normalem]{ulem}
\usepackage{dutchcal}
\usepackage{nicematrix}

\usepackage[bb=stix]{mathalpha}

\usepackage{caption}
\usepackage[backend=biber, style=alphabetic, sorting=nty]{biblatex}
\usepackage{amsthm,thmtools}
\usepackage{url}
\usepackage[hypertexnames=false]{hyperref}
\usepackage{hypcap}
\hypersetup{colorlinks=true, citecolor=blue, linkcolor=blue}
\usepackage[noabbrev,capitalise]{cleveref}
\usepackage{pgfkeys}
\usepackage{tikz}
\usetikzlibrary{trees, shapes, arrows, arrows.meta, matrix, calc, graphs,
  plotmarks, backgrounds}
\usetikzlibrary{decorations,
  decorations.markings,decorations.pathmorphing,decorations.pathreplacing}
\pgfdeclaredecoration{cappedcurveto}{initial}{%
  \state{initial}[width=\pgfdecoratedinputsegmentlength/100]
  {\pgfpathlineto{\pgfpointorigin} }%
  \state{final}{}%
}%
\tikzset{
  draw left/.style={
    decorate, decoration={cappedcurveto, raise={.5*\pgflinewidth}}
  },
  draw right/.style={
    decorate, decoration={cappedcurveto, raise={-.5*\pgflinewidth}}
  }
}

\usetikzlibrary{external}
\usepackage{pdftexcmds} % To detect shell escape
\makeatletter % if not in a .sty file
\ifnum\pdf@shellescape=1
\fi
\makeatother

\usepackage{todonotes}
\setuptodonotes{inline}
\makeatletter
\renewcommand{\todo}[2][]{\tikzexternaldisable\@todo[#1]{#2}\tikzexternalenable}
\define@key{todonotes}{AA}[]{%
  \setkeys{todonotes}{author=AA,color=orange}}%
\define@key{todonotes}{FH}[]{%
  \setkeys{todonotes}{author=FH,color=green!70}}%
\define@key{todonotes}{JS}[]{%
  \setkeys{todonotes}{author=JS,color=cyan}}%
\makeatother

\makeatletter
\def\l@section{\@tocline{1}{7pt}{0pc}{}{}}
\makeatother
\let\oldtocpart=\tocpart
\renewcommand{\tocpart}[2]{\bf\large\oldtocpart{#1}{#2}}
\let\oldtocsection=\tocsection
\renewcommand{\tocsection}[2]{\bf\oldtocsection{#1}{#2}}
\let\oldtocsubsubsection=\tocsubsubsection
\renewcommand{\tocsubsubsection}[2]{\quad\oldtocsubsubsection{#1}{#2}}

\crefname{enumi}{Condition}{Conditions}

\title[Diagrammatic Okada Algebra]{%
  Diagrammatic Okada monoid \\ and cellularity of the Okada algebra}
\thanks{}

\author[F.~Hivert]{Florent Hivert}
\address[F.~Hivert]{Université Paris-Saclay, CNRS, Laboratoire Interdisciplinaire des Sciences du Numérique, 91405, Orsay, France}
\email{florent.hivert@lisn.fr}
\urladdr{http://www.lisn.fr/~hivert/}

\author[J.~Scott]{Jeanne Scott}
\address[J.~Scott]{Department of Mathematics, University of Minnesota, 127 Vincent Hall, 206 Church Street SE, Minneapolis, MN 55455, United States}
\email{scot1526@umn.edu}

\keywords{Young-Fibonnacci lattice, differential posets, Robinson-Schensted
  correspondence, Cellular algebras, diagram algebras, Okada
  algebras, Bratteli diagrams, clone Schur functions, monoids, Green relations}
\newtheorem{theorem}{Theorem}[section]
\newtheorem{corollary}[theorem]{Corollary}
\newtheorem{proposition}[theorem]{Proposition}
\newtheorem{lemma}[theorem]{Lemma}
\newtheorem{definition}[theorem]{Definition}
\newtheorem{conjecture}[theorem]{Conjecture}

\theoremstyle{definition}
\newtheorem{example}[theorem]{Example}
\newtheorem{remark}[theorem]{Remark}
\newtheorem{question}[theorem]{Question}

\definecolor{green}{RGB}{57,181,74} % green color
\newcommand{\white}{\color{white}} % white command
\newcommand{\blue}{\color{blue}} % blue command
\newcommand{\green}{\color{green}} % green command
\newcommand{\red}{\color{red}} % red command
\newcommand{\cyan}{\color{cyan}} % cyan command
\newcommand{\N}{\mathbb{N}} % naturals
\newcommand{\K}{\mathbb{K}} % field
\newcommand{\C}{\mathbb{C}} % set of summands

\newcommand{\isom}{\simeq}  % isomorphic
\newcommand{\set}[2]{\left\{ #1 \;\middle|\; #2 \right\}} % set notation
\newcommand{\biggset}[2]{\bigg\{ #1 \;\bigg|\; #2 \bigg\}} % bigg set notation
\newcommand{\eqdef}{\coloneq} % :=
\newcommand{\card}{\operatorname{\#}}            % Cardinal d'un ensemble
\DeclareMathOperator{\End}{End}  % endomorphisms

\newcommand{\tMonoid}{M}
\newcommand{\leJJ}{\leq_\mathcal{J}}
\newcommand{\leLL}{\leq_\mathcal{L}}
\newcommand{\leRR}{\leq_\mathcal{R}}
\newcommand{\leHH}{\leq_\mathcal{H}}
\newcommand{\leKK}{\leq_\mathcal{K}}
\newcommand{\JJ}{\mathrel{\mathcal{J}}}
\newcommand{\LL}{\mathrel{\mathcal{L}}}
\newcommand{\RR}{\mathrel{\mathcal{R}}}
\newcommand{\HH}{\mathrel{\mathcal{H}}}
\newcommand{\KK}{\mathrel{\mathcal{K}}}

\newcommand{\ssp}{\hspace{1pt}}
\newcommand{\ie}{\textit{i.e.}~} % id est
\newcommand{\ordinal}{\textsuperscript{th}} % th for ordinals
\newcommand{\ordinalst}{\textsuperscript{st}} % st for ordinals
\newcommand{\qandq}{\quad\text{and}\quad} % space around and
\newcommand{\defn}[1]{\emph{\textsf{\color{blue} #1}}} % emphasis of a definition

\newcommand{\greed}[1]{{\Delta^\mathrm{Greed}(#1)}}  % greedy filling
\newcommand{\perm}[1]{\sigma(#1)} % permutation associated to a diagram
\newcommand{\rewto}{\stackrel*{\mapsto}}
\newcommand{\weight}{\operatorname{wt}}

\newcommandx{\setN}[1][1=N]{[#1]}
\newcommandx{\setNN}[1][1=N]{\overline{[#1]}}

\newcommand{\unpair}{\operatorname{unpair}}
\newcommand{\lpair}{\operatorname{lpair}}
\newcommand{\height}{\operatorname{height}}
\newcommand{\loops}{\operatorname{Loops}}
\newcommand{\walks}{\operatorname{Walks}}

\newcommand{\bl}{|[text=white,fill=black]|}%
\newcommand{\gr}{|[fill=lightgray]|}%
\newcommand{\nn}{|[draw=none]|}%
\tikzset{diamond/node/.style={
    scale=0.55, minimum size=0.9cm, inner sep=0mm,
    draw, regular polygon, regular polygon sides=4,
    yscale=-1, rotate=45, shape border rotate=45
  }}%
\tikzset{diamond/matrix/.style={matrix of nodes,
  transform canvas={yscale=-1, rotate=45},
  nodes={diamond/node},row sep=-\pgflinewidth,column sep=-\pgflinewidth}}
\tikzset{diamond/.style={
    baseline={([yshift=-.5ex]current bounding box.center)},
  }}
\pgfkeys{/tikz/diamond/bound/.code 2 args={
  \useasboundingbox[scale=0.25]
  ($#1*(-1,1) + {1-#2}*(1,0)$) rectangle (${#1+#2-1}*(1,0) + (0,-1)$);
}}
\tikzset{debug/.style={
    show background rectangle,    % to debug margins around diagrams
  }}
\newcommand{\straightLoop}[3]{
  \node[diamond#1]{};
  \begin{scope}
    \clip (-5.0pt,-5.0pt) rectangle (5pt,5pt);
    \draw[ultra thick, #2] (-6pt,1pt) -- (1pt,-6pt);
    \draw[ultra thick, #3] (6pt,-1pt) -- (-1pt,6pt);
  \end{scope}
}
\newcommand{\stC}[2]{\straightLoop{}{#1}{#2}}
\newcommand{\st}{\stC{black}{black}}
\newcommand{\stbC}[1]{\straightLoop{, draw=none}{white}{#1}}
\newcommand{\stb}{\stbC{black}}
\newcommand{\sttC}[1]{\straightLoop{, draw=none}{#1}{white}}
\newcommand{\stt}{\sttC{black}}
\newcommand{\tuC}[2]{
  \node[diamond]{};
  \begin{scope}
    \clip (-5.0pt,-5.0pt) rectangle (5pt,5pt);
    \draw[line width=1.8pt, #1] (-3.6pt,3.6pt) circle (3.85pt);
    \draw[line width=1.8pt, #2] (3.6pt,-3.6pt) circle (3.85pt);
  \end{scope}
}
\newcommand{\tu}{\tuC{black}{black}}
\newcommand{\TLlab}[1]{\nn\raisebox{-8pt}[0pt][0pt]{$#1$}}%

\newcommand{\ov}[1]{\overline{#1}}

\newcommand{\arctip}{\rule[0.1ex]{0.5pt}{1ex}}
\newcommand{\arcdrawing}[1]{\arctip\rule[0.55ex]{#1}{0.5pt}}%

\newlength{\arcwidth}
\newlength{\arcwidthB}
\newsavebox{\arclabel}
\newsavebox{\arcdiag}
\newcommand{\arc}[4][]{%
  \savebox{\arclabel}{$\scriptstyle#3$}%
  \savebox{\arcdiag}{$\scriptstyle#1$}%
  \settowidth{\arcwidth}{\usebox{\arclabel}}%
  \settowidth{\arcwidthB}{\usebox{\arcdiag}}%
  \ifthenelse{\lengthtest{\arcwidthB>\arcwidth}}{%
    \setlength{\arcwidth}{\arcwidthB}}{}%
  \addtolength{\arcwidth}{1em}%
  \ifthenelse{\equal{#4}{}}{\def\endtip{}}{\def\endtip{\arctip}}%
  \ifthenelse{\equal{#1}{}}{%
    #2\mathrel{\stackrel{\usebox{\arclabel}}{\arcdrawing{\arcwidth}\endtip}}#4%
  }{#2\mathrel{\stackrel[{\usebox{\arcdiag}}]{\usebox{\arclabel}}%
      {\arcdrawing{\arcwidth}\endtip}}#4%
  }%
}
\newcommand{\harc}[3][{}]{\arc[#1]{#2}{#3}{}}
\newcommand{\arcp}[4][{}]{(\arc[#1]{#2}{#3}{#4})}
\newcommand{\barc}{\mathrel{\arctip\rule[0.55ex]{2mm}{0.5pt}\!\raisebox{0.1pt}{$\bullet$}\!\rule[0.55ex]{2mm}{0.5pt}\arctip}}

\newcommand{\glue}[2]{%
  #1\,%
  \mathchoice%
    {\raisebox{-.2ex}{$\boldsymbol{\Join}$}} % Display style
    {\raisebox{-.2ex}{$\boldsymbol{\Join}$}} % Text style
    {\raisebox{-.25ex}{$\boldsymbol{\Join}$}} % Script style (indices)
    {\raisebox{-.25ex}{$\boldsymbol{\Join}$}} % Script-script style (indices of indices)
  \,#2}  % or \talloblong

\newcommand{\leftd}[1]{\langle #1|}
\newcommand{\rightd}[1]{\!\left|#1\right\rangle\!}
\newcommand{\linkd}[2]{\rightd{#1, #2}}
\newcommand{\topd}[1]{\linkd{\setN}{#1}}

\newcommand{\freeld}[1]{\linkd{#1}{#1}}
\newcommand{\freerd}[1]{\linkd{#1}{#1}}

\newcommand{\isdomeq}{\preceq}  %{\trianglelefteqslant}
\newcommand{\isdom}{\prec}  %{\vartriangleleft}
\newcommand{\YoungCover}{\subset\!\!\!\!\boldsymbol{\cdot}\ }

\newcommandx{\SG}[1][1=N]{\mathfrak{S}_{#1}} % symmetric group
\newcommandx{\Okada}[1][1=N]{\mathbf{O}_{#1}}
\newcommandx{\OkArc}[1][1=N]{{\mathbf{Arc}}_{#1}}
\newcommandx{\TL}[1][1=N]{\mathbf{TL}_{#1}}
\newcommandx{\Blob}[1][1=N]{\mathbf{Blob}_{#1}}
\newcommandx{\Jones}[1][1=N]{\mathbf{J}_{#1}}
\newcommandx{\FreeMonoid}[1][1=N]{\mathbf{F}_{#1}}
\newcommand{\E}[1]{\mathsf{E}_{#1}}   % Okada algebra
\newcommand{\e}[1]{\mathsf{e}_{#1}}   % Okada monoid
\newcommand{\A}[1]{\mathsf{A}_{#1}}   % Arc monoid (temporarily)
\newcommand{\G}[1]{\mathsf{g}_{#1}}   % generic generators
\newcommand{\f}[1]{\mathsf{f}_{#1}}   % free monoid
\newcommand{\tl}[1]{\mathsf{t}_{#1}}  % TL monoid
\newcommand{\blob}{\mathsf{b}}        % blob monoid

\newcommand{\ind}[1]{\mathbf{#1}}
\newcommand{\id}{\mathsf{id}}      % identity map / functor...
\newcommand{\code}{\mathsf{code}}  % code of a permutation
\newcommandx{\lab}[1][1=]{\operatorname{lab}_{#1}}

\DeclareMathOperator{\Des}{Des}          % descents
\DeclareMathOperator{\LDes}{LDes}        % largest descents
\DeclareMathOperator{\lexmin}{LexMin}
\DeclareMathOperator{\norf}{Can}
\DeclareMathOperator{\length}{Length}

\newcommand{\Sys}{\mathsf{S}}
\newcommand{\Comm}{\mathsf{Comm}}
\newcommand{\OSys}{\mathsf{O}}

\newcommand{\evenD}{\mathbb{EW}}
\newcommand{\oddD}{\mathbb{OW}}

\newcommand{\YFS}{\mathbb{YFS}}
\newcommandx{\YFSn}[1][1=N]{\YFS_{#1}}
\newcommand{\YF}{\mathbb{YF}}
\newcommandx{\YFn}[1][1=N]{\YF_{#1}}
\DeclareMathOperator{\Grow}{Gr}
\DeclareMathOperator{\Shape}{Sh}
\DeclareMathOperator{\PropLab}{PLab}   % Propagating labels
\DeclareMathOperator{\PropInd}{PInd}   % Propagating indexes
\DeclareMathOperator{\Chain}{Chain}
\DeclareMathOperator{\Diag}{Diag}
\DeclareMathOperator{\RS}{RS}

\DeclareMathOperator{\FibSet}{Set}
\DeclareMathOperator{\FibWord}{Fib}

\begin{document}
\begin{abstract}
  It is well known that the Young lattice is the Bratelli diagram of the
  symmetric groups, expressing how irreducible representations restrict from
  $\SG[N]$ to $\SG[N-1]$. In 1975, Stanley discovered a similar lattice called
  the Young-Fibonacci lattice which was identified as the Bratelli diagram of a
  family of algebras $\{\Okada(X,Y)\}_{N \geq 0}$ by Okada in 1994.

  In this paper, we first realize the Okada algebra $\Okada(X,Y)$ and
  the associated monoid $\Okada$ using a labelled version of
  non-crossing arc-diagrams appearing in the description of the
  Temperley-Lieb algebra and Jones monoid.  We establish, for general
  parameters $(X,Y)$, that the dimension of the Okada algebra
  $\Okada(X,Y)$ is $N!$, noting that Okada proved this result only in
  the semisimple case. We interpret a natural bijection between
  permutations and labelled arc-diagrams as an incarnation of Fomin's
  version of the Robinson-Schensted correspondence associated to the
  Young-Fibonacci lattice. The arc-diagram formalism allow us to
  probe the structure of the Okada monoid and algebra. In particular
  we prove that the Okada monoid is a regular, aperiodic $*$-monoid and
  we describe its Green relations and order.

  These results allow us to construct
  a cellular basis of the Okada algebra and to show that
  $\{\Okada(X,Y)\}_{N \geq 0}$ forms a coherent tower of cellular
  algebras in the sense of Goodman and Graber. We present
  some conjectures expressing the Gram determinant of the 
  invariant bilinear form attached to each cell module
  in terms of Okada's clone Schur functions. We conclude the paper by
  presenting two follow-up, ongoing projects along with a series of questions
  pushing further the analogy between the symmetric groups and the
  Okada algebras.
\end{abstract}

\maketitle

\tableofcontents

\listoffigures

\section{Introduction}

\subsection{The Young lattice and the symmetric groups}
The lattice of integer partitions $\mathbb{Y}$, otherwise known as the
\defn{Young lattice}, is an object of central importance, appearing in
algebraic combinatorics, representation theory, statistical mechanics,
and the study of random matrix models. Our interest in the Young
lattice $\mathbb{Y}$ comes from the role it plays in the
representation theory of the symmetric groups.  Up to isomorphism, the
complex, irreducible representations $V^\lambda$ of the symmetric
group $\SG$ are classified by the set $\mathbb{Y}_N$ of partitions
$\lambda$ of rank $N$.  Moreover the Young lattice $\mathbb{Y}$ is the
{\it Bratteli diagram} of the tower of symmetric groups
$\SG[0] \subset \SG[1] \subset \SG[2] \subset \SG[3] \subset \cdots $
and thus the covering relations of $\mathbb{Y}$ express how
irreducible representations of $\SG$ decompose when {\it restricted}
to the the subgroup $\SG[N-1]$ for all $N \geq 1$. Specifically
\begin{equation}
\label{young-lattice-restriction}
\mathrm{\bf Res}_{\ssp \SG[N-1]}^{\ssp \SG} \big( V^\lambda  \big) \ssp \cong \ssp \bigoplus_{\mu \ssp \subset \!\!\! \boldsymbol{\cdot} \, \ssp \lambda } V^\mu
\end{equation}
for $\lambda \in \mathbb{Y}_N$, where $\mu \subset \!\!\!\!\! \boldsymbol{\cdot} \ \lambda$ indicates $\mu \subset \lambda$
is a covering relation.
General results of Okounkov-Vershik~\cite{Okounkov1996}
tell us that for $\lambda \in \mathbb{Y}_N$ the irreducible
module~$V^\lambda$ admits a {\it Gelfand-Tsetlin}
basis which is indexed by saturated chains $\boldsymbol{\underline{\lambda}} = \lambda^{(0)} \subset \lambda^{(1)} \subset \cdots \subset \lambda^{(N)}$
in the $\mathbb{Y}$-lattice starting at $\lambda^{(0)} = \varnothing$ and
ending at $\lambda^{(N)} = \lambda$.
Beyond this the restriction and induction functors obey a {\it categorified} version of the Weyl commutation identity:
\begin{equation}
\label{young-lattice-induction-restriction}
\mathrm{\bf Res}_{\ssp \SG[N]}^{\ssp \SG[N+1]}  \mathrm{\bf Ind}_{\ssp \SG[N]}^{\ssp \SG[N+1]} \big( V^\lambda  \big) \,
\cong \, V^{\lambda} \, \boldsymbol{\oplus} \,
\mathrm{\bf Ind}_{\ssp \SG[N-1]}^{\ssp \SG}  \mathrm{\bf Res}_{\ssp \SG[N-1]}^{\ssp \SG[N]} \big( V^\lambda  \big)\,.
\end{equation}

\subsection{Differential Posets}
\Cref{young-lattice-induction-restriction} is an algebraic incarnation of the {\it differential} property of
the Young lattice $\mathbb{Y}$, where the {\it up} and {\it down} operators
$\mathrm{\bf U}$ and $\mathrm{\bf D}$ respectively implement the action of the induction and restriction
functors $\mathrm{\bf Ind}$ and $\mathrm{\bf Res}$.
Recall that a ranked, locally finite poset $(\mathbb{P}, \unlhd)$ equipped with
a unique, minimal element $\varnothing$ is a said to be
\defn{differential} if the Weyl commutation identity $\mathrm{\bf D} \mathrm{\bf U} - \mathrm{\bf U} \mathrm{\bf D}= \mathrm{\bf Id}_{\ssp \mathbb{P}}$
holds (\cite{Stanley1988}). Here $\mathrm{\bf U}$ and $\mathrm{\bf D}$ are, respectively, the \defn{up} and \defn{down} operators
defined on the vector space $\mathbb{C}[\mathbb{P}]$ of complex-valued functions on $\mathbb{P}$ by
\begin{equation}
\label{def:up+down-operators}
\mathrm{\bf U} \delta_v \ssp \eqdef \ssp \sum_{\stackrel{\scriptstyle w \ssp \rhd \ssp v}{|w| \ssp = \ssp |v| + 1}} \delta_w
\qquad \text{and} \qquad
\mathrm{\bf D} \delta_v \ssp \eqdef \ssp \sum_{\stackrel{\scriptstyle u \ssp \lhd \ssp v}{|u| \ssp = \ssp |v| - 1}} \delta_u
\end{equation}
where $|w|$ and $\delta_w$ denote the rank and indicator function of an element $w \in \mathbb{P}$.
An immediate consequence of \Cref{def:up+down-operators} is that
\begin{equation}
\label{eq:differential-poset-identity}
\mathrm{\bf D}^N \mathrm{\bf U}^N \varnothing = N! \ssp \varnothing.
\end{equation}
The
term {\it differential} winks at the fact that differentiation $\partial_x$ and multiplication by $x$
satisfy the Weyl commutation identity, together with
the observation that \Cref{eq:differential-poset-identity}
can be understood as a poset generalization of the identity $\partial_x \ssp x^N = N!$.
\Cref{eq:differential-poset-identity} is suggestive for another reason, as it indicates a
possible bijection between pairs of saturated chains sharing a common endpoint in $\mathbb{P}_N$
(elements of $w \in \mathbb{P}$ of rank $|w| = N$) and permutations $\sigma \in \SG$.
In the case of the Young lattice $\mathbb{Y}$ this bijection is the celebrated
{\it Robinson-Schensted} correspondence which, from an algebraic perspective,
is just the Wedderburn decomposition of the group algebra $\mathbb{C}[\SG]$.

Indeed, the theory of differential posets was developed by
R. Stanley~\cite{Stanley1988} as a framework for generalizing the
Robinson-Schensted correspondence beyond the combinatorics of the Young
lattice $\mathbb{Y}$ and integer partitions.  A similar undertaking was made by
S. Fomin in his work on dual graded graphs and growth processes, where the later
technique was used to construct an explicit RS-correspondence for
Stanley's \defn{Young-Fibonacci} lattice \defn{$\YF$}~\cite{Fomin1995,Stanley1988}.
Both $\mathbb{Y}$ and $\YF$ are differential and they were
shown to be the only lattices having this property
in the thesis work of P. Byrnes~\cite{byrnes2012structural}.
Fomin's approach involves a Fibonacci version of standard tableaux; a notion
later examined independently by T. Roby, K. Killpatrick, and J. Nzeutchap
(whose formulation by-passes the growth construction altogether),
see~\cite{Nzeutchap2009} and the references therein.

\subsection{The Young-Fibonacci lattice and the Okada algebras}
What is the Young-Fibonacci lattice? Stanley's original
construction~\cite{Stanley1975} involves defining a partial order on
the set of \defn{Fibonacci words}, i.e. binary words in the alphabet
$\{1, 2\}$ which are ranked by the sum of their digits; we postpone a
discussion of this account until \cref{subsection:YF-lattice}. Instead we
work with a new, alternative model using \defn{Fibonacci sets},
which is better adapted to our diagram calculus.
\begin{definition}
\label{def:intro-fibonacci-sets}
A \defn{Fibonacci set of rank $N$} is a subset
$S = \{s_1<s_2<\dots<s_k\}$ of $\{1, \dots, N \}$ whose size $k$ has
the same parity as $N$ and such that $s_\ell$ have the same parity as
$\ell$. We write $\YFS$ for the collection of Fibonacci sets.
\end{definition}
With this definition $\YFS$ becomes a ranked differential lattice
where there is a covering relation $S \lessdot T$ whenever
$T = S \setminus \max S$ or $S = T \setminus \max T$
(see~\cref{def:fibonacci-sets,fig.YF-lattice+introduction,def:YFS-lattice}).
\begin{figure}[ht]
\[    \begin{tikzpicture}[>=latex,line join=bevel, xscale=0.55, yscale=0.68]
\node (node_0) at (303.5bp,3.5bp) [draw,draw=none] {$ \varnothing_{\bf \scriptscriptstyle 0}$};
\node (node_1) at (303.5bp,49.5bp) [draw,draw=none] {$\left\{1 \right\}_{\bf \scriptscriptstyle 1}$};

  \node (node_2) at (275.5bp,98.5bp) [draw,draw=none] {$\left\{1, 2 \right\}_{\bf \scriptscriptstyle 2}$};
  \node (node_7) at (330.5bp,98.5bp) [draw,draw=none] {$\varnothing_{\bf \scriptscriptstyle 2}$};

  \node (node_3) at (239.5bp,147.5bp) [draw,draw=none] {$\left\{1,2,3 \right\}_{\bf \scriptscriptstyle 3}$};
  \node (node_12) at (304.5bp,147.5bp) [draw,draw=none] {$\left\{1\right\}_{\bf \scriptscriptstyle 3}$};
  \node (node_8) at (375.5bp,147.5bp) [draw,draw=none] {$\left\{3\right\}_{\bf \scriptscriptstyle 3}$};

  \node (node_4) at (148.5bp,196.5bp) [draw,draw=none] {$\left\{1,2,3,4\right\}_{\bf \scriptscriptstyle 4}$};
  \node (node_16) at (239.5bp,196.5bp) [draw,draw=none] {$\left\{1,2\right\}_{\bf \scriptscriptstyle 4}$};
  \node (node_9) at (466.5bp,196.5bp) [draw,draw=none] {$\left\{3,4 \right\}_{\bf \scriptscriptstyle 4}$};
  \node (node_22) at (375.5bp,196.5bp) [draw,draw=none] {$\varnothing_{\bf \scriptscriptstyle 4}$};
  \node (node_13) at (304.5bp,196.5bp) [draw,draw=none] {$\left\{1,4\right\}_{\bf \scriptscriptstyle 4}$};

  \node (node_5) at (53.5bp,245.5bp) [draw,draw=none] {$\left\{1,2,3,4,5\right\}_{\bf \scriptscriptstyle 5}$};
  \node (node_19) at (147.5bp,245.5bp) [draw,draw=none] {$\left\{1,2,3\right\}_{\bf \scriptscriptstyle 5}$};
  \node (node_10) at (545.5bp,245.5bp) [draw,draw=none] {$\left\{3,4,5 \right\}_{\bf \scriptscriptstyle 5}$};
  \node (node_25) at (466.5bp,245.5bp) [draw,draw=none] {$\left\{3\right\}_{\bf \scriptscriptstyle 5}$};
  \node (node_14) at (343.5bp,245.5bp) [draw,draw=none] {$\left\{1,4,5\right\}_{\bf \scriptscriptstyle 5}$};
  \node (node_28) at (285.5bp,245.5bp) [draw,draw=none] {$\left\{ 1\right\}_{\bf \scriptscriptstyle 5}$};
  \node (node_17) at (222.5bp,245.5bp) [draw,draw=none] {$\left\{1,2,5\right\}_{\bf \scriptscriptstyle 5}$};
  \node (node_23) at (407.5bp,245.5bp) [draw,draw=none] {$\left\{5\right\}_{\bf \scriptscriptstyle 5}$};

  \draw [black,->] (node_0) -- (node_1);
  \draw [black,->] (node_1) -- (node_2);
  \draw [black,->] (node_1) -- (node_7);
  \draw [black,->] (node_2) -- (node_3);
  \draw [black,->] (node_2) -- (node_12);
  \draw [black,->] (node_3) -- (node_4);
  \draw [black,->] (node_3) -- (node_16);
  \draw [black,->] (node_4) -- (node_5);
  \draw [black,->] (node_4) -- (node_19);
  \draw [black,->] (node_7) -- (node_8);
  \draw [black,->] (node_7) -- (node_12);
  \draw [black,->] (node_8) -- (node_9);
  \draw [black,->] (node_8) -- (node_22);
  \draw [black,->] (node_9) -- (node_10);
  \draw [black,->] (node_9) -- (node_25);
  \draw [black,->] (node_12) -- (node_13);
  \draw [black,->] (node_12) -- (node_16);
  \draw [black,->] (node_12) -- (node_22);
  \draw [black,->] (node_13) -- (node_14);
  \draw [black,->] (node_13) -- (node_28);
  \draw [black,->] (node_16) -- (node_17);
  \draw [black,->] (node_16) -- (node_19);
  \draw [black,->] (node_16) -- (node_28);
  \draw [black,->] (node_22) -- (node_23);
  \draw [black,->] (node_22) -- (node_25);
  \draw [black,->] (node_22) -- (node_28);
\end{tikzpicture}\]
 \caption[The Young-Fibonacci lattice]{Fibonacci sets and the Young-Fibonacci lattice up to rank
    $5$.}
  \label{fig.YF-lattice+introduction}
\end{figure}

\defn{Okada algebas}
$\{ \Okada(X,Y) \}_{\scriptscriptstyle \mathrm{N} \geq 0}$ were
introduced in~\cite{Okada1994} in order to give an algebraic account
of the Young-Fibonacci lattice.  As such, they play a role for the
Young-Fibonacci lattice similar to that played by the symmetric groups
for the Young lattice. These algebras depend on two sequences of
parameters $X = (x_1, x_2, x_3, \dots )$ and
$Y=(y_1, y_2, y_3, \dots)$ selected from a field $\K$.
\begin{definition}[\cite{Okada1994}]
\label{intro-def:okada-algebras}
  The \defn{Okada algebra $\Okada(X, Y)$} is the algebra generated by
  $\E{1},\dots,\E{N-1}$ with the relations
\begin{alignat*}{3}
  \E{i}^2&=x_i\E{i} &\qquad& 1\leq i \leq N-1\,, \\
  \E{i}\E{j}&=\E{j}\E{i} &\qquad&\vert i-j\vert \geq 2\,,  \\
  \E{i+1}\E{i}\E{i+1}&=y_i\E{i+1} &\qquad&  1\leq i \leq N-2\,.
\end{alignat*}
\end{definition}
These algebras are finite dimensional, associative, and generically semi-simple.
An \defn{Okada Monoid $\Okada$} can also be defined (\cref{def:Okada-monoid})
by specializing $x_k = y_k = 1$, however the corresponding
Okada algebra (i.e. the monoid algebra) is not semi-simple. In order
to distinguish the monoid generators, we write $\e{i}$ instead of $\E{i}$.
The obvious inclusion  $\Okada[N-1](X,Y) \subset \Okada[N](X,Y)$
for $N \geq 1$ gives rise to a tower of finite dimensional algebras:
\[ \mathbb{K} = \Okada[0](X,Y) \subset \Okada[1](X,Y) \subset \Okada[2](X,Y) \subset \Okada[3](X,Y) \subset \cdots \]
Okada's chief observation is that the Bratelli diagram of this tower is the Young-Fibonacci lattice, in the semi-simple case.
Therefore, up to isomorphism, irreducible $\Okada(X,Y)$-modules $V^S$ are in bijection with
Fibonacci sets $S \in \YFS_N$ and the decomposition of the restriction of $V^S$ to
$\Okada[N-1](X,Y)$ is captured by the covering relations of the $\YFS$-lattice
\begin{equation}
\label{young-fibonacci-lattice-restriction}
\mathrm{\bf Res}_{\ssp \Okada[N-1](X,Y)}^{\ssp \Okada(X,Y)} \big( V^S  \big) \ssp \cong \ssp \bigoplus_{T \lessdot S} V^{T}\,.
\end{equation}
The
Okounkov-Vershik approach is further evinced: As in the
classical case of the Young lattice, each irreducible
representation $V^S$ aquires a {\it Gelfand-Tsetlin} basis indexed by
saturated chains
$\underline{\mathrm{\bf S}} = S_0 \lessdot S_1 \lessdot \cdots
\lessdot S_N$ in the $\YFS$-lattice beginning at $S_0 = \varnothing_0$
and ending at $S_N = S$, where the action of each generator $\E{k}$ on
$\underline{\mathrm{\bf S}}$ is local and only depends on the covering
relations $S_{k-1} \lessdot S_k \lessdot S_{k+1}$ for
$1 \leq k \leq N-1$~\cite{Okada1994}.

Since the $\YFS$-lattice is differential it follows that the induction
and restriction functors $\mathrm{\bf Ind}$ and $\mathrm{\bf Res}$ for
irreducible representations of the Okada algebras obey the
categorified Weyl commutation identity given in
\Cref{young-lattice-induction-restriction}.  In the semi-simple case,
differentiability also allows one to show that $\dim \Okada(X,Y) = N!$
and that $\Okada(X,Y)$ has a basis $\{ \E{\sigma} \}_{\sigma \in \SG}$
consisting of monomials of the form
$\E{\sigma} \eqdef \E{i_1} \cdots \E{i_k}$ where the tuple
$(i_1, \dots , i_k)$ indexes the unique, lexicographically minimal
factorization $\sigma = s_{i_1} \cdots s_{i_k}$ into simple
transpositions $s_i = (i, i+1)$. We will show that both results remain
valid in the non semi-simple case, however more involved rewriting
techniques are needed to establish this; see~\cref{subsection:rewriting}.

A companion theory of \defn{clone functions}
was also developed in~\cite{Okada1994} which
serves as a counterpart to the theory of symmetric functions for the $\YF$-lattice.
Clone analogues of the classical bases
(e.g. complete homogeneous, Schur, and power-sum symmetric functions)
are constructed as well as variants of the Pieri and Littlewood-Richardson rules
for clone Schur functions. Notably, the covering relations of the
$\YF$-lattice can be recovered from this clone Pieri rule.
Clone Schur functions are important in this story as they simultaneously
model an induction product for irreducible representations of Okada algebras
and also appear as matrix coefficients describing the action of the generators
in these representations. They also appear in Goodman-Kerov's determination of the
{\it Martin boundary} of the
$\YF$-lattice~\cite{Goodman1997} and are related to central Markov
processes which evolve on the $\YF$-lattice
(see~\cite{GNEDIN_KEROV_2000,Petrov_Scott_2026}).

\subsection{Diagram algebras}
For appropriate values of the $X, Y$ parameters, the Okada algebra has
two well known quotients: The Temperley-Lieb algebra~\cite{TemperleyLieb1971}
$\mathrm{TL}_N(\delta)$ and the Martin-Saleur blob
algebra~\cite{Martin1994}
$\mathrm{Blob}_N(\gamma,\delta)$. Indeed, both $\mathrm{TL}_N(\delta)$
and $\mathrm{Blob}_N(\gamma,\delta)$ are defined by relations which
refine Okada's presentation. In addition their Bratelli diagrams
can be embedded into the $\YF$-lattice.

The Temperley-Lieb algebras famously appear in the work of V. Jones on
subfactors and the theory of knots. In Jones' framework any
braid with $N$ strands can be {\it evaluated} in
$\mathrm{TL}_N(\delta)$ using skein relations, and this
evaluation is a topological invariant of the knot obtained by
closing-up the braid~\cite{Jones1983}.  Both families of algebras have
central roles in various statistical models arising in physics, such
as the {\it Potts} and {\it six-vertex}
models~\cite{TemperleyLieb1971}.  Notably the generators of
$\mathrm{TL}_N(\delta)$ in the $\delta = 1$ specialization give rise
to {\it transition probabilities} defining a Markov process on the
space non-crossing link patterns (i.e. {\it half-diagrams}) with $N$
vertices; likewise the $\gamma = \delta =1$ specialization of
$\mathrm{Blob}_N(\gamma,\delta)$ governs a Markov process on
non-crossing link patterns having an open (left) boundary
(see~\cite{DeGier2004}).

Combinatorial aspects of the behaviour of the stationary
probability distribution in each case are described by the
Razumov–Stroganov conjecture and its
blob-variant~\cite{DeGier2004,ZinnJustin2009}. The original
formulation of the Razumov-Stroganov conjecture, involving the
transfer matrix for completely packed loops on a semi-infinite
cylinder and the {\it affine} Temperley-Lieb algebra, was settled by
L. Cantini and A. Sportiello in 2011~\cite{Cantini2011}.

%The Temperley-Lieb algebra $\mathrm{TL}_N(\delta)$
%and the Blob algebra $\mathrm{Blob}_N(\gamma,\delta)$
%can be realized as quotients of the Okada algebra $\Okada(X,Y)$ after specializing
%the $X,Y$ parameters appropriately. In addition their Bratelli diagrams
%can be embedded into the $\YF$-lattice. Both $\mathrm{TL}_N(\delta)$
%and $\mathrm{Blob}_N(\gamma,\delta)$ are {\it diagram algebras}
%and carry a {\it cellular algebra structure}, and so it's natural to ask whether the $\Okada(X,Y)$ has
%such a description.

%In this paper we
%realize the Okada algebra $\Okada(X,Y)$
%as a diagram algebra
%with a multiplicative/monoidal basis
%expressed in terms of certain arc-labelled, non-crossing
%perfect matchings (as appear in both  the Temperley-Lieb
%and Martin-Saleur Blob algebras~\cite{Martin1994a}).
%This basis is cellular and affords us with a
%novel, diagrammatic presentation of the irreducible
%representations of $\Okada(X,Y)$
%(i.e. as \defn{cell modules}).
%We interpret Fomin's RS-correspondence
%diagrammatically. This involves constructing a
%bijection between saturated chains in the $\YF$-lattice
%(presented in terms of sequences of \defn{Fibonacci sets})
%and \defn{Okada half arc-diagrams}.
%In addition we examine the structure theory of the Okada algebra
%and monoid via a \defn{dominance order} on Fibonacci sets.

The Temperley-Lieb algebra $\mathrm{TL}_N(\delta)$
is an example of a {\it diagram algebra} and
possesses a multiplicative basis consisting of
{\it non-crossing arc-diagrams} of rank $N$.
Such a diagram is a planar visualization of a
perfect matching, linking vertices
$ \{1, \dots\, N\}$ and $\{\ov{1}, \dots, \ov{N} \}$ situated
respectively on the left and right boundaries of a rectangle
by {\it non-crossing} arcs (drawn in the interior of the
rectangle). Each pair $\{a, b \}$ in the matching corresponds to
an arc, denoted $\arc{a}{}{b}$, joining vertices $a , b \in \setN \cup \setNN$
where $\setN = \{1, \dots N \}$ and where $\setNN \eqdef \{\ov{1}, \dots, \ov{N} \}$.
The ensemble of arcs is drawn modulo isotopy.
Up to a scalar depending on the parameter $\delta$,
the product $C \boldsymbol{\cdot} D$ of two non-crossing arc-diagrams $C$ and $D$
corresponds to merging, respectively, the right and left boundary vertices of $C$
and $D$ and then removing any loops which may have been created.
Each loop created in the merger contributes a factor of $\delta$
to the overall scalar.
The blob algebra $\mathrm{Blob}_N(\gamma,\delta)$ has a
similar description, involving a diagram basis consisting of
non-crossing arc-diagrams, whose {\it exposed} arcs
are allowed to be marked or {\it blobbed}.
\medskip

This leads us to the main question of this paper: \textbf{Both
$\mathrm{TL}_N(\delta)$ and $\mathrm{Blob}_N(\gamma,\delta)$ are \defn{\it
  diagram algebras} and carry a \defn{\it cellular algebra structure}, and
so it's natural to ask whether $\Okada(X,Y)$ has such a
description.}
\medskip

\subsection{Okada arc-diagrams}

The chief result of this paper is to realize the Okada algebra
$\Okada(X,Y)$ as a diagram algebra with a multiplicative basis
consisting of Temperley-Lieb diagrams whose arcs are assigned positive
integer~\textit{labels} satisfying the following constraints:
\begin{definition}
  \label{def.okada.arc.introduction}
  An \defn{Okada arc-diagram} of rank $N$ is a is a
  non-crossing arc-diagram of rank $N$ whose arcs
  carry integer labels subject to the following conditions:
  \begin{enumerate}
  \item the label  of each arc $\arc{a}{}{b}$ must
    be at least $1$ and at most $\min(|a|, |b|)$
  \item the label of each arc $\arc{a}{}{b}$
    must have the same parity as $\min(|a|, |b|)$
  \item if an arc $\arc{a}{}{b}$ is nested in an arc $\arc{c}{}{d}$ then
    the label of $\arc{a}{}{b}$ is \newline strictly larger than the label of $\arc{c}{}{d}$.
  \end{enumerate}
\end{definition}
Two Okada arc-diagrams $C$ and $D$ of rank $N$ can be multiplied
\defn{$C \boldsymbol{\cdot} D$} by merging their respective underlying
Temperley-Lieb diagrams, removing loops, and labelling each newly created
arc by the {\it minimum} of the integer labels of the individual arcs from
$C$ and $D$ which form it. It is a non-trivial result that labelling
the arcs in this manner produces a new Okada arc-diagram of rank $N$.
In this way we obtain a monoid structure on the set \defn{$\OkArc$} of
all Okada arc-diagram of rank $N$ (see
\cref{fig:intro-okada-arc-diagrams}). $\OkArc$ is generated by
\defn{elementary diagrams $\A{1}, \dots, \A{N-1}$} (see~\cref{def:gen-arc-okada}),
which obey the defining relations of the Okada monoid and the map
$\A{i}\mapsto\e{i}$ extends to a monoid isomorphism $\Theta: \OkArc
\rightarrow  \Okada$~\cref{cor:isomEG}.

\begin{figure}[ht]
  \centering
  \begin{tikzpicture}[xscale=0.5,yscale=0.4, thick, baseline={(current bounding box.east)}]
\tikzstyle{vertex} = [shape=rectangle, minimum height=15pt, minimum width=0pt, inner sep=0pt]
\tikzstyle{mid} = [draw, fill=white, shape=circle, minimum size=2pt, inner sep=1pt]
\node[vertex] (G--9) at (2.64, 10.40) {};
\node[vertex] (G--8) at (2.64, 9.10) {};
\node[vertex] (G--7) at (2.64, 7.80) {};
\node[vertex] (G--6) at (2.64, 6.50) {};
\node[vertex] (G--5) at (2.64, 5.20) {};
\node[vertex] (G--4) at (2.64, 3.90) {};
\node[vertex] (G--3) at (2.64, 2.60) {};
\node[vertex] (G--2) at (2.64, 1.30) {};
\node[vertex] (G--1) at (2.64, 0.00) {};
\node[vertex] (G-1) at (-2.64, 0.00) {};
\node[vertex] (G-2) at (-2.64, 1.30) {};
\node[vertex] (G-3) at (-2.64, 2.60) {};
\node[vertex] (G-4) at (-2.64, 3.90) {};
\node[vertex] (G-5) at (-2.64, 5.20) {};
\node[vertex] (G-6) at (-2.64, 6.50) {};
\node[vertex] (G-7) at (-2.64, 7.80) {};
\node[vertex] (G-8) at (-2.64, 9.10) {};
\node[vertex] (G-9) at (-2.64, 10.40) {};
\draw[color=blue] (G--1) .. controls +(-2.59, 1.30) and +(2.59, -1.30) .. (G-5) node[pos=0.5,mid] (M-1-5) { 1 };
\draw[color=blue] (G--8) -- (G-6) node[pos=0.5,mid] (M-8-6) { 2 };
\draw[color=blue] (G-1) .. controls +(2.10, 1.05) and +(2.10, -1.05) .. (G-4) node[pos=0.5,mid] (M1-4) { 1 };
\draw[color=blue] (G--7) .. controls +(-2.10, -1.05) and +(-2.10, 1.05) .. (G--2) node[pos=0.5,mid] (M-7--2) { 2 };
\draw[color=blue] (G-2) .. controls +(0.70, 0.35) and +(0.70, -0.35) .. (G-3) node[pos=0.5,mid] (M2-3) { 2 };
\draw[color=blue] (G--4) .. controls +(-0.70, -0.35) and +(-0.70,0.35) .. (G--3) node[pos=0.5,mid] (M-4--3) { 3 };
\draw[color=blue] (G--9) -- (G-7) node[pos=0.5,mid] (M-9-7) { 7 };
\draw[color=blue] (G-8) .. controls +(0.70, 0.35) and +(0.70, -0.35) .. (G-9) node[pos=0.5,mid] (M8-9) { 8 };
\draw[color=blue] (G--6) .. controls +(-0.70, -0.35) and +(-0.70, 0.35) .. (G--5) node[pos=0.5,mid] (M-6--5) { 5 };
\end{tikzpicture}
%$\cdot$
\begin{tikzpicture}[xscale=0.9,yscale=0.4, thick, baseline={(current bounding box.east)}]
\tikzstyle{vertex} = [shape=rectangle, minimum height=15pt, minimum width=0pt, inner sep=0pt]
\tikzstyle{mid} = [draw, fill=white, shape=circle, minimum size=2pt, inner sep=1pt]
\node[vertex] (G--9) at (1.44, 10.40) {};
\node[vertex] (G--8) at (1.44, 9.10) {};
\node[vertex] (G--7) at (1.44, 7.80) {};
\node[vertex] (G--6) at (1.44, 6.50) {};
\node[vertex] (G--5) at (1.44, 5.20) {};
\node[vertex] (G--4) at (1.44, 3.90) {};
\node[vertex] (G--3) at (1.44, 2.60) {};
\node[vertex] (G--2) at (1.44, 1.30) {};
\node[vertex] (G--1) at (1.44, 0.00) {};
\node[vertex] (G-1) at (-1.44, 0.00) {};
\node[vertex] (G-2) at (-1.44, 1.30) {};
\node[vertex] (G-3) at (-1.44, 2.60) {};
\node[vertex] (G-4) at (-1.44, 3.90) {};
\node[vertex] (G-5) at (-1.44, 5.20) {};
\node[vertex] (G-6) at (-1.44, 6.50) {};
\node[vertex] (G-7) at (-1.44, 7.80) {};
\node[vertex] (G-8) at (-1.44, 9.10) {};
\node[vertex] (G-9) at (-1.44, 10.40) {};
\draw[magenta] (G--1) .. controls +(-1.63, 0.82) and +(1.63, -0.82) .. (G-5) node[pos=0.5,mid] (M-1-5) { 1 };
\draw[magenta] (G--6) -- (G-6) node[pos=0.5,mid] (M-6-6) { 2 };
\draw[magenta] (G--9) -- (G-7) node[pos=0.5,mid] (M-9-7) { 3 };
\draw[magenta] (G-3) .. controls +(0.70, 0.35) and +(0.70, -0.35) .. (G-4) node[pos=0.5,mid] (M3-4) { 1 };
\draw[magenta] (G--5) .. controls +(-0.70, -0.35) and +(-0.70, 0.35) .. (G--4) node[pos=0.5,mid] (M-5--4) { 2 };
\draw[magenta] (G-8) .. controls +(0.70, 0.35) and +(0.70, -0.35) .. (G-9) node[pos=0.5,mid] (M8-9) { 6 };
\draw[magenta] (G--8) .. controls +(-0.70, -0.35) and +(-0.70, 0.35) .. (G--7) node[pos=0.5,mid] (M-8--7) { 5 };
\draw[magenta] (G-1) .. controls +(0.70, 0.35) and +(0.70, -0.35) .. (G-2) node[pos=0.5,mid] (M1-2) { 1 };
\draw[magenta] (G--3) .. controls +(-0.70, -0.35) and +(-0.70, 0.35) .. (G--2) node[pos=0.5,mid] (M-3--2) { 2 };
\end{tikzpicture}
%%%%%%%%%%%%%%%%%%%%%%%%%%%
\quad $\boldsymbol{=}$ \quad
%%%%%%%%%%%%%%%%%%%%%%%%%%%
\begin{tikzpicture}[xscale=0.5,yscale=0.4, thick, baseline={(current bounding box.east)}]
\tikzstyle{vertex} = [shape=rectangle, minimum height=15pt, minimum width=0pt, inner sep=0pt]
\tikzstyle{mid} = [draw, fill=white, shape=circle, minimum size=2pt, inner sep=1pt]
\node[vertex] (G--9) at (2.64, 10.40) {};
\node[vertex] (G--8) at (2.64, 9.10) {};
\node[vertex] (G--7) at (2.64, 7.80) {};
\node[vertex] (G--6) at (2.64, 6.50) {};
\node[vertex] (G--5) at (2.64, 5.20) {};
\node[vertex] (G--4) at (2.64, 3.90) {};
\node[vertex] (G--3) at (2.64, 2.60) {};
\node[vertex] (G--2) at (2.64, 1.30) {};
\node[vertex] (G--1) at (2.64, 0.00) {};
\node[vertex] (G-1) at (-2.64, 0.00) {};
\node[vertex] (G-2) at (-2.64, 1.30) {};
\node[vertex] (G-3) at (-2.64, 2.60) {};
\node[vertex] (G-4) at (-2.64, 3.90) {};
\node[vertex] (G-5) at (-2.64, 5.20) {};
\node[vertex] (G-6) at (-2.64, 6.50) {};
\node[vertex] (G-7) at (-2.64, 7.80) {};
\node[vertex] (G-8) at (-2.64, 9.10) {};
\node[vertex] (G-9) at (-2.64, 10.40) {};
\draw (G--9) .. controls +(-2.59, -1.30) and +(2.59, 1.30) .. (G-5) node[pos=0.5,mid] (M-9-5) { 1 };
\draw (G-6) .. controls +(0.70, 0.35) and +(0.70, -0.35) .. (G-7) node[pos=0.5,mid] (M6-7) { 2 };
\draw (G--6) .. controls +(-3.10, -1.05) and +(-3.10, 1.05) .. (G--1) node[pos=0.5,mid] (M-6--1) { 1 };
\draw (G-1) .. controls +(3.10, 1.05) and +(3.10, -1.05) .. (G-4) node[pos=0.5,mid] (M1-4) { 1 };
\draw (G--5) .. controls +(-0.70, -0.35) and +(-0.70, 0.35) .. (G--4) node[pos=0.5,mid] (M-5--4) { 2 };
\draw (G-2) .. controls +(0.70, 0.35) and +(0.70, -0.35) .. (G-3) node[pos=0.5,mid] (M2-3) { 2 };
\draw (G--8) .. controls +(-0.70, -0.35) and +(-0.70, 0.35) .. (G--7) node[pos=0.5,mid] (M-8--7) { 5 };
\draw (G-8) .. controls +(0.70, 0.35) and +(0.70, -0.35) .. (G-9) node[pos=0.5,mid] (M8-9) { 8 };
\draw (G--3) .. controls +(-0.70, -0.35) and +(-0.70, 0.35) .. (G--2) node[pos=0.5,mid] (M-3--2) { 2 };
\end{tikzpicture}
\caption{Two Okada arc-diagrams (in \textcolor{blue}{blue}
and \textcolor{magenta}{magenta}) and their product (in black) within the monoid
$\OkArc[9]$.}
\label{fig:intro-okada-arc-diagrams}
\end{figure}

This description extends to the algebra: $\Okada(X,Y)$ has a natural
diagram basis $(\E{D})_{D\in\OkArc}$ and the $X$
and $Y$ parameter can be incorporated into the monoid product as
\begin{equation}
\label{eq:multiplication-diagrams-okada-algebra-intro}
 \E{C} \ssp \E{D} \, = \,  \lambda(C,D) \ssp \E{C \boldsymbol{\cdot} D}
 \end{equation}
where $\lambda(C,D) \in \mathbb{K}$ is a monomial
in the $X$ and $Y$-parameters depending on
the labels of both the loops and arcs formed when merging $C$ and $D$,
see~\cref{prop:lambda-coefficient,coro:coeff-mult}.
In the simplest case,
when $y_k =1$ for all $k \geq 1$, each loop in the merger
contributes a factor of $x_\ell$ to the scalar $\lambda(C,D)$
where $\ell$ is the minimum label of the loop. The
formula is simultaneously a multivariate and {\it tropical} extension of the rule for
multiplying diagrams in the Temperley-Lieb algebra $\mathrm{TL}_N(\delta)$.

\subsection{Diagrammatic Robinson-Schensted correspondence}

From a combinatorial point of view, there are two ways to justify the
fact that the cardinality of $\Okada$ and the dimension of
$\Okada(X, Y)$ are both $N!$. 

The first approach uses rewriting techniques to show that the map
$\sigma\mapsto \e{\sigma}$ from $\SG$ to $\Okada$ is a bijection where
$\e{\sigma} \eqdef \e{i_1} \cdots \e{i_k}$ matches the unique,
lexicographically minimal factorization
$\sigma = s_{i_1} \cdots s_{i_k}$ into simple transpositions (see
\cref{theo:bij-Sn_Okada,prop:lexmin-normal-form}).

The second approach makes use of our description of the
$\YFS$-lattice in terms of Fibonacci sets and, remarkably,
recovers Fomin's RS-correspondance.
In particular, this approach is designed to leverage
the following key observation: For any Okada arc-diagram $D$ of rank
$N$ the set \defn{$\PropLab(D)$} consisting of the labels of
propagating arcs of $D$ (i.e. those connecting the left and right-hand
sides of the diagram) is always a Fibonacci set of rank $N$
(\cref{prop:proplab-index-fibo}). On the other hand, an Okada
arc-diagram $D$ can be split into its left and right half diagrams
\defn{$\rightd{D}$} and \defn{$\leftd{D}$} whose sets of propagating
labels agree. Furthermore, any rank $N$ half diagram $H$ can be
restricted in an obvious way to $\{1, \dots\, k \}$ for $k \leq N$ to
obtain a rank $k$ half diagram $H/\setN[k]$.  The sequence
$S_k \eqdef \PropLab(H/\setN[k])$ of propagating labels of $H/\setN[k]$
for $k=0,\dots,N$ is always a saturated chain
\[ \Chain(H) \, \eqdef \, S_0 \lessdot \cdots \lessdot S_N \] ending at
$\PropLab(H)\in\YFS_N$ and the map $H\mapsto\Chain(H)$ is a bijection
(\cref{prop:chain-diag-bij}). Recall that we have a monoid
isomorphism $\Theta: \Okada \to \OkArc$. Then
(\cref{theo:RS-isomorphism}) Fomin's bijection from
permutation to pair of chains is equal to
\[ \sigma \mapsto   \Big( \Chain \rightd{\Theta(\e{\sigma})}, \ssp  \Chain \leftd{\Theta(\e{\sigma})}\Big)\,. \]

\subsection{Cellularity vis-à-vis  Green structure}
The main algebraic outcome of this paper is the fact that the diagram basis
$\{E_D\}_{D \in \OkArc}$ of $\Okada(X,Y)$ is \defn{cellular}.
Typically, a cellular basis for a finite dimensional algebra is meant to
capture features of a generalized or modified RS-correspondence
which, in the semi-simple case, manifests the Wedderburn decomposition of the algebra.
A characteristic of this structure is that multiplication should be
{\it non-increasing} with respect to a canonical {\it filtration} of the
algebra by two-sided \defn{cell ideals} which can be defined from
the basis.

General results have been known for some time which exhibit a cellular
structure for both monoid and twisted-monoid algebras~(see e.g.:
\cite{WILCOX200710, EAST2006505, GUO200971}). Examples of the later include,
the Temperley-Lieb, blob, partition, and Brauer algebras. In this situation,
the cellular structure mirrors the relationship between ideals of
the monoid as expressed by Green's relations and related partial
orders. In the twisted case, this approach most certainly works for
the Okada algebra, except perhaps for its most degenerate
specializations. However, that would require a good, a-priori
understanding of the $\lambda(C, D)$ coefficients. In our case the
evaluation of $\lambda(C, D)$ is non-trivial
(see~\cref{coro:coeff-mult}) and instead we opt for a direct approach.
\smallskip

Indications that the diagram basis of $\Okada(X,Y)$ is cellular can
already be seen at the level of the arc monoid $\OkArc$.  One
important feature of $\OkArc$ is that multiplication is {\it
  non-increasing} with respect to Okada's \defn{dominance order
  $\preceq$} on $\YFS_N$, i.e. for $C, D \in \OkArc$
\begin{equation}
\label{proplabs+dominance+inequality}
\PropLab(C \boldsymbol{\cdot} D) \preceq \PropLab(C) \wedge \PropLab(D)
\end{equation}
where $\wedge$ is the meet operation for the dominance order which
endows $\YFS_N$ with the structure of a distributive
lattice~(\cref{prop.dominance.lattice}). This implies that two
diagrams $C$ and $D$ belong to the same $\JJ$-class if and only if
$\PropLab(C)=\PropLab(D)$ and that the $\JJ$-order is isomorphic to
the dominance order (\cref{theo:j-order-okada}). The linear span of a
two-sided monoid ideal is precisely a two-sided cell ideal, and
therefore the dominance order must be the partial order responsible for the
cellular structure on $\Okada(X,Y)$~(\cref{theo-cellular}).  Going
back to the monoid, an important consequence of
\cref{proplabs+dominance+inequality} is that $\Okada$ is an aperiodic
and regular $*$-monoid~(\cref{theo:aperiodic,cor:regular}). A more
thorough analysis also allows us to describe the one-sided Green order
as a ranked semi-lattice on
half-diagrams~(See~\cref{prop:r-combi-order,theo:r-order}).
\smallskip

From a representation theoretic standpoint, a cellular basis affords a
finite dimensional algebra with a supply of \defn{cell modules}.  In
the semi-simple case, these cell modules will be pairwise distinct,
simple, and will exhaust all simple modules. In the non semi-simple
case, there may be more cell modules than simple modules. Nevertheless
each simple module can be obtained as a quotient of {\it some} cell
module.

For the Okada Algebra $\Okada(X,Y)$
there is a distinct cell module $V^S$ for each
$S \in \YFS_N$ whose basis consists
of left half-diagrams $\rightd{D}$ with
$\PropLab \rightd{D} = S$ (see~\cref{sec:cell-module}). The (left) action
of $\Okada(X,Y)$ on half diagrams
in $V^S$ is given by diagram multiplication
\begin{equation}
  \label{eq.cell-module-action-intro}
  \E{C}  \bullet \rightd{D} \ \eqdef
  \begin{cases}
    \lambda(C,D) \ssp \rightd{C \boldsymbol{\cdot} D}
    &\text{if $\PropLab(C \boldsymbol{\cdot} D) = S$} \\
    0
    &\text{otherwise}
  \end{cases}
\end{equation}
where $\lambda(C,D)$ is the scalar appearing
in \Cref{eq:multiplication-diagrams-okada-algebra-intro}.
The action defined by
 \Cref{eq.cell-module-action-intro}
 might be called {\it quasi}-combinatorial
 insofar as $\E{C}$ maps
 half diagrams to half diagrams, albeit the
 occurrence of scalars. This is a hallmark of
 algebras possessing diagram bases which are cellular,
 unlike, for example, the Murphy basis of the Hecke algebra $\mathbcal{H}_N$.
 We emphasize that the half diagram basis for
$V^S$ differs from its Gelfand-Tsetlin basis
appearing in Okada's description of $V^S$.
While Okada's formulation is valid only in the
semi-simple case, the cell module structure prescribed by
\Cref{eq.cell-module-action-intro} makes sense
for any specialization of the $X$ and $Y$-parameters.

We show that the cellular structures for $\Okada(X,Y)$ and
$\Okada[N+1](X,Y)$ are \defn{coherent} with respect to the standard
inclusion $\Okada(X,Y) \subset \Okada[N+1](X,Y)$ for all
$N \geq 0$~(\cref{theo:coherent-celullar}).  This compatibility,
introduced by Goodman-Graber~\cite{Goodman2011}, is needed to
formalize the notion of a Bratteli diagram which describes the
branching of cell modules, even in the non semi-simple case.  In
general the covering relations of a Bratteli diagram are determined by
{\it composition factors} and not direct summands which arise when
restricted and/or inducing a cell module. We prove that for any choice
of parameters, the Goodman-Graber version of the Bratteli diagram for
the tower of Okada algebras $\Okada(X,Y)$ is always
$\YFS$~(\cref{prop:restr-okada}).  This relies on the combinatorial
fact that successors and predecessors of any Fibonacci set in $\YFS$
are totally ordered with respect to the dominance
order~(\cref{lem:YF-pred_totally-ordered}).

\medskip

A important feature of cell modules, is that each comes
equipped with a canonical, \defn{invariant bilinear form} $\varphi$.
The radical of this bilinear form is non-zero precisely when the cell
module is simple; the former condition being characterized by the
non-vanishing of the associated \defn{Gram determinant}.  The bilinear
form $\varphi_S$ attached to the cell module $V^S$ is addressed
in~\cref{sec:clone} and we conjecture an explicit factorization
of the associated Gram determinant $\det G^S_N $ in terms of biserial
clone Schur functions in \ref{conj::SGram-Determinant}.  This is
consistent with Okada's result~\cite[Theorem 2.8]{Okada1994}
where the semi-simplicity of
$\Okada(X,Y)$ is characterized by the non-vanishing of certain clone
Schur functions, which are present in our conjectured factorization.
\Cref{conj::SGram-Determinant}
models a result of \cite{DeGier2009} for the {\it two-boundary
  Tempereley-Lieb algebra} $\boldsymbol{\mathrm{2BTL}}_N$ and also
agrees with well-known results concerning the factorization of Gram
determinants for both $\mathrm{TL}_N(\delta)$ and
$\mathrm{Blob}_N(\gamma,\delta)$, see~\cite{Di_Francesco1998-ot,Jacobsen2008}.

\subsection*{Acknowledgments}
F. Hivert would like to thank James
Mitchell and Nicolas Thiéry for fruitful discussions. Likewise, J. Scott
thanks Amritanshu Prasad, Darij Grinberg, Leonid Petrov, and Tom Roby 
for discussions, and well as
Philippe Biane and
Anatoly Vershik (posthumously) for their input and guidance.
Significant progress on this project was made 
during a joint visit to the Mathematics department at the
University of Minnesota in January 2025. We would 
both like to thank Vic Reiner for insightful discussions 
and for his hospitality during our stay in Minneapolis.

F. Hivert was partially supported by the European projet FRESCO
(European Union’s Horizon 2020 research and innovation program grant
No.101001995) and by the French--Austrian project PAGCAP
(ANR-21-CE48-0020 \& FWF I 5788). Computations were
done using the SageMath and Mathematica computer algebra systems.

\newpage

\section{Background}

Throughout this paper $N$ designates a non-negative integer.  
We often write negative numbers as overlined numbers;
for example, writing $\ov{4}$ instead of $-4$.  The cardinality of a set $S$ is denoted $\card S$.
The set $\{1,\dots,N\}$ is denoted  $\setN$ while $\{ \ov{1} , \dots, \ov{N} \}$ is
denoted $\ov{[N]}$.
For a
non-negative integer $N$ we endow $[N] \cup \ov{[N]}$ with the total order
\begin{equation}
  \label{eq:orderN-NN}
  \{1 < 2< \dots < N < \ov{N} < \dots < \ov{2} < \ov{1}\}\,.
\end{equation}
This non-standard ordering is chosen because we want to ensure that the Okada arc-monoid $\OkArc$
(defined in \cref{subsection:the-labelled-arc-diagram-monoid})
naturally embeds into $\OkArc[N+1]$ while being consistent with 
~\cref{relSxy}. Indicating negative numbers by overlines
should help remind the reader of this unusual ordering.

\subsection{Poset, covers, chains and lattices}
At least seven different posets are defined in this paper! We feel
that we need to recall standard terminology and fix some notation 
so that our exposition can be as self-contained as possible.

A \defn{pre-order} $\leq$ on a set $\mathbb{P}$ is a binary relation which is
reflexive ($x \leq x$ for all $x\in \mathbb{P}$) and transitive (if $x \leq y$
and $y \leq z$, then $x \leq z$ for all all $x, y, z\in \mathbb{P}$). A pre-order
which is anti-symmetric (if $x \leq y$ and $y \leq x$, then
$x = y$) is called a \emph{partial order}. We also says that $(\mathbb{P}, \leq)$ is
a partially ordered set (or \defn{poset} for short). Any pre-order
$\leq$ gives rise to an equivalence relation defined by
$x \equiv y$ whenever $x\leq y$ and $y\leq x$; this equivalence makes the quotient
$\mathbb{P}/\!\equiv$ into a poset. We will use different notation such as
$\leq ,\subseteq, \preceq, \trianglelefteq$ to distinguish between 
various (pre)-orders in the text.  When we exclude the possibility that $x=y$, we
write $x < y$, $x\subset y$, $x\prec y$ or $x\triangleleft y$.

A pair $(x,y)$ such that $x < y$ and for which there is no $z\in \mathbb{P}$ satisfying
$x < z < y$ is called a \defn{covering}. We also says that $y$ covers
$x$ or $x$ is covered by $y$. We usually denote coverings by adding a
dot in the inequality symbol, as in $x\lessdot y$ or $x\YoungCover y$.  The
\defn{Hasse diagram} of $(\mathbb{P},\leq)$ is the oriented graph whose vertices
are the elements $x\in \mathbb{P}$ and where there is an edge directed 
from $x$ to $y$ whenever $x\lessdot y$.

A poset $(\mathbb{P},\leq)$ is \defn{graded} if there is a map $|.| : \mathbb{P} \to \N$
such that $|x| + 1 = |y|$ whenever $x\lessdot y$. Note that the
grading map is only defined up to a constant. In this paper, all 
graded posets will have a unique, minimal element $\varnothing$
(i.e. $\varnothing \leq x$ for all $x$), so requiring that $|\varnothing|=0$
fixes the constant. A \defn{chain} $\mathrm{\bf C}$ in a poset is a totally ordered
subset. Therefore a \emph{finite chain} can be written as a sequence
$(x_1<x_2<\dots<x_n)$. A chain $\mathrm{\bf C}$ is \defn{saturated} if the
only way to extend $\mathrm{\bf C}$ to a larger chain $\{x\} \cup \mathrm{\bf C}$ is 
when either $x \leq y$ for all $y\in \mathrm{\bf C}$ or $x \geq y$ for all $y\in \mathrm{\bf C}$. In
the case of a \emph{finite saturated chain} consecutive pairs in the sequence
form coverings, i.e.  $\mathrm{\bf C} = (x_1\lessdot x_2\lessdot\dots\lessdot
x_n)$. Such a chain can be visualized as a path in the Hasse diagram.

Finally, recall that the \defn{join} (also called the
least upper bound or supremum and denoted $x \vee y$ ) 
of two elements $x, y \in \mathbb{P}$ is an element $z$ such that for
all $t$, the relation $z\leq t$ is equivalent to $x\leq t$ and
$y\leq t$. Clearly the join is unique if it exists.
If all pairs $x ,y \in \mathbb{P}$ admit a join, we says that $\mathbb{P}$ is a
\defn{join semilattice}. Reversing the direction of comparison
defines the \defn{meet} (or greatest lower bound or infimum and denoted
$x \wedge y$) together with the notion of a \defn{meet semilattice}. If $\mathbb{P}$ is
simultaneously both a meet and join semilattice, then $\mathbb{P}$ is said to be a \defn{lattice}.

\subsection{Words}
A finite word $\ind{i} = i_1 \cdots  i_k$
composed of integers in $\setN[N-1]$ will be called an \defn{index word}
of rank $N$. Let $\mathcal{l}(\ind{i})=k$ be the \defn{length} of the
word, $|\ind{i}| = i_1 + \cdots  + i_k$ its sum, and
$\ind{i}^\star = i_k \cdots  i_1$ its reversal. The empty word is denoted
$\boldsymbol{\epsilon}$ and $\ind{i}\cdot \ind{j}$ denotes the concatenation of two index words $\ind{i}$ and $\ind{j}$.
Obviously $\mathcal{l}(\ind{i}\cdot\ind{j})=\mathcal{l}(\ind{i})+\mathcal{l}(\ind{j})$ and
$| \ind{i}\cdot\ind{j}| = |\ind{i}| + |\ind{j}|$ and $(\ind{i} \cdot \ind{j})^\star = \ind{j}^\star \cdot \ind{i}^\star$.
An index word $\ind{i}= i_1 \cdots i_k$ is said to be \defn{decreasing} if
$i_1 > \cdots > i_k$.
Given a monoid or a $\K$-algebra $G$ generated by a family $(\G{i})_{i}$ then
$\G{\ind{i}}$ will denote the product $\G{i_1} \! \cdots \G{i_k} \in G$. Such a
product is called a \defn{word} when $G$ is a monoid and a
\defn{monomial} when $G$ is an algebra. Clearly $\G{\epsilon}=1$ is
the neutral element of $G$ and $\G{\ind{i}\cdot\ind{j}} = \G{\ind{i}} \, \G{\ind{j}}$.
Let $\FreeMonoid$ denote the
\defn{free monoid} consisting
of all rank $N$ index words
under the concatenation product.

\subsection{Monoids}

We present here basic facts about monoids.  We refer to \cite{Pin.2010} or
\cite{Steinberg.2016} for more details. Throughout this paper, all monoids are
assumed to be \emph{finite}.
\medskip

Recall that the left (resp.\ right, resp.\ two-sided) ideal of a
monoid $\tMonoid$ generated by an element $x\in\tMonoid$ is the set
$Mx\eqdef\{m x\mid m\in\tMonoid\}$ (resp.\
$xM\eqdef\{x m\mid m\in\tMonoid\}$, resp.\
$MxM\eqdef\{m x n\mid m, n\in\tMonoid\})$.  In 1951,
Green~\cite{Green.1951} introduced several pre-orders on monoids
related to inclusion of ideals.  The standard terminology is to write
$\RR$ for right ideals, $\LL$ for left, $\JJ$ for two-sided. Let
$\KK\in\lbrace\RR,\LL,\JJ\rbrace$ and let $\tMonoid$ be a monoid. For
$x,y\in \tMonoid$, we write $x \leKK y$ when the $\KK$-ideal generated
by $x$ is contained in the $\KK$-ideal generated by $y$.  For example,
when $\KK=\LL$, this means that $x \leLL y$ if
$\tMonoid x\subseteq \tMonoid y$ or equivalently if $x=uy$ for some
$u\in \tMonoid$. There is a fourth relation writen $\HH$ which is
defined by $x \leHH y$ if and only if $x \leRR y$ and $x \leLL
y$. Each of these relations is clearly a pre-order (reflexive and
anti-symmetric) and therefore each naturally gives rise to an equivalence
relation denoted simply by $\KK$. The corresponding equivalence
classes for the relation are called $\KK$-classes. For example, $x \, \LL \, y$ if
and only if $\tMonoid x = \tMonoid y$.

\begin{definition}
  A monoid $\tMonoid$ is called $\KK$-trivial if all $\KK$-classes have 
  cardinality one, that is if the $\KK$ pre-order is anti-symmetric and
  therefore an actual partial order.
  In particular, $\tMonoid$ is $\HH$-trivial if $\tMonoid x = \tMonoid y$ and
  $x \tMonoid = y \tMonoid$ imply $x = y$.
\end{definition}
For the reader who is more familiar with Cayley graphs, this means that the
$\KK$-sided Cayley graph has only trivial (i.e. singletons) strongly connected
components.
\begin{definition}
  A monoid $\tMonoid$ is \emph{aperiodic} if for each $x\in\tMonoid$, there is
  an integer $n > 0$ such that $x^n = x^{n+1}$.
\end{definition}
As a consequence, $x^m = x^n$ for all $m\geq n$. Since we assume finiteness,
the quantifiers in the definition can be interchanged: $\tMonoid$ is aperiodic if
there is a global integer $N > 0$ such that, for all $x\in\tMonoid$,
$x^N = x^{N+1}$. In contrast to groups, a monoid can contain many
idempotents (i.e. an element $e$ such that $e^2=e$). Each 
idempotent is the unit of some groups, which are a sub-semigroups
of the monoid, the smallest of which is the trivial group $\{e\}$. 
Here is a well known characterization of aperiodic Monoids:
\begin{proposition}[\cite{Pin.2010} Proposition 4.22]
  Let $\tMonoid$ be a finite monoid. The following conditions are
  equivalent:
  \vspace{-0.4cm}
  \begin{multicols}{2}
    \begin{enumerate}
    \item $\tMonoid$ is aperiodic,
    \item $\tMonoid$ is $\HH$-trivial,
    \item all the groups in $\tMonoid$ are trivial.
    \end{enumerate}
  \end{multicols}
\end{proposition}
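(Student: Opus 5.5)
We prove the cycle of implications (1)$\Rightarrow$(3)$\Rightarrow$(2)$\Rightarrow$(1). Throughout we use finiteness of $\tMonoid$ to get, for every $x$, an idempotent power: $x^\omega$, namely $x^{k}$ for a suitable $k>0$ with $x^{2k}=x^k$.

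For (1)$\Rightarrow$(3), let $G\subseteq\tMonoid$ be a subgroup with identity $e$, and take $g\in G$. By aperiodicity $g^n=g^{n+1}$ for some $n>0$. Multiplying in the group $G$ by $g^{-n}$ gives $e=g$, so $G$ is trivial.

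For (3)$\Rightarrow$(2), take $x\HH y$; we must show $x=y$. The standard route is Green's lemma: if an $\HH$-class $H$ contains an idempotent $e$, then $H$ is a group with identity $e$. So the plan is as follows.
\begin{enumerate}
\item Write $x=yu$, $y=xv$, $x=wy$, $y=zx$ with $u,v,w,z\in\tMonoid$, so that $x=xvu$.
\item Set $e=(vu)^\omega$. Then $xe=x$, and also $ye=xve=x\,vu\,(vu)^{\omega-1}\ldots$. More cleanly, right multiplication by $v$ followed by $u$ is the identity on $x$.
\item Invoke Green's lemma: right translations $\rho_v\colon L_x\to L_y$ and $\rho_u\colon L_y\to L_x$ are mutually inverse bijections preserving $\LL$-classes. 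These restrict to bijections between $\HH$-classes inside the $\RR$-class.
\item Separately, use the classical fact that $H_x$ is a group if and only if $x^2\HH x$. Reduce to that case by the Clifford--Miller argument: $H_x$ and $H_e$ are in bijection via Green translations when $e$ lies in the same $\RR$-class.
\end{enumerate}
A shortcut that avoids full Green theory exists when $H$ contains no idempotent. In that case, $x\HH y$ gives $x=xvu$. Then $vu\in\tMonoid$, and the element $t=vu$ acts on $x$ from the right as the identity. Hence the map $s\mapsto su$ restricted to $H_y$ is injective, and the group $(t^\omega\tMonoid t^\omega)^{\times}$ contains the $\HH$-class of $t^\omega$, which is a trivial group by (3). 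Since $x=xt^\omega$ and the sequence $t,t^2,\dots$ generates a cyclic subgroup $\{t^{\omega+j}\}$ with identity $t^\omega$, (3) forces $t^{\omega+1}=t^\omega$. Therefore $y=xv=xt^\omega v=x\,t^{\omega}v$, and symmetrically $x=y\,u$. Combining this with the left-sided relations, $y=x\,t^\omega v$ and $t^\omega v\,u\,t^\omega=t^{\omega+1}=t^\omega$.

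The main obstacle is exactly this step: extracting $x=y$ rather than merely $x\RR y$. I would close it by citing the structure theorem from \cite{Pin.2010} (Green's lemma together with the statement that an $\HH$-class containing an idempotent is a maximal subgroup, and that all $\HH$-classes within a regular $\JJ$-class are in bijection with it), handling the case of non-regular $\JJ$-classes via the $\HH$-class bijection directly.

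For (2)$\Rightarrow$(1), note that $x^\omega$ and $x^{\omega+1}$ are $\HH$-equivalent. Indeed, $x^{\omega+1}=x^\omega x=xx^\omega$, and conversely $x^\omega=x^{\omega+1}x^{\omega k-1}$ for suitable exponents: the powers $x^{j}$ with $j\ge\omega$ form a cyclic group with identity $x^\omega$. $\HH$-triviality then gives $x^\omega=x^{\omega+1}$, which is aperiodicity with $n=\omega$.
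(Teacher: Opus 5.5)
The paper gives no proof of this statement (it is quoted from \cite{Pin.2010}), so your argument has to stand on its own. Your implications (1)$\Rightarrow$(3) and (2)$\Rightarrow$(1) are correct. However, (3)$\Rightarrow$(2) is never actually proved, and the routes you sketch for it do not work.

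Working with $t=vu$ is a dead end. The relation $x=xt$ holds by construction, so the information $t^{\omega+1}=t^\omega$ extracted from (3) says nothing about $x$ versus $y$; indeed that paragraph ends with true but irrelevant identities.

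The proposed reduction to a group $\HH$-class also fails. The idempotent $e=(vu)^\omega$ need not lie in the $\RR$-class of $x$ (it can be the identity of $\tMonoid$). When the $\JJ$-class of $x$ is not regular, its $\RR$-class contains no idempotent at all. Green's lemma then only gives bijections between idempotent-free $\HH$-classes, so hypothesis (3) never enters. The non-regular case you defer to ``the $\HH$-class bijection directly'' is exactly the missing argument.

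The missing idea is to pair a left relation with a right one, rather than two right ones. From $x=wy$ and $y=xv$ you get $x=wxv$, hence $x=w^nxv^n$ for every $n\ge 0$. Apply (3) to the cyclic group $\{w^\omega,w^{\omega+1},\dots\}$, exactly as you did for $t$; this gives $w^{\omega+1}=w^{\omega}$. Then, with $n=\omega$,
\[
y=xv=w^{n}xv^{n+1}=w^{n+1}xv^{n+1}=x .
\]
This works uniformly, whether or not the $\JJ$-class is regular, and without any use of Green's lemma.

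Equivalently, you may run the cycle as (2)$\Rightarrow$(3)$\Rightarrow$(1)$\Rightarrow$(2):
\begin{itemize}
\item (2)$\Rightarrow$(3): a group with identity $e$ is contained in the $\HH$-class of $e$.
\item (3)$\Rightarrow$(1): apply (3) to the cyclic group of $x$ to get $x^{\omega+1}=x^\omega$.
\item (1)$\Rightarrow$(2): use the computation above, with $w^{n}=w^{n+1}$ now coming from aperiodicity.
\end{itemize}
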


\section{The Okada monoid as a labelled arc-diagrams monoid}

The main goal of this section is to realize the Okada monoid as a
diagram monoid. After presenting basic definitions we develop a
rewriting technique to construct a bijection between the symmetric
group and the Okada monoid. A combinatorial description of normal
forms of this rewriting system naturally leads us to a new
concept of arc-diagrams which will be the main focus of the paper.

\subsection{Basic definition}

We start by defining the Okada algebras and monoids:
\begin{definition}
  Fix a positive integer $N$. Given a field $\K$, let $X=(x_1,\dots,x_{N-1})$
  and $Y=(y_1,\dots, y_{N-2})$ be two sequences of element of $\K$.
  The \defn{Okada algebra $\Okada(X, Y)$} is the algebra generated by
  $\set{\E{i}}{i=1\dots{N-1}}$ and subject to the relations
\begin{alignat}{3}
  \E{i}^2&=x_i\E{i} &\qquad& 1\leq i \leq N-1, \tag{$I(X,Y)$}\label{relIxy}\\
  \E{i}\E{j}&=\E{j}\E{i} &\qquad&\vert i-j\vert \geq 2,   \tag{C(X,Y)}\label{relCxy}\\
  \E{i+1}\E{i}\E{i+1}&=y_i\E{i+1} &\qquad&  1\leq i \leq N-2, \tag{S(X,Y)}\label{relSxy}
\end{alignat}
\end{definition}
If all the $X$'s and the $Y$'s are equal to $1$, then the Okada algebra
$\Okada(1,\dots,1; 1,\dots,1)$ is actually
the algebra of a monoid:
\begin{definition}\label{def:Okada-monoid}
  The \defn{Okada Monoid $\Okada$} is the monoid generated by
  $\set{\e{i}}{i=1\dots{N-1}}$ with relations
\begin{alignat}{3}
  \e{i}^2&=\e{i} &\qquad& 1\leq i \leq N-1, \tag{I}\label{relI}\\
  \e{i}\e{j}&=\e{j}\e{i} &\qquad&\vert i-j\vert \geq 2,   \tag{C}\label{relC}\\
  \e{i+1}\e{i}\e{i+1}&=\e{i+1} &\qquad&  1\leq i \leq N-2,      \tag{S}\label{relS}
\end{alignat}
\end{definition}

%\begin{definition}
%  The \defn{Jones Monoid $\Jones[N]$} is the quotient of the Okada
%  monoid $\Okada[N]$ by the extra relation
%\begin{alignat}{3}
%  \e{i}\e{i+1}\e{i}&=\e{i} &\qquad&  1\leq i \leq N-2,      \tag{T}\label{relJ}
%\end{alignat}
%\end{definition}

We denote the symmetric group, consisting of all permutations of $\setN$, by
$\SG[N]$. Permutations will be denoted by Greek letters such as $\sigma$ and
$\tau$. The symmetric group is generated by the elementary transpositions $s_i$
exchanging $i$ and $i+1$ and leaving all the other elements fixed.
\pagebreak[3]

We now recall Okada's construction of a basis of the Okada algebra:
\begin{definition}
  The \defn{code} of a permutation $\sigma\in\SG[N]$ is the $N$-tuple
  $\code(\sigma)=(c_1,\dots c_N)$ where
  \begin{equation}\label{eq.def.code}
    c_i \eqdef \card\set{k<i}{\sigma^{-1}(k)>\sigma^{-1}(i)}
    \stackrel{k = \sigma(j)}{=} \card\set{j>\sigma^{-1}(i)}{\sigma(j)< i }
    \,.
  \end{equation}
  Said otherwise, $c_i$ is the number of indices smaller than $i$ that are on the
  right of $i$ in the one line notation of $\sigma$.
  \end{definition}
It is well known that the product
\begin{equation}\label{eq.min.redword}
  \prod_{i=1}^n s_{i-1}s_{i-2}  \cdots s_{i - c_i}
\end{equation}
is the lexicographically minimal reduced factorization of the permutations $\sigma$.
We denote the associated index word by $\lexmin(\sigma)$.
\begin{definition}
  Fix a family of generators $(\G{i})$ of a monoid or an algebra. To a
  permutation $\sigma$ we associate the element $\G{\sigma}$ defined by
  \begin{equation}
    \G{\sigma} \eqdef \G{\lexmin(\sigma)}
    = \prod_{i=1}^n \G{i-1}\G{i-2}  \cdots \G{i - c_i} \,.
  \end{equation}
  where $(c_1,\dots c_N)=\code(\sigma)$, and the product is increasing
  with $i$.
\end{definition}
Example: If $\sigma = (4, 1, 5, 3, 2) \in \SG[5]$ then $\code(4, 1, 5, 3, 2) = (0, 0, 1, 3, 2)$ and
\[
  \G{\sigma}=
  1\cdot1\cdot\G{2}\cdot\G{3}\G{2}\G{1}\cdot\G{4}\G{3}
\]
Okada showed~\cite{Okada1994} that for generic values of $X$ and $Y$, the family
$\{\E{\sigma}  \mid  \sigma\in\SG[n]\}$ is a basis of the Okada
algebra. However, the proof relies on a non-trivial argument using representation
theory which doesn't apply to the degenerate case of the monoid. The next
section is devoted to showing that Okada's basis result is always true using elementary
rewriting techniques.

\subsection{Diamond diagram, heaps of dimers and rewriting}
\label{subsection:rewriting}
The goal of this section is to build a bijection between the symmetric group
and the the Okada monoid and to find a basis for the Okada algebra. A well-known,
standard technique for dealing with generators and relations is
rewriting. However we feel, in our case, that \emph{string} rewriting leads to a
somewhat cumbersome rewriting system and, most of all, to a poor combinatorial
understanding. Inspired by Viennot's theory of heaps of
dimers~\cite{ViennotHeap,ViennotWeb} and Knutson-Miller's pipe-dreams, 
we use instead planar tilings which we call \emph{diamond diagrams}. We start by
defining these tilings along with a proper notion of rewriting.

\begin{definition}
  A \defn{diamond diagram} is an arbitrary wide diagram of trapezoidal shape
  of height $N-1$ starting with a north-east diagonal and ending with a
  south-east diagonal, where each box can be either black or white. The rows
  are indexed starting with $1$ from bottom to top. We identify diagrams differing
  by empty south-east diagonals on the right.

  The \defn{reading} of a diamond diagram $\Delta$ is the index word $\ind{r}(\Delta)$
  obtained by recording the indices of the rows of the black boxes encountered
  when traversing each south-east diagonal from top to bottom, starting from
  the left-most diagonal.
  Its \defn{$\G{}$-evaluation} $\G{\Delta} \in G$ is the product $\G{\ind{r}(\Delta)}$
  associated to the reading $\ind{r}(\Delta)$ when $G$ is a monoid or an algebra.
\end{definition}
Note that the identification of diagrams differing only by
empty diagonals on the right is compatible with the reading. Here is an
example of two equivalent diagrams and their row numbering, together with the
reading order and their reading:
\[\vspace{5mm}
  \begin{tikzpicture}[diamond, diamond/bound={4}{5}]\matrix[diamond/matrix] {
      1 \\
      2 & 1 \\
      \bl3 & 2 & \bl1 \\
      \bl4 & 3 & 2 & 1 \\
      & 4 & 3 & \bl2 & 1 \\
      && \bl4 & \bl3 & 2 &\bl1\\
      &&& 4 & 3 & 2 & 1 \\
      &&&& 4 & 3 & 2 & 1 \\};
  \end{tikzpicture}=
  \begin{tikzpicture}[diamond, diamond/bound={4}{3}]\matrix[diamond/matrix] {
      1 \\
      2 & 1 \\
      \bl3 & 2 & \bl1 \\
      \bl4 & 3 & 2 & 1 \\
      & 4 & 3 & \bl2 & 1 \\
      && \bl4 & \bl3 & 2 & \bl1 \\};
  \end{tikzpicture}\quad
  \begin{tikzpicture}[diamond, diamond/bound={4}{3}]\matrix[diamond/matrix] {
      1 \\
      2 & 3 \\
      4 & 5 & 6 \\
      7 & 8 & 9 & 10 \\
      & 11 & 12 & 13 & 14 \\
      && 15 & 16 & 17 & 18 \\};
  \end{tikzpicture}\quad
  3 1 \cdot 4 \cdot 2 \cdot 431\vspace{5mm}
\]
\begin{definition}
%  Given an index sequence $\ind{i}$, we define its associated \defn{greedy
%    factorization} as the sequence obtained by iteratively factoring out, from left
%  to right, the largest prefix which is a subsequence of
%  $(n-1, n-2, \dots, 1)$.
    Given an index word $\ind{i}$, we define its associated \defn{greedy
    factorization} as $\ind{i} = \ind{i}^{(1)} \cdots \ind{i}^{(m)}$
    where $\ind{i}^{(\ell)}$ is a decreasing index word for each
    $\ell=1, \dots, m$ and $m \geq 1$ is minimal.

  From a greedy factorization we construct a \defn{greedy diamond diagram}
  $\greed{\ind{i}}$ as follows: For each greedy factor $\ind{i}^{(\ell)} = i_1^{(\ell)} \cdots i_{k_\ell}^{(\ell)}$
  we color the cells of a south-east diagonal, putting a black
  box in the row indexed $i^{(\ell)}_d$ for each $d$ from $1$ to $k_\ell$. Then
  $\greed{\ind{i}}$ is obtained by stacking, from left to right, the diagonals
  associated to its greedy factors such that each diagonal appears as far west
  as possible and so that their black boxes fit together in a diamond diagram.
%  From a greedy factorization we obtain a \defn{greedy diamond diagram}
%  $\greed{\ind{i}}$ as follows: for each subsequence $(j_1,\dots, j_k)$ of
%  $(n-1, n-2, \dots, 1)$, we build a south-east diagonal by putting a black
%  box in the row indexed $j_\ell$ for $\ell$ from $1$ to $k$. Then
%  $\greed{\ind{i}}$ is obtained by stacking, from left to right, the diagonals
%  associated to its greedy factors such that each diagonal appears as far west
%  as possible and so that their black boxes fit together in a diamond diagram.
\end{definition}
The last sentence of the previous definition means that the first occupied
diagonal begins at row $i^{(1)}_1$ while the second occupied
diagonal starts at row $i^{(2)}_1$ and so on.
%$\max(i_1+1,i_\ell)$ where $i_\ell$ is the first letter
%of the second factor, and so on.
Here are two greedy factorized
sequences together with their associated greedy diamond diagrams:
\[
  \greed{31\cdot42\cdot431} = \hskip-5mm
  \begin{tikzpicture}[diamond, diamond/bound={4}{3}]\matrix[diamond/matrix] {
      \ \\
      \  &\  \\
      \bl&\  &\bl \\
      \bl &\  &\bl&\  \\
          &\bl&\bl&\  &\bl \\};
  \end{tikzpicture}
  \qquad
  \greed{2\cdot42\cdot431\cdot32\cdot41} =  \hskip-5mm
  \begin{tikzpicture}[diamond, diamond/bound={4}{5}]\matrix[diamond/matrix] {
      \ \\
      \bl&\  \\
      \  &\  &\  \\
      \bl&\  &\bl&\  \\
         &\bl&\bl&\  &\bl \\
         &   &\  &\bl&\bl&\  \\
         &   &   &\bl&\  &\  &\bl\\};
  \end{tikzpicture}
\]
By construction, we have the following lemma
\begin{lemma}
  For any index word $\ind{i}$ the reading of the greedy diamond diagram
  $\greed{\ind{i}}$ is $\ind{i}$.
\end{lemma}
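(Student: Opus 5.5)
The plan is to unwind the definitions and show that reading the diagram diagonal by diagonal gives back the greedy factors in order. Write the greedy factorization as $\ind{i} = \ind{i}^{(1)} \cdots \ind{i}^{(m)}$, with each factor decreasing. By construction, the $\ell$-th occupied south-east diagonal of $\greed{\ind{i}}$ carries black boxes exactly in the rows $i^{(\ell)}_1, \dots, i^{(\ell)}_{k_\ell}$.

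The first step is to check that the placement makes sense. A south-east diagonal goes down one row per cell, and the factor $\ind{i}^{(\ell)}$ is strictly decreasing. So the rows $i^{(\ell)}_1 > \dots > i^{(\ell)}_{k_\ell}$ are distinct cells of a single diagonal. Traversing that diagonal from top to bottom meets them in the order $i^{(\ell)}_1, \dots, i^{(\ell)}_{k_\ell}$, which is exactly $\ind{i}^{(\ell)}$.

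The second step concerns the placement of the diagonals. The construction places the diagonals from left to right in the order $\ell = 1, \dots, m$, each as far west as possible. Each such diagonal starts strictly east of the previous one, so the left-to-right order of occupied diagonals is the order of the factors. Any skipped diagonals contain no black box and contribute the empty word to the reading. The reading is therefore the concatenation $\ind{i}^{(1)} \cdots \ind{i}^{(m)} = \ind{i}$.

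The only delicate point is the first occupied diagonal. It must actually fit inside the trapezoid, starting at row $i^{(1)}_1$ while containing all its black boxes below that row. This holds because every index lies in $\setN[N-1]$, so all black boxes sit in rows $1, \dots, N-1$. It also needs the identification of diagrams that differ only by empty diagonals on the right, which does not change the reading.
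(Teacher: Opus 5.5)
Your proposal is correct and follows the paper, which simply asserts the lemma holds ``by construction''. You spell that construction out: each strictly decreasing greedy factor occupies a single south-east diagonal and is read top to bottom, the diagonals are stacked strictly west to east in factor order, and any skipped diagonals contribute the empty word. The key point, which you rightly make explicit, is this strict left-to-right stacking, not a literal reading of ``the $\ell$-th occupied diagonal starts at row $i^{(\ell)}_1$''. For instance, for $31\cdot 2$ the factor $2$ must sit on a diagonal east of the one carrying $31$.
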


To handle Okada's presentation, we will use a notion of rewriting for diamond
diagrams: A \defn{pattern} is any $2\times2$ square diagram with black and white
boxes. When we say that a pattern appear in a diagram, we allow the top-most or
bottom-most box of the pattern to lie outside the diagram by extending the
diagram by two empty rows at the top and bottom.
A \defn{rewriting} entails replacing one pattern by another. When a
pattern extends outside the diagram, the extra box must be white in both
patterns. Here are two examples of rewritings of the same diagram:
\[
  \begin{tikzpicture}[diamond, diamond/bound={4}{3}]\matrix[diamond/matrix] {
      \ \\
      \ & \  \\
      \bl & \ & \bl\\
      \bl & \ & \ & \ \\
      & \ & |[fill=green!20]| & |[fill=green]| & \ \\
      && |[fill=green]| & |[fill=green]| & \ &\bl\\
    };
  \end{tikzpicture}
  \qquad
  \begin{tikzpicture}[diamond, diamond/bound={3}{0}]\matrix[diamond/matrix] {
      \nn \\
      & |[fill=green!20]| & |[fill=green]| \\
      & |[fill=green]| &|[fill=green]|  &  & \nn & \nn \\
    };
  \end{tikzpicture}
  \longmapsto
  \begin{tikzpicture}[diamond, diamond/bound={3}{0}]\matrix[diamond/matrix] {
      \nn \\
      & \bl & \ \\
      & \bl & \  & \nn & \nn \\
    };
  \end{tikzpicture}
  \qquad
  \begin{tikzpicture}[diamond, diamond/bound={4}{3}]\matrix[diamond/matrix] {
      \ \\
      \ & \  \\
      \bl & \ & \bl\\
      \bl & \ & \ & \ \\
      & \ & \bl & \ & \ \\
      && \bl & \ & \ &\bl\\
    };
  \end{tikzpicture}
\]
\[
  \begin{tikzpicture}[diamond, diamond/bound={5}{2}]\matrix[diamond/matrix] {
      \nn \\
      \ \\
      \ & |[fill=green!20]| & |[fill=green!20]| \\
      \bl & |[fill=green!20]| &|[fill=green]|\\
      \bl & \ & \ & \ \\
      & \ & \ & \bl & \ \\
      && \bl & \bl & \ &\bl &\nn \\
    };
  \end{tikzpicture}
  \qquad
  \begin{tikzpicture}[diamond, diamond/bound={3}{0}]\matrix[diamond/matrix] {
      \nn \\
      & |[fill=green!20]| & |[fill=green!20]| \\
      & |[fill=green!20]| & |[fill=green]| &  \nn & \nn \\
\\
    };
  \end{tikzpicture}
  \longmapsto
  \begin{tikzpicture}[diamond, diamond/bound={3}{0}]\matrix[diamond/matrix] {
      \nn \\
      & \bl & \ \\
      & \ & \ &  \nn & \nn  \\
    };
  \end{tikzpicture}
  \qquad
  \begin{tikzpicture}[diamond, diamond/bound={5}{2}]\matrix[diamond/matrix] {
      \nn \\
      \ \\
      \ & \bl  \\
      \bl & \ & \ \\
      \bl & \ & \ & \ \\
      & \ & \bl & \ & \ \\
      && \bl & \ & \ &\bl & \nn\\
    };
  \end{tikzpicture}
\]

A \defn{diagram rewriting system} is a set of pairs $(p_1, p_2)$ of
$2\times2$ patterns which allows one to replace an occurrence of the pattern
$p_1$ with $p_2$. Given a rewriting system $\Sys$, and two diagrams
$\Delta_1$ and $\Delta_2$ we write $\Delta_1 \mapsto_{\Sys} \Delta_2$
if there is a rewriting from $\Delta_1$ to $\Delta_2$ where the pair
of patterns used for the rewriting belongs to $\Sys$. We may omit the
system $\Sys$ in the subscript if it is clear from the context. Similarly, 
$\Delta_1\rewto_\Sys \Delta_2$ means there is a sequence (possibly empty)
of rewritings from $\Delta_1$ to $\Delta_2$.  Otherwise said,
$\rewto_\Sys$ is the reflexive and transitive closure of the relation
$\mapsto_{\Sys}$. \medskip

Let's recall some general, well-know terminology and facts about
rewriting. We refer the reader to the standard reference~\cite{RewAllThat} for
more details. Given a rewriting system $\Sys$, a \defn{normal form} for $\Sys$
is a diagram which cannot be rewritten using the rules of $\Sys$. A rewriting
system $\Sys$ is \defn{locally confluent}, if for all
$\Delta, \Delta_1, \Delta_2$ such that $\Delta\mapsto_\Sys \Delta_1$ and
$\Delta\mapsto_\Sys \Delta_2$ there exists a $\Delta_0$ such that
$\Delta_1\rewto_\Sys \Delta_0$ and $\Delta_2\rewto_\Sys \Delta_0$. In this
case we say that the pair $(\Delta_1, \Delta_2)$ is \defn{joinable}. 
In order to show local confluence it is sufficient to check the joinability of
all \defn{critical pairs}, i.e. pairs $(\Delta_1, \Delta_2)$ where the
rewritten pattern in $\Delta\mapsto_\Sys \Delta_1$ and
$\Delta\mapsto_\Sys \Delta_2$ overlap:
\begin{lemma}
  A rewriting system is locally confluent if and only if all critical pairs
  are joinable.
\end{lemma}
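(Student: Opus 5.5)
The direction ``locally confluent $\Rightarrow$ critical pairs joinable'' is immediate. A critical pair $(\Delta_1,\Delta_2)$ comes from a divergence $\Delta\mapsto_\Sys\Delta_1$, $\Delta\mapsto_\Sys\Delta_2$, so local confluence applied to this particular $\Delta$ gives a common descendant $\Delta_0$.

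For the converse, I will take an arbitrary divergence $\Delta\mapsto_\Sys\Delta_1$ and $\Delta\mapsto_\Sys\Delta_2$. The first uses a rule $(p_1,p_2)\in\Sys$ on a $2\times 2$ window $W$; the second uses $(q_1,q_2)\in\Sys$ on a window $W'$. I split into two cases according to how $W$ and $W'$ sit relative to each other.

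\emph{Disjoint windows.} Suppose $W\cap W'=\varnothing$. Each rewriting only changes the boxes inside its own window. So the occurrence of $q_1$ in $W'$ is still present in $\Delta_1$, and the occurrence of $p_1$ in $W$ is still present in $\Delta_2$. Applying the missing rule on each side gives the same diagram $\Delta_0$, namely $\Delta$ with $W$ replaced by $p_2$ and $W'$ replaced by $q_2$. Hence $\Delta_1\mapsto_\Sys\Delta_0$ and $\Delta_2\mapsto_\Sys\Delta_0$.

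\emph{Overlapping windows.} Suppose $W\cap W'\neq\varnothing$. If $W=W'$ and the two rules coincide, then $\Delta_1=\Delta_2$ and there is nothing to show. Otherwise, let $\Gamma$ be the restriction of $\Delta$ to the bounded region $W\cup W'$, completed by white boxes so that it is a diamond diagram. The two rewritings restrict to $\Gamma\mapsto_\Sys\Gamma_1$ and $\Gamma\mapsto_\Sys\Gamma_2$, and $(\Gamma_1,\Gamma_2)$ is precisely a critical pair. By hypothesis there is $\Gamma_0$ with $\Gamma_1\rewto_\Sys\Gamma_0$ and $\Gamma_2\rewto_\Sys\Gamma_0$. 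The argument is then finished by a \emph{context lemma}: whenever $\Gamma\mapsto_\Sys\Gamma'$ and $\Gamma$ sits as a subregion of a larger diagram $\Delta$, the same rule applies at the same place in $\Delta$. This lemma turns the two rewriting sequences for $\Gamma_1,\Gamma_2$ into sequences for $\Delta_1,\Delta_2$ ending at the common diagram $\Delta_0$ obtained by inserting $\Gamma_0$ into $\Delta$.

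The context lemma is where I expect the only real difficulty, because of the boundary convention that a pattern may stick out of the diagram by one row at the top or bottom. To handle it, I will define critical pairs using exactly this convention: the region $W\cup W'$ is placed inside the diagram padded with two empty rows, and the boxes outside the true diagram are forced to be white. With this definition, an occurrence of a pattern in a subregion is an occurrence in the ambient diagram. Rows lying outside the true diagram are white in both, and rewritten boxes stay in the same rows, so the left and right sides of a rule agree on which boxes must be white. I also need to check that the identification of diagrams up to empty south-east diagonals on the right is harmless. Adding or removing such diagonals does not affect any $2\times 2$ occurrence, so the rewriting relation is well defined on equivalence classes.
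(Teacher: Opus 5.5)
Your first direction and the disjoint-window case are fine. The gap is the context lemma you invoke for overlapping windows, which is false in this setting.

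\paragraph{Why the context lemma fails.} Your $\Gamma$ agrees with $\Delta$ only on $W\cup W'$ and is white everywhere else. Nothing forces the joining sequences $\Gamma_1\rewto_\Sys\Gamma_0$ and $\Gamma_2\rewto_\Sys\Gamma_0$ to stay inside $W\cup W'$. A rule prescribes the colour of all four boxes of its window, white ones included. So a step of $\Gamma$ whose window meets one of the added white boxes need not be a step of $\Delta$, where that box may be black. Moreover, once $\Gamma_0$ differs from $\Gamma$ outside $W\cup W'$, ``inserting $\Gamma_0$ into $\Delta$'' is not even defined. The top/bottom padding convention you single out is harmless; the problem is the white completion inside the diagram.

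This already happens for $\OSys$. Consider the last three horizontal overlaps in the proof of local confluence. After rewriting the left-hand pattern, the join starts at the window whose eastern box is the black southern box of the right-hand pattern. The southern box of that window lies outside $W\cup W'$. In your $\Gamma$ it is white, so rule $C$ applies at once; in $\Delta$ it may be black. That is exactly why the paper walks south-west through boxes of unknown colour (the gray ones) instead of lifting a fixed local join.

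\paragraph{The padded version is false in general.} With your padded notion of critical pair, the ``if'' direction fails for general diagram rewriting systems. Take two rules:
\begin{itemize}
\item $A$ erases a black eastern box whose western, southern and northern boxes are white;
\item $B$ erases a black southern box whose western box is black and whose northern and eastern boxes are white.
\end{itemize}
One checks that every padded overlap of $A$ and $B$ is joinable. Now take rank $N\geq 4$ and the diagram whose black boxes are the bottom boxes of the first three south-east diagonals, together with the boxes in rows $3$ and $2$ of the fourth diagonal. Here $A$ erases the row-$3$ box of the fourth diagonal, and $B$ erases its row-$2$ box. The first result is already a normal form. The second rewrites only, by $A$, to the diagram with the three bottom boxes. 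So this divergence is not joinable. The padded join would erase the remaining row-$2$ box through the window whose southern box is the bottom box of the third diagonal, and that box is black in the actual diagram.

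\paragraph{The fix.} Define critical pairs in context, as the paper does: pairs $(\Delta_1,\Delta_2)$ arising from overlapping rewritings of an \emph{arbitrary} diagram $\Delta$. Then the overlapping case is exactly the hypothesis, and your disjoint-window argument completes the proof; no context lemma is needed. All the real work moves into checking the critical pairs in every context, which is what the paper does for $\OSys$.
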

If a rewriting system \defn{terminates} (there is no infinite sequence of rewriting)
and is locally confluent then one can associate to any $\Delta$ a normal form
$\norf(\Delta)$ obtained by rewriting $\Delta$ until no possible
rewriting can be implemented. Confluence ensures that the result does not depend on the choice
of rewriting steps. The main result is the following fundamental
theorem (see \textit{e.g.} \cite[Theorem 2.1.9]{RewAllThat}):
\begin{proposition}
  Let $S$ be a terminating and locally confluent rewriting system. Then two
  objects are equivalent under the reflexive, symmetric and transitive closure
  of $\Sys$ if and only if their associated normal forms are equal.
\end{proposition}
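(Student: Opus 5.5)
The plan is the classical argument through Newman's lemma: first upgrade local confluence to full confluence using termination, then use confluence to show normal forms are unique, and finally compare the equivalence relation generated by $\Sys$ with equality of normal forms. We write $\Delta \equiv_\Sys \Delta'$ for the reflexive, symmetric and transitive closure of $\mapsto_\Sys$.

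\textbf{Step 1 (Newman's lemma).} We show that every $\Delta$ is \emph{confluent}: whenever $\Delta \rewto_\Sys \Delta_1$ and $\Delta \rewto_\Sys \Delta_2$, there is a $\Delta_0$ with $\Delta_1 \rewto_\Sys \Delta_0$ and $\Delta_2 \rewto_\Sys \Delta_0$. Since $\Sys$ terminates, the relation ``$\Delta'$ is reachable from $\Delta$ by at least one step'' is well-founded. We may therefore argue by well-founded induction on $\Delta$, assuming the claim for every proper rewrite descendant of $\Delta$.

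If either rewriting sequence is empty the claim is trivial. Otherwise write $\Delta \mapsto \Delta_1' \rewto \Delta_1$ and $\Delta \mapsto \Delta_2' \rewto \Delta_2$.
\begin{enumerate}
\item Local confluence gives a $\Delta''$ with $\Delta_1' \rewto \Delta''$ and $\Delta_2' \rewto \Delta''$.
\item The induction hypothesis applied at $\Delta_1'$ joins $\Delta_1$ and $\Delta''$ at some $\Delta'''$.
\item The induction hypothesis applied at $\Delta_2'$ joins $\Delta_2$ and $\Delta'''$ at some $\Delta_0$.
\end{enumerate}
Then $\Delta_1 \rewto \Delta''' \rewto \Delta_0$ and $\Delta_2 \rewto \Delta_0$, as required. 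This step is the main point of the proof, and the only delicate issue is to set up the induction over the well-founded order correctly.

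\textbf{Step 2 (uniqueness of normal forms).} By termination every $\Delta$ rewrites to at least one normal form. If $\Delta$ rewrote to two normal forms, confluence would join them at a common descendant. A normal form has no proper descendants, so both must equal that descendant and hence each other. Thus $\norf(\Delta)$ is well defined, and $\norf(\Delta) = \norf(\Delta')$ whenever $\Delta \rewto \Delta'$.

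\textbf{Step 3 (the equivalence).} Suppose $\norf(\Delta_1) = \norf(\Delta_2)$. Then $\Delta_1 \rewto \norf(\Delta_1)$ and $\Delta_2 \rewto \norf(\Delta_2)$, so $\Delta_1 \equiv_\Sys \Delta_2$ through their common normal form.

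Conversely, suppose $\Delta_1 \equiv_\Sys \Delta_2$. Then the two are linked by a finite zigzag of single rewriting steps, each taken forwards or backwards. For every step $\Delta \mapsto \Delta'$ in the zigzag, Step 2 gives $\norf(\Delta) = \norf(\Delta')$, whichever direction the step is traversed. Hence $\norf$ is constant along the zigzag, and induction on its length gives $\norf(\Delta_1) = \norf(\Delta_2)$.
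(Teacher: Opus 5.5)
Your proposal is correct. The paper gives no proof of its own and simply cites \cite[Theorem 2.1.9]{RewAllThat}. Your argument (Newman's lemma by well-founded induction, then uniqueness of normal forms, then constancy of $\norf$ along zigzags of single steps) is the standard textbook proof that this citation relies on.
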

\bigskip

We now describe the two rewriting systems which will allow us to deal with
Okada's relations:
\begin{definition}
  We say that two words in the free monoid $\FreeMonoid$
  are in the same
  \defn{commutation class} if they are related by a sequence
  of consecutive, pairwise exchanges of the form:
  \begin{equation*}
    i j \Mapsto j i\quad\text{for $|i-j|\geq2$}
  \end{equation*}
We refer to this elementary transformation as Relation~(\ref{relC}).
\end{definition}

Consider the rewriting system $\Comm$ containing the unique rule
$C\eqdef
\bigg(\begin{tikzpicture}[diamond, diamond/bound={3}{0}]\matrix[diamond/matrix] {
    \nn \\
    & \ & \ \\
    & \ & \bl & \nn& \nn\\
  };
\end{tikzpicture},
\begin{tikzpicture}[diamond, diamond/bound={3}{0}]\matrix[diamond/matrix] {
    \nn \\
    & \bl & \ \\
    & \ & \ & \nn& \nn\\
  };
\end{tikzpicture}\bigg)$. Clearly, for any rewriting $\Delta_1\to \Delta_2$
using this rule, the $\f{}$-readings of $\Delta_1$ and $\Delta_2$ are in the
same commutation class. It is also obvious that any diagram $\Delta$ whose
reading is $\ind{v}$ can be rewritten to $\greed{\ind{v}}$ by $\Comm$.
Finally, it is easy to see (this is basically Viennot's theory of heaps of
dimers) that the rewriting system $\Comm$ is locally confluent and
terminating, and so one has the following lemma:
\begin{lemma}
  Two words $\ind{v}$ and $\ind{w}$ are in the same commutation class if and
  only if any two diagrams $\Delta(\ind{v})$ and $\Delta(\ind{w})$ with
  respective readings $\ind{v}$ and $\ind{w}$ rewrite to the same
  normal form under the rewriting system $\Comm$.
\end{lemma}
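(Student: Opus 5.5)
The plan is to apply the fundamental theorem on terminating, locally confluent rewriting systems (the last Proposition recalled above) to $\Comm$. First I will check that equivalence under $\Comm$ matches equivalence under Relation~(\ref{relC}) on readings. A single application of the rule $C$ swaps a black box in row $i$ on one south-east diagonal with the situation where that box sits one diagonal earlier. Consecutive letters of the reading are thereby exchanged only across boxes of rows differing by at least $2$: the rule requires the two adjacent cells of the $2\times2$ pattern, in rows $i\pm1$, to be white. So each rewriting step changes the reading by a sequence of commutations $ij\Mapsto ji$ with $|i-j|\ge2$, and readings of $\Comm$-equivalent diagrams lie in the same commutation class. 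Conversely, one elementary commutation $ij\Mapsto ji$ in $\ind{v}$ is realised on the greedy diagrams. Both $\Delta(\ind{v})$ and $\Delta(\ind{w})$ rewrite to $\greed{\ind{v}}$ and $\greed{\ind{w}}$ by the remark preceding the lemma. It then suffices to show that $\greed{ij\cdots}$ and $\greed{ji\cdots}$ are connected by $C$-moves and their inverses, which is a local check on the two diagonals containing the swapped letters.

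Next I establish termination. Assign to each diagram the sum, over black boxes, of their diagonal index (counted from the left). Each application of $C$ moves one black box one diagonal to the left, that is, west. The weight therefore strictly decreases, and it is bounded below by $0$, so $\Comm$ terminates.

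For local confluence I invoke the critical-pair lemma. Two occurrences of the pattern of $C$ overlap only in a few configurations: they share the bottom cell, the top cell, or one is shifted vertically by one row. In each case I complete the square by applying the other move afterwards, or by moving both boxes west in a suitable order. This is exactly Viennot's heap-of-dimers picture, where black boxes are dimers that fall west until blocked by a dimer in an adjacent row. The lemma then follows from the Proposition: two diagrams are $\Comm$-equivalent iff their normal forms agree. Since $\Comm$-equivalence coincides with commutation equivalence of readings by the first step, the same holds for $\Delta(\ind{v})$ and $\Delta(\ind{w})$.

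I expect the main obstacle to be the critical-pair analysis. The boundary convention, in which patterns may extend by white boxes outside the diagram, and the identification of diagrams up to empty right diagonals must be handled carefully to be sure no overlap is missed. My first attempt will avoid the case analysis by identifying the normal form directly: I will show that a diagram is $C$-irreducible iff every black box is blocked, meaning it lies on the first diagonal or has a black box in an adjacent row on the previous diagonal. Such a diagram is exactly the heap of its reading, and heaps are known to classify commutation classes, which gives uniqueness of normal forms.
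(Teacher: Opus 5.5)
Your proposal is correct and is essentially the paper's argument: $C$-moves only carry a letter $i$ across letters at distance at least $2$, and every diagram with reading $\ind{v}$ rewrites to $\greed{\ind{v}}$. Moreover $\Comm$ terminates and is locally confluent, since two occurrences of the left-hand pattern of $C$ can only overlap in a shared white top/bottom cell, so the two moves commute; the theorem on terminating, locally confluent systems then concludes.

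Two details need care. First, your shortcut misstates irreducibility. A black box off the western (north-east) border is $C$-irreducible exactly when the west, south or north cell of the $2\times2$ pattern having it as east box is black; so a box directly west in the same row also blocks, and the north blocker lies on the same south-east diagonal. Second, $\greed{\ind{v}}$ and $\greed{\ind{w}}$ need not differ only on two diagonals, because later greedy diagonals can shift. It is cleaner to exhibit, as at the end of the proof of \cref{lemma:diamond_rewriting}, two diagrams with readings $\ind{v}$ and $\ind{w}$ related by a single $C$-move.
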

We consider now the following two new rules
\[
I \eqdef \bigg(
\begin{tikzpicture}[diamond, diamond/bound={3}{0}]\matrix[diamond/matrix] {
    \nn \\
    & \bl & \ \\
    & \ & \bl & \nn& \nn\\
  };
\end{tikzpicture},
\begin{tikzpicture}[diamond, diamond/bound={3}{0}]\matrix[diamond/matrix] {
    \nn \\
    & \bl & \ \\
    & \ & \ & \nn& \nn\\
  };
\end{tikzpicture}
\bigg),\quad
S \eqdef \bigg(
\begin{tikzpicture}[diamond, diamond/bound={3}{0}]\matrix[diamond/matrix] {
    \nn \\
    & \bl & \bl \\
    & \ & \bl & \nn& \nn\\
  };
\end{tikzpicture},
\begin{tikzpicture}[diamond, diamond/bound={3}{0}]\matrix[diamond/matrix] {
    \nn \\
    & \bl & \ \\
    & \ & \ & \nn& \nn\\
  };
\end{tikzpicture}
\bigg)
% \qandq     %% Unused TL rewriting.
% T \eqdef \bigg(
% \begin{tikzpicture}[diamond, diamond/bound={3}{0}]\matrix[diamond/matrix] {
%     \nn \\
%     & \bl & \ \\
%     & \bl & \bl & \nn& \nn\\
%   };
% \end{tikzpicture},
% \begin{tikzpicture}[diamond, diamond/bound={3}{0}]\matrix[diamond/matrix] {
%     \nn \\
%     & \bl & \ \\
%     & \ & \ & \nn& \nn\\
%   };
% \end{tikzpicture}
% \bigg)
\,,
\]
and define the \defn{Okada rewriting system} as $\OSys\eqdef\{C, I, S\}$.
% \begin{equation}\label{eq.rewrite_rule}
% \OSys\eqdef\{C, I, S\}
% \qandq
% \TLSys\eqdef\{C, I, S, T\}\,.
% \end{equation}
\medskip

We shall explain how the $\OSys$ rewriting system for diamond diagrams is related with
Okada's monoid and algebra relations.
We define first the \defn{$\Okada$-evaluation} of a diamond diagram $\Delta$
with reading $\ind{v}$
to be the product $\e{\Delta} = \e{\ind{v}}$ understood as an
element of the monoid $\Okada$.

\begin{lemma}\label{lemma:diamond_rewriting}
  Any rewriting of a diamond diagram doesn't change its $\Okada$-evaluation.
  Moreover if two words $\ind{v}$ and $\ind{w}$ in $\FreeMonoid$
  differ by
  an Okada monoid relation, then there exists two diagrams $\Delta(\ind{v})$ and
  $\Delta(\ind{w})$ with respective readings $\ind{v}$ and
  $\ind{w}$ together with a rewriting between $\Delta(\ind{v})$ and $\Delta(\ind{w})$.
\end{lemma}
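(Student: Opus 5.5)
The plan is to prove both assertions by looking locally at one occurrence of a $2\times2$ pattern. I take it to be a $2\times 2$ block of the rotated grid: it meets two consecutive south-east diagonals, say $D_k$ and $D_{k+1}$, and two consecutive rows. A rewriting only changes cells inside this block, so it only changes the portions of the readings of $D_k$ and $D_{k+1}$ coming from those cells. I will write the reading of $\Delta$ as $\ind{u}\cdot\ind{a}\cdot\ind{p}\cdot\ind{b}\cdot\ind{w}$, where:
\begin{itemize}
\item $\ind{u}$ reads the diagonals before $D_k$, and $\ind{w}$ those after $D_{k+1}$;
\item $\ind{a}$ reads the cells of $D_k$ above the block, and $\ind{b}$ the cells of $D_{k+1}$ below it;
\item $\ind{p}$ mixes the black block cells of $D_k$, then the cells of $D_k$ below the block, then the cells of $D_{k+1}$ above the block, then the black block cells of $D_{k+1}$.
\end{itemize}

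\textbf{First assertion.} The key geometric observation concerns the cells that $\ind{p}$ reads between the block cells. On $D_k$ they sit strictly below the block, and on $D_{k+1}$ strictly above it. Because diagonals are shifted by one row, their row indices differ by at least $2$ from the row indices of the block cells on the \emph{other} diagonal. Hence, using only Relation~(\ref{relC}), I can commute these intermediate letters out of the way. This brings the black block cells into adjacent positions, so that $\ind{p}$ equals, in $\Okada$, $\ind{x}\cdot\ind{q}$ for a block word $\ind{x}$ and an unchanged word $\ind{q}$.

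I then check the three rules one at a time:
\begin{itemize}
\item for $C$, $\ind{x}$ is a single letter that merely changes diagonal, which is exactly the commutation just performed;
\item for $I$, $\ind{x}=ii$ is replaced by $i$, which is Relation~(\ref{relI});
\item for $S$, the three black cells read (after the commutations) as $\e{i+1}\e{i}\e{i+1}$ and are replaced by the single cell $\e{i+1}$, which is Relation~(\ref{relS}).
\end{itemize}
The boundary convention (pattern cells outside the diagram are white in both patterns) guarantees that nothing outside the diagram is created. Therefore $\e{\Delta}$ is unchanged.

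\textbf{Second assertion.} Write $\ind{v}=\ind{u}\cdot\ind{x}\cdot\ind{w}$ and $\ind{v}'=\ind{u}\cdot\ind{x}'\cdot\ind{w}$ with $\ind{x}=\ind{x}'$ a defining relation. I will build $\Delta(\ind{v})$ explicitly: each letter of $\ind{u}$ goes alone on its own diagonal, then $\ind{x}$ is placed as a pattern on two fresh consecutive diagonals, then each letter of $\ind{w}$ again goes alone on its own diagonal. This is possible because diagrams may be taken arbitrarily wide, and a diagonal carrying a single black box reads that one letter. For each relation I place $\ind{x}$ as follows:
\begin{itemize}
\item For $ii\to i$, put the two $i$'s at row $i$ on consecutive diagonals, which is the left pattern of $I$.
\item For $(i+1)\,i\,(i+1)\to i+1$, place the three letters so that they form the left pattern of $S$; the cells adjacent to the block are empty, so the reading is exactly $\ind{x}$.
\item For $ij\to ji$ with $|i-j|\ge 2$, put both letters alone on separate diagonals and push one of them diagonal by diagonal with the rule $C$ until it lies on the other side. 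Here "a rewriting" is understood as a rewriting sequence $\rewto$, which the subsequent use of this lemma only needs.
\end{itemize}
The diagram obtained by applying the rule has reading $\ind{v}'$.

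The main obstacle is the bookkeeping in the first assertion: pinning down, from the rotated-matrix picture, exactly which rows the two cells of each diagonal in the block occupy. This is needed to justify that all letters read between the block cells are at distance at least $2$ from them, and that the reading order of the three black cells in $S$ is $\e{i+1}\e{i}\e{i+1}$ rather than $\e{i}\e{i+1}\e{i}$. I would settle this first by working the two explicit rewriting examples displayed before the definition of $\OSys$.
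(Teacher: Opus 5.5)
Your proposal is correct and is essentially the paper's proof: you use the same splitting of the reading into $\ind{l}$, the black block cells of $D_k$, a middle word $\ind{m}$ (rows $\le i-2$ on $D_k$ and rows $\geq i+2$ on $D_{k+1}$), the black block cells of $D_{k+1}$, and $\ind{r}$; you then commute the east letter $i$ past $\ind{m}$ and apply (I) or (S), and for the converse you use the same padding construction with one letter per diagonal. Two small corrections. First, the block spans three rows, not two (north in row $i+1$, west and east in row $i$, south in row $i-1$), and this is exactly what gives your distance-$\geq 2$ observation. Second, for the commutation relation you need not weaken the claim to a rewriting sequence: put $j$ inside $\ind{m}$ (on the west diagonal in row $j\le i-2$, or on the east diagonal in row $j\geq i+2$) and $i$ in the east cell, and a single application of $C$ realizes $ji\mapsto ij$.
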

\begin{proof}
  Consider a diamond diagram rewriting
  $\Delta(\ind{v})\mapsto \Delta(\ind{w})$ with respective readings $\ind{v}$
  and $\ind{w}$. Let $i$ be the height of the middle row
  of the $2\times2$ pattern involved in the rewriting.
  Furthermore let $a < a+1 < b <b+1$ be the reading orders
  of the respective west, south, north, and east boxes of this
   $2\times2$ square. Accordingly, the readings of the diamond diagrams
   $\Delta(\ind{v})$ and $\Delta(\ind{w})$ share three common subwords:

  \begin{itemize}
  \item the word $ \ind{l}$ formed by reading the boxes with reading order positions $p<a$
  \item the word $\ind{r}$ formed by reading the boxes with reading order positions $p > b+1$
  \item the word $\ind{m}$ formed by reading the boxes with reading order positions $a+1 < p <b$
  \end{itemize}
  Note that $\ind{m}$ is necessarily a subword of $(i-2) \! \cdot  \! (i-3)  \cdots 1 \cdot  (N-1) \cdots (i+2)$.
  Depending of the rewriting, the word $\ind{v}$ has one of the following three forms
  \begin{equation*}
    \ind{v} = \ind{l}  \! \cdot \!  i  \!  \cdot  \! \ind{m} \! \cdot \!  i \! \cdot \! \ind{r}, \quad
    \ind{l}  \!  \cdot \! \ind{m} \! \cdot \! i  \! \cdot \!  \ind{r},
    \quad  \ind{l} \! \cdot \! i \! \cdot \! (i-1) \! \cdot \! \ind{m} \! \cdot \!  i \! \cdot \!  \ind{r}
    \end{equation*}
  while it must be the case that $\ind{w} =  \ind{l} \! \cdot \! i \! \cdot \! \ind{m} \! \cdot \! \ind{r}$.
  All indices occurring in $\ind{m}$ differ from $i$ by at least $2$ so
  we may interchange $i$ and the word $\ind{m}$ by repeated
  application of Relation ~\ref{relC} and consequently $\ind{v}$ is
  equivalent to one of the following three forms
  \begin{equation*}
    \ind{l}  \! \cdot \!  i  \!  \cdot  \! i \! \cdot \!  \ind{m} \! \cdot \! \ind{r}, \quad
    \ind{l}  \!  \cdot \! i \! \cdot \! \ind{m}  \! \cdot \!  \ind{r},
    \quad  \ind{l} \! \cdot \! i \! \cdot \! (i-1) \! \cdot \! i \! \cdot \!  \ind{m} \! \cdot \!  \ind{r}
    \end{equation*}
%  \begin{equation*}
%    \f{\ind{l}} \f{i}\f{i} \f{\ind{m}} \f{\ind{r}}
%    \text{ or } \f{\ind{l}} \f{i} \f{\ind{m}} \f{\ind{r}}
%    \text{ or } \f{\ind{l}} \f{i} \f{i-1} \f{i} \f{\ind{m}} \f{\ind{r}}
%  \end{equation*}
  The first is equivalent to $\ind{w}$ using Relation~\ref{relI}, the
  second is already $\ind{w}$ and the third is equivalent to $\ind{w}$
  using Relation~\ref{relS}. In all cases $\ind{v}\equiv \ind{w}$ modulo
  Okada's relations.

  Finally if two words $\ind{v}$ and $\ind{w}$ differ by an Okada
  relation one can build two associated diamond diagrams by writing the two patterns of the
  relation and completing the rest of the diagram by coloring
  a single box in each diagonal for each the remaining indices (i.e. those not participating
   in the $2\times2$ pattern).
\end{proof}
\pagebreak[2]

We now explain how to handle the $X$ and $Y$-parameters
for the Okada algebra when rewriting:
\begin{definition}
  The \defn{weight} of a rewriting $\Delta_1\mapsto \Delta_2$ is the element of $\K$
  defined by
  \begin{equation}
    \weight(\Delta_1\mapsto \Delta_2)\eqdef
    \begin{cases}
      1 & \text{if $\Delta_2$ is obtained by rule $C$ applied to $\Delta_1$} \\
      x_i & \text{if $\Delta_2$ is obtained by rule $I$ applied to $\Delta_1$ in row $i$} \\
      y_{i-1} & \text{if $\Delta_2$ is obtained by rule $S$ applied to $\Delta_1$ in row $i$}
    \end{cases}
  \end{equation}
  The weight of a rewriting sequence is the product of the weights of its
  rewriting steps.
\end{definition}
Like the monoid case, weights of a rewriting sequences relate to the Okada algebra as
follows:
\begin{lemma}
  For any rewriting sequence $\Delta_1\rewto \Delta_2$
  of diamond diagrams
  the equality
  \begin{equation}
    \E{\Delta_1} = \weight(\Delta_1\rewto \Delta_2) \E{\Delta_2}
  \end{equation}
  holds in the Okada algebra
  $\Okada(X, Y)$.
\end{lemma}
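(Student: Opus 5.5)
The plan is to reduce to a single rewriting step by induction on the length of the rewriting sequence. The empty sequence has weight $1$ and $\Delta_1=\Delta_2$, so the statement is trivial. If $\Delta_1\mapsto\Delta'\rewto\Delta_2$, then $\weight(\Delta_1\rewto\Delta_2)=\weight(\Delta_1\mapsto\Delta')\,\weight(\Delta'\rewto\Delta_2)$, because weights are multiplicative along sequences. Once the one-step identity $\E{\Delta_1}=\weight(\Delta_1\mapsto\Delta')\E{\Delta'}$ is known, the induction hypothesis applied to $\Delta'$ gives the result, since $\K$ is commutative.

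For a single step I repeat the proof of \cref{lemma:diamond_rewriting} verbatim, keeping track of scalars. Let $i$ be the middle row of the $2\times2$ pattern. The readings decompose as before: $\ind{w}=\ind{l}\cdot i\cdot\ind{m}\cdot\ind{r}$, and $\ind{v}$ is one of
\[
\ind{l}\cdot i\cdot\ind{m}\cdot i\cdot\ind{r},\qquad
\ind{l}\cdot\ind{m}\cdot i\cdot\ind{r},\qquad
\ind{l}\cdot i\cdot(i-1)\cdot\ind{m}\cdot i\cdot\ind{r},
\]
according to whether the rule used is $I$, $C$ or $S$. Here $\ind{m}$ is a subword of $(i-2)\cdots1\cdot(N-1)\cdots(i+2)$, so every letter of $\ind{m}$ differs from $i$ by at least $2$. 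Relation~\eqref{relCxy} has coefficient $1$, so in $\Okada(X,Y)$ we get $\E{\ind{m}}\E{i}=\E{i}\E{\ind{m}}$ with no scalar.

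The three cases then read as follows.
\begin{itemize}
\item Rule $C$: the two monomials are equal, which matches weight $1$.
\item Rule $I$: $\E{\ind{v}}=\E{\ind{l}}\E{i}^2\E{\ind{m}}\E{\ind{r}}=x_i\E{\ind{w}}$ by \eqref{relIxy}.
\item Rule $S$: $\E{\ind{v}}=\E{\ind{l}}\E{i}\E{i-1}\E{i}\E{\ind{m}}\E{\ind{r}}=y_{i-1}\E{\ind{w}}$ by \eqref{relSxy} with index $i-1$.
\end{itemize}
These scalars agree with the definition of $\weight$.

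The only point needing care is the case where the pattern sticks out of the diagram, in rows $0$ or $N$. There the extra box is white in both patterns by convention, so it contributes no letter to either reading and the computation above is unaffected. For rule $S$ the south box sits in row $i-1\geq1$ because it is black, so $y_{i-1}$ is defined and $i\leq N-1$. Apart from this bookkeeping I expect no real obstacle; the argument is a weighted transcription of \cref{lemma:diamond_rewriting}.
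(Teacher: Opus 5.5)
Your proof is correct and follows the paper's own argument: reduce to a single rewriting step, using that weights multiply along a sequence, and then rerun the proof of \cref{lemma:diamond_rewriting}, recording the scalar $1$, $x_i$ or $y_{i-1}$ that \eqref{relCxy}, \eqref{relIxy} or \eqref{relSxy} produces. Your explicit treatment of patterns that stick out of the top or bottom row, and of why $y_{i-1}$ is a defined parameter, fills in bookkeeping the paper leaves implicit.
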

\begin{proof}
  One just needs to check that the equality holds for individual rewriting steps. 
  Indeed this is the case because each diamond rewriting step corresponds to a word rewriting up to
  commutation as in \cref{lemma:diamond_rewriting}.
\end{proof}
\medskip

The crucial observation is that rewriting system $\OSys$ has all
properties needed to faithfully compute within the Okada monoid and
algebra:
\begin{theorem}
  The rewriting system $\OSys$ terminates and is locally confluent.
  Moreover for any diamond diagram $\Delta$ and any rewriting sequence
  $\Delta\rewto\norf(\Delta)$, the weight $\weight(\Delta\rewto\norf(\Delta))$ depends only on $\Delta$
  and not on the rewriting sequence.
\end{theorem}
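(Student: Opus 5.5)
Termination comes first, via a measure. Rules $I$ and $S$ strictly decrease the number of black boxes. Rule $C$ keeps the number of black boxes fixed: it moves a black box from the south position of a $2\times 2$ square to its west position, i.e.\ one diagonal to the left. So I take as measure the pair (number of black boxes, sum over black boxes of their diagonal index), ordered lexicographically. Every rule strictly decreases this pair, and both components are non-negative integers. Since diagrams are only taken modulo empty diagonals on the right and black boxes are never created, there is no infinite rewriting sequence.

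Local confluence goes through the critical-pair lemma. I would enumerate all ways two left-hand patterns among $C$, $I$, $S$ can overlap in a diamond diagram. Two $2\times2$ squares overlap either in a single box or in two adjacent boxes sharing an edge, in one of several relative positions. In each case the union is a small diagram: a $3\times 2$ strip, a $2\times 3$ strip, or a shape of up to seven boxes, with black/white colourings forced by the two left-hand sides. For each such configuration I exhibit explicit rewriting sequences from the two results to a common diagram. Overlaps of $C$ with $C$ amount to the standard confluence of heaps of dimers and may be taken from the $\Comm$ lemma. The genuinely new pairs are $I/I$, $I/S$, $S/S$, $C/I$ and $C/S$. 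Each corresponds to a known word-level coincidence, for instance $i\,i\,i$, or $i\,(i-1)\,i\,i$ against $i\,(i-1)\,i$. In each I would write the two reductions and continue with further $C$, $I$ and $S$ moves until the normal forms agree. The boundary convention, that the extra rows are white, must be checked separately in rows $1$ and $N-1$; there some patterns cannot occur.

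For the weight statement, I strengthen local confluence to \emph{weighted} local confluence. For every critical pair I check that the two joining sequences can be chosen with equal total weight, as products of $x_i$ and $y_{i-1}$. A weighted Newman's lemma then follows by the usual well-founded induction on the termination measure. Suppose every diagram strictly below $\Delta$ has a well-defined weight to its normal form, and take two first steps $\Delta\mapsto\Delta_1$, $\Delta\mapsto\Delta_2$. Weighted joinability at some $\Delta_0$, together with the induction hypothesis applied to $\Delta_1$, $\Delta_2$ and $\Delta_0$, shows that both total weights coincide. Non-overlapping steps commute trivially with matching weights, since the two weight factors are independent.

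The main obstacle is the case analysis of critical pairs with weights, especially overlaps involving $S$. The $S$ pattern uses three black boxes across rows $i$ and $i-1$. When it overlaps an $I$ or another $S$ in the adjacent row, one branch produces a factor $x_i$ and the other a factor $y_{i-1}$. The joining sequences must then generate the compensating factors, e.g.\ $x_i y_{i-1}$ along both paths. I would enumerate these systematically by the position of the shared box or boxes, as north, south, east or west of each pattern. First I would check the degenerate monoid case to find the joins, then recompute the weights. I expect every case to close with matching products. If some pair failed weighted joinability, that would contradict Okada's generic basis result, so the checks should succeed.
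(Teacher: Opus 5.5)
Your termination measure and your weighted local-to-global induction are fine and match the paper. A small correction: rule $C$ moves the \emph{east} black box to the west position, not the south one, but your measure is unaffected.

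The gap is in local confluence. You treat each critical pair as a finite configuration: the union of the two overlapping patterns, with colours forced by the left-hand sides. You then plan to join the two results inside it. For this system that is not possible.

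Every left-hand side has a white north box and a black east box. Hence two-box overlaps cannot occur, and vertical overlaps share a white box that stays white, so those pairs commute. The only real critical pairs are horizontal: the black east box of a left pattern in row $i$ is the west box of a right pattern, which must therefore be $I$ or $S$.

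Take the right pattern to be $S$, with left pattern $C$, $I$ or $S$, and rewrite the left pattern first. This leaves the following configuration:
\begin{itemize}
\item a black box at the left pattern's west position in row $i$, with a white box east of it;
\item a black box in row $i-1$ on the next diagonal;
\item a black box in row $i$ two diagonals east.
\end{itemize}
No $2\times 2$ pattern inside the union is rewritable any more. Any common reduct requires, at some point, a $C$-move pushing the row-$(i-1)$ black box one diagonal west. That move is available only once the box below it in row $i-2$ is white, and that box lies outside the overlap with unconstrained colour. If it is black the pattern is blocked, and the same can happen further down.

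So joinability of these three pairs depends on the context. The critical-pair lemma in the form ``join the minimal overlap'' is therefore not available: the rules carry white-box side conditions, so a join computed with the outside treated as white need not survive in a larger diagram.

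The missing idea is the paper's south-west descent:
\begin{enumerate}
\item Follow the chain of black boxes south-west from the row-$(i-1)$ box until you meet a rewritable pattern. One exists: at each step the pattern is rewritable unless its west box is white and its south box black, in which case you descend one row, and the extra row below row $1$ is white.
\item Rewrite that bottom pattern.
\item Shift the whole chain one diagonal west by weight-$1$ $C$-moves.
\item Finish with a $C$ and an $S$.
\item On the other branch, perform the same descent, bottom rewrite and chain shift.
\end{enumerate}
This produces a common reduct, and the weights agree because both branches use the same bottom pattern.

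Your closing appeal to Okada's generic basis theorem cannot replace this verification. It says nothing about unweighted joinability, and the point here is to obtain the basis result without it.
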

\begin{proof}
  Black boxes either disappear or move to the west. Therefore, the sum
  of the column positions of the black boxes strictly decrease with each
  rewriting step. Consequently it is not possible to have an infinite rewriting sequence.

  Regarding local confluence: Notice first that
  $2 \times 2$ patterns are either disjoint or
  else overlap on a single  box (shared either vertically or horizontally). \medskip

%  First, one can easily check that
%  there is no overlapping rewriting with the two rewriten patterns sharing two
%  entries. Therefore we have to consider two cases: overlapping patterns
%  sharing their top and bottom boxes and overlapping patterns sharing their
%  right and left boxes.  \medskip

  \cref{fig:column_Spairs} shows all overlapping patterns sharing 
  top and bottom boxes:
  \begin{figure}[ht]
    \[\begin{array}{c@{\qquad}c@{\qquad}c@{\qquad}c@{\qquad}c@{\qquad}c}
  \begin{tikzpicture}[diamond, diamond/bound={5}{-2}]\matrix[diamond/matrix] {
    &&&&\nn \\ &&&\nn\\
       &\  &\  \\
    \  &\  &\bl\\
    \  &\bl\\
  };\end{tikzpicture}&
  \begin{tikzpicture}[diamond, diamond/bound={5}{-2}]\matrix[diamond/matrix] {
    &&&&\nn \\ &&&\nn\\
       &\  &\  \\
    \bl&\  &\bl\\
    \  &\bl\\
  };\end{tikzpicture}&
  \begin{tikzpicture}[diamond, diamond/bound={5}{-2}]\matrix[diamond/matrix] {
    &&&&\nn \\ &&&\nn\\
       &\bl&\  \\
    \  &\  &\bl\\
    \  &\bl\\
  };\end{tikzpicture}&
  \begin{tikzpicture}[diamond, diamond/bound={5}{-2}]\matrix[diamond/matrix] {
    &&&&\nn \\ &&&\nn\\
       &\bl&\  \\
    \bl&\  &\bl\\
    \  &\bl\\
  };\end{tikzpicture}&
  \begin{tikzpicture}[diamond, diamond/bound={5}{-2}]\matrix[diamond/matrix] {
    &&&&\nn \\ &&&\nn\\
       &\bl&\bl\\
    \  &\  &\bl\\
    \  &\bl\\
  };\end{tikzpicture}&
  \begin{tikzpicture}[diamond, diamond/bound={5}{-2}]\matrix[diamond/matrix] {
    &&&&\nn \\ &&&\nn\\
       &\bl&\bl\\
    \bl&\  &\bl\\
    \  &\bl\\
  };\end{tikzpicture}\\
      1&x_{i+1}&x_{i-1}&x_{i-1}x_{i+1}&y_{i-2}&y_{i-2}x_{i+1}
    \end{array}
\]
\caption[Overlapping patterns sharing their top and bottom boxes]{%
  Overlapping patterns sharing their top and bottom boxes together
  with the common weight of the two rewriting steps (taken in either order). 
  The middle box is situated in the $i\ordinal$ row.}
    \label{fig:column_Spairs}.
  \end{figure}
These cases are easy to handle since the middle box always remains white
regardless of whether we rewrite the top or the bottom pattern. Consequently
both rewritings can be executed, one after the other, and the order does not matter.
In each case we obtain
\begin{tikzpicture}[diamond, diamond/bound={5}{-2}]\matrix[diamond/matrix] {
    &&&&\nn \\ &&&\nn\\
       &\bl &\  \\
    \bl&\  &\  \\
    \  &\  \\
};\end{tikzpicture}. The weight is independent of the order in which the two possible
rewriting steps are taken, as shown in
\cref{fig:column_Spairs}.
\medskip

We now turn to the situation where the $2 \times 2$ patterns overlap horizontally,
sharing a single box as shown in \cref{fig:row_Spairs}.
\begin{figure}[ht]
  \[
    \begin{array}{c@{\qquad}c@{\qquad}c@{\qquad}c@{\qquad}c@{\qquad}c}
      \begin{tikzpicture}[diamond, diamond/bound={3}{1}]\matrix[diamond/matrix] {
    \nn \\
    &\  &\  \\
    &\  &\bl&\ \\
    &   &\  &\bl&\nn&\nn\\
  };\end{tikzpicture}&
\begin{tikzpicture}[diamond, diamond/bound={3}{1}]\matrix[diamond/matrix] {
    \nn \\
    &\bl&\  \\
    &\  &\bl&\ \\
    &   &\  &\bl&\nn&\nn\\
  };\end{tikzpicture}&
\begin{tikzpicture}[diamond, diamond/bound={3}{1}]\matrix[diamond/matrix] {
    \nn \\
    &\bl&\bl\\
    &\  &\bl&\ \\
    &   &\  &\bl&\nn&\nn\\
  };\end{tikzpicture}&
\begin{tikzpicture}[diamond, diamond/bound={3}{1}]\matrix[diamond/matrix] {
    \nn \\
    &\  &\  \\
    &\  &\bl&\bl\\
    &   &\  &\bl&\nn&\nn\\
  };\end{tikzpicture}&
\begin{tikzpicture}[diamond, diamond/bound={3}{1}]\matrix[diamond/matrix] {
    \nn \\
    &\bl&\  \\
    &\  &\bl&\bl\\
    &   &\  &\bl&\nn&\nn\\
  };\end{tikzpicture}&
\begin{tikzpicture}[diamond, diamond/bound={3}{1}]\matrix[diamond/matrix] {
    \nn \\
    &\bl&\bl\\
    &\  &\bl&\bl\\
    &   &\  &\bl&\nn&\nn\\
  };\end{tikzpicture}\\
        x_i & x_i^2 & x_iy_{i-1}&y_{i-1}&x_{i}y_{i-1}&y_{i-1}^2\\
      \end{array}
\]
\caption[Overlapping patterns sharing their left and right boxes]{%
  Overlapping patterns sharing their left and right boxes together with the
  weight of the rewriting to the normal form.}
    \label{fig:row_Spairs}.
  \end{figure}
The first three pairs on the left are easy: If we rewrite the righthand pattern first
and then subsequently rewrite the lefthand pattern we obtain
\begin{tikzpicture}[diamond, diamond/bound={3}{1}]\matrix[diamond/matrix] {
    \nn \\
    &\bl&\ \\
    &\  &\  &\  \\
    &   &\  &\  &\nn&\nn\\
  };\end{tikzpicture}.
Alternatively, rewriting the lefthand pattern first produces
\begin{tikzpicture}[diamond, diamond/bound={3}{1}]\matrix[diamond/matrix] {
    \nn \\
    &\bl&\ \\
    &\  &\  &\  \\
    &   &\  &\bl&\nn&\nn\\
  };\end{tikzpicture}
which then rewrites to
\begin{tikzpicture}[diamond, diamond/bound={3}{1}]\matrix[diamond/matrix] {
    \nn \\
    &\bl&\ \\
    &\  &\bl&\  \\
    &   &\  &\  &\nn&\nn\\
  };\end{tikzpicture}
from which we finally obtain
\begin{tikzpicture}[diamond, diamond/bound={3}{1}]\matrix[diamond/matrix] {
    \nn \\
    &\bl&\ \\
    &\  &\  &\  \\
    &   &\  &\  &\nn&\nn\\
  };\end{tikzpicture}\,.
  In all three cases, the two possible rewriting sequences produce the same weights
  (displayed in \cref{fig:row_Spairs}).
  \bigskip
In each of the last three cases, we may rewrite the lefthand side first and obtain
\begin{tikzpicture}[diamond, diamond/bound={3}{1}]\matrix[diamond/matrix] {
    \nn \\
    &\bl&\ \\
    &\  &\  &\bl\\
    &   &\  &\bl&\nn&\nn\\
  };\end{tikzpicture} 
whose righthand side cannot be rewriten. 
Now move south-west from the west-most bottom box until we find a
rewritable pattern. This must exist since we will eventually encounter a $2 \times 2$ pattern containing either a
southern white box (possbily outside the diagram) or a western black box.

We obtain a picture, such as the first diagram on the left of the following sequence of
rewriting steps; the gray boxes can have any color provided they form a
rewritable pattern (the bottom-most gray box may by outside the shape, but in
this case it is white). We then may proceed with the rewriting sequence:
\[
  \begin{tikzpicture}[diamond, diamond/bound={7}{-2}]\matrix[diamond/matrix] {
    &&&\nn\\&&&\nn \\&&&\nn \\
    &   &\gr&\gr\\
    &   &\  &\bl\\
    &   &\  &\bl\\
    &\bl&\  &\bl\\
    &\  &\  &\bl\\
    &   &\  &\bl&\nn&\nn&\nn&\nn\\
  };\end{tikzpicture}\longmapsto
  \begin{tikzpicture}[diamond, diamond/bound={7}{-2}]\matrix[diamond/matrix] {
    &&&\nn\\&&&\nn \\&&&\nn \\
    &   &\bl&\  \\
    &   &\  &\  \\
    &   &\  &\bl\\
    &\bl&\  &\bl\\
    &\  &\  &\bl\\
    &   &\  &\bl&\nn&\nn&\nn&\nn\\
  };\end{tikzpicture}\longmapsto
  \begin{tikzpicture}[diamond, diamond/bound={7}{-2}]\matrix[diamond/matrix] {
    &&&\nn\\&&&\nn \\&&&\nn \\
    &   &\bl&\  \\
    &   &\bl&\  \\
    &   &\  &\  \\
    &\bl&\  &\bl\\
    &\  &\  &\bl\\
    &   &\  &\bl&\nn&\nn&\nn&\nn\\
  };\end{tikzpicture}\longmapsto\dots\longmapsto
%   \begin{tikzpicture}[diamond, diamond/bound={7}{-2}]\matrix[diamond/matrix] {
%     &&&\nn\\&&&\nn \\&&&\nn \\
%     &   &\bl&\  \\
%     &   &\bl&\  \\
%     &   &\bl&\  \\
%     &\bl&\  &\  \\
%     &\  &\  &\bl\\
%     &   &\  &\bl&\nn&\nn&\nn&\nn\\
%   };\end{tikzpicture}\]
% \[
% \longmapsto
%   \begin{tikzpicture}[diamond, diamond/bound={7}{-2}]\matrix[diamond/matrix] {
%     &&&\nn\\&&&\nn \\&&&\nn \\
%     &   &\bl&\  \\
%     &   &\bl&\  \\
%     &   &\bl&\  \\
%     &\bl&\bl&\  \\
%     &\  &\  &\  \\
%     &   &\  &\bl&\nn&\nn&\nn&\nn\\
%   };\end{tikzpicture}\longmapsto
  \begin{tikzpicture}[diamond, diamond/bound={7}{-2}]\matrix[diamond/matrix] {
    &&&\nn\\&&&\nn \\&&&\nn \\
    &   &\bl&\  \\
    &   &\bl&\  \\
    &   &\bl&\  \\
    &\bl&\bl&\  \\
    &\  &\bl&\  \\
    &   &\  &\  &\nn&\nn&\nn&\nn\\
  };\end{tikzpicture}\longmapsto
  \begin{tikzpicture}[diamond, diamond/bound={7}{-2}]\matrix[diamond/matrix] {
    &&&\nn\\&&&\nn \\&&&\nn \\
    &   &\bl&\  \\
    &   &\bl&\  \\
    &   &\bl&\  \\
    &\bl&\  &\  \\
    &\  &\  &\  \\
    &   &\  &\  &\nn&\nn&\nn&\nn\\
  };\end{tikzpicture}\,.
\]

Had we instead started by rewriting the righthand side of each of the last three overlapping patterns 
in \cref{fig:row_Spairs}, we would have obtained  
the following three kinds of subdiagrams:
\[
  \begin{tikzpicture}[diamond, diamond/bound={7}{-2}]\matrix[diamond/matrix] {
    &&&\nn\\&&&\nn \\&&&\nn \\
    &   &\gr&\gr\\
    &   &\  &\bl\\
    &   &\  &\bl\\
    &\  &\  &\bl\\
    &\  &\bl&\  \\
    &   &\  &\  &\nn&\nn&\nn&\nn\\
  };\end{tikzpicture}\,,\quad
  \begin{tikzpicture}[diamond, diamond/bound={7}{-2}]\matrix[diamond/matrix] {
    &&&\nn\\&&&\nn \\&&&\nn \\
    &   &\gr&\gr\\
    &   &\  &\bl\\
    &   &\  &\bl\\
    &\bl&\  &\bl\\
    &\  &\bl&\  \\
    &   &\  &\  &\nn&\nn&\nn&\nn\\
  };\end{tikzpicture}\,,\quad
  \begin{tikzpicture}[diamond, diamond/bound={7}{-2}]\matrix[diamond/matrix] {
    &&&\nn\\&&&\nn \\&&&\nn \\
    &   &\gr&\gr\\
    &   &\  &\bl\\
    &   &\  &\bl\\
    &\bl&\bl&\bl\\
    &\  &\bl&\  \\
    &   &\  &\  &\nn&\nn&\nn&\nn\\
  };\end{tikzpicture} 
\]
all of which admit the rewriting sequence 
\[
    \begin{tikzpicture}[diamond, diamond/bound={7}{-2}]\matrix[diamond/matrix] {
    &&&\nn\\&&&\nn \\&&&\nn \\
    &   &\gr  &\gr  \\
    &   &\  &\bl\\
    &   &\  &\bl\\
    &\bl&\  &\bl\\
    &\  &\ &\  \\
    &   &\  &\  &\nn&\nn&\nn&\nn\\
  };\end{tikzpicture}\longmapsto
    \begin{tikzpicture}[diamond, diamond/bound={7}{-2}]\matrix[diamond/matrix] {
    &&&\nn\\&&&\nn \\&&&\nn \\
    &   &\bl&\  \\
    &   &\  &\  \\
    &   &\  &\bl\\
    &\bl&\  &\bl\\
    &\  &\ &\  \\
    &   &\  &\  &\nn&\nn&\nn&\nn\\
  };\end{tikzpicture}\longmapsto\dots\longmapsto
  \begin{tikzpicture}[diamond, diamond/bound={7}{-2}]\matrix[diamond/matrix] {
    &&&\nn\\&&&\nn \\&&&\nn \\
    &   &\bl&\  \\
    &   &\bl&\  \\
    &   &\bl&\  \\
    &\bl&\  &\  \\
    &\  &\  &\  \\
    &   &\  &\  &\nn&\nn&\nn&\nn\\
  };\end{tikzpicture}\,.
\]
We have shown that all critical pairs are convergent and so the system is
locally confluent. Moreover we can check, in all cases, that the weights of both
rewritings agree. It follows, by a standard local to global rewriting argument, 
that the weight does not depend on the rewriting sequence.
\end{proof}

\begin{corollary}\label{cor:bij-mon-norf}
  The map $\Delta\mapsto\e{\Delta}$ sending a diagram to its evaluation in
  $\Okada$ defines a bijection between the set of diagram normal forms and
  $\Okada$.
\end{corollary}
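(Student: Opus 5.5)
Surjectivity and injectivity are handled separately, both by combining the rewriting theorem just proved with \cref{lemma:diamond_rewriting}.

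\textbf{Surjectivity.} Every element of $\Okada$ has the form $\e{\ind{v}}$ for some index word $\ind{v}\in\FreeMonoid$. By the lemma on greedy diagrams, $\greed{\ind{v}}$ has reading $\ind{v}$, so its evaluation is $\e{\ind{v}}$. Since $\OSys$ terminates, $\greed{\ind{v}}$ rewrites to a normal form $\norf(\greed{\ind{v}})$. The first part of \cref{lemma:diamond_rewriting} says rewriting does not change the $\Okada$-evaluation, so this normal form evaluates to $\e{\ind{v}}$. Hence the map from normal forms to $\Okada$ is onto.

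\textbf{Injectivity.} Let $\Delta_1,\Delta_2$ be normal forms with $\e{\Delta_1}=\e{\Delta_2}$, and let their readings be $\ind{v}$ and $\ind{w}$. Since $\Okada$ is the quotient of $\FreeMonoid$ by the relations (\ref{relI}), (\ref{relC}), (\ref{relS}), there is a chain $\ind{v}=\ind{u}_0,\ind{u}_1,\dots,\ind{u}_r=\ind{w}$ in which consecutive words differ by one application of a relation, in either direction. It suffices to show that any two diamond diagrams whose readings differ by one elementary relation have the same normal form. Given that, if $\Delta(\ind{u}_j)$ denotes any diagram with reading $\ind{u}_j$, all the $\norf(\Delta(\ind{u}_j))$ coincide, and in particular $\Delta_1=\norf(\Delta_1)=\norf(\Delta_2)=\Delta_2$.

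The main obstacle is that the normal form might a priori depend on the chosen diagram, not only on its reading. I would resolve this as follows.
\begin{itemize}
\item Two diagrams with the same reading $\ind{u}$ both $\Comm$-rewrite to $\greed{\ind{u}}$, as noted after the definition of $\Comm$. Since $\Comm\subseteq\OSys$, they lie in the same $\OSys$-equivalence class, and by confluence they have the same normal form. So $\norf$ is well defined on readings.
\item For $\ind{u}_j,\ind{u}_{j+1}$ differing by a single relation, the second part of \cref{lemma:diamond_rewriting} provides diagrams with those readings related by one rewriting step. Their normal forms therefore agree.
\end{itemize}
Combining the two points with the fundamental theorem on terminating, locally confluent systems (equivalent objects have equal normal forms) gives $\norf(\Delta(\ind{v}))=\norf(\Delta(\ind{w}))$, which completes injectivity.
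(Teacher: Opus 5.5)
Your proof is correct and follows essentially the argument the paper leaves implicit: the paper states the corollary without proof, as a direct consequence of the termination and local confluence theorem, \cref{lemma:diamond_rewriting}, and the fundamental proposition on normal forms of convergent systems. Your extra step showing that $\norf$ depends only on the reading (via $\Comm$-rewriting to $\greed{\ind{u}}$, or equivalently the paper's lemma on commutation classes) supplies the detail needed to chain the elementary relation steps rigorously.
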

The inverse bijection is denoted \defn{$\e{} \mapsto \Delta_{\e{}}$}, and we call
$\Delta_{\e{}}$ the \defn{canonical diamond diagram}
associated to $\e{}$ in $\Okada$.

\begin{corollary}\label{coro-weight-basis}
  For any diamond diagram $\Delta$, the equality
  \begin{equation}
    \E{\Delta} = \weight(\Delta\rewto \norf(\Delta))\,\E{\norf(\Delta)}
  \end{equation}
  holds in the Okada algebra $\Okada(X, Y)$. Moreover the family $(\E{\Delta})_{\Delta}$
  where $\Delta$ varies over the set of diagram normal forms is a basis of the Okada algebra.
 \end{corollary}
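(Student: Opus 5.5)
The equality $\E{\Delta} = \weight(\Delta\rewto \norf(\Delta))\,\E{\norf(\Delta)}$ is immediate. Since $\OSys$ terminates, some rewriting sequence $\Delta\rewto\norf(\Delta)$ exists. The preceding lemma gives $\E{\Delta_1} = \weight(\Delta_1\rewto \Delta_2)\E{\Delta_2}$ along any such sequence. By the theorem, the weight of the sequence depends only on $\Delta$, so the right-hand side is well defined.

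For the basis statement I would argue spanning and then linear independence.

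\emph{Spanning.} Every monomial $\E{\ind{i}}$ equals $\E{\greed{\ind{i}}}$, because the reading of the greedy diagram is $\ind{i}$. By the first part, this is a scalar multiple of $\E{\norf(\greed{\ind{i}})}$. Monomials span $\Okada(X,Y)$, so the family $(\E{\Delta})$ indexed by normal forms spans.

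\emph{Linear independence.} I would construct a representation directly. Let $V$ be the $\K$-vector space with basis the set $\mathcal N$ of normal forms. By \cref{cor:bij-mon-norf}, $\mathcal N$ is in bijection with $\Okada$. For $\Delta\in\mathcal N$ and a generator index $i$, let $\Delta\cdot i$ be any diamond diagram whose reading is $\ind{r}(\Delta)\cdot i$; for instance, take $\greed{\ind{r}(\Delta)\cdot i}$. Define
\[ \E{i}\,\Delta \eqdef \weight\big(\Delta\cdot i\rewto\norf(\Delta\cdot i)\big)\,\norf(\Delta\cdot i), \]
where the generator acts on the right, so this gives a right module (equivalently, a left module of the opposite algebra).

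The key point is that this is well defined and respects the relations. Confluence together with weight-independence shows that the result depends only on the reading, up to $\Comm$-moves, which have weight $1$. Moreover, for any word $\ind{v}$ and any two diagrams with readings related by an Okada relation, \cref{lemma:diamond_rewriting} provides a single rewriting step between suitable representatives, of weight $x_i$ or $y_{i-1}$. Combined with path-independence of the weight, this gives:
\[ \E{i}\E{i}\,\Delta=x_i\,\E{i}\Delta, \qquad \E{i}\E{j}\,\Delta=\E{j}\E{i}\,\Delta \text{ for } |i-j|\geq 2, \qquad \E{i+1}\E{i}\E{i+1}\,\Delta=y_i\,\E{i+1}\Delta. \]
To make this precise, I would show by induction on the word length that acting by $\E{\ind{w}}$ on $\Delta$ yields the normal form of any diagram with reading $\ind{r}(\Delta)\cdot\ind{w}$, with weight equal to its total rewriting weight. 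Hence the action factors through $\Okada(X,Y)$.

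Now act on the empty diagram $\varnothing\in\mathcal N$. The element $\E{\Delta}$ sends $\varnothing$ to $\weight(\Delta\rewto\Delta)\,\Delta=\Delta$, because $\Delta$ is already normal. Therefore, if $\sum_\Delta c_\Delta\E{\Delta}=0$, applying this to $\varnothing$ gives $\sum c_\Delta\Delta=0$ in $V$, and so every $c_\Delta=0$.

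\emph{Main obstacle.} The hardest step is the well-definedness of the action, namely that concatenating and renormalizing is compatible with rewriting done earlier. This is exactly where the path-independence of the weight in the theorem is needed. One subtlety deserves care: a rewriting of a prefix diagram should lift to a rewriting of the extended diagram. This holds because appending diagonals on the right does not alter the $2\times2$ patterns already present, except possibly at the boundary with new boxes. Those boundary interactions are still legitimate rewritings within the larger diagram, so the weight computation remains path-independent.
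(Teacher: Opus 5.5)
Your proof is correct, but it establishes linear independence by a different mechanism than the paper. The paper stays inside the free algebra. It splits $\K\FreeMonoid=\bigoplus_{\mathsf{e}\in\Okada}W_{\mathsf{e}}$ according to the evaluation of index words in the Okada monoid. It then notes that the kernel of $\phi:\K\FreeMonoid\to\Okada(X,Y)$ is spanned by elements homogeneous for this splitting: single words, or differences $\ind{v}-\lambda\ind{w}$ of words with the same evaluation. From this it concludes $\Okada(X,Y)\isom\bigoplus_{\mathsf{e}}W_{\mathsf{e}}/(\ker\phi\cap W_{\mathsf{e}})\isom\bigoplus_{\mathsf{e}}\K\,\E{\Delta(\mathsf{e})}$.

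You instead build an explicit right representation on the span of the normal forms and evaluate at the empty diagram. The paper's route is shorter, but its last isomorphism silently relies on the path-independence of the weight. That isomorphism needs each quotient to be exactly one-dimensional, i.e.\ the normal-form word must not lie in $\ker\phi$. The justification is the functional on $W_{\mathsf{e}}$ sending a word to the weight of its reduction to the normal form. It kills every generator $\ind{l}\,\ind{u}\,\ind{r}-\lambda\,\ind{l}\,\ind{u}'\,\ind{r}$ of the relation ideal and equals $1$ on the normal form. Your construction makes this dependence explicit and gives a genuine certificate of independence. The cost is verifying the module axioms: lifting rewritings of a prefix to the extended diagram, and induction on word length.

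You could skip that layer. Define the linear map $\K\FreeMonoid\to V$, $\ind{w}\mapsto\weight(\greed{\ind{w}}\rewto\norf(\greed{\ind{w}}))\,\norf(\greed{\ind{w}})$, directly. Since \cref{lemma:diamond_rewriting} already handles words differing by a relation inside arbitrary contexts $\ind{l},\ind{r}$, this map vanishes on the ideal of relations. It therefore factors through $\Okada(X,Y)$ and sends $\E{\Delta}$ to $\Delta$ for every normal form $\Delta$.
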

\begin{proof}
  Consider the quotient map $\phi$ from the free algebra
  $\K\FreeMonoid$ to $\Okada(X,Y)$
  mapping $\ind{v} \mapsto \E{\ind{v}}$ for each index word
  $\ind{v} \in \FreeMonoid$.
  We partition the set of index words into equivalence
  classes based on their evaluations within the Okada monoid, i.e.
  $\ind{v}$ and $\ind{w}$ are equivalent if and only
  if $\e{\ind{v}}= \e{\ind{w}}$.

 For any $\mathsf{e} \in\Okada$, let
  $W_\mathsf{e}$ denote the linear span of $\ind{w} \in \FreeMonoid$ such that $\e{\ind{w}} =
  \mathsf{e}$. Obviously
  $\K\FreeMonoid =\bigoplus_{\mathsf{e} \in\Okada} W_\mathsf{e}$.

  The kernel of $\phi$ is the ideal generated by the Okada relations. It has a
  basis composed of either individual index words or differences $\ind{v} - \lambda \ind{w}$ where
  $\lambda \in\K$ and $\ind{v}, \ind{w}$ are equivalent index words. As a
  consequence the kernel of $\phi$ can be written as a direct sum
  \[
    \ker(\phi) = \bigoplus_{\mathsf{e} \in\Okada} \ker(\phi) \cap W_\mathsf{e} \,.
  \]
  Therefore, the Okada algebra is isomorphic as a vector space to
  \[
    \Okada(X,Y)
    \isom \left.\left(\bigoplus_{\mathsf{e} \in\Okada} W_\mathsf{e}  \right)
          \middle/ \left(\bigoplus_{ \mathsf{e} \in\Okada} \ker(\phi) \cap W_\mathsf{e} \right)\right.
    \isom \bigoplus_{\mathsf{e} \in\Okada} (W_\mathsf{e}  / \ker(\phi) \cap W_\mathsf{e})
    \isom \bigoplus_{\mathsf{e} \in\Okada} \K \E{\Delta(\mathsf{e})}\,.
  \]
  where $\Delta(\mathsf{e})$ is the normal form of the diamond diagram associated to $\mathsf{e}$.
\end{proof}
We now describe the normal forms:
\begin{proposition}
  A diamond diagram $\Delta$ is a normal form if and only if it has a triangular
  shape (possibly after removing irrelevant empty diagonals from the east),
  and if each black box has either a black box to its northwest or else lies on
  the western border of the diamond diagram.
\end{proposition}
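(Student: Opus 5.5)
Since a normal form is by definition a diagram admitting no rewriting by $C$, $I$ or $S$, the plan is to translate the absence of each pattern into a local condition on black boxes, then show these local conditions together are equivalent to the global description. Label a $2\times 2$ pattern by its west, south, north and east boxes $W,S,N,E$; the north-west neighbour of the south box is $W$ and the north-west neighbour of $E$ is $N$. All three rules have $E$ black in their left-hand pattern: $C$ requires $W,S,N$ white; $I$ requires $W$ black with $S,N$ white; $S$ requires $W,N$ black with $S$ white. So a pattern with black $E$ is rewritable exactly when $S$ is white and $N$ is white, or $S$ white, $N$ black and $W$ black. Equivalently: \emph{$\Delta$ is a normal form iff for every black box $E$ not on the western border, the box $S$ south-west of it (one row below) is black, or else $N$ (north-west, same row minus...) is black and $W$ is white.} I would first carefully write this reformulation, paying attention to boxes outside the diagram (extended rows count as white, and a box on the western border has no pattern with it as east box).

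Next I would prove the ``if'' direction: assume each black box either lies on the west border or has a black north-west neighbour. I need to check none of $C,I,S$ applies. Take a pattern with black $E$ not on the border; its north-west neighbour is black. In my labelling I must identify which of $N$, $S$ is the north-west neighbour of $E$; with the diamond geometry (diagonals going south-east, reading top to bottom), the box sharing the diagonal position above $E$ in the previous diagonal is the one relevant to the rule, and the rule $C$ moves $E$ to that box. So black NW neighbour kills $C$ and $I$ directly (both need that box white), while $S$ needs the NW box black and another box black too; here I would use the triangular shape and an induction along the chain of NW-neighbours up to the border to show the configuration forced by $S$ (a black box to the west at the same row with white box below) contradicts the NW-chain structure, since following NW neighbours from both black boxes gives two chains hitting the west border that would have to cross in an impossible way.

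For the ``only if'' direction I argue contrapositively: if some black box $b$ is off the western border and its NW neighbour is white, then look at the pattern having $b$ as east box; if the relevant other boxes make it a $C$ or $I$ pattern we are done, otherwise we are in the $S$-type configuration and, as in the confluence proof, moving south-west along the column we find a rewritable pattern (this is exactly the ``move south-west until a rewritable pattern is found'' argument used in the proof of local confluence, which I would reuse). Triangularity then follows: if the diagram is not triangular after removing empty east diagonals, some black box would lie beyond the triangle with no black NW neighbour being possible at the top row, contradicting the condition. The main obstacle I expect is the bookkeeping of diamond geometry in the $S$ case of the ``if'' direction, i.e. ruling out the $S$ pattern using only the NW-neighbour condition; I would handle it by the induction on diagonals from west to east, showing the black boxes of each diagonal form a contiguous-from-NW structure compatible with the previous one.
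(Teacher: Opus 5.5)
\textbf{The gap: you have misread rule $S$.} The left-hand side of $S$ has $W$, $S$ and $E$ black and $N$ white. With $W,E$ in row $h$ and the south box in row $h-1$, it encodes the relation $\e{h}\e{h-1}\e{h}=\e{h}$. The pattern you attribute to $S$ ($W$, $N$, $E$ black, $S$ white) reads $h\cdots(h+1)\,h$. That is the Temperley--Lieb relation $\e{h}\e{h+1}\e{h}=\e{h}$, which does not hold in $\Okada$.

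This has two consequences. First, your local reformulation of ``normal form'' is false. Second, the step of your ``if'' direction fails: there you plan to show, using chains of north-west neighbours and triangularity, that this configuration cannot occur, but that claim is false. Take the rank-$3$ normal form with reading $121$ (code $(0,1,2)$). The black box in row $1$ of the second diagonal has a black north box, a black west box, and a south box below the diagram (hence white). Yet every black box of this diagram lies on the western border or has a black north-west neighbour. So the configuration is fully compatible with the hypothesis.

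\textbf{The fix.} All three left-hand sides have a black east box and a white north box, and the north box is exactly the north-west neighbour of the east box. This is the observation the paper's proof rests on.
\begin{itemize}
\item The ``if'' direction is then immediate: a black north-west neighbour kills all three rules at once. No induction and no use of triangularity are needed.
\item For the ``only if'' direction, let $b$ be a black box off the western border whose north-west neighbour is white (possibly outside the diagram), and look at the pattern with $E=b$. The colourings $(W,S)=(\text{white},\text{white})$, $(\text{black},\text{white})$, $(\text{black},\text{black})$ give rules $C$, $I$, $S$ respectively.
\item The only non-rewritable case is $W$ white, $S$ black. Then $S$ is itself a black box off the western border, with white north-west neighbour $W$, lying further west.
\item The paper handles this case by choosing $b$ west-most. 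Your south-west iteration works equally well: each step lowers the row by one while staying off the western border, so it must end with a south box outside the diagram, giving rule $C$ or $I$.
\end{itemize}
Your deduction of triangularity from the north-west condition is fine.
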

\begin{proof}
  First, all rewritable patterns in $\OSys$ have a black eastern box and a
  white northern box. So they all have a black box along with a white box to its
  northwest. If there are no such boxes, the diagram is in normal form.

  Conversely, suppose that $\Delta$ has a black box which is not on the western border
  and has a white box to its north-west. Call $b$ one of the west-most 
  boxes of this kind and consider the $2\times2$ pattern whose eastern box is $b$. The pattern
  \begin{tikzpicture}[diamond, diamond/bound={3}{0}]\matrix[diamond/matrix] {
      \nn \\
      & \ & \bl \\
      & \ & \bl & \nn& \nn\\};
  \end{tikzpicture}
  is not possible because there is a white box north-west
  of the southern box of the pattern (which is black), contradicting the assumption that $b$ is as west as
  possible. The three remaining possible colors of the western and southern
  boxes make rewritable patterns.
\end{proof}
\begin{proposition}\label{prop:lexmin-normal-form}
  Let $\Delta$ be a normal form diamond diagram. Define $c(\Delta)=(c_0, \dots c_{N-1})$ where
  $c_0=0$ and $c_i$ is the number of black boxes in the $i\ordinal$
  southeast diagonal. Then
  $c(\Delta)$ is a code (i.e.  $0\leq c_i \leq i$ for all $i$)
  and $\Delta\mapsto c(\Delta)$ is a bijection between normal forms of
  diamond diagrams and codes such that the reading $\ind{r}(\Delta)$
  is the $\lexmin$ word associated to $c(\Delta)$.
\end{proposition}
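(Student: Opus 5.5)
We use the characterization of normal forms from the preceding proposition: after deleting empty east diagonals, $\Delta$ is triangular, and every black box either lies on the western border or has a black box immediately to its northwest. First we fix coordinates. The $i$\ordinal southeast diagonal (for $i=1,\dots,N-1$) starts on the western border in row $i$ and runs down through rows $i, i-1, \dots, 1$. So it has exactly $i$ cells, its top cell is on the western border, and the triangular shape gives exactly $N-1$ diagonals. The northwest neighbour of the cell in row $r$ of diagonal $i$ is the cell in row $r+1$ of diagonal $i$, i.e.\ the cell just above it in the same diagonal in reading order.

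With these coordinates the normal-form condition says that on each diagonal the black cells form an initial segment from the top. Diagonal $i$ therefore has black boxes exactly in rows $i, i-1, \dots, i-c_i+1$ for some $0 \le c_i \le i$. This shows $c(\Delta)$ is a code. Conversely, any tuple $(0,c_1,\dots,c_{N-1})$ with $0\le c_i\le i$ determines a unique diagram of this form, and that diagram satisfies the criterion, so it is a normal form. Hence $\Delta \mapsto c(\Delta)$ is a bijection onto codes. If one insists on the indexing $(c_1,\dots,c_N)$ of the definition of the code, one checks that the shift $c_{i+1}^{\text{code}} = c_i$ matches, since $c_1^{\text{code}}=0$ always; this is the role of the convention $c_0=0$.

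For the reading, we traverse diagonals from left to right and each one from top to bottom. Diagonal $i$ contributes the word $i\,(i-1)\cdots(i-c_i+1)$. Rewriting with $j=i+1$, this is $\G{j-1}\G{j-2}\cdots\G{j-c}$ with $c=c_i$, which is exactly the $j$-th factor of the product defining $\G{\sigma}$. The full reading is therefore the concatenation in increasing $j$, which is $\lexmin$ of the permutation with this code.

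The main obstacle is purely bookkeeping. We must verify that the paper's diamond geometry really puts the northwest neighbour directly above in the same diagonal, and that the $i$\ordinal diagonal begins on the border at row $i$. We will confirm both against the worked example $\greed{31\cdot42\cdot431}$ before writing the argument.
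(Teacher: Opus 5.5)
Your argument is correct and is the paper's proof made explicit. The normal-form criterion forces the black cells of the $i$th southeast diagonal (which has $i$ cells, rows $i$ down to $1$) to form an initial segment from the top, which gives the code bound and the bijection; reading that diagonal then yields the factor $i(i-1)\cdots(i-c_i+1)$ of $\lexmin$ after the index shift you describe. The two geometric facts you flag do hold: the northwest neighbour of a cell is the preceding cell in the same southeast diagonal, and diagonal $i$ starts on the western border in row $i$. You can see this in $\greed{31\cdot 42\cdot 431}$, whose successive diagonals read $\epsilon$, $\epsilon$, $31$, $42$, $431$.
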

\begin{proof}
  The length of the diagonals ensure that $c(\Delta)$ is a code and since the black
  boxes must lie north-west within each diagonal, one can recover $\Delta$ from
  its code. The equality of the reading and LexMin words follows from the
  definitions.
\end{proof}
\cref{fig.six_normal_forms} shows the $6$ normal forms for $N=3$ and their
readings. \cref{fig.example_normal_forms} shows a few larger examples.
\begin{figure}[ht]
  \[\begin{array}{cccccc}
  \begin{tikzpicture}[diamond, diamond/bound={2}{1}]\matrix[diamond/matrix] {
      \  \\
      \  &\  \\
    };\end{tikzpicture}&
  \begin{tikzpicture}[diamond, diamond/bound={2}{1}]\matrix[diamond/matrix] {
      \  \\
      \bl&\  \\
    };\end{tikzpicture}&
  \begin{tikzpicture}[diamond, diamond/bound={2}{1}]\matrix[diamond/matrix] {
      \  \\
      \bl&\bl\\
    };\end{tikzpicture}&
  \begin{tikzpicture}[diamond, diamond/bound={2}{1}]\matrix[diamond/matrix] {
      \bl\\
      \  &\  \\
    };\end{tikzpicture}&
  \begin{tikzpicture}[diamond, diamond/bound={2}{1}]\matrix[diamond/matrix] {
      \bl\\
      \bl&\  \\
    };\end{tikzpicture}&
  \begin{tikzpicture}[diamond, diamond/bound={2}{1}]\matrix[diamond/matrix] {
      \bl\\
      \bl&\bl\\
    };\end{tikzpicture} \\[4mm]
      %%%%%%%%%%%%%%%%
      \epsilon&2&21&1&12&121 \\[2mm]
      (0,0,0)&(0,0,1)&(0,0,2)&(0,1,0)&(0,1,1)&(0,1,2) \\[2mm]
      123&132&312&213&231&321 \\[2mm]
      %%%%%%%%%%%%%%%%
  \begin{tikzpicture}[diamond, diamond/bound={3}{2}]\matrix[diamond/matrix] {
      \stt\\
      \st&\stt\\
      \st&\st&\stt\\
    };\end{tikzpicture}&
  \begin{tikzpicture}[diamond, diamond/bound={3}{2}]\matrix[diamond/matrix] {
      \stt\\
      \st&\stt\\
      \tu&\st&\stt\\
    };\end{tikzpicture}&
  \begin{tikzpicture}[diamond, diamond/bound={3}{2}]\matrix[diamond/matrix] {
      \stt\\
      \st&\stt\\
      \tu&\tu&\stt\\
    };\end{tikzpicture}&
  \begin{tikzpicture}[diamond, diamond/bound={3}{2}]\matrix[diamond/matrix] {
      \stt\\
      \tu&\stt\\
      \st&\st&\stt\\
    };\end{tikzpicture}&
  \begin{tikzpicture}[diamond, diamond/bound={3}{2}]\matrix[diamond/matrix] {
      \stt\\
      \tu&\stt\\
      \tu&\st&\stt\\
    };\end{tikzpicture}&
  \begin{tikzpicture}[diamond, diamond/bound={3}{2}]\matrix[diamond/matrix] {
      \stt\\
      \tu&\stt\\
      \tu&\tu&\stt\\
    };\end{tikzpicture}
      \\
  %%%%%%%%%%%%%%%%%%%%%%%%%%%%%%%%%%%%%%%%%%%%%%%%%
\begin{tikzpicture}[scale=0.45, thick, baseline={(current bounding box.east)}]
\tikzstyle{vertex} = [shape=rectangle, minimum height=15pt, minimum width=0pt, inner sep=0pt]
\tikzstyle{mid} = [draw, fill=white, shape=circle, minimum size=2pt, inner sep=1pt]
\node[vertex] (G--1) at (1.24, 0.0) {};
\node[vertex] (G-1) at (-1.24, 0.0) {};
\node[vertex] (G--2) at (1.24, 1.5) {};
\node[vertex] (G-2) at (-1.24, 1.5) {};
\node[vertex] (G--3) at (1.24, 3.0) {};
\node[vertex] (G-3) at (-1.24, 3.0) {};
\draw (G--1) -- (G-1) node[pos=0.5,mid] (M-1-1) { 1 };
\draw (G--2) -- (G-2) node[pos=0.5,mid] (M-2-2) { 2 };
\draw (G--3) -- (G-3) node[pos=0.5,mid] (M-3-3) { 3 };
\end{tikzpicture}
&\begin{tikzpicture}[scale=0.45, thick, baseline={(current bounding box.east)}]
\tikzstyle{vertex} = [shape=rectangle, minimum height=15pt, minimum width=0pt, inner sep=0pt]
\tikzstyle{mid} = [draw, fill=white, shape=circle, minimum size=2pt, inner sep=1pt]
\node[vertex] (G--1) at (1.44, 0.0) {};
\node[vertex] (G-1) at (-1.44, 0.0) {};
\node[vertex] (G-2) at (-1.44, 1.5) {};
\node[vertex] (G-3) at (-1.44, 3.0) {};
\node[vertex] (G--3) at (1.44, 3.0) {};
\node[vertex] (G--2) at (1.44, 1.5) {};
\draw (G--1) -- (G-1) node[pos=0.5,mid] (M-1-1) { 1 };
\draw (G-2) .. controls +(0.7, 0.35) and +(0.7, -0.35) .. (G-3) node[pos=0.5,mid] (M2-3) { 2 };
\draw (G--3) .. controls +(-0.7, -0.35) and +(-0.7, 0.35) .. (G--2) node[pos=0.5,mid] (M-3--2) { 2 };
\end{tikzpicture}&
\begin{tikzpicture}[scale=0.45, thick, baseline={(current bounding box.east)}]
\tikzstyle{vertex} = [shape=rectangle, minimum height=15pt, minimum width=0pt, inner sep=0pt]
\tikzstyle{mid} = [draw, fill=white, shape=circle, minimum size=2pt, inner sep=1pt]
\node[vertex] (G--3) at (1.44, 3.0) {};
\node[vertex] (G-1) at (-1.44, 0.0) {};
\node[vertex] (G-2) at (-1.44, 1.5) {};
\node[vertex] (G-3) at (-1.44, 3.0) {};
\node[vertex] (G--2) at (1.44, 1.5) {};
\node[vertex] (G--1) at (1.44, 0.0) {};
\draw (G--3) -- (G-1) node[pos=0.5,mid] (M-3-1) { 1 };
\draw (G-2) .. controls +(0.7, 0.35) and +(0.7, -0.35) .. (G-3) node[pos=0.5,mid] (M2-3) { 2 };
\draw (G--2) .. controls +(-0.7, -0.35) and +(-0.7, 0.35) .. (G--1) node[pos=0.5,mid] (M-2--1) { 1 };
\end{tikzpicture}&
\begin{tikzpicture}[scale=0.45, thick, baseline={(current bounding box.east)}]
\tikzstyle{vertex} = [shape=rectangle, minimum height=15pt, minimum width=0pt, inner sep=0pt]
\tikzstyle{mid} = [draw, fill=white, shape=circle, minimum size=2pt, inner sep=1pt]
\node[vertex] (G-1) at (-1.44, 0.0) {};
\node[vertex] (G-2) at (-1.44, 1.5) {};
\node[vertex] (G--2) at (1.44, 1.5) {};
\node[vertex] (G--1) at (1.44, 0.0) {};
\node[vertex] (G--3) at (1.44, 3.0) {};
\node[vertex] (G-3) at (-1.44, 3.0) {};
\draw (G-1) .. controls +(0.7, 0.35) and +(0.7, -0.35) .. (G-2) node[pos=0.5,mid] (M1-2) { 1 };
\draw (G--2) .. controls +(-0.7, -0.35) and +(-0.7, 0.35) .. (G--1) node[pos=0.5,mid] (M-2--1) { 1 };
\draw (G--3) -- (G-3) node[pos=0.5,mid] (M-3-3) { 3 };
\end{tikzpicture}&
\begin{tikzpicture}[scale=0.45, thick, baseline={(current bounding box.east)}]
\tikzstyle{vertex} = [shape=rectangle, minimum height=15pt, minimum width=0pt, inner sep=0pt]
\tikzstyle{mid} = [draw, fill=white, shape=circle, minimum size=2pt, inner sep=1pt]
\node[vertex] (G--1) at (1.44, 0.0) {};
\node[vertex] (G-3) at (-1.44, 3.0) {};
\node[vertex] (G-1) at (-1.44, 0.0) {};
\node[vertex] (G-2) at (-1.44, 1.5) {};
\node[vertex] (G--3) at (1.44, 3.0) {};
\node[vertex] (G--2) at (1.44, 1.5) {};
\draw (G--1) -- (G-3) node[pos=0.5,mid] (M-1-3) { 1 };
\draw (G-1) .. controls +(0.7, 0.35) and +(0.7, -0.35) .. (G-2) node[pos=0.5,mid] (M1-2) { 1 };
\draw (G--3) .. controls +(-0.7, -0.35) and +(-0.7, 0.35) .. (G--2) node[pos=0.5,mid] (M-3--2) { 2 };
\end{tikzpicture}&
\begin{tikzpicture}[scale=0.45, thick, baseline={(current bounding box.east)}]
\tikzstyle{vertex} = [shape=rectangle, minimum height=15pt, minimum width=0pt, inner sep=0pt]
\tikzstyle{mid} = [draw, fill=white, shape=circle, minimum size=2pt, inner sep=1pt]
\node[vertex] (G--3) at (1.44, 3.0) {};
\node[vertex] (G-3) at (-1.44, 3.0) {};
\node[vertex] (G-1) at (-1.44, 0.0) {};
\node[vertex] (G-2) at (-1.44, 1.5) {};
\node[vertex] (G--2) at (1.44, 1.5) {};
\node[vertex] (G--1) at (1.44, 0.0) {};
\draw (G--3) -- (G-3) node[pos=0.5,mid] (M-3-3) { 1 };
\draw (G-1) .. controls +(0.7, 0.35) and +(0.7, -0.35) .. (G-2) node[pos=0.5,mid] (M1-2) { 1 };
\draw (G--2) .. controls +(-0.7, -0.35) and +(-0.7, 0.35) .. (G--1) node[pos=0.5,mid] (M-2--1) { 1 };
\end{tikzpicture}
  \end{array}\]
\caption[The 6 normal forms in rank $3$]{%
  The 6 normal forms of the $\Okada[3]$-rewriting system, their readings,
  associated codes, permutations, fully-packed loop configurations, and labelled arc-diagrams.
}
  \label{fig.six_normal_forms}
\end{figure}
\begin{figure}
\[
  \begin{array}{c@{\qquad\qquad}c@{\qquad\qquad}c}
\begin{tikzpicture}[diamond,diamond/bound={5}{1}]\matrix[diamond/matrix] {
    \bl\\
    \  &\  \\
    \bl&\bl&\  \\
    \bl&\bl&\bl&\  \\
    \bl&\  &\  &\  &\  \\
  };\end{tikzpicture}
&
\begin{tikzpicture}[diamond, diamond/bound={5}{1}]\matrix[diamond/matrix] {
    \bl\\
    \bl&\bl\\
    \  &\  &\  \\
    \bl&\bl&\  &\  \\
    \bl&\bl&\bl&\  &\  \\
  };\end{tikzpicture}
&
\begin{tikzpicture}[diamond, diamond/bound={5}{1}]\matrix[diamond/matrix] {
    \bl\\
    \bl&\bl\\
    \bl&\bl&\  \\
    \  &\  &\  &\  \\
    \bl&\bl&\bl&\bl&\  \\
  };\end{tikzpicture}
    \\[8mm]
    %%%%%%%%%%%%%%%%%%%%%%%%%%%%%
    1324325 & 12143543 & 121325432 \\[2mm]
    (0,1,0,2,3,1) & (0,1,2,0,2,3) & (0,1,2,2,0,4) \\[2mm]
    254163 & 326514 & 364215 \\[2mm]
%%%%%%%%%%%%%%%%%%%%%%%%%%%%
\begin{tikzpicture}[diamond, diamond/bound={6}{1}]\matrix[diamond/matrix] {
    \nn\\
    \TLlab1 &\stt\\
    \TLlab2 &\tu&\stt\\
    \TLlab3 &\st&\st&\stt\\
    \TLlab4 &\tu&\tu&\st&\stt\\
    \TLlab5 &\tu&\tu&\tu&\st&\stt\\
    \TLlab6 &\tu&\st&\st&\st&\st&\stt\\
    &\TLlab{\ov6}&\TLlab{\ov5}&\TLlab{\ov4}&\TLlab{\ov3}&\TLlab{\ov2}&\TLlab{\ov1}
    &\nn\\
  };\end{tikzpicture}
&
\begin{tikzpicture}[diamond, diamond/bound={6}{1}]\matrix[diamond/matrix] {
    \nn\\
    \TLlab1 &\stt\\
    \TLlab2 &\tu&\stt\\
    \TLlab3 &\tu&\tu&\stt\\
    \TLlab4 &\st&\st&\st&\stt\\
    \TLlab5 &\tu&\tu&\st&\st&\stt\\
    \TLlab6 &\tu&\tu&\tu&\st&\st&\stt\\
    &\TLlab{\ov6}&\TLlab{\ov5}&\TLlab{\ov4}&\TLlab{\ov3}&\TLlab{\ov2}&\TLlab{\ov1}
    &\nn\\
  };\end{tikzpicture}
&
\begin{tikzpicture}[diamond, diamond/bound={6}{1}]\matrix[diamond/matrix] {
    \nn\\
    \TLlab1 &\stt\\
    \TLlab2 &\tu&\stt\\
    \TLlab3 &\tu&\tu&\stt\\
    \TLlab4 &\tu&\tu&\st&\stt\\
    \TLlab5 &\st&\st&\st&\st&\stt\\
    \TLlab6 &\tu&\tu&\tu&\tu&\st&\stt\\
    &\TLlab{\ov6}&\TLlab{\ov5}&\TLlab{\ov4}&\TLlab{\ov3}&\TLlab{\ov2}&\TLlab{\ov1}
    &\nn\\
  };\end{tikzpicture}
  \\
  \begin{tikzpicture}[xscale=0.65,yscale=0.5, thick, baseline={(current bounding box.east)}]
\tikzstyle{vertex} = [shape=rectangle, minimum height=15pt, minimum width=0pt, inner sep=0pt]
\tikzstyle{mid} = [draw, fill=white, shape=circle, minimum size=2pt, inner sep=1pt]
\node[vertex] (G--1) at (1.44, 0.0) {$\ov{1}$};
\node[vertex] (G-5) at (-1.44, 6.0) {$5$};
\node[vertex] (G--4) at (1.44, 4.5) {$\ov{4}$};
\node[vertex] (G-6) at (-1.44, 7.5) {$6$};
\node[vertex] (G-1) at (-1.44, 0.0) {$1$};
\node[vertex] (G-2) at (-1.44, 1.5) {$2$};
\node[vertex] (G--3) at (1.44, 3.0) {$\ov{3}$};
\node[vertex] (G--2) at (1.44, 1.5) {$\ov{2}$};
\node[vertex] (G-3) at (-1.44, 3.0) {$3$};
\node[vertex] (G-4) at (-1.44, 4.5) {$4$};
\node[vertex] (G--6) at (1.44, 7.5) {$\ov{6}$};
\node[vertex] (G--5) at (1.44, 6.0) {$\ov{5}$};
\draw (G--1) .. controls +(-1.632, 0.816) and +(1.632, -0.816) .. (G-5) node[pos=0.5,mid] (M-1-5) { 1 };
\draw (G--4) -- (G-6) node[pos=0.5,mid] (M-4-6) { 2 };
\draw (G-1) .. controls +(0.7, 0.35) and +(0.7, -0.35) .. (G-2) node[pos=0.5,mid] (M1-2) { 1 };
\draw (G--3) .. controls +(-0.7, -0.35) and +(-0.7, 0.35) .. (G--2) node[pos=0.5,mid] (M-3--2) { 2 };
\draw (G-3) .. controls +(0.7, 0.35) and +(0.7, -0.35) .. (G-4) node[pos=0.5,mid] (M3-4) { 3 };
\draw (G--6) .. controls +(-0.7, -0.35) and +(-0.7, 0.35) .. (G--5) node[pos=0.5,mid] (M-6--5) { 5 };
\end{tikzpicture}&
\begin{tikzpicture}[xscale=0.65,yscale=0.5, thick, baseline={(current bounding box.east)}]
\tikzstyle{vertex} = [shape=rectangle, minimum height=15pt, minimum width=0pt, inner sep=0pt]
\tikzstyle{mid} = [draw, fill=white, shape=circle, minimum size=2pt, inner sep=1pt]
\node[vertex] (G-3) at (-2.16, 3.0) {$3$};
\node[vertex] (G-6) at (-2.16, 7.5) {$6$};
\node[vertex] (G--2) at (0.96, 1.5) {$\ov{2}$};
\node[vertex] (G--1) at (0.96, 0.0) {$\ov{1}$};
\node[vertex] (G-1) at (-2.16, 0.0) {$1$};
\node[vertex] (G-2) at (-2.16, 1.5) {$2$};
\node[vertex] (G--6) at (0.96, 7.5) {$\ov{6}$};
\node[vertex] (G--5) at (0.96, 6.0) {$\ov{5}$};
\node[vertex] (G-4) at (-2.16, 4.5) {$4$};
\node[vertex] (G-5) at (-2.16, 6.0) {$5$};
\node[vertex] (G--4) at (0.96, 4.5) {$\ov{4}$};
\node[vertex] (G--3) at (0.96, 3.0) {$\ov{3}$};
\draw (G-3) .. controls +(2.10, 1.05) and +(2.10, -1.05) .. (G-6) node[pos=0.5,mid] (M3-6) { 1 };
\draw (G--2) .. controls +(-0.7, -0.35) and +(-0.7, 0.35) .. (G--1) node[pos=0.5,mid] (M-2--1) { 1 };
\draw (G-1) .. controls +(0.7, 0.35) and +(0.7, -0.35) .. (G-2) node[pos=0.5,mid] (M1-2) { 1 };
\draw (G--6) .. controls +(-0.7, -0.35) and +(-0.7, 0.35) .. (G--5) node[pos=0.5,mid] (M-6--5) { 3 };
\draw (G-4) .. controls +(0.7, 0.35) and +(0.7, -0.35) .. (G-5) node[pos=0.5,mid] (M4-5) { 4 };
\draw (G--4) .. controls +(-0.7, -0.35) and +(-0.7, 0.35) .. (G--3) node[pos=0.5,mid] (M-4--3) { 3 };
\end{tikzpicture}&
\begin{tikzpicture}[xscale=0.65,yscale=0.5, thick, baseline={(current bounding box.east)}]
\tikzstyle{vertex} = [shape=rectangle, minimum height=15pt, minimum width=0pt, inner sep=0pt]
\tikzstyle{mid} = [draw, fill=white, shape=circle, minimum size=2pt, inner sep=1pt]
\node[vertex] (G-3) at (-0.96, 3.0) {$3$};
\node[vertex] (G-4) at (-0.96, 4.5) {$4$};
\node[vertex] (G--6) at (2.16, 7.5) {$\ov{6}$};
\node[vertex] (G--1) at (2.16, 0.0) {$\ov{1}$};
\node[vertex] (G-1) at (-0.96, 0.0) {$1$};
\node[vertex] (G-2) at (-0.96, 1.5) {$2$};
\node[vertex] (G--5) at (2.16, 6.0) {$\ov{5}$};
\node[vertex] (G--4) at (2.16, 4.5) {$\ov{4}$};
\node[vertex] (G-5) at (-0.96, 6.0) {$5$};
\node[vertex] (G-6) at (-0.96, 7.5) {$6$};
\node[vertex] (G--3) at (2.16, 3.0) {$\ov{3}$};
\node[vertex] (G--2) at (2.16, 1.5) {$\ov{2}$};
\draw (G-3) .. controls +(0.7, 0.35) and +(0.7, -0.35) .. (G-4) node[pos=0.5,mid] (M3-4) { 1 };
\draw (G--6) .. controls +(-2.10, -1.05) and +(-2.10, 1.05) .. (G--1) node[pos=0.5,mid] (M-6--1) { 1 };
\draw (G-1) .. controls +(0.7, 0.35) and +(0.7, -0.35) .. (G-2) node[pos=0.5,mid] (M1-2) { 1 };
\draw (G--5) .. controls +(-0.7, -0.35) and +(-0.7, 0.35) .. (G--4) node[pos=0.5,mid] (M-5--4) { 2 };
\draw (G-5) .. controls +(0.7, 0.35) and +(0.7, -0.35) .. (G-6) node[pos=0.5,mid] (M5-6) { 5 };
\draw (G--3) .. controls +(-0.7, -0.35) and +(-0.7, 0.35) .. (G--2) node[pos=0.5,mid] (M-3--2) { 2 };
\end{tikzpicture}
  \end{array}\]
    \caption[Examples of normal forms]{Some examples of normal forms, their readings,
  associated codes, permutations, fully-packed loop configurations, and labelled arc-diagrams.}
\label{fig.example_normal_forms}
  \end{figure}

We've established now the first main result of this section:
\begin{theorem}\label{theo:bij-Sn_Okada}
  For any non-negative integer $N$, the monoid $\Okada$ has cardinality
  $N!$. Moreover the map $\sigma\mapsto \e{\sigma}$ is a bijection from
  $\SG[N]$ to $\Okada$.

  For any non-negative integer $N$ and any $X$, $Y$, the algebra $\Okada(X,Y)$
  has dimension $N!$ and the family $(\E{\sigma})_{\sigma\in\SG}$ is a basis.
\end{theorem}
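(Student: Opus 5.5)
The plan is to assemble the theorem from the three results just established: \cref{cor:bij-mon-norf}, \cref{coro-weight-basis} and \cref{prop:lexmin-normal-form}. No new rewriting arguments should be needed.

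For the monoid statement, \cref{cor:bij-mon-norf} says that $\Delta\mapsto\e{\Delta}$ is a bijection from the set of normal-form diamond diagrams onto $\Okada$. \Cref{prop:lexmin-normal-form} says that $\Delta\mapsto c(\Delta)$ is a bijection from normal forms to codes $(c_1,\dots,c_N)$ with $0\le c_i\le i-1$, under the indexing shift $c_0=0$ there. Composing these with the classical bijection $\sigma\mapsto\code(\sigma)$ between $\SG$ and codes identifies $\Okada$ with $\SG$, so $\card\Okada=N!$. The one point to check carefully is that the composite bijection is really $\sigma\mapsto\e{\sigma}$. By \cref{prop:lexmin-normal-form}, the reading of the normal form with code $\code(\sigma)$ is $\lexmin(\sigma)$. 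Hence its evaluation is $\e{\lexmin(\sigma)}=\e{\sigma}$ by definition.

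I expect the main obstacle to be the indexing conventions. The diagonals are numbered $c_0,\dots,c_{N-1}$, while the code of $\sigma$ is written $(c_1,\dots,c_N)$ with $c_1=0$ always. I would verify that the $i$-th diagonal of the diagram carries the factor $\G{i-1}\cdots\G{i-c_i}$ of \eqref{eq.min.redword}. This holds because its black boxes are packed north-west, starting at row $i-1$ and going down $c_i$ rows. I would also check the bound $c_i\le i-1$ on the code against the length of that diagonal. Both checks are routine comparisons of definitions.

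For the algebra statement, \cref{coro-weight-basis} gives that $(\E{\Delta})_\Delta$, with $\Delta$ ranging over normal forms, is a basis of $\Okada(X,Y)$, for arbitrary $X$ and $Y$. Under the bijection above, each such $\E{\Delta}$ equals $\E{\lexmin(\sigma)}=\E{\sigma}$ for a unique $\sigma\in\SG$. Therefore $(\E{\sigma})_{\sigma\in\SG}$ is a basis and $\dim\Okada(X,Y)=N!$.
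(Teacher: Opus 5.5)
Your proposal is correct and matches how the paper obtains the theorem. The paper states it directly after \cref{prop:lexmin-normal-form}, as an immediate consequence of \cref{cor:bij-mon-norf}, \cref{coro-weight-basis} and \cref{prop:lexmin-normal-form}, combined with the classical bijection between permutations and codes. Your indexing check (the diagonal carrying code entry $c_i$ has length $i-1$, and its black boxes form the top segment read as $\G{i-1}\cdots\G{i-c_i}$) is exactly the bookkeeping behind the paper's shift $c_0=0$.
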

\begin{remark}
  The previous description implies that
  \begin{align}
    \label{eq:left-coset-decomposition}
    \Okada[N+1] &= \Okada[N] \ssp \sqcup \ssp \Okada[N]\E{N}
                  \ssp \sqcup \ssp \Okada[N]\E{N}\E{N-1} \ssp \sqcup \dots
                  \sqcup \ssp  \Okada[N]\E{N}\E{N-1} \cdots \E{2}\E{1} \\
    \label{eq:right-coset-decomposition}
                &= \Okada[N] \ssp \sqcup \ssp \E{N}\Okada[N]
                  \ssp \sqcup \ssp \E{N-1}\E{N}\Okada[N] \ssp \sqcup \dots
                  \sqcup \ssp \E{1}\E{2}\dots\E{N-1}\E{N}\Okada[N]\,.
    \end{align}
  Similar direct sum decompositions hold for $\Okada[N+1](X,Y)$.
\end{remark}
Recall that $\ind{w} \in \FreeMonoid$ is a \defn{reduced} word for
$\mathsf{e} \in \Okada$ if $\e{\ind{w}} = \mathsf{e}$ and $\ind{w}$ has
minimal length among the words $\ind{v}$ with $\e{\ind{v}} = \mathsf{e}$. We
denote $\length(\mathsf{e})$ the \defn{length} of any reduced word for
$\mathsf{e}$. The
rewriting system shows the following important fact about reduced words
in $\Okada$:
\begin{proposition}\label{prop:commutation-class}
  Let $\ind{w} , \ind{w'} \in \FreeMonoid$ be two reduced words for respective elements
  $\mathsf{e}, \mathsf{e'} \in \Okada$. Then $\mathsf{e}= \mathsf{e'}$ if and only if $\ind{w}$ and $\ind{w'}$ are in the same
  commutation class. In particular, reduced words are exactly the elements of the
  commutations classes of canonical words.
\end{proposition}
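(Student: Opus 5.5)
The plan is to use the rewriting system $\OSys$ and the fact that rules $I$ and $S$ strictly decrease the number of black boxes, while rule $C$ preserves it. The key observation is that every rewriting step either preserves the reading up to commutation (rule $C$) or shortens the reading (rules $I$ and $S$).

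First I will show that a reduced word $\ind{w}$ for $\mathsf{e}$ has the same length as the canonical word $\lexmin$ of $\mathsf{e}$ and is commutation-equivalent to it. Take any diagram $\Delta$ with reading $\ind{w}$, for instance $\greed{\ind{w}}$. By termination and local confluence, $\Delta\rewto\norf(\Delta)=\Delta_{\mathsf{e}}$, and by \cref{cor:bij-mon-norf} this normal form is the canonical diamond diagram of $\mathsf{e}$. Along the rewriting sequence, each application of $I$ or $S$ removes at least one black box, so the reading length drops. The reading of $\Delta_{\mathsf{e}}$ is a word for $\mathsf{e}$, since rewriting preserves the $\Okada$-evaluation by \cref{lemma:diamond_rewriting}. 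Minimality of $\mathcal{l}(\ind{w})$ therefore forbids any $I$ or $S$ step, so only $C$ steps occur. Each $C$ step keeps the reading in the same commutation class, hence $\ind{w}$ is commutation-equivalent to the reading of $\Delta_{\mathsf{e}}$, which is the canonical word by \cref{prop:lexmin-normal-form}.

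Now the equivalence follows directly. If $\mathsf{e}=\mathsf{e'}$, both $\ind{w}$ and $\ind{w'}$ are commutation-equivalent to the same canonical word, hence to each other. Conversely, Relation~\eqref{relC} holds in $\Okada$, so commutation-equivalent words have equal evaluations and $\mathsf{e}=\mathsf{e'}$.

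For the final assertion, I argue both inclusions. Every reduced word lies in the commutation class of a canonical word by the first step. Conversely, every word in the commutation class of the canonical word of $\mathsf{e}$ evaluates to $\mathsf{e}$ and has the same length. Taking $\ind{w}$ to be some reduced word of $\mathsf{e}$, the first step shows that the canonical word has length $\length(\mathsf{e})$, so the canonical word and its whole commutation class consist of reduced words.

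The main point to check carefully is the claim that $I$ and $S$ strictly shorten the reading and that $C$ acts as a genuine commutation on readings. Both are visible from the patterns and from the proof of \cref{lemma:diamond_rewriting}.
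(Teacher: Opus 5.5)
Your proof is correct and follows essentially the same route as the paper: a diagram whose reading is reduced can only be rewritten to its normal form using rule $C$, since $I$ and $S$ strictly decrease the number of black boxes, so the reduced word is commutation-equivalent to the canonical word, and the converse follows because the commutation relation holds in $\Okada$. You are somewhat more explicit than the paper about why minimality rules out $I$ and $S$ steps and about the final assertion, but the argument is the same.
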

\begin{proof}
  The reverse implication is obvious since the rewriting system $\OSys$
  contains $\Comm =\{C\}$.

  Consider a diagram $\Delta$ whose reading is a reduced word $\ind{v}$. During the
  rewriting of $\Delta$ to its normal form $\norf(\Delta)$, one can only use the rule
  $C$, since $\Delta$ and $\norf(\Delta)$ have the same number of black boxes and all
  the other rewriting rules strictly decrease the number of black boxes.
\end{proof}
As a consequence, the set of reduced words for an Okada element is a
subset of the set of reduced words of the associated permutation.
This implies the following corollaries:
\begin{corollary}\label{lemma:redword-Okada-Sn}
  If $\ind{w} \in \FreeMonoid$ is a reduced word for $\e{\ind{w}} \in \Okada$, 
  then $\ind{w}$ is also a reduced word for $\sigma_{\ind{w}} \in \SG$. In particular
  $\length(\e{\ind{w}}) = \operatorname{inv}(\sigma_{\ind{w}})$, where
  $\operatorname{inv}$ denotes the Coxeter length (which is also the number
  of inversions).
\end{corollary}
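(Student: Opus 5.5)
The plan is to reduce the claim to \cref{prop:commutation-class} together with the bijection of \cref{theo:bij-Sn_Okada}. Let $\ind{w}$ be a reduced word for $\mathsf{e}=\e{\ind{w}}$. First, the canonical word of $\mathsf{e}$ is the reading of the normal form $\norf(\Delta)$, which by \cref{prop:lexmin-normal-form} is $\lexmin(\sigma)$ for the permutation $\sigma$ with $\e{\sigma}=\mathsf{e}$. This word is a reduced word of $\sigma$ in $\SG$, since the product \eqref{eq.min.redword} is the lexicographically minimal reduced factorization. Next, \cref{prop:commutation-class} shows that $\ind{w}$ lies in the commutation class of this canonical word; the argument there is that rewriting a diagram with reduced reading down to its normal form uses only rule $C$.

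The key step is to transport this commutation class into $\SG$. The relations $s_is_j=s_js_i$ for $|i-j|\geq2$ hold in the symmetric group, so commuting letters does not change the product in $\SG$. Hence $\sigma_{\ind{w}}=\sigma_{\lexmin(\sigma)}=\sigma$, and $\ind{w}$ has the same length as $\lexmin(\sigma)$, which is $\operatorname{inv}(\sigma)$. A word of length $\operatorname{inv}(\sigma)$ representing $\sigma$ is reduced in $\SG$ by definition of Coxeter length. This gives the first claim, and it also gives $\length(\e{\ind{w}})=\mathcal{l}(\ind{w})=\operatorname{inv}(\sigma_{\ind{w}})$.

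The only point needing care is that ``$\ind{w}$ reduced'' refers to minimality among all words evaluating to $\mathsf{e}$, not just within a commutation class. This is handled because \cref{prop:commutation-class} already characterizes reduced words of $\mathsf{e}$ as exactly the commutation class of the canonical word. We also need the canonical word to be of minimal length. That holds because every rewriting rule is either $C$ or strictly decreases the number of black boxes, so the normal form has the fewest letters among all readings of diagrams evaluating to $\mathsf{e}$. No substantial obstacle is expected; the proof is a short bookkeeping argument.
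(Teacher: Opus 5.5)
Your proposal is correct and takes essentially the paper's route. The paper derives this corollary from \cref{prop:commutation-class}: the reduced words of $\e{\ind{w}}$ form the commutation class of $\lexmin(\sigma)$, which is a reduced word of $\sigma$ in $\SG$, and commutations preserve both the product in $\SG$ and the word length. Your remark that the canonical word has minimal length, because rewriting never adds black boxes, is the same fact used implicitly in the proof of \cref{prop:commutation-class}.
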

\begin{corollary}\label{lemma:redword-eq-Okada-Sn}
  Suppose that $\ind{w}_1, \ind{w}_2 \in \FreeMonoid$ are two reduced
  words respectively for $\e{\ind{w}_1}, \e{\ind{w}_2} \in \Okada$. 
  Then $\e{\ind{w}_1}=\e{\ind{w}_2}$ if and only if
  $\sigma_{\ind{w}_1}=\sigma_{\ind{w}_2}$.
\end{corollary}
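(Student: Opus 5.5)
The statement is \cref{lemma:redword-eq-Okada-Sn}, and I would derive it by combining \cref{prop:commutation-class} with the classical theory of reduced words in $\SG$. The two ingredients are the following.

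\begin{enumerate}
\item By \cref{lemma:redword-Okada-Sn}, each $\ind{w}_j$ is also a reduced word for the permutation $\sigma_{\ind{w}_j}\in\SG$.
\item By \cref{prop:commutation-class}, $\e{\ind{w}_1}=\e{\ind{w}_2}$ holds if and only if $\ind{w}_1$ and $\ind{w}_2$ lie in the same commutation class.
\end{enumerate}

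For the forward implication, assume $\e{\ind{w}_1}=\e{\ind{w}_2}$. Then the two words are related by a sequence of moves $ij\Mapsto ji$ with $|i-j|\geq 2$. Each such move leaves the product of simple transpositions unchanged, since $s_is_j=s_js_i$ for $|i-j|\geq 2$. Hence $\sigma_{\ind{w}_1}=\sigma_{\ind{w}_2}$.

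For the converse, assume $\sigma_{\ind{w}_1}=\sigma_{\ind{w}_2}=\sigma$. The plan is not to pass through Matsumoto's theorem, because braid moves $i(i+1)i\Mapsto(i+1)i(i+1)$ are not relations of $\Okada$. Instead I would use the canonical words. Let $\ind{c}_j$ be the canonical word of $\e{\ind{w}_j}$, namely the reading of the normal form $\Delta_{\e{\ind{w}_j}}$. By \cref{prop:lexmin-normal-form}, $\ind{c}_j=\lexmin(\tau_j)$ for some permutation $\tau_j$, and then $\e{\ind{w}_j}=\e{\tau_j}$ by definition of $\e{\tau_j}$. By \cref{prop:commutation-class}, $\ind{w}_j$ lies in the commutation class of $\ind{c}_j$. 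The forward implication then gives $\sigma_{\ind{w}_j}=\sigma_{\ind{c}_j}=\tau_j$, using that $\lexmin(\tau_j)$ is a reduced word for $\tau_j$ by \eqref{eq.min.redword}. Therefore $\tau_1=\sigma=\tau_2$, and so
\[
\e{\ind{w}_1}=\e{\tau_1}=\e{\tau_2}=\e{\ind{w}_2}.
\]

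The main point needing care is the bookkeeping in the converse, namely identifying the canonical word with $\lexmin$ of the correct permutation. This is exactly the content of \cref{prop:lexmin-normal-form} together with the bijectivity in \cref{theo:bij-Sn_Okada}, so I expect no real obstacle.
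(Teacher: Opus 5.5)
Your proof is correct and follows the paper's reasoning: the paper derives this corollary directly from \cref{prop:commutation-class} (the reduced words of an Okada element form exactly the commutation class of its canonical word $\lexmin(\sigma)$, hence are reduced words of $\sigma$) together with \cref{theo:bij-Sn_Okada}, which is precisely your forward argument plus your identification $\e{\ind{w}_j}=\e{\sigma_{\ind{w}_j}}$ in the converse. As a minor remark, your converse only uses that $\e{\sigma}$ is well defined as a function of $\sigma$, not the injectivity of $\sigma\mapsto\e{\sigma}$.
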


\begin{corollary}[Cancellation lemma]\label{lemma:cancel}
  Let $\e{}, \f1, \f2 \in \Okada$. Suppose that
  $\e{}\f1=\e{}\f2$ and that these product are reduced, that is
  $\length(\e{}\f1)=\length(\e{})+\length(\f1) = \length(\e{})+\length(\f2)$.
  Then $\f1=\f2$. The left-handed version of the statement also holds.
\end{corollary}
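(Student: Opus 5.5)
Take reduced words $\ind{u}$ for $\e{}$, $\ind{v}_1$ for $\f1$, and $\ind{v}_2$ for $\f2$. The hypothesis $\length(\e{}\f1)=\length(\e{})+\length(\f1)$ says the concatenation $\ind{u}\cdot\ind{v}_1$ evaluates to $\e{}\f1$ and has minimal length, so it is a reduced word for $\e{}\f1$. In the same way $\ind{u}\cdot\ind{v}_2$ is a reduced word for $\e{}\f2=\e{}\f1$. Both words therefore represent the same element and are reduced.

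First I apply \cref{lemma:redword-eq-Okada-Sn} to $\ind{u}\cdot\ind{v}_1$ and $\ind{u}\cdot\ind{v}_2$. It gives equality of the associated permutations: $\sigma_{\ind{u}}\sigma_{\ind{v}_1}=\sigma_{\ind{u}}\sigma_{\ind{v}_2}$ in $\SG$. Since $\SG$ is a group, I cancel $\sigma_{\ind{u}}$ and obtain $\sigma_{\ind{v}_1}=\sigma_{\ind{v}_2}$.

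Now $\ind{v}_1$ and $\ind{v}_2$ are reduced words for $\f1$ and $\f2$ in $\Okada$. Applying \cref{lemma:redword-eq-Okada-Sn} again, in the other direction, equality of their permutations yields $\f1=\f2$. The left-handed statement follows by the mirror argument with words of the form $\ind{v}_i\cdot\ind{u}$, or by the anti-automorphism $\ind{i}\mapsto\ind{i}^\star$. That reversal preserves the relations (I), (C), (S), since reversing $i{+}1\,i\,i{+}1$ gives the same word, and it preserves lengths.

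The only step needing care is checking that a concatenation of reduced words is reduced exactly when the lengths add. This is immediate from the definition of $\length$, since the concatenation has length $\length(\e{})+\length(\f{i})=\length(\e{}\f{i})$. Everything else is cancellation in the group $\SG$ combined with \cref{lemma:redword-eq-Okada-Sn}, which transports group cancellation back to the monoid along reduced words.
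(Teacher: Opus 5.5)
Your proof is correct and matches the paper's: concatenate reduced words, use the length hypothesis to see the concatenations are reduced, pass to the symmetric group to cancel, and pull back to $\Okada$ with \cref{lemma:redword-eq-Okada-Sn}. For the first transfer to permutations, \cref{lemma:redword-eq-Okada-Sn} is the precise statement needed; the paper cites \cref{lemma:redword-Okada-Sn} at that point. You also spell out the left-handed version, which the paper only asserts.
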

\begin{proof}
  By \cref{theo:bij-Sn_Okada}, there exist three reduced words
  $\ind{w},\ind{w}_1,\ind{w}_2$, such that $\e{}=\e{\ind{w}}$,
  $\f1=\e{\ind{w}_1}$ and $\f2=\e{\ind{w}_2}$. Equality of lengths shows
  that $\ind{w}\ind{w}_1$ and $\ind{w}\ind{w}_2$ are both reduced words for
  $\e{}\f1=\e{}\f2$. Then \cref{lemma:redword-Okada-Sn} shows that
  $\sigma_{\ind{w}}\sigma_{\ind{w}_1}=\sigma_{\ind{w}}\sigma_{\ind{w}_2}$ and
  thus $\sigma_{\ind{w}_1}=\sigma_{\ind{w}_2}$. The conclusion follows
  by applying \cref{lemma:redword-eq-Okada-Sn}.
\end{proof}

\bigskip

Notice that the relations defining the Okada monoid and Okada algebra are
preserved when reversed. This means that there's an involution of the
Okada monoid/algebra which preserves the set of reduced words. The following proposition
elucidates the effect of this involution on $\e{\sigma}$.
\begin{proposition}\label{prop:revword}
  For any permutation $\sigma$, a word $\ind{w}$ is a reduced word for
  $\e{\sigma}$ if and only if $\ind{w}^\star$ is a reduced word for
  $\e{\sigma^{-1}}$.
\end{proposition}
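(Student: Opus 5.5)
The plan is to exploit the anti-automorphism $\e{i}\mapsto\e{i}$ obtained by reversing words, and then compare with the symmetric group through \cref{lemma:redword-Okada-Sn} and \cref{lemma:redword-eq-Okada-Sn}.

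First, I will check that word reversal $\ind{v}\mapsto\ind{v}^\star$ is compatible with the Okada relations. Relations \eqref{relI} and \eqref{relC} are visibly symmetric under reversal. For \eqref{relS}, the word $(i+1)\,i\,(i+1)$ is a palindrome, so reversing both sides of $\e{i+1}\e{i}\e{i+1}=\e{i+1}$ gives the same relation. Hence $\ind{v}\mapsto\ind{v}^\star$ descends to a well-defined anti-automorphism $\e{}\mapsto\e{}^\star$ of $\Okada$, with $\e{\ind{v}}^\star=\e{\ind{v}^\star}$. Since reversal preserves length and is a bijection on words, $\ind{w}$ is reduced for $\e{}$ if and only if $\ind{w}^\star$ is reduced for $\e{}^\star$.

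It therefore remains to identify $\e{\sigma}^\star=\e{\sigma^{-1}}$. Take $\ind{w}=\lexmin(\sigma)$. This word is a reduced word for $\e{\sigma}$: by \cref{prop:lexmin-normal-form} it is the reading of a normal form, and by \cref{prop:commutation-class} the reduced words are exactly the commutation classes of canonical words. Consequently $\ind{w}^\star$ is a reduced word for $\e{\sigma}^\star$. By \cref{lemma:redword-Okada-Sn}, $\ind{w}^\star$ is then also a reduced word in $\SG$ for the permutation $\sigma_{\ind{w}^\star}=(\sigma_{\ind{w}})^{-1}=\sigma^{-1}$, using $s_i^{-1}=s_i$.

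Now $\lexmin(\sigma^{-1})$ is a reduced word for $\e{\sigma^{-1}}$, and it also represents $\sigma^{-1}$. We thus have two Okada-reduced words representing the same permutation, so \cref{lemma:redword-eq-Okada-Sn} gives $\e{\ind{w}^\star}=\e{\sigma^{-1}}$. This shows $\e{\sigma}^\star=\e{\sigma^{-1}}$. Combined with the first paragraph, $\ind{w}$ is reduced for $\e{\sigma}$ if and only if $\ind{w}^\star$ is reduced for $\e{\sigma^{-1}}$. The converse direction follows by symmetry, replacing $\sigma$ with $\sigma^{-1}$, or directly because $\star$ is an involution.

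The main point requiring care is the claim that $\lexmin(\sigma)$, and more generally any reduced word, has the right behaviour. In particular, one must ensure that Okada-reducedness of $\ind{w}^\star$ is established within $\Okada$ (via the anti-automorphism) before transferring to $\SG$. A reduced word in $\SG$ need not be Okada-reduced, so the argument must never go in the direction $\SG\to\Okada$ without \cref{lemma:redword-eq-Okada-Sn}.
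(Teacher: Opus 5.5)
Your proof is correct, but it takes a different route from the paper's. The paper argues on diamond diagrams. Starting from the normal form of $\sigma$, it pushes black boxes eastward with the reversed commutation rule until each black box is alone in the quarter plane below it. The reading of the resulting diagram $\Delta$ is then the reversal of the reading of its mirror image $\Delta^\star$, and normalizing $\Delta^\star$ places $\ind{w}^\star$ in the commutation class of $\lexmin(\sigma^{-1})$.

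You instead first promote word reversal to an involutive anti-automorphism $\star$ of $\Okada$. This gives at once that $\ind{w}$ is reduced for $\mathsf{e}$ if and only if $\ind{w}^\star$ is reduced for $\mathsf{e}^\star$. You then identify $\e{\sigma}^\star=\e{\sigma^{-1}}$ purely algebraically, by comparing Okada-reduced words with $\SG$ through \cref{lemma:redword-Okada-Sn,lemma:redword-eq-Okada-Sn}. In effect you prove first the corollary that follows the proposition in the paper (that $\star$ is an anti-automorphism acting as $\e{\sigma}\mapsto\e{\sigma^{-1}}$) and deduce the proposition from it; the paper goes the other way.

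Your route is shorter and needs no new geometric claim such as the quarter-plane property. It also makes explicit an input the diagrammatic argument uses only tacitly, via the remark preceding the proposition: the mirrored reading is still Okada-reduced. That is what guarantees that rewriting $\Delta^\star$ to normal form uses only rule $C$ and does not change the underlying permutation.

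In exchange, the paper's version exhibits a concrete diagram witnessing that $\lexmin(\sigma)$ and $\lexmin(\sigma^{-1})^\star$ lie in the same commutation class (as the reading of a normal form for the reversed rewriting rule). You would recover that fact afterwards from \cref{prop:commutation-class}.
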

\begin{proof}
By symmetry, we only need to prove that
$\ind{w}^\star$ is a reduced word for $\sigma^{-1}$
whenever $\ind{w}$ is a reduced word for
$\e{\sigma}$.
Start with the normal form diagram associated to $\sigma$. After widening the
diagram if necessary, we migrate each black box eastward
using the reverse rewriting rule
$\begin{tikzpicture}[diamond, diamond/bound={3}{0}]\matrix[diamond/matrix] {
  \nn \\
  & \bl & \ \\
  & \ & \ & \nn& \nn\\
};
\end{tikzpicture}\longmapsto
\begin{tikzpicture}[diamond, diamond/bound={3}{0}]\matrix[diamond/matrix] {
  \nn \\
  & \ & \ \\
  & \ & \bl & \nn& \nn\\
};
\end{tikzpicture}$. We do this 
until the quarter plane between the southwest and southeast
diagonals emanating from each black box
contains no other black box but itself. Let 
$\Delta$ denote the diamond diagram obtained.
The quarter plane condition ensures
that the reading of $\Delta$ is the mirror image of the reading of $\Delta^\star$ (i.e the mirror
image diamond diagram). 
Consequently the reading of $\Delta^\star$ is a reduced word
for $\sigma^{-1}$ and therefore the reading of its associated normal form $\norf(\Delta^\star)$ is
$\lexmin(\sigma^{-1})$. This proves
that $\ind{w}^\star$ and $\lexmin(\sigma^{-1})$ are in the same commutation class
and are both reduced words for $\e{\sigma^{-1}}$.
\end{proof}

As a consequence $\Okada$ is a $\star$-monoid:
\begin{corollary}
  The map $\star \, :\ \e{\sigma} \mapsto \e{\sigma}^\star\eqdef\e{\sigma^{-1}}$
  is an anti-automorphism of $\Okada$.
\end{corollary}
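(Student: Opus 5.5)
The plan is to build the anti-automorphism at the level of words and only then identify it with $\e{\sigma}\mapsto\e{\sigma^{-1}}$. Consider the reversal map $\ind{w}\mapsto\ind{w}^\star$ on the free monoid $\FreeMonoid$. It is an anti-automorphism of $\FreeMonoid$, since $(\ind{i}\cdot\ind{j})^\star=\ind{j}^\star\cdot\ind{i}^\star$.

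First I check that reversal preserves the congruence defining $\Okada$. Relations (\ref{relI}) and (\ref{relC}) are visibly symmetric under reversal: $ii\Mapsto i$ and $ij\Mapsto ji$ go to themselves. Relation (\ref{relS}) has the palindromic word $(i+1)\,i\,(i+1)$ on its left side and the single letter $i+1$ on its right side, so it is also sent to itself. If $\ind{v}=\ind{l}\,\ind{a}\,\ind{r}$ and $\ind{w}=\ind{l}\,\ind{b}\,\ind{r}$ with $(\ind{a},\ind{b})$ a defining relation, then $\ind{v}^\star=\ind{r}^\star\ind{a}^\star\ind{l}^\star$ and $\ind{w}^\star=\ind{r}^\star\ind{b}^\star\ind{l}^\star$. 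Since $(\ind{a}^\star,\ind{b}^\star)$ is again a defining relation, the congruence is stable under reversal. Hence $\e{\ind{w}}\mapsto\e{\ind{w}^\star}$ is a well-defined map $\Okada\to\Okada$. It reverses products because reversal does, and it is an involution because $(\ind{w}^\star)^\star=\ind{w}$. So it is a bijective anti-automorphism.

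It remains to identify this map with $\star$, which is where \cref{prop:revword} is used. Given $\sigma$, pick any reduced word $\ind{w}$ for $\e{\sigma}$; for instance $\lexmin(\sigma)$, which is reduced by \cref{prop:commutation-class}. The word-level map sends $\e{\sigma}=\e{\ind{w}}$ to $\e{\ind{w}^\star}$. By \cref{prop:revword}, $\ind{w}^\star$ is a reduced word for $\e{\sigma^{-1}}$, so $\e{\ind{w}^\star}=\e{\sigma^{-1}}$. Thus the map is exactly $\e{\sigma}\mapsto\e{\sigma^{-1}}$. Its well-definedness is automatic, since \cref{theo:bij-Sn_Okada} says $\sigma\mapsto\e{\sigma}$ is a bijection.

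There is no real obstacle; the substantive input is \cref{prop:revword}. The only points to be careful about are that the relations are reversal-invariant, which is (\ref{relS})'s palindromicity, and that the identification with $\sigma^{-1}$ needs a reduced representative. The same argument applies verbatim to $\Okada(X,Y)$, because the relations (\ref{relIxy}), (\ref{relCxy}) and (\ref{relSxy}) are reversal-invariant with the same scalars.
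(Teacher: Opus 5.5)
Your proposal is correct and follows the same approach as the paper. Reversal of words respects the reversal-symmetric Okada relations and so induces an anti-automorphism of $\Okada$, which \cref{prop:revword} then identifies with $\e{\sigma}\mapsto\e{\sigma^{-1}}$; you simply write out the congruence-stability and reduced-representative details that the paper leaves implicit.
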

\begin{proof}
  Since the relations defining $\Okada$ are symmetric, the map sending any
  word to its mirror image is compatible with the relations and therefore
  defines an anti-automorphism of $\Okada$. Thanks to
  Proposition~\ref{prop:revword}, this is nothing but the map
  $\e{\sigma} \mapsto \e{\sigma^{-1}}$.
\end{proof}

Each permutation has another important reduced word, namely the reverse word
of the inverse permutation, namely $\lexmin(\sigma^{-1})^\star$. 
The two preceding
lemmas imply the following corollary:
\begin{corollary}
  For any permutation $\sigma$, the elements $ \lexmin(\sigma)$ and
  $\lexmin(\sigma^{-1})^\star$ belong to the same commutation class.
\end{corollary}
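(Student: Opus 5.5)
The plan is to show that both words are reduced words for the same element $\e{\sigma}$ of $\Okada$, and then to invoke \cref{prop:commutation-class}. That proposition says two reduced words represent the same Okada element exactly when they lie in the same commutation class.

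First, $\lexmin(\sigma)$ is the reading of the normal form diamond diagram whose code is $\code(\sigma)$ (\cref{prop:lexmin-normal-form}). It is a reduced word for $\e{\sigma}$. Indeed, any word $\ind{w}$ with $\e{\ind{w}}=\e{\sigma}$ rewrites to this normal form, and rewriting never increases the number of black boxes. Hence no word for $\e{\sigma}$ is shorter than $\lexmin(\sigma)$.

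Second, apply the same fact to $\sigma^{-1}$: $\lexmin(\sigma^{-1})$ is a reduced word for $\e{\sigma^{-1}}$. Now use \cref{prop:revword} with the permutation $\sigma^{-1}$ and the word $\ind{w}=\lexmin(\sigma^{-1})$. It says that $\ind{w}$ is reduced for $\e{\sigma^{-1}}$ if and only if $\ind{w}^\star$ is reduced for $\e{(\sigma^{-1})^{-1}}=\e{\sigma}$. Therefore $\lexmin(\sigma^{-1})^\star$ is a reduced word for $\e{\sigma}$. Equivalently, the anti-automorphism $\star$ of the preceding corollary maps $\e{\sigma^{-1}}$ to $\e{\sigma}$. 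Since $\star$ reverses words and preserves length, reducedness transfers.

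Finally, $\lexmin(\sigma)$ and $\lexmin(\sigma^{-1})^\star$ are two reduced words for the same element $\e{\sigma}$. By \cref{prop:commutation-class} they are therefore in the same commutation class.

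The only delicate point is the correct direction of \cref{prop:revword}: it must be applied to $\sigma^{-1}$ rather than $\sigma$. Beyond that the argument is bookkeeping, and I expect no real obstacle.
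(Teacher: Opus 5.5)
Your proof is correct and follows the paper's route: \cref{prop:revword} applied to $\sigma^{-1}$ shows that $\lexmin(\sigma^{-1})^\star$ is a reduced word for $\e{\sigma}$, as $\lexmin(\sigma)$ is, and \cref{prop:commutation-class} then puts both words in the same commutation class. You also fill in a step the paper leaves implicit: $\lexmin(\sigma)$ is reduced in $\Okada$, because normal forms are unique per element and rewriting never adds black boxes.
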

This is yet another word in the same commutation class which is
the reading of the normal form for the reverse rewriting rule. In general,
$ \lexmin(\sigma)$ is not the same word as $\lexmin(\sigma^{-1})^\star$.
\medskip

The rewriting system also allows us to understand how to factor elements in the
Okada Monoid. A (right) \defn{$\Okada$-descent} of an element $\mathsf{e} \in \Okada$ is the last
index $w_k$ of a reduced word $\ind{w} = w_1 \cdots w_k$
for $\mathsf{e}$. This definition emulates the well-known characterization
of descents of a of permutation as the terminal indices in reduced words
for the permutation. Since the set of reduced words for $\e{\sigma} \in \Okada$
is a subset of the set of reduced words for the corresponding
permutation $\sigma \in \SG$, it follows that the
set of $\Okada$-descents of $\e{\sigma} \in \Okada$ is a subset of
the set of descents of  $\sigma \in \SG$. We only deal
with right descents since, thanks to \cref{prop:revword}, left $\Okada$-descents of $\mathsf{e}$
are the right $\Okada$-descents of $\mathsf{e}^\star$.
\begin{lemma}\label{lemma:descents-loop}
  Let $ \mathsf{e} \in\Okada$ and let $\Delta(\mathsf{e})$ be its associated canonical
  diamond diagram. Then $i$ is an $\Okada$-descent of $\mathsf{e}$ if and only
\begin{itemize}
  \item the $i\ordinal$ row of $\Delta(\mathsf{e})$ is occupied by a black box;
  \item there are no black boxes in either the $(i-1)\ordinalst$ or the
    $(i+1)\ordinalst$~rows of $\Delta(\mathsf{e})$ which are situated to the right
    of the right-most black box in the $i\ordinal$~row.
  \end{itemize}
\end{lemma}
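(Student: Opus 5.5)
The plan is to characterize descents by deciding when a black box in row $i$ can be moved to the very end of the reading using only rule $C$. By \cref{prop:commutation-class}, the reduced words of $\mathsf{e}$ are exactly the words in the commutation class of the reading $\ind{r}(\Delta(\mathsf{e}))=\lexmin(\sigma)$. So $i$ is an $\Okada$-descent if and only if some word in this commutation class ends with the letter $i$. In heap language (Viennot), the commutation class is the set of linear extensions of the heap of $\ind{r}(\Delta(\mathsf{e}))$. A letter can be placed last exactly when it is a maximal element of this heap.

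We first translate heap maximality into the diamond picture. Take an occurrence of $i$ in the reading, say the black box $b$ in row $i$. It can be commuted to the end exactly when every later letter differs from $i$ by at least $2$. If some later occurrence of $i$ exists, then $b$ is blocked, since $i\,i$ does not commute. We may therefore restrict attention to the right-most black box in row $i$. Here we use the reading order: a box is read after $b$ precisely when it lies in a later south-east diagonal, or lower in the same diagonal. Within one south-east diagonal, boxes in row $i$ and row $i+1$ appear only in the same diagonal or in adjacent ones, and the same holds for row $i-1$.

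Next we check both directions carefully.

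\emph{Sufficiency.} Suppose there is a black box in row $i$, and no black box of rows $i\pm1$ lies to the right of the right-most row-$i$ box $b$. Every letter read after $b$ is then at distance $\ge 2$ from $i$. For the row-$(i-1)$ box in the same diagonal, just below $b$, we need the convention that ``to the right'' includes the south-east neighbour. Once this is settled, repeated use of rule $C$, equivalently relation~(\ref{relC}) on words, moves $i$ to the end. The resulting word is still reduced, so $i$ is a descent.

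\emph{Necessity.} Suppose $i$ is the last letter of some reduced word. That word lies in the commutation class, so its final $i$ corresponds to a maximal element of the heap. This element must be the last occurrence of $i$, namely the right-most black box in row $i$. Any black box in row $i\pm1$ read after it would lie above it in the heap, contradicting maximality.

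The main obstacle is matching ``situated to the right'' with ``read later'' in the diamond geometry, because of the tilted coordinates. Concretely, we must confirm that a black box in row $i\pm1$ is read after $b$ exactly when it lies to the right of $b$. To settle this, we use the normal form description: each black box either has a black box to its northwest or lies on the western border. This rules out the ambiguous configuration in which the row-$(i-1)$ box sits directly southeast of $b$ in the same diagonal while another, unblocked copy remains. If that case persists, we handle it by noting that the same-diagonal box below $b$ is read after $b$ and so counts as being to the right.
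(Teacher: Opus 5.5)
Your proposal is correct and follows the paper's argument: by \cref{prop:commutation-class} the reduced words of $\mathsf{e}$ form the commutation class of the reading of $\Delta(\mathsf{e})$, and $i$ is a descent exactly when the last black box of row $i$ can be commuted to the end, which you make precise through heap maximality. Your last paragraph is unnecessary, and its claim that the normal form rules out a black row-$(i-1)$ box directly south-east of $b$ is false (take row $2$ of $\Delta(\e{1}\e{2}\e{1})$); the clean fact, valid in any diamond diagram, is that a box of row $i\pm1$ is read after $b$ if and only if it lies strictly to the right of $b$, because boxes of adjacent rows interleave horizontally and the south-east neighbour of $b$ in its own diagonal is strictly to its right.
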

\begin{proof}
  Recall that reduced words for $\mathsf{e} \in \Okada$ are the row readings of diamond diagrams
  within the commutation class of $\Delta(\mathsf{e})$. Now $i$ is an $\Okada$-decent of $\mathsf{e}$ 
  if and only if one can move, using only commutation, a black box to the right-most position in the
  $i\ordinal$~row of $\Delta(\mathsf{e})$.
\end{proof}
\begin{corollary}
  $\Okada$-descent sets are exactly the subsets of
  $\setN[N-1]$ which don't contain consecutive members. Consequently, the total number
  of $\Okada$-descent sets is the $N\ordinal$ Fibonacci number.
\end{corollary}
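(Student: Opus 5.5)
The plan is to prove the two inclusions separately and then count. Throughout I use \cref{lemma:descents-loop} together with \cref{prop:commutation-class}: the reduced words of $\mathsf{e}$ are exactly the readings of the diagrams in the commutation class of $\Delta(\mathsf{e})$.

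\emph{Descent sets have no two consecutive members.} Suppose $i$ and $i+1$ were both $\Okada$-descents of $\mathsf{e}$. By \cref{lemma:descents-loop}, rows $i$ and $i+1$ of $\Delta(\mathsf{e})$ both contain black boxes. Let $b_i$ and $b_{i+1}$ be the right-most black boxes in these rows. Boxes in adjacent rows of a diamond diagram are offset by half a cell, so they never share a horizontal position. Hence one of $b_i, b_{i+1}$ lies strictly to the right of the other. If $b_{i+1}$ lies to the right of $b_i$, this violates the second condition of the lemma for $i$. If $b_i$ lies to the right of $b_{i+1}$, it violates that condition for $i+1$. Either way we have a contradiction. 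The only point needing care is making ``situated to the right'' precise via horizontal position in the diamond picture; the rest is immediate.

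\emph{Every such set is realized.} Let $D=\{d_1<\dots<d_k\}\subseteq\setN[N-1]$ with no consecutive members, and put $\mathsf{e}=\e{d_1}\cdots\e{d_k}$. The letters are pairwise distinct and pairwise commute by \eqref{relC}. Place them in a diamond diagram with one black box per letter. No $2\times2$ pattern of rule $I$ or $S$ can occur, since these need two black boxes in the same row or in adjacent rows. So the word is reduced: only rule $C$ applies, and the number of black boxes is preserved. By \cref{prop:commutation-class}, every reduced word of $\mathsf{e}$ is a rearrangement of $d_1\cdots d_k$. Therefore every $\Okada$-descent lies in $D$. Conversely, each $d_j$ can be commuted to the last position, so $d_j$ is a descent. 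Thus the descent set of $\mathsf{e}$ is exactly $D$.

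\emph{Counting.} Let $a_n$ be the number of subsets of $\setN[n]$ with no two consecutive members. Splitting on whether $n$ belongs to the subset gives $a_n=a_{n-1}+a_{n-2}$, with $a_0=1$ and $a_1=2$. The number of descent sets is therefore $a_{N-1}$, the $N$\ordinal{} Fibonacci number under the indexing $F_1=1$, $F_2=2$ suited to the $\YF$-lattice. I expect no real obstacle; the only delicate step is the geometric comparison of right-most boxes in adjacent rows in the first part.
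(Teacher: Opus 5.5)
Your proof is correct and follows the paper's route. The absence of consecutive descents is read off from \cref{lemma:descents-loop}, and every admissible set $I$ is realized by the fully commutative product $\prod_{i\in I}\e{i}$. The paper treats both steps as immediate; your checks that this product is reduced (rules $I$ and $S$ never apply because each row carries at most one black box) and that its descent set is exactly $I$ supply details the paper leaves implicit.
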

\begin{proof}
  The only remaining thing to prove is that for any subset $I$ without
  consecutive members, there is an element of $\Okada$ which has $I$ as an $\Okada$-descent
  set. Clearly $\prod_{i\in I} \e{i}$ is such an element.
\end{proof}
%On can see from \cref{fig.six_normal_forms} that $\e{123}$ has no descents,
%$\e{312}, \e{213}$ and $\e{321}$ each descent set $\{1\}$, and both $\e{132}$ and
%$\e{231}$ have descent set $\{2\}$.
One can see from \cref{fig.six_normal_forms} that the monoid unit $\e{\boldsymbol{\epsilon}}$ has no $\Okada$-descents, that
$\e{\ind{1}}$, $\e{\ind{2}\ind{1}}$ and $\e{\ind{1}\ind{2}\ind{1}}$ all have $\{1\}$ as an $\Okada$-descent set, and that both $\e{\ind{2}}$ and $\e{\ind{1}\ind{2}}$
have $\{2\}$ as an $\Okada$-descent set.

In general, terminal indices of any word are $\Okada$-descents:
\begin{proposition}
  If $\ind{w}=w_1 \cdots w_k$ is \underline{any} index word then
  $w_k$ is an $\Okada$-descent of $\e{\ind{w}} \in \Okada$.
\end{proposition}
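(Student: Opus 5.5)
The idea is to factor $\e{\ind{w}}$ as a reduced product ending in $\e{w_k}$, after which $w_k$ is the last letter of a reduced word. Write $\ind{w} = \ind{u}\cdot w_k$ with $\ind{u}=w_1\cdots w_{k-1}$, and set $\mathsf{f}=\e{\ind{u}}$. Let $\ind{v}$ be a reduced word for $\mathsf{f}$, for instance the reading of its canonical diamond diagram $\Delta_{\mathsf{f}}$. Then $\e{\ind{w}}=\e{\ind{v}\cdot w_k}$. If $\ind{v}\cdot w_k$ is reduced, we are done.

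Otherwise, I would reduce $\ind{v}\cdot w_k$ with the rewriting system $\OSys$ and check that the resulting normal form still admits a reduced word ending in $w_k$. To do this, build the diamond diagram $\Delta$ obtained from $\Delta_{\mathsf{f}}$ by appending a new rightmost south-east diagonal containing a single black box $b$ in row $i=w_k$. Its reading is $\ind{v}\cdot w_k$. The key invariant to maintain along a suitable rewriting sequence $\Delta\rewto\norf(\Delta)$ is the criterion of \cref{lemma:descents-loop}: row $i$ contains a black box, and no black box of rows $i\pm1$ lies strictly to the right of the rightmost black box of row $i$. In $\Delta$ this invariant holds trivially, because $b$ is the rightmost box of the whole diagram.

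I would then argue that every rule $C$, $I$, $S$ preserves this invariant. Each rule moves a black box west or deletes it, and it never creates a black box to the east of an existing one. The only danger is that the rightmost black box of row $i$ is deleted or moved west, exposing a box in row $i\pm1$.
\begin{itemize}
\item For rule $I$ in row $i$, the eastern box is deleted, but the western box of the pattern, also in row $i$, survives. Nothing in rows $i\pm1$ sits between these two boxes, since they are adjacent in row $i$ up to the neighbouring rows in the pattern, and those cells are white for $I$.
\item For rule $S$ in row $i$, the surviving box is again in row $i$. Any box in row $i-1$ that lay between the two row-$i$ boxes is the southern box of the pattern, which gets deleted.
\item For rule $C$ moving the rightmost row-$i$ box west by one step, the cells it passes over in rows $i\pm1$ are exactly the white top and bottom cells of the pattern.
\item A rule $S$ in row $i+1$ could delete a row-$i$ box, namely the southern box of the pattern. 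But then a row-$(i+1)$ box survives to its west, so no row-$(i\pm1)$ box appears to the right of the new rightmost row-$i$ box unless it was already there.
\end{itemize}

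This case check is where I expect the main obstacle. The other cases are local, but this last one (a rule in row $i+1$ deleting the rightmost row-$i$ box) needs care: one must confirm that row $i$ still contains some black box to the east of every black box in rows $i\pm1$. If this fails, I would instead choose a specific rewriting order. First normalize $\Delta_{\mathsf{f}}$, which is already normal. Then only rewrite patterns involving the propagation of $b$ westward: $b$ moves west by $C$ until it meets an obstruction, where it is either absorbed by $I$ (the row-$i$ box to its northwest remains) or triggers $S$ (again a row-$i$ box remains). Each step keeps a row-$i$ box as the easternmost box in rows $i-1,i,i+1$. Any cascade then only involves rows further away, as in the critical-pair analysis of the termination theorem. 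Confluence guarantees that the normal form obtained this way is $\Delta_{\e{\ind{w}}}$, and \cref{lemma:descents-loop} then yields that $w_k$ is an $\Okada$-descent.
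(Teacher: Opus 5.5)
Your strategy is the paper's: you carry the criterion of \cref{lemma:descents-loop} as an invariant along a rewriting sequence down to $\norf(\Delta)$. The invariant is that row $i=w_k$ is occupied and that no black box of rows $i\pm1$ lies to the right of the rightmost black box $b$ of row $i$. Starting from $\Delta_{\mathsf f}$ plus one box, rather than from an arbitrary diagram with reading $\ind{w}$, makes no difference, and your first three bullets are correct.

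The gap is your fourth bullet. Suppose the $S$-move with middle row $i+1$ deletes $b$ itself. Then the new rightmost box of row $i$ may lie far to the west, or row $i$ may become empty. The surviving western box of the pattern sits in row $i+1$, half a step west of $b$, so it would then lie to the right of the new rightmost row-$i$ box. In other words, ``a row-$(i+1)$ box survives to its west'' describes a potential violation, not a reassurance. Your fallback, a special rewriting order whose cascades ``only involve rows further away'', is asserted rather than proved. As written, the invariant is therefore not established.

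The missing observation is that this case cannot occur while the invariant holds. If $b$ is the southern box of a pattern with middle row $i+1$, that pattern's eastern box lies in row $i+1$, half a step to the right of $b$. Rule $S$ is the only rule whose left side has a black southern box, and it requires that eastern box to be black. You would then have a black row-$(i+1)$ box to the right of $b$, contradicting the invariant.

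Similarly, $b$ is never a northern box, since the northern box is white in every rule. It is never a western box either, since the eastern box would then be a black row-$i$ box to the right of $b$. Hence $b$ is either untouched or is the eastern box of a pattern in row $i$, which your first three bullets handle. Every other move only shifts boxes west or deletes them, so it cannot create a violation.

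The invariant therefore survives every step of every rewriting sequence, and you can drop the fallback. This is exactly the paper's step ``the assumptions force $b$ to be the eastern box of the pattern''. Note that it is the row-$(i+1)$ half of the invariant that excludes the bad case, so you genuinely need both rows.

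A minor point on the base case: $b$ is in general not the rightmost box of the whole diagram, since boxes of earlier diagonals in rows $\le i-3$ can lie to its right. What you need, and what holds, is that $b$ is last in the reading order. For rows $i-1,i,i+1$, lying to the right of $b$ is the same as being read after $b$.
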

Of course if $i$ is an $\Okada$-descent of $\e{\sigma}$ then $i$ is a descent of
the permutation $\sigma$. The converse is false. For example, both $i = 1,2$ are
descents of the permutation $\sigma = (3,2,1)$ while
$i = 1$ is
the only $\Okada$-descent of $\e{\sigma}$.
\begin{proof}
  Let $w_k = i$ and
  pick a diamond diagram $\Delta$ whose reading is $\ind{w}$ and
  consider a rewriting sequence
  $\Delta=\Delta_1\mapsto \Delta_2\mapsto \dots \mapsto
  \Delta_\ell=\norf(\Delta)$. We argue by induction that for all $j\leq \ell$
  \begin{itemize}
  \item the $i\ordinal$ row of $\Delta_j$ is occupied by a black box;
  \item there is no black box in the $(i-1)\ordinalst$~row of $\Delta_j$ which is situated to the right of the right-most
    black box in the $i\ordinal$~row.
  \end{itemize}
%  \begin{itemize}
%  \item the $i\ordinal$ row of $\Delta_j$ is not empty;
%  \item there is no black box on the right of the rightmost black box in the
%    $i$-th column in both the $i-1\ordinalst$~row and the $i+1\ordinalst$~row.
%  \end{itemize}
  The base case of the induction is trivial since the reading of $\Delta_1=\Delta$ ends
  with $i$. We now turn to the inductive step. Call $b$ the right-most black box in
  the $i\ordinal$ row of $\Delta_j$. Again, we emphasize that rewriting only move black boxes to the
  left. So nothing changes to the right of $b$ provided $b$ does not belong to the
  pattern of the rewriting step $\Delta_j \mapsto \Delta_{j+1}$. Otherwise,
  the assumptions above force $b$ to be the eastern box of the pattern. Since the pattern can only be rewritten to
  \begin{tikzpicture}[diamond, diamond/bound={3}{0}]\matrix[diamond/matrix] {
    \nn \\
    & \bl & \ \\
    & \ & \ & \nn& \nn\\
  }; \end{tikzpicture}
  the induction hypothesis also holds for $\Delta_{j+1}$.
\end{proof}

\subsection{Loop configurations}
\label{subsection:loop-configurations}

To better understand elements of the Okada monoid, we'll modify diamond diagrams by
replacing black and white squares respectively by \defn{double U-turn} and
\defn{double straight squares} as:
\mbox{$\begin{tikzpicture}[diamond,
    diamond/bound={1}{1}]\matrix[diamond/matrix] {
      \bl\\
    };
\end{tikzpicture}\to
\begin{tikzpicture}[diamond, diamond/bound={1}{1}]\matrix[diamond/matrix] {
    \tu\\
};
\end{tikzpicture}$}
and
\mbox{$\begin{tikzpicture}[diamond, diamond/bound={1}{1}]\matrix[diamond/matrix] {
    \ \\
};
\end{tikzpicture}\to
\begin{tikzpicture}[diamond, diamond/bound={1}{1}]\matrix[diamond/matrix] {
    \st\\
};
\end{tikzpicture}$}. If necessary, we complete the paths by adding straight lines
at the top and bottom of the diagram, as if there where half straight squares. The left and right end-points of the paths are numbered from bottom to top,
with positive numbers on the left and negative numbers (overlined) on
the right. The picture obtained is called the \defn{fully-packed loop configuration (FPLC)}
associated to the diamond diagram. See \cref{fig.path-def} for an example.
\begin{figure}[ht]
  % \makebox[\textwidth]{$
  \[
\begin{tikzpicture}[diamond, diamond/bound={6}{3}]\matrix[diamond/matrix] {
  \  \\
  \bl&\bl\\
  \bl&\  &\  \\
  \bl&\  &\bl&\  \\
  \bl&\  &\  &\bl&\bl\\
  &\  &\bl&\  &\bl&\  \\
  &   &\  &\  &\bl&\bl&\bl\\
  &   &   &\  &\  &\  &\  &\bl\\
};
\end{tikzpicture}
\hspace{1cm}
\begin{tikzpicture}[diamond, diamond/bound={6}{4}]\matrix[diamond/matrix] {
  \TLlab1 & \stt\\
  \TLlab2 &\st& \stt\\
  \TLlab3 &\tu&\tuC{black}{green}& \sttC{green}\\
  \TLlab4 &\tuC{black}{green}&\stC{green}{green}&\stC{green}{green}& \sttC{green}\\
  \TLlab5 &\tuC{green}{black}&\stC{black}{green}&\tuC{green}{black}&\stC{black}{green}& \sttC{green}\\
  \TLlab6 &\tu&\st&\st&\tuC{black}{green}&\tuC{green}{green}& \sttC{green}\\
          &\stb&\st&\tuC{black}{blue}&\st\stC{blue}{green}&\tuC{green}{blue}&\stC{blue}{green}& \sttC{green}\\
          &   &\stb&\stC{black}{blue}&\stC{blue}{blue}&\tuC{blue}{blue}&\tuC{blue}{green}&\tuC{green}{black}& \stt\\
          &   &   &\stb&\stC{black}{blue}&\stC{blue}{blue}&\stC{blue}{green}&\stC{green}{black}&\tu& \stt\\
  & & & & \TLlab{\ov6} & \TLlab{\ov5} &\TLlab{\ov4} &\TLlab{\ov3} &\TLlab{\ov2} &\TLlab{\ov1} \\
};
\end{tikzpicture}
\hspace{1cm}
\begin{tikzpicture}[yscale=0.35,xscale=0.6, thick, baseline={(current bounding box.east)}]
\tikzstyle{vertex} = [shape=rectangle, minimum height=15pt, minimum width=0pt, inner sep=0pt]
\tikzstyle{mid} = [draw, fill=white, shape=circle, minimum size=2pt, inner sep=1pt]
\foreach \i in {1,...,6} {
  \node[vertex] (G-\i) at ($(-3, \i*1.5-1.5)$) {$\i$};
  \node[vertex] (G--\i) at ($(1.44, \i*1.5-1.5)$) {$\ov{\i}$};
}
\draw[green] (G--3) -- (G-5) node[pos=0.35294117647058826,mid] (M-3-5) { 1 };
\draw (G-1) .. controls +(2.3, 1.05) and +(2.3, -1.05) .. (G-4) node[pos=0.5,mid] (M1-4) { 1 };
\draw[blue] (G--5) .. controls +(-0.7, -0.35) and +(-0.7, 0.35) .. (G--4) node[pos=0.5,mid] (M-5--4) { 2 };
\draw (G--6) -- (G-6) node[pos=0.35294117647058826,mid] (M-6-6) { 4 };
\draw (G-2) .. controls +(0.7, 0.35) and +(0.7, -0.35) .. (G-3) node[pos=0.5,mid] (M2-3) { 2 };
\draw (G--2) .. controls +(-0.7, -0.35) and +(-0.7, 0.35) .. (G--1) node[pos=0.5,mid] (M-2--1) { 1 };
\draw[color=red, <-] (M2-3) -- (M1-4);
\draw[color=red, <-] (M-5--4) -- (M-3-5);
\draw[color=red, <-] (M-6-6) -- (M-3-5);
\end{tikzpicture}
\]
  \caption[Associated diamond, path and arc-diagrams]{\label{fig.path-def}%
    A diamond diagram with its associated fully-packed loop
    configuration and labelled arc-diagram. The red arrows show the Hasse
    diagram of the nesting order on the arcs. Two paths and their corresponding
    edges are highlighted in green and blue.}
\end{figure}
A path in an FPLC which doesn't terminate at the border of the diagram is
called a \defn{loop}.

The rewriting system $\OSys$ can be reinterpreted
using straight and U-turn squares:
\[
  \OSys\eqdef
  \left\{
\begin{tikzpicture}[diamond, diamond/bound={3}{0}]\matrix[diamond/matrix] {
    \nn \\
    & \st & \st \\
    & \st & \tu & \nn& \nn\\
  };
\end{tikzpicture}
\longmapsto
\begin{tikzpicture}[diamond, diamond/bound={3}{0}]\matrix[diamond/matrix] {
    \nn \\
    & \tu & \st \\
    & \st & \st & \nn& \nn\\
  };
\end{tikzpicture}
\ ,\quad
\begin{tikzpicture}[diamond, diamond/bound={3}{0}]\matrix[diamond/matrix] {
    \nn \\
    & \tu & \st \\
    & \st & \tu & \nn& \nn\\
  };
\end{tikzpicture}
\longmapsto
\begin{tikzpicture}[diamond, diamond/bound={3}{0}]\matrix[diamond/matrix] {
    \nn \\
    & \tu & \st \\
    & \st & \st & \nn& \nn\\
  };
\end{tikzpicture}
\ ,\quad
\begin{tikzpicture}[diamond, diamond/bound={3}{0}]\matrix[diamond/matrix] {
    \nn \\
    & \tu & \tu \\
    & \st & \tu & \nn& \nn\\
  };
\end{tikzpicture}
\longmapsto
\begin{tikzpicture}[diamond, diamond/bound={3}{0}]\matrix[diamond/matrix] {
    \nn \\
    & \tu & \st \\
    & \st & \st & \nn& \nn\\
  };
\end{tikzpicture}\right\}\,.
\]

We export the notation used for diamond diagrams to FPLCs,
replacing freely each diamond diagram $\Delta$ by its
associated FPLC $\mathcal{D}$. For example:
\begin{multicols}{2}
\begin{itemize}
\item $\e{\mathcal{D}} \eqdef \e{\Delta}$
\item $\mathcal{D}_{\e{}}$ denotes the FPLC associated to $\Delta_{\e{}}$ for any $\e{} \in \Okada$
\item $\norf(\mathcal{D})$ replaces $\norf(\Delta)$
\item $\mathcal{D}_1\mapsto \mathcal{D}_2$
replaces  $\Delta_1\mapsto \Delta_2$
\item $\mathcal{D}_1\rewto \mathcal{D}_2$
replaces $\Delta_1\rewto \Delta_2$
\end{itemize}
\end{multicols}

Notice that a path must pass through an intermediate straight-square whenever
the path goes up and later goes down. In particular, loops must always pass
through some straight squares. This simple observation has a important
consequence:
\begin{proposition}\label{prop:red-expr-loop}
  The reading $\ind{r}(\Delta)$ of a diamond diagram $\Delta$ is a
  reduced word for an element in the monoid $\Okada$ if and only if its
  associated loop configuration $\mathcal{D}$ contains no paths which go up
  and later down. In particular, it contains no loops.
\end{proposition}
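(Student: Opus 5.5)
We argue both directions through the rewriting system $\OSys$ and the characterization of reduced words in \cref{prop:commutation-class}: a word $\ind{v}=\ind{r}(\Delta)$ is reduced exactly when $\Delta$ rewrites to $\norf(\Delta)$ using only the commutation rule $C$. Equivalently, no diagram in the $\Comm$-class of $\Delta$ contains an occurrence of the pattern of rule $I$ or rule $S$. Indeed, $\Comm$ is confluent, so if some $I$ or $S$ step is ever applicable along a rewriting path, the number of black boxes drops strictly. Conversely, if only $C$ steps occur, the normal form keeps the same number of black boxes; by \cref{prop:lexmin-normal-form} the reading of $\norf(\Delta)$ is a $\lexmin$ word, hence reduced, and so $\ind{v}$ is reduced.

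The first step is to read rules $I$ and $S$ in FPLC language. In both patterns the southern box is straight, the western box is straight and the eastern box is a U-turn (in $I$ the northern box is a U-turn, in $S$ it is a U-turn as well; the differences concern the northern or western colour). In either case the path entering the eastern U-turn from the left and the strand through the two straight boxes combine into a path that goes up and then comes back down. So if an $I$ or $S$ pattern occurs in a commutation-equivalent diagram, that diagram contains an up-then-down path. Moreover rule $C$ preserves the connectivity of paths: it slides a U-turn across straight squares, which is an isotopy of the strands. Hence the property ``contains an up-then-down path'' is an invariant of the $\Comm$-class. This proves that non-reduced implies an up-then-down path.

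For the converse, suppose $\mathcal{D}$ has a path that goes up and later down. By the observation preceding the statement, this path passes through a straight square between its ascending and descending portions. We take such a path and a turning portion that is innermost, so that it is minimal in the sense that the region it encloses contains no other up-then-down subpath. Inside that region all U-turns can be pushed west by $C$-moves (the commutation class acts like heaps of dimers), until the turning point becomes adjacent to the U-turn where the path reverses. At that point the local $2\times 2$ neighbourhood is an $I$ or $S$ pattern. Therefore a non-$C$ rewriting applies to a commutation-equivalent diagram, and $\ind{v}$ is not reduced.

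The main obstacle is this converse extraction. We must justify that commutation moves can bring the U-turns bounding the up-then-down excursion into a single $2\times2$ pattern without destroying the excursion. We plan to argue by induction on the area (the number of squares) enclosed between the ascending and descending strands, using that $C$ moves the enclosed U-turns out and preserves the path. The final assertion about loops is immediate: a loop must go up and later down, since it passes through straight squares as noted before the statement.
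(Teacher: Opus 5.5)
Your overall strategy is the paper's: reducedness amounts to the absence of an $I$- or $S$-pattern anywhere in the commutation class, and rule $C$ only alters horizontal steps of paths. For the converse one picks an extremal up-then-down turn and produces an $I$ or $S$ pattern by commutations. Two things need fixing.

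The minor one: you misdescribe the patterns. In the loop-configuration form of $\OSys$, both $I$ and $S$ have U-turns in the western and eastern squares and a straight northern square. The southern square is straight for $I$ and a U-turn for $S$. The up-then-down path is then as follows.
\begin{itemize}
\item For $I$, it is the small loop through all four squares (walk \texttt{UD}).
\item For $S$, it runs through the southern square's western arc, the western U-turn, the northern straight, the eastern U-turn and the southern square's eastern arc (walk \texttt{UUDD}).
\end{itemize}
Your conclusion is right, but its justification as written is not.

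The genuine gap is in the converse, exactly at the step you flag. Say the turn goes up at a U-turn $A$ in row $i$ and down at a U-turn $B$ in row $i$ further west. The horizontal run between them passes through straight squares of rows $i$ and $i+1$.
\begin{itemize}
\item Sliding $A$ one step west by rule $C$ requires the row-$(i-1)$ square beneath that step to be straight; a U-turn there blocks $A$.
\item Pushing that blocker west in turn requires a straight square in row $i-2$ below it, and so on.
\item With only westward moves, $B$ never moves toward the blocker.
\end{itemize}
So ``push all U-turns west, induct on the enclosed area'' does not close. Note also that the ``region enclosed'' by a turn that is not a loop is not well defined.

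The missing idea, which is the paper's, is to choose the horizontal run as \emph{low} as possible. Then at most one square of row $i-1$ beneath the run is a U-turn, since two consecutive ones would bound a lower up-then-down run.
\begin{itemize}
\item If there is none, slide $A$ west until it is adjacent to $B$. This gives an $I$ pattern (the path was a loop).
\item If there is one, do not try to remove it. Slide $A$ west until it sits just east of the blocker, and slide $B$ \emph{east} until it sits just west of it. The eastward move is the reverse of rule $C$, which is legitimate because only the commutation class matters. The blocker then becomes the southern square of an $S$ pattern.
\end{itemize}
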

Since the reading of normal forms are particular reduced words, they contain no
paths going up and later down nor loops.

\begin{remark}
  The reading of a diamond diagram is reduced if and only if the number
  of its black squares is minimal, which means that the number of
  U-turn squares in its associated loop configuration is also minimal.
\end{remark}
\begin{proof}[Proof of \cref{prop:red-expr-loop}]
  Rule (C), which implements the
  commutation relations, only changes the horizontal steps of paths.
  If a FPLC $\mathcal{D}$ can be reduced, it must contain either
  \begin{tikzpicture}[diamond, diamond/bound={3}{0}]\matrix[diamond/matrix] {
      \nn \\
      & \tu & \st \\
      & \st & \tu & \nn& \nn\\
    };
  \end{tikzpicture}
  or
  \begin{tikzpicture}[diamond, diamond/bound={3}{0}]\matrix[diamond/matrix] {
      \nn \\
      & \tu & \tu \\
      & \st & \tu & \nn& \nn\\
    };
  \end{tikzpicture}
  in its commutation class. Both contain a path going up and down so $\mathcal{D}$ must
  contain such a path as well.
  \medskip

  Conversely, we need to show that if a FPLC has a path which goes up and
  down, it can be reduced. For any path going up and then down, there
  must be an intermediate sequence of horizontal steps following the up
  step and preceding the down step.

  Consider one of the lowest possible sequences of horizontal steps confined by an up
  and down step. It follows a certain number of straight square. Under
  those square there must be at most one U-turn square. Indeed, if there were
  more than one, the straight line between the two leftmost U-turn squares
  would build a path going up and down, contradicting the assumption that the
  path inhabits the lowest possible level of the FPLC. This is illustrated below:
  \[
    \begin{tikzpicture}[diamond, diamond/bound={2}{5}]
      \matrix[diamond/matrix] {
        \nn&\nn&\nn\\
        \tu&\stt\\
        \stb&\st&\tu\\
        &\stb&\st&\tu\\
        &&\stb&\st&\stt\\
        &&&\stb&\st&\stt\\
        &&&&\stb&\tu&\nn\\
      };
    \end{tikzpicture}
  \]
  Thus, there are only two cases: Either there is no U-turn square below or
  there is exactly one.

  If there is no U-turn square below, we are left with a loop containing two
  straight lines and two U-turns. We can apply the following rewriting sequence:
  \[
    \begin{tikzpicture}[diamond, diamond/bound={1}{5}]
      \matrix[diamond/matrix] {
        \tu&\stt\\
        \stb&\st&\stt\\
        &\stb&\nn\dots&\stt\\
        &&\stb&\st&\stt\\
        &&&\stb&\tu\\
      };
    \end{tikzpicture}\longmapsto
    \begin{tikzpicture}[diamond, diamond/bound={1}{5}]
      \matrix[diamond/matrix] {
        \tu&\stt\\
        \stb&\st&\stt\\
        &\stb&\nn\dots&\stt\\
        &&\stb&\tu&\stt\\
        &&&\stb&\st\\
      };
    \end{tikzpicture}\longmapsto\dots\longmapsto
    \begin{tikzpicture}[diamond, diamond/bound={1}{5}]
      \matrix[diamond/matrix] {
        \tu&\stt\\
        \stb&\tu&\stt\\
        &\stb&\nn\dots&\stt\\
        &&\stb&\st&\stt\\
        &&&\stb&\st\\
      };
    \end{tikzpicture}\longmapsto
    \begin{tikzpicture}[diamond, diamond/bound={1}{5}]
      \matrix[diamond/matrix] {
        \tu&\stt\\
        \stb&\st&\stt\\
        &\stb&\nn\dots&\stt\\
        &&\stb&\st&\stt\\
        &&&\stb&\st\\
      };
    \end{tikzpicture}
  \]
  The last step removes a U-turn square showing that the reading of $\mathcal{D}$
  is not a reduced index word.

  What remains is the case where there is a single U-turn square
  below. We proceed similarly:
  \[
    \begin{tikzpicture}[diamond, diamond/bound={2}{5}]
      \matrix[diamond/matrix] {
        \nn&\nn&\nn\\
        \tu&\stt\\
        \stb&\nn\dots&\stt\\
        &\stb&\st&\tu\\
        &&\stb&\st&\stt\\
        &&&\stb&\nn\dots&\stt\\
        &&&&\stb&\tu&\nn\\
      };
    \end{tikzpicture}\longmapsto
    \begin{tikzpicture}[diamond, diamond/bound={2}{5}]
      \matrix[diamond/matrix] {
        \nn&\nn&\nn\\
        \tu&\stt\\
        \stb&\nn\dots&\stt\\
        &\stb&\st&\tu\\
        &&\stb&\tu&\stt\\
        &&&\stb&\nn\dots&\stt\\
        &&&&\stb&\st&\nn\\
      };
    \end{tikzpicture}\longmapsto
    \begin{tikzpicture}[diamond, diamond/bound={2}{5}]
      \matrix[diamond/matrix] {
        \nn&\nn&\nn\\
        \st&\stt\\
        \stb&\nn\dots&\stt\\
        &\stb&\tu&\tu\\
        &&\stb&\tu&\stt\\
        &&&\stb&\nn\dots&\stt\\
        &&&&\stb&\st&\nn\\
      };
    \end{tikzpicture}\longmapsto
    \begin{tikzpicture}[diamond, diamond/bound={2}{5}]
      \matrix[diamond/matrix] {
        \nn&\nn&\nn\\
        \st&\stt\\
        \stb&\nn\dots&\stt\\
        &\stb&\tu&\st\\
        &&\stb&\st&\stt\\
        &&&\stb&\nn\dots&\stt\\
        &&&&\stb&\st&\nn\\
      };
    \end{tikzpicture}
  \]
  In both cases, we manage to remove at least one U-turn square.
\end{proof}
\cref{fig.example_normal_forms} shows more examples of normal form FPLCs.

\subsection{The labelled arc-diagram monoid}
\label{subsection:the-labelled-arc-diagram-monoid}

At this stage, we know that Okada monoid elements are in bijection with
commutation classes of normal forms of FPLCs. Within commutation
classes horizontal steps of FPLCs can move freely. This
suggest that we can ignore them and record only how much each path goes down
and then up. This is reinforced by the following key observation:
\begin{lemma}
\label{lemma-FPLC-height-invariance}
  Consider a FPLC $\mathcal{D}$ of rank $N$ and a path in $\mathcal{D}$ joining endpoints
  $i, j \in \setN \cup \setNN$. Then the lowest height $h\in\setN$ of
  a horizontal step attained by the path is invariant under any rewriting of $\mathcal{D}$. The
  normal form of $\mathcal{D}$ must therefore contain a path joining $i, j \in \setN \cup \setNN$
  whose minimal height is also $h \in \setN$.
\end{lemma}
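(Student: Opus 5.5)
The proof is a local check: a single rewriting step of each type in $\OSys=\{C,I,S\}$ preserves the endpoints of every path that reaches the border, and preserves the minimal height of its horizontal steps. Since $\norf(\mathcal{D})$ is reached from $\mathcal{D}$ by a finite rewriting sequence (termination of $\OSys$), the second sentence of the statement then follows by induction on the length of that sequence. So I only need to analyse one step $\mathcal{D}_1\mapsto\mathcal{D}_2$, where the pattern is a $2\times 2$ block whose middle row has height $i$. Outside the block the two configurations coincide, so only the strands passing through the four squares of the block can change.

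For rule $C$ (commutation), the U-turn square simply slides one diagonal to the west across a straight square. As already observed in the proof of \cref{prop:red-expr-loop}, this changes only the horizontal steps of paths. Concretely, I would trace the four boundary strands of the $2\times 2$ block before and after, check that they are paired identically, and check that the horizontal levels visited inside the block are the same set of heights, namely rows $i-1,i,i+1$, with only their order shifted. Hence endpoints and lowest horizontal heights are unchanged.

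For rules $I$ and $S$, I would enumerate the strand pairings on the eight boundary half-edges of the block in both configurations. The claim to verify is that the left pattern and the right pattern connect these half-edges identically, except that the left pattern may additionally contain a closed loop, or a detour that goes up and then down through a straight square. This matches the removal of a U-turn pair in the reduction pictures from the proof of \cref{prop:red-expr-loop}. Then, for a border-to-border path passing through the block, the portion inside the block is replaced by one that is monotone and stays within rows $i$ and $i\pm1$. I must check that this portion's minimum horizontal height does not change. The point is that the strands entering the pattern's western U-turn already reach down to the same level as the deleted portion. For rule $I$, the strand through the two stacked U-turns visits height $i$ both before and after. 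For rule $S$, the U-turn in row $i-1$ removed by $S$ lies on a strand that, after rewriting, still passes horizontally through row $i-1$ via the new straight square.

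The main obstacle is this bookkeeping for rule $S$. The removed square lies in row $i-1$, so I have to make sure that the path whose lowest point was there either is a loop or still attains height $i-1$ elsewhere. I would handle this by drawing both patterns with explicit strand labels, viewing a U-turn square as connecting its west and south half-edges and its north and east half-edges, and reading off connectivity and heights directly. The cases where the pattern sticks outside the diagram are covered by the convention that the extra box is white, hence straight, in both patterns, so the same tables apply. Finally, termination gives a finite sequence $\mathcal{D}\rewto\norf(\mathcal{D})$, and induction yields a path in the normal form joining $i,j$ with the same minimal height $h$.
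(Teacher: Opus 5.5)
Your strategy (check one rewriting step for each rule of $\OSys$, then induct along the finite sequence $\mathcal{D}\rewto\norf(\mathcal{D})$) is the paper's. Your endpoint argument is also fine: in all three rules the boundary half-edges of the block are paired identically, and the left side of $I$ carries an extra closed loop. The height part, however, fails as you set it up, because horizontal steps inside the $2\times 2$ block are redistributed between strands.

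To see this, fix the following conventions.
\begin{itemize}
\item The cells of the pattern are $W,N,E,S$, with $W,E$ in row $i$.
\item A straight square joins its NW/NE and SW/SE edges; a U-turn square joins its NW/SW and NE/SE edges.
\item A horizontal step lying between rows $h-1$ and $h$ has height $h$, so that node $a$ sits at height $a$.
\end{itemize}
With these conventions, the three rules act as follows.
\begin{itemize}
\item Under $C$, the strand joining the two western boundary edges has horizontal steps of heights $i+1,i$ before and none after. The strand joining the two eastern edges has none before and heights $i+1,i$ after.
\item Under $I$, the eastern strand likewise gains heights $i+1,i$, inherited from the deleted loop.
\item Under $S$, the eastern strand gains $i+1,i$. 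The strand through the two bottom edges has, inside the block, only height $i+1$ before and only height $i-1$ after.
\end{itemize}
So the claim that ``this portion's minimum horizontal height does not change'' is false. Your claim for $C$ holds only in aggregate over all strands, not per path. Your worry for $S$ also points the wrong way: before the rewrite that strand has no horizontal step in row $i-1$ inside the block, and the danger is that the rewrite \emph{lowers} its minimum.

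The missing ingredient is non-local. By the observation preceding \cref{prop:red-expr-loop} and its mirror image, a path cannot reverse its vertical direction without crossing a straight square at that level. Nor can it reach a boundary node just after a downward U-turn without doing so. Hence the lowest horizontal height of a path equals the minimum of its sequence of levels, which is its walk in the sense of \cref{sec:coeffprod}. Rule $C$ changes no such sequence. Rules $I$ and $S$ only delete, respectively, a closed loop and an up-then-down detour \texttt{UUDD} lying above its base level. So the minima are preserved.

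Equivalently, you may look one square beyond the block. For example, the eastern strand leaves through the SE edge of $E$ heading east just below row $i$. The next square is either straight, giving a horizontal step of height $i$ that is already present, or it continues the descent, in which case a straight square of smaller height must occur before the strand turns or ends. Similarly, the bottom strand of $S$ reaches the SW edge of $S$ either along a horizontal step of height $i-1$ or rising from below.
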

\begin{proof}
  This it clear from the three possible rewriting rules.
\end{proof}
Accordingly, Lemma \ref{lemma-FPLC-height-invariance}
allows us to abbreviate the structure of a FPLC $\mathcal{D}$ by removing its loops
and recording the connectivity of its paths along with their heights. Formally
we remove all loop of $\mathcal{D}$, label each
path by its respective height and taking the isotopy class of what remains. The
result is called the \defn{height-labelled isotopy class} and  is denoted $[\mathcal{D}]$; an example is depicted in the third image
of \cref{fig.path-def}.
In view of the previous remarks
$[\mathcal{D}_1] = [\mathcal{D}_2]$ whenever $\mathcal{D}_1$ and $\mathcal{D}_2$ are
two FPLCs of rank $N$ which are related by a sequences of rewriting moves. It turns out
that this is actually an equivalence (which we will later prove
in \cref{prop:loop-to-diag}), providing us with a diagram model for the
Okada algebra. Let us elaborate.

%The idea of the present section is to find a data-structure to record
%precisely this information. This will be provided by labelled arc-diagram: If
%there is a path from $i\in\setNN$ going down to height $k$ and then up to
%$j\in\setNN$, we will draw an arc between $i$ and $j$ labelled by $k$. This is
%illustrated on \cref{fig.path-def}. We now give the formal definitions.

Recall that a \defn{perfect matching} of $\setN \cup \setNN$ is a partition of
$\setN \cup \setNN$ into pairs. A perfect matching is \defn{crossing} if it
contains two pairs $\{a,b\}$ and $\{c, d\}$ of elements in $\setN \cup \setNN$,
where $a < c < b < d$ with respect to the order of~\cref{eq:orderN-NN}.  A
rank $N$ \defn{non-crossing arc-diagram} is a visualization of a perfect
matching linking vertices $\{1, \dots , N\}$ and $\{\ov{1}, \dots, \ov{N} \}$,
situated on the left and right boundaries of a rectangle, by non-crossing arcs 
(drawn in the interior of the rectangle). A pair $\{a,b\}$ in the matching is depicted
by an arc, in the rectangle, joining vertices $a,b \in [N] \cup \ov{[N]}$ and
is denoted by $\arc{a}{}{b}$. Either $a,b$ are both positive, both negative,
or else have different signs; in the later case we say the arc $\arc{a}{}{b}$
is a \defn{propagating}.  Arcs linking positive (resp. negative) numbers are
called left (resp. right) arcs.  Only the incidence relations of the
arc-diagram are relevant, and so isotopic diagrams are considered
equivalent. It is well known that the number of perfect matching is the
$N\ordinal$ Catalan number $\mathrm{Cat}(N)$.

An arc $\arc{a}{}{b}$ is said to be \defn{nested} in another arc
$\arc{c}{}{d}$ if $c<a<b<d$. Nesting defines a partial order on the arcs of a
non-crossing arc-diagram. The reader should be aware that any arc situated
above a propagating arc is nested in the later. In particular, given
any pair of propagating arcs, one arc must be nested in the other;
consequently the nesting order is total when restricted to propagating
arcs. \cref{fig.Okada_arc_diagrams} shows some arc-diagrams where the Hasse diagram
of the nesting order is depicted by red arrows.

\begin{figure}
  \makebox[\textwidth]{
    \begin{tikzpicture}[xscale=0.6,yscale=0.45, thick, baseline={(current bounding box.east)}]
\tikzstyle{vertex} = [shape=circle, minimum size=7pt, inner sep=1pt]
\tikzstyle{mid} = [draw, fill=white, shape=circle, minimum size=2pt, inner sep=1pt]
\foreach \i in {1,...,8} {
  \node[vertex] (G-\i) at ($(-1.20, \i*1.5-1.5)$) {$\i$};
  \node[vertex] (G--\i) at ($(1.20, \i*1.5-1.5)$) {$\overline{\i}$};
}
\draw (G--1) -- (G-1) node[pos=0.5,mid] (M-1-1) { 1 };
\draw (G--2) -- (G-2) node[pos=0.5,mid] (M-2-2) { 2 };
\draw (G--3) -- (G-3) node[pos=0.5,mid] (M-3-3) { 3 };
\draw (G--4) -- (G-4) node[pos=0.5,mid] (M-4-4) { 4 };
\draw (G--5) -- (G-5) node[pos=0.5,mid] (M-5-5) { 5 };
\draw (G--6) -- (G-6) node[pos=0.5,mid] (M-6-6) { 6 };
\draw (G--7) -- (G-7) node[pos=0.5,mid] (M-7-7) { 7 };
\draw (G--8) -- (G-8) node[pos=0.5,mid] (M-8-8) { 8 };
\draw[color=red, <-] (M-8-8) -- (M-7-7);
\draw[color=red, <-] (M-7-7) -- (M-6-6);
\draw[color=red, <-] (M-6-6) -- (M-5-5);
\draw[color=red, <-] (M-5-5) -- (M-4-4);
\draw[color=red, <-] (M-4-4) -- (M-3-3);
\draw[color=red, <-] (M-3-3) -- (M-2-2);
\draw[color=red, <-] (M-2-2) -- (M-1-1);
\end{tikzpicture}
\hfill
\begin{tikzpicture}[xscale=0.6,yscale=0.45, thick, baseline={(current bounding box.east)}]
\tikzstyle{vertex} = [shape=circle, minimum size=7pt, inner sep=1pt]
\tikzstyle{mid} = [draw, fill=white, shape=circle, minimum size=2pt, inner sep=1pt]
\foreach \i in {1,...,8} {
  \node[vertex] (G-\i) at ($(-2.64, \i*1.5-1.5)$) {$\i$};
  \node[vertex] (G--\i) at ($(2.64, \i*1.5-1.5)$) {$\overline{\i}$};
}
\draw (G--3) -- (G-1) node[pos=0.5,mid] (M-3-1) { 1 };
\draw (G--8) .. controls +(-2.592, -1.296) and +(2.592, 1.296) .. (G-4) node[pos=0.5,mid] (M-8-4) { 2 };
\draw (G-5) .. controls +(2.1, 1.05) and +(2.1, -1.05) .. (G-8) node[pos=0.5,mid] (M5-8) { 3 };
\draw (G--2) .. controls +(-0.7, -0.35) and +(-0.7, 0.35) .. (G--1) node[pos=0.5,mid] (M-2--1) { 1 };
\draw (G-2) .. controls +(0.7, 0.35) and +(0.7, -0.35) .. (G-3) node[pos=0.5,mid] (M2-3) { 2 };
\draw (G--7) .. controls +(-2.1, -1.05) and +(-2.1, 1.05) .. (G--4) node[pos=0.5,mid] (M-7--4) { 4 };
\draw (G-6) .. controls +(0.7, 0.35) and +(0.7, -0.35) .. (G-7) node[pos=0.5,mid] (M6-7) { 6 };
\draw (G--6) .. controls +(-0.7, -0.35) and +(-0.7, 0.35) .. (G--5) node[pos=0.5,mid] (M-6--5) { 5 };
\draw[color=red, <-] (M6-7) -- (M5-8);
\draw[color=red, <-] (M5-8) -- (M-8-4);
\draw[color=red, <-] (M2-3) -- (M-3-1);
\draw[color=red, <-] (M-6--5) -- (M-7--4);
\draw[color=red, <-] (M-7--4) -- (M-3-1);
\draw[color=red, <-] (M-8-4) -- (M-3-1);
\end{tikzpicture}
\hfill
\begin{tikzpicture}[xscale=0.6,yscale=0.45, thick, baseline={(current bounding box.east)}]
\tikzstyle{vertex} = [shape=circle, minimum size=7pt, inner sep=1pt]
\tikzstyle{mid} = [draw, fill=white, shape=circle, minimum size=2pt, inner sep=1pt]
\foreach \i in {1,...,8} {
  \node[vertex] (G-\i) at ($(-1.96, \i*1.5-1.5)$) {$\i$};
  \node[vertex] (G--\i) at ($(1.96, \i*1.5-1.5)$) {$\overline{\i}$};
}
\draw (G-1) .. controls +(2.1, 1.05) and +(2.1, -1.05) .. (G-4) node[pos=0.5,mid] (M1-4) { 1 };
\draw (G--8) .. controls +(-2.1, -1.05) and +(-2.1, 1.05) .. (G--3) node[pos=0.5,mid] (M-8--3) { 1 };
\draw (G-5) .. controls +(0.7, 0.35) and +(0.7, -0.35) .. (G-6) node[pos=0.5,mid] (M5-6) { 3 };
\draw (G--7) .. controls +(-0.7, -0.35) and +(-0.7, 0.35) .. (G--6) node[pos=0.5,mid] (M-7--6) { 2 };
\draw (G-2) .. controls +(0.7, 0.35) and +(0.7, -0.35) .. (G-3) node[pos=0.5,mid] (M2-3) { 2 };
\draw (G--2) .. controls +(-0.7, -0.35) and +(-0.7, 0.35) .. (G--1) node[pos=0.5,mid] (M-2--1) { 1 };
\draw (G-7) .. controls +(0.7, 0.35) and +(0.7, -0.35) .. (G-8) node[pos=0.5,mid] (M7-8) { 7 };
\draw (G--5) .. controls +(-0.7, -0.35) and +(-0.7, 0.35) .. (G--4) node[pos=0.5,mid] (M-5--4) { 4 };
\draw[color=red, <-] (M2-3) -- (M1-4);
\draw[color=red, <-] (M-5--4) -- (M-8--3);
\draw[color=red, <-] (M-7--6) -- (M-8--3);
\end{tikzpicture}
\hfill
  \begin{tikzpicture}[xscale=0.8,yscale=0.45, thick, baseline={(current bounding box.east)}]
\tikzstyle{vertex} = [shape=circle, minimum size=7pt, inner sep=1pt]
\tikzstyle{mid} = [draw, fill=white, shape=circle, minimum size=2pt, inner sep=1pt]
\foreach \i in {1,...,8} {
  \node[vertex] (G-\i) at ($(-1.35, \i*1.5-1.5)$) {$\i$};
  \node[vertex] (G--\i) at ($(1.35, \i*1.5-1.5)$) {$\overline{\i}$};
}
\draw (G--3) -- (G-3) node[pos=0.5,mid] (M-3-3) { 1 };
\draw (G--6) -- (G-4) node[pos=0.5,mid] (M-6-4) { 2 };
\draw (G-1) .. controls +(0.70, 0.35) and +(0.70, -0.35) .. (G-2) node[pos=0.5,mid] (M1-2) { 1 };
\draw (G--2) .. controls +(-0.70, -0.35) and +(-0.70, 0.35) .. (G--1) node[pos=0.5,mid] (M-2--1) { 1 };
\draw (G--7) -- (G-5) node[pos=0.5,mid] (M-7-5) { 5 };
\draw (G--8) -- (G-8) node[pos=0.5,mid] (M-8-8) { 6 };
\draw (G-6) .. controls +(0.70, 0.35) and +(0.70, -0.35) .. (G-7) node[pos=0.5,mid] (M6-7) { 6 };
\draw (G--5) .. controls +(-0.70, -0.35) and +(-0.70, 0.35) .. (G--4) node[pos=0.5,mid] (M-5--4) { 4 };
\draw[color=red, <-] (M6-7) -- (M-7-5);
\draw[color=red, <-] (M-5--4) -- (M-3-3);
\draw[color=red, <-] (M-8-8) -- (M-7-5);
\draw[color=red, <-] (M-7-5) -- (M-6-4);
\draw[color=red, <-] (M-6-4) -- (M-3-3);
\end{tikzpicture}}
  \caption[Okada arc-diagram]{Some Okada arc-diagrams. The red arrows shows
    the Hasse diagram of the nesting order on the edges.}
  \label{fig.Okada_arc_diagrams}
\end{figure}

We can merge two non-crossing arc-diagrams $C$ and $D$
of rank $N$
by glueing the right boundary nodes of
$C$ with the left boundary nodes of $D$
and concatenating their respective arcs. The resulting
diagram is call the \defn{composition}
of $C$ and $D$ and is denoted $C \circ D$.
The \defn{product} $C \boldsymbol{\cdot} D$ of $C$ and $D$
is the rank $N$ non-crossing arc-diagram obtained by
removing any loops in $C \circ D$ and
taking the isotopy class of the result.

%Given two arc-diagrams $C, D$ of rank $N$, their composition $C \cdot D$ is
%computed by glueing the two diagrams
%recording in a new arc-diagram how the
%left nodes of $C$ are connected to the right node of $D$ following the paths of
%the diagrams. loops that doesn't touch the left of $C$ or the right of $D$ are
%discarded.

Here are some examples:
\[
\begin{tikzpicture}[scale=0.3, thick, baseline={(current bounding box.east)}]
\tikzstyle{vertex} = [shape=circle, minimum size=7pt, inner sep=1pt]
\node[vertex] (G-1) at (-1.44, -0.1) {1};
\node[vertex] (G-2) at (-1.44, 1.4) {2};
\node[vertex] (G-3) at (-1.44, 2.9) {3};
\node[vertex] (G-4) at (-1.44, 4.4) {4};
\node[vertex] (G-5) at (-1.44, 5.9) {5};
\node[vertex] (G-6) at (-1.44, 7.4) {6};
\node[vertex] (G--1) at (2.64, 0.0) {$\ov1$};
\node[vertex] (G--3) at (2.64, 1.5) {$\ov2$};
\node[vertex] (G--2) at (2.64, 3.0) {$\ov3$};
\node[vertex] (G--4) at (2.64, 4.5) {$\ov4$};
\node[vertex] (G--5) at (2.64, 6.0) {$\ov5$};
\node[vertex] (G--6) at (2.64, 7.5) {$\ov6$};
\draw (G--5) .. controls +(-2.592, -1.296) and +(1.632, 0.816) .. (G-1);
\draw (G--6) -- (G-4);
\draw (G-5) .. controls +(1.05, 0.35) and +(1.05, -0.35) .. (G-6);
\draw (G--4) .. controls +(-2.1, -1.05) and +(-2.1, 1.05) .. (G--1);
\draw (G-2) .. controls +(1.05, 0.35) and +(1.05, -0.35) .. (G-3);
\draw (G--3) .. controls +(-1.05, -0.35) and +(-1.05, 0.35) .. (G--2);
\end{tikzpicture}
\begin{tikzpicture}[scale=0.3, thick, baseline={(current bounding box.east)}]
  \tikzstyle{vertex} = [shape=circle, minimum size=7pt, inner sep=1pt]
\node[vertex] (G-1) at (-1.44, -0.1) {1};
\node[vertex] (G-2) at (-1.44, 1.4) {2};
\node[vertex] (G-3) at (-1.44, 2.9) {3};
\node[vertex] (G-4) at (-1.44, 4.4) {4};
\node[vertex] (G-5) at (-1.44, 5.9) {5};
\node[vertex] (G-6) at (-1.44, 7.4) {6};
\node[vertex] (G--1) at (1.44, 0.0) {$\ov1$};
\node[vertex] (G--2) at (1.44, 1.5) {$\ov2$};
\node[vertex] (G--3) at (1.44, 3.0) {$\ov3$};
\node[vertex] (G--5) at (1.44, 6.0) {$\ov4$};
\node[vertex] (G--4) at (1.44, 4.5) {$\ov5$};
\node[vertex] (G--6) at (1.44, 7.5) {$\ov6$};
\draw (G--3) -- (G-1);
\draw (G-2) .. controls +(1.05, 0.35) and +(1.05, -0.35) .. (G-3);
\draw (G--2) .. controls +(-1.05, -0.35) and +(-1.05, 0.35) .. (G--1);
\draw (G--6) -- (G-6);
\draw (G-4) .. controls +(1.05, 0.35) and +(1.05, -0.35) .. (G-5);
\draw (G--5) .. controls +(-1.05, -0.35) and +(-1.05, 0.35) .. (G--4);
\end{tikzpicture}
\qquad \raisebox{-2.5pt}{=} \qquad
\begin{tikzpicture}[scale=0.3, thick, baseline={(current bounding box.east)}]
  \tikzstyle{vertex} = [shape=circle, minimum size=7pt, inner sep=1pt]
\node[vertex] (G-1) at (-1.44, -0.1) {1};
\node[vertex] (G-2) at (-1.44, 1.4) {2};
\node[vertex] (G-3) at (-1.44, 2.9) {3};
\node[vertex] (G-4) at (-1.44, 4.4) {4};
\node[vertex] (G-5) at (-1.44, 5.9) {5};
\node[vertex] (G-6) at (-1.44, 7.4) {6};
\node[vertex] (G--1) at (1.44, 0.0) {$\ov1$};
\node[vertex] (G--2) at (1.44, 1.5) {$\ov2$};
\node[vertex] (G--3) at (1.44, 3.0) {$\ov3$};
\node[vertex] (G--4) at (1.44, 4.5) {$\ov4$};
\node[vertex] (G--5) at (1.44, 6.0) {$\ov5$};
\node[vertex] (G--6) at (1.44, 7.5) {$\ov6$};
\draw (G--3) -- (G-1);
\draw (G--6) -- (G-4);
\draw (G-5) .. controls +(1.05, 0.35) and +(1.05, -0.35) .. (G-6);
\draw (G--2) .. controls +(-1.05, -0.35) and +(-1.05, 0.35) .. (G--1);
\draw (G-2) .. controls +(1.05, 0.35) and +(1.05, -0.35) .. (G-3);
\draw (G--5) .. controls +(-1.05, -0.35) and +(-1.05, 0.35) .. (G--4);
\end{tikzpicture}
\]

\begin{definition}
  A \defn{labelled non-crossing arc-diagram} is a non-crossing arc-diagram with a positive integer $\ell$
  attached to each arc. We write $\arc[D]{a}{\ell}{b}$ to express that $D$
  contains an arc joining distinct nodes $a, b  \in \setN \cup \setNN$ and labelled by $\ell$. If $D$ is clear from
  the context, we shall omit the subscript, writing only $\arc{a}{\ell}{b}$. In pictures,
  each label is drawn within a circle in the middle of the corresponding arc.

  The \defn{labelled product} of two labelled non-crossing arc-diagrams $C$ and $D$ is
  obtained by labelling each arc $\alpha$ of the underlying product $C \boldsymbol{\cdot} D$
  by the \textbf{minimum} of the labels of the arcs of $C$ and $D$
  which form $\alpha$ when concatenated. Bearing some abuse of notation,
   $C \boldsymbol{\cdot} D$ will simultaneously denote both the product and labelled product of $C$ and $D$.

%  The \defn{(labelled) product} of two labelled arc-diagrams is the labelling
%  of the unlabelled product where the label of an arc $e$ is the
%  \textbf{minimum} of the labels of the arcs in the path associated to $e$.
\end{definition}
See \cref{fig.compo_Okada_arc,fig:right-code-factor} for some examples of
products.
\begin{figure}[ht]
  \makebox[\textwidth]{
  \begin{tikzpicture}[xscale=0.55,yscale=0.45, thick, baseline={(current bounding box.east)}]
\tikzstyle{vertex} = [shape=rectangle, minimum height=15pt, minimum width=0pt, inner sep=0pt]
\tikzstyle{mid} = [draw, fill=white, shape=circle, minimum size=2pt, inner sep=1pt]
\foreach \i in {1,...,8} {
  \node[vertex] (G-\i) at ($(-1.44, \i*1.5-1.5)$) {};
  \node[vertex] (G--\i) at ($(1.44, \i*1.5-1.5)$) {};
}
\draw[color=blue] (G--3) -- (G-3) node[pos=0.5,mid] (M-3-3) { 3 };
\draw[color=blue] (G--6) -- (G-4) node[pos=0.5,mid] (M-6-4) { 4 };
\draw (G-1) .. controls +(0.70, 0.35) and +(0.70, -0.35) .. (G-2) node[pos=0.5,mid] (M1-2) { 1 };
\draw (G--2) .. controls +(-0.70, -0.35) and +(-0.70, 0.35) .. (G--1) node[pos=0.5,mid] (M-2--1) { 1 };
\draw (G--7) -- (G-5) node[pos=0.5,mid] (M-7-5) {$5$};
\draw (G--8) -- (G-8) node[pos=0.5,mid] (M-8-8) { 6 };
\draw (G-6) .. controls +(0.70, 0.35) and +(0.70, -0.35) .. (G-7) node[pos=0.5,mid] (M6-7) { 6 };
\draw[color=blue] (G--5) .. controls +(-0.70, -0.35) and +(-0.70, 0.35) .. (G--4) node[pos=0.5,mid] (M-5--4) { 4 };
% \draw[color=red, <-] (M6-7) -- (M-7-5);
% \draw[color=red, <-] (M-5--4) -- (M-3-3);
% \draw[color=red, <-] (M-8-8) -- (M-7-5);
% \draw[color=red, <-] (M-7-5) -- (M-6-4);
% \draw[color=red, <-] (M-6-4) -- (M-3-3);
\end{tikzpicture}
\begin{tikzpicture}[xscale=0.55,yscale=0.45, thick, baseline={(current bounding box.east)}]
\tikzstyle{vertex} = [shape=rectangle, minimum height=15pt, minimum width=0pt, inner sep=0pt]
\tikzstyle{mid} = [draw, fill=white, shape=circle, minimum size=2pt, inner sep=1pt]
\foreach \i in {1,...,8} {
  \node[vertex] (G-\i) at ($(-1.44, \i*1.5-1.5)$) {};
  \node[vertex] (G--\i) at ($(1.44, \i*1.5-1.5)$) {};
}
\draw (G--7) -- (G-7) node[pos=0.5,mid] (M-7-7) { 1 };
\draw[color=blue] (G-5) .. controls +(0.70, 0.35) and +(0.70, -0.35) .. (G-6) node[pos=0.5,mid] (M5-6) { 1 };
\draw (G--4) .. controls +(-2.10, -1.05) and +(-2.10, 1.05) .. (G--1) node[pos=0.5,mid] (M-4--1) { 1 };
\draw[color=blue] (G-3) .. controls +(0.70, 0.35) and +(0.70, -0.35) .. (G-4) node[pos=0.5,mid] (M3-4) { 3 };
\draw (G--3) .. controls +(-0.70, -0.35) and +(-0.70, 0.35) .. (G--2) node[pos=0.5,mid] (M-3--2) { 2 };
\draw (G-1) .. controls +(0.70, 0.35) and +(0.70, -0.35) .. (G-2) node[pos=0.5,mid] (M1-2) { 1 };
\draw (G--6) .. controls +(-0.70, -0.35) and +(-0.70, 0.35) .. (G--5) node[pos=0.5,mid] (M-6--5) { 5 };
\draw (G--8) -- (G-8) node[pos=0.5,mid] (M-8-8) { 4 };
% \draw[color=red, <-] (M-3--2) -- (M-4--1);
% \draw[color=red, <-] (M-8-8) -- (M-7-7);
\end{tikzpicture}
\ = \
\begin{tikzpicture}[xscale=0.55,yscale=0.45, thick, baseline={(current bounding box.east)}]
\tikzstyle{vertex} = [shape=rectangle, minimum height=15pt, minimum width=0pt, inner sep=0pt]
\tikzstyle{mid} = [draw, fill=white, shape=circle, minimum size=2pt, inner sep=1pt]
\foreach \i in {1,...,8} {
  \node[vertex] (G-\i) at ($(-1.44, \i*1.5-1.5)$) {};
  \node[vertex] (G--\i) at ($(1.54, \i*1.5-1.5)$) {};
}
\draw (G--7) -- (G-5) node[pos=0.5,mid] (M-7-5) { 1 };
\draw[color=blue] (G-3) .. controls +(0.70, 0.35) and +(0.70, -0.35) .. (G-4) node[pos=0.5,mid] (M3-4) { 1 };
\draw (G--4) .. controls +(-2.10, -1.05) and +(-2.10, 1.05) .. (G--1) node[pos=0.5,mid] (M-4--1) { 1 };
\draw (G-1) .. controls +(0.70, 0.35) and +(0.70, -0.35) .. (G-2) node[pos=0.5,mid] (M1-2) { 1 };
\draw (G--3) .. controls +(-0.70, -0.35) and +(-0.70, 0.35) .. (G--2) node[pos=0.5,mid] (M-3--2) { 2 };
\draw (G--8) -- (G-8) node[pos=0.5,mid] (M-8-8) { 4 };
\draw (G-6) .. controls +(0.70, 0.35) and +(0.70, -0.35) .. (G-7) node[pos=0.5,mid] (M6-7) { 6 };
\draw (G--6) .. controls +(-0.70, -0.35) and +(-0.70, 0.35) .. (G--5) node[pos=0.5,mid] (M-6--5) { 5 };
% \draw[color=red, <-] (M6-7) -- (M-7-5);
% \draw[color=red, <-] (M-3--2) -- (M-4--1);
% \draw[color=red, <-] (M-8-8) -- (M-7-5);
\end{tikzpicture}
\hfil\hfil
\begin{tikzpicture}[xscale=0.55,yscale=0.45, thick, baseline={(current bounding box.east)}]
\tikzstyle{vertex} = [shape=rectangle, minimum height=15pt, minimum width=0pt, inner sep=0pt]
\tikzstyle{mid} = [draw, fill=white, shape=circle, minimum size=2pt, inner sep=1pt]
\foreach \i in {1,...,8} {
  \node[vertex] (G-\i) at ($(-1.44, \i*1.5-1.5)$) {};
  \node[vertex] (G--\i) at ($(1.44, \i*1.5-1.5)$) {};
}
\draw[color=blue] (G--7) -- (G-5) node[pos=0.5,mid] (M-7-5) { 1 };
\draw (G-3) .. controls +(0.70, 0.35) and +(0.70, -0.35) .. (G-4) node[pos=0.5,mid] (M3-4) { 1 };
\draw (G--4) .. controls +(-0.70, -0.35) and +(-0.70, 0.35) .. (G--3) node[pos=0.5,mid] (M-4--3) { 3 };
\draw (G-1) .. controls +(0.70, 0.35) and +(0.70, -0.35) .. (G-2) node[pos=0.5,mid] (M1-2) { 1 };
\draw[color=blue] (G--6) .. controls +(-0.70, -0.35) and +(-0.70, 0.35) .. (G--5) node[pos=0.5,mid] (M-6--5) { 5 };
\draw (G--8) -- (G-6) node[pos=0.5,mid] (M-8-6) { 6 };
\draw (G-7) .. controls +(0.70, 0.35) and +(0.70, -0.35) .. (G-8) node[pos=0.5,mid] (M7-8) { 7 };
\draw (G--2) .. controls +(-0.70, -0.35) and +(-0.70, 0.35) .. (G--1) node[pos=0.5,mid] (M-2--1) { 1 };
% \draw[color=red, <-] (M7-8) -- (M-8-6);
% \draw[color=red, <-] (M-8-6) -- (M-7-5);
\end{tikzpicture}
\begin{tikzpicture}[xscale=0.55,yscale=0.45, thick, baseline={(current bounding box.east)}]
\tikzstyle{vertex} = [shape=rectangle, minimum height=15pt, minimum width=0pt, inner sep=0pt]
\tikzstyle{mid} = [draw, fill=white, shape=circle, minimum size=2pt, inner sep=1pt]
\foreach \i in {1,...,8} {
  \node[vertex] (G-\i) at ($(-2.1, \i*1.5-1.5)$) {};
  \node[vertex] (G--\i) at ($(2.1, \i*1.5-1.5)$) {};
}
\draw[color=blue] (G--7) -- (G-5) node[pos=0.5,mid] (M-7-5) { 3 };
\draw[color=blue] (G-6) .. controls +(0.70, 0.35) and +(0.70, -0.35) .. (G-7) node[pos=0.5,mid] (M6-7) { 4 };
\draw (G--4) .. controls +(-2.10, -1.05) and +(-2.10, 1.05) .. (G--1) node[pos=0.5,mid] (M-4--1) { 1 };
\draw (G-1) .. controls +(2.10, 1.05) and +(2.10, -1.05) .. (G-4) node[pos=0.5,mid] (M1-4) { 1 };
\draw (G--6) .. controls +(-0.70, -0.35) and +(-0.70, 0.35) .. (G--5) node[pos=0.5,mid] (M-6--5) { 3 };
\draw (G--8) -- (G-8) node[pos=0.5,mid] (M-8-8) { 4 };
\draw (G-2) .. controls +(0.70, 0.35) and +(0.70, -0.35) .. (G-3) node[pos=0.5,mid] (M2-3) { 2 };
\draw (G--3) .. controls +(-0.70, -0.35) and +(-0.70, 0.35) .. (G--2) node[pos=0.5,mid] (M-3--2) { 2 };
% \draw[color=red, <-] (M6-7) -- (M-7-5);
% \draw[color=red, <-] (M2-3) -- (M1-4);
% \draw[color=red, <-] (M-3--2) -- (M-4--1);
% \draw[color=red, <-] (M-8-8) -- (M-7-5);
\end{tikzpicture}\ =\
\begin{tikzpicture}[xscale=0.58,yscale=0.45, thick, baseline={(current bounding box.east)}]
\tikzstyle{vertex} = [shape=rectangle, minimum height=15pt, minimum width=0pt, inner sep=0pt]
\tikzstyle{mid} = [draw, fill=white, shape=circle, minimum size=2pt, inner sep=1pt]
\foreach \i in {1,...,8} {
  \node[vertex] (G-\i) at ($(-1.64, \i*1.5-1.5)$) {};
  \node[vertex] (G--\i) at ($(1.44, \i*1.5-1.5)$) {};
}
\draw[color=blue] (G--7) -- (G-5) node[pos=0.5,mid] (M-7-5) { 1 };
\draw (G-3) .. controls +(0.70, 0.35) and +(0.70, -0.35) .. (G-4) node[pos=0.5,mid] (M3-4) { 1 };
\draw (G--4) .. controls +(-2.10, -1.05) and +(-2.10, 1.05) .. (G--1) node[pos=0.5,mid] (M-4--1) { 1 };
\draw (G-1) .. controls +(0.70, 0.35) and +(0.70, -0.35) .. (G-2) node[pos=0.5,mid] (M1-2) { 1 };
\draw (G--6) .. controls +(-0.70, -0.35) and +(-0.70, 0.35) .. (G--5) node[pos=0.5,mid] (M-6--5) { 3 };
\draw (G--8) -- (G-6) node[pos=0.5,mid] (M-8-6) { 4 };
\draw (G-7) .. controls +(0.70, 0.35) and +(0.70, -0.35) .. (G-8) node[pos=0.5,mid] (M7-8) { 7 };
\draw (G--3) .. controls +(-0.70, -0.35) and +(-0.70, 0.35) .. (G--2) node[pos=0.5,mid] (M-3--2) { 2 };
% \draw[color=red, <-] (M7-8) -- (M-8-6);
% \draw[color=red, <-] (M-3--2) -- (M-4--1);
% \draw[color=red, <-] (M-8-6) -- (M-7-5);
\end{tikzpicture}}
\caption[{Products of arc-diagrams}]{Two example of products of Okada
  arc-diagrams. Some arcs, along with the associated arc fragments
  which form them, are highlighted in blue.}
  \label{fig.compo_Okada_arc}
\end{figure}

We first gather some simple, but important observations:
\begin{proposition}
  The labelled product of non-crossing arc-diagrams is associative, and therefore the set of labelled
  non-crossing arc-diagrams is a semigroup.
\end{proposition}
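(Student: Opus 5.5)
The plan is to compare both bracketings of a triple product with a single three-fold composite. Given labelled non-crossing arc-diagrams $B, C, D$ of rank $N$, form the composite $B \circ C \circ D$ by gluing the right boundary of $B$ to the left boundary of $C$, and the right boundary of $C$ to the left boundary of $D$. This is a planar graph in which every internal vertex (the $2N$ glued nodes) has degree $2$ and every outer boundary node has degree $1$. Its connected components are therefore of two kinds: paths joining two outer boundary nodes, and closed loops. Each edge of this graph is an arc of $B$, $C$ or $D$ and carries that arc's label.

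Define $P(B,C,D)$ to be the labelled diagram obtained by deleting all loops, keeping one arc for each remaining path (joining that path's two endpoints), and labelling it by the minimum of the labels of the edges along the path. Planarity of the composite gives that the resulting matching is non-crossing.

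The key step is to show $(B \boldsymbol{\cdot} C) \boldsymbol{\cdot} D = P(B,C,D)$; the identity $B \boldsymbol{\cdot} (C \boldsymbol{\cdot} D) = P(B,C,D)$ then follows by the mirror argument. For the first identity, note that each arc of $B \boldsymbol{\cdot} C$ is a contraction of a maximal path of $B\circ C$ running through the glued middle nodes. Its label is the minimum of the labels along that path, since the labelled product uses the minimum.

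Now glue $D$ on. A path of $(B\boldsymbol{\cdot} C)\circ D$ between outer nodes is obtained from a unique path of $B\circ C\circ D$ by contracting its $B\circ C$ segments. The minimum along the contracted path equals the minimum along the original path, because $\min$ is associative and commutative.

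It remains to handle loops. A loop of $(B\boldsymbol{\cdot} C)\circ D$ uncontracts to a loop of $B\circ C\circ D$. Conversely, every loop of $B\circ C\circ D$ either lies entirely inside $B\circ C$, and so was already deleted when forming $B\boldsymbol{\cdot} C$, or passes through the $C|D$ interface and appears as a loop in the second gluing. In both cases loops are discarded, so the endpoint connectivity and the labels agree. Taking isotopy classes of the planar result completes this step.

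The main obstacle I expect is stating this path-contraction bijection rigorously rather than pictorially. I would formalise diagrams as labelled perfect matchings on the vertex sets, and the composite as a graph whose components are computed combinatorially. Contraction then just means that a path is a concatenation of maximal subpaths in $B\circ C$ alternating with edges of $D$, and the minimum and connectivity statements become immediate. Associativity of the product follows, so the set of labelled non-crossing arc-diagrams is a semigroup.
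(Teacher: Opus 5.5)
Your proof is correct and is essentially the paper's argument. The paper just cites associativity of the unlabelled (Temperley--Lieb) product together with associativity of $\min$. Your three-fold composite $B\circ C\circ D$, with $B\circ C$ segments contracted to arcs of $B\boldsymbol{\cdot} C$ and loops discarded, is the standard justification of the former, with labels tracked by the latter.
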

\begin{proof}
  Associativity follows from the associativity of unlabelled product and
  the associativity of the minimum.
\end{proof}
\begin{proposition}
  The mirror map defined by
  $D\longmapsto D^\star\eqdef\set{\arcp{-a}{\ell}{-b}}{\arcp{a}{\ell}{b}\in D}$ is an
  anti-automorphism of the semigroup of labelled non-crossing arc-diagrams.
\end{proposition}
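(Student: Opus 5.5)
The plan is to check two facts. First, $D^\star$ is again a labelled non-crossing arc-diagram. Second, $(C\boldsymbol{\cdot}D)^\star = D^\star\boldsymbol{\cdot}C^\star$ holds as labelled diagrams. Involutivity ($D^{\star\star}=D$, since $-(-a)=a$) then gives bijectivity, so $\star$ is an anti-automorphism.

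For the first point, negation $a\mapsto -a$ exchanges $\setN$ and $\setNN$. Geometrically it is the reflection of the rectangle across its vertical axis: vertex $i$ on the left goes to $\ov{i}$ on the right at the same height, and vice versa. Under the total order of \cref{eq:orderN-NN}, $1<\dots<N<\ov{N}<\dots<\ov{1}$, negation is order-reversing, since the position of $i$ is $i$ and the position of $\ov{i}$ is $2N+1-i$. If $a<c<b<d$ were a crossing in $D^\star$, then applying negation would give $-d<-b<-c<-a$, again a crossing pattern, this time in $D$. So $D^\star$ is non-crossing, hence a perfect matching of $\setN\cup\setNN$. Labels are simply carried along, so $D^\star$ is a labelled non-crossing arc-diagram.

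For the anti-multiplicativity, I use the reflection picture. The composition $C\circ D$ glues the right boundary of $C$ to the left boundary of $D$. Reflecting this whole picture across a vertical line yields a picture in which the reflected $D$ sits on the left and the reflected $C$ on the right, glued along the same middle column. In other words, it is exactly $D^\star\circ C^\star$.

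More combinatorially, a path in $C\circ D$ is an alternating sequence of arcs of $C$ and of $D$ meeting at middle vertices. Negating every endpoint turns it into a path in $D^\star\circ C^\star$ through the same middle vertices, using the mirrored arcs with the same labels. Closed loops correspond to closed loops, so deleting loops commutes with the reflection. An arc $\arc{a}{}{b}$ of $C\boldsymbol{\cdot}D$ therefore corresponds to the arc $\arc{-a}{}{-b}$ of $D^\star\boldsymbol{\cdot}C^\star$. Both arcs are formed by the same multiset of labelled fragments, so their labels, each defined as the minimum over the fragments, coincide.

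The only delicate step is the bookkeeping at the glued middle column. Negation identifies the right vertex $\ov{i}$ of $C$ with the left vertex $i$ of $C^\star$, and the left vertex $i$ of $D$ with the right vertex $\ov{i}$ of $D^\star$. So the gluing $\ov{i}_C\sim i_D$ is sent exactly to the gluing $\ov{i}_{D^\star}\sim i_{C^\star}$ used in $D^\star\circ C^\star$. Once this is checked, the correspondence of paths is a bijection and the statement follows.
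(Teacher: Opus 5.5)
Your proof is correct, and it is the left--right reflection argument that the paper leaves implicit (the paper states this proposition without proof): negation is an order-reversing involution of $\setN\cup\setNN$ for the order $1<\dots<N<\ov{N}<\dots<\ov{1}$, so it preserves non-crossingness, and it carries $C\circ D$ onto $D^\star\circ C^\star$ fragment by fragment, preserving loops and minimum labels. One small slip: $D^\star$ is a perfect matching because negation is a bijection of the vertex set, not as a consequence of being non-crossing as your wording suggests.
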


Since non-crossing arc-diagrams are in bijection with well-parenthesized
words, it is not unexpected that parity plays an important role. The following
fundamental lemma illustrates this:
\begin{lemma}\label{lemma:propag-parity}
  In a non-crossing arc-diagram, the indices of the endpoints of each arc have the same
  parity if an only if the arc is propagating.
\end{lemma}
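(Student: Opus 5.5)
The plan is a counting argument: every node strictly between the two endpoints of an arc must be matched inside that arc. Fix an arc $\arc{a}{}{b}$ of a non-crossing arc-diagram, with $a<b$ in the total order of \cref{eq:orderN-NN}. Let $I$ be the set of vertices $c\in\setN\cup\setNN$ with $a<c<b$.

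First I check that $I$ is matched to itself. Take $c\in I$ and let $d$ be its partner. If $d$ lay outside the closed interval $[a,b]$, the pairs $\{a,b\}$ and $\{c,d\}$ would interleave, giving $a<c<b<d$ or $d<a<c<b$. Either way the matching would be crossing. Hence $d\in I$, so $\card I$ is even.

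Next I compute $\card I$ in each case. This step needs the most care, because the order is non-standard; I will write it out explicitly.
\begin{itemize}
\item Both endpoints positive, $a<b\le N$: here $I=\{a+1,\dots,b-1\}$, so $\card I=b-a-1$.
\item Both endpoints negative, $a=\ov{p}$ and $b=\ov{q}$ with $p>q$: here $I=\{\ov{p-1},\dots,\ov{q+1}\}$, so $\card I=p-q-1$.
\item Propagating arc, $a=p\in\setN$ and $b=\ov{q}$: here $I=\{p+1,\dots,N\}\cup\{\ov{N},\dots,\ov{q+1}\}$, so $\card I=(N-p)+(N-q)=2N-p-q$.
\end{itemize}

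Finally I read off the parities, where "index" means the absolute value $|a|$, $|b|$. For a non-propagating arc, $\card I$ even forces $|b|-|a|$ to be odd, so the two indices have different parities. For a propagating arc, $\card I$ even forces $p+q$ to be even, so the indices have the same parity. This proves both directions of the equivalence simultaneously.
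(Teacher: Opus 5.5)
Your proof is correct and follows the paper's own argument: the paper likewise observes that the number of nodes strictly between the two endpoints of an arc is even, and then reads off the parities in the non-propagating and propagating cases. Your explicit count $2N-p-q$ for a propagating arc $p$ to $\ov{q}$ makes precise the step the paper states only briefly.
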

\begin{proof}
   The number of nodes nested between
   the two endpoints of any non-crossing arc is always even.
   If the arc is non-propagating (i.e. its endpoints are either
   both positive or both negative) then the difference (of the
   indices) of its endpoints is odd, and hence the parities
   differ. If the arc is propagating then the heights
   of the left and righthand endpoints of the arc must both be
   either odd or even, and hence the parities agree.
%  If an arc is non-crossing, there is exactly twice the number
%  of nested edges node between the two extremities: they parity
%  must differ. Conversely, if the arc is propagating, the parity of its extremity is the
%  opposite of the number of propagating arc it is nested in.
\end{proof}
\bigskip

We are now in a position to define the main object of this paper:
\begin{definition}
  \label{def.okada.arc}
  A labelled, non-crossing arc-diagram is called an \defn{Okada arc-diagram} if
  the following conditions are satisfied:
  \begin{enumerate}
  \item\label{okada-arc-cond-interv} the label  of each arc $\arc{a}{}{b}$ must
    be at least $1$ and at most $\min(|a|, |b|)$,
  \item\label{okada-arc-cond-parity} the label of each arc $\arc{a}{}{b}$
    must have the same parity as $\min(|a|, |b|)$.
  \item\label{okada-arc-cond-nested} if an arc $\arc{a}{}{b}$ is nested in an arc $\arc{c}{}{d}$ then
    the label of $\arc{a}{}{b}$ is \newline strictly larger than the label of $\arc{c}{}{d}$.
  \end{enumerate}
\end{definition}
The reader should examine
\cref{fig.Okada_arc_diagrams,fig.compo_Okada_arc} which show various Okada
arc-diagrams.

\begin{remark}
\label{rem:fusion-FPLCs}
As mentioned earlier, the concept of an Okada arc-diagram
is meant to capture the features of a FPLC which are invariant under the
re-writing system $\OSys$. Indeed, the height-labelled isotopy class
$[\mathcal{D}]$ of any FPLC $\mathcal{D}$ of rank $N$
is an Okada arc-diagram of rank $N$. As we'll see later, each rank $N$ Okada arc-diagram,
is realized as the height-labelled isotopy class of some FPLC of rank $N$.
The labelled product (of Okada arc-diagrams) is intended to condense an obvious product
which can be defined for height-labelled isotopy classes by
\begin{equation}
\label{eqn.product-isotopy-classes}
[\mathcal{D}_1 ] \boldsymbol{\cdot}  [\mathcal{D}_2] \, \eqdef \, [ \mathcal{D}_1 \circ \mathcal{D}_2 ]
\end{equation}
where $\mathcal{D}_1$ and $\mathcal{D}_2$ are representative FPLCs of rank $N$, both in
normal form, and where $\mathcal{D}_1 \circ \mathcal{D}_2$ is the rank $N$ FPLC
obtained by connecting the endpoints of the arcs on the right hand side of $\mathcal{D}_1$ and those on left hand side of $\mathcal{D}_2$
by intermediate horizontal arcs; see~\cref{fig.prod-arc-fplc} for an example.
We'll revisit Formula \ref{eqn.product-isotopy-classes}
later in Proposition \ref{prop:loop-to-diag}. 
\end{remark}

\begin{figure}[ht]
  \centering
  \begin{tikzpicture}[xscale=0.35,yscale=0.4, thick, baseline={(current bounding box.east)}]
\tikzstyle{vertex} = [shape=rectangle, minimum height=15pt, minimum width=0pt, inner sep=0pt]
\tikzstyle{mid} = [draw, fill=white, shape=circle, minimum size=2pt, inner sep=1pt]
\node[vertex] (G--9) at (2.64, 10.40) {};
\node[vertex] (G--8) at (2.64, 9.10) {};
\node[vertex] (G--7) at (2.64, 7.80) {};
\node[vertex] (G--6) at (2.64, 6.50) {};
\node[vertex] (G--5) at (2.64, 5.20) {};
\node[vertex] (G--4) at (2.64, 3.90) {};
\node[vertex] (G--3) at (2.64, 2.60) {};
\node[vertex] (G--2) at (2.64, 1.30) {};
\node[vertex] (G--1) at (2.64, 0.00) {};
\node[vertex] (G-1) at (-2.64, 0.00) {};
\node[vertex] (G-2) at (-2.64, 1.30) {};
\node[vertex] (G-3) at (-2.64, 2.60) {};
\node[vertex] (G-4) at (-2.64, 3.90) {};
\node[vertex] (G-5) at (-2.64, 5.20) {};
\node[vertex] (G-6) at (-2.64, 6.50) {};
\node[vertex] (G-7) at (-2.64, 7.80) {};
\node[vertex] (G-8) at (-2.64, 9.10) {};
\node[vertex] (G-9) at (-2.64, 10.40) {};
\draw[color=blue] (G--1) .. controls +(-2.59, 1.30) and +(2.59, -1.30) .. (G-5) node[pos=0.5,mid] (M-1-5) { 1 };
\draw[color=blue] (G--8) -- (G-6) node[pos=0.5,mid] (M-8-6) { 2 };
\draw[color=blue] (G-1) .. controls +(2.10, 1.05) and +(2.10, -1.05) .. (G-4) node[pos=0.5,mid] (M1-4) { 1 };
\draw[color=blue] (G--7) .. controls +(-2.10, -1.05) and +(-2.10, 1.05) .. (G--2) node[pos=0.5,mid] (M-7--2) { 2 };
\draw[color=blue] (G-2) .. controls +(0.70, 0.35) and +(0.70, -0.35) .. (G-3) node[pos=0.5,mid] (M2-3) { 2 };
\draw[color=blue] (G--4) .. controls +(-0.70, -0.35) and +(-0.70, 0.35) .. (G--3) node[pos=0.5,mid] (M-4--3) { 3 };
\draw[color=blue] (G--9) -- (G-7) node[pos=0.5,mid] (M-9-7) { 7 };
\draw[color=blue] (G-8) .. controls +(0.70, 0.35) and +(0.70, -0.35) .. (G-9) node[pos=0.5,mid] (M8-9) { 8 };
\draw[color=blue] (G--6) .. controls +(-0.70, -0.35) and +(-0.70, 0.35) .. (G--5) node[pos=0.5,mid] (M-6--5) { 5 };
% \draw[color=red, <-] (M8-9) -- (M-9-7);
% \draw[color=red, <-] (M2-3) -- (M1-4);
% \draw[color=red, <-] (M-4--3) -- (M-7--2);
% \draw[color=red, <-] (M-6--5) -- (M-7--2);
% \draw[color=red, <-] (M-7--2) -- (M-1-5);
% \draw[color=red, <-] (M-9-7) -- (M-8-6);
% \draw[color=red, <-] (M-8-6) -- (M-1-5);
\end{tikzpicture}
%$\cdot$
\begin{tikzpicture}[xscale=0.5,yscale=0.4, thick, baseline={(current bounding box.east)}]
\tikzstyle{vertex} = [shape=rectangle, minimum height=15pt, minimum width=0pt, inner sep=0pt]
\tikzstyle{mid} = [draw, fill=white, shape=circle, minimum size=2pt, inner sep=1pt]
\node[vertex] (G--9) at (1.44, 10.40) {};
\node[vertex] (G--8) at (1.44, 9.10) {};
\node[vertex] (G--7) at (1.44, 7.80) {};
\node[vertex] (G--6) at (1.44, 6.50) {};
\node[vertex] (G--5) at (1.44, 5.20) {};
\node[vertex] (G--4) at (1.44, 3.90) {};
\node[vertex] (G--3) at (1.44, 2.60) {};
\node[vertex] (G--2) at (1.44, 1.30) {};
\node[vertex] (G--1) at (1.44, 0.00) {};
\node[vertex] (G-1) at (-1.44, 0.00) {};
\node[vertex] (G-2) at (-1.44, 1.30) {};
\node[vertex] (G-3) at (-1.44, 2.60) {};
\node[vertex] (G-4) at (-1.44, 3.90) {};
\node[vertex] (G-5) at (-1.44, 5.20) {};
\node[vertex] (G-6) at (-1.44, 6.50) {};
\node[vertex] (G-7) at (-1.44, 7.80) {};
\node[vertex] (G-8) at (-1.44, 9.10) {};
\node[vertex] (G-9) at (-1.44, 10.40) {};
\draw[green] (G--1) .. controls +(-1.63, 0.82) and +(1.63, -0.82) .. (G-5) node[pos=0.5,mid] (M-1-5) { 1 };
\draw[green] (G--6) -- (G-6) node[pos=0.5,mid] (M-6-6) { 2 };
\draw[green] (G--9) -- (G-7) node[pos=0.5,mid] (M-9-7) { 3 };
\draw[green] (G-3) .. controls +(0.70, 0.35) and +(0.70, -0.35) .. (G-4) node[pos=0.5,mid] (M3-4) { 1 };
\draw[green] (G--5) .. controls +(-0.70, -0.35) and +(-0.70, 0.35) .. (G--4) node[pos=0.5,mid] (M-5--4) { 2 };
\draw[green] (G-8) .. controls +(0.70, 0.35) and +(0.70, -0.35) .. (G-9) node[pos=0.5,mid] (M8-9) { 6 };
\draw[green] (G--8) .. controls +(-0.70, -0.35) and +(-0.70, 0.35) .. (G--7) node[pos=0.5,mid] (M-8--7) { 5 };
\draw[green] (G-1) .. controls +(0.70, 0.35) and +(0.70, -0.35) .. (G-2) node[pos=0.5,mid] (M1-2) { 1 };
\draw[green] (G--3) .. controls +(-0.70, -0.35) and +(-0.70, 0.35) .. (G--2) node[pos=0.5,mid] (M-3--2) { 2 };
% \draw[color=red, <-] (M8-9) -- (M-9-7);
% \draw[color=red, <-] (M-3--2) -- (M-1-5);
% \draw[color=red, <-] (M-5--4) -- (M-1-5);
% \draw[color=red, <-] (M-8--7) -- (M-6-6);
% \draw[color=red, <-] (M-9-7) -- (M-6-6);
% \draw[color=red, <-] (M-6-6) -- (M-1-5);
\end{tikzpicture}
%%%%%%%%%%%%%%%%%%%%%%%%%%%
\ =\
%%%%%%%%%%%%%%%%%%%%%%%%%%%
\begin{tikzpicture}[xscale=0.35,yscale=0.4, thick, baseline={(current bounding box.east)}]
\tikzstyle{vertex} = [shape=rectangle, minimum height=15pt, minimum width=0pt, inner sep=0pt]
\tikzstyle{mid} = [draw, fill=white, shape=circle, minimum size=2pt, inner sep=1pt]
\node[vertex] (G--9) at (2.64, 10.40) {};
\node[vertex] (G--8) at (2.64, 9.10) {};
\node[vertex] (G--7) at (2.64, 7.80) {};
\node[vertex] (G--6) at (2.64, 6.50) {};
\node[vertex] (G--5) at (2.64, 5.20) {};
\node[vertex] (G--4) at (2.64, 3.90) {};
\node[vertex] (G--3) at (2.64, 2.60) {};
\node[vertex] (G--2) at (2.64, 1.30) {};
\node[vertex] (G--1) at (2.64, 0.00) {};
\node[vertex] (G-1) at (-2.64, 0.00) {};
\node[vertex] (G-2) at (-2.64, 1.30) {};
\node[vertex] (G-3) at (-2.64, 2.60) {};
\node[vertex] (G-4) at (-2.64, 3.90) {};
\node[vertex] (G-5) at (-2.64, 5.20) {};
\node[vertex] (G-6) at (-2.64, 6.50) {};
\node[vertex] (G-7) at (-2.64, 7.80) {};
\node[vertex] (G-8) at (-2.64, 9.10) {};
\node[vertex] (G-9) at (-2.64, 10.40) {};
\draw (G--9) .. controls +(-2.59, -1.30) and +(2.59, 1.30) .. (G-5) node[pos=0.5,mid] (M-9-5) { 1 };
\draw (G-6) .. controls +(0.70, 0.35) and +(0.70, -0.35) .. (G-7) node[pos=0.5,mid] (M6-7) { 2 };
\draw (G--6) .. controls +(-3.10, -1.05) and +(-3.10, 1.05) .. (G--1) node[pos=0.5,mid] (M-6--1) { 1 };
\draw (G-1) .. controls +(3.10, 1.05) and +(3.10, -1.05) .. (G-4) node[pos=0.5,mid] (M1-4) { 1 };
\draw (G--5) .. controls +(-0.70, -0.35) and +(-0.70, 0.35) .. (G--4) node[pos=0.5,mid] (M-5--4) { 2 };
\draw (G-2) .. controls +(0.70, 0.35) and +(0.70, -0.35) .. (G-3) node[pos=0.5,mid] (M2-3) { 2 };
\draw (G--8) .. controls +(-0.70, -0.35) and +(-0.70, 0.35) .. (G--7) node[pos=0.5,mid] (M-8--7) { 5 };
\draw (G-8) .. controls +(0.70, 0.35) and +(0.70, -0.35) .. (G-9) node[pos=0.5,mid] (M8-9) { 8 };
\draw (G--3) .. controls +(-0.70, -0.35) and +(-0.70, 0.35) .. (G--2) node[pos=0.5,mid] (M-3--2) { 2 };
% \draw[color=red, <-] (M8-9) -- (M-9-5);
% \draw[color=red, <-] (M6-7) -- (M-9-5);
% \draw[color=red, <-] (M2-3) -- (M1-4);
% \draw[color=red, <-] (M-3--2) -- (M-6--1);
% \draw[color=red, <-] (M-5--4) -- (M-6--1);
\end{tikzpicture},
    \begin{tikzpicture}[diamond, diamond/bound={ 8 }{ 10 }]
      \matrix[, diamond/matrix] {
    \TLlab{1}&\straightLoop{, draw=none}{blue}{white}\\
\TLlab{2}&\straightLoop{}{blue}{blue}&\straightLoop{, draw=none}{blue}{white}\\
\TLlab{3}&\tuC{blue}{blue}&\tuC{blue}{blue}&\straightLoop{, draw=none}{blue}{white}\\
\TLlab{4}&\tuC{blue}{blue}&\tuC{blue}{blue}&\straightLoop{}{blue}{blue}&\straightLoop{, draw=none}{blue}{white}\\
\TLlab{5}&\tuC{blue}{blue}&\tuC{blue}{blue}&\tuC{blue}{blue}&\straightLoop{}{blue}{blue}&\straightLoop{, draw=none}{blue}{white}\\
\TLlab{6}&\tuC{blue}{blue}&\tuC{blue}{blue}&\tuC{blue}{blue}&\straightLoop{}{blue}{blue}&\straightLoop{}{blue}{blue}&\straightLoop{, draw=none}{blue}{white}\\
\TLlab{7}&\straightLoop{}{blue}{blue}&\straightLoop{}{blue}{blue}&\straightLoop{}{blue}{blue}&\straightLoop{}{blue}{blue}&\straightLoop{}{blue}{blue}&\straightLoop{}{blue}{blue}&\straightLoop{, draw=none}{blue}{white}\\
\TLlab{8}&\straightLoop{}{blue}{blue}&\straightLoop{}{blue}{blue}&\straightLoop{}{blue}{blue}&\straightLoop{}{blue}{blue}&\straightLoop{}{blue}{blue}&\straightLoop{}{blue}{blue}&\straightLoop{}{blue}{blue}&\straightLoop{, draw=none}{blue}{white}\\
\TLlab{9}&\tuC{blue}{blue}&\tuC{blue}{blue}&\tuC{blue}{blue}&\tuC{blue}{blue}&\straightLoop{}{blue}{blue}&\straightLoop{}{blue}{blue}&\straightLoop{}{blue}{blue}&\straightLoop{}{blue}{blue}&\straightLoop{, draw=none}{blue}{white}\\
&\straightLoop{, draw=none}{white}{gray}&\straightLoop{}{gray}{gray}&\straightLoop{}{gray}{gray}&\straightLoop{}{gray}{gray}&\straightLoop{}{gray}{gray}&\straightLoop{}{gray}{gray}&\straightLoop{}{gray}{gray}&\straightLoop{}{gray}{gray}&\tuC{green}{green}&\straightLoop{, draw=none}{green}{white}\\
&&\straightLoop{, draw=none}{white}{gray}&\straightLoop{}{gray}{gray}&\straightLoop{}{gray}{gray}&\straightLoop{}{gray}{gray}&\straightLoop{}{gray}{gray}&\straightLoop{}{gray}{gray}&\straightLoop{}{gray}{gray}&\tuC{green}{green}&\tuC{green}{green}&\straightLoop{, draw=none}{green}{white}\\
&&&\straightLoop{, draw=none}{white}{gray}&\straightLoop{}{gray}{gray}&\straightLoop{}{gray}{gray}&\straightLoop{}{gray}{gray}&\straightLoop{}{gray}{gray}&\straightLoop{}{gray}{gray}&\tuC{green}{green}&\tuC{green}{green}&\straightLoop{}{green}{green}&\straightLoop{, draw=none}{green}{white}\\
&&&&\straightLoop{, draw=none}{white}{gray}&\straightLoop{}{gray}{gray}&\straightLoop{}{gray}{gray}&\straightLoop{}{gray}{gray}&\straightLoop{}{gray}{gray}&\tuC{green}{green}&\tuC{green}{green}&\straightLoop{}{green}{green}&\straightLoop{}{green}{green}&\straightLoop{, draw=none}{green}{white}\\
&&&&&\straightLoop{, draw=none}{white}{gray}&\straightLoop{}{gray}{gray}&\straightLoop{}{gray}{gray}&\straightLoop{}{gray}{gray}&\tuC{green}{green}&\tuC{green}{green}&\tuC{green}{green}&\tuC{green}{green}&\straightLoop{}{green}{green}&\straightLoop{, draw=none}{green}{white}\\
&&&&&&\straightLoop{, draw=none}{white}{gray}&\straightLoop{}{gray}{gray}&\straightLoop{}{gray}{gray}&\tuC{green}{green}&\straightLoop{}{green}{green}&\straightLoop{}{green}{green}&\straightLoop{}{green}{green}&\straightLoop{}{green}{green}&\straightLoop{}{green}{green}&\straightLoop{, draw=none}{green}{white}\\
&&&&&&&\straightLoop{, draw=none}{white}{gray}&\straightLoop{}{gray}{gray}&\tuC{green}{green}&\tuC{green}{green}&\tuC{green}{green}&\straightLoop{}{green}{green}&\straightLoop{}{green}{green}&\straightLoop{}{green}{green}&\straightLoop{}{green}{green}&\straightLoop{, draw=none}{green}{white}\\
&&&&&&&&\straightLoop{, draw=none}{white}{gray}&\tuC{green}{green}&\tuC{green}{green}&\tuC{green}{green}&\tuC{green}{green}&\tuC{green}{green}&\tuC{green}{green}&\tuC{green}{green}&\straightLoop{}{green}{green}&\straightLoop{, draw=none}{green}{white}\\
&&&&&&&&&\TLlab{\ov{9}}&\TLlab{\ov{8}}&\TLlab{\ov{7}}&\TLlab{\ov{6}}&\TLlab{\ov{5}}&\TLlab{\ov{4}}&\TLlab{\ov{3}}&\TLlab{\ov{2}}&\TLlab{\ov{1}}\\
    };
    \end{tikzpicture}
  \caption{Products of two Okada arc-diagrams and their associated FPLCs}
  \label{fig.prod-arc-fplc}
\end{figure}

\begin{definition}
  Let \defn{$\id_N$} denote the \defn{identity arc-diagram} of rank $N$.
  This is the unique arc-diagram containing the horizontal arcs $\arc{a}{a}{\ov{a}}$
  for all $1\leq a \leq N$.
\end{definition}
For example, the first arc-diagram of \cref{fig.Okada_arc_diagrams} is $\id_8$.
The identity arc-diagram has the following characterization:
\begin{lemma}\label{lemma:uniq-id_propag}
  $\id_N$ is the unique Okada arc-diagram of rank~$N$ containing only
  propagating arcs.
\end{lemma}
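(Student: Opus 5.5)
The plan splits into two parts: first show that the only non-crossing arc-diagram of rank $N$ with all arcs propagating is the identity matching, and then show that the labels are forced.

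\emph{The underlying matching.} Suppose every arc of $D$ is propagating. Then each of the $N$ left nodes is matched to one of the $N$ right nodes. By the non-crossing condition for the order of \cref{eq:orderN-NN}, the matching must be order-preserving with respect to heights. Indeed, take two propagating arcs $\arc{a}{}{\ov{b}}$ and $\arc{c}{}{\ov{d}}$ with $a<c$. If $d<b$, then $a<c<\ov{d}<\ov{b}$ in the order of \cref{eq:orderN-NN}, which is a crossing. Hence $b<d$, and so the bijection $a\mapsto b$ is increasing. The only increasing bijection of $\setN$ is the identity, so the arcs are exactly $\arc{a}{}{\ov{a}}$ for $1\leq a\leq N$.

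\emph{The labels.} Let $\ell_a$ be the label of $\arc{a}{}{\ov{a}}$. By \cref{okada-arc-cond-interv} of \cref{def.okada.arc} we have $1\leq \ell_a\leq a$. As noted just before the statement, arc $a$ is nested in arc $a'$ whenever $a'<a$, since $a'<a<\ov{a}<\ov{a'}$. By \cref{okada-arc-cond-nested}, the labels therefore satisfy $\ell_1<\ell_2<\dots<\ell_N$. A strictly increasing sequence with $\ell_1\geq 1$ satisfies $\ell_a\geq a$ by induction on $a$. Combining this with $\ell_a\leq a$ gives $\ell_a=a$, which is exactly $\id_N$. The parity condition of \cref{okada-arc-cond-parity} is not needed for uniqueness.

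\emph{Existence.} It remains to check that $\id_N$ is itself an Okada arc-diagram. Each label $a$ lies in $[1,a]$ and has the parity of $a=\min(a,a)$, and nested arcs carry strictly larger labels. So all three conditions of \cref{def.okada.arc} hold.

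The only step requiring care is the non-crossing argument, because the order of \cref{eq:orderN-NN} is non-standard. I expect it to be routine once that order is written out explicitly.
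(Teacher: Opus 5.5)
Your proposal follows the paper's proof: non-crossing plus pigeonhole forces each left node $a$ to be matched with $\ov{a}$, and then the interval condition $\ell_a\leq a$, together with the strict increase of labels along the nested chain starting from $\ell_1=1$, forces $\ell_a=a$. You spell out both steps in more detail than the paper does.

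There is one error, in the step you yourself flagged as delicate. In the order of \cref{eq:orderN-NN}, the overlined nodes are arranged by \emph{decreasing} index. So $d<b$ gives $\ov{b}<\ov{d}$, hence $a<c<\ov{b}<\ov{d}$, and that is the crossing pattern. The chain you wrote, $a<c<\ov{d}<\ov{b}$, is false when $d<b$. It would also describe $\{c,\ov{d}\}$ nested inside $\{a,\ov{b}\}$ rather than a crossing.

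You use the correct direction later, in $a'<a<\ov{a}<\ov{a'}$, so only that one line needs fixing. Once it is corrected, the conclusion $b<d$ and the rest of the argument stand.
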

\begin{proof}
  The three conditions in~\cref{def.okada.arc} are clearly satisfied by
  $\id_N$. Now suppose that $D$ is an Okada arc-diagram of rank~$N$ containing only
  propagating edges. Let $\pi_a$ denote the propagating arc with endpoint $a \in \setN$.
  The non-crossing condition together with the pigeonhole principle
  forces the righthand endpoint of $\pi_a$ to be $\ov{a}$.
  \Cref{okada-arc-cond-interv} ensures that $\pi_1$ has label $\ell_1 =1$.
  Clearly $\pi_{a+1}$ is nested in $\pi_a$ for each $1 \leq a \leq N-1$.
  \cref{okada-arc-cond-interv,okada-arc-cond-nested}, along with the fact that $\ell_1 = 1$,
  force $\pi_a$ to have label $\ell_a = a$ for each $1 \leq a \leq N$.
  \end{proof}
%  We denote $\alpha_i$ the arc starting at $i$ and $\beta_i$ the arc
%  starting at $\ov{i}$.
%  We show by induction on $i$ that $\alpha_i$ must be linked to $\ov{i}$ and
%  labelled by $i$. For the base case, we notice that the arc $\alpha_1$ ends
%  at $\ov{1}$ otherwise the arc $\beta_1$ can't be propagating without crossing
%  $\alpha_1$. \Cref{okada-arc-cond-interv} ensures that $\alpha_1$ is labelled $1$.
%
%  We now turn to the inductive step. The arc $\alpha_{i+1}$ has to end at $\ov{j}$
%  for $j>i$, otherwise $j\leq i$ and the end of arc $\alpha_j$ is the
%  node $\ov{j}$ by induction. By the same reasoning used in the base case, the arc
%  $\alpha_{i+1}$ has to ends at $\ov{i+1}$ otherwise the arc $\beta_{i+1}$ can't be
%  propagating without crossing it. Finally, noticing that $\alpha_{i+1}$ is nested
%  in $\alpha_{i}$, we see that its label has to be at least $i+1$ by
%  \cref{okada-arc-cond-nested} and is at most $i+1$ by
%  \cref{okada-arc-cond-interv}.
The set of Okada arc-diagrams is closed under the labelled product:
\begin{theorem}
  \label{prop.Okada_arc_semigroup}
  Rank $N$ Okada arc-diagrams form a sub-semigroup of the semigroup of labelled non-crossing arc-diagrams.
  Furthermore rank $N$ Okada arc-diagrams form a monoid
  whose identity element is $\id_N$.
\end{theorem}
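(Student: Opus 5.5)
The plan is to verify directly that the labelled product of two Okada arc-diagrams $C$ and $D$ satisfies the three conditions of \cref{def.okada.arc}, and then to check the identity property by hand. Fix an arc $\alpha=\arc{a}{}{b}$ of $C\boldsymbol{\cdot} D$. In $C\circ D$ it is a path made of fragments $\alpha_1,\dots,\alpha_m$, alternately arcs of $C$ and of $D$, glued at middle vertices $v_1,\dots,v_{m-1}$. Its label is $\ell(\alpha)=\min_j \ell(\alpha_j)$. Loops of $C\circ D$ are discarded and play no role.

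\Cref{okada-arc-cond-interv} is immediate. The endpoint $a$ is an endpoint of $\alpha_1$, so $\ell(\alpha)\le\ell(\alpha_1)\le |a|$, and in the same way $\ell(\alpha)\le\ell(\alpha_m)\le |b|$. All labels are at least $1$.

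For \cref{okada-arc-cond-nested} I would argue by planarity. Suppose $\beta$ is nested in $\alpha$ in the product. Then the whole path of $\beta$ in $C\circ D$ lies in the region bounded by the path of $\alpha$ and the boundary segment between $a$ and $b$. Hence every fragment $\beta_k$ of $\beta$ is nested, inside $C$ or inside $D$, in some fragment $\alpha_j$ of the same factor. I would make this precise by following the boundary of that region through the middle column. By \cref{okada-arc-cond-nested} for $C$ and $D$, $\ell(\beta_k)>\ell(\alpha_j)\ge\ell(\alpha)$ for every $k$. Taking the minimum over $k$ gives $\ell(\beta)>\ell(\alpha)$.

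The parity \cref{okada-arc-cond-parity} is the main obstacle. We must compare the parity of $\ell(\alpha)=\ell(\alpha_{j_0})$, which by hypothesis equals the parity of the smaller endpoint height of the minimizing fragment $\alpha_{j_0}$, with the parity of $\min(|a|,|b|)$. The tool I would use is \cref{lemma:propag-parity}: propagating fragments preserve the parity of heights, and non-propagating ones flip it. First I would show, using the nesting argument just given, that $\alpha_{j_0}$ can be chosen as an \emph{outermost} fragment, one not nested in any other fragment of the same path. I would then try to show that the lower endpoint of such an outermost minimal fragment has the same parity as the lower endpoint of the whole path. This should come from walking along the path from the lower of $a,b$ to $\alpha_{j_0}$ and using the label inequalities (strict nesting plus the bound $\ell\le$ endpoint height) to see that the parity changes pair up.

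If this bookkeeping becomes unwieldy, I would fall back on the FPLC picture of \cref{rem:fusion-FPLCs}. There the label is the minimal height reached by the path in $\mathcal{D}_1\circ\mathcal{D}_2$, and parity together with the bound come from the height-labelled isotopy class being Okada, as stated there. This route, however, needs the realization of every Okada arc-diagram by a normal-form FPLC, which is proved later, so the direct combinatorial argument is preferred.

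Finally, for the identity, compose $\id_N$ with an Okada diagram $D$. Each arc of $D$ with a left endpoint $c\in\setN$ gets extended by the arc $\arc{c}{c}{\ov c}$ of $\id_N$. This does not change the underlying matching, and the new label is $\min(c,\ell)=\ell$ because $\ell\le |c|$ by \cref{okada-arc-cond-interv}. Arcs of $D$ with both endpoints negative are untouched, so $\id_N\boldsymbol{\cdot} D=D$. The equality $D\boldsymbol{\cdot}\id_N=D$ follows symmetrically, or from the mirror anti-automorphism $\star$ since $\id_N^\star=\id_N$. Associativity is already known, so the Okada arc-diagrams form a monoid.
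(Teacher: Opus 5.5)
Your treatment of \cref{okada-arc-cond-interv}, of \cref{okada-arc-cond-nested} and of the identity is correct and follows the paper. For \cref{okada-arc-cond-nested} you show that every fragment of a nested arc $\beta$ is nested, within its own factor, in a fragment of $\alpha$; this is exactly the paper's \cref{lemma:path_arc_nested}. The genuine gap is \cref{okada-arc-cond-parity}, which carries the real content of the theorem and which you leave at ``I would then try to show''. The mechanism you suggest, label inequalities forcing parity changes to pair up, is not what makes it work. Labels are used only once: to reduce to outermost fragments. That reduction follows directly from \cref{okada-arc-cond-nested} inside each factor (\cref{remark:restrict_unnested}), not from the product nesting argument. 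After it, labels play no further role. What remains is a statement about the unlabelled diagrams: every outermost fragment of $\alpha$ has lower endpoint of one fixed parity.

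The missing idea is a direction-of-traversal argument.
\begin{enumerate}
\item Orient $\alpha$ from its smallest left endpoint $a_1$, a left vertex of $C$; the case with no left endpoint is the mirror image. Let $p$ be the parity of $a_1$ and $q$ the other parity. If $\alpha$ is a single arc of $C$ there is nothing to prove.
\item Otherwise the first fragment $\arc{a_1}{}{\ov{a_2}}$ is propagating in $C$, so by \cref{lemma:propag-parity} it ends at a $p$-vertex.
\item From then on, each left arc of $D$ runs from a $p$-vertex to a $q$-vertex, and each right arc of $C$ runs from a $q$-vertex to a $p$-vertex. So the parity of a fragment's lower endpoint is decided entirely by the direction in which $\alpha$ traverses it.
\item The paper's \cref{lemma:arc_travel_direction} fixes that direction by a Jordan-curve argument. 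The region of the rectangle to the right of the oriented curve $\alpha$, the one containing the bottom edge, lies to the right of every outermost fragment. This forces outermost right arcs of $C$ to be traversed downward and outermost left arcs of $D$ upward.
\item Hence every outermost fragment has lower endpoint, and therefore label, of parity $p$.
\item The closing fragment is handled separately. Either it is a propagating arc of $C$, which by minimality of $a_1$ is nested in $\arc{a_1}{}{\ov{a_2}}$ and so is not outermost. Or it is a propagating arc of $D$ joining two $p$-vertices.
\end{enumerate}
The minimal label therefore has parity $p$, which is the parity of $\min(|a|,|b|)$.

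Your FPLC fallback does not close the gap. The realization of Okada diagrams by FPLCs (\cref{prop:loop-to-diag}) goes through the isomorphism of \cref{cor:isomEG}, which already uses this theorem. Moreover, the parity property of height-labelled classes of arbitrary FPLCs asserted in \cref{rem:fusion-FPLCs} is not established independently.
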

Until we prove that the Okada arc-diagram monoid is isomorphic to the Okada
monoid $\Okada$ we'll denote the former by \defn{$\OkArc$}.
The proof will eventually use the following crucial remark.
\begin{remark}\label{remark:restrict_unnested}
  Let $D_1$ and $D_2$ be two Okada arc-diagrams and $\alpha$ be an arc in the
  product $D =  D_1 \boldsymbol{\cdot} D_2$. Up to isotopy, the arc $\alpha$ can be
  represented as a concatenation of arc fragments from $D_1$ and $D_2$ in the
  composition $D_1\circ D_2$.
  By definition, the label of arc $\alpha$ is the minimum of the labels of these arcs
  fragments. When computing this minimum,
  it is enough, by \cref{okada-arc-cond-nested}, to consider
  only those arc-fragments, among all the arc fragments forming $\alpha$,
  which are minimal with respect to the nesting order; i.e. those arcs
  which are not nested by other arc fragments of $\alpha$.
  \end{remark}
\begin{proof}
  It is clear from \cref{okada-arc-cond-interv} that the arc-diagram
  $\id_N$ is an identity element for the labelled product product. We need to
  show that the product of two Okada arc-diagrams is an Okada arc-diagram. It
  is known that the unlabelled product of
  two non-crossing arc-diagrams is a non-crossing arc-diagram.
  Let $D_1$ and $D_2$ be two Okada arc-diagrams. We need
  to show that the labelled product $D \eqdef D_1 \boldsymbol{\cdot} D_2$ satisfies the three
  conditions of \cref{def.okada.arc}. Any arc in the product $D$ is the
  isotopy class $[\alpha]$ of some path $\alpha$ in the composition
  $D_1\circ D_2$.
  \medskip

  \begin{enumerate}[wide, labelwidth=0pt, labelindent=0pt]
  \item \Cref{okada-arc-cond-interv} is clearly satisfied as the
    minimum of the labels of an arc is smaller than the labels of its two extreme
    arc fragments, which in turn are smaller than the labels of their endpoints.
    \medskip
    
  \item To show \cref{okada-arc-cond-parity}, by symmetry, we only
    have to consider arcs having at least one endpoint on the lefthand side of $D_1\circ D_2$. Let $[\alpha]$ be
    such an arc in $D$. The arc $[\alpha]$ has either one or two endpoints on
    the lefthand side of $D$. Among these choose the endpoint, denoted $a_1$, which is minimal.
    Consider the list $A = (a_1, a_2, \dots, a_\ell)$ of the
    absolute values of node indices encountered while traversing $\alpha$ starting from $a_1$
    and terminating at either $a_\ell$ if $\ell$ is even or $\ov{a_\ell}$ if $\ell$ is odd.
    If $\ell =2$ then $\alpha$ is $\arc{a_1}{}{a_2}$ which can only be an arc in $D_1$.
    Since $D_1$ is an Okada arc-diagram the label $\mathcal{h}$ of $\arc{a_1}{}{a_2}$
    satisfies \cref{okada-arc-cond-parity} and we are done.

    Otherwise $\ell \geq 3$ and the arc $\alpha$ travels first along a propagating arc $\arc{a_1}{}{\ov{a_2}}$ in $D_1$
    then follows an arc $\arc{a_2}{}{a_3}$ on the lefthand side of $D_2$ and subsequently proceeds to
    alternate between
    arcs $\arc{\ov{a_{2k-1}}}{}{\ov{a_{2k}}}$ on the righthand side of $D_1$ and arcs
   $\arc{a_{2k}}{}{a_{2k+1}}$ on lefthand side of $D_2$, finally traversing either a propagating arc
   $\arc{\ov{a_{\ell-1}}}{}{a_{\ell}}$ in $D_1$ or a propagating arc
   $\arc{a_{\ell-1}}{}{\ov{a_{\ell}}}$ in $D_2$ depending of the parity of
   $\ell$.
   See \cref{fig:path-alternate} for an illustration.

\begin{figure}[ht]
\[
\tikzset{mid arrow/.style={
decoration={markings,mark=at position 0.5 with {\arrow{#1}}},
postaction={decorate}}}
\begin{tikzpicture}[scale=0.4, rotate=-90,thick, baseline={(current bounding
    box.east)}, >=Computer Modern Rightarrow] %, >=Straight Barb]
\tikzstyle{vertex} = [shape=rectangle, minimum height=15pt, minimum width=0pt,
inner sep=0pt]
\tikzstyle{minimal} = [ultra thick, red]
\draw[black,thin] (0, 31.5) -- (0,-1.5) -- (10,-1.5) -- (10, 31.5) -- (0, 31.5);
\draw[black,thin] (5, 31.5) -- (5,-1.5);
\node[rotate=-90] at(2,0) {$\mathcal{O}_1$};
\node[rotate=-90] at(8,0) {$\mathcal{O}_2$};
\foreach \i in {1,...,21} {
  \node[vertex] (G-\i) at ($(0.00, \i*1.5-1.5)$) {};
  \node[vertex] (G--\i) at ($(5.00, \i*1.5-1.5)$) {};
  \node[vertex] (G+\i) at ($(10.00, \i*1.5-1.5)$) {};
}
\foreach \i in {1,...,21} {
  \node[left,rotate=-90] at (G-\i) {$\i$};
  \node[right,rotate=-90] at (G+\i) {$\i$};
}
\draw[minimal,mid arrow={<}] (G--15) .. controls +(-4.51, -2.26) and +(1.63, 0.82) .. (G-7);
\draw[mid arrow={<}] (G--21) .. controls +(-3.50, -1.75) and +(-3.50, 1.75) .. (G--16);
\draw[mid arrow={>}] (G--20) .. controls +(-2.10, -1.05) and +(-2.10, 1.05) .. (G--17);
\draw[minimal,mid arrow={>}] (G--12) .. controls +(-0.90, -0.35) and +(-0.90, 0.35) .. (G--11);
\draw[minimal,mid arrow={>}] (G--10) .. controls +(-4.90, -2.45) and +(-4.90, 2.45) .. (G--1);
\draw[mid arrow={<}] (G--9) .. controls +(-3.50, -1.75) and +(-3.50, 1.75) .. (G--2);
\draw[mid arrow={>}] (G--8) .. controls +(-0.70, -0.35) and +(-0.70, 0.35) .. (G--7);
\draw[mid arrow={>}] (G--6) .. controls +(-2.10, -1.05) and +(-2.10, 1.05) .. (G--3);
\draw[mid arrow={<}] (G--5) .. controls +(-0.70, -0.35) and +(-0.70, 0.35) .. (G--4);
\draw[minimal, mid arrow={<}] (G+13) .. controls +(-3.55, -1.78) and +(3.55, 1.78) .. (G--7);
\draw[mid arrow={<}] (G--20) .. controls +(0.70, 0.35) and +(0.70, -0.35) .. (G--21);
\draw[mid arrow={<}] (G--12) .. controls +(2.10, 1.05) and +(2.10, -1.05) .. (G--17);
\draw[mid arrow={>}] (G--15) .. controls +(0.70, 0.35) and +(0.70, -0.35) .. (G--16);
\draw[mid arrow={<}] (G--10) .. controls +(0.70, 0.35) and +(0.70, -0.35) .. (G--11);
\draw[mid arrow={<}] (G--8) .. controls +(0.70, 0.35) and +(0.70, -0.35) .. (G--9);
\draw[minimal, mid arrow={>}] (G--1) .. controls +(3.50, 1.75) and +(3.50, -1.75) .. (G--6);
\draw[mid arrow={<}] (G--2) .. controls +(2.10, 1.05) and +(2.10, -1.05) .. (G--5);
\draw[mid arrow={>}] (G--3) .. controls +(0.70, 0.35) and +(0.70, -0.35) .. (G--4);
\draw[gray!70, dotted] (G--18) .. controls +(-0.90, -0.35) and +(-0.90, 0.35) .. (G--19);
\draw[gray!70, dotted] (G--13) .. controls +(0.90, -0.35) and +(0.90, 0.35) .. (G--14);
\foreach \i in {1,3,...,21} {
  \node[green] at (G-\i) {\pgfuseplotmark{*}};
  \node[green] at (G--\i) {\pgfuseplotmark{*}};
  \node[green] at (G+\i) {\pgfuseplotmark{*}};
}
\foreach \i in {2,4,...,20} {
  \node[color=blue] at (G-\i) {\pgfuseplotmark{square*}};
  \node[color=blue] at (G--\i) {\pgfuseplotmark{square*}};
  \node[color=blue] at (G+\i) {\pgfuseplotmark{square*}};
}
\end{tikzpicture}
\]
\caption[A path in a composition]{%
  Traveling along a path $\alpha$ in the composition $D_1 \circ D_2$. Odd nodes are 
  green circle, while even nodes blue square. Minimal arcs are shown in thick red.}
  \label{fig:path-alternate}
\end{figure}

Call $p$ the parity of $a_1$ and let $q\eqdef p+1 \bmod 2$ be the opposite
parity. Thanks to \cref{lemma:propag-parity}, $a_2$ also has parity $p$.
Due to alternation, the arcs in $D_1$ travel in the $q \to p$
direction while the arcs of $D_2$ travel in the $p \to q$
direction. The entire procedure is illustrated below
\[
  \arc[D_1]{\stackrel{\displaystyle p}{\mid}{a_1}\ }{}{\ov{a_2}}
  \stackrel{\displaystyle p}{\mid}
  \arc[D_2]{a_2}{}{a_3}
  \stackrel{\displaystyle q}{\mid}
  \ov{a_3}
  \quad\dots\quad
  {a_{2k-1}}
  \stackrel{\displaystyle q}{\mid}
  \arc[D_1]{\ov{a_{2k-1}}}{}{\ov{a_{2k}}}
  \stackrel{\displaystyle p}{\mid}
  \arc[D_2]{a_{2k}}{}{a_{2k+1}}
  \stackrel{\displaystyle q}{\mid}
  \ov{a_{2k+1}}
  \quad\dots
\]
ending with
\[
  a_{\ell-1}
  \stackrel{\displaystyle q}{\mid}
  \arc[D_1]{\ov{a_{\ell-1}}}{}{a_\ell  \ \stackrel{\displaystyle q}{\mid}}
  \ \text{ if $\ell$ is even, }
  \qquad
  \text{ or }\
%  \ov{a_{\ell-1}}
%  \stackrel{\displaystyle p}{\mid}
%  \arc[D_2]{a_{\ell-1}}{}{\ov{a_{\ell}}  \stackrel{\displaystyle p}{\mid}
  \ov{a_{\ell-1}}
  \stackrel{\displaystyle p}{\mid}
  \arc[D_2]{a_{\ell-1}}{}{\ov{a_\ell} \ \stackrel{\displaystyle p}{\mid}}
  \ \text{ if $\ell$ is odd. }
\]

The statement is a consequence of the following fact:
    \begin{lemma}
      \label{lemma:arc_travel_direction}
      Consider the set $\mathcal{A}_1(\alpha)$ consisting of all arcs of $D_1$ which are fragments of $\alpha$. Order
      the arcs in $\mathcal{A}_1(\alpha)$ with respect to the nesting order in $D_1$. Any minimal right arc in $\mathcal{A}_1(\alpha)$
      is traversed from top to bottom when following $\alpha$ from $a_1$ to $a_\ell$. Likewise,
      let $\mathcal{A}_2(\alpha)$ be the set of all arcs of $D_2$ which are fragments of $\alpha$. Order
      the arcs in $\mathcal{A}_2(\alpha)$ with respect to the nesting order in $D_2$. Any minimal left arc in $\mathcal{A}_2(\alpha)$
      is traversed from bottom to top when following $\alpha$ from $a_1$ to $a_\ell$.
%      Any right arc of $D_1$ which is part of $\alpha$ and which is minimal among such
%      arcs with respect to the nesting order (that is not nested in any other arc fragment of $\alpha$ in $D_1$) is
%      traversed from top to bottom. Any right arc of $D_2$ which is part of $\alpha$ and which is not nested in any
%      other arc fragment of $\alpha$ in $D_2$ is traversed from bottom to top.
    \end{lemma}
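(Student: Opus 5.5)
The plan is to argue by planarity, using the alternation pattern already set up in the proof of \cref{okada-arc-cond-parity}: $\alpha$ leaves $a_1$ along a propagating arc of $D_1$ and then alternates between right arcs of $D_1$ and left arcs of $D_2$. We treat the statement about $D_1$; the statement about $D_2$ is its mirror image.

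The argument for $D_2$ follows from the $D_1$ case by applying the anti-automorphism $D \mapsto D^\star$ to the composition and reversing the orientation of $\alpha$. The only care needed is to check that the starting convention (the minimal left endpoint $a_1$) is transported to the corresponding convention at the other end, or is not actually needed.

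\textbf{Setup.} Fix a minimal right arc $\beta = \arc{\ov{c}}{}{\ov{d}}$ of $D_1$ in $\mathcal{A}_1(\alpha)$, with $d<c$, so that $\ov{c}$ is the upper endpoint. Let $R$ be the region of $D_1$ bounded by $\beta$ and the segment $I$ of the middle line between the heights $d$ and $c$. Every right arc of $D_1$ with an endpoint strictly inside $I$ is nested in $\beta$. By minimality none of them is a fragment of $\alpha$, so $\alpha$ never enters $R$; it meets $\overline{R}$ only along $\beta$ itself.

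\textbf{Step 1: the nodes strictly inside $I$ do not lie on $\alpha$.} Every node strictly inside $I$ is the endpoint of a right arc of $D_1$ nested in $\beta$; these arcs pair up the interior nodes, so the number of interior nodes is even. Since none of these arcs is a fragment of $\alpha$, no interior node of $I$ lies on $\alpha$. Consequently the $D_2$-fragments of $\alpha$ adjacent to $\beta$ leave $\ov{c}$ and $\ov{d}$ into $D_2$ without passing through the interior of $I$.

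\textbf{Step 2: close up $\alpha$ and compare orientations.} Let $\gamma$ be the simple closed curve formed by the subpath of $\alpha$ from $\ov{d}$ back round to $\ov{c}$ on the $D_2$ side, together with the chord $I$. The behaviour of $\alpha$ near $\ov{c}$ and $\ov{d}$ in $D_2$ is constrained: the $D_2$ left arcs at $\ov{c}$ and $\ov{d}$ are either nested in the ones enclosing $I$ or enclose $I$, by non-crossing. The claim to establish is that $a_1$, being the \emph{minimal} left endpoint of $\alpha$, lies in the unbounded component of the complement of $\alpha \cup \beta$. Granting this, the Jordan curve theorem forces the following orientation statement: going along $\alpha$ from $a_1$, the region $R$ must lie on the same side as the lower part of the left boundary. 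This is exactly the statement that $\beta$ is entered at $\ov{c}$ and exited at $\ov{d}$, i.e. it is traversed from top to bottom.

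\textbf{Main obstacle.} The hard step is making the orientation argument of Step 2 rigorous combinatorially rather than by picture. My first attempt would avoid topology and use parity instead. By \cref{lemma:propag-parity}, $c$ and $d$ have opposite parities, and the alternation shows that $D_1$-arcs are traversed from the endpoint of parity $q$ to the endpoint of parity $p$. So it suffices to show that the upper endpoint $\ov{c}$ of a minimal arc has parity $q$, i.e.\ parity opposite to $a_1$. I would prove this by counting nodes below $\ov{d}$ on the middle line. These nodes are matched by arcs not on $\alpha$, except for $\ov{a_2}$ and the fragments of $\alpha$ lying below $\beta$. Minimality of $\beta$ together with non-crossing should pin down this count modulo $2$, in terms of the position of $\ov{a_2}$ relative to $\beta$, and the parity of $a_2$ equals $p$. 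If the count becomes messy, I would fall back on an induction along $\alpha$: track, for each successive minimal fragment of $\alpha$, on which side of it the start $a_1$ lies.
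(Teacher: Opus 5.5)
\textbf{Genuine gap: the hypothesis is read backwards.} In the paper the nesting order has outer arcs below inner ones (labels increase along it), and \cref{remark:restrict_unnested} says explicitly that the minimal fragments are those \emph{not nested in any other} fragment of $\alpha$, i.e.\ the outermost ones. This is also what the proof of \cref{okada-arc-cond-parity} needs, since the label of $[\alpha]$ is the minimum over exactly these fragments. In Step~1 you instead use minimality to say that no fragment of $\alpha$ is nested in $\beta$, i.e.\ that $\beta$ is innermost.

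Under your reading the statement is false. Take $D_1$ with arcs $\{1,\ov{1}\}$, $\{2,\ov{4}\}$, $\{3,4\}$, $\{\ov{2},\ov{3}\}$ (labels $1,2,3,2$) and $D_2=\A{1}\A{3}$. Starting from $a_1=1$, the path runs $1\to\ov{1}$ in $D_1$, then $1\to 2$ in $D_2$, then $\ov{2}\to\ov{3}$ in $D_1$, then $3\to 4$ in $D_2$, then $\ov{4}\to 2$ in $D_1$. Nothing of $\alpha$ is nested in the right arc $\{\ov{2},\ov{3}\}$, yet it is traversed from bottom to top. The lemma is not contradicted, because this arc is nested in the fragment $\{1,\ov{1}\}$. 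So Step~2 and the parity fallback are aimed at a false conclusion.

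Step~2 also has the orientation reversed. The part of the left edge below $a_1$ lies to the right of $\alpha$, while the inner region $R$ lies to the left of a right arc traversed from top to bottom. Hence top-to-bottom traversal means $R$ is on the \emph{opposite} side of $\alpha$ from that edge, not the same side. In addition, its decisive topological claim is only granted.

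\textbf{The missing idea.} Look at the \emph{outer} side of an outermost $\beta$; its inner region may well contain other fragments of $\alpha$. The arcs of $D_1$ that separate a point just outside $\beta$ from the bottom edge of the left half-rectangle are exactly the arcs of $D_1$ in which $\beta$ is nested. These are the right arcs enclosing $\beta$ and the propagating arcs whose right endpoint lies below $\ov{d}$. For pairwise disjoint arcs in the half-rectangle, two points lie in the same component of the complement as soon as no single arc separates them. By outermostness none of these separating arcs lies on $\alpha$. Hence the outer side of $\beta$ lies in the component $\mathcal{O}$ of the rectangle minus $\alpha$ that contains the bottom edge.

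Orienting $\alpha$ from its lowest left endpoint $a_1$ puts $\mathcal{O}$ on the right of $\alpha$. The outer side of a right arc of $D_1$ is on its right exactly when the arc is traversed from top to bottom, which gives the $D_1$ claim. The same reasoning in the right half gives the $D_2$ statement directly, with no mirror-symmetry reduction. There the separating arcs are the left arcs of $D_2$ enclosing $\beta$ and the propagating arcs of $D_2$ whose left endpoint lies below $\beta$. This is the paper's Jordan-curve argument.
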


    Indeed, \cref{lemma:arc_travel_direction} implies that the index of the bottom endpoint of each
    nesting-minimal arc on the righthand side of $D_1$ which is encountered while
    traversing $\alpha$ must have parity $p$.
    Finally, depending on the parity of its label, the arc $\alpha$ ends
    \begin{itemize}
    \item either with a propagating arc in $D_1$ which, due to the
      hypothesis that $a_1$ is minimal, is above $\arc{a_1}{}{a_2}$ and therefore not
      minimal;
    \item or with a propagating arc in $D_2$ whose endpoints both have parity $p$.
    \end{itemize}
    As a consequence, following \cref{remark:restrict_unnested}, the
    label of $[\alpha]$ can be computed by taking
    the minimum of the labels of its arc fragments which are not nested in any other arcs
    fragments of $\alpha$. Since the labels of those arcs all have parity $p$, the minimum of
    these label also has parity $p$. Thus the label of $[\alpha]$ matches the parity
    of both of its endpoints.

    We need still need to prove the Lemma.
    \begin{proof}[Proof of \cref{lemma:arc_travel_direction}]
This is a consequence of the fact that the diagrams $D_1$ and $D_2$ are
non-crossing.  Embed the composition $D_1 \circ D_2$ into the rectangle
$[0,2] \times [0, n+1]$ so that the $i\ordinal$ left vertex of $D_1$
is situated at position $(0,i)$, the $i\ordinal$ right vertex of $D_1$
and the $i\ordinal$ left vertex of $D_2$ occupy
position $(1,i)$, and the $i\ordinal$ right vertex of $D_2$ is
situated in position $(2,i)$. Orient $\alpha$ from beginning to
end.  As a Jordan curve, $\alpha$
cuts the rectangle into two connected regions: The region to the
right of $\alpha$ containing the bottom segment $[0,2] \times \{ 0 \}$
is called $\mathcal{O}$. Let $\mathcal{O}_1$ and
 $\mathcal{O}_2$ denote the intersections of $\mathcal{O}$
 with the sub-rectangles $[0,1] \times [0,n+1]$ and
 $[1,2] \times [0,n+1]$ respectively.
The region $\mathcal{O}_1$ lies to the right of
each minimal right arc $\arc{\ov{a_k}}{}{\ov{a_{k+1}}}$
in $\mathcal{A}_1(\alpha)$.
Therefore $\arc{\ov{a_k}}{}{\ov{a_{k+1}}}$
must be traversed from top to bottom while following
$\alpha$ from start to finish.
Likewise, the region $\mathcal{O}_2$ lies to the right of
each minimal left arc $\arc{a_k}{}{a_{k+1}}$
in $\mathcal{A}_2(\alpha)$
Consequently $\arc{a_k}{}{a_{k+1}}$
must be traversed from bottom to top while following
$\alpha$ from start to finish.
\end{proof}
  \item Finally, \cref{okada-arc-cond-nested} is an easy consequence of
    the following observation:
    \begin{lemma}
      \label{lemma:path_arc_nested}
      Suppose that $[\alpha]$ and $[\beta]$ are two arcs of the product $D_1 \boldsymbol{\cdot} D_2$
      such that $[\beta]$ is nested in $[\alpha]$. Any arc fragment of $\beta$ in
      $D_1$ or $D_2$ is nested in some arc fragment of $\alpha$ in $D_1$ or $D_2$
      respectively.
    \end{lemma}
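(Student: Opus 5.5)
The plan is to use the planar geometry of the composition $D_1\circ D_2$, embedded as in the proof of \cref{lemma:arc_travel_direction} in $[0,2]\times[0,N+1]$ with the middle line $\{1\}\times[0,N+1]$ separating the two halves. Since $[\beta]$ is nested in $[\alpha]$ in the product, the endpoints of $\beta$ lie strictly between those of $\alpha$ in the order of \cref{eq:orderN-NN}. Going around the boundary of the big rectangle (left side of $D_1$ bottom to top, then right side of $D_2$ top to bottom), this means the curve $\alpha$ separates the rectangle into two regions, and $\beta$, which is disjoint from $\alpha$ since all fragments are non-crossing and distinct paths share no vertices, lies entirely in the region $\mathcal{U}$ bounded by $\alpha$ and the boundary segment between the endpoints of $\alpha$ not containing the bottom edge. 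In other words $\beta$ is ``above/inside'' $\alpha$ in the Jordan-curve sense.

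Next I restrict to each half. Fix an arc fragment $\gamma$ of $\beta$ lying in $D_1$, say (the $D_2$ case is symmetric, or follows from the mirror anti-automorphism $D\mapsto D^\star$). The region $\mathcal{U}\cap([0,1]\times[0,N+1])$ is a union of connected components, each bounded by pieces of fragments of $\alpha$ in $D_1$ together with boundary segments of the left rectangle. The fragment $\gamma$ lies in one such component $\mathcal{V}$. I will show that $\mathcal{V}$ is bounded (on the ``outer'' side, i.e. the side towards the bottom edge $[0,1]\times\{0\}$) by a single fragment $\delta$ of $\alpha$ in $D_1$: the component containing the bottom edge lies in the complement $\mathcal{O}$, so every component of $\mathcal{U}$ in the left half is separated from the bottom edge by some arc of $D_1$ belonging to $\alpha$. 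Taking $\delta$ to be the fragment of $\alpha$ whose region (the part of the left rectangle cut off by $\delta$ away from the bottom) contains $\mathcal{V}$ and which is innermost among such, the endpoints of $\gamma$ lie on the boundary of the left rectangle strictly between the endpoints of $\delta$, which is exactly the statement that $\gamma$ is nested in $\delta$ with respect to the order \cref{eq:orderN-NN} restricted to the vertices of $D_1$.

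The main obstacle is making the second step rigorous without hand-waving: one must check that ``the part of the half-rectangle cut off by a single arc away from the bottom edge'' corresponds precisely to nesting in the combinatorial order \cref{eq:orderN-NN} (including the case of propagating arcs, where ``above'' means nested), and that some single fragment of $\alpha$, rather than a union of several, already separates $\gamma$ from the bottom edge. For the latter I would argue that any two fragments of $\alpha$ in $D_1$ are either nested or side by side, so the cut-off regions form a laminar family, and a point of $\mathcal{U}$ in the left half not in $\mathcal{O}_1$ must lie in the cut-off region of at least one of them; picking a minimal such region containing a point of $\gamma$ and using connectedness of $\gamma$ (which avoids $\alpha$) gives that all of $\gamma$, hence both its endpoints, lie in that region, yielding the nesting.
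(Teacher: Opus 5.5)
Your proposal is correct and is essentially the paper's argument. The paper embeds $D_1\circ D_2$ in $[0,2]\times[0,N+1]$ and lets $\mathcal{O}(\alpha)$ be the region on the bottom side of $\alpha$; it then argues that a fragment of $\beta$ in $D_1$ not nested in any fragment of $\alpha$ would lie entirely in $\mathcal{O}_1(\alpha)$, contradicting that $\beta$ starts outside $\mathcal{O}(\alpha)$ and cannot cross $\alpha$. Your laminar-family and connectedness step just makes explicit what the paper asserts without detail: a point of the left half lying off $\alpha$ and outside $\mathcal{O}_1(\alpha)$ is in the cut-off region of some single fragment of $\alpha$ (and minimality is not even needed, since $\gamma$ is connected and avoids every fragment of $\alpha$).
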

    \begin{proof}[Proof of \cref{lemma:path_arc_nested}]
      This is again a consequence of the fact that the diagrams $D_1$ and
      $D_2$ are non-crossing.

      Let's proceed as in  Lemma \ref{lemma:arc_travel_direction} and embed $D = D_1 \circ D_2$ into
      the rectangle $[0,2] \times [0,n+1]$ and let $\mathcal{O}(\alpha)$ be the region inside
      the rectangle to the right $\alpha$. As before $\mathcal{O}_1(\alpha)$ and $\mathcal{O}_2(\alpha)$
      will denote the intersections of $\mathcal{O}(\alpha)$ with the rectangles $[0,1] \times [0,n+1]$
      and $[1,2] \times [0,n+1]$.
      By symmetry we can assume,
      without loss of generality, that the starting vertices of $\alpha$ and $\beta$ are
      situated respectively at $a_1$ and $b_1$ on the left hand side of $D_1$. The nesting
      hypothesis forces $a_1<b_1$.

       If $\arc{b_k}{}{b_{k+1}}$ is an arc-fragment of $\beta$ in $D_1$
       which is not nested in any minimal arc of $\mathcal{A}_1(\alpha)$
       then $\arc{b_k}{}{b_{k+1}}$ must lie entirely in $\mathcal{O}_1(\alpha)$.
       However this situation is in conflict with the non-crossing property
       since that starting vertex $b_1$ of $\beta$ lies outside of
       $\mathcal{O}_1(\alpha)$. Therefore $\arc{b_k}{}{b_{k+1}}$ must be
       nested in some minimal arc of $\mathcal{A}_1(\alpha)$.
       A similar argument handles arc-fragments of $\beta$ in $D_2$.
%      According to \cref{lemma:arc_travel_direction}, nesting-minimal arcs of
%      $D_1$ are traversed from top to bottom, so being on their left
%      implies being nested. Similarly, minimal arcs of $D_2$ are traversed from
%      bottom to top, so being on their left also implies being
%      nested. This reasoning also applies to both the first arc fragment of $\alpha$, and the
%      last arc fragment of $\alpha$ if it is minimal and therefore in $D_2$. We have shown
%      that all arc fragments of $\beta$ are nested in a minimal arc fragments of $\alpha$.
    \end{proof}
  \end{enumerate}

  As a consequence, the label of any arc fragment in $\beta$ is strictly larger than the
  label of $\alpha$, and so the label of $[\beta]$ is also strictly larger than the label of
  $[\alpha]$. This ends the proof of \cref{prop.Okada_arc_semigroup}.
\end{proof}
Before investigating in detail the structure of the Okada arc-diagram monoid,
let's point out a few simple remarks:
\begin{remark}
  $\OkArc$ is stable under the mirror image map $D\mapsto D^\star$. Therefore
  the mirror image map is an anti-isomorphism of $\OkArc$.
\end{remark}
\begin{remark}
  Let $\iota_N$ denote the map from $\OkArc[N]$ to $\OkArc[N+1]$ adding to
  each arc-diagram the labelled arc $\alpha \eqdef\arc{N+1}{N+1}{\ov{N+1}}$. Clearly $\iota_N$
  is an injective monoid morphism whose image is the set of diagrams containing
  $\alpha$. More generally
  \[
    \begin{array}{rcl}
      \iota_{M,M}: \OkArc[M]\times\OkArc[N]&\longrightarrow &\OkArc[M+N]\\
      (D, E)&\longmapsto& D\ \cup\
                          \set{\arcp{i+N}{\ell+N}{j+N}}{\arcp{i}{\ell}{j} \in E}\,
    \end{array}
  \]
  is an injective monoid morphism.
\end{remark}

We now turn to the proof that $\Okada$ and $\OkArc$ are isomorphic monoids. We
first need to describe the generating set for $\OkArc$ matching the $\e{i}$'s.
\begin{definition} \label{def:gen-arc-okada}
  For $1\leq i < N$, let $\A{i}$ denote the Okada arc-diagram containing the arcs
    \quad $\arcp{a}{a}{\ov{a}}$ for $a \neq i, i+1$ as well as 
    $\arcp{i}{i}{i+1}$ and $\arcp{\ov{i}}{i}{\ov{i+1}}$.
\end{definition}
The diagrams $\A{i}$ for $N=5$ and $i = 1, \dots, 4$ are depicted below.
\[
\A{1}=\begin{tikzpicture}[scale=0.4, thick, baseline={(current bounding box.east)}]
\tikzstyle{vertex} = [shape=rectangle, minimum height=15pt, minimum width=0pt, inner sep=0pt]
\tikzstyle{mid} = [draw, fill=white, shape=circle, minimum size=2pt, inner sep=1pt]
\foreach \i in {1,...,5} {
  \node[vertex] (G-\i) at ($(-1.44, \i*1.5-1.5)$) {};
  \node[vertex] (G--\i) at ($(1.44, \i*1.5-1.5)$) {};
}
\draw (G-1) .. controls +(0.7, 0.35) and +(0.7, -0.35) .. (G-2) node[pos=0.5,mid] (M1-2) { 1 };
\draw (G--2) .. controls +(-0.7, -0.35) and +(-0.7, 0.35) .. (G--1) node[pos=0.5,mid] (M-2--1) { 1 };
\draw (G--3) -- (G-3) node[pos=0.5,mid] (M-3-3) { 3 };
\draw (G--4) -- (G-4) node[pos=0.5,mid] (M-4-4) { 4 };
\draw (G--5) -- (G-5) node[pos=0.5,mid] (M-5-5) { 5 };
\end{tikzpicture}\qquad
\A{2}=\begin{tikzpicture}[scale=0.4, thick, baseline={(current bounding box.east)}]
\tikzstyle{vertex} = [shape=rectangle, minimum height=15pt, minimum width=0pt, inner sep=0pt]
\tikzstyle{mid} = [draw, fill=white, shape=circle, minimum size=2pt, inner sep=1pt]
\foreach \i in {1,...,5} {
  \node[vertex] (G-\i) at ($(-1.44, \i*1.5-1.5)$) {};
  \node[vertex] (G--\i) at ($(1.44, \i*1.5-1.5)$) {};
}
\draw (G--1) -- (G-1) node[pos=0.5,mid] (M-1-1) { 1 };
\draw (G-2) .. controls +(0.7, 0.35) and +(0.7, -0.35) .. (G-3) node[pos=0.5,mid] (M2-3) { 2 };
\draw (G--3) .. controls +(-0.7, -0.35) and +(-0.7, 0.35) .. (G--2) node[pos=0.5,mid] (M-3--2) { 2 };
\draw (G--4) -- (G-4) node[pos=0.5,mid] (M-4-4) { 4 };
\draw (G--5) -- (G-5) node[pos=0.5,mid] (M-5-5) { 5 };
\end{tikzpicture}\qquad
\A{3}=\begin{tikzpicture}[scale=0.4, thick, baseline={(current bounding box.east)}]
\tikzstyle{vertex} = [shape=rectangle, minimum height=15pt, minimum width=0pt, inner sep=0pt]
\tikzstyle{mid} = [draw, fill=white, shape=circle, minimum size=2pt, inner sep=1pt]
\foreach \i in {1,...,5} {
  \node[vertex] (G-\i) at ($(-1.44, \i*1.5-1.5)$) {};
  \node[vertex] (G--\i) at ($(1.44, \i*1.5-1.5)$) {};
}
\draw (G--1) -- (G-1) node[pos=0.5,mid] (M-1-1) { 1 };
\draw (G--2) -- (G-2) node[pos=0.5,mid] (M-2-2) { 2 };
\draw (G-3) .. controls +(0.7, 0.35) and +(0.7, -0.35) .. (G-4) node[pos=0.5,mid] (M3-4) { 3 };
\draw (G--4) .. controls +(-0.7, -0.35) and +(-0.7, 0.35) .. (G--3) node[pos=0.5,mid] (M-4--3) { 3 };
\draw (G--5) -- (G-5) node[pos=0.5,mid] (M-5-5) { 5 };
\end{tikzpicture}\qquad
\A{4}=\begin{tikzpicture}[scale=0.4, thick, baseline={(current bounding box.east)}]
\tikzstyle{vertex} = [shape=rectangle, minimum height=15pt, minimum width=0pt, inner sep=0pt]
\tikzstyle{mid} = [draw, fill=white, shape=circle, minimum size=2pt, inner sep=1pt]
\foreach \i in {1,...,5} {
  \node[vertex] (G-\i) at ($(-1.44, \i*1.5-1.5)$) {};
  \node[vertex] (G--\i) at ($(1.44, \i*1.5-1.5)$) {};
}
\draw (G--1) -- (G-1) node[pos=0.5,mid] (M-1-1) { 1 };
\draw (G--2) -- (G-2) node[pos=0.5,mid] (M-2-2) { 2 };
\draw (G--3) -- (G-3) node[pos=0.5,mid] (M-3-3) { 3 };
\draw (G-4) .. controls +(0.7, 0.35) and +(0.7, -0.35) .. (G-5) node[pos=0.5,mid] (M4-5) { 4 };
\draw (G--5) .. controls +(-0.7, -0.35) and +(-0.7, 0.35) .. (G--4) node[pos=0.5,mid] (M-5--4) { 4 };
\end{tikzpicture}
\]
\begin{proposition}\label{prop:relat-G}
  The $(\A{i})_{i\in\setN[N-1]}$ satisfy the Okada relations:
  \begin{alignat}{3}
  \A{i}^2&=\A{i} &\qquad& 1\leq i \leq N-1, \\
  \A{i}\A{j}&=\A{j}\A{i} &\qquad&\vert i-j\vert \geq 2, \\
  \A{i+1}\A{i}\A{i+1}&=\A{i+1} &\qquad&  1\leq i \leq N-2,
\end{alignat}
\end{proposition}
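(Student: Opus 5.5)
The three relations are local identities between labelled arc-diagrams, so I will compute each product directly. Two structural facts keep the calculation short. First, an arc $\arcp{a}{a}{\ov{a}}$ of $\A{i}$, for $a\neq i,i+1$, glues to the matching horizontal arc of the other factor; the minimum of the labels is preserved, so these strands pass through the product unchanged. Second, the labelled product is associative and closed on Okada arc-diagrams by \cref{prop.Okada_arc_semigroup}. Hence in each relation I only need to follow the arcs touching the nodes $i,i+1$ (and $i+2$ for the third relation), read off the connectivity, and take the minimum of the labels along each resulting arc.

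\emph{Idempotence.} In $\A{i}\circ\A{i}$ the right cup $\arcp{\ov{i}}{i}{\ov{i+1}}$ of the first factor and the left cup $\arcp{i}{i}{i+1}$ of the second factor close into a loop. Loops are discarded in the monoid product. What survives is the left cup of the first factor, the right cup of the second factor, and the identity strands, all with unchanged labels. This gives $\A{i}^2=\A{i}$.

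\emph{Commutation.} If $|i-j|\geq2$, the node sets $\{i,i+1\}$ and $\{j,j+1\}$ are disjoint. In either order, each cup of $\A{i}$ is glued only to horizontal strands of $\A{j}$ whose labels are at least $i+2$ or are the indices of nodes far from $i$, and vice versa. Each cup therefore keeps its own label, and each horizontal strand meets only horizontal strands with the same label. Both products equal the diagram with cups at $i,i+1$ labelled $i$, cups at $j,j+1$ labelled $j$, and identity strands elsewhere.

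\emph{The braid-type relation.} This is the only real computation, and it is where the labels matter. I restrict to the nodes $i,i+1,i+2$ and follow each path through $\A{i+1}\circ\A{i}\circ\A{i+1}$.
\begin{itemize}
\item Starting from left node $i$: the strand labelled $i$ in the first factor, then the cup $\arcp{i}{i}{i+1}$ of the middle factor, then back through the cup $\arcp{\ov{i+1}}{i+1}{\ov{i+2}}$ of the first factor to $\ov{i+2}$. There it enters the strand labelled $i+2$ of the middle factor, and then the left cup labelled $i+1$ of the third factor, arriving at node $i+2$ of the middle--third interface. It then runs back through the strand labelled $i+2$ of the middle factor, through the first factor's right cup to $\ov{i+1}$, into the middle factor's right cup $\arcp{\ov{i}}{i}{\ov{i+1}}$ to $\ov{i}$, and finally along the strand labelled $i$ of the third factor to the right node $\ov{i}$.
\item Tracking the unlabelled connectivity this way, the surviving arcs are exactly the strand $i\to\ov{i}$, the left cup $\{i+1,i+2\}$ from the first factor, and the right cup $\{\ov{i+1},\ov{i+2}\}$ from the third factor, with no loops. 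This is the Temperley--Lieb shape of $\A{i+1}$.
\item For the labels: the two cups each consist of a single fragment labelled $i+1$, so they keep label $i+1$. The propagating arc is built from fragments labelled $i$, $i+1$ and $i+2$, so its minimum is $i$, agreeing with the strand $\arcp{i}{i}{\ov{i}}$ of $\A{i+1}$.
\end{itemize}
This gives $\A{i+1}\A{i}\A{i+1}=\A{i+1}$.

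The main obstacle is exactly this path tracing: one must make sure every fragment is accounted for and no loop is created. Accounting for all fragments matters because no loop weight appears in the monoid relation. I would present it with a single picture of the composite on nodes $i,i+1,i+2$. As a cross-check, the computed label can be compared against \cref{okada-arc-cond-interv}: a propagating arc at height $i$ has label at most $i$.
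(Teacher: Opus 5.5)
Your approach is the paper's: a direct computation in the composite diagram. For the third relation the paper just draws $\A{i+1}\circ\A{i}\circ\A{i+1}$ on the nodes $i,i+1,i+2$, and it leaves the first two relations to the reader. Your final answer for the third relation is correct: a strand $i\to\ov{i}$ with label $i$, the first factor's left cup and the third factor's right cup, each with label $i+1$, and no loop. However, the path you wrote down is not a walk in that composite. Since you identify this trace as the crux, it needs fixing.

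\textbf{The trace is correct up to the middle factor's strand labelled $i+2$.} That strand brings you to node $i+2$ of the middle--third interface. The third factor's left cup then takes you to node $i+1$ of that interface, not $i+2$. From there the path goes straight into the middle factor's right cup, to node $i$ of that interface, and then along the third factor's strand labelled $i$ to $\ov{i}$.

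\textbf{The path cannot ``run back'' as you describe.} Returning through the middle strand labelled $i+2$ and the first factor's right cup is impossible for two reasons. Every interface node has exactly two incident fragments, so a path never re-enters a fragment it has already used. Moreover, the first factor's right cup lies on the first--middle interface, so it cannot connect to the middle factor's right cup, which lies on the middle--third interface.

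\textbf{The corrected path and fragment count.} The path uses each of the following seven fragments exactly once:
\begin{itemize}
\item the first factor's strand (label $i$);
\item the middle factor's left cup ($i$);
\item the first factor's right cup ($i+1$);
\item the middle factor's strand ($i+2$);
\item the third factor's left cup ($i+1$);
\item the middle factor's right cup ($i$);
\item the third factor's strand ($i$).
\end{itemize}
There are nine fragments on these nodes, and the two not used are the outer cups. So the count closes, no loop arises, and the minimum label along the path is $i$, as you claim.

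\textbf{Smaller inaccuracy in the commutation step.} The strands of $\A{j}$ met by the cups of $\A{i}$ are those at nodes $i$ and $i+1$, with labels $i$ and $i+1$, not labels at least $i+2$. The cups keep label $i$ simply because these labels are $\geq i$.
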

\begin{proof}
  We only show the third relation; the first two are left to the reader. The
  last one is shown in the picture below:
  \[
\begin{tikzpicture}[scale=0.4, thick, baseline={(current bounding box.east)}]
\tikzstyle{vertex} = [shape=rectangle, minimum height=15pt, minimum width=0pt, inner sep=0pt]
\tikzstyle{mid} = [draw, fill=white, shape=circle, minimum size=13pt, inner sep=1pt]
\foreach \i / \a in {1/south,5/center} {
  \node[vertex, anchor=\a] (G-\i) at ($(0, \i*1.5-1.6)$) {$\vdots$};
  \node[vertex, anchor=\a] (G--\i) at ($(6.5, \i*1.5-1.6)$) {$\vdots$};
  \node[vertex, anchor=\a] (G+\i) at ($(11, \i*1.5-1.6)$) {$\vdots$};
  \node[vertex, anchor=\a] (G++\i) at ($(17.5, \i*1.5-1.6)$) {$\vdots$};
}
\foreach \i in {2,...,4} {
  \node[vertex] (G-\i) at ($(0, \i*1.5-1.5)$) {$.$};
  \node[vertex] (G--\i) at ($(6.5, \i*1.5-1.5)$) {$.$};
  \node[vertex] (G+\i) at ($(11, \i*1.5-1.5)$) {$.$};
  \node[vertex] (G++\i) at ($(17.5, \i*1.5-1.5)$) {$.$};
}
\node[anchor=east] at (G-2) {$i$};
\node[anchor=east] at (G-3) {$i+1$};
\node[anchor=east] at (G-4) {$i+2$};
\node[anchor=west] at (G++2) {$\ov{i}$};
\node[anchor=west] at (G++3) {$\ov{i+1}$};
\node[anchor=west] at (G++4) {$\ov{i+2}$};
\draw (G-2) -- (G--2) node[pos=0.5,mid] { $i$ };
\draw (G-3) .. controls +(2.1, 0.35) and +(2.1, -0.35) .. (G-4)
    node[pos=0.5,mid,shape=ellipse] { \scriptsize $i+1$ };
\draw (G--3) .. controls +(-2.1, 0.35) and +(-2.1, -0.35) .. (G--4)
    node[pos=0.5,mid,shape=ellipse] { \scriptsize $i+1$ };
\draw (G--2) .. controls +(1.5, 0.35) and +(1.5, -0.35) .. (G--3)
    node[pos=0.5,mid] { $i$ };
\draw (G+3) .. controls +(-1.5, -0.35) and +(-1.5, 0.35) .. (G+2)
    node[pos=0.5,mid] { $i$ };
\draw (G--4) -- (G+4) node[pos=0.5,mid,shape=ellipse] { \scriptsize $i+2$ };
\draw (G+2) -- (G++2) node[pos=0.5,mid] { $i$ };
\draw (G+3) .. controls +(2.1, 0.35) and +(2.1, -0.35) .. (G+4)
    node[pos=0.5,mid,shape=ellipse] { \scriptsize $i+1$ };
\draw (G++3) .. controls +(-2.1, 0.35) and +(-2.1, -0.35) .. (G++4)
    node[pos=0.5,mid,shape=ellipse] { \scriptsize $i+1$ };
\end{tikzpicture}
\ =\joinrel\!\!=\ 
\begin{tikzpicture}[scale=0.4, thick, baseline={(current bounding box.east)}]
\tikzstyle{vertex} = [shape=rectangle, minimum height=15pt, minimum width=0pt, inner sep=0pt]
\tikzstyle{mid} = [draw, fill=white, shape=circle, minimum size=13pt, inner sep=1pt]
\foreach \i / \a in {1/south,5/center} {
  \node[vertex, anchor=\a] (G-\i) at ($(0, \i*1.5-1.6)$) {$\vdots$};
  \node[vertex, anchor=\a] (G--\i) at ($(6.5, \i*1.5-1.6)$) {$\vdots$};
}
\foreach \i in {2,...,4} {
  \node[vertex] (G-\i) at ($(0, \i*1.5-1.5)$) {$.$};
  \node[vertex] (G--\i) at ($(6.5, \i*1.5-1.5)$) {$.$};
}

\node[anchor=east] at (G-2) {$i$};
\node[anchor=east] at (G-3) {$i+1$};
\node[anchor=east] at (G-4) {$i+2$};
\node[anchor=west] at (G--2) {$\ov{i}$};
\node[anchor=west] at (G--3) {$\ov{i+1}$};
\node[anchor=west] at (G--4) {$\ov{i+2}$};
\draw (G-2) -- (G--2) node[pos=0.5,mid] { $i$ };
\draw (G-3) .. controls +(2.1, 0.35) and +(2.1, -0.35) .. (G-4)
   node[pos=0.5,mid,shape=ellipse] { \scriptsize $i+1$ };
\draw (G--3) .. controls +(-2.1, 0.35) and +(-2.1, -0.35) .. (G--4)
   node[pos=0.5,mid,shape=ellipse] { \scriptsize $i+1$ };
\end{tikzpicture}\qedhere
\]
\end{proof}
\begin{remark}
  We'll write $\A{i}^{(N)}$ to emphasize that $\A{i}$ lives in $\OkArc$;
  then $\iota_N\left(\A{i}^{(N)}\right) = \A{i}^{(N+1)}$.
\end{remark}
\medskip

%To construct the isomorphism from $\Okada$ to $\OkArc$, we need to show that
%$(\A{i})$ generate $\OkArc$. It is clear from the definition of labelled diagram product, that if a
%product ends with $\A{i}$, then it contains an arc
%$\arc{\ov{i}}{i}{\ov{i+1}}$. The converse is also true: If an element
%$\mathsf{e} \in\Okada$ contains an arc linking $\arc{\ov{i}}{i}{\ov{i+1}}$ then it can
%be factored as $\mathsf{e}= \mathsf{f} \ssp \A{i}$. However, constructing $\mathsf{f}$ in general is
%tricky. Therefore, we postpone the proof of the general case until we know
%that $\Okada$ and $\OkArc$ are isomorphic, and focus on the simpler case where
%$i$ is maximal. The goal is to find a such factorization of any element
%$D \in\OkArc$.  \smallskip

To construct the isomorphism from $\Okada$ to $\OkArc$, we need to show that
$(\A{i})_{i=1\dots N-1}$ generate $\OkArc$. It is clear from the definition of the labelled product, that
if an Okada arc-diagram $D$ can be factored as
$D  = D' \boldsymbol{\cdot} \A{i}$ then $D$ contains an arc
$\arc{\ov{i}}{i}{\ov{i+1}}$. The converse is also true: If an element
$D \in \OkArc$ contains an arc linking $\arc{\ov{i}}{i}{\ov{i+1}}$ then it can
be factored as $D= D' \boldsymbol{\cdot} \A{i}$. However, constructing $D'$ is
tricky in general. Therefore, we postpone the proof of the general case until we know
that $\Okada$ and $\OkArc$ are isomorphic, and focus on the simpler case where
$i$ is maximal. The goal is to find a such factorization of any element
$D \in\OkArc$.  \smallskip

The next two lemmas show that any rank $N$ Okada arc-diagram $D \ne \id_N$
must contain a non-propagating arc whose label is maximal on both the left and righthand sides.

%$\arc{\ov{i}}{i}{\ov{i+1}}$.
%\begin{lemma}\label{lemma:id-arc}
%  Let $D\in\OkArc\setminus\{\id_N\}$ be an Okada arc-diagram. Suppose that
%  for some $k \leq N$ the arc ending at $k$ has label $\ell = k$. Then
%  \begin{itemize}
%  \item either $D$ contains all arcs $\arc{i}{i}{\ov{i}}$ for $i\geq k$,
%  \item or $D$ contains at least one arc $\arc{i}{i}{i+1}$ for some $i\geq k$.
%  \end{itemize}
%\end{lemma}
%\begin{proof}
%  Suppose that $D$ doesn't contain any of the arcs $\arc{i}{i}{\ov{i}}$ for
%  $i\geq k$. By \cref{okada-arc-cond-interv,okada-arc-cond-nested}, if the
%  arc ending at $k$ is $\arc{k}{k}{\ov{k}}$, then the arc ending at $k+1$ is
%  nested in the former and must be labelled $k+1$. Therefore, we can assume,
%  without a loss of generality that $k$ is not linked to $\ov{k}$. However
%  since the label of the arc is $k$, its other end must be $i$ or $\ov{i}$
%  for some $i>k$. As a consequence, $D$ must contain a non-propagating arc
%  above $k$ on the left side.
%
%  Let $\alpha$ be such an arc with minimal difference between the indices of its
%  two endpoints. Then $\alpha$ must link some $i$ with $i+1$. Indeed, if this is not the
%  case, then $D$ also contains another arc nested in $\alpha$ contradicting
%  minimality. Let $i_0$ be the minimal index of an arc linking $i$ and
%  $i+1$. Thanks to
%  \cref{okada-arc-cond-interv,okada-arc-cond-nested},
%  a reasoning, similar to the argument in \cref{lemma:uniq-id_propag},
%  can be used to show (by induction on $j$) that all arcs starting at
%  some $j$ where $1 \leq j\leq i_0$ have label $j$.
%\end{proof}

  \begin{lemma}\label{lemma:id-arc}
  Let $D\in\OkArc \setminus \{  \id_N \} $ be an Okada arc-diagram, then
  $D$ contains at least one non-propagating arc of the form $\arcp{a}{a}{a+1}$
  for some $a < N$.
  \end{lemma}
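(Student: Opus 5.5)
Since $D\neq\id_N$, \cref{lemma:uniq-id_propag} shows that $D$ has at least one non-propagating arc. By the mirror symmetry $D\mapsto D^\star$ (which preserves $\OkArc$) we may assume it is a left arc. Among all left arcs, choose one, $\alpha$, whose endpoints are closest together. By non-crossing, any vertex strictly between the endpoints of $\alpha$ would be matched to another vertex strictly between them, giving a shorter left arc. Hence $\alpha=\arc{a}{\ell}{a+1}$ for some $a<N$. The real task is to show that some such innermost arc has label exactly $a$; arbitrary choices will not do, since innermost arcs may carry larger labels (e.g.\ $\arc{7}{7}{8}$ is fine, but an arc $\arc{5}{}{6}$ could a priori carry label $5$ or be nested with label larger than $5$ is impossible, though it could be $3$ or $1$). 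So the plan is to take $a$ \emph{minimal}, i.e.\ let $a_0$ be the smallest index such that $\arc{a_0}{}{a_0+1}$ is a (left) arc of $D$.

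The key step is to show by induction on $j=1,\dots,a_0$ that the arc with endpoint $j$ has label $j$, mimicking the proof of \cref{lemma:uniq-id_propag}. Any vertex $j<a_0$ is an endpoint of either a propagating arc or a left arc $\arc{j}{}{k}$ with $k>j$. In the latter case $k$ cannot be $\le a_0$: otherwise the left arcs nested inside $\arc{j}{}{k}$ would contain an innermost one $\arc{b}{}{b+1}$ with $b<a_0$, contradicting minimality. So $k>a_0+1$, i.e.\ the arc encloses $\arc{a_0}{}{a_0+1}$. Consequently the arcs at $1,\dots,a_0$ are pairwise distinct and linearly nested: the arc at $j+1$ is nested in the arc at $j$ (for two propagating arcs by the ordering \cref{eq:orderN-NN}, and similarly when left arcs open below). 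Then \cref{okada-arc-cond-interv} gives label $1$ at vertex $1$, and \cref{okada-arc-cond-nested} together with the upper bound $\min(|a|,|b|)\le j$ from \cref{okada-arc-cond-interv} forces label $j$ on the arc at $j$. Finally, $\arc{a_0}{}{a_0+1}$ is nested in the arc at $a_0-1$ (label $a_0-1$), so its label is $\ge a_0$, while \cref{okada-arc-cond-interv} caps it at $a_0$. This gives the arc $\arcp{a_0}{a_0}{a_0+1}$. (If instead the minimal short arc lies on the right, the mirror argument yields $\arcp{\ov{a}}{a}{\ov{a+1}}$. The statement as written presumably allows either side, or the left case is obtained after applying $\star$.)

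The main obstacle is the nesting claim in the induction: the arc at $j+1$ must be nested in the arc at $j$ for all $j<a_0$. This requires care when the arc at $j$ is propagating to some $\ov{m}$ while the arc at $j+1$ is a left arc reaching above $a_0+1$. Both must enclose $\arc{a_0}{}{a_0+1}$, so non-crossing forces the left arc to lie inside the propagating one. I would handle this by noting that every arc at $j\le a_0$ encloses the short arc, so their endpoints are comparable and non-crossing yields the chain.
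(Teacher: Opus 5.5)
Your core argument is the paper's: a shortest left arc has consecutive endpoints; take the lowest such arc $\arc{a}{}{a+1}$; the arcs at $1,\dots,a$ form a nested chain; and \cref{okada-arc-cond-interv,okada-arc-cond-nested} then force label $k$ on the arc at $k$. On one point you are more careful than the paper. The paper asserts that the arcs at $k<a$ must be propagating, which is false in general: take $\arc{1}{}{4}$ and $\arc{2}{}{3}$, so $a=2$. Your observation is exactly what is needed: each such arc is either propagating or a left arc $\arc{k}{}{m}$ with $m>a+1$, and non-crossing still makes these arcs a chain. (In your motivating remark the direction is reversed: the risk is that an innermost arc $\arc{a}{}{a+1}$ carries a label \emph{smaller} than $a$, since \cref{okada-arc-cond-interv} caps it at $a$.)

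There is one real gap, though it is easily closed: the opening reduction. The conclusion is not mirror-invariant. It asks for a left arc $\arcp{a}{a}{a+1}$ with $a\in\setN$. The right-hand version $\arcp{\ov{a}}{a}{\ov{a+1}}$ is a separate statement (\cref{lemma:non_id_desc}), deduced from this lemma via $D\mapsto D^\star$. So ``by mirror symmetry we may assume it is a left arc'' is not legitimate: if you only knew that a right arc existed, your argument would produce an arc on the wrong side. Your closing parenthetical (``presumably allows either side'') misreads the statement for the same reason. What you actually need is that $D$ has a left arc, and this follows by counting. Let $D$ have $p$ propagating arcs, $L$ left arcs and $R$ right arcs. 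Counting the vertices of $\setN$ and of $\setNN$ gives $N=p+2L=p+2R$, so $L=R$, and any non-propagating arc forces a left one. Replace the symmetry step with this and drop the parenthetical; the rest of your proof then goes through.
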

\begin{proof}
  $D \ne \id_N$ so it must contain at least one non-propagating arc of
  the form $\arc{u}{}{v}$ for some pair $u , v \in \setN$ with $u <v$. Among
  these arcs choose one for which $v- u$ is minimal. It must be
  the case that $v = u+1$ otherwise
  the non-crossing property would imply there is an arc $\arc{u'}{}{v'}$
  with $u' < v'$ and nested in $\arc{u}{}{v}$. But then $v'-u' < v - u$ which violates
  the minimality condition. Now select an arc $\pi$ of the form $\arc{a}{\ell}{a+1}$
  whose endpoint $a \in \setN$ is minimal.
  If $a =1$ then we are done because \cref{okada-arc-cond-interv} forces
  $\ell =1$. If $a > 1$ then the arc $\pi_k$ with endpoint $k$
  for $k < a$ must be propagating. Furthermore $\pi_{k+1}$
  is nested in $\pi_k$ for each $1 \leq k \leq a-1$ where
  we set $\pi_a \eqdef \pi$. It follows from
  \cref{okada-arc-cond-interv,okada-arc-cond-nested}
  that $\pi_k$ has label $\ell_k = k$ for each
  $1 \leq k \leq a$. In particular $\pi = \pi_a$ has
  label $\ell = \ell_a =a$.
  \end{proof}

Applying mirror symmetry $D \mapsto D^\star$ we obtain
\begin{corollary}\label{lemma:non_id_desc}
  Let $D\in\OkArc \setminus \{ \id_N \}$ be an Okada arc-diagram
  then $D$ contains at least one arc $\arcp{\ov{a}}{a}{\ov{a+1}}$ for some $a < N$.
\end{corollary}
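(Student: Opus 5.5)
The statement is the mirror image of \cref{lemma:id-arc}, so I will deduce it from that lemma using the anti-automorphism $D\mapsto D^\star$ instead of rerunning the nesting argument.

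First, I check that $D^\star$ is again an Okada arc-diagram of rank $N$ and that $D^\star\neq\id_N$. The mirror map sends an arc $\arcp{a}{\ell}{b}$ to $\arcp{-a}{\ell}{-b}$. Negation reverses the total order of \cref{eq:orderN-NN}, so it preserves crossings and nestings. It also keeps $\min(|a|,|b|)$ and the label unchanged. Hence all three conditions of \cref{def.okada.arc} hold for $D^\star$; this is exactly the remark that $\OkArc$ is stable under $\star$. Since $\id_N^\star=\id_N$ and $\star$ is an involution, $D\neq\id_N$ gives $D^\star\neq\id_N$.

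Next, I apply \cref{lemma:id-arc} to $D^\star$. It gives a left arc $\arcp{a}{a}{a+1}$ in $D^\star$ for some $a<N$. Mirroring back, $D=(D^\star)^\star$ contains $\arcp{\ov{a}}{a}{\ov{a+1}}$, which is the claim.

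The only point needing care is that the minimality and nesting arguments in \cref{lemma:id-arc} are invariant under reversing the order, and the first step covers this. There is no real obstacle.
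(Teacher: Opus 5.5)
Your proposal is correct and is the paper's argument: the corollary is obtained by applying \cref{lemma:id-arc} to the mirror $D^\star$, which is again an Okada arc-diagram different from $\id_N$, and then mirroring back. Your closing caveat is not needed: once $D^\star \in \OkArc \setminus \{\id_N\}$ is established, the lemma applies as stated.
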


%Applying the previous lemma with $n=1$, we get:
%\begin{corollary}\label{lemma:non_id_desc}
%  Let $D\in\OkArc\setminus\{\id_N\}$ be an Okada arc-diagram which is not the
%  identity. Then $D$ contains at least one arc $\arc{i}{i}{i+1}$ for some
%  $i$. By mirror symmetry, $D$ contains at least one arc
%  $\arc{\ov{i}}{i}{\ov{i+1}}$ for some $i$.
%\end{corollary}

We will need the following computation:
\begin{remark}\label{remark:descending-product}
  For $d < N$, the product $\A{N-1}\A{N-2} \ssp \boldsymbol{\cdots} \A{d}$ is the Okada arc-diagram with
  the arcs
  $\arcp{a}{a}{\ov{a}}$ for $a<d$, $\arcp{\ov{d}}{d}{\ov{d+1}}$,
  $\arcp{a}{a}{\ov{a+2}}$ for $d\leq a \leq N-2$, and $\arcp{N-1}{N-1}{N}$.
  See \cref{fig:right-code-factor} for some examples.
\end{remark}
We now define a factorization which allows us to prove that $(\A{i})_{i\in\setN[N-1]}$ generate
$\OkArc$.
\begin{definition}\label{def:flat_diagram}
  Suppose that $D$ is an Okada arc-diagram of rank~$N$ which doesn't contain
  the arc $\arc{N}{N}{\ov{N}}$. Define $\LDes(D)$ to be the largest
  index $a  \in \setN$ such that $D$ contains the arc $\arc{\ov{a}}{a}{\ov{a+1}}$ (which
  must exist by \cref{lemma:non_id_desc}). For all
  $a \in \setN[N] \cup \setNN$ such that $a \ne \ov{\LDes(D)}$ and $a \ne \ov{\LDes(D)+1}$ define
  \[
    a^\flat\eqdef
    \begin{cases}
      \ov{N-1} & \text{ if $a=N$ } \\
      a &\text{\ if  $a< N$}
    \end{cases}
    \qandq
    \ov{a}^{\,\flat} \eqdef
    \begin{cases}
      \ov{a - 2} & \text{ if $a \geq\LDes(D)+2$} \\
      \ov{a} &\text{\ if $a < \LDes(D)$}
    \end{cases}
  \]
  We define $D^\flat$ to be the diagram containing
  \begin{itemize}
  \item the arc $\arc{N}{N}{\ov{N}}$
  \item the arc $\arc{i^\flat}{\ell}{j^\flat}$ whenever
    $\arc{i}{\ell}{j}$ is an arc of $D$
    other than $\arc{\ov{d}}{d}{\ov{d+1}}$ where $d = \LDes(D)$
  \end{itemize}
\end{definition}

\noindent
The notation $\LDes$ is an abbreviation for \defn{ largest descent}, a terminology which we'll
 justify later.

 Graphically, we remove the arc
 $\arcp{\ov{\LDes(D)}}{\LDes(D)}{\ov{\LDes(D)+1}}$, shift by two
 places all the nodes in the interval
 $N, \ov{N}, \ov{N-1}, \dots, \ov{\LDes(D)+2}$ to
 $\ov{N-1}, \dots, \ov{\LDes(D)}$ and add
 $\arc{N}{N}{\ov{N}}$. \cref{fig:right-code-factor} shows an example
 of the construction of $D^\flat$.

\begin{proposition} \label{prop:right-code-factor}
  As in \cref{def:flat_diagram}, suppose that $D$ is an Okada arc-diagram of
  rank $N$ which doesn't contains $\arc{N}{N}{\ov{N}}$. Then $D^\flat$
  is an Okada arc-diagram such that $D$ factorizes as
  \[
    D=D^\flat \A{N-1}\A{N-2} \ssp \boldsymbol{\cdots} \A{\LDes(D)}\,.
  \]
\end{proposition}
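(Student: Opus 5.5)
Write $d\eqdef\LDes(D)$ and $P\eqdef\A{N-1}\A{N-2}\cdots\A{d}$. The plan has two independent parts: show that $D^\flat$ is a well-defined Okada arc-diagram, then verify $D=D^\flat\boldsymbol{\cdot}P$ by computing the labelled product directly. For the product we use the explicit description of $P$ in \cref{remark:descending-product}.

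\textbf{Step 1: $D^\flat$ is well defined and non-crossing.} The map $a\mapsto a^\flat$ is order-preserving for the order \eqref{eq:orderN-NN} on $(\setN\cup\setNN)\setminus\{\ov{d},\ov{d+1}\}$. Its image is $\setN[N-1]\cup\setNN[N-1]$ after relabelling, and the new arc $\arc{N}{N}{\ov{N}}$ sits in the freed slot between $N-1$ and $\ov{N-1}$. Removing the innermost arc $\arc{\ov d}{}{\ov{d+1}}$ and relabelling the remaining vertices monotonically keeps the matching non-crossing. The only nesting relations that change are these:
\begin{itemize}
\item arcs that had $\arc{\ov d}{}{\ov{d+1}}$ nested inside them lose that nesting, which is harmless;
\item the new arc $\arc{N}{N}{\ov N}$ becomes nested in every arc of $D^\flat$ that joins a left vertex to some $\ov{j}$, that is, in the former propagating arcs and in the images of arcs ending at $N$.
\end{itemize}

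\textbf{Step 2: the Okada conditions for $D^\flat$.} Conditions \ref{okada-arc-cond-interv} and \ref{okada-arc-cond-parity} have to be rechecked only for arcs with an endpoint whose index changed.
\begin{itemize}
\item An endpoint $N$ becomes $\ov{N-1}$.
\item An endpoint $\ov{a}$ with $a\ge d+2$ becomes $\ov{a-2}$, so its parity is kept.
\end{itemize}
The main obstacle is the bound $\ell\le a-2$ for a right arc endpoint at $\ov a$ with $a\geq d+2$, together with the bound $\ell\le N-1$ coming from the endpoint $N$. We will use the maximality of $d$, and at the same time condition \ref{okada-arc-cond-nested} for the new arc (every arc nesting $\arc{N}{N}{\ov N}$ must have label $<N$). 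The approach mimics \cref{lemma:id-arc}: look at the arcs at the top of the right side. Since $D$ contains no arc $\arc{\ov a}{a}{\ov{a+1}}$ with $a>d$, and $D\neq$ anything containing $\arc{N}{N}{\ov N}$, a chain-of-nesting argument shows the following. Any arc attaining label $=\min$ of its endpoints among those with an endpoint above $\ov{d+1}$ would force, by \ref{okada-arc-cond-interv} and \ref{okada-arc-cond-nested} and induction along the nesting chain, either $\arc{N}{N}{\ov N}\in D$ or a later descent arc. Both are excluded. Parity \ref{okada-arc-cond-parity} then upgrades "$\ell<a$" to "$\ell\le a-2$", and likewise for labels $\le N-1$. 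I expect this case analysis (propagating arcs versus right arcs, and the arc ending at $N$) to be the delicate part.

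\textbf{Step 3: the product.} Compose $D^\flat\circ P$. The propagating arcs $\arcp{a}{a}{\ov{a+2}}$ of $P$ (for $d\le a\le N-2$) carry the right vertex $\ov{a}$ of $D^\flat$ back to $\ov{a+2}$, which undoes the shift $\flat$. Similarly, $\arcp{N-1}{N-1}{N}$ joins $\ov{N-1}$ to $\ov N$ through $D^\flat$'s arc $\arc{N}{N}{\ov N}$, reconnecting what was the endpoint $N$. Finally $P$ contributes $\arcp{\ov d}{d}{\ov{d+1}}$, and its identity arcs below $d$ pass through unchanged. So the underlying matching is exactly that of $D$, and no loops are created. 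For labels, each arc of the product is an arc of $D^\flat$ with label $\ell$ concatenated with arcs of $P$. The labels of those $P$ arcs are $a$, $N-1$ or $N$, and each is at least the corresponding endpoint bound, hence $\ge\ell$ by Step 2. The minimum is therefore $\ell$, and the labels agree with those of $D$.
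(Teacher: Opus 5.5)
Your plan follows the paper's proof in all three parts:
\begin{itemize}
\item non-crossing via the order-preserving relabelling;
\item the label bound at the shifted vertices, by a climbing argument in the style of \cref{lemma:id-arc} using the maximality of $d=\LDes(D)$ and $\arc{N}{N}{\ov{N}}\notin D$;
\item the factorization read off from \cref{remark:descending-product}. Your Step~3 is a more explicit, and correct, version of the paper's one-line appeal to that remark.
\end{itemize}
The gap is the key claim of Step~2. As you state it, with ``label equal to the minimum of its endpoints'', it is false, so no nesting-chain argument can prove it. Take $N=3$ and $D=\A{2}\A{1}$, whose arcs are $\arcp{1}{1}{\ov{3}}$, $\arcp{2}{2}{3}$ and $\arcp{\ov{1}}{1}{\ov{2}}$, so $d=1$. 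The arc $\arc{1}{1}{\ov{3}}$ has an endpoint above $\ov{d+1}$, and its label is the minimum of its endpoints. Yet $D$ contains neither $\arc{3}{3}{\ov{3}}$ nor a descent arc $\arc{\ov{k}}{k}{\ov{k+1}}$ with $k>1$. Hence the bound $\ell\le a-2$ is not established as written.

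What is true, and what the paper proves, is weaker: the arc incident to $\ov{a}$ with $a\ge d+2$ never has label exactly $a$. If it did, one of three things would hold:
\begin{itemize}
\item it is $\arc{N}{N}{\ov{N}}$;
\item it is the descent arc $\arc{\ov{a}}{a}{\ov{a+1}}$ with $a>d$;
\item the arc at $\ov{a+1}$ is nested in it, and therefore also carries the label $a+1$ equal to its index.
\end{itemize}
Climbing in this way gives a contradiction.

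From this weaker fact, $\ell\le a-2$ does not follow by parity alone. Let $x$ be the other endpoint.
\begin{itemize}
\item If $|x|\ge a$, the minimum is $a$ and parity gives $\ell\le a-2$.
\item If $|x|\le a-2$, the bound is immediate.
\item If $|x|=a-1$, \cref{lemma:propag-parity} forces the right arc $\arc{\ov{a-1}}{\ell}{\ov{a}}$. Here $\ell=a-1$ must be excluded by the maximality of $d$, not by parity; note $a-1\ge d+2$ because $\ov{d+1}$ is already occupied.
\end{itemize}

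Similarly, the vertex $N\mapsto\ov{N-1}$ changes parity, so \cref{okada-arc-cond-parity} there is not automatic from your remark. It holds because $N$ is a minimal endpoint only for an arc $\arc{N}{\ell}{\ov{N}}$. For that arc $\ell\le N-2$, and its image $\arc{\ov{N-1}}{\ell}{\ov{N-2}}$ satisfies the condition.
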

  % \todo[FH]{Better example : $[3, 4, 9, 5, 7, 8, 2, 6, 1]$, and put back
  %   nesting arcs.}
  % sage: e = M.random_element()
  % sage: e = M.from_code(e.code()[:-1]+(6,))
  % sage: view(L([e, eq, L(e.right_c_factor())]))  # not tested
  % [3, 6, 2, 4, 5, 9, 7, 8, 1] [4, 3, 1, 7, 9, 2, 6, 8, 5]
\begin{figure}[ht]
  \makebox[\textwidth]{
\begin{tikzpicture}[xscale=0.7, yscale=0.4, thick, baseline={(current bounding box.east)}]
\tikzstyle{vertex} = [shape=rectangle, minimum height=15pt, minimum width=0pt, inner sep=0pt]
\tikzstyle{mid} = [draw, fill=white, shape=circle, minimum size=2pt, inner sep=1pt]
\foreach \i in {0,...,9} {
  \node[vertex] (G-\i) at ($(-1.44, \i*1.5-1.5)$) {};
  \node[vertex] (G--\i) at ($(2.64, \i*1.5-1.5)$) {};
}
\draw (G--1) .. controls +(-2.59, 1.30) and +(1.63, -0.82) .. (G-5) node[pos=0.6470588235294118,mid] (M-1-5) { 1 };
\draw (G--2) .. controls +(-2.59, 1.30) and +(1.63, -0.82) .. (G-6) node[pos=0.6470588235294118,mid] (M-2-6) { 2 };
\draw (G--5) .. controls +(-2.59, 1.30) and +(1.63, -0.82) .. (G-9) node[pos=0.6470588235294118,mid] (M-5-9) { 3 };
\draw (G-3) .. controls +(0.70, 0.35) and +(0.70, -0.35) .. (G-4) node[pos=0.5,mid] (M3-4) { 1 };
\draw (G--9) .. controls +(-2.10, -1.05) and +(-2.10, 1.05) .. (G--6) node[pos=0.5,mid] (M-9--6) { 4 };
\draw (G-7) .. controls +(0.70, 0.35) and +(0.70, -0.35) .. (G-8) node[pos=0.5,mid] (M7-8) { 5 };
\draw (G--8) .. controls +(-0.70, -0.35) and +(-0.70, 0.35) .. (G--7) node[pos=0.5,mid] (M-8--7) { 5 };
\draw (G-1) .. controls +(0.70, 0.35) and +(0.70, -0.35) .. (G-2) node[pos=0.5,mid] (M1-2) { 1 };
\draw (G--4) .. controls +(-0.70, -0.35) and +(-0.70, 0.35) .. (G--3) node[pos=0.5,mid] (M-4--3) { 3 };
% \draw[color=red, <-] (M7-8) -- (M-2-6);
% \draw[color=red, <-] (M-4--3) -- (M-2-6);
% \draw[color=red, <-] (M-8--7) -- (M-9--6);
% \draw[color=red, <-] (M-9--6) -- (M-5-9);
% \draw[color=red, <-] (M-5-9) -- (M-2-6);
% \draw[color=red, <-] (M-2-6) -- (M-1-5);
\path (G-0) -- (G--0) node[pos=0.5] {$D$};
\end{tikzpicture}
\quad=\quad
\begin{tikzpicture}[xscale=0.7, yscale=0.4, thick, baseline={(current bounding box.east)}]
\tikzstyle{vertex} = [shape=rectangle, minimum height=15pt, minimum width=0pt, inner sep=0pt]
\tikzstyle{mid} = [draw, fill=white, shape=circle, minimum size=2pt, inner sep=1pt]
\foreach \i in {0,...,9} {
  \node[vertex] (G-\i) at ($(-1.44, \i*1.5-1.5)$) {};
  \node[vertex] (G--\i) at ($(3.84, \i*1.5-1.5)$) {};
}
\draw (G--1) .. controls +(-3.55, 1.78) and +(1.63, -0.82) .. (G-5) node[pos=0.7272727272727272,mid] (M-1-5) { 1 };
\draw (G--2) .. controls +(-3.55, 1.78) and +(1.63, -0.82) .. (G-6) node[pos=0.7272727272727272,mid] (M-2-6) { 2 };
\draw (G-3) .. controls +(0.70, 0.35) and +(0.70, -0.35) .. (G-4) node[pos=0.5,mid] (M3-4) { 1 };
\draw (G--8) .. controls +(-3.50, -1.75) and +(-3.50, 1.75) .. (G--3) node[pos=0.5,mid] (M-8--3) { 3 };
\draw (G-7) .. controls +(0.70, 0.35) and +(0.70, -0.35) .. (G-8) node[pos=0.5,mid] (M7-8) { 5 };
\draw (G--7) .. controls +(-2.10, -1.05) and +(-2.10, 1.05) .. (G--4) node[pos=0.5,mid] (M-7--4) { 4 };
\draw (G-1) .. controls +(0.70, 0.35) and +(0.70, -0.35) .. (G-2) node[pos=0.5,mid] (M1-2) { 1 };
\draw (G--6) .. controls +(-0.70, -0.35) and +(-0.70, 0.35) .. (G--5) node[pos=0.5,mid] (M-6--5) { 5 };
\draw (G--9) -- (G-9) node[pos=0.7272727272727272,mid] (M-9-9) { 9 };
% \draw[color=red, <-] (M7-8) -- (M-2-6);
% \draw[color=red, <-] (M-6--5) -- (M-7--4);
% \draw[color=red, <-] (M-7--4) -- (M-8--3);
% \draw[color=red, <-] (M-8--3) -- (M-2-6);
% \draw[color=red, <-] (M-9-9) -- (M-2-6);
% \draw[color=red, <-] (M-2-6) -- (M-1-5);
\path (G-0) -- (G--0) node[pos=0.5] {$D^\flat$};
\end{tikzpicture}\hskip-0.36cm
%%%
\begin{tikzpicture}[xscale=0.6, yscale=0.4, thick, baseline={(current bounding box.east)}]
\tikzstyle{vertex} = [shape=rectangle, minimum height=15pt, minimum width=0pt, inner sep=0pt]
\tikzstyle{mid} = [draw, fill=white, shape=circle, minimum size=2pt, inner sep=1pt]
\foreach \i in {0,...,9} {
  \node[vertex] (G-\i) at ($(-1.44, \i*1.5-1.5)$) {};
  \node[vertex] (G--\i) at ($(1.44, \i*1.5-1.5)$) {};
}
\draw[color=blue] (G--1) -- (G-1) node[pos=0.5,mid] (M-1-1) { 1 };
\draw[color=blue] (G--2) -- (G-2) node[pos=0.5,mid] (M-2-2) { 2 };
\draw[color=blue] (G--5) -- (G-3) node[pos=0.5,mid] (M-5-3) { 3 };
\draw[color=blue] (G--6) -- (G-4) node[pos=0.5,mid] (M-6-4) { 4 };
\draw[color=blue] (G--7) -- (G-5) node[pos=0.5,mid] (M-7-5) { 5 };
\draw[color=blue] (G--8) -- (G-6) node[pos=0.5,mid] (M-8-6) { 6 };
\draw[color=blue] (G--9) -- (G-7) node[pos=0.5,mid] (M-9-7) { 7 };
\draw[color=blue] (G-8) .. controls +(0.70, 0.35) and +(0.70, -0.35) .. (G-9) node[pos=0.5,mid] (M8-9) { 8 };
\draw[color=blue] (G--4) .. controls +(-0.70, -0.35) and +(-0.70, 0.35) .. (G--3) node[pos=0.5,mid] (M-4--3) { 3 };
% \draw[color=red, <-] (M8-9) -- (M-9-7);
% \draw[color=red, <-] (M-4--3) -- (M-2-2);
% \draw[color=red, <-] (M-9-7) -- (M-8-6);
% \draw[color=red, <-] (M-8-6) -- (M-7-5);
% \draw[color=red, <-] (M-7-5) -- (M-6-4);
% \draw[color=red, <-] (M-6-4) -- (M-5-3);
% \draw[color=red, <-] (M-5-3) -- (M-2-2);
% \draw[color=red, <-] (M-2-2) -- (M-1-1);
\path (G-0) -- (G--0) node[pos=0.5,blue] {$\A{8}\A{7}\A{6}\A{5}\A{4}\A{3}$};
\end{tikzpicture}}
\caption[Examples of right factorization]{%
  An example of right factorization according to
  \cref{prop:right-code-factor}.
  % One can see that the
  % Hasse diagram of the nesting order of $D^\flat$ is the same up to shifting of
  % some node to the right as the one of $D$, plus possibly extra arc linking to
  % $\arc{N}{N}{\ov{N}}$.
  }
  \label{fig:right-code-factor}
\end{figure}
\begin{proof}
 First we show that $D^\flat$ is an Okada arc-diagram. Set $d\eqdef\LDes(D)$.
 We'll show that $D^\flat$ is non-crossing. Going from
  $D$ to $D^\flat$ amounts to removing an arc between two consecutive nodes, then
  shifting a bunch of consecutive nodes, relative to the order on $\setNN[N]$, and
  finally adding an arc between two consecutive nodes. None of these
  operations creates a crossing.

  We now verify the three condition in~\cref{def.okada.arc}:
  \begin{enumerate}[wide, labelwidth=0pt, labelindent=0pt]

  \item It suffices to prove that the arc incident to node $\ov{i}$ for
  each $i=d+2,\dots,N$ has label $\ell \leq i-2$ in $D$. Suppose not.
  Accordingly, there exists an arc incident to $\ov{j}$ for
   $j \geq d+2$ with label $\ell > j-2$.
    \cref{okada-arc-cond-parity,okada-arc-cond-interv,okada-arc-cond-nested}
    imply that for all $i = j, \dots, N$ the
    arc incident to $\ov{i}$ must have label $\ell =i$.
    Furthermore, the arcs incident to $\ov{i}$ for $i= j , \dots, N$ can not
    all be propagating. If they were then, in particular, the arc
    incident to $\ov{N}$ would simultaneously have label $\ell = N$
    and be incident to a node $i' < N$ (since $D$ doesn't contain the
    arc $\arc{N}{N}{\ov{N}}$). But this violates
    \cref{okada-arc-cond-interv}. Adapting the reasoning in the
    proof of \cref{lemma:id-arc}, we can show that among the
    non-propagating arcs of $D$ incident to nodes $\ov{i}$
    for $i \geq j \geq d+2$, there must
    exist an arc of the form $\arc{\ov{k}}{k}{\ov{k+1}}$ with $k \geq d + 2$.
    But this violates the maximality of $d = \LDes(D)$.

  \item The parity condition is trivial for $\arc{N}{N}{\ov{N}}$ and for
    $\arc{\ov{N-1}}{\ell}{j}$ as well, since $|j| \leq N-1$. It is also clear for all
    remaining arcs $\arc{i^\flat}{\ell}{j^\flat}$ since $j$ and
    $j^\flat$ have the same parity for all $j \in \setN$.
  \item The nesting condition also holds:  Two arcs $\alpha^\flat$ and $\beta^\flat$ of
    the form $\arc{i^\flat}{\ell}{j^\flat}$ are nested if and only if $\alpha$ and $\beta$
    are. Moreover the arc $\arc{N}{N}{\ov{N}}$ presents no problem.
  \end{enumerate}
  \smallskip

  Finally the factorization $ D=D^\flat \A{N-1}\A{N-2} \ssp \boldsymbol{\cdots} \A{\LDes(D)}$ follows from
  \cref{remark:descending-product}.
\end{proof}

\begin{proposition}
  Suppose that $N>1$. If we define
  $D^\flat\eqdef D$ and $\LDes(D)\eqdef N$ whenever $D$ contains the arc 
  $\arc{N}{N}{\ov{N}}$, then the maps
  \[
    \begin{array}{cccc}
      \phi: & D & \longmapsto
      & \left(D^\flat\setminus\{\arc{N}{N}{\ov{N}}\},\ \LDes(D)\right)\, \\
      \psi: & (D, d) & \longmapsto
      & \iota_{N-1}(D)\ \A{N-1}\A{N-2} \ssp \boldsymbol{\cdots} \A{d}\,
    \end{array}
  \]
  are mutually inverse bijections between $\OkArc[N]$ and $\OkArc[N-1]\times\setN$.
\end{proposition}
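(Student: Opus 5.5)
To show that $\phi$ and $\psi$ are mutually inverse, I will check the two composites $\psi\circ\phi$ and $\phi\circ\psi$ directly. Note first that $\phi$ is well defined. By \cref{prop:right-code-factor}, $D^\flat$ is an Okada arc-diagram containing $\arc{N}{N}{\ov{N}}$, and by the remark on $\iota_{N-1}$ such diagrams are exactly the image of $\iota_{N-1}$. So $D^\flat\setminus\{\arc{N}{N}{\ov{N}}\}$ is a well-defined element of $\OkArc[N-1]$. The map $\psi$ lands in $\OkArc[N]$ by \cref{prop.Okada_arc_semigroup}, with the empty product for $d=N$.

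\emph{$\psi\circ\phi=\id$.} If $D$ contains $\arc{N}{N}{\ov{N}}$, then $\psi(\phi(D))=\iota_{N-1}(\iota_{N-1}^{-1}(D))=D$. Otherwise, $\psi(\phi(D))=D^\flat\A{N-1}\cdots\A{\LDes(D)}$, and this equals $D$ by \cref{prop:right-code-factor}.

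\emph{$\phi\circ\psi=\id$.} Fix $(E,d)\in\OkArc[N-1]\times\setN$. For $d=N$, $\psi(E,d)=\iota_{N-1}(E)$ contains $\arc{N}{N}{\ov{N}}$, so $\phi$ returns $(E,N)$. For $d<N$, set $D\eqdef\iota_{N-1}(E)\,\A{N-1}\cdots\A{d}$ and compute $D$ explicitly from \cref{remark:descending-product}. Gluing the right side of $\iota_{N-1}(E)$ to the left side of that product, no loops form, since every left node of the product is matched by a propagating arc except the pair $N-1,N$. The node $\ov{a}$ of $\iota_{N-1}(E)$ with $d\le a\le N-2$ continues to $\ov{a+2}$, with label the minimum of its old label and $a$. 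That minimum is the old label, because labels at $\ov a$ are at most $a$. Nodes $\ov{a}$ with $a<d$ stay fixed. The nodes $\ov{N-1}$ and $\ov{N}$ of $\iota_{N-1}(E)$ are joined through the cap $\arc{N-1}{N-1}{N}$. This cap produces the arc from the partner of $\ov{N-1}$ in $E$ to the left node $N$, carrying the label of that $E$-arc; this holds because the $E$-label is at most $N-1$ and the label of $\arc{N}{N}{\ov N}$ is $N$. Finally, the right cap $\arc{\ov d}{d}{\ov{d+1}}$ is added.

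Comparing with \cref{def:flat_diagram}, $D$ is obtained from $E\cup\{\arc{N}{N}{\ov N}\}$ by exactly the inverse of the relabelling $a\mapsto a^\flat$. Hence $D^\flat=\iota_{N-1}(E)$, provided $\LDes(D)=d$ and $D$ does not contain $\arc{N}{N}{\ov N}$. The second condition is clear, since node $N$ is joined to the former partner of $\ov{N-1}$.

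The main obstacle is proving $\LDes(D)=d$, i.e.\ that $D$ has no arc $\arc{\ov{k}}{k}{\ov{k+1}}$ with $k>d$. Any right arc of $D$ with both endpoints above $\ov{d+1}$ comes from a right arc $\arc{\ov{k-2}}{\ell}{\ov{k-1}}$ of $E$ shifted up by two, and its label satisfies $\ell\le k-2<k$ by \cref{okada-arc-cond-interv}. So it cannot carry label $k$. The only remaining candidate would involve $\ov{d+1}$ or $\ov{d}$, but those nodes are used by the cap of label $d$. Therefore $\LDes(D)=d$, and $\phi(\psi(E,d))=(E,d)$. As a sanity check, the bijection gives $\card\OkArc[N]=N\cdot\card\OkArc[N-1]$, consistent with $N!$.
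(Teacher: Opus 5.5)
Your proof is correct and follows the paper's argument: $\psi\circ\phi=\id$ comes from \cref{prop:right-code-factor}, and for $\phi\circ\psi=\id$ you recover $d$ from the bound that labels at $\ov{j}$ are at most $j-2$ for $j\ge d+2$, then read off $D^\flat=\iota_{N-1}(E)$ from \cref{remark:descending-product}; you only spell out the gluing more explicitly than the paper does. One small correction: $D$ may well contain an arc joining $N$ and $\ov{N}$ (this happens when $E$ has the right arc joining $\ov{N-2}$ and $\ov{N-1}$ and $d\le N-2$), so the reason $\arc{N}{N}{\ov{N}}$ is absent from $D$ is the label bound you already computed (the arc at $N$ carries the $E$-label, which is at most $N-1$), not the identity of its other endpoint.
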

\begin{proof}
  We already proved in \cref{prop:right-code-factor}, that the composition
  $\psi\,\circ\,\phi$ is the identity. Let's prove that
  $\phi(\psi(D,d))=(D,d)$.  If $d=N$, this just adds and removes the arc
  $\arc{N}{N}{\ov{N}}$. If $d\neq N$, one can recover $d$ knowing
  $\psi(D, d)$. Indeed $\psi(D, d)$ contains $\arc{\ov{d}}{d}{\ov{d+1}}$ and
  for all $j\geq d+2$ the label of the arc incident to $\ov{j}$ is at most $j-2$ since
  the label of the arc incident to $\ov{j}$ in the product $\A{N-1}\cdots\A{d}$
  is precisely $j-2$ for all $j\geq d+2$. Therefore $d=\LDes(\psi(D, d))$.

  It remains to check that $(\psi(D, d))^\flat=\iota_{N-1}(D)$.
  This is a consequence of  \cref{remark:descending-product}.
\end{proof}
By induction, we get the size of the Okada arc-diagram monoid:
\begin{corollary}
  The monoid $\OkArc$ has cardinality $N!$.
\end{corollary}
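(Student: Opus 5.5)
The plan is an induction on $N$ using the bijection $\phi\colon\OkArc[N]\to\OkArc[N-1]\times\setN$ of the preceding proposition. Since $\phi$ and $\psi$ are mutually inverse, we get $\card\OkArc[N]=N\cdot\card\OkArc[N-1]$ for every $N>1$.

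The base cases come first. For $N=0$ the only diagram is empty, and for $N=1$ the only diagram is $\id_1$, consisting of the single arc $\arc{1}{1}{\ov{1}}$. In the latter case \cref{okada-arc-cond-interv} forces the label to be $1$, so $\card\OkArc[1]=1=1!$. The recursion then gives $\card\OkArc[N]=N\cdot(N-1)!=N!$.

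The only point to check is that $\phi$ really lands in $\OkArc[N-1]\times\setN$ and that $\psi$ is defined on all of it; the preceding proposition asserts both. The detail that needs attention is the identification of $\OkArc[N-1]$ with the diagrams of $\OkArc[N]$ that contain $\arc{N}{N}{\ov{N}}$. This is exactly the injective morphism $\iota_{N-1}$, whose image is that set. Removing this arc from $D^\flat$, which by \cref{prop:right-code-factor} is an Okada arc-diagram containing it, therefore yields an element of $\OkArc[N-1]$.

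Since $\phi$ and $\psi$ have already been established as inverse bijections, I expect no real obstacle. If anything, the subtle step is the degenerate case $d=N$, where the convention $D^\flat=D$ makes $\psi(D,N)=\iota_{N-1}(D)$, the empty product of generators. That case was already handled in the proof of the proposition.
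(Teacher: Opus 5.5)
Your proof is correct and is essentially the paper's own argument. The corollary is obtained by induction from the mutually inverse bijections $\phi,\psi$ between $\OkArc[N]$ and $\OkArc[N-1]\times\setN$, which give $\card\OkArc[N]=N\cdot\card\OkArc[N-1]$ for $N>1$, together with the base case $\card\OkArc[1]=1$.
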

\begin{corollary}
  The elements $\A{1}, \dots, \A{N-1}$ generate the monoid $\OkArc$.
\end{corollary}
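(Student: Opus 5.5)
We argue by induction on $N$ using the bijection $\phi,\psi$ of the preceding proposition. For $N\le 1$ the monoid $\OkArc$ is $\{\id_N\}$, the identity arc-diagram, which is the empty product of generators, so there is nothing to prove. For $N=2$ the two elements are $\id_2$ and $\A{1}$, and the claim is immediate.

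For $N\geq 2$, let $D\in\OkArc[N]$ and put $(D',d)=\phi(D)$ with $D'\in\OkArc[N-1]$. Since $\psi\circ\phi$ is the identity, we have
\[
D=\psi(D',d)=\iota_{N-1}(D')\,\A{N-1}\A{N-2}\cdots\A{d},
\]
where the descending product is empty when $d=N$. By the induction hypothesis, $D'$ is a product of the generators $\A{1}^{(N-1)},\dots,\A{N-2}^{(N-1)}$ of $\OkArc[N-1]$. Because $\iota_{N-1}$ is a monoid morphism satisfying $\iota_{N-1}\bigl(\A{i}^{(N-1)}\bigr)=\A{i}^{(N)}$, the element $\iota_{N-1}(D')$ is the corresponding product of $\A{1}^{(N)},\dots,\A{N-2}^{(N)}$. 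Substituting this into the displayed factorization writes $D$ as a word in $\A{1},\dots,\A{N-1}$.

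There is no real obstacle, since all the combinatorial content sits in \cref{prop:right-code-factor} and \cref{remark:descending-product}, which we take as given. The only point needing care is the case where $D$ contains $\arc{N}{N}{\ov{N}}$. There $d=N$ and $D=\iota_{N-1}(D')$ directly, which is consistent with the convention $D^\flat=D$.

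A byproduct of the induction is that iterating the factorization yields exactly words of the form $\prod_i \A{i-1}\cdots\A{i-c_i}$. This matches the $\lexmin$ words of \cref{prop:lexmin-normal-form} and prepares the isomorphism $\Theta$ with $\Okada$.
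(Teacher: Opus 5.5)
Your proof is correct and follows the paper's argument: induction on $N$ via the factorization $D=\iota_{N-1}(D')\,\A{N-1}\cdots\A{\LDes(D)}$ coming from $\psi\circ\phi=\id$, together with the fact that $\iota_{N-1}$ is a monoid morphism sending $\A{i}^{(N-1)}$ to $\A{i}^{(N)}$. The extra $N=2$ base case is redundant, and the closing remark that the iterated factors have $\lexmin$ shape is a correct side observation.
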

\begin{proof}
  We argue by induction on $N$. When $N=0,1$, the result is obviously true for
  $\OkArc[0]$ and $\OkArc[1]$.
  Recall that $\iota(\A{i})=\A{i}$. As a consequence, if
  $D = \psi(E, i)$ then the factorization $E = \A{j_1} \ssp \boldsymbol{\cdots} \A{j_k}$
  obtained by induction gives a factorization
  \[
    D = \A{j_1} \ssp \boldsymbol{\cdots} \A{j_k}\A{N-1}\A{N-2} \ssp \boldsymbol{\cdots} \A{\LDes(D)}\,.\qedhere
  \]
\end{proof}
\begin{definition}
\label{def:A-sigma-diagram}
  For any $\sigma\in\SG$ we define
  $\A{\sigma}\eqdef\A{\lexmin(\sigma)}$. In addition, we define a map
  $\Theta:\Okada\to\OkArc$ by $\Theta(\e{\sigma}) \eqdef \A{\sigma}$.
\end{definition}
\begin{corollary}\label{cor:isomEG}
  The map $\Theta$ is a monoid isomorphism from
  $\Okada$ to $\OkArc$ sending $\e{i}$ to $\A{i}$.
\end{corollary}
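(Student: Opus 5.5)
The strategy is to use the presentation of $\Okada$ and then compare cardinalities. First, by \cref{prop:relat-G}, the arc-diagrams $\A{1},\dots,\A{N-1}$ satisfy the defining relations (I), (C), (S) of the Okada monoid. Since $\Okada$ is defined by generators and relations, the assignment $\e{i}\mapsto\A{i}$ extends uniquely to a monoid morphism $\widetilde\Theta:\Okada\to\OkArc$. For any word $\ind{w}$ it satisfies $\widetilde\Theta(\e{\ind{w}})=\A{\ind{w}}$.

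Next, I check that $\widetilde\Theta$ agrees with the map $\Theta$ of \cref{def:A-sigma-diagram}. By \cref{theo:bij-Sn_Okada}, every element of $\Okada$ is $\e{\sigma}=\e{\lexmin(\sigma)}$ for a unique $\sigma\in\SG$. Hence
\[
\widetilde\Theta(\e{\sigma})=\A{\lexmin(\sigma)}=\A{\sigma}=\Theta(\e{\sigma}).
\]
In particular $\Theta$ is well defined and is a monoid morphism sending $\e{i}$ to $\A{i}$.

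Surjectivity follows because the image of $\Theta$ is a submonoid containing every $\A{i}$. By the preceding corollary these elements generate $\OkArc$, so the image is all of $\OkArc$.

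For injectivity, I compare sizes: $\card\Okada=N!$ by \cref{theo:bij-Sn_Okada}, and $\card\OkArc=N!$ by the corollary following the bijection $\phi,\psi$. A surjection between finite sets of equal cardinality is a bijection, so $\Theta$ is an isomorphism.

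The only delicate point is that the relations hold in $\OkArc$, i.e.\ \cref{prop:relat-G}. The paper checks only relation (S) explicitly. Relations (I) and (C) are direct: for (I), composing $\A{i}$ with itself creates a loop that is discarded, while the right cup of the first factor and the left cup of the second produce no new through-arc, and the outer arcs keep labels $i$, so the product is $\A{i}$. For (C), the two cups sit on disjoint node sets with identity arcs elsewhere. Labels are minima of identical labels, so they are unchanged in every case. With these relations in hand, the argument is purely formal.
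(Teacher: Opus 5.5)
Your proposal is correct and follows the paper's proof: the relations of \cref{prop:relat-G} give a morphism $\e{i}\mapsto\A{i}$, which is surjective because the $\A{i}$ generate $\OkArc$, and hence bijective because both monoids have $N!$ elements. Your extra checks fill in details the paper leaves implicit: that this morphism coincides with $\Theta(\e{\sigma})=\A{\sigma}$, and that relations (I) and (C) hold. One small imprecision: in (C) the surviving cup's label is the minimum of $i$, $i$ and $i+1$ rather than of identical labels, though it still equals $i$.
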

\begin{proof}
  We know from \cref{prop:relat-G} that the $\A{i}$ satisfy the defining relations
  of the Okada monoid. Hence, there is a morphism $\Theta$ from $\Okada$
  to $\OkArc$ sending $\e{i}$ to $\A{i}$. Since the $\A{i}$ generate $\OkArc$
  this morphism is surjective, and therefore bijective since the two monoids
  have the same cardinality.
\end{proof}
\cref{fig.six_normal_forms} gives the isomorphism when $N=3$.
\cref{fig.example_normal_forms} gives examples of 
canonical forms of FPLCs together with their
associated Okada diagrams when $N=6$.
\begin{remark}
  The isomorphism $\Theta$ commutes with the star anti-automorphism:
  $\Theta(\e{}^{\star}) = \Theta(\e{})^{\star}$.
\end{remark}
\begin{definition}
\label{def:arcdiagram-permutation-bijection}
  For any Okada arc-diagram $D\in\OkArc$, define $\perm{D}\in\SG$ to be the unique permutation
  such that $D= \A{\perm{D}}$.
\end{definition}
By this definition we have the identity:
\begin{equation}\label{eq.perm-of-diag}
\Theta(\e{\perm{D}}) = D\,.
\end{equation}
\medskip

We conclude this section by making explicit the relation between FPLCs
and arc-diagrams:
\begin{proposition}\label{prop:loop-to-diag}
  Let $\mathcal{D}$ be a FPLC of rank $N$ and let $\e{_\Delta}$ be the
  $\e{}$-evaluation of the diamond diagram $\Delta$ associated ot $\mathcal{D}$. Then
  $\Theta(\e{\Delta}) = [\mathcal{D}]$, that is the rank $N$ Okada arc-diagram
  where $\arc{i}{\ell}{j}$ is an arc whenever there is a path linking $i$
  and $j$ in $\mathcal{D}$ with minimal height $\ell$.
\end{proposition}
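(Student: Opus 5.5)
The plan is to use the invariance of height-labelled isotopy classes under rewriting (\cref{lemma-FPLC-height-invariance}) to reduce to normal forms, and then to prove the statement for normal forms by induction on $N$, using the factorization of \cref{prop:right-code-factor}.

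\emph{Reduction to normal forms.} Each of the three rules of $\OSys$ preserves connectivity of the boundary endpoints and the minimal height of every non-loop path. Rule $S$ deletes a loop, and loops are discarded in $[\mathcal{D}]$ anyway. Hence $[\mathcal{D}]=[\norf(\mathcal{D})]$. Since $\e{\Delta}=\e{\norf(\Delta)}$ by \cref{lemma:diamond_rewriting}, it suffices to prove $\Theta(\e{\Delta})=[\mathcal{D}]$ when $\Delta$ is a normal form. Such a normal form is the canonical diagram of $\e{\sigma}$ with reading $\lexmin(\sigma)$, so we must show $[\mathcal{D}_{\e{\sigma}}]=\A{\sigma}$.

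\emph{Induction on $N$.} By \cref{prop:lexmin-normal-form}, the normal form with code $(c_0,\dots,c_{N-1})$ is obtained from the rank $N-1$ normal form with code $(c_0,\dots,c_{N-2})$ in two steps. First we add a top row. Then we append a last south-east diagonal whose black boxes occupy rows $N-1,\dots,N-c_{N-1}$; its reading is $(N-1)(N-2)\cdots(N-c_{N-1})$. On the FPLC side:
\begin{itemize}
\item Adding the top row inserts a straight path from $N$ to $\ov{N}$ at height $N$. This realizes $\iota_{N-1}$ on height-labelled classes.
\item Appending the diagonal amounts to composing with the FPLC of that single diagonal.
\end{itemize}
The single diagonal is read as $\A{N-1}\cdots\A{d}$ with $d=N-c_{N-1}$. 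A direct inspection of its FPLC shows that its height-labelled class is exactly the diagram of \cref{remark:descending-product}: arcs $\arc{a}{a}{\ov{a+2}}$, a cap $\arc{\ov{d}}{d}{\ov{d+1}}$ at height $d$, and so on. If $c_{N-1}=0$, the added diagonal is empty.

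\emph{Gluing step.} It remains to check that for two FPLCs $\mathcal{D}_1$ and $\mathcal{D}_2$, the class of their horizontal concatenation is the labelled product $[\mathcal{D}_1]\boldsymbol{\cdot}[\mathcal{D}_2]$, which is \cref{eqn.product-isotopy-classes}. Along a concatenated path, the minimal height is the minimum of the minimal heights of its pieces, and the connectivity is the composition of the matchings. Loops formed in the gluing are discarded on both sides. So the claim follows, with labels combined by $\min$ exactly as in the labelled product. Combining this with the induction hypothesis $[\mathcal{D}_{\e{\tau}}]=\A{\tau}$ in rank $N-1$ gives
\[
[\mathcal{D}_{\e{\sigma}}]=\iota_{N-1}(\A{\tau})\,\A{N-1}\cdots\A{d}=\A{\sigma},
\]
since $\lexmin(\sigma)=\lexmin(\tau)\cdot(N-1)\cdots d$.

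\emph{Main obstacle.} I expect the hardest step to be making the gluing and diagonal-appending geometry rigorous. The point to verify is that the added top row, which sits at height $N$ and is mostly straight squares, correctly threads the paths through the new diagonal with the right heights. My first attempt would be a direct local analysis of the single-diagonal FPLC, tracking which U-turn square each path turns at. After that, the height bookkeeping in the concatenation argument is routine.
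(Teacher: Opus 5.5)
Your proposal is correct and is essentially the paper's proof. You reduce to normal forms by the height-invariance lemma, then induct on $N$ by peeling off the last south-east diagonal, so that $\Theta(\e{\Delta})=\psi(\Theta(\e{\Delta'}),N-k)=\iota_{N-1}(\Theta(\e{\Delta'}))\,\A{N-1}\cdots\A{N-k}$. Your concatenation argument and your explicit check of the one-diagonal class against \cref{remark:descending-product} spell out what the paper calls clear.

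One small slip: it is rule $I$, not rule $S$, that deletes a loop. Rule $S$ only removes an up--up--down--down excursion from a path or loop. This does not affect your conclusion.
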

See \cref{fig.path-def,fig.example_normal_forms} for some examples.
\begin{proof}
  We know that the minimal height $\ell$ of a path linking $i$ to $j$
  in a $\mathcal{D}$ is preserved by the FPLC rewriting rules. One can therefore suppose
  without loss of generality that $\mathcal{D}$ is a normal form FPLC and
  let $\Delta$ be its associated diamond diagram. 
  Arguing by induction on $N$, let us suppose that $\Delta$ is obtained from a diamond diagram
  $\Delta'$ of rank $N-1$ by adding on it right a south-east diagonal consisting of
  $k$ black boxes followed by $N-k$ white boxes.  The reading $\e{\Delta}$ is then
  \[
    \e{\Delta'} \ssp \e{N-1}\e{N-2}\cdots\e{N-k}\,.
  \]
  This means that $\Theta(\e{\Delta}) = \psi(\Theta( \e{\Delta'}), N-k)$. Assuming that the paths
  of $\mathcal{D}'$ match those of $\Theta(\e{\Delta'})$, its clear that the paths of
  $\mathcal{D}$ match those of $\psi( \Theta(\e{\Delta'}), N-k)$.
\end{proof}

\section{Young-Fibonacci Robinson-Schensted correspondence and arc-diagrams}

The bijection $D \mapsto \sigma(D)$ between $\SG$ and
$\OkArc$ defined in \ref{def:arcdiagram-permutation-bijection}
is not very transparent: Starting from a permutation, first its code is
computed, then the associated FPLC is drawn, from which
an Okada arc-diagram is obtained. It is not obvious, for example, that the
inverse of a permutation corresponds to the mirror of the associated
Okada arc-diagram. The goal of this section is to better explicate
this graphical bijection which turns out to be an
incarnation of Fomin's Robinson-Schensted
correspondence for the Young-Fibonacci lattice.
\medskip

We start by recalling some established facts about the Young-Fibonacci
lattice~\cite{Stanley1975,Fomin1994} and Fomin's approach to the
associated Robinson-Schensted correspondence~\cite{Fomin1995}.

\subsection{The Young-Fibonacci lattice and its Fomin's RS correspondence}
\label{subsection:YF-lattice}
A \defn{Fibonacci word} of rank $N$ is a finite sequence containing only $1$'s
and $2$'s whose sum is $N$. By convention there is only one Fibonacci word of
rank zero, namely the empty word denoted $\varnothing$. We write Fibonacci words
as $212221121$ or in its abbreviated form $212^31^221$ where the exponents indicate repeated digits.
 It is well known and elementary that the number of such sequences
is the $N\ordinal$~Fibonacci number $F_N$. The set of Fibonacci words of rank $N$ is
denoted $\YFn$ and the set of all Fibonacci words, of all ranks, is denoted
$\YF$. The rank of the Fibonacci word $w$
is denoted $|w|$ and it length is denoted $\ell(w)$.

There is a lattice structure on $\YF$, called the \defn{Young Fibonacci
  lattice}, which we define through covering relations: An element $w$ covers
an element $v$ if $w$ is obtained
\begin{itemize}
\item either by replacing the \emph{left-most} occurrence of $1$ in $v$ by a $2$;
\item or by inserting a $1$ in $v$ anywhere \emph{to the left} of all $1$'s of $v$.
\end{itemize}
We write $v \lessdot w$ if $w$ covers $v$ in $\YF$. The definition clearly
implies that $|w|=|v|+1$ and so $\YF$ is a ranked
lattice. \cref{fig.YF-lattice} shows Hasse diagram of the $\YF$-lattice up to rank five.
\begin{figure}[ht]
  \[
  \tikzset{every node/.style={rectangle split,rectangle split parts=2}}
  \begin{tikzpicture}[>=latex,yscale=1,line join=bevel]
\node (node_0) at (163.0bp,7.5bp) {$\varnothing$ \nodepart{second} $\varnothing_\mathrm{\bf 0}$};
  \node (node_1) at (163.0bp,57.5bp) {$1$ \nodepart{second} $\left\{1\right\}_\mathrm{\bf 1}$};
  \node (node_2) at (147.0bp,106.5bp) {$11$ \nodepart{second} $\left\{1, 2\right\}_\mathrm{\bf 2}$};
  \node (node_6) at (180.0bp,106.5bp) {$2$ \nodepart{second} $\varnothing_\mathrm{\bf 2}$};
  \node (node_3) at (123.0bp,155.5bp) {$111$ \nodepart{second} $\left\{1, 2, 3\right\}_\mathrm{\bf 3}$};
  \node (node_10) at (164.0bp,155.5bp) {$21$ \nodepart{second} $\left\{1\right\}_\mathrm{\bf 3}$};
  \node (node_4) at (64.0bp,204.5bp) {$1111$ \nodepart{second} $\left\{1, 2, 3, 4\right\}_\mathrm{\bf 4}$};
  \node (node_13) at (123.0bp,204.5bp) {$211$ \nodepart{second} $\left\{1, 2\right\}_\mathrm{\bf 4}$};
  \node (node_5) at (8.0bp,253.5bp) {$11111$ \nodepart{second} $\left\{1, 2, 3, 4, 5\right\}_\mathrm{\bf 5}$};
  \node (node_15) at (64.0bp,253.5bp) {$2111$ \nodepart{second} $\left\{1, 2, 3\right\}_\mathrm{\bf 5}$};
  \node (node_7) at (214.0bp,155.5bp) {$12$ \nodepart{second} $\left\{3\right\}_\mathrm{\bf 3}$};
  \node (node_8) at (278.0bp,204.5bp) {$112$ \nodepart{second} $\left\{3, 4\right\}_\mathrm{\bf 4}$};
  \node (node_16) at (214.0bp,204.5bp) {$22$ \nodepart{second} $\varnothing_\mathrm{\bf 4}$};
  \node (node_9) at (321.0bp,253.5bp) {$1112$ \nodepart{second} $\left\{3, 4, 5\right\}_\mathrm{\bf 5}$};
  \node (node_18) at (278.0bp,253.5bp) {$212$ \nodepart{second} $\left\{3\right\}_\mathrm{\bf 5}$};
  \node (node_11) at (164.0bp,204.5bp) {$121$ \nodepart{second} $\left\{1, 4\right\}_\mathrm{\bf 4}$};
  \node (node_12) at (195.0bp,253.5bp) {$1121$ \nodepart{second} $\left\{1, 4, 5\right\}_\mathrm{\bf 5}$};
  \node (node_19) at (152.0bp,253.5bp) {$221$ \nodepart{second} $\left\{1\right\}_\mathrm{\bf 5}$};
  \node (node_14) at (111.0bp,253.5bp) {$1211$ \nodepart{second} $\left\{1, 2, 5\right\}_\mathrm{\bf 5}$};
  \node (node_17) at (238.0bp,253.5bp) {$122$ \nodepart{second} $\left\{5\right\}_\mathrm{\bf 5}$};
  \draw [black,->] (node_0) -- (node_1);
  \draw [black,->] (node_1) -- (node_2);
  \draw [black,->] (node_1) -- (node_6);
  \draw [black,->] (node_2) -- (node_3);
  \draw [black,->] (node_2) -- (node_10);
  \draw [black,->] (node_3) -- (node_4);
  \draw [black,->] (node_3) -- (node_13);
  \draw [black,->] (node_4) -- (node_5);
  \draw [black,->] (node_4) -- (node_15);
  \draw [black,->] (node_6) -- (node_7);
  \draw [black,->] (node_6) -- (node_10);
  \draw [black,->] (node_7) -- (node_8);
  \draw [black,->] (node_7) -- (node_16);
  \draw [black,->] (node_8) -- (node_9);
  \draw [black,->] (node_8) -- (node_18);
  \draw [black,->] (node_10) -- (node_11);
  \draw [black,->] (node_10) -- (node_13);
  \draw [black,->] (node_10) -- (node_16);
  \draw [black,->] (node_11) -- (node_12);
  \draw [black,->] (node_11) -- (node_19);
  \draw [black,->] (node_13) -- (node_14);
  \draw [black,->] (node_13) -- (node_15);
  \draw [black,->] (node_13) -- (node_19);
  \draw [black,->] (node_16) -- (node_17);
  \draw [black,->] (node_16) -- (node_18);
  \draw [black,->] (node_16) -- (node_19);
\end{tikzpicture}
  \]
  \caption[The Young-Fibonacci lattice]{The Young-Fibonacci lattice up to rank
    $5$. We show the Fibonacci words and their associated Fibonacci sets.}
  \label{fig.YF-lattice}
\end{figure}

Consider the \defn{up and down operators} acting on formal linear combinations of
elements of $\YF$ defined by 
\[
  \mathrm{\bf U}(v)\eqdef\sum_{w : \, w \gtrdot v} w
  \qandq
  \mathrm{\bf D}(v)\eqdef\sum_{u : \, u\lessdot v} u 
\]
and extended by linearity. The Young-Fibonacci lattice is a \defn{differential poset}, meaning that $\mathrm{\bf U}$
and $\mathrm{\bf D}$ satisfy the relation $\mathrm{\bf DU}- \mathrm{\bf UD} =  \mathrm{\bf Id}_{\YF}$. In particular, this implies that
$\mathrm{\bf D}^N\mathrm{\bf U}^N(\varnothing) = N!\, \varnothing$. Otherwise said, there are $N!$ many
pairs of saturated chains in the $\YF$-lattice starting at $\varnothing$ and
sharing a common endpoint at rank $N$. Fomin ~\cite{Fomin1995} devised a way to construct a
bijection between permutations and pairs of such chains using growth diagrams.

A \defn{growth diagram} of rank $N$ is a way to associate an element of $\YF$
to each vertex in the $\{0,1,\dots N\}\times\{0,1,\dots N\}$ grid. We
picture such a grid in cartesian coordinates and identify each square of the
grid with the vertex in its upper-right corner. To compute the growth diagram
$\Grow(\sigma)$ associated to a permutation $\sigma\in\SG$, we begin by putting a box
$\Box$ in each $(i, \sigma(i))$ square cell (latter we'll place a number inside the box). We
assign the empty Fibonacci word $\varnothing$ to each vertex along the left and lower boundary of the grid.
The construction is iterative, and evolves from left to right
and bottom to top. At each step we consider a square cell whose three
left and lower corners $u, v, w$ have already been assigned Fibonacci words.
We then assign a Fibonacci word to the upper
right corner $z$ according to a local rule depending whether there is a box in the
square cell or not.
\[
  \begin{tikzpicture}
    \matrix (M) [matrix of math nodes,
    column sep=0.2cm, row sep=0.2cm,
    nodes={inner sep=3pt, anchor=center}]{
      v && z \\
      & \Box & \\
      u && w\\
    };
    \draw[-latex] (M-1-1) edge node[auto] {$b_2$} (M-1-3);
    \draw[-latex] (M-3-1) edge node[below] {$a_2$} (M-3-3);
    \draw[-latex] (M-3-1) edge node[auto] {$a_1$} (M-1-1);
    \draw[-latex] (M-3-3) edge node[right] {$b_1$} (M-1-3);
  \end{tikzpicture}
  \qquad
 \begin{tikzpicture}
   \matrix (M) [matrix of math nodes,
    column sep=0.2cm, row sep=0.2cm,
    nodes={inner sep=3pt, anchor=center}]{
      v && z \\
      & {\white \Box} & \\
      u && w\\
    };
    \draw[-latex] (M-1-1) edge node[auto] {$b_2$} (M-1-3);
    \draw[-latex] (M-3-1) edge node[below] {$a_2$} (M-3-3);
    \draw[-latex] (M-3-1) edge node[auto] {$a_1$} (M-1-1);
    \draw[-latex] (M-3-3) edge node[right] {$b_1$} (M-1-3);
  \end{tikzpicture}
\]
The rule to assign $z$ is the following:
\begin{itemize}
\item if $v \neq u$ and $w \neq u$, then $z\eqdef 2u$;
\item else if $v=w=u$ and the cell contains a box $\Box$, then $z\eqdef 1u$;
\item otherwise $z$ is assigned is such a way that the two endpoints of the edge $b_i$
  are equal whenever the two endpoints of the edge $a_i$ are as well ($i=1\text{ or }2$).
\end{itemize}
See~\cref{fig.growth-diagram} for an example (the meaning of the colored arcs
will be explained later).
\begin{figure}
  \vspace{0.2cm}
  \includegraphics[height=0.99\textwidth, angle=-90]{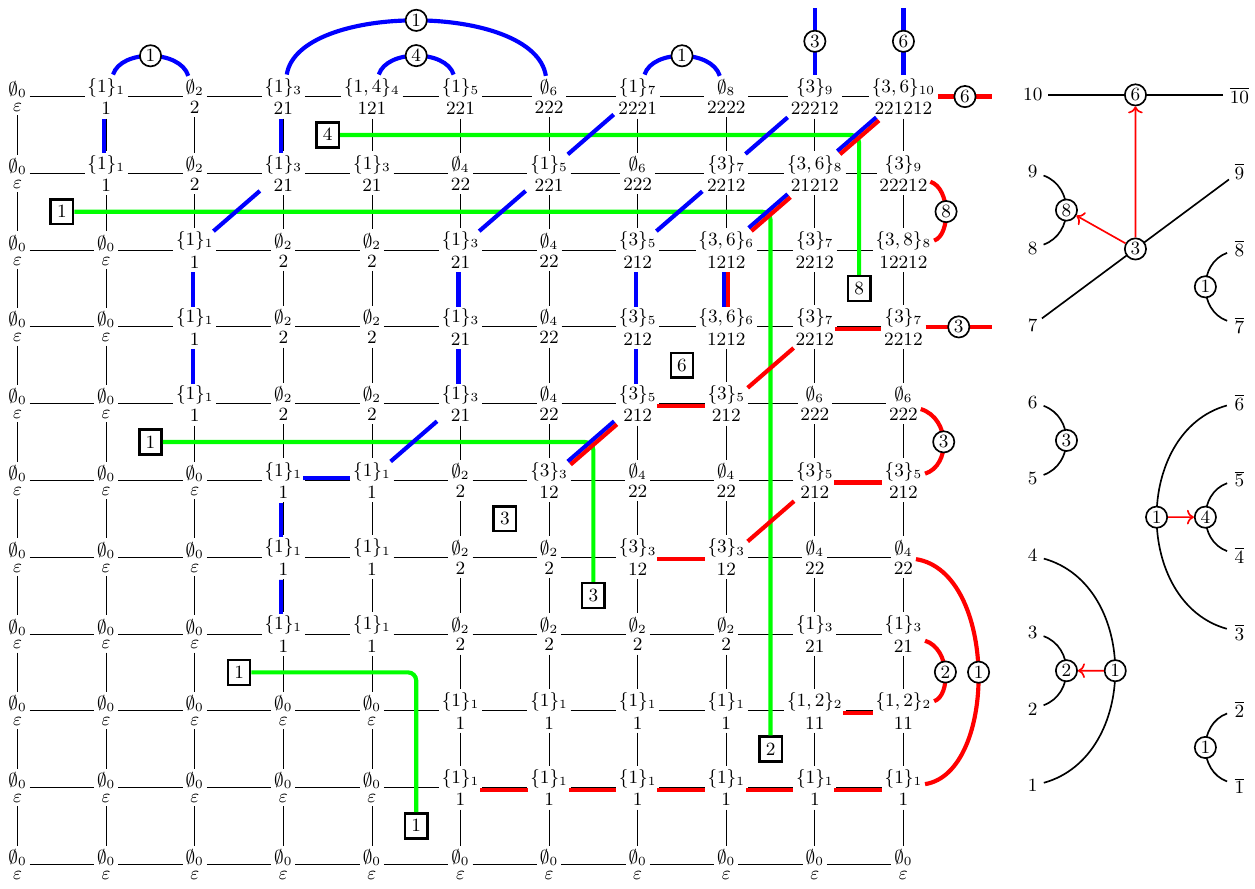}
  \vspace{0.2cm}
  \caption[Example of growth diagram]{%
    The growth diagram of the permutation $(9, 6, 3, 10, 1, 5, 4, 7, 2, 8)$
    and the construction of the associated Okada arc-diagram. Grid positions are
    labelled both by Fibonacci sets and words. Pairs of removed letters in the
    largest last letter procedure are linked in green. \Cref{rem:redblue}
    explains the blue and red lines.}
  \label{fig.growth-diagram}
\end{figure}

We recall a few important facts: The rank of the Fibonacci word 
at grid position $(x,y)$ is equal to the number of square cells with boxes $\Box$
situated to its south-west, that is the number of $i\leq x$ such that $\sigma(i)\leq y$.
Moreover, if $v$ is the Fibonacci word at vertex $(x,y)$ and $w$ is
the Fibonacci word at vertex $(x+1,y)$ or vertex $(x,y+1)$
then either $v=w$ or $v\lessdot w$ depending on whether their ranks are equal or differ
by one. In particular, there are two saturated chains
$P(\sigma) = (u_0 \lessdot u_1 \lessdot u_2 \lessdot \dots \lessdot u_N)$ and
$Q(\sigma) = (v_0 \lessdot v_1 \lessdot v_2 \lessdot \dots \lessdot v_N)$
on the right-most and top borders of the grid, both
beginning at $u_0 = v_0 = \varnothing$.
Since these two chains meet at the upper-right corner of
the grid it follows that $u_N = v_N$ in $\YFn$. The crucial feature is
that the growth process can be reversed starting from the top and rightmost borders and filling
in the intermediate grid positions and permutation boxes along the way. Hence Fomin's
theorem:
\begin{theorem}[\cite{Fomin1995}]
  The map sending a permutation $\sigma \in \SG$ to the pair of saturated chains
  $\left(P(\sigma), Q(\sigma)\right)$
  situated on the right and top borders
  of the associated growth diagram $\Grow(\sigma)$ defines a bijection between $\SG$
  and pairs of saturated chains in $\YF$-lattice
  sharing a common endpoint $\Shape(\sigma) \in \YFn$.
\end{theorem}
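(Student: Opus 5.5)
The plan is to follow Fomin's growth-diagram argument, with three ingredients: the local rules are well defined and produce saturated chains, they can be inverted locally, and a counting step closes the argument. I propose the following order.

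\textbf{Step 1: the forward map lands in pairs of chains with a common endpoint.} Fix $\sigma\in\SG$. By induction over the grid, from left to right and bottom to top, I will show that at each vertex $(x,y)$ the assigned word has rank equal to the number of boxes weakly south-west of it. I will also show that horizontally or vertically adjacent vertices either carry equal words or form a covering. This reduces to a case check of the local rule. If $v\neq u$ and $w\neq u$ then $u\lessdot v$ and $u\lessdot w$. I must check that $2u$ covers both $v$ and $w$. This holds because the words covering $u$ are $1u$, the words obtained by inserting a $1$ to the left of all $1$'s, and the word obtained by changing the leftmost $1$ to $2$. Each of these is covered by $2u$: either replace the leftmost $1$ by $2$, or insert a $1$ in front of the leading $2$'s. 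In the remaining cases at least one edge is trivial, and $z$ is forced. The right and top borders then give saturated chains $P(\sigma)$ and $Q(\sigma)$ from $\varnothing$ to the common corner word $\Shape(\sigma)$.

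\textbf{Step 2: local reversibility.} Given $v,w,z$ in a cell, I show that $u$, and whether the cell contains a box, are uniquely determined. If $v=w=z$, then $u=z$ and there is no box. If exactly one of $v,w$ equals $z$, then $u$ is the other one and there is no box. If $v=w\neq z$, then either $z=1v$, in which case $u=v$ and the cell has a box, or $z\neq 1v$, in which case $u$ is obtained from $v$ through the degenerate rule (one edge is trivial), so $u=v$ is forced. This last case needs care: the differential property should give that the only $z$ covering $v$ with $u=v=w$ and no box is excluded, so the case $v=w$ with $z\ne 1v$ must come from $u\lessdot v$. If $v\neq w$, then $z=2u$ forces $u$ to be $z$ with its leading $2$ deleted, and no box is present. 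Running these rules from the top-right corner lets me reconstruct the whole grid from $(P,Q)$, which proves injectivity.

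\textbf{Step 3: surjectivity.} I use the differential identity $\mathrm{\bf D}^N\mathrm{\bf U}^N\varnothing=N!\,\varnothing$ recalled above. It says that the number of pairs of saturated chains with a common endpoint of rank $N$ is exactly $N!=\card\SG$, so injectivity gives bijectivity.

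The main obstacle is Step 2. I must verify that the backward reconstruction is consistent, in particular that the inferred box positions form a permutation matrix, with exactly one box per row and column. Rather than prove this directly, I will lean on injectivity plus counting as above. Injectivity only needs that distinct $\sigma$ produce distinct border data, and that follows from the local uniqueness of $(u,\text{box})$ given $(v,w,z)$, applied inductively from the top-right corner.
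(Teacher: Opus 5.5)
Your proof is correct, but it gets surjectivity differently from the paper. The paper (citing Fomin) relies on full reversibility of the growth process. For an \emph{arbitrary} pair of saturated chains with a common endpoint, the backward local rules always apply from the top-right corner and place exactly one box in each row and each column, so the inverse map is constructed explicitly. You prove less: the backward step is uniquely determined whenever $(v,w,z)$ comes from an actual growth diagram. Processing cells in order of decreasing $x+y$, this gives global injectivity. The differential identity $\mathbf{D}^N\mathbf{U}^N\varnothing=N!\,\varnothing$ then finishes, since it says $\sum_{|w|=N}e(w)^2=N!$, where $e(w)$ is the number of saturated chains from $\varnothing$ to $w$. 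Your route spares you checking that the backward rules never get stuck and always produce a permutation matrix. The price is that it imports Stanley's differential property as an independent input. Fomin's route is constructive and in fact gives a bijective proof of that identity, which your argument cannot recover.

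Two small repairs.
\begin{itemize}
\item In Step~2, case $v=w\neq z$ with $z\neq 1v$, the claim that ``$u=v$ is forced'' is false: $u=v=w$ without a box gives $z=v$. As you then say, this case must have $u\lessdot v$, so $z=2u$ and $u$ is $z$ with its leading $2$ deleted. This follows from the forward case analysis alone, so no appeal to the differential property is needed there.
\item In Step~1, for $u=2^k1u'$, the cover $2^{k+1}u'$ becomes $2u=2^{k+1}1u'$ by inserting a $1$ right after the prefix $2^{k+1}$, which lies to the left of all $1$'s. It is not obtained by inserting a $1$ in front of the leading $2$'s.
\end{itemize}
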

\begin{remark}\label{rem:pairing}
  There is an alternative method to compute the Fibonacci word $\Shape(\sigma)$ (and the
  entire growth diagram) due to ~\cite[Definition 5.4.1]{Roby1991}, \cite[Theorem
  1]{Killpatrick2005} and also \cite{Nzeutchap2009}. Express the
  permutation $\sigma$ in one-line notation. If the last entry is also the
  largest, record $1$ and remove the letter. Otherwise, record $2$ and
  remove the last entry together with the largest entry to its left.
  The procedure is repeated until no entries of the permutation remain. The
  resulting Fibonacci word is $\Shape(\sigma)$. In \cref{fig.growth-diagram}, pairs of entries
  removed by the procedure are joined by a bent green arc.
  The Fibonacci word situated at an intermediate grid position $(x, y)$ in $\Grow(\sigma)$ can be
  also obtained by restricting the elimination procedure to the first $x$ letters of the permutation $\sigma$ which are smaller
  than $y$.

\end{remark}
\subsection{Young-Fibonacci chains and half arc-diagrams}
In the classical Robinson-Schensted correspondence,
a standard Young tableaux records
the order in which boxes appear along a saturated chain, while the shape of the tableaux
specifies the endpoint of the saturated chain. This correspondence
is a bijection between permutations and pairs of standard Young tableaux of the same
shape.  In the Young-Fibonacci case,
\cite{Roby1991,Killpatrick2005,Nzeutchap2009} devised several different
ways to encode saturated chains of the $\YF$-lattice
using analogues of standard Young tableaux.
Here we present a graphical version of the Robison-Schensted
correspondence for the $\YF$-lattice
which directly constructs the
Okada arc-diagram associated to a
permutation. In this framework, saturated chains in the
$\YF$-lattice can be bijectively
converted into Okada half arc-diagrams.
The Okada arc-diagram $\Theta(\e{\sigma})$
is obtained by glueing together the two half arc-diagrams
associated to the pair of saturated chains
arising from $\sigma \in \SG$
under the RS-bijection
for the $\YF$-lattice.
We now explain the details of this construction. \medskip

Cutting a labelled arc-diagram $D$ in the middle gives a natural notion of
\defn{labelled half arc-diagram} containing either full arcs $\arc{i}{\ell}{j}$
joining two nodes $i,j\in\setN$ or a labelled half arcs $\harc{i}{\ell}$ with a
free end. Such a free arc is \defn{propagating}. The left half arc-diagram
$\rightd{D}$ of $D$, also called the $\defn{bra}$,  is the restriction of~$D$ to
its positive part. The right
half arc-diagram $\leftd{D}$, also called the \defn{ket}, is the bra $\rightd{D^\star}$
of the mirror $D^\star$.
The notion of
crossing as previously defined naturally restricts to half diagrams.
Likewise, the concept of nesting of half arcs in a half arc-diagram $\rightd{D}$
is simply the restriction of the nesting relation
to the left side of the non-crossing arc-diagram $D$ (clearly this
doesn't depend on the arcs on the right side).
A \defn{half Okada arc-diagram} $H$ is a labelled, non-crossing
half arc-diagram which satisfies 
\cref{okada-arc-cond-interv,okada-arc-cond-parity,okada-arc-cond-nested},
or, equivalently, if gluing $H$ with its mirror produces a
valid Okada arc-diagram.
\begin{definition}
  The \defn{propagating index set} of a half Okada arc-diagram $H$ of rank $N$ is
  the subset $\PropInd(H)$ of $\setN$ containing the indices of the
  nodes of its propagating arcs.

  The \defn{propagating label set} of a half Okada arc-diagram $H$ of rank $N$ is
  the subset $\PropLab(H)$ of $\setN$ containing the labels of its
  propagating arcs.
\end{definition}
The following lemma-definition is of great importance:
\begin{lemma}[Gluing lemma]\label{lemma:gluing}
  For any Okada arc-diagram $D$, the left and right half arc-diagrams $\rightd{D}$
  and $\leftd{D}$ are two Okada half arc-diagrams which have identical
  propagating label sets. Consequently, it makes sense to define
  $\PropLab(D)\eqdef\PropLab\ssp \rightd{D} \ssp=\PropLab\ssp \leftd{D} \ssp$.

  Moreover if $L$ and $R$ are two Okada half arc-diagrams such
  that $\PropLab(L)=\PropLab(R)$, then there is a unique Okada
  arc-diagram~\defn{$\glue{L}{R}$} such that $\rightd{\glue{L}{R}}= L$ and
  $\leftd{\glue{L}{R}}= R$.
\end{lemma}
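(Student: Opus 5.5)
The plan is to treat the two assertions separately, using only the three conditions of \cref{def.okada.arc} together with the observation that the propagating arcs of a non-crossing diagram are totally ordered by nesting.

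For the first assertion, the conditions in \cref{def.okada.arc} restrict to each half. Parity and bounds for a propagating arc $\arc{a}{\ell}{\ov{b}}$ involve $\min(|a|,|b|)$, so each half inherits them only in the weak form $\ell\le a$ and $\ell\le b$. It is therefore cleanest to define half Okada diagrams via gluing with the mirror, as the paper does. I would verify directly that $\rightd{D}$ glued with its own mirror is Okada; the only new arcs are propagating arcs $\arc{a}{\ell}{\ov{a}}$. \Cref{lemma:propag-parity} shows that $a$ and $b$ have the same parity, so $\ell\equiv a$ and $\ell\le a$ hold whenever they hold for $\min(a,b)$ together with $\ell\le a$. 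Nesting is preserved because nesting among arcs of a half diagram depends only on that side.

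The equality $\PropLab\rightd{D}=\PropLab\leftd{D}$ is trivial, since both sides carry the same propagating arcs and hence the same labels. The set $\PropLab(D)$ is therefore well defined.

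For the gluing statement, the key point is that in an Okada half diagram the propagating labels strictly increase from bottom to top. Indeed, the propagating half arcs are totally ordered by nesting, with a higher arc nested in a lower one, so \cref{okada-arc-cond-nested} forces strictly increasing labels. Given $L$ and $R$ with $\PropLab(L)=\PropLab(R)=S=\{s_1<\dots<s_k\}$, there is a unique non-crossing way to join the $k$ free ends of $L$ to the $k$ free ends of $R$: match them bottom-to-bottom in order. Any other matching would cross. This matching also agrees on labels, because the $i$-th propagating arc carries label $s_i$ on both sides. This proves uniqueness, and it gives the candidate $\glue{L}{R}$.

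It remains to check that $\glue{L}{R}$ is Okada. For a propagating arc $\arc{a}{s_i}{\ov{b}}$, the bound $s_i\le\min(a,b)$ follows from $s_i\le a$ and $s_i\le b$. Parity follows because $a\equiv s_i\equiv b$. For nesting, a non-propagating arc on the left is nested in a propagating arc of $\glue{L}{R}$ exactly when it is nested in the corresponding half arc of $L$, and the same holds on the right; nestings between two propagating arcs were handled above. The main obstacle is keeping the half-diagram bounds correct: the plan relies on interpreting \cref{okada-arc-cond-interv,okada-arc-cond-parity} for a free half arc $\harc{a}{\ell}$ as $\ell\le a$ with $\ell\equiv a$. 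I would state this interpretation explicitly and justify it through the mirror-gluing equivalence.
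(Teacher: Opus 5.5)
The paper gives no written proof of this lemma: it is stated as a lemma-definition and treated as evident. Your proposal supplies the omitted verification, and it is correct.

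The substantive points are the ones you isolate.
\begin{itemize}
\item The free half arcs of an Okada half arc-diagram are totally ordered by nesting. So \cref{okada-arc-cond-nested} makes their labels strictly increase from bottom to top.
\item Consequently $L$ and $R$ each have exactly $\card S$ free ends. Non-crossing forces the order-preserving matching, and the labels agree on each glued arc.
\item Uniqueness can even be read off from the labels alone. The label of a propagating arc of $D$ occurs in both $\rightd{D}$ and $\leftd{D}$, and labels are distinct on each side.
\item For a glued arc joining $a$ and $\ov{b}$ with label $s$, the half-diagram conditions $s\le a$, $s\equiv a \pmod 2$ and $s\le b$, $s\equiv b \pmod 2$ give \cref{okada-arc-cond-interv,okada-arc-cond-parity} with $\min(a,b)$.
\item In the other direction, $s\le\min(a,b)\le a$ and $s\equiv\min(a,b)\equiv a$ (by \cref{lemma:propag-parity}) give the first assertion.
\end{itemize}

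In a write-up, state the routine checks you pass over.
\begin{itemize}
\item A non-propagating arc of $L$ (resp.\ $R$) cannot cross a glued propagating arc, because $L$ (resp.\ $R$) is non-crossing as a half diagram.
\item Nestings among right arcs of $\glue{L}{R}$, and of right arcs inside propagating arcs, are those of $R$. This holds because the mirror map reverses the total order on $[N]\cup\ov{[N]}$.
\item A propagating arc is never nested in a non-propagating one, and a left arc is never nested in a right arc or vice versa. So no other cases of \cref{okada-arc-cond-nested} arise.
\end{itemize}

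The interpretation you worry about needs no outside justification. Your existence check with $R=L$ shows that a half diagram satisfying the direct conditions ($\ell\le a$ and $\ell\equiv a$ for a free end at $a$, plus nesting) glues with its mirror to an Okada arc-diagram. Your restriction argument from the first part, applied to $\glue{L}{L}$, gives the converse. Hence the paper's two definitions of an Okada half arc-diagram agree.
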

At this point we need to characterize those sets which can appear as a propagating sets:
\begin{definition}\label{def:fibonacci-sets}
  A \defn{Fibonacci set of rank $N$} is a subset $S = \{s_1<s_2<\dots<s_k\}$ of
  $\setN$ whose size $k$ has the same parity as $N$ and such that $s_i$ has
  the same parity as $i$ for all $i \in \{1, \dots, k \}$. 
  The collection of all Fibonacci set of rank $N$ will be denoted $\YFSn$. The disjoint union of 
  $\YFSn$ for all $N \geq 0$ is denoted $\YFS$.
\end{definition}
The entire set $\setN$ itself is always a rank $N$ Fibonacci set, while $\varnothing$ is
only a rank $N$ Fibonacci set when $N$ is even. \Cref{fig.dominance} shows the full list of Fibonacci
sets of ranks $0$ to $6$.  We emphasize that a Fibonacci set is a set together
with a rank. For example, the set
$\{1,2,5\}$ of rank $5$ is not the same Fibonacci set as $\{1,2,5\}$ of rank
$7$. In order to make this distinction, we sometimes add a subscript $N$ indicating the rank, e.g.
$\{1,2,5\}_\text{\bf 5}$ versus $\{1,2,5\}_\text{\bf 7}$. If $S$ is a
Fibonacci set of rank $N$ then clearly it is also a Fibonacci set of rank $N+2$; the notation \defn{$S^\sharp$}
will be a shorthand to indicate that we are viewing $S \in \YFS_{N+2}$ instead of $S \in \YFS_N$.
\begin{remark}
  The parity conditions for a rank $N$ Fibonacci set $S = \{s_1<s_2<\dots<s_k\}$
  are equivalent to the condition that the consecutive
  differences $s_1-0$, $s_{i+1}-s_{i}$, and $N+1-s_k$ are all odd. Hence Fibonacci
  sets are nothing but descent sets of compositions of $N+1$ into odd parts
  (which are known to be counted by Fibonacci numbers).
\end{remark}
\begin{proposition}\label{prop:proplab-index-fibo}
  For an Okada half arc-diagram $\rightd{D}$ of rank $N$
  both $\PropInd \rightd{D}$ and $\PropLab \rightd{D}$
  are Fibonacci sets of rank $N$.
\end{proposition}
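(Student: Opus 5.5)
We treat the two sets separately. Fix a half Okada arc-diagram $H=\rightd{D}$ of rank $N$. Its propagating arcs have left endpoints $p_1<p_2<\dots<p_k$ and labels $\ell_1,\dots,\ell_k$, where $\ell_i$ is the label of the arc at $p_i$. Everything else is non-propagating: full arcs $\arc{a}{}{b}$ with $a<b$ in $\setN$.

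\textbf{The index set.} The non-propagating arcs form a non-crossing perfect matching of $\setN\setminus\{p_1,\dots,p_k\}$. No full arc may straddle a propagating node, since that free arc would cross it. So every full arc lies inside one of the gaps $(0,p_1)$, $(p_i,p_{i+1})$, or $(p_k,N+1)$.

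Each gap is therefore perfectly matched among itself and contains an even number of nodes. Hence $p_1-0$, $p_{i+1}-p_i$ and $N+1-p_k$ are all odd. By the remark following \cref{def:fibonacci-sets}, this is exactly the statement that $\PropInd(H)$ is a Fibonacci set of rank $N$. Equivalently, $p_i\equiv i \pmod 2$ and $k\equiv N \pmod 2$. This is also essentially \cref{lemma:propag-parity} applied to the glued diagram.

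\textbf{The label set.} We argue from the arc conditions.

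First, the propagating arcs are totally ordered by nesting: the arc at $p_{i+1}$ is nested in the arc at $p_i$. \Cref{okada-arc-cond-nested} therefore gives $\ell_1<\ell_2<\dots<\ell_k$. Consequently $\PropLab(H)=\{\ell_1<\dots<\ell_k\}$ has $k$ elements, and $k\equiv N$ by the first part.

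Second, \cref{okada-arc-cond-parity} says $\ell_i$ has the parity of the free arc's "$\min(|a|,|b|)$". I read this as $p_i$ (gluing with the mirror, the partner is $\ov{q}$ with $q\equiv p_i$, and the minimum of the two has that common parity). Hence $\ell_i\equiv p_i\equiv i \pmod 2$.

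Third, $\ell_i\le p_i\le N$ by \cref{okada-arc-cond-interv}, so the labels lie in $\setN$. Together these show $\PropLab(H)$ is a rank $N$ Fibonacci set.

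\textbf{Main obstacle.} The delicate point is the parity/interval conditions for a free half-arc, where the other endpoint is not visible. I would handle it via the definition that a half Okada diagram is one whose gluing with its own mirror is Okada. In that glued diagram the propagating arc at $p_i$ ends at $\ov{p_i}$, so $\min=p_i$ and the conditions read exactly as used above.

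If one instead wants arbitrary $D$ (with $H=\rightd{D}$), the partner $\ov{q_i}$ satisfies $q_i\equiv i\equiv p_i$ by the index argument applied to $\leftd{D}$. Then $\min(p_i,q_i)$ again has parity $i$ and $\ell_i\le\min(p_i,q_i)\le N$, so the argument goes through unchanged.
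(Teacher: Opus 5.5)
Your proof is correct and follows the same route as the paper: the nodes lying between consecutive propagating arcs are matched among themselves, so each gap holds an even number of nodes and consecutive propagating indices differ by an odd amount; the parity condition then transfers this to the labels. Your version also makes explicit two points that the paper's two-line proof leaves implicit, namely that the labels strictly increase (by the nesting condition) and that $1\le \ell_i\le N$ (by the interval condition).
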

\begin{proof}
  Between two consecutive propagating arcs, the number of nodes is twice
  the number of non-propagating arcs. Therefore the indices of the nodes of
  any pair of consecutive propagating arcs differ by an odd number.
  \Cref{okada-arc-cond-parity} ensures that
  this is also the case for their labels.
\end{proof}
The converse is also true: Any Fibonacci set of rank $N$ is the propagating set of some
rank $N$ Okada half arc-diagram. This is a consequence of \cref{prop:chain-diag-bij}.
\medskip

As one might expect, Fibonacci sets are in bijection with Fibonacci words:
\begin{proposition}\label{prop-bij-fib-set-word}
  Given a Fibonacci word $w=w_1 \cdots w_n$ of rank $N$
  we construct the following subset of $\setN$
  \[
    \FibSet(w)\eqdef\biggset{\sum_{i=j}^{n} w_i}{w_j = 1}
  \]
  Conversely to any Fibonacci set $S=\{s_1<s_2<\dots<s_k\}_{\boldsymbol{N}}$ we associate the word
  \[
    \FibWord(S)\eqdef 2^{(N-s_k)/2} 1 2^{(s_k - s_{k-1} -1)/2} 1 \cdots 2^{(s_2 - s_1 - 1)/2} 1 2^{(s_1 -1)/2 }
%    \qquad 2^{(s_1-1)/2}1 2^{(s_2-s_1-1)/2}1\dots 2^{(N-s_k)/2}
  \]
  These maps $\FibSet$ and $\FibWord$ define mutually inverse
  bijections between Fibonacci words of rank $N$ and Fibonacci sets of rank $N$.
\end{proposition}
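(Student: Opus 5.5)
The plan is to check directly that the two maps are well defined and mutually inverse, by tracking what the digits of $w$ contribute to the sums appearing in $\FibSet(w)$.

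\textbf{$\FibSet$ is well defined.} Let $w=w_1\cdots w_n$ have rank $N$, and put $t_j\eqdef\sum_{i=j}^{n}w_i$ (the suffix sums). These satisfy $N=t_1>t_2>\cdots>t_n\geq 1$, so $\FibSet(w)$ is a subset of $\setN$.

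List the positions $j$ with $w_j=1$ from right to left as $j_1>j_2>\cdots>j_k$, and write $s_m\eqdef t_{j_m}$. Then $s_1<\cdots<s_k$.

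The tail after position $j_1$ consists only of $2$'s, so $s_1=1+2a$ is odd. The same reasoning gives the other differences:
\begin{itemize}
\item $s_{m+1}-s_m=1+2(\text{number of $2$'s strictly between } j_{m+1} \text{ and } j_m)$, which is odd;
\item $N-s_k=2(\text{number of leading $2$'s})$, which is even.
\end{itemize}

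Consequently $s_m\equiv m \pmod 2$ by induction, and $k\equiv s_k\equiv N\pmod 2$. Hence $\FibSet(w)\in\YFSn$.

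\textbf{$\FibWord$ is well defined.} Take $S=\{s_1<\cdots<s_k\}_{\boldsymbol N}$. The parity conditions say exactly that $s_1-1$, $s_{m+1}-s_m-1$ and $N-s_k$ are even and non-negative. So every exponent in $\FibWord(S)$ is a non-negative integer. Summing the digits gives
\[
(N-s_k)+\sum_{m}\bigl(1+(s_{m+1}-s_m-1)\bigr)+1+(s_1-1)=N,
\]
so $\FibWord(S)$ is a Fibonacci word of rank $N$.

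\textbf{Mutual inversion.} The computation in the first step shows that the block decomposition of $w$ is
\[
w=2^{(N-s_k)/2}\,1\,2^{(s_k-s_{k-1}-1)/2}\,1\cdots1\,2^{(s_1-1)/2},
\]
which says $\FibWord(\FibSet(w))=w$.

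For the other composition, start from $\FibWord(S)$. The suffix sum at the $m$-th $1$ counted from the right is
\[
1+(s_1-1)+\sum_{r<m}\bigl(1+(s_{r+1}-s_r-1)\bigr)=s_m,
\]
so $\FibSet(\FibWord(S))=S$.

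The main point needing care is the indexing in this second composition: the $1$'s must be counted from the right, so that the $m$-th $1$ from the right corresponds to $s_m$. Apart from that the argument is routine bookkeeping.
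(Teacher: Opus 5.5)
Your proof is correct and follows essentially the paper's route: suffix sums, with the gap between consecutive $1$'s (and after the last $1$) forced odd by the intervening $2$'s. You also carry out the well-definedness of $\FibWord$ and the two inverse checks, which the paper leaves to the reader. The only thing left implicit is the case $k=0$ (the word $2^{N/2}$ and $S=\varnothing$), where your argument $k\equiv s_k\equiv N$ does not apply; it is trivial, and the paper's count $N=\#\{1\text{'s}\}+2\,\#\{2\text{'s}\}$ covers it uniformly.
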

For example the Fibonacci set associated to $22212212211$ is
$\{1,2,7,12\}_{\text{\bf 18}}$. Also in \cref{fig.YF-lattice}, the nodes of the
Young-Fibonacci lattice are shown with their associated Fibonacci sets.
\begin{proof}
  Notice that $\FibSet(w)$ is actually a Fibonacci set. Indeed,
  the cardinality $\#S$ is
  the number of $1$'s in $w$
  while $N$ equals the number of $1$'s in $w$
  plus twice the number of $2$'s in $w$. Clearly
  $\#S$ and $N$ have the same parity.
   The first element $s_1$ of $\FibSet(w)$ corresponds to the
  last occurrence of $1$ when $w$ is read from left to right.
  The last $1$ is succeded by $2^m$ for some $m \geq 0$ and therefore the
  $s_1 = 1 + 2m$ whose parity is odd.
  If $s_i < s_{i+1}$ are consecutive elements of $S$ then $s_i$
  and $s_{i+1}$ are the ranks of $1v$ and $12^m 1v$ respectively,
  for some $m \geq 1$ and
  suffix $v$. This forces $s_i$ and $s_{i+1}$ to have opposite parity.
  The reader can check that $\FibWord(S)$ is a Fibonacci word of rank $N$ whenever $S$
  is a rank $N$ Fibonacci set, that
  $\FibSet(\FibWord(S))=S$, and that $\FibWord(\FibSet(w))=w$.
\end{proof}
Before returning to Okada arc-diagrams, let us translate the Young-Fibonacci
lattice and Fomin's Robinson-Schensted correspondence in the language of Fibonacci sets.
The covering relations of the Young-Fibonacci lattice are particularly simple using this
description:
\begin{proposition}\label{def:YFS-lattice}
We transport the lattice structure on the $\YF$-lattice
to $\YFS$ using the bijection $w \mapsto \FibSet(w)$.
Covering relations have the form
$S \lessdot T$ where $S \in \YFS_N$, $T \in \YFS_{N+1}$, and either
\begin{itemize}
\item $T = S \setminus \max S$
\item $S = T \setminus \max T$
\end{itemize}
Given a cell in $\Grow(\sigma)$ (with or without a box $\Box$)

\[
  \begin{tikzpicture}
    \matrix (M) [matrix of math nodes,
    column sep=0.2cm, row sep=0.2cm,
    nodes={inner sep=3pt, anchor=center}]{
      S' && T \\
      & \Box & \\
      S && S'' \\
    };
    \draw[-latex] (M-1-1) edge node[auto] {$b_2$} (M-1-3);
    \draw[-latex] (M-3-1) edge node[below] {$a_2$} (M-3-3);
    \draw[-latex] (M-3-1) edge node[auto] {$a_1$} (M-1-1);
    \draw[-latex] (M-3-3) edge node[right] {$b_1$} (M-1-3);
  \end{tikzpicture}
  \qquad
 \begin{tikzpicture}
   \matrix (M) [matrix of math nodes,
    column sep=0.2cm, row sep=0.2cm,
    nodes={inner sep=3pt, anchor=center}]{
      S' && T \\
      & {\white \Box} & \\
      S && S'' \\
    };
    \draw[-latex] (M-1-1) edge node[auto] {$b_2$} (M-1-3);
    \draw[-latex] (M-3-1) edge node[below] {$a_2$} (M-3-3);
    \draw[-latex] (M-3-1) edge node[auto] {$a_1$} (M-1-1);
    \draw[-latex] (M-3-3) edge node[right] {$b_1$} (M-1-3);
  \end{tikzpicture}
\]
where $S$ is a Fibonacci set of rank $N$
and where $S \lessdot S'$ and $S \lessdot S''$ are
covering relations in $\YFS$, Fomin's local growth rules
become
\begin{itemize}
\item if $S' \neq S$ and $S'' \neq S$, then $T \eqdef S^\sharp$ (the set $S$
  but with rank $N+2$);
\item if $S'=S''=S$ and the cell contains a box $\Box$ then $T \eqdef S \cup\{N+1\}$
  \item otherwise $T$ is assigned so that the two ends of edge $b_i$
  are equal whenever the two ends of edge $a_i$ are as well ($i=1\text{ or }2$).
\end{itemize}
\end{proposition}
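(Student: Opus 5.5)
The plan is to compute directly how the map $\FibSet$ of \cref{prop-bij-fib-set-word} behaves under each of the two kinds of covering moves in $\YF$. Since $\FibSet$ is a rank-preserving bijection, transporting the order amounts to identifying the images of covering pairs. The key observation is that an element of $\FibSet(w)$ is the suffix sum starting at an occurrence of $1$ in $w$. Hence the \emph{leftmost} $1$ of $w$ gives the \emph{maximal} element of $\FibSet(w)$, and prepending or altering letters to the left of a given $1$ leaves its suffix sum unchanged.

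First I treat the move ``replace the leftmost $1$ by a $2$''. Write $v = 2^m 1 u$ with $|v| = N$, so that $w = 2^m 2 u$. The suffix sums at the $1$'s of $u$ are the same in $v$ and in $w$. The only $1$ that disappears is the leftmost one, whose suffix sum $|1u|$ is $\max \FibSet(v)$. Therefore $\FibSet(w) = \FibSet(v) \setminus \max \FibSet(v)$, taken in rank $N+1$. Conversely, if $S \in \YFS_N$ and $T = S \setminus \max S$ in rank $N+1$, then $\FibWord(T)$ is exactly the word obtained from $\FibWord(S)$ by this move; this can be read off from the explicit formula for $\FibWord$, where the leading block $2^{(N-s_k)/2}1$ becomes $2^{(N+1-s_{k-1})/2}$.

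Next I treat the move ``insert a $1$ to the left of all $1$'s''. Write $v = 2^m u$, where $u$ is empty or begins with $1$, and $w = 2^a 1 2^b u$ with $a+b = m$. The old $1$'s keep their suffix sums, which are all at most $|u|$. The new $1$ contributes $t = |u| + 2b + 1 > |u| \geq \max \FibSet(v)$. Hence $\FibSet(v) = \FibSet(w) \setminus \max \FibSet(w)$. For the converse, take $T \in \YFS_{N+1}$ with $S = T \setminus \max T$. I must check that $t = \max T$ satisfies $t > |u|$, $t \leq N+1$ and $t \equiv |u|+1 \pmod 2$, so that $b = (t-1-|u|)/2$ is a legitimate choice with $0 \le b \le m$. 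This follows from the Fibonacci-set parity conditions, namely that consecutive gaps are odd, applied to $T$ together with the fact that $|u| = \max S$ (or $|u| = 0$ when $S = \varnothing$). This parity bookkeeping is the only step that needs real care, and I expect it to be the main, though modest, obstacle.

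Finally I translate Fomin's local rules. Rule $z = 2u$ prepends a $2$, which changes no suffix sum at any $1$, so $\FibSet(2u)$ is the same set with rank raised by $2$, that is $S^\sharp$. Rule $z = 1u$ prepends a $1$ whose suffix sum is $|u|+1 = N+1$, giving $S \cup \{N+1\}$. The third rule is purely a statement about equalities along edges, so it transports verbatim under the bijection.
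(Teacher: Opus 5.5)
Your proposal is correct. The suffix-sum description of $\FibSet$ handles both covering moves, the parity check on $\max T$ gives the converse for insertions, and Fomin's rules translate exactly as you describe. The paper states this proposition without proof, treating it as a direct translation through the bijection of \cref{prop-bij-fib-set-word}, and your computation is the routine verification it leaves implicit.
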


\medskip
We now explain how to convert a saturated chain in $\YFS$ consisting of Fibonacci sets
into an Okada half arc-diagram and vice versa. To do this we need a way to {\it restrict}
half arc-diagrams:
\begin{definition}
  For $r\leq N$, the \defn{$r$-restriction} of an Okada half arc-diagram
  $\rightd{D}$ is the half arc-diagram denoted by $\rightd{D}/\setN[r]$ of rank
      $r$ having
  \begin{itemize}
  \item a full arc $\arc{i}{\ell}{j}$ whenever $\rightd{D}$ contains an arc
    $\arc{i}{\ell}{j}$ with $i,j\leq r$
  \item a propagating arc $\harc{i}{\ell}$ whenever $\rightd{D}$ contains either a full arc
    $\arc{i}{\ell}{j}$ with $i\leq r<j$ or a propagating arc $\harc{i}{\ell}$ with
    $i\leq r$
  \end{itemize}
\end{definition}
Clearly $\rightd{D}/\setN[r]$ satisfies \cref{okada-arc-cond-interv,okada-arc-cond-parity,okada-arc-cond-nested}
and is thus a half Okada arc-diagram of rank $r$ in its own right. Moreover if
  $r\leq s$ the $r$-restriction of the $s$-restriction of any $\rightd{D}$ is
    equal to the $r$-restriction of $\rightd{D}$.

\begin{definition}
\label{def:chain-map}
  To any Okada half arc-diagram $\rightd{D}$ of rank $N$ we associate a
  sequence of Fibonacci sets $\defn{$\Chain \ssp \rightd{D} $}\eqdef(C_0,\dots,C_N)$
  defined by $C_i\eqdef\PropLab(\, \rightd{D}/\setN[i])$.
\end{definition}
\begin{definition}
  To each saturated chain ${\bf C} = C_0\lessdot\dots\lessdot C_N$ in $\YFS$ we associate a
  half Okada arc-diagram \defn{$\Diag( {\bf C})$} of rank $N$. Begin with the
  empty half arc-diagram and add an arc incident to node $i$ for each $i \in \setN$
  according to the following rule:
 \begin{itemize}
  \item if $C_i=C_{i-1}\cup\{\ell\}$ create a propagating arc $\harc{i}{\ell}$
  \item if $C_i=C_{i-1}\setminus\{\ell\}$ form an arc $\arc{j}{\ell}{i}$ by
  closing the propagating arc $\harc{j}{\ell}$
  \end{itemize}
\end{definition}
 \Cref{fig.ex-half-arcs} shows examples of saturated chains together with their associated half
Okada diagrams when $N=7$. 
\begin{proposition}\label{prop:chain-diag-bij}
  The $\Chain$ and $\Diag$ maps are mutually inverse bijections between rank $N$ Okada half
  arc-diagrams and saturated chains of $\YFS$ terminating at rank level $N$.
\end{proposition}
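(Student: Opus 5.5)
We must show that $\Chain$ lands in saturated chains, that $\Diag$ lands in Okada half arc-diagrams, and that the two maps are mutually inverse. We argue by induction on $N$ through the restriction $\rightd{D}/\setN[N-1]$. The case $N=0$ is trivial.

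\textbf{Step 1: $\Chain\rightd{D}$ is a saturated chain.} Each $C_i$ is a Fibonacci set of rank $i$ by \cref{prop:proplab-index-fibo} applied to $\rightd{D}/\setN[i]$. Passing from $\rightd{D}/\setN[i-1]$ to $\rightd{D}/\setN[i]$, the node $i$ either opens a new propagating arc or closes an existing one.

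If node $i$ closes an arc $\arc{j}{\ell}{i}$, then by non-crossing the half arc at $j$ is the topmost propagating arc of the $(i-1)$-restriction. Every other propagating arc nests it, so by \cref{okada-arc-cond-nested} $\ell$ is the maximal label. Hence $C_i=C_{i-1}\setminus\max C_{i-1}$.

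If node $i$ opens an arc with label $\ell$, this arc is nested in all propagating arcs of the $(i-1)$-restriction, so $\ell>\max C_{i-1}$. We must still show $C_{i-1}=C_i\setminus\max C_i$ is a genuine covering, i.e.\ that $C_{i-1}\cup\{\ell\}$ is a Fibonacci set of rank $i$. This follows from \cref{prop:proplab-index-fibo} applied to the $i$-restriction. Thus every step is a covering of $\YFS$.

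\textbf{Step 2: $\Diag(\mathbf C)$ is a well-defined Okada half arc-diagram.} Closing always removes $\max C_{i-1}$. We maintain the invariant that the open propagating arcs, read bottom to top, carry the labels of $C_{i-1}$ in increasing order. Under this invariant, closing joins node $i$ to the topmost open arc, so no crossing is created.

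The labels satisfy the Okada conditions:
\begin{itemize}
\item Labels are at most the smaller endpoint, since a newly added label $\ell$ lies in a rank-$i$ Fibonacci set, so $\ell\le i$.
\item Parity holds because $\ell=\max C_i$ has the parity of $|C_i|$, which is the parity of the number of open arcs, and that in turn matches the parity of the node index. For a closed arc, the parity is fixed at opening time.
\item Nesting is strict: an arc opened later while an earlier one is still open gets a strictly larger label. Arcs nested inside a closed arc were all opened after it.
\end{itemize}

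\textbf{Step 3: the two maps are mutually inverse.} Both constructions commute with restriction to $\setN[N-1]$, and both are determined at node $N$ by the same data. For $\Diag\circ\Chain$: the last step of the chain records exactly whether node $N$ opens an arc with label $\ell=C_N\setminus C_{N-1}$, or closes the arc carrying label $\max C_{N-1}$, which by Step 1 is the topmost one. For $\Chain\circ\Diag$: the invariant of Step 2 gives $\PropLab(\Diag(\mathbf C)/\setN[i])=C_i$ directly. Induction finishes the proof.

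The main obstacle is Step 2's parity and bound checks, together with the subtle point in Step 1 that an opening label $\ell$ need not equal $i$. We must therefore rely on the Fibonacci-set description, that consecutive elements differ by an odd number, rather than on the identity diagram.
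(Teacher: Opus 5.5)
Your proof is correct and follows essentially the same route as the paper. Both argue by induction through the restrictions $\rightd{D}/\setN[i]$ and split on whether node $i$ closes an arc or opens one; in either case the nesting condition forces the label involved to be the maximum of the propagating labels. Your version also spells out the label-bound, parity and non-crossing checks for $\Diag$, which the paper only asserts as easy.
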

\begin{proof}
  We first show by induction on $N$ that if $\rightd{D}$ is a half Okada arc-diagram, then
  $\Chain \ssp \rightd{D}$ is a saturated chain whose associated diagram is
  $\rightd{D}$. Given a half Okada arc-diagram $\rightd{D}$ of rank $N$
  and $i\in\setN$, consider the restricted diagrams
  $\rightd{D}_{i-1}\eqdef \rightd{D}/\setN[i-1]$ and
  $\rightd{D}_{i}\eqdef \rightd{D}/\setN[i]$. By induction we know that
  $\Chain \ssp \rightd{D}_{i-1}$ is a saturated chain whose associated diagram is
  $\rightd{D}_{i-1}$. Two cases are possible:
  \begin{itemize}
  \item Either $\rightd{D}_{i-1}$ and $\rightd{D}_{i}$ differ by closing an arc
    $\arc{j}{\ell}{i}$ for $j<i$. In this case, the $\harc{j}{\ell}$ is a propagating
    arc of $\rightd{D}_{i-1}$ with label
    $\ell =\max(\PropLab( \ssp \rightd{D}_{i-1}))$. Closing this arc ensures that
    $\PropLab(\ssp \rightd{D}_{i})=\PropLab(\ssp \rightd{D}_{i-1})\setminus\{ \ell \}$. This
    corresponds to a cover in the $\YFS$-lattice, and so $\Chain \ssp \rightd{D}_{i}$ must
    be the saturated chain whose associated diagram is $\rightd{D}_{i}$.
  \item Otherwise $\rightd{D}_{i-1}$ and $\rightd{D}_{i}$ differ by a
    propagating arc $\harc{i}{\ell}$. The label $\ell$ of this half arc is
    larger than the label of any other propagating arc due to
    \cref{okada-arc-cond-nested}. So
    $\PropLab(\ssp \rightd{D}_{i})=\PropLab(\ssp \rightd{D}_{i-1})\cup\{\ell\}$. Again, this
    corresponds to a
    cover of $\YFS$-lattice and so $\Chain \ssp \rightd{D}_{i}$ must be a saturated chain whose
    associated diagram is $\rightd{D}_{i}$.
  \end{itemize}
  \medskip

  Conversely, we argue by induction on $N$ that $\Diag( {\bf C})$ is a rank $N$
  Okada half arc-diagram whenever ${\bf C} =C_0\lessdot\cdots\lessdot C_N$
  is a saturated chain. Suppose that $S \lessdot T$ is a cover
  in the $\YFS$-lattice between two Fibonacci set $S$ and $T$ of
  ranks $i-1$ and $i$ respectively. Assume that $\Diag( {\bf C} /[i-1])$ is a rank $i-1$ Okada
  half arc-diagram
  with propagating set $S$ where ${\bf C} /[i-1] = C_0 \lessdot \cdots \lessdot C_{i-1}$
  is the $i\ordinal$-truncation of the saturated chain ${\bf C}$.
  We must prove that $\Diag( {\bf C}  /[i])$ is a rank $i$ Okada half arc-diagram with
  propagating set $T$ such that $\Diag( {\bf C}  /[i])/\setN[i-1] = \Diag( {\bf C}  /[i-1]) $. There are
  two cases to handle:
  \begin{itemize}
  \item $T=S\setminus \max(S)$. In this case we are closing 
    a propagating arc $\harc{j}{\ell}$ in $\Diag({\bf C}  /[i-1]) $ and forming the arc
    $\arc{j}{\ell}{i}$ in $\Diag({\bf C}  /[i]) $ where $\ell=\max(S)$. No crossings are created since all arcs
    incident to nodes $k>j$ are non-propagating arcs (otherwise $S$ would have had a
    propagating label larger than $\ell$). It is easy to check that the
    \cref{okada-arc-cond-interv,okada-arc-cond-parity,okada-arc-cond-nested}
    are satisfied since they already hold for $\Diag( {\bf C}  /[i-1]) $.
  \item $T=S\cup\{\ell\}$ with $\ell > \max(S)$. This means that $\Diag({\bf C}/[i]) $ is obtained
  simply by adding the propagating arc
  $\harc{i}{\ell}$ to $\Diag( {\bf C}  /[i-1]) $. Clearly $\Diag( {\bf C}  /[i]) $ is a valid Okada half arc-diagram with propagating set is $T$.
    \qedhere
  \end{itemize}
\end{proof}

\subsection{The Robinson-Schensted correspondence as an isomorphism}
Starting from a permutation $\sigma$, the two Okada half arc-diagrams encoding its
associated chains  have the same
propagating set and therefore can be merged into an entire Okada arc-diagram
\[
  \RS(\sigma) \eqdef \glue{\Diag(P(\sigma))\:}{\Diag(Q(\sigma))}\,.
\]
This diagram, as we shall see, is precisly $\A{\sigma}$ from \Cref{def:A-sigma-diagram}.

\begin{theorem}\label{theo:RS-isomorphism}
  Let $L_\sigma = \Diag(P(\sigma))$ and $R_\sigma = \Diag(Q(\sigma))$
  denote the rank $N$ Okada half arc-diagrams of the respective saturated
  chains $P(\sigma)$ and $Q(\sigma)$ associated to $\sigma \in \SG[N]$ under
  Fomin's RS-correspondence.  Then
  $\A{\sigma} = \glue{L_\sigma}{R_\sigma}$. Moreover
  $\A{\sigma}^\star = \A{\sigma^{-1}} =
  \glue{R_\sigma}{L_\sigma}$.
\end{theorem}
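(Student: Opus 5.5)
We prove $\A{\sigma} = \glue{L_\sigma}{R_\sigma}$ by induction on $N$, using the bijection $\psi$ of the previous section, which builds $\A{\sigma}$ one rank at a time. Write $\sigma\in\SG[N]$ and let $\sigma'\in\SG[N-1]$ be the permutation whose code is $\code(\sigma)$ with its last entry $c_N$ removed. Then $\lexmin(\sigma)=\lexmin(\sigma')\cdot (N-1)(N-2)\cdots(N-c_N)$, and therefore
\[
\A{\sigma}=\psi\bigl(\A{\sigma'},\,N-c_N\bigr)=\iota_{N-1}(\A{\sigma'})\,\A{N-1}\cdots\A{N-c_N}.
\]
By induction, $\A{\sigma'}=\glue{L_{\sigma'}}{R_{\sigma'}}$.

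The first step is to compare $\sigma$ and $\sigma'$ at the level of the growth diagram. By definition of the code, the one-line word of $\sigma'$ is obtained from that of $\sigma$ by deleting the letter $N$ and keeping the relative order of the rest. Moving $N$ to the last position does not change which letters smaller than $y$ appear before position $x$, for any $y\le N-1$. So, with the elimination description of \cref{rem:pairing}, the Fibonacci sets $\Grow(\sigma)$ at grid positions $(x,y)$ with $y\le N-1$ are those of $\Grow(\sigma')$, up to a shift in $x$ at the column of $N$.

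We then read off the two chains. On the $P$ side, the right-hand column of $\Grow(\sigma)$ restricted to heights $\le N-1$ is $P(\sigma')$; the chain $P(\sigma)$ then appends one last step, $C_{N-1}\lessdot C_N$. On the $Q$ side, the top row changes more substantially: the box in row $N$ sits in column $\sigma^{-1}(N)=N-c_N$, and we must describe how the top chain $Q(\sigma)$ is obtained from $Q(\sigma')$ by inserting the box there. We would iterate Fomin's local rules along the top row, rewritten in terms of Fibonacci sets via \cref{def:YFS-lattice}.

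The main task is to check that these changes agree diagrammatically with right multiplication by $\A{N-1}\cdots\A{d}$, where $d=N-c_N$, using the explicit description in \cref{remark:descending-product}. That product shifts the right-hand nodes $\ov{a}\mapsto\ov{a+2}$ for $d\le a\le N-2$. It creates the right arc $\arcp{\ov{d}}{d}{\ov{d+1}}$, and it sends the top left node $N$ back to the right through the arc $\arcp{N-1}{N-1}{N}$. On the chain side we expect the following picture. Columns before $d$ are untouched. At column $d$ the growth rule $T=S\cup\{\cdot\}$ inserts a new label, and every later step becomes a ``$2u$'' step $T=S^\sharp$. This should match the shift by two together with the new arc labelled $d$ that closes immediately; by \cref{prop:chain-diag-bij}, the half diagram $\Diag(Q(\sigma))$ then gains exactly the arc $\arcp{\ov d}{d}{\ov{d+1}}$ and shifts the remaining nodes. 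To get the labels right, compare minima: the labelled product takes minima over fragments, and this must agree with the labels produced by $\Diag$. This local-rule bookkeeping is the expected obstacle, in particular the case where $N$ is not already at the right end. If it gets messy, we would instead prove a statement for a single generator: if $D=\glue{L}{R}$ and $D\cdot\A{i}$ is a reduced product, then $\Chain\leftd{D\A{i}}$ changes as Fomin's rule predicts. We would then apply this to each letter of the descending product.

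The second claim follows formally. The growth diagram of $\sigma^{-1}$ is the transpose of that of $\sigma$, because Fomin's local rules are symmetric in the edge pairs $a_1,a_2$ and $b_1,b_2$. Hence $P(\sigma^{-1})=Q(\sigma)$ and $Q(\sigma^{-1})=P(\sigma)$, and the first part gives $\A{\sigma^{-1}}=\glue{R_\sigma}{L_\sigma}$. On the other hand, $\Theta$ commutes with $\star$, so $\A{\sigma}^\star=\Theta(\e{\sigma}^\star)=\Theta(\e{\sigma^{-1}})=\A{\sigma^{-1}}$. Finally, mirroring swaps bra and ket by definition, which gives $(\glue{L}{R})^\star=\glue{R}{L}$ directly.
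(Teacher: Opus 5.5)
Your proposal is correct and takes the paper's route. You argue by induction on $N$, deleting $N$ from $\sigma$ (your $d=N-c_N$ is the paper's $p=\sigma^{-1}(N)$), use the factorization $\A{\sigma}=\iota_{N-1}(\A{\sigma'})\,\A{N-1}\cdots\A{p}$, and observe via the local rules that the top row of $\Grow(\sigma)$ becomes $S_0,\dots,S_{p-1},S_{p-1}\cup\{p\},S_{p-1}^\sharp,\dots,S_{N-2}^\sharp$ while the right column only gains the step $T_{N-1}\lessdot S_{N-2}^\sharp$; the descending product does exactly this to the ket and, through the arc $\arcp{N-1}{N-1}{N}$, to left node $N$ of the bra.

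Your deduction of the second claim, from the transpose symmetry of Fomin's local rules together with $\Theta(\e{}^\star)=\Theta(\e{})^\star$, correctly supplies a step the paper leaves implicit.
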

\begin{proof}
  We've already proven that $\RS$ is a bijection $\SG\to\Okada$. We'll prove, by induction
  on $N$, that $\RS$ and $\Theta$ coincide. The base
  cases $N=0,1$ are trivial. Choose $\sigma\in\SG$ and consider the
  permutation $\sigma'$ obtained by removing $N$ from the one line notation of
  $\sigma$. If $p$ is the position of $N$ in the one line notation of
  $\sigma$ then $\sigma= \sigma^\flat  s_{N-1}\cdots s_p$ where $\sigma^\flat$
  is the permutation obtained by appending $N$ to the one line notation of
  $\sigma'$. We want to show that
  $\RS(\sigma)=\A{\sigma}$ given that $\RS(\sigma')=\A{\sigma'}$.

  The growth diagram $\Grow(\sigma)$ of $\sigma$ is obtained
  by inserting an extra column before the $p\ordinal$ column
  in the growth diagram  $\Grow(\sigma')$ of $\sigma'$
  and putting a box $\Box$ in the square cell situated at grid position $(p,N)$.
  Except for the upper row, the rows of  $\Grow(\sigma)$ and  $\Grow(\sigma')$ are the
  same, with the $p\ordinal$ Fibonacci set repeated twice.
  Let $\varnothing=S_0\lessdot S_1\lessdot\dots\lessdot S_{N-1}$
  and $\varnothing=T_0\lessdot T_1\lessdot\dots\lessdot T_{N-1}=S_{N-1}$ be the 
  top row and rightmost column of  $\Grow(\sigma')$ respectively. Clearly both $S_p$ and $T_p$ are Fibonacci
  sets of rank $p$. Recall that $S^\sharp$ is the same set as $S$ but adding
  $2$ to its rank. Then top row of $\Grow(\sigma)$ is:
\[
\newcommand\es{\emptyset}%
% sage: p = OM[9, 6, 3, 10, 1, 5, 4, 7, 2, 8]
% sage: view(GrowthPicture(p.perm()))  # not tested
% sage: latex(GrowthPicture(p.perm()))  # not tested
\begin{tikzpicture}[ultra thick,
  baseline={(current bounding box.east)},
  column sep={1.5cm,between origins},
  row sep={1.0cm,between origins}]
\matrix (M) [matrix of math nodes, nodes={inner sep=2pt, anchor=center}] {
  \varnothing & S_1 & \cdots & S_{p-1} & S_{p-1} \cup \{p\} & S_{p-1}^\sharp &
  S_{p}^\sharp & \cdots & S_{N-3}^\sharp & S_{N-2}^\sharp \\
  \varnothing & S_1 & \cdots & S_{p-1} & S_{p-1} & S_{p} & S_{p+1} &
  \cdots & S_{N-2} & S_{N-1}\\
};
\foreach \n in {1,2,...,9} {
  \foreach \m in {1,2} {
    \draw[thin] (M-\m-\n.east) -- (M-\m-\the\numexpr \n + 1\relax.west);
  }
}
\foreach \n in {1,2,...,10} {
  \draw[thin] (M-1-\n.south) -- (M-2-\n.north);
}
\node[draw, very thick] at ($(M-1-4)!0.5!(M-2-5)$) {};
\end{tikzpicture}
\]
This gives the top chain as well as the rightmost chain, namely
$T_0\lessdot\dots\lessdot T_{N-1}\lessdot S_{N-2}^\sharp$.
These chains coincide respectively with 
$\Chain \ssp \rightd{D}$ and $\Chain \ssp \leftd{D}$ 
where $D$ is the rank $N$ Okada arc-diagram
given by the factorization
\[ D \, = \, \iota_{N-1}(\A{\sigma'}) \ssp \boldsymbol{\cdot} \A{N-1} \boldsymbol{\cdot} \A{N-2} \ssp \boldsymbol{\cdots} \A{p} \]
Notice that $\iota_{N-1}(\A{\sigma'})$ is precisely $\A{\sigma^\flat} = \A{\sigma}^\flat$ and using
\cref{prop:right-code-factor} we have $D = \A{\sigma}$ and thus
$\RS(\sigma) = \A{\sigma}$.
\end{proof}
As an example, we show in \cref{fig:example-chains-RS} the chains and
permutations associated to the diagram factorisation illustrated in
\cref{fig:right-code-factor}.
\begin{figure}[ht]
  \centering
\begin{tikzpicture}[xscale=0.7, yscale=0.4, thick, baseline={(current bounding box.east)}]
\tikzstyle{vertex} = [shape=rectangle, minimum height=15pt, minimum width=0pt, inner sep=0pt]
\tikzstyle{mid} = [draw, fill=white, shape=circle, minimum size=2pt, inner sep=1pt]
\foreach \i in {0,...,9} {
  \node[vertex] (G-\i) at ($(-1.44, \i*1.5-1.5)$) {};
  \node[vertex] (G--\i) at ($(2.64, \i*1.5-1.5)$) {};
}
\foreach \chainset [count=\xi] in {
  {$\{1\}$}, {$\varnothing$}, {$\{1\}$}, {$\varnothing$}, {$\{1\}$},
  {$\{1, 2\}$}, {$\{1 ,2, 5\}$}, {$\{1, 2\}$}, {$\{1, 2, 3\}$}
} { \node[anchor=east] at (G-\xi) {\chainset}; }
\foreach \chainset [count=\xi] in {
  {$\{1\}$}, {$\{1, 2\}$}, {$\{1, 2, 3\}$}, {$\{1, 2\}$}, {$\{1, 2, 3\}$},
  {$\{1, 2, 3, 4\}$},
  {$\{1, 2, 3, 4, 5\}$}, {$\{1, 2, 3, 4\}$}, {$\{1, 2, 3\}$}
} { \node[anchor=west] at (G--\xi) {\chainset}; }
\draw (G--1) .. controls +(-2.59, 1.30) and +(1.63, -0.82) .. (G-5) node[pos=0.6470588235294118,mid] (M-1-5) { 1 };
\draw (G--2) .. controls +(-2.59, 1.30) and +(1.63, -0.82) .. (G-6) node[pos=0.6470588235294118,mid] (M-2-6) { 2 };
\draw (G--5) .. controls +(-2.59, 1.30) and +(1.63, -0.82) .. (G-9) node[pos=0.6470588235294118,mid] (M-5-9) { 3 };
\draw (G-3) .. controls +(0.70, 0.35) and +(0.70, -0.35) .. (G-4) node[pos=0.5,mid] (M3-4) { 1 };
\draw (G--9) .. controls +(-2.10, -1.05) and +(-2.10, 1.05) .. (G--6) node[pos=0.5,mid] (M-9--6) { 4 };
\draw (G-7) .. controls +(0.70, 0.35) and +(0.70, -0.35) .. (G-8) node[pos=0.5,mid] (M7-8) { 5 };
\draw (G--8) .. controls +(-0.70, -0.35) and +(-0.70, 0.35) .. (G--7) node[pos=0.5,mid] (M-8--7) { 5 };
\draw (G-1) .. controls +(0.70, 0.35) and +(0.70, -0.35) .. (G-2) node[pos=0.5,mid] (M1-2) { 1 };
\draw (G--4) .. controls +(-0.70, -0.35) and +(-0.70, 0.35) .. (G--3) node[pos=0.5,mid] (M-4--3) { 3 };
% \draw[color=red, <-] (M7-8) -- (M-2-6);
% \draw[color=red, <-] (M-4--3) -- (M-2-6);
% \draw[color=red, <-] (M-8--7) -- (M-9--6);
% \draw[color=red, <-] (M-9--6) -- (M-5-9);
% \draw[color=red, <-] (M-5-9) -- (M-2-6);
% \draw[color=red, <-] (M-2-6) -- (M-1-5);
\path (G-0) -- (G--0) node[pos=0.5] {$A_{\sigma}$};
\end{tikzpicture}
%%%%%%%%%%%%%%%%%%%%%%%%%%%%%%%%%%%%%%%%%%%%%%
\begin{tikzpicture}[xscale=0.7, yscale=0.4, thick, baseline={(current bounding box.east)}]
\tikzstyle{vertex} = [shape=rectangle, minimum height=15pt, minimum width=0pt, inner sep=0pt]
\tikzstyle{mid} = [draw, fill=white, shape=circle, minimum size=2pt, inner sep=1pt]
\foreach \i in {0,...,9} {
  \node[vertex] (G-\i) at ($(-1.44, \i*1.5-1.5)$) {};
  \node[vertex] (G--\i) at ($(3.84, \i*1.5-1.5)$) {};
}
\foreach \chainset [count=\xi] in {
  {$\{1\}$}, {$\varnothing$}, {$\{1\}$}, {$\varnothing$}, {$\{1\}$},
  {$\{1, 2\}$}, {$\{1,2,5\}$}, {$\{1, 2\}$}
} { \node[anchor=east] at (G-\xi) {\chainset}; }
\foreach \chainset [count=\xi] in {
  {$\{1\}$}, {$\{1, 2\}$}, {$\{1, 2, 3\}$}, {$\{1, 2, 3, 4\}$},
  {$\{1, 2, 3, 4, 5\}$}, {$\{1, 2, 3, 4\}$}, {$\{1, 2, 3\}$}, {$\{1, 2\}$}
} { \node[anchor=west] at (G--\xi) {\chainset}; }
\draw (G--1) .. controls +(-3.55, 1.78) and +(1.63, -0.82) .. (G-5) node[pos=0.7272727272727272,mid] (M-1-5) { 1 };
\draw (G--2) .. controls +(-3.55, 1.78) and +(1.63, -0.82) .. (G-6) node[pos=0.7272727272727272,mid] (M-2-6) { 2 };
\draw (G-3) .. controls +(0.70, 0.35) and +(0.70, -0.35) .. (G-4) node[pos=0.5,mid] (M3-4) { 1 };
\draw (G--8) .. controls +(-3.50, -1.75) and +(-3.50, 1.75) .. (G--3) node[pos=0.5,mid] (M-8--3) { 3 };
\draw (G-7) .. controls +(0.70, 0.35) and +(0.70, -0.35) .. (G-8) node[pos=0.5,mid] (M7-8) { 5 };
\draw (G--7) .. controls +(-2.10, -1.05) and +(-2.10, 1.05) .. (G--4) node[pos=0.5,mid] (M-7--4) { 4 };
\draw (G-1) .. controls +(0.70, 0.35) and +(0.70, -0.35) .. (G-2) node[pos=0.5,mid] (M1-2) { 1 };
\draw (G--6) .. controls +(-0.70, -0.35) and +(-0.70, 0.35) .. (G--5) node[pos=0.5,mid] (M-6--5) { 5 };
%\draw (G--9) -- (G-9) node[pos=0.7272727272727272,mid] (M-9-9) { 9 };
% \draw[color=red, <-] (M7-8) -- (M-2-6);
% \draw[color=red, <-] (M-6--5) -- (M-7--4);
% \draw[color=red, <-] (M-7--4) -- (M-8--3);
% \draw[color=red, <-] (M-8--3) -- (M-2-6);
% \draw[color=red, <-] (M-9-9) -- (M-2-6);
% \draw[color=red, <-] (M-2-6) -- (M-1-5);
\path (G-0) -- (G--0) node[pos=0.5] {$A_{\sigma'}$};
\end{tikzpicture}
  \caption{The chains associated to $\sigma=349578261$ and $\sigma'=34578261$}
  \label{fig:example-chains-RS}
\end{figure}

\begin{remark}\label{rem:redblue}
  As a final remark, there is shortcut which allows us
  to compute the arc-diagram $\A{\sigma}$ associated to a permutation
  $\sigma \in \SG$ without "filling out" the
  entire growth diagram $\Grow(\sigma)$.
  The idea is that one can follow the positions of
  the propagating arcs as one adds more rows and columns of the permutation
  matrix. This is illustrated on \cref{fig.growth-diagram} with the blue and
  red lines. Here are the details of the procedure.

  Label each box $\begin{young} {\white \ell} \end{young}$ in the growth diagram
  $\begin{young} \ell \end{young}$ where $\ell$ is the number of
  boxes to its south-west, including itself. Proceed from right to left and pair the box
  in the $i\ordinal$ column with
  the box in the $j\ordinal$ column
  where $j \leq i$ and $\sigma(j)$ is the
  largest index to the left of $\sigma(i)$
  when $\sigma$ is written in one-line
  notation; see~\cref{rem:pairing}. The upper box of each pair
  will give an arc of $\Diag(Q(\sigma))$ situated on the top,
  while the lower box will give an arc of $\Diag(P(\sigma))$
  situated on the right; unpaired boxes will
  give propagating arcs in both half diagrams.

  To compute the arc of $\Diag(Q(\sigma))$ associated to an upper or unpaired box, start at
  the upper-right corner of the square cell containing the
  box. Repeat the following protocol to generate a walk which ends at the top of the grid:
  Scan the row and column of square cells immediately north and east of the
  current grid position.
  \begin{itemize}
  \item Move one step north if the box in the row is east of the current
    grid position (which means that no green line is crossed);
  \item Move one step east if the box in the row is west of the current position
    and the box in the column is north of the current position;
  \item Move one step north-east otherwise (which means the row contains a box to
    the west and the column contains a box to the south).
  \end{itemize}
  Upon reaching the top of the grid, say at position $(i,N)$, create a propagating arc $\harc{i}{\ell}$
  where $\ell$ is the label inside the box at the beginning of the walk. Do this for each of the $N$
  labelled boxes. Reading from right to left, create an arc $\arc{i}{\ell}{j}$
  by "closing off" the propagating arc $\harc{i}{\ell}$
  with the nearest unoccupied grid position $(j,N)$ where $j > i$.
  If there is no unoccupied position, simply retain $\harc{i}{\ell}$
  as a propagating arc. The result is the rank $N$ Okada half arc-diagram $\Diag(Q(\sigma))$.
  The diagram $\Diag(P(\sigma))$ is computed by transposing the
  procedure. \medskip

  The reader should be able to convince himself that this visual procedure
  exactly parallels the construction of the growth diagram. A large
  example of this procedure can be found in annex in
  \cref{fig.compute-arc-large}.
\end{remark}

\pagebreak[2]
We close this section with two small combinatorial remarks:
  \begin{figure}
  \[
    \begin{array}{l|rrrrrrr}
      \hspace{0.4cm}k & 0 & 1 & 2 & 3 & 4 & 5 & 6 \\[-2mm]
      N & \\\hline
      0 & 1 \\
      1 & 0& 1 \\
      2 & 1& 0& 1 \\
      3 & 0& 3& 0& 1 \\
      4 & 3& 0& 6& 0& 1 \\
      5 & 0& 15& 0& 10& 0& 1 \\
      6 & 15& 0& 45& 0& 15& 0& 1 \\
    \end{array}\qquad
    \begin{array}{l|rrrrrrr}
      \hspace{0.4cm}k & 0 & 1 & 2 & 3 & 4 & 5 & 6 \\[-2mm]
      N & \\\hline
      0 & 1 \\
      1 & 0& 1 \\
      2 & 1& 0& 1 \\
      3 & 0& 5& 0& 1 \\
      4 & 9& 0& 14& 0& 1 \\
      5 & 0& 89& 0& 30& 0& 1 \\
      6 & 225& 0& 439& 0& 55& 0& 1 \\
      \end{array}\qquad
      \begin{tikzpicture}[scale=0.6, thick, baseline={(current bounding box.east)}]
\tikzstyle{vertex} = [shape=rectangle, minimum height=15pt, minimum width=0pt, inner sep=0pt]
\tikzstyle{mid} = [draw, fill=white, shape=circle, minimum size=2pt, inner sep=1pt]
\node[vertex] (G--4) at (0.96, 3.90) {};
\node[vertex] (G--3) at (0.96, 2.60) {};
\node[vertex] (G--2) at (0.96, 1.30) {};
\node[vertex] (G--1) at (0.96, 0.00) {};
\node[vertex] (G-1) at (-0.96, 0.00) {};
\node[vertex] (G-2) at (-0.96, 1.30) {};
\node[vertex] (G-3) at (-0.96, 2.60) {};
\node[vertex] (G-4) at (-0.96, 3.90) {};
\draw (G-3) .. controls +(0.70, 0.35) and +(0.70, -0.35) .. (G-4) node[pos=0.5,mid] (M3-4) { 1 };
\draw (G--2) .. controls +(-0.70, -0.35) and +(-0.70, 0.35) .. (G--1) node[pos=0.5,mid] (M-2--1) { 1 };
\draw (G-1) .. controls +(0.70, 0.35) and +(0.70, -0.35) .. (G-2) node[pos=0.5,mid] (M1-2) { 1 };
\draw (G--4) .. controls +(-0.70, -0.35) and +(-0.70, 0.35) .. (G--3) node[pos=0.5,mid] (M-4--3) { 3 };
\end{tikzpicture}
\]
\caption[Number of propagating edge in arc-diagrams]{Triangles showing
  the number of Okada half arc-diagrams (resp. full Okada arc-diagrams)
  of rank $N$ with $k$ propagating edges together with the arc-diagram associated to
  $3241 \in \SG[4]$.}
\label{fig:triangle-propag}

  \end{figure}
\begin{remark}
  The left triangle in~\cref{fig:triangle-propag} shows the number of Okada
  half arc-diagrams of rank $N$ with $k$ propagating edges. These are the
  coefficients of the modified Hermite polynomials~\cite[Sequence
  A099174]{OEIS}. This is because the number of Okada half arc-diagrams of
  rank $N$ with $k$ propagating edges is the same as the number of involution
  of $\SG$ with $k$ fixed point. Indeed, the number of propagating edges of a
  half arc-diagram $H$ is the same as the number of propagating edges
  of the symmetric arc-diagram $\glue{H}{H}$.
  Symmetric arc-diagrams are in bijection with
  involutions under the  Robinson-Schensted map, and it is clear from the growth diagram that each propagating
  arc corresponds to a fixed point of the associated involution.
\end{remark}

\begin{remark}
  The second triangle in~\cref{fig:triangle-propag} shows the number of full
  Okada arc-diagrams of rank $N$ with $k$ propagating edges. This
  is~\cite[Sequence A060524]{OEIS} (i.e. number of permutations in $\SG$ with
  $k$ odd cycles). We offer a proof which was explained to us by
  Izumi~Watanabe~\cite{Watanabe2025} and we thank him. Note that Fomin's RS-bijection 
  doesn't give a proof: For example the permutation
  $3241 = (2)(134)$ has cycle type $(3,1)$ while its associated arc-diagram,
  on the far right in~\cref{fig:triangle-propag}, has no propagating arcs.

  \begin{proof}
  The triangle~$T(n,k)$ of \cite[Sequence A060524]{OEIS} can be defined by
  the following recursion:
  \begin{gather*}
    T(0, 0)= T(1, 1) = 1,\qquad T(1, 0)=0 \qandq T(n, k)=0\quad\text{for $k>n$},\\
    T(n, k)=T(n-1, k-1) + (n-1)^2\,T(n-2, k)\,.
  \end{gather*}
  We show that the number $\card{D(n, k)}$ of Okada arc-diagrams of rank $n$
  with $k$ propagating arcs satisfies the same recursion. The base cases are
  immediate and we only explain the inductive step. We remark that
  $\card{D(n-1, k-1)}$ is also the number of Okada arc-diagrams of rank $n$ that
  contain $\arc{n}{n}{\ov{n}}$, so that we only need to prove that
  $(n-1)^2\,T(n-2, k)$ is the number of Okada arc-diagrams of rank $n$ with $k$
  propagating arcs which don't contain $\arc{n}{n}{\ov{n}}$. For such an arc-diagram 
  $D$, call $r(D) \eqdef \LDes(D)$ and $\ell(D)\eqdef\LDes(D^*)$. Then applying
  \cref{prop:right-code-factor} twice, one can define
  ${}^\flat\!{D}^\flat\eqdef(((D^\flat)^*)^\flat)^*$ which satisfies
  \begin{equation}
    D =
    \A{\ell(D)} \ssp \boldsymbol{\cdots}  \A{N-2}\A{N-1}
    \,{}^\flat\!{D}^\flat\,
    \A{N-1}\A{N-2} \ssp \boldsymbol{\cdots} \A{r(D)}\,.
  \end{equation}
  Moreover the map
  \begin{equation}
    \left.D\in \left\{ D(n, k)\;\middle|\;\arc{n}{n}{\ov{n}}\notin D \right\}  \right.
    \quad\longrightarrow\quad (\ell(D), {}^\flat\!{D}^\flat, r(D))
  \end{equation}
  defines a bijection with the Cartesian product
  \begin{equation}
    \setN[n-1] \times
    \set{D\in D(n, k)}{\arc{n}{n}{\ov{n}}, \, \arc{n-1}{n-1}{\ov{n-1}}\in D}
    \times \setN[n-1]\,.
  \end{equation}
  Our assertion follows because the middle term of the cartesian product stands in bijection
  with $D(n-2, k)$ when removing the arcs
  $\arc{n}{n}{\ov{n}}$ and  $\arc{n-1}{n-1}{\ov{n-1}}$ from $D$.
\end{proof}
We also give an alternative proof based on the RS-correspondence
and using Fibonacci words (instead of sets). In 
what follows, \defn{$\dim(w)$} will denote the number of
saturated chains from $\varnothing$ to $w$ in the $\YF$-lattice.
\begin{proof}
  A rank $N$ Okada arc-diagram $D$ has $k$ propagating arcs if and only if the
  rank $N$ Fibonacci word $w$ associated to $D$ under the RS-correspondence
  has exactly $k$ occurrences of the 1-digit. Notice that $D$ contains
  the arc $\arc{n}{n}{\ov{n}}$ if and only if $w = 1v$ for some suffix $v$ of
  rank $N-1$. Otherwise $w= 2v$ for some suffix of rank $N-2$. Notice also
  that $\dim(2v) = (N-1) \cdot \dim(v)$ which follows from the hook-length
  formula (See \cite[Proposition 2.3]{Nzeutchap2009}).  The RS-correspondence
  implies that the number $\# D(n,k)$ of rank $N$ Okada arc-diagrams $D$ with
  $k$ propagating arcs can computed using the following two-step recursion based
  on whether or not the Fibonacci word $w$ begins with 1 or 2,
  namely
  \[\begin{split}
\# \ssp D(n,k)
\ = \,
\sum_{\stackrel{\scriptstyle |w| \ssp = \ssp N}{\mathbcal{r}(w) \ssp = \ssp k  }}
\dim^2(w)
&\, = \,
\sum_{\stackrel{\scriptstyle |v| \ssp = \ssp N-1}{\mathbcal{r}(v) \ssp = \ssp k-1  }}
\dim^2(1v)
\ + \
\sum_{\stackrel{\scriptstyle |v| \ssp = \ssp N-2}{\mathbcal{r}(v) \ssp = \ssp k }}
\dim^2(2v) \\
&\, = \,
\sum_{\stackrel{\scriptstyle |v| \ssp = \ssp N-1}{\mathbcal{r}(v) \ssp = \ssp k-1  }}
\dim^2(v)
\ + \
\sum_{\stackrel{\scriptstyle |v| \ssp = \ssp N-2}{\mathbcal{r}(v) \ssp = \ssp k }}
(N-1)^2 \cdot \dim^2(v) \\
&\, = \, \# \ssp D(N-1,k-1) \ + \ \# \ssp D(N-2,k) \cdot (N-1)^2
    \end{split}\]
with the obvious initial conditions and
where $\mathbcal{r}(w)$ denotes the total number of {\bf 1}'s in $w$.
\end{proof}
\end{remark}

\subsection{Okada arc-diagrams and Young-Fibonacci Tableaux}
\label{subsection:YF-tableaux}
This subsection explains
the relationship between our notion of Okada half arc-diagrams
and other, previously known encodings
of saturated chains in the $\YF$-lattice, in particular
an encoding based on a notion of Young-Fibonacci tableaux
due to T. Roby~\cite{Roby1991}.
The reader unfamiliar with Roby's work may skip this subsection
 without any problem as we don't use any of its results later in the text.
Accordingly, and in order to keep the discussion short, we only 
describe some relevant bijections, leaving proofs to the
reader. \smallskip

Recall that a Fibonacci word $w = a_1 \cdots a_k$ of rank $|w|=n$ can be depicted by
its \defn{Young-Fibonacci diagram}. This is
a profile of $n$ {\it boxes} arranged,
from left to right,
into $k$ adjacent columns such that the $i\ordinal$ column consists of $a_i$ vertically stacked boxes.
The following example with $w=12112211$
should illustrate the concept clearly:
\begin{equation*}
\begin{young}
, &  & , & , &  &   \\
& &  &  &  &  & & \\
, 1 & , 2  & , 1 &  , 1 &  , 2 & , 2 & , 1 & , 1
\end{young}
\end{equation*}

\begin{definition}[\cite{Roby1991}]
A \defn{Roby-tableau} of shape $w \in \YFn[n]$
is a bijective labelling of the boxes of the
Young-Fibonacci diagram associated to $w$ by
integers in $\{1, \dots, n\}$ such that:

 \begin{enumerate}
 \item box entries are strictly decreasing in columns
when read top to bottom
\item  the top entry of any column of height one 
is greater than any entry to its right
\item the entries to
the right of a column of height two with entries $a < b$
are not contained in the interval $[a,b]$.
\end{enumerate}
\end{definition}

\begin{example}
  \newcommand\ylw{\Yfillcolour{yellow}}
For example, if $w=2212$ and
\[ \raisebox{1.5ex}{$S \, = $}  \ \  \text{\begin{young}  $3$ & $5$ & , & $6$    \\ $2$ & $4$ & $7$ & $1$  \end{young}}
\qquad\qquad  \raisebox{1.5ex}{$T \, = $}  \ \  \text{\begin{young}  $7$ & $5$
    & , & $2$    \\ $6$ & $3$ & $\red\bf4$ & $1$  \end{young}} \]

\noindent
then $S$ is a Roby-tableau but $T$ is {\bf not} because $4$ resides in a column to the
right of the column containing the pair $\{3,5\}$ but nevertheless $4 \in [3,5]$.
\end{example}

We first expalain how to re-encode a Roby-tableau as a non-crossing
half arc-diagram equipped with a system of arc labels:
\begin{definition}
  Fix a non-crossing unlabelled half arc-diagram $H$ of rank $N$ with $d$ arcs (propagating or not). A
  \defn{linear extension labelling} of $H$ (or LE-labelling for short) is a
  linear extension of the associated nesting order, i.e. a bijective labelling
  of its arcs by indices $\{1, \dots, d \}$ which is increasing with respect to the nesting order.
\end{definition}
A LE-labelling of $H$ will be presented as an $N$-tuple
$\boldsymbol{\underline{\ell}} = (\ell_1 , \dots, \ell_N)$ where $\ell_k$ is
the LE-label of the arc incident to vertex $k$ of $H$.

\begin{lemma}
\label{Roby-to-LinearExtension}
Let $T$ be the filling of a shape $w \in \YFn[n]$. Number its columns from
right to left starting with $1$. Let $(H, \boldsymbol{\underline{\ell}})$ be the labelled
half arc-diagram containing
\begin{itemize}
\item $\arc{i}{c}{j}$ whenever the $c\ordinal$ column consists of two boxes
  containing $i$ and $j$.
\item $\harc{i}{c}$ whenever the $c\ordinal$ column consists of a single box
  containing $i$.
\end{itemize}
Then $T$ is a Roby-tableau if and only if $H$ is
a non-crossing half arc-diagram and $\boldsymbol{\underline{\ell}}$
is  an LE-labelling of $H$. Furthermore the map
$T\mapsto(H,\boldsymbol{\underline{\ell}})$ defined a bijection between Roby tableaux of
rank $N$ and LE-labelled non-crossing half arc-diagrams of rank $N$.
\end{lemma}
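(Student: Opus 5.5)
The plan is to translate each of the three Roby conditions into a statement about the half arc-diagram $H$ and its labelling $\boldsymbol{\underline{\ell}}$. The dictionary is fixed by the construction. A column $c$ of height two with entries $i<j$ becomes a full arc $\arc{i}{c}{j}$. A column of height one with entry $i$ becomes a free arc $\harc{i}{c}$. The columns lying to the right of column $c$ are exactly those with smaller index, so they become the arcs carrying labels smaller than $c$. Condition (1) of a Roby-tableau only fixes which entry is on top, and that is irrelevant once the column is viewed as an unordered pair $\{i,j\}$. So the map is well defined, and condition (1) is what makes it reversible: the filling is recovered by putting the larger endpoint on top.

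Next I would check that, for any filling, conditions (2) and (3) together are equivalent to $H$ being non-crossing and $\boldsymbol{\underline{\ell}}$ being a linear extension. Throughout I use the convention that a free arc $\harc{i}{}$ nests everything above $i$, which matches the paper's description of half diagrams as bras of full diagrams.

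For a height-two column $\{a<b\}$ with label $c$, condition (3) states that every arc with label smaller than $c$ has no endpoint in $[a,b]$. This is exactly the claim that no arc lying inside $\arc{a}{}{b}$, or crossing it, has a smaller label. Similarly, for a height-one column with entry $a$ and label $c$, condition (2) says that all arcs with smaller label have all endpoints below $a$.

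For the forward direction I take a Roby-tableau. Any two full arcs $\alpha,\beta$ have different labels, say $\ell(\beta)<\ell(\alpha)$. Condition (3) applied to $\alpha$ then puts both endpoints of $\beta$ outside $\alpha$. Hence $\beta$ is either disjoint from $\alpha$ or nests it, and in particular they do not cross. If $\beta$ is a free arc, condition (2) applied to $\beta$ forces $\alpha$ to lie below it, so no crossing occurs there either. Finally, a free arc together with a full arc spanning across its endpoint is excluded as follows. If the full arc has the smaller label, condition (3) forbids it. If the free arc has the smaller label, condition (2) forbids it. This gives non-crossingness, and the same case analysis shows that nesting implies a smaller label, i.e. $\boldsymbol{\underline{\ell}}$ is increasing along the nesting order.

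For the converse I assume $H$ is non-crossing and $\boldsymbol{\underline{\ell}}$ is a linear extension. An arc with label smaller than $c$ cannot be nested inside the arc labelled $c$, because of the linear extension property, and it cannot cross it, because $H$ is non-crossing. So its endpoints avoid $[a,b]$, or avoid the region above $a$ in the free case, and conditions (2) and (3) follow.

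Bijectivity then follows by constructing the inverse. Given $(H,\boldsymbol{\underline{\ell}})$, I read the arcs in decreasing label order to form columns, placing each arc as a column, with the larger endpoint on top for a full arc. The main obstacle is the bookkeeping for free arcs in conditions (2) and (3), in particular making the nesting convention for half-infinite arcs consistent. I would handle it by reducing to a full arc-diagram: glue $H$ with its mirror image and apply the full-diagram nesting rules.
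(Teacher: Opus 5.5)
The paper leaves this lemma to the reader, and your overall plan is the natural one. Your dictionary paragraph, the converse direction and the bijection are correct. The key point of the dictionary is that, because columns are numbered from the right, the condition attached to a column (condition (2) or (3)) only constrains arcs of \emph{smaller} label.

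Your forward-direction case analysis loses track of this in every mixed free/full case.
\begin{itemize}
\item Take your normalisation $\ell(\beta)<\ell(\alpha)$ with $\beta$ free at $i$ and $\alpha=\arc{a}{}{b}$ full. Condition (2) for $\beta$ says nothing about $\alpha$, and the conclusion that $\alpha$ lies below $\beta$ is false: in the paper's example tableau, $\harc{1}{1}$ nests $\arc{2}{2}{9}$. The correct step is condition (3) for $\alpha$. It gives $i\notin[a,b]$, so $\beta$ either nests $\alpha$ (carrying the smaller label, as an LE-labelling requires) or lies entirely above it.
\item Consider a full arc straddling the endpoint of a free arc. When the full arc has the smaller label, it is excluded by condition (2) for the free arc. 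When the free arc has the smaller label, it is excluded by condition (3) for the full arc. This is the opposite of what you wrote.
\item You never treat two free arcs at $i<j$, where the lower one nests the upper one. There, condition (2) for the one with the larger label shows that the lower one carries the smaller label.
\end{itemize}

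Every configuration you wanted to exclude is in fact excluded, so the result survives. You can also drop the case analysis altogether. Your dictionary already says that conditions (2) and (3) hold if and only if, for every arc $\alpha$, no arc of smaller label is nested in $\alpha$ or crosses it. Here a free arc nests everything above its endpoint and is crossed by any full arc straddling its endpoint. Any two distinct arcs have distinct labels, and crossing is symmetric. So this condition says exactly that $H$ is non-crossing and that an outer arc always has the smaller label; the forward direction is your converse argument read backwards.

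Finally, the map forgets the vertical order inside a column. So the ``if and only if'' in the lemma is only true for fillings that already satisfy condition (1). You use this implicitly, and you should state it.
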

See the left of~\cref{fig.roby-arc} for an example.

\begin{definition}
\label{LE-to-HLabels}
Given a LE-labelling of a non-crossing half arc-diagram $H$
of rank $N$ encoded by the $N$-tuple
$\boldsymbol{\underline{\ell}} = (\ell_1 , \dots, \ell_N)$ we define
$\boldsymbol{\underline{h}} = (h_1, \dots, h_N)$
where $h_k \eqdef \# \{ i < k \, : \, \ell_i < \ell_k \} \, + \, 1$.

Note that the nesting condition ensures that $h_i = h_j$ whenever
$\ell_i = \ell_j$. Since $\ell_i = \ell_j$ if and only if vertices $i$ and
$j$ are joined by an arc in the non-crossing half arc-diagram $H$, we may
unambiguously label this arc by the common value $h_i = h_j$. We get a
new labelling $\boldsymbol{\underline{h}}$ of $H$.
\end{definition}

\begin{proposition}
\label{LinearExtentions-to-HLabels}
Fix a non-crossing unlabelled half arc-diagram $H$ of rank $N$ and let
$\boldsymbol{\underline{\ell}}$ be a LE-labelling of $H$. Then the half arc-diagram $H$
labelled by the associated $N$-tuple $\boldsymbol{\underline{h}}$ is an Okada half arc-diagram.
Furthermore the map
$(H, \boldsymbol{\underline{\ell}}) \mapsto (H, \boldsymbol{\underline{h}})$ defines a bijection
between LE-labelled non-crossing half arc-diagrams of rank $N$ and Okada half
arc-diagrams of rank $N$.
\end{proposition}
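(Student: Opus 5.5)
The plan is to verify the three Okada conditions for $(H,\boldsymbol{\underline{h}})$ directly, and then to construct the inverse map explicitly.

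\textbf{Well-definedness.} First I check that $h$ is constant on each arc. If $i<j$ are the endpoints of an arc with LE-label $m$, every vertex $k$ with $i<k<j$ lies on an arc nested inside it. Such an arc has LE-label larger than $m$. Hence $\#\{k<j:\ell_k<m\}=\#\{k<i:\ell_k<m\}$, so $h_i=h_j$.

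\textbf{Condition (1).} Let $a$ be the smaller endpoint of an arc. Then $1\le h_a\le a$ trivially, and $\min(|a|,|b|)=a$.

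\textbf{Condition (3).} Suppose $\arc{a}{}{b}$ is nested in $\arc{c}{}{d}$, so $c<a$ and $\ell_c<\ell_a$. Every $k<c$ with $\ell_k<\ell_c$ also satisfies $\ell_k<\ell_a$, and $k=c$ itself is counted for $h_a$ but not for $h_c$. Therefore $h_a\ge h_c+1$.

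\textbf{Condition (2), parity.} This is the step needing care. Take the vertex $a$ (the smaller endpoint) and consider the vertices $k<a$. They split into three groups:
\begin{itemize}
\item vertices on arcs nesting the arc at $a$: these are left endpoints of such arcs, all with smaller LE-label;
\item pairs of vertices on closed arcs lying entirely before $a$: each such arc contributes either $0$ or $2$ to the count;
\item a remaining class to be pinned down.
\end{itemize}
The claim to establish is that $h_a-1\equiv a-1 \pmod 2$. Between consecutive nesting arcs (or propagating arcs below $a$), the vertices before $a$ are covered by closed arcs lying entirely in that region. Each such arc contributes $0$ or $2$ to the count, and $2$ to $a-1$. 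The nesting arcs contribute $1$ each to both counts, since their label is smaller and they are ancestors.

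The subtle point is propagating half-arcs below $a$ that do not nest $a$. A propagating arc $\harc{k}{}$ with $k<a$ always nests every vertex above it. So each such arc is an ancestor, has smaller LE-label, and counts $1$. This gives exactly the parity of $a-1$, which is the main obstacle to get right.

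\textbf{Bijectivity.} I will define the inverse by reconstructing $\boldsymbol{\underline{\ell}}$ from $\boldsymbol{\underline{h}}$. Process the vertices left to right. When a new arc opens at vertex $k$, insert its LE-rank so that exactly $h_k-1$ of the previously opened arcs have a smaller rank. The final relative order of the opened arcs is a linear order, which yields $\ell$.

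Two things then need checking. First, the Okada conditions ensure the insertion respects nesting, because ancestors have strictly smaller $h$; more precisely, the ancestors must occupy the lowest ranks. Second, the two maps are mutually inverse, which follows from the defining formula for $h$.

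The delicate points are the parity count and showing that any Okada labelling yields a consistent insertion. For the latter, the constraint $h_k\le k$ with parity condition (2) must leave room above the ancestors, i.e.\ $h_k-1\ge$ the number of ancestors. This follows from condition (3) together with induction along the nesting chain.
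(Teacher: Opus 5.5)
Your proof that $(H,\boldsymbol{\underline{h}})$ is an Okada half arc-diagram is sound. This covers constancy of $h$ on arcs, the bound $h_a\le a$, and strict increase under nesting. It also covers the parity count: every vertex below $a$ is either the lower endpoint of an ancestor of the arc at $a$, contributing $1$ to both $h_a-1$ and $a-1$, or lies on a closed arc entirely below $a$, contributing $0$ or $2$ to $h_a-1$ and $2$ to $a-1$. Your ``remaining class'' is therefore empty.

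The bijectivity part fails as written. Since $h_k-1=\#\{i<k:\ell_i<\ell_k\}$ counts \emph{vertices}, a closed arc lying entirely below $k$ contributes $2$, not $1$. So your rule ``exactly $h_k-1$ of the previously opened arcs have a smaller rank'' does not invert the map. In Figure~\ref{fig.roby-arc}, $h_6=6$ although only four arcs are opened before vertex $6$. At vertex $10$ your rule would place the propagating arc above all five earlier arcs, whereas $\ell_{10}=4<5=\ell_3$. Your claim that the ancestors occupy the lowest ranks is also false. In the same example, just before vertex $11$, the ancestor (propagating arc at $10$) has LE-label $4$, while the closed arcs $2$--$9$ and $5$--$8$ have LE-labels $2$ and $3$. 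Consequently ``$h_k-1\ge$ number of ancestors'' is not the condition that makes an insertion slot exist, and surjectivity is not established.

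To repair it, weight each previously opened arc by its number of endpoints below $k$. Ancestors of the new arc get weight $1$ and closed arcs below $k$ get weight $2$; by non-crossing there are no other arcs. Insert the new arc above its parent (innermost ancestor, lower endpoint $p$) so that the total weight ranked below it is $h_k-1$. In the ranking built so far, the arcs ranked at or below the parent have total weight $\#\{i<k:\ell_i\le\ell_p\}=h_p$, because every vertex strictly between $p$ and $k$ lies on an arc nested in the parent. Every arc ranked above the parent is closed, of weight $2$, and the total weight is $k-1$. Hence the admissible slots realize $h_p,h_p+2,\dots,k-1$ each exactly once, or $0,2,\dots,k-1$ when there is no ancestor. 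The value $h_k-1$ occurs among them exactly when $h_p<h_k\le k$ and $h_k\equiv k\pmod 2$ (note $k-p-1$ is even), that is, by conditions (1)--(3). Existence of the slot gives surjectivity and its uniqueness gives injectivity. Closing vertices need nothing, by your well-definedness argument. Since later insertions never change the relative order of earlier arcs, the result is a linear extension of the nesting order.
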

See~\cref{fig.roby-arc} for an example.

\begin{figure}[ht]
\[
\text{\begin{young} $7$ & $13$ & $14$ & $4$ & , & $8$ & $9$ & ,
        \\ $6$ & $12$ & $11$ & $3$ & $10$ & $5$ & $2$ & $1$  \\
        , {\red 8} & , {\red 7} & , {\red 6} & , {\red 5} & , {\red 4} & ,
        {\red 3} & , {\red 2} & , {\red 1}
      \end{young}}
\quad
\begin{tikzpicture}[ultra thick,yscale=0.40, baseline={(current bounding
        box.east)}]
\tikzstyle{mid}=[draw,thick,black,anchor=center,fill=white,
            shape=circle,minimum size=2pt,inner sep=1pt]
\tikzstyle{walkv}=[draw, line width=2pt]
\node[anchor=east] (1) at (0, 1) {$\ell_1 = 1$};
\node[anchor=east] (2) at (0, 2) {$\ell_2 = 2$};
\node[anchor=east] (3) at (0, 3) {$\ell_3 = 5$};
\node[anchor=east] (4) at (0, 4) {$\ell_4 = 5$};
\node[anchor=east] (5) at (0, 5) {$\ell_5 = 3$};
\node[anchor=east] (6) at (0, 6) {$\ell_6 = 8$};
\node[anchor=east] (7) at (0, 7) {$\ell_7 = 8$};
\node[anchor=east] (8) at (0, 8) {$\ell_8 = 3$};
\node[anchor=east] (9) at (0, 9) {$\ell_9 = 2$};
\node[anchor=east] (10) at (0, 10) {$\ell_{10} = 4$};
\node[anchor=east] (11) at (0, 11) {$\ell_{11} = 6$};
\node[anchor=east] (12) at (0, 12) {$\ell_{12} = 7$};
\node[anchor=east] (13) at (0, 13) {$\ell_{13} = 7$};
\node[anchor=east] (14) at (0, 14) {$\ell_{14} = 6$};
\draw[walkv] (1) -- ($(1.east) + (3, 0)$) node[pos=0.8,mid] (L1) {\red$1$};
\draw[walkv] (2.east) .. controls +(2.50, 0.25) and +(2.00, -0.25) .. (9.east)
    node[pos=0.4,mid] (L2) {$\red2$};
\draw[walkv] (3.east) .. controls +(0.50, 0.) and +(0.50, -0.) .. (4.east)
    node[pos=0.5,mid] (L5) {$\red5$};
\draw[walkv] (5.east) .. controls +(1.60, 0.25) and +(1.40, -0.25) .. (8.east)
     node[pos=0.4,mid] (L3) {$\red3$};
\draw[walkv] (6.east) .. controls +(0.50, 0.) and +(0.50, -0.) .. (7.east)
     node[pos=0.5,mid] (L8) {$\red8$};
\draw[walkv] (10) -- ($(10.east) + (3,0)$) node[pos=0.8,mid] (L4) {$\red4$};
\draw[walkv] (11.east) .. controls +(1.60, 0.25) and +(1.40, -0.25) .. (14.east)
    node[pos=0.4,mid] (L6) {$\red6$};
\draw[walkv] (12.east) .. controls +(0.50, 0.) and +(0.50, -0.) .. (13.east)
    node[pos=0.5,mid] (L7) {$\red 7$};
      % \draw[line width=1.2pt,color=red,<-] (L2) -- (L1);
\draw[line width=1.2pt,>=stealth,color=red,<-] (L2) -- (L1);
\draw[line width=1.2pt,>=stealth,color=red,<-] (L4) -- (L1);
\draw[line width=1.2pt,>=stealth,color=red,<-] (L3) -- (L2);
\draw[line width=1.2pt,>=stealth,color=red,<-] (L8) -- (L3);
\draw[line width=1.2pt,>=stealth,color=red,<-] (L5) -- (L2);
\draw[line width=1.2pt,>=stealth,color=red,<-] (L6) -- (L4);
\draw[line width=1.2pt,>=stealth,color=red,<-] (L7) -- (L6);
\end{tikzpicture}
\qquad
\begin{tikzpicture}[ultra thick,yscale=0.40, baseline={(current bounding
    box.east)}]
\tikzstyle{mid}=[draw,thick,black,anchor=center,fill=white,
    shape=circle,minimum size=2pt,inner sep=1pt]
\tikzstyle{walkv}=[draw, line width=2pt]
\node[anchor=east] (1) at (0, 1) {$h_1 = \ \,1$};
\node[anchor=east] (2) at (0, 2) {$h_2 = \ \,2$};
\node[anchor=east] (3) at (0, 3) {$h_3 = \ \,3$};
\node[anchor=east] (4) at (0, 4) {$h_4 = \ \,3$};
\node[anchor=east] (5) at (0, 5) {$h_5 = \ \,3$};
\node[anchor=east] (6) at (0, 6) {$h_6 = \ \,6$};
\node[anchor=east] (7) at (0, 7) {$h_7 = \ \,6$};
\node[anchor=east] (8) at (0, 8) {$h_8 = \ \,3$};
\node[anchor=east] (9) at (0, 9) {$h_9 = \ \,2$};
\node[anchor=east] (10) at (0, 10) {$h_{10} = \ \,6$};
\node[anchor=east] (11) at (0, 11) {$h_{11} = \ \,9$};
\node[anchor=east] (12) at (0, 12) {$h_{12} = 10$};
\node[anchor=east] (13) at (0, 13) {$h_{13} = 10$};
\node[anchor=east] (14) at (0, 14) {$h_{14} = \ \,9$};
\draw[walkv] (1) -- ($(1.east) + (3, 0)$) node[pos=0.8,mid] (L1) {$\red1$};
\draw[walkv] (2.east) .. controls +(2.50, 0.25) and +(2.00, -0.25) .. (9.east)
   node[pos=0.4,mid] (L2) {$\red2$};
\draw[walkv] (3.east) .. controls +(0.50, 0.) and +(0.50, -0.) .. (4.east)
   node[pos=0.5,mid] (L5) {$\red3$};
\draw[walkv] (5.east) .. controls +(1.60, 0.25) and +(1.40, -0.25) .. (8.east)
   node[pos=0.4,mid] (L3) {$\red3$};
\draw[walkv] (6.east) .. controls +(0.50, 0.) and +(0.50, -0.) .. (7.east)
   node[pos=0.5,mid] (L8) {$\red6$};
\draw[walkv] (10) -- ($(10.east) + (3, 0)$) node[pos=0.8,mid] (L4) {$\red6$};
\draw[walkv] (11.east) .. controls +(1.70, 0.25) and +(1.50, -0.25) .. (14.east)
   node[pos=0.4,mid] (L6) {$\red9$};
\draw[walkv] (12.east) .. controls +(0.50, 0.) and +(0.50, -0.) .. (13.east)
   node[pos=0.5,mid] (L7) {$\red 10$};
\draw[line width=1.2pt,>=stealth,color=red,<-] (L2) -- (L1);
\draw[line width=1.2pt,>=stealth,color=red,<-] (L4) -- (L1);
\draw[line width=1.2pt,>=stealth,color=red,<-] (L3) -- (L2);
\draw[line width=1.2pt,>=stealth,color=red,<-] (L8) -- (L3);
\draw[line width=1.2pt,>=stealth,color=red,<-] (L5) -- (L2);
\draw[line width=1.2pt,>=stealth,color=red,<-] (L6) -- (L4);
\draw[line width=1.2pt,>=stealth,color=red,<-] (L7) -- (L6);
    \end{tikzpicture}
  \]
  \caption{A Roby tableau and its associated LE-labelling (left and middle) together with 
  the corresponding Okada half arc-diagram (right).}
  \label{fig.roby-arc}
\end{figure}

\begin{remark}
Roby's insertion algorithm,
and the $\Chain$ map in \Cref{def:chain-map} are consistent:
Given a Roby-tableaux $T$,
and the Okada half arc-diagram $(H, \boldsymbol{\underline{h}})$
specified by \Cref{LE-to-HLabels} and
\Cref{LinearExtentions-to-HLabels},
the saturated chain in the $\YF$-lattice which corresponds to
$T$ under Roby's insertion algorithm
is identical to the saturated
chain in the $\YFS$-lattice
corresponding to $(H, \boldsymbol{\underline{h}})$
under the $\Chain$ map.
\end{remark}

\section{The structure of the Okada monoid}

\textbf{From now on, we won't distinguish $\Okada$ and $\OkArc$ and
  following~\cref{eq.perm-of-diag}, we identify a diagram $D$ with its
  associated Okada monoid element $\e{\perm{D}}$. In particular, the
  diagram~$\A{i}$ is identified with the generator~$\e{i}$.} \bigskip

Thanks to its diagrammatic realization, it is now much easier to
compute in the Okada monoid. The goal of this section is to understand its
structure theory. In particular, we describe its green relations and we
show that the monoid is aperiodic. This will allows us to prove
cellularity of the Okada algebra in \cref{section:okada-algebra-cellular-structure}.
We start by examining an analogue of the
dominance order on Fibonacci sets (instead of partitions).

\subsection{Dominance orders}
Recall that Fibonacci sets appear as both propagating indexing sets and
labelling sets. To describe how these two descriptions are related, we need an analogue of
the dominance order for Fibonacci sets.
\begin{definition}
  Let $S=\{s_1<\dots<s_k\}$ and $T=\{t_1<\dots<t_\ell\}$ be two Fibonacci sets of
  the same rank $N$. We says that \defn{$S$ is dominated by $T$} and write
  $S \isdomeq T$ if $k \leq \ell$ and $s_{k-i}\leq t_{\ell-i}$ for any $0\leq i<k$. We
  write $S \isdom T$ if $S \isdomeq T$ but $S \neq T$.
\end{definition}
\begin{proposition}\label{prop.dominance.lattice}
  $(\YFSn, \isdomeq)$ is a ranked distributive lattice such that
  \begin{itemize}
  \item the minimal element is $\emptyset$ if $N$ is even and $\{1\}$ if $N$
    is odd;
  \item the maximal element is $\setN$;
  \item $S \isdom T$ is a cover if and only if  either $T = (S\setminus \{i\})\sqcup \{i+2\}$ whenever
    $i\in S$ and $i+1\notin S$ or $T= S \sqcup\{1,2\}$ provided $1\notin S$ and $2\notin S$;
  \item the rank of $S$ is $\left(\sum_{s \in S} s  - (\#S+p)/2\right)/2$ where
    $p \eqdef N \bmod 2$;
  \item for $S=\{s_1<\dots<s_k\}$ and $T=\{t_1<\dots<t_\ell\}$ the respective meet and join are
  \begin{alignat}{1}
    S \wedge T&
    =  \set{\min(s_{k-i},t_{\ell-i})}{0<i<\min(k,\ell)},\\
   S \vee T&
    = \set{\max(s_{k-i},t_{\ell-i})}{0<i<\max(k,\ell)},
  \end{alignat}
  where $t_i=s_i=0$ is understood whenever $i\leq 0$
  in the last formula.
  \end{itemize}
\end{proposition}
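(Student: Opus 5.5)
The natural route is to encode a Fibonacci set of rank $N$ as a point of an explicit distributive lattice, for which every claim becomes a routine check. Right-align a Fibonacci set $S=\{s_1<\dots<s_k\}$ of rank $N$ and pad it on the left with zeros. We write $S$ as the sequence $(u_1,u_2,\dots)$ with $u_i \eqdef s_{k+1-i}$ for $i\le k$ and $u_i\eqdef 0$ beyond, where $u_1$ is the largest element. The parity conditions are then: $u_i\equiv N+1-i \pmod 2$ for the nonzero entries, strict decrease among the nonzero entries, and the count of nonzero entries has the parity of $N$. Equivalently, set $v_i \eqdef (u_i - (N+1-i))$ suitably shifted. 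A cleaner choice is to pass to the composition of $N+1$ into odd parts, as in the remark after \cref{def:fibonacci-sets}: the gaps $N+1-s_k, s_k-s_{k-1},\dots,s_1$ are all odd.

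Under this encoding, $S\isdomeq T$ says exactly that, read from the top, each element of $S$ is at most the corresponding element of $T$, and that $T$ has at least as many elements. With the convention $s_i=0$ for $i\le 0$, this is componentwise comparison of the padded sequences $(s_k,s_{k-1},\dots)$ and $(t_\ell,t_{\ell-1},\dots)$. The condition $k\le\ell$ is then automatic from componentwise comparison plus strictness: a zero in $T$'s sequence forces a zero in $S$'s.

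The key step is to check that the set of admissible padded sequences is closed under componentwise $\min$ and $\max$. Take two admissible sequences. Position $i$ from the top must hold either $0$ or a value of fixed parity $N+1-i$, so $\min$ and $\max$ preserve that parity. Strict decrease is preserved under componentwise $\min/\max$ of two strictly decreasing sequences. The parity of the number of nonzero entries needs a separate argument. A nonzero entry in position $i$ has parity $N+1-i$, while the entry $0$ has even parity, so the first zero position $i$ satisfies $N+1-i$ odd, i.e.\ $i\equiv N \pmod 2$. Hence the number $k=i-1$ of nonzeros has the parity of $N$ automatically, once we know the entries strictly decrease down to zero. Here one must treat the entry $1$ with care: if position $i$ carries a value $\ge1$ of parity $N+1-i$, the next position either continues or is zero. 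I would verify this bookkeeping directly.

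This identifies $(\YFSn,\isdomeq)$ with a sublattice of $\N^{\lceil N/2\rceil}$ or of a similar product of chains, closed under $\min$ and $\max$. That gives a distributive lattice with the stated meet and join formulas. The extremal elements follow: the maximum is $(N,N-1,\dots,1)$, i.e.\ $\setN$. The minimum puts zero as early as parity allows, which gives $\varnothing$ for $N$ even and $\{1\}$ for $N$ odd.

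For covers, I would substitute $u_i=2a_i+c_i$ with the fixed parity offset, turning the constraints into weakly decreasing conditions on the $a_i$, with a Young-diagram-like shape. A cover then increases a single $a_i$ by $1$ while staying admissible. This either raises an element $i$ to $i+2$, which is possible when $i+1\notin S$, or turns two trailing zeros into the new bottom pair $\{1,2\}$, i.e.\ adds $1,2$ to $S$. Ranking by $\sum a_i$ gives $\bigl(\sum s - (\#S+p)/2\bigr)/2$ after computing the offsets. I expect this offset/rank bookkeeping, including the handling of the zero padding at the bottom, to be the main obstacle. I would check it against \cref{fig.dominance} for small $N$.
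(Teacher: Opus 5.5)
Your route is sound, and it is more explicit than the paper's. You encode $S$ by its top-aligned, zero-padded sequence $u=(s_k,s_{k-1},\dots,s_1,0,\dots,0)\in\{0,\dots,N\}^N$, so that $\isdomeq$ becomes componentwise order, and you read off distributivity and meet/join from closure under componentwise $\min/\max$. The paper instead calls everything except the rank formula straightforward, and gets rankedness by checking that the minimum has rank $0$ and that each of the two cover moves raises the formula by one.

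Your parity step is wrong as written. Positional parity plus strict decrease does not force the number of nonzero entries: for $N=4$ the sequence $(4,0,0,0)$ passes both tests, yet $\{4\}$ is not a Fibonacci set of rank $4$. Your computation also contradicts itself. The first zero sits at position $i=k+1$, so $k\equiv N$ means $N+1-i$ is \emph{even}, and $i\equiv N$ would give $k\not\equiv N$. To repair it, put $k\equiv N$ (equivalently, the smallest element is odd) into the definition of admissible sequences. Then note that the componentwise $\min$ and $\max$ have exactly $\min(k,\ell)$ and $\max(k,\ell)$ nonzero entries. What this yields are the formulas with index range $0\le i<\min(k,\ell)$, resp.\ $0\le i<\max(k,\ell)$. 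The printed range $0<i$ drops the top entry: $\{1,2,3\}_5\wedge\{5\}_5=\{3\}_5$, whereas $0<i$ gives $\varnothing$.

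The real gap is the cover description, which you assert rather than prove. In a sublattice of a product of chains, covers need not be unit steps in one coordinate. In your $u$-coordinates the cover $S\lessdot S\sqcup\{1,2\}$ changes two entries, $(0,0)\mapsto(2,1)$. Moreover, with $u_i=2a_i+c_i$ and $c_i\equiv N+1-i$, a padding zero where $c_i=1$ has no integer $a_i$.

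Taking $a_i\eqdef\lfloor u_i/2\rfloor$ (padding gives $0$) fixes the encoding. Both moves add $1$ to a single coordinate, and $\sum_i a_i=\bigl(\sum_{s\in S}s-\lceil k/2\rceil\bigr)/2$ is exactly the stated rank, since precisely $\lceil k/2\rceil=(k+p)/2$ elements of $S$ are odd.

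You still need the lemma that if $S\isdom T$ then some single move from $S$ stays $\isdomeq T$; otherwise a cover could be a longer jump. The argument splits into two cases.
\begin{itemize}
\item Suppose $\#S=\#T$, or $\#S<\#T$ and $1\in S$ (then $u^S_k=1<3\le t_{\ell-k+1}=u^T_k$). Let $p$ be the topmost position with $u^S_p<u^T_p$, and set $i=u^S_p$. For $p>1$ the entry of $S$ just above equals $u^T_{p-1}>u^T_p\ge i+2$. So in all cases $i+1\notin S$, $i+2\le N$, and replacing $i$ by $i+2$ stays $\isdomeq T$.
\item Otherwise $\#S<\#T$ and $1\notin S$. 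Then $S\sqcup\{1,2\}\isdomeq T$, since $t_{\ell-k}\ge2$ and $t_{\ell-k-1}\ge1$.
\end{itemize}
Each move is itself a cover: any intermediate $U$ is pinned down entry by entry, using $\#U\equiv N$ for the second move. Together these give the cover characterization and the rank. Comparing with the figure for small $N$ cannot replace this step.
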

\Cref{fig.dominance} shows the Hasse diagrams for the dominance order up to rank $N=6$.
\begin{proof}
  All the assertions are straightforward and left to the reader, except for the rank
  formula: One can easily check that the minimal element has
  rank $0$ and that each cover adds one to the value 
  returned by the proposed rank formula (in particular it's always an integer). This proves that the lattice is ranked.
\end{proof}
\begin{figure}[ht]
\[
  % sage: P = Poset([Fib_words(4), lambda x,
  % y:is_dominated(y,x)]).relabel(FibSet_of_FibWord)
\begin{tikzpicture}[>=latex,line join=bevel,draw,black]
\node (00) at (-1,0) {$\emptyset$};
\node (10) at (-0,0) {$\left\{1\right\}$};
\node (20) at (1.2,0) {$\emptyset$};
\node (21) at (1.2,1) {$\left\{1, 2\right\}$};
\draw[black,->] (20) -- (21);
\node (30) at (2.5,0) {$\left\{1\right\}$};
\node (31) at (2.5,1) {$\left\{3\right\}$};
\node (32) at (2.5,2) {$\left\{1, 2, 3\right\}$};
\draw [black,->] (30) -- (31);
\draw [black,->] (31) -- (32);
\node (40) at (4,0) {$\emptyset$};
\node (41) at (4,1) {$\left\{1, 2\right\}$};
\node (42) at (4,2) {$\left\{1, 4\right\}$};
\node (43) at (4,3) {$\left\{3, 4\right\}$};
\node (44) at (4,4) {$\left\{1, 2, 3, 4\right\}$};
\draw [black,->] (40) -- (41);
\draw [black,->] (41) -- (42);
\draw [black,->] (42) -- (43);
\draw [black,->] (43) -- (44);
\node (50) at (6.5,0) {$\left\{1\right\}$};
\node (51) at (6.5,1) {$\left\{3\right\}$};
\node (52) at (5.5,2) {$\left\{1, 2, 3\right\}$};
\node (53) at (7.5,2) {$\left\{5\right\}$};
\node (54) at (6.5,3) {$\left\{1, 2, 5\right\}$};
\node (55) at (6.5,4) {$\left\{1, 4, 5\right\}$};
\node (56) at (6.5,5) {$\left\{3, 4, 5\right\}$};
\node (57) at (6.5,6) {$\left\{1, 2, 3, 4, 5\right\}$};
\draw [black,->] (50) -- (51);
\draw [black,->] (51) -- (52);
\draw [black,->] (51) -- (53);
\draw [black,->] (52) -- (54);
\draw [black,->] (53) -- (54);
\draw [black,->] (54) -- (55);
\draw [black,->] (55) -- (56);
\draw [black,->] (56) -- (57);
\node (6-0) at (10.5,0) {$\emptyset$};
\node (6-1) at (10.5,1) {$\left\{1, 2\right\}$};
\node (6-2) at (10.5,2) {$\left\{1, 4\right\}$};
\node (6-3) at (9,3) {$\left\{3, 4\right\}$};
\node (6-5) at (12,3) {$\left\{1, 6\right\}$};
\node (6-4) at (9,4) {$\left\{1, 2, 3, 4\right\}$};
\node (6-6) at (12,4) {$\left\{3, 6\right\}$};
\node (6-7) at (9,5) {$\left\{1, 2, 3, 6\right\}$};
\node (6-8) at (12,5) {$\left\{5, 6\right\}$};
\node (6-9) at (10.5,6) {$\left\{1, 2, 5, 6\right\}$};
\node (6-10) at (10.5,7) {$\left\{1, 4, 5, 6\right\}$};
\node (6-11) at (10.5,8) {$\left\{3, 4, 5, 6\right\}$};
\node (6-12) at (10.5,9) {$\left\{1, 2, 3, 4, 5, 6\right\}$};
\draw [black,->] (6-0) -- (6-1);
\draw [black,->] (6-1) -- (6-2);
\draw [black,->] (6-2) -- (6-3);
\draw [black,->] (6-2) -- (6-5);
\draw [black,->] (6-3) -- (6-4);
\draw [black,->] (6-3) -- (6-6);
\draw [black,->] (6-4) -- (6-7);
\draw [black,->] (6-5) -- (6-6);
\draw [black,->] (6-6) -- (6-7);
\draw [black,->] (6-6) -- (6-8);
\draw [black,->] (6-7) -- (6-9);
\draw [black,->] (6-8) -- (6-9);
\draw [black,->] (6-9) -- (6-10);
\draw [black,->] (6-10) -- (6-11);
\draw [black,->] (6-11) -- (6-12);
\end{tikzpicture}
\]
  \caption[Dominance orders of Fibonacci sets]{The dominance order on Fibonacci
    sets of rank $0$ to $6$.}
  \label{fig.dominance}
\end{figure}
\begin{remark}
   Okada's original description of the dominance order (comparing partial
   sums of suffixes of Fibonacci words) introduced in  ~\cite[p.~560 and Theorem
  4.2]{Okada1994} can be recovered using the bijection
  between Fibonacci words and Fibonacci sets given in \Cref{prop-bij-fib-set-word}.
\end{remark}

\subsection{Half diagram surgery}
The goal of this section is to present a few ways to construct Okada half
arc-diagrams from Fibonacci sets. We starts with a remark which shows a
first link between arc-diagrams and dominance order:
\begin{remark}
  Let $H$ be an Okada half arc-diagram, then $\PropLab(H)\isdomeq\PropInd(H)$.
\end{remark}
\begin{proof}
  This is just a consequence of \cref{okada-arc-cond-interv}.
\end{proof}
The following lemma-definition is a kind of converse:
\begin{definition}\label{lemma:connect-isdomeq}
  Let $S=\{s_1<\dots<s_k\}$ and $T=\{t_1<\dots<t_\ell\}$ be two Fibonacci sets of
  rank $N$ such that $S\isdomeq T$ (in particular $k\leq \ell$).  Let
  $\tilde{T}\eqdef\{t_{\ell-k+1},\dots,t_\ell\}$. There exists a unique half Okada
  arc-diagram $\linkd{T}{S}$ of rank $N$ such that
  \begin{itemize}
  \item $\PropInd \ssp \linkd{T}{S} = \tilde T$ and
    $\PropLab \ssp \linkd{T}{S} =S$.
    In other words, the propagating arcs of $\linkd{T}{S}$ are the arcs
    $\harc{t_{\ell-i}}{s_{k-i}}$ for all $0\leq i<k$.
  \item All the other arcs are of the form $\arc{i}{i}{i+1}$.
  \end{itemize}
\end{definition}
\begin{proof}
  First of all, $\tilde T$ is clearly a rank $N$ Fibonacci set. As in the previous
  lemmas there exists a unique way to complete with $\arc{i}{i}{i+1}$. The
  definition of Fibonacci sets ensure that $t_{k-i}$ and $s_{\ell-i}$ have the
  same parity, so that \cref{okada-arc-cond-parity} holds. One gets the two
  other \cref{okada-arc-cond-interv,okada-arc-cond-nested} because
  $s_{k-i}\leq t_{\ell-i}$.
\end{proof}
We will use two extremal cases of the previous definition. Given
$S=\{s_1<\dots<s_k\}$ a Fibonacci sets of rank $N$, the minimal possible $T$
such that $S\isdomeq T$ is $T= S$, leading to the half diagram
$\freeld{S}$\ssp. The maximal possible is $T = \setN$ leading to
the diagram $\topd{S}$\ssp. See~\cref{fig.ex-half-arcs} for
some examples.

\begin{figure}[ht]
\begin{tikzpicture}[ultra thick,yscale=0.5,
  baseline={(current bounding box.east)}]
  \tikzstyle{mid}=[draw,thick,black,anchor=center,fill=white,
                   shape=circle,minimum size=2pt,inner sep=1pt]
  \tikzstyle{walkv}=[draw, line width=2pt]
\node[anchor=east] (1) at (0, 1) {$\{1\}$};
\node[anchor=east] (2) at (0, 2) {$\emptyset$};
\node[anchor=east] (3) at (0, 3) {$\{1\}$};
\node[anchor=east] (4) at (0, 4) {$\{1, 4\}$};
\node[anchor=east] (5) at (0, 5) {$\{1\}$};
\node[anchor=east] (6) at (0, 6) {$\{1, 4\}$};
\node[anchor=east] (7) at (0, 7) {$\{1, 4, 5\}$};
\draw[walkv] (1.east) .. controls +(0.50, 0.25) and +(0.50, -0.25) .. (2.east)
    node[pos=0.5,mid] {$1$};
\draw[walkv] (3) -- ($(3.east) + (1, 0)$) node[pos=0.5,mid] {$1$};
\draw[walkv] (4.east) .. controls +(0.50, 0.25) and +(0.50, -0.25) .. (5.east)
    node[pos=0.5,mid] {$4$};
\draw[walkv] (6) -- ($(6.east) + (1, 0)$) node[pos=0.5,mid] {$4$};
\draw[walkv] (7) -- ($(7.east) + (1, 0)$) node[pos=0.5,mid] {$5$};
\end{tikzpicture}
  \qquad
  %%%%%%%%%%%%%%%%%%
\begin{tikzpicture}[ultra thick,yscale=0.5,
  baseline={(current bounding box.east)}]
  \tikzstyle{mid}=[draw,thick,black,anchor=center,fill=white,
                   shape=circle,minimum size=2pt,inner sep=1pt]
  \tikzstyle{walkv}=[draw, line width=2pt]
\node[anchor=east] (1) at (0, 1) {$\{1\}$};
\node[anchor=east] (2) at (0, 2) {$\{1, 2\}$};
\node[anchor=east] (3) at (0, 3) {$\{1\}$};
\node[anchor=east] (4) at (0, 4) {$\{1, 4\}$};
\node[anchor=east] (5) at (0, 5) {$\{1, 4, 5\}$};
\node[anchor=east] (6) at (0, 6) {$\{1, 4, 5, 6\}$};
\node[anchor=east] (7) at (0, 7) {$\{1, 4, 5\}$};
\draw[walkv] (1) -- ($(1.east) + (1, 0)$) node[pos=0.5,mid] {$1$};
\draw[walkv] (2.east) .. controls +(0.50, 0.25) and +(0.50, -0.25) .. (3.east)
    node[pos=0.5,mid] {$2$};
\draw[walkv] (4) -- ($(4.east) + (1, 0)$) node[pos=0.5,mid] {$4$};
\draw[walkv] (5) -- ($(5.east) + (1, 0)$) node[pos=0.5,mid] {$5$};
\draw[walkv] (6.east) .. controls +(0.50, 0.25) and +(0.50, -0.25) .. (7.east)
    node[pos=0.5,mid] {$6$};
  \end{tikzpicture}
  \qquad
  %%%%%%%%%%%%%%%%%%
  \begin{tikzpicture}[ultra thick,yscale=0.5,
  baseline={(current bounding box.east)}]
  \tikzstyle{mid}=[draw,thick,black,anchor=center,fill=white,
                   shape=circle,minimum size=2pt,inner sep=1pt]
  \tikzstyle{walkv}=[draw, line width=2pt]
\node[anchor=east] (1) at (0, 1) {$\{1\}$};
\node[anchor=east] (2) at (0, 2) {$\emptyset$};
\node[anchor=east] (3) at (0, 3) {$\{3\}$};
\node[anchor=east] (4) at (0, 4) {$\emptyset$};
\node[anchor=east] (5) at (0, 5) {$\{1\}$};
\node[anchor=east] (6) at (0, 6) {$\{1, 4\}$};
\node[anchor=east] (7) at (0, 7) {$\{1, 4, 5\}$};
\draw[walkv] (1.east) .. controls +(0.50, 0.25) and +(0.50, -0.25) .. (2.east)
    node[pos=0.5,mid] {$1$};
\draw[walkv] (3.east) .. controls +(0.50, 0.25) and +(0.50, -0.25) .. (4.east)
    node[pos=0.5,mid] {$3$};
\draw[walkv] (5) -- ($(5.east) + (1, 0)$) node[pos=0.5,mid] {$1$};
\draw[walkv] (6) -- ($(6.east) + (1, 0)$) node[pos=0.5,mid] {$4$};
\draw[walkv] (7) -- ($(7.east) + (1, 0)$) node[pos=0.5,mid] {$5$};
\end{tikzpicture}
\caption[Half diagram surgery examples]{The half arc-diagrams
  $\linkd{T}{S}$ \ssp , $\freeld{S}$ and $\topd{S}$ for
  $S=\{1,4,5\}_\mathrm{\bf 7}$ and $T=\{1,2,3,6,7\}_\mathrm{\bf 7}$}
  \label{fig.ex-half-arcs}
\end{figure}

\subsection{Aperiodicity of the Okada monoid}

We now return to structural aspects of the Okada monoid. In this subsection
$N$ is a fixed positive integer.
\begin{lemma}\label{lemma:mult-compat}
  Suppose that $F$, $G$,  and $H$ are three half Okada
  arc-diagrams of rank $N$ with the same propagating set. Then
  $\glue{F}{G}\cdot\glue{G}{H} = \glue{F}{H}$.
\end{lemma}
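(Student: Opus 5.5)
We argue directly in the composition $\glue{F}{G}\circ\glue{G}{H}$. Write $C=\glue{F}{G}$ and $D=\glue{G}{H}$. The right half of $C$ is $G$, read with negative indices (the mirror of the bra $G$), and the left half of $D$ is $G$ itself. So along the middle boundary, every non-propagating arc of $G$ on the right of $C$ meets the same non-propagating arc of $G$ on the left of $D$. We then have three things to establish in turn: the unlabelled shape of the product, the absence of any loop contributions, and the labels.

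\emph{Connectivity.} Let $\arc{i}{\ell}{j}$ be a closed arc of $G$. In $C\circ D$ its two copies, one in $C$ and one in $D$, share the endpoints $\ov{i},\ov{j}$ and $i,j$. They therefore close up into a loop, which the product discards. A propagating half arc $\harc{t}{s}$ of $G$ appears in $C$ as the right end of a propagating arc joining some node of $F$ to $\ov{t}$. It appears in $D$ as the left end of a propagating arc joining $t$ to some node of $H$. These glue at the middle node into a single propagating path. The arcs of $F$ on the left of $C$ and of $H$ on the right of $D$ pass through unchanged. Hence, as unlabelled diagrams, the product has bra $F$ and ket $H$, with the propagating arcs matched in order. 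This matching is forced in any case by non-crossingness, since the propagating arcs are totally ordered by nesting.

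\emph{Labels.} Left and right arcs keep their labels, because each consists of a single fragment. A propagating arc of the product is the concatenation of two fragments. Gluing respects the order of propagating labels, so the $r$-th propagating arc (in nesting order) of $C$ carries the $r$-th smallest element $s_r$ of the common set $S=\PropLab(F)=\PropLab(G)=\PropLab(H)$, and the same holds in $D$. These two fragments are precisely the ones glued together at the middle node corresponding to the $r$-th propagating half arc of $G$. The label of the concatenation is therefore $\min(s_r,s_r)=s_r$. So the labelled product has bra $F$, ket $H$, and the propagating labels $S$ in their correct positions. By the uniqueness part of \cref{lemma:gluing}, it equals $\glue{F}{H}$.

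The only point that needs care is the claim that the $r$-th propagating arc of $C$ is glued exactly to the $r$-th propagating arc of $D$, and not shifted. This follows because both are attached to the same $r$-th propagating node of $G$ in the middle, together with the fact that gluing in \cref{lemma:gluing} assigns labels to propagating arcs monotonically. That monotonicity is forced by \cref{okada-arc-cond-nested}.
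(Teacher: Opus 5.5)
Your proof is correct and follows essentially the same argument as the paper: the two propagating fragments carrying the same label $s\in S$ concatenate into a propagating arc labelled $\min(s,s)=s$, and the non-propagating arcs of $F$ and $H$ pass through unchanged, so the product is $\glue{F}{H}$. You are somewhat more explicit than the paper about why the closed arcs of $G$ produce discarded loops and why the $r$-th propagating fragments of the two factors meet at the same middle node, but the route is the same.
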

\begin{proof}
  Since $F$, $G$, and $H$ have the same  $\PropLab$-set
  $S$, their respective propagating arcs labelled by $s \in S$ are linked together
  in $\glue{F}{G}$, $\glue{G}{H}$ and $\glue{F}{H}$. Applying the product
  rule, the pair of propagating arcs in $\glue{F}{G}$ and $\glue{G}{H}$ 
  labelled by $s$ concatenate to form a propagating arc in $\glue{F}{G}\cdot\glue{G}{H}$
  which is also labelled $s$. Clearly the endpoints of this concatenated
  arc match the endpoints of the propagating arc labelled $s$ in $\glue{F}{H}$
  Likewise, non-propagating left (resp. right) arcs of
  $\glue{F}{G}\cdot\glue{G}{H}$ and $\glue{F}{H}$ are exactly the non-propagating
  arcs of $F$ (resp. $H$).
\end{proof}
Setting $F = G = H$ in the previous lemma, one gets:
\begin{corollary}
  For any half Okada arc-diagram $H$, the element $\glue{H}{H}$ is 
  idempotent.
\end{corollary}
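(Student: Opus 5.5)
The corollary follows by specializing \cref{lemma:mult-compat} to the case $F=G=H$. The only points to check are that the hypotheses of that lemma are met and that the glued diagram $\glue{H}{H}$ actually exists.

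First, since $H$ trivially has the same propagating label set as itself, the gluing lemma (\cref{lemma:gluing}) applies with $L=R=H$. It provides a unique Okada arc-diagram $\glue{H}{H}$ whose bra $\rightd{\glue{H}{H}}$ and ket $\leftd{\glue{H}{H}}$ are both $H$. So $\glue{H}{H}$ is a well-defined element of $\OkArc$, which we identify with $\Okada$.

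Next, we apply \cref{lemma:mult-compat} with $F=G=H$; its hypothesis that the three half diagrams share a propagating set holds trivially. The lemma then gives
\[
\glue{H}{H}\cdot\glue{H}{H}=\glue{H}{H},
\]
which is exactly the statement that $\glue{H}{H}$ is idempotent.

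There is no real obstacle here, since all the content sits in \cref{lemma:mult-compat}. The one thing worth confirming is that the lemma's argument does not break down in the degenerate case $F=G=H$. When the two copies of $H$ are composed, each middle closed arc of $H$ is matched against its own mirror image. This creates closed loops, and the labelled product discards them. So in the monoid setting no scalar is produced and the identity holds exactly.
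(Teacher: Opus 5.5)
Your proposal is correct and matches the paper's argument: the corollary is obtained by setting $F=G=H$ in \cref{lemma:mult-compat}. Your remark that the loops formed by pairing each non-propagating arc of $H$ with its mirror are simply discarded in the monoid product is accurate, and it confirms that the degenerate case causes no problem.
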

\begin{definition}
  An element $\mathsf{e}  \in\Okada$ is said to be \defn{involutive} if
  $\mathsf{e} =\glue{H}{H}$
  for some half Okada arc-diagram $H$; equivalently if $\mathsf{e}^\star= \mathsf{e} $.
\end{definition}
Thanks to the $\RS$-correspondence, $\mathsf{e}$ is involutive if and only if $\mathsf{e}  =\e{\sigma}$
for some involution $\sigma \in \SG[N]$ (\ie $\sigma^2=1$).
\begin{remark}
  The involutive elements do not exhaust, by far, the idempotents of
  $\Okada$. For example in $\Okada[3]$ all element are idempotents and in
  $\Okada[4]$ the only non-idempotents are $\e{1}\e{2}\e{3}$ and
  $\e{3}\e{2}\e{1}$. We have not yet managed to fully understand which elements of
  $\Okada$ are itempotents. This problem seems to be non-trivial even 
  in the unlabelled case, i.e. for the Jones
  monoid~(see~\cite{DOLINKA2019}). Nevertheless, thanks to some computer
  experiments, we know their numbers for $n=0,\dots,10$:
  \[1,\quad1,\quad 2,\quad 6,\quad 22,\quad 108,\quad 594,\quad 4116,\quad 30500,\quad 274006,\quad 2560400.\]
\end{remark}
The usual, group-theoretic definition of \emph{inverse} isn't very useful for monoids
and doesn't even make sense for semigroups (which don't have an
identity). Therefore, in the context of monoids, one usually uses the following
definition which is equivalent when the monoid is a group: $y$ is a
\emph{(monoid) inverse} of $x$ if $xyx=x$ and $yxy=y$. A \emph{regular} monoid
is a monoid where each element has an inverse. An \emph{inverse} monoid is a
regular monoid where the inverse is unique. Therefore a group can be defined as an
inverse monoid which is {\it cancellative} (i.e. right and left multiplication are
both injective). The following is a simple consequence of \cref{lemma:mult-compat}:
\begin{corollary}\label{cor:regular}
  For any $\mathsf{e}\in\Okada$, one has
  $\mathsf{e}\mathsf{e}^*\mathsf{e} = \mathsf{e}$. In particular, $\Okada$ is
  a regular monoid,
\end{corollary}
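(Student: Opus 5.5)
The plan is to write $\mathsf{e}$ as a glued diagram and reduce the statement to \cref{lemma:mult-compat}. Identify $\mathsf{e}$ with its Okada arc-diagram $D$. By the Gluing lemma (\cref{lemma:gluing}), $D=\glue{L}{R}$, where $L=\rightd{D}$ and $R=\leftd{D}$ are Okada half arc-diagrams with $\PropLab(L)=\PropLab(R)$.

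The next step is to identify $\mathsf{e}^\star$ diagrammatically. By the remark that $\Theta$ commutes with the star anti-automorphism, $\mathsf{e}^\star$ corresponds to the mirror diagram $D^\star$. The mirror exchanges the left and right boundaries, so its bra is the ket of $D$ and its ket is the bra of $D$. Hence $\rightd{D^\star}=R$ and $\leftd{D^\star}=L$. By the uniqueness part of the Gluing lemma, $D^\star=\glue{R}{L}$. This is also consistent with \cref{theo:RS-isomorphism}, where $\A{\sigma}^\star=\glue{R_\sigma}{L_\sigma}$.

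Now apply \cref{lemma:mult-compat} twice; all three half diagrams $L,R,L$ share the same propagating label set, as that lemma requires. The first application gives $\glue{L}{R}\cdot\glue{R}{L}=\glue{L}{L}$. The second gives $\glue{L}{L}\cdot\glue{L}{R}=\glue{L}{R}$. Together these yield $\mathsf{e}\mathsf{e}^\star\mathsf{e}=D$.

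Regularity then follows. Applying the same identity to $\mathsf{e}^\star$ and using $(\mathsf{e}^\star)^\star=\mathsf{e}$ gives $\mathsf{e}^\star\mathsf{e}\mathsf{e}^\star=\mathsf{e}^\star$. So $\mathsf{e}^\star$ is a monoid inverse of $\mathsf{e}$, and $\Okada$ is regular.

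The only point needing care is the identification $D^\star=\glue{R}{L}$, which depends on the definitions of bra and ket: the ket $\leftd{D}$ was defined as $\rightd{D^\star}$, and these definitions make this step essentially tautological. Everything else is a direct citation.
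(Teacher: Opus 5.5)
Your proof is correct and is the argument the paper intends when it calls the corollary a simple consequence of \cref{lemma:mult-compat}: write $\mathsf{e}=\glue{L}{R}$ and $\mathsf{e}^\star=\glue{R}{L}$, then apply that lemma twice. Your extra step deriving $\mathsf{e}^\star\mathsf{e}\mathsf{e}^\star=\mathsf{e}^\star$ from $(\mathsf{e}^\star)^\star=\mathsf{e}$ also covers the paper's two-sided definition of a monoid inverse.
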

However for $N\geq3$, the monoid $\Okada$ is not inverse. Indeed, the four
elements $\e{2}$, $\e{1}\e{2}$, $\e{2}\e{1}$, and $\e{1}\e{2}\e{1}$ are all pairwise inverse.
\medskip

We can now relate the product of the Okada monoid and the dominance order:
\begin{lemma}\label{lemma:left-eq-or-prop-le}
  Let $\mathsf{e}, \mathsf{f}$ be two elements in $\Okada$. Then
  either $\rightd{\mathsf{ef}}=\rightd{\mathsf{e}}$ and thus
  $\PropLab(\mathsf{ef})=\PropLab(\mathsf{e})$ or
  $\PropLab(\mathsf{ef})\isdom\PropLab(\mathsf{e})$.
\end{lemma}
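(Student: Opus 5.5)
The plan is to compare the left half $\rightd{\mathsf{ef}}$ with $\rightd{\mathsf{e}}$ directly in the composition $\mathsf{e}\circ\mathsf{f}$, using the arc-diagram realization of $\Okada$. First I trace which arcs of the product start on the left side, and how their labels arise from the labelled product, i.e.\ as the minimum over fragments. Then I reduce the dominance claim to a monotonicity statement about subsequences.

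\textbf{Step 1: structure of the left half.} Every left (non-propagating) arc $\arc{a}{\ell}{b}$ of $\mathsf{e}$, with $a,b\in\setN$, is a single fragment of a path in $\mathsf{e}\circ\mathsf{f}$. It therefore survives unchanged, with the same endpoints and the same label, as a left arc of $\mathsf{ef}$.

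Now let $\pi_1,\dots,\pi_k$ be the propagating arcs of $\mathsf{e}$, ordered by their left endpoints $p_1<\dots<p_k$, with labels $s_1<\dots<s_k$. Here $\PropLab(\mathsf{e})=\{s_1<\dots<s_k\}$, and the labels increase with the endpoints by the nesting condition. The path in $\mathsf{e}\circ\mathsf{f}$ starting at $p_j$ begins with the fragment $\pi_j$ and then wanders through $\mathsf{f}$ and the right arcs of $\mathsf{e}$. It can end in one of two ways:
\begin{itemize}
\item at a node on the right of $\mathsf{f}$, in which case it is a propagating arc of $\mathsf{ef}$ with label $u_j\le s_j$;
\item back at a left node of $\mathsf{e}$, necessarily some $p_{j'}$, since all other left nodes are already used by left arcs of $\mathsf{e}$. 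In that case it becomes a new left arc of $\mathsf{ef}$ joining $p_j$ and $p_{j'}$.
\end{itemize}
Consequently the propagating arcs of $\mathsf{ef}$ are exactly the surviving arcs, say those starting at $p_{i_1}<\dots<p_{i_m}$. Their labels are $u_1<\dots<u_m$ with $u_r\le s_{i_r}$, because the minimum over fragments is at most the label of the first fragment $\pi_{i_r}$.

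\textbf{Step 2: dominance.} We have $\PropLab(\mathsf{ef})=\{u_1<\dots<u_m\}$ with $m\le k$. Since $i_1<\dots<i_m\le k$, we get $i_{m-r}\le k-r$ for $0\le r<m$. Hence
\[
u_{m-r}\;\le\; s_{i_{m-r}}\;\le\; s_{k-r},
\]
which is exactly the condition $\PropLab(\mathsf{ef})\isdomeq\PropLab(\mathsf{e})$.

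\textbf{Step 3: the equality case.} Suppose $\PropLab(\mathsf{ef})=\PropLab(\mathsf{e})$. Then $m=k$, so every $\pi_j$ survives as a propagating arc, no new left arcs are created, and $u_j=s_j$ for all $j$. In that case the bra of $\mathsf{ef}$ has the same left arcs as $\mathsf{e}$ with the same labels, and the same propagating endpoints $p_j$ with the same labels $s_j$. Hence $\rightd{\mathsf{ef}}=\rightd{\mathsf{e}}$. Otherwise the inequality from Step 2 is strict, which gives the dichotomy in \cref{lemma:left-eq-or-prop-le}.

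The only delicate point is the claim in Step 1 that a path leaving $p_j$ which returns to the left side must land on some other propagating endpoint $p_{j'}$, and that the left arcs of $\mathsf{e}$ are untouched. This is a simple counting and occupancy argument, because every left node belongs to exactly one arc of $\mathsf{e}$. Everything else is bookkeeping with the minimum rule for labels.
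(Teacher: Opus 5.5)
Your proof is correct, but it takes a different route from the paper.

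The paper first reduces, by induction on a word for $\mathsf{f}$, to the case $\mathsf{f}=\e{i}$. It then splits into four cases according to whether the arcs of $\mathsf{e}$ at $\ov{i}$ and $\ov{i+1}$ are propagating. In each case it reads off either $\rightd{\mathsf{e}\e{i}}=\rightd{\mathsf{e}}$ or an explicit strict drop in $\PropLab$: either two labels are removed, or $\ell_{i+1}$ is replaced by a smaller $\ell_i$. The general statement follows by chaining these steps, using transitivity of dominance.

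You instead analyze the composition $\mathsf{e}\circ\mathsf{f}$ globally. The left arcs of $\mathsf{e}$ survive unchanged. The propagating arcs of $\mathsf{ef}$ start at a subset of the propagating endpoints of $\mathsf{e}$, with labels bounded by the original ones. This is exactly $\rightd{\mathsf{ef}}\trianglelefteq\rightd{\mathsf{e}}$ for the order of \cref{def:breleq-order}. The paper only invokes this later, in the proof of \cref{theo:r-order}, together with the remark that a strict relation there forces strict dominance.

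Your route needs neither generators nor induction and works uniformly for arbitrary $\mathsf{f}$. It also yields the stronger structural statement directly. The paper's route instead records precisely how a single generator changes the bra, which is more concrete but costs the reduction and the case check.

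One small point. Listing the surviving labels as $u_1<\dots<u_m$ in the order of their left endpoints uses that $\mathsf{ef}$ is again an Okada arc-diagram (\cref{prop.Okada_arc_semigroup}). You could also avoid this, since the pointwise bounds $u_r\le s_{i_r}$ already imply the comparison of the sorted sequences.
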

\begin{proof}
  By induction, it suffice to show the statement when $\mathsf{f}=\e{i}$ for some $i$.
  Consider the product $\mathsf{e} \e{i}$. Four cases are possible depending whether the
  arc $\alpha_i=\arc{\ov{i}}{\ell_i}{a_i}$ and
  $\alpha_{i+1}=\arc{\ov{i+1}}{\ell_{i+1}}{a_{i+1}}$ are propagating in $\mathsf{e}$ or not:
  \begin{itemize}
  \item If $\alpha_{i}$ and $\alpha_{i+1}$ are not propagating, then the arc
    $\arc{i}{i}{i+1}$ of $\e{i}$ joins two right arcs of $\mathsf{e}$, so that
    $\rightd{\mathsf{e} \e{i}}=\rightd{\mathsf{e}}$.
  \item If $\alpha_{i}$ is propagating but not $\alpha_{i+1}$, then $\alpha_{i+1}$ is nested
    in $\alpha_i$ so that the resulting arc is labelled $\ell_i$. Again,
    $\rightd{\mathsf{e} \e{i}}=\rightd{\mathsf{e}}$.
  \item If $\alpha_{i}$ is not propagating but $\alpha_{i+1}$ is, then either the label
    of $\ell_{i+1}\leq \ell_{i}$ and the resulting arc is labelled $\ell_{i+1}$, and once again
    $\rightd{\mathsf{e} \e{i}}=\rightd{\mathsf{e}}$. Otherwise the new label is $\ell_{i}$ and 
    $\PropLab(\mathsf{e} \e{i})=(\PropLab(\mathsf{e})\setminus\{\ell_{i+1}\})\cup\{\ell_{i}\}$. 
    By the nesting condition, $\ell_{i}$ must be larger than any other propagating
    label. It follows that $\PropLab(\mathsf{e} \e{i})\isdom\PropLab(\mathsf{e})$.
  \item Finally, if both $\alpha_i$ and $\alpha_{i+1}$ are propagating then, together
    with $\arc{i}{i}{i+1}$ in $\e{i}$, they create a non-propagating arc
    $\arc{a_i}{\ell_i}{a_{i+1}}$. Then
    $\PropLab(\mathsf{e} \e{i})=\PropLab(\mathsf{e})\setminus\{\ell_{i},\ell_{i+1}\}$ and again
    we have $\PropLab(\mathsf{e} \e{i})\isdom\PropLab(\mathsf{e})$. \qedhere
  \end{itemize}
\end{proof}
By mirror symmetry, one obtain the following corollary:
\begin{corollary}\label{cor:prod-inf-prop}
  Let $\mathsf{e}, \mathsf{f} \in\Okada$. Then $\PropLab(\mathsf{ef} )\isdomeq \PropLab(\mathsf{e} ) \wedge \PropLab(\mathsf{f} )$.
\end{corollary}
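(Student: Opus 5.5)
We obtain the corollary from \cref{lemma:left-eq-or-prop-le} applied once on each side, the second application going through the mirror anti-automorphism $\star$.

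First, \cref{lemma:left-eq-or-prop-le} applied to the pair $(\mathsf{e},\mathsf{f})$ gives either $\PropLab(\mathsf{ef})=\PropLab(\mathsf{e})$ or $\PropLab(\mathsf{ef})\isdom\PropLab(\mathsf{e})$. In both cases $\PropLab(\mathsf{ef})\isdomeq\PropLab(\mathsf{e})$.

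Second, we use the mirror map. By construction $D^\star$ swaps the bra and the ket of $D$, so $\rightd{D^\star}=\leftd{D}$. The Gluing lemma (\cref{lemma:gluing}) then yields $\PropLab(D^\star)=\PropLab(D)$ for every diagram $D$. Since $\star$ is an anti-automorphism, $(\mathsf{ef})^\star=\mathsf{f}^\star\mathsf{e}^\star$. Applying \cref{lemma:left-eq-or-prop-le} to the pair $(\mathsf{f}^\star,\mathsf{e}^\star)$ gives
\[
\PropLab(\mathsf{ef})=\PropLab(\mathsf{f}^\star\mathsf{e}^\star)\isdomeq\PropLab(\mathsf{f}^\star)=\PropLab(\mathsf{f}).
\]

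Finally, $\PropLab(\mathsf{ef})$ is a common lower bound of $\PropLab(\mathsf{e})$ and $\PropLab(\mathsf{f})$ in the same rank-$N$ poset $\YFSn$. By \cref{prop.dominance.lattice}, $\YFSn$ is a lattice, so the meet exists and any common lower bound lies below it. Hence $\PropLab(\mathsf{ef})\isdomeq\PropLab(\mathsf{e})\wedge\PropLab(\mathsf{f})$.

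The only point that needs care is the identity $\PropLab(D^\star)=\PropLab(D)$. It follows directly from the definitions: the mirror negates endpoints and keeps labels unchanged, so the propagating labels seen from the right equal those seen from the left, which is exactly what the Gluing lemma states.
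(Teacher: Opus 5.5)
Your proof is correct and follows the paper's route: the paper obtains this corollary from \cref{lemma:left-eq-or-prop-le} together with mirror symmetry, exactly as you do. You have simply made explicit the identity $\PropLab(D^\star)=\PropLab(D)$ and the step from a common lower bound in the lattice $(\YFSn,\isdomeq)$ to the meet.
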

We are now ready to describe the structure of the Okada monoid:
\begin{theorem}\label{theo:aperiodic}
  For all $N$, the monoid $\Okada$ is aperiodic.
\end{theorem}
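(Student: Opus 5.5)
We will use the characterization recalled in the background (Pin, Proposition 4.22): a finite monoid is aperiodic if and only if it is $\HH$-trivial. So the plan is to show that if $\mathsf{e}\HH\mathsf{f}$ in $\Okada$ then $\mathsf{e}=\mathsf{f}$, by reading off both half diagrams of an element from its Green classes.

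First we show that $\mathsf{e}\RR\mathsf{f}$ forces $\rightd{\mathsf{e}}=\rightd{\mathsf{f}}$. Suppose $\mathsf{f}=\mathsf{e}\mathsf{u}$ and $\mathsf{e}=\mathsf{f}\mathsf{v}$. By \cref{lemma:left-eq-or-prop-le}, either $\rightd{\mathsf{e}\mathsf{u}}=\rightd{\mathsf{e}}$, or $\PropLab(\mathsf{f})\isdom\PropLab(\mathsf{e})$. In the second case, applying the same lemma to $\mathsf{e}=\mathsf{f}\mathsf{v}$ gives $\PropLab(\mathsf{e})\isdomeq\PropLab(\mathsf{f})$. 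Combining the two yields $\PropLab(\mathsf{e})\isdom\PropLab(\mathsf{e})$, which is impossible since $\isdomeq$ is a partial order by \cref{prop.dominance.lattice}. Hence $\rightd{\mathsf{f}}=\rightd{\mathsf{e}}$.

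By the mirror anti-automorphism $\star$ (which swaps left and right multiplication and exchanges bras and kets), $\mathsf{e}\LL\mathsf{f}$ likewise forces $\leftd{\mathsf{e}}=\leftd{\mathsf{f}}$. Concretely, one applies the previous step to $\mathsf{e}^\star\RR\mathsf{f}^\star$ and uses $\leftd{D}=\rightd{D^\star}$.

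Now let $\mathsf{e}\HH\mathsf{f}$. Then both halves agree, and by the uniqueness part of the gluing lemma (\cref{lemma:gluing}), $\mathsf{e}=\glue{\rightd{\mathsf{e}}}{\leftd{\mathsf{e}}}=\mathsf{f}$. Hence $\Okada$ is $\HH$-trivial, and therefore aperiodic.

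The only delicate point is making sure that \cref{lemma:left-eq-or-prop-le} applies to arbitrary $\mathsf{u}$ and not just to generators. This is stated in the lemma itself, via induction on a word for $\mathsf{u}$, using transitivity of $\isdom$ and the fact that an unchanged bra keeps $\PropLab$ unchanged. Everything else is formal.

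Alternatively, one can bypass Green's relations entirely. For any $\mathsf{e}$, the sequence $\PropLab(\mathsf{e}^n)$ is weakly decreasing in the finite lattice $\YFSn$, so it stabilizes. Once it stabilizes, the same lemma shows that the bra and the ket of $\mathsf{e}^n$ stabilize too, and so $\mathsf{e}^n=\mathsf{e}^{n+1}$. The first argument is cleaner, so we would present that one.
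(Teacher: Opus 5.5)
Your proof is correct. Your main route differs from the paper's; the ``alternative'' you sketch at the end is exactly the paper's proof.

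The paper never mentions Green's relations here. It sets $P_k=\PropLab(\mathsf{e}^k)$ and notes via \cref{cor:prod-inf-prop} that this sequence is weakly decreasing for $\isdomeq$. It then picks $i$ with $P_i=P_{i+1}$ and applies \cref{lemma:left-eq-or-prop-le} and its mirror to $\mathsf{e}^i\cdot\mathsf{e}$ and $\mathsf{e}\cdot\mathsf{e}^i$. This shows that $\mathsf{e}^{i+1}$ and $\mathsf{e}^{i}$ have the same bra and ket, hence are equal.

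Your main route instead proves $\HH$-triviality. Your first step is the same antisymmetry argument the paper only uses later, for the forward direction of \cref{prop:r-class}. You mirror it for $\LL$, then close with gluing uniqueness and the finite-monoid equivalence from Pin.

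Both proofs rest on the same inputs: \cref{lemma:left-eq-or-prop-le}, antisymmetry of $\isdomeq$, and the fact that a diagram is determined by its two halves. The difference is packaging.

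The paper's version checks $\mathsf{e}^n=\mathsf{e}^{n+1}$ directly, with no external characterization. It also yields a uniform exponent: $P_k$ can drop strictly at most as many times as the height of the ranked lattice $(\YFSn,\isdomeq)$. So some $i$ at most one more than that height satisfies $\mathsf{e}^i=\mathsf{e}^{i+1}$.

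Your version yields the more structural statement that every $\HH$-class of $\Okada$ is trivial. It also shows along the way that $\RR$- and $\LL$-classes lie inside the fibres of $\mathsf{e}\mapsto\rightd{\mathsf{e}}$ and $\mathsf{e}\mapsto\leftd{\mathsf{e}}$. The cost is relying on Pin's Proposition~4.22, which is legitimate since $\Okada$ is finite, and getting no explicit exponent.
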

\begin{proof}
  Let $\mathsf{e}  \in\Okada$. Consider the sequence $P_k\eqdef\PropLab(\mathsf{e}^k)$. According to
  \cref{cor:prod-inf-prop}, it is decreasing with respect to $\isdomeq$ and therefore must
  eventually stabilize. Find an index $i$ such that $P_i=P_{i+1}$. Applying
  \cref{lemma:left-eq-or-prop-le} to $\mathsf{e}^i$ and $\mathsf{e} $, we get that
  $\rightd{\mathsf{e} ^{i+1}}=\rightd{\mathsf{e} ^i}$ \ssp . Likewise
$\leftd{\mathsf{e}^{i+1}}=\leftd{\mathsf{e}^i}$, and so $\mathsf{e}^{i+1}=\mathsf{e}^i$.
\end{proof}

\subsection{Green relations in the Okada monoid}
We are now ready to describe combinatorially the Green relations and
orders of the Okada monoid. We first study the double sided case:
Recall that $\e{}\leJJ\f{}$ means that there exists $\mathsf{u}$ and
$\mathsf{v}$ such that $\e{}=\mathsf{u}\f{}\mathsf{v}$ and that
$\e{} \JJ \f{}$ means $\e{} \leJJ \f{}$ and $\f{} \leJJ \e{}$. Finally, recall
that $\JJ$ is an equivalence relation. We begin by
describing a set of representative for $\JJ$ classes.
\begin{definition}
  A Okada half arc-diagram is \defn{free} if it contains only propagating arcs of the
  form $\harc{i}{i}{}$ and non-propagating arcs of the form $\arc{i}{i}{i+1}$. 
  An Okada arc-diagram
  $D\in\Okada$ is \defn{free} if $D=\glue{H}{H}$ for some free Okada half
  arc-diagram~$H$.
\end{definition}
The following two lemmas are easy.
\begin{lemma} The map
  $H \mapsto \PropLab(H)$ is a bijection between rank $N$ free Okada half arc-diagrams and rank $N$ Fibonacci
  sets.
\end{lemma}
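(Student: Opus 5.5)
The plan is to show the map is well defined into $\YFS_N$, injective, and surjective, by describing free half diagrams through their propagating index sets.

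First, well-definedness. Let $H$ be a free Okada half arc-diagram of rank $N$. Its propagating arcs are $\harc{i}{i}$, so $\PropLab(H)=\PropInd(H)$. By \cref{prop:proplab-index-fibo}, this set is a Fibonacci set of rank $N$.

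Second, injectivity. In a free diagram every arc has label equal to its lower endpoint. Also, every non-propagating arc joins two consecutive nodes $i,i+1$. Hence $H$ is completely determined by its set of propagating nodes $S=\PropInd(H)=\PropLab(H)$. Indeed, the complement $\setN\setminus S$ must be tiled by consecutive pairs. Since $S$ is Fibonacci, each gap between consecutive elements of $\{0\}\cup S\cup\{N+1\}$ has odd difference, so it contains an even number of nodes. Such a gap admits exactly one tiling by adjacent pairs $\{i,i+1\}$: the pairing read from the bottom of the gap upward. So $H$ is recovered from $S$, and the map is injective.

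Third, surjectivity. Given $S\in\YFS_N$, take the half diagram $\freeld{S}$ from \cref{lemma:connect-isdomeq} with $T=S$. It has propagating arcs $\harc{s}{s}$ for $s\in S$, and all its other arcs are of the form $\arc{i}{i}{i+1}$. So it is free, and $\PropLab\freeld{S}=S$.

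For completeness I would also check directly that the diagram built in the injectivity step is an Okada half diagram, rather than only citing \cref{lemma:connect-isdomeq}.
\begin{itemize}
\item The diagram is non-crossing because all arcs are either propagating or short arcs $\arc{i}{}{i+1}$.
\item The label $i$ of the arc $\arc{i}{}{i+1}$ satisfies the interval and parity conditions.
\item A short arc nests nothing. A short arc $\arc{i}{}{i+1}$ sitting above the propagating arc $\harc{s}{s}$ has $i>s$, so the nesting condition holds.
\item Nested propagating arcs $\harc{s}{s}$ and $\harc{s'}{s'}$ with $s<s'$ have increasing labels.
\end{itemize}

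The only delicate point is the parity of the gaps, which is exactly the Fibonacci-set condition; the rest of the argument is routine.
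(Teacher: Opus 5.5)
Your proof is correct and is the argument the paper has in mind; the paper calls this lemma easy and omits the proof. Labels equal lower endpoints and the complement of $S$ admits a unique tiling by adjacent pairs, so a free half diagram is determined by its propagating set, and surjectivity is given by $\freeld{S}$, which is the same diagram the paper uses for the free $\mathcal{J}$-class representatives. One small remark: injectivity does not need the odd gaps, since a tiling of an interval by adjacent pairs is unique whenever it exists; the parity condition is only needed for your direct existence check.
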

\begin{lemma}
  Let $\e{}  \in\Okada$ be a free element. Let $S \eqdef \PropLab(\e{})$ and define
  \begin{equation}
    F(S) \eqdef \set{i}{i-\max\set{j\in S}{j\leq i}\text{ is odd}} \, .
  \end{equation}
  Then $\e{} =\prod_{i\in F(S)}\e{i}$. The set $F(S)$ doesn't contain both $i$ and $i+1$
  for any $i\in\setN$. In particular the product is fully commutative.
  \end{lemma}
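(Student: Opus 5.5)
The statement has three parts: $F(S)$ has no two consecutive integers, the product $\prod_{i\in F(S)}\e{i}$ is fully commutative, and this product equals the free element $\e{}=\glue{H}{H}$ with $\PropLab(H)=S$. I would prove them in that order, using the diagram model throughout. By the preceding lemma, $H$ is the unique free half diagram with $\PropLab(H)=S$. Its propagating arcs are $\harc{s}{s}$ for $s\in S$, and the remaining nodes are covered by arcs $\arc{i}{i}{i+1}$.

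\textbf{Structure of $H$.} Between consecutive elements of $S\cup\{0\}$, and between $\max S$ (or $0$) and $N$, the gaps are even; this is the odd-difference remark after \cref{def:fibonacci-sets}. So the nodes strictly between $s_j$ and $s_{j+1}$ are paired consecutively as $(s_j+1,s_j+2),(s_j+3,s_j+4),\dots$. This gives a concrete description: the non-propagating arcs of $H$ are exactly $\arc{i}{i}{i+1}$ for $i\in F(S)$. Indeed, $i\notin S$ is the lower end of such an arc exactly when $i-\max\{j\in S\cup\{0\}: j\le i\}$ is odd. Here I read the max with the convention $\max\emptyset=0$, which is needed when $S=\emptyset$ or $i<\min S$.

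\textbf{No consecutive pair.} If $i\in F(S)$, then $i+1\notin S$ and $i+1$ lies in the same gap, with offset one larger, hence even. So $i+1\notin F(S)$. Consequently any two indices in $F(S)$ differ by at least $2$, the factors $\e{i}$ pairwise commute by relation~(\ref{relC}), and the product is well defined and fully commutative.

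\textbf{Computing the product.} The diagrams $\A{i}$, $i\in F(S)$, involve pairwise disjoint node sets $\{i,i+1\}$. Their product $\prod_{i\in F(S)}\A{i}$ is therefore obtained from $\id_N$ by replacing, for each $i\in F(S)$, the propagating arcs at $i,i+1$ with $\arc{i}{i}{i+1}$ and $\arc{\ov{i}}{i}{\ov{i+1}}$. There are no loops, and each product arc is the concatenation of the identity arc $\arc{a}{a}{\ov{a}}$ with such arcs. For the labels, a product arc on nodes $\{i,i+1\}$ on the left collects fragments labelled $i$ from $\A{i}$ and labels $\ge i$ from the other factors' identity arcs, so its label is $i$ by taking the minimum. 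The untouched nodes $a$ keep the label $a$. Thus the left half of the product has propagating arcs $\harc{a}{a}$ for $a\notin F(S)\cup(F(S)+1)$, which is exactly the set $S$, together with the arcs $\arc{i}{i}{i+1}$; this is precisely $H$. The right half is the mirror image, so the product is $\glue{H}{H}=\e{}$.

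The only point I expect to need care is the bookkeeping in the first step: verifying that the gap-parity description of $F(S)$ matches the unique completion of $H$ by consecutive arcs, including the boundary convention when no element of $S$ lies below $i$. Everything else is direct.
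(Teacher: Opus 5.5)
Your proof is correct and follows the same route as the paper. The paper's argument is simply that $F(S)$ is exactly the set of indices $i$ such that $\arc{i}{i}{i+1}$ is an arc of $\e{}$, from which the factorization and the absence of consecutive indices follow; you supply the details the paper leaves implicit: the gap-parity description of the free half diagram, the explicit product of the pairwise commuting $\A{i}$, and the convention $\max\emptyset=0$, which is indeed needed (e.g.\ for $S=\varnothing$).
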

\begin{proof}
  Note we can recover $S$ from $F(S)$ thanks to
  \begin{equation}
    S = \set{i}{i\notin F(S) \text{ and }i-1\notin F(S)}\,.
  \end{equation}
  The map $S \mapsto F(S)$ is a bijection between Fibonacci sets of rank
  $N$ and subsets of $\setN$ which don't contain consecutive indices. The
  Lemma follows from the fact that $F(S)$ is exactly the set of indices $i \in \setN$ such that
  $\arc{i}{i}{i+1}$ is an arc of $\e{}$.
\end{proof}
\begin{proposition}
\label{JJclass-free-rep}
  Each $\JJ$-class of $\Okada$ contains a unique free element. Moreover, the
  free representative of the $\JJ$-class of an element $\e{} \in\Okada$ is the
  unique free element having the same propagating labelling set as $\e{}$.
\end{proposition}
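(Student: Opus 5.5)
The plan is to prove two facts. First, every $\e{}\in\Okada$ is $\JJ$-equivalent to the free element $\mathsf{f}_S$ with $S=\PropLab(\e{})$. Second, $\JJ$-equivalent elements have the same propagating label set. Together with the bijection between free elements and Fibonacci sets (the lemma just before the statement), these give existence, uniqueness, and the characterization of the free representative.

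For the second fact I use \cref{cor:prod-inf-prop}. If $\e{}=\mathsf{u}\f{}\mathsf{v}$, then applying the corollary twice gives $\PropLab(\e{})\isdomeq\PropLab(\f{})$. So $\e{}\JJ\f{}$ forces $\PropLab(\e{})\isdomeq\PropLab(\f{})\isdomeq\PropLab(\e{})$, and antisymmetry of the dominance order (\cref{prop.dominance.lattice}) gives equality. Consequently two distinct free elements, having distinct propagating sets, can never be $\JJ$-equivalent. This yields uniqueness.

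For the first fact, fix $\e{}$ and let $S=\PropLab(\e{})$, $L=\rightd{\e{}}$, $R=\leftd{\e{}}$, and $F=\freeld{S}$, so that $\mathsf{f}_S=\glue{F}{F}$. All of $L,R,F$ have propagating label set $S$, so by the gluing lemma (\cref{lemma:gluing}) the diagrams $\glue{L}{F}$ and $\glue{F}{R}$ exist. Then \cref{lemma:mult-compat} gives
\[
  \glue{L}{F}\cdot\glue{F}{F}\cdot\glue{F}{R}=\glue{L}{R}=\e{},
\]
so $\e{}\leJJ\mathsf{f}_S$. In the other direction, the same lemma gives
\[
  \glue{F}{L}\cdot\glue{L}{R}\cdot\glue{R}{F}=\glue{F}{F}=\mathsf{f}_S,
\]
so $\mathsf{f}_S\leJJ\e{}$. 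Hence $\e{}\JJ\mathsf{f}_S$, which establishes existence, and the representative is exactly the free element sharing $\PropLab$ with $\e{}$.

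The main point needing care is that $\freeld{S}$ is genuinely the free half diagram with label set $S$. By \cref{lemma:connect-isdomeq}, with $T=S$, its propagating arcs are $\harc{s}{s}$ and all other arcs have the form $\arc{i}{i}{i+1}$. This is exactly the definition of a free half diagram, so $\glue{F}{F}$ is the unique free element with $\PropLab=S$.
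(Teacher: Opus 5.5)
Your proof is correct and follows the paper's argument. Existence comes from factorizing through $\glue{F}{F}$ via \cref{lemma:mult-compat}, and uniqueness comes from \cref{cor:prod-inf-prop} together with antisymmetry of $\isdomeq$. Your second identity $\glue{F}{L}\cdot\glue{L}{R}\cdot\glue{R}{F}=\glue{F}{F}$ has the halves in the right order. The paper's displayed version $\glue{H}{\leftd{\e{}}}\cdot\e{}\cdot\glue{\rightd{\e{}}}{H}$ swaps the two halves of $\e{}$, so yours is the one that \cref{lemma:mult-compat} actually justifies.
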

\begin{proof}
  Let $\e{} \in\Okada$. Set $H\eqdef\freeld{S}$ where $S=\PropLab(\e{})$. By
  \cref{lemma:mult-compat}, we have the factorization
  \[
    \e{} = \glue{\rightd{\e{}}}{H} \boldsymbol{\cdot} \glue{H}{\leftd{\e{}}}
    = \glue{\rightd{\e{}}}{H} \boldsymbol{\cdot} \glue{H}{H} \boldsymbol{\cdot} \glue{H}{\leftd{\e{}}}\,.
  \]
  Moreover
  \[
    \glue{H}{H} = \glue{H}{\leftd{\e{}}} \boldsymbol{\cdot} \e{} \boldsymbol{\cdot} \glue{\rightd{\e{}}}{H}\,.
  \]
  So each element lies in the $\JJ$-class of some free element. Conversely, by \cref{cor:prod-inf-prop}, two elements within the same
  $\JJ$-class must share the same $\PropLab$ set.
\end{proof}
We a now in a position to describe combinatorially the $\JJ$-relation
and order.
\begin{theorem}\label{theo:j-order-okada}
  Let $\e{}, \f{} \in\Okada$, then $\e{} \leJJ \f{}$ if and only if
  $\PropLab(\e{})\isdomeq\PropLab(\f{})$. In particular $\e{}, \f{}$
  belongs to the same $\JJ$-class if and only if
  $\PropLab(\e{})=\PropLab(\f{})$.
\end{theorem}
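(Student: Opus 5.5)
The forward implication is immediate from \cref{cor:prod-inf-prop}. If $\e{} = \mathsf{u}\f{}\mathsf{v}$, then applying the corollary twice gives
\[
\PropLab(\e{}) \isdomeq \PropLab(\mathsf{u}\f{}) \wedge \PropLab(\mathsf{v}) \isdomeq \PropLab(\mathsf{u}\f{}) \isdomeq \PropLab(\f{}).
\]
The statement about $\JJ$-classes then follows from the main equivalence by antisymmetry of $\isdomeq$ (it is a partial order, indeed a lattice by \cref{prop.dominance.lattice}).

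For the converse, suppose $S \eqdef \PropLab(\e{}) \isdomeq T \eqdef \PropLab(\f{})$. By \cref{JJclass-free-rep}, $\e{}$ is $\JJ$-equivalent to $\glue{\freeld{S}}{\freeld{S}}$ and $\f{}$ is $\JJ$-equivalent to $\glue{\freeld{T}}{\freeld{T}}$. So it suffices to show that the free element with label set $S$ lies $\leJJ$ below the free element with label set $T$. The natural tool is the surgery of \cref{lemma:connect-isdomeq}.

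Since $S \isdomeq T$, the half diagram $K \eqdef \linkd{T}{S}$ exists. Its propagating indices are $\tilde T$, the top $\#S$ elements of $T$, and its propagating labels are $S$. I would then build a diagram $\mathsf{u}$ whose right half is $K$ and whose left half is $\freeld{S}$; both halves have label set $S$, so the gluing lemma \cref{lemma:gluing} applies. Set $\mathsf{u} \eqdef \glue{\freeld{S}}{K}$ and $\mathsf{v} \eqdef \mathsf{u}^\star = \glue{K}{\freeld{S}}$. The claim to verify is
\[
\mathsf{u}\cdot\glue{\freeld{T}}{\freeld{T}}\cdot\mathsf{v} = \glue{\freeld{S}}{\freeld{S}}.
\]

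Verifying this product is the main step, and I would do it arc by arc. The right half of $\mathsf{u}$ has arcs $\arc{\ov{i}}{i}{\ov{i+1}}$ together with propagating arcs ending at the nodes $\ov{t}$ for $t\in\tilde T$. The free diagram for $T$ has left arcs $\arc{i}{i}{i+1}$ and horizontal propagating arcs $\arc{t}{t}{\ov t}$ for $t\in T$. The propagating arc of $\mathsf{u}$ labelled $s_{k-i}$ meets $\arc{t_{\ell-i}}{t_{\ell-i}}{\ov{t_{\ell-i}}}$ and continues straight through. The minimum of the labels is $s_{k-i}$, since $s_{k-i}\le t_{\ell-i}$. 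Symmetrically, the arc passes through $\mathsf{v}$ and returns to the node with label $s_{k-i}$. The propagating arcs of the $T$-diagram at nodes of $T\setminus\tilde T$ get capped on both sides by the short arcs of $K$ and $K^\star$, so they become non-propagating or loops. The non-propagating arcs of the outer factors remain those of $\freeld{S}$, so the left and right halves of the product are $\freeld{S}$.

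I expect the delicate point to be checking that these capped pieces and the loops never reconnect propagating strands or create arcs with wrong labels. A cleaner way to sidestep this is to invoke \cref{lemma:left-eq-or-prop-le}. The product has left half equal to $\freeld{S}$ unless its $\PropLab$ drops strictly below $S$. Counting the straight-through strands shows that $\PropLab$ is exactly $S$, and the left and right halves are forced by the outer factors, which pins down the product.
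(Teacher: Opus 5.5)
Your proof is correct and follows the paper's route. The paper also reduces to free representatives via \cref{JJclass-free-rep} and uses $\mathsf{g}\eqdef\glue{\linkd{S}{S}}{\linkd{T}{S}}$, which is exactly your $\mathsf{u}=\glue{\freeld{S}}{K}$; it then simply asserts $\mathsf{g}\cdot\f{}\cdot\mathsf{g}^\star=\e{}$, and your strand-tracing supplies that check. The step you flag as delicate is harmless: the non-propagating left arcs of the first factor and right arcs of the last factor persist unchanged in any product, the $\#S$ traced strands get labels $\min(s_{k-i},t_{\ell-i})=s_{k-i}$, and every other middle piece has no free endpoint left and so closes into a loop.
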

\begin{proof}
  The implication ($\Rightarrow$) is already known from \cref{cor:prod-inf-prop}.
  To prove the converse, we can assume, without loss of generality, that $\e{}$
  and $\f{}$ are free. Let $S = \PropLab(\e{})$ and $T = \PropLab( \f{} )$ and
  define
  \begin{equation*}
    \mathsf{g} \eqdef \glue{\linkd{S}{S}}{\linkd{T}{S}}
  \end{equation*}
  from~\cref{lemma:connect-isdomeq}. Then $\mathsf{g} \boldsymbol{\cdot} \f{} \boldsymbol{\cdot} \mathsf{g}^\star = \e{}$, which
  proves that $\e{} \leJJ \f{}$.
\end{proof}

We now turn our attension to the one-sided Green relations.
Recall that $\e{} \leRR \f{}$ means that there exists $\mathsf{g}$ such that $\e{} = \f{} \boldsymbol{\cdot} \mathsf{g}$ and that
$\e{} \RR \f{}$ means $\e{} \leRR \f{}$ and $\f{} \leRR \e{}$. Thanks to the previous lemmas, we
are ready to describe the $\RR$-classes of $\Okada$:
\begin{proposition}\label{prop:r-class}
  Let $\e{}, \f{} \in\Okada$. Then $\e{} \RR \f{}$ if and only if \, $\rightd{\e{}}=\rightd{ \f{} }$.
\end{proposition}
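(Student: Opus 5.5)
The plan is to prove the two implications separately, both resting on \cref{lemma:mult-compat} and \cref{lemma:left-eq-or-prop-le}.

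\emph{($\Leftarrow$)} Suppose $\rightd{\e{}}=\rightd{\f{}}=:L$. Both diagrams then have propagating label set $S=\PropLab(L)$. We can write $\e{}=\glue{L}{\leftd{\e{}}}$ and $\f{}=\glue{L}{\leftd{\f{}}}$. Applying \cref{lemma:mult-compat} with $F=G=L$ and $H=\leftd{\e{}}$ gives
\[
\f{}\boldsymbol{\cdot}\glue{\leftd{\f{}}}{\leftd{\e{}}}
=\glue{L}{\leftd{\f{}}}\boldsymbol{\cdot}\glue{\leftd{\f{}}}{\leftd{\e{}}}
=\glue{L}{\leftd{\e{}}}=\e{}.
\]
Here $\glue{\leftd{\f{}}}{\leftd{\e{}}}$ is a legitimate element by the gluing lemma, since both half diagrams have label set $S$. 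This shows $\e{}\leRR\f{}$, and the symmetric argument gives $\f{}\leRR\e{}$.

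\emph{($\Rightarrow$)} Suppose $\e{}\RR\f{}$, so that $\e{}=\f{}\mathsf{g}$ and $\f{}=\e{}\mathsf{h}$. By \cref{lemma:left-eq-or-prop-le} applied to $\f{}\mathsf{g}$, either $\rightd{\e{}}=\rightd{\f{}}$ (and we are done), or $\PropLab(\e{})\isdom\PropLab(\f{})$. Applying the same lemma to $\e{}\mathsf{h}$ gives $\PropLab(\f{})\isdomeq\PropLab(\e{})$. In the second case these two relations together would give $\PropLab(\e{})\isdom\PropLab(\e{})$, contradicting antisymmetry of the dominance order (\cref{prop.dominance.lattice}). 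Hence $\rightd{\e{}}=\rightd{\f{}}$.

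The only delicate point is that \cref{lemma:left-eq-or-prop-le} is stated as a genuine dichotomy with strict inequality in the second branch. Its proof by induction on a factorization of $\mathsf{f}$ into generators preserves this, because each generator step either leaves the bra unchanged or strictly decreases $\PropLab$. So the argument above goes through with no further work.
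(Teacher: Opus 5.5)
Your proof is correct and follows the same route as the paper: the forward direction combines \cref{lemma:left-eq-or-prop-le} with the strictness of $\isdom$, and the converse exhibits the right multiplier $\glue{\leftd{\f{}}}{\leftd{\e{}}}$ via \cref{lemma:mult-compat}. One small slip: the substitution in \cref{lemma:mult-compat} is $F=L$, $G=\leftd{\f{}}$, $H=\leftd{\e{}}$, not $F=G=L$, although your displayed computation already uses the right one.
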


\begin{proof}
  If $\e{} \RR \f{}$ but $\rightd{\e{}}\neq\rightd{\f{} }$ by \cref{lemma:left-eq-or-prop-le},
  both $\PropLab(\e{})\isdom\PropLab(\f{})$ and $\PropLab(\f{})\isdom\PropLab(\e{})$ holds.
  This is absurd since $\isdom$ is a strict order.  Conversely,
  \cref{lemma:mult-compat} shows that if $\rightd{\e{}}=\rightd{\f{}}$ then
  $\e{} \boldsymbol{\cdot} \glue{\rightd{\e{}}}{\leftd{\f{}}}=\glue{\rightd{\f{} }}{\leftd{\e{} }} \boldsymbol{\cdot}
  \glue{\rightd{\e{}  }}{\leftd{ \f{} }}= \f{}$.  This show that $\f{} \leRR \e{} $. By symmetry
  $\e{} \leRR \f{}$ and so $\e{} \RR \f{}$.
\end{proof}
\begin{corollary}
  Each $\RR$-class of $\Okada$ contains a unique involutive element.
\end{corollary}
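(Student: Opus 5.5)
The plan is to derive the corollary from \cref{prop:r-class}, which says that $\RR$-classes are exactly the fibres of the map $\e{}\mapsto\rightd{\e{}}$. So it suffices to show that for each half Okada arc-diagram $H$ arising as a bra there is exactly one involutive element $\mathsf{g}$ with $\rightd{\mathsf{g}}=H$.

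For existence, fix $\e{}\in\Okada$ and put $H\eqdef\rightd{\e{}}$. By the Gluing Lemma (\cref{lemma:gluing}) the element $\glue{H}{H}$ is a well-defined Okada arc-diagram, since $H$ trivially has the same propagating label set as itself. By the definition of gluing, $\rightd{\glue{H}{H}}=H=\rightd{\e{}}$, so \cref{prop:r-class} gives $\glue{H}{H}\RR\e{}$. Moreover $\glue{H}{H}$ is involutive by definition.

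For uniqueness, suppose $\mathsf{g}$ and $\mathsf{g}'$ are involutive and lie in the same $\RR$-class as $\e{}$. Write $\mathsf{g}=\glue{K}{K}$ and $\mathsf{g}'=\glue{K'}{K'}$. Then $\rightd{\mathsf{g}}=K$ and $\rightd{\mathsf{g}'}=K'$, and \cref{prop:r-class} forces $K=K'=\rightd{\e{}}$. Hence $\mathsf{g}=\mathsf{g}'$, using the uniqueness part of the Gluing Lemma.

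The only point needing care is that an involutive element is determined by its bra: the ket of $\glue{K}{K}$ is again $K$, so the element is literally $\glue{\rightd{\mathsf{g}}}{\rightd{\mathsf{g}}}$. This follows directly from the definition of involutive element together with the uniqueness statement in \cref{lemma:gluing}, so I expect no real obstacle.
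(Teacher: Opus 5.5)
Your argument is correct and is essentially the reasoning the paper intends: the paper states this corollary without a separate proof, as an immediate consequence of \cref{prop:r-class}. The other ingredient is the observation that an involutive element $\glue{H}{H}$ has bra $H$ and is therefore determined by it, which is exactly how you handle existence and uniqueness.
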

Involutive elements are not the only representative of right classes. Another
possible choice of representative is $\glue{H}{\freeld{S}}$ where $S = \PropLab(H)$, 
whose reduced factorization in $\Okada$ has shortest length.
A third representative is
$\glue{H}{\topd{S}}$ whose
reduced factorization is longest.  \bigskip

\newcommand{\brel}{\triangleleft}%
\newcommand{\breleq}{\trianglelefteq}%
\begin{figure}[ht]
  \centering
  $\begin{array}{c}
    \begin{tikzpicture}[ultra thick,yscale=0.5,
  baseline={(current bounding box.east)},
background rectangle/.style={fill=gray!25, rounded corners}, show background rectangle]
  \tikzstyle{mid}=[draw,thick,black,anchor=center,fill=white,
                   shape=circle,minimum size=2pt,inner sep=1pt]
  \tikzstyle{walkv}=[draw, line width=2pt]
\node[inner sep=0pt, anchor=east] (1) at (0, 1) {$\!\{1\}$\,};
\node[inner sep=0pt, anchor=east] (2) at (0, 2) {$\!\{1, 2\}$\,};
\node[inner sep=0pt, anchor=east] (3) at (0, 3) {$\!\{1\}$\,};
\node[inner sep=0pt, anchor=east] (4) at (0, 4) {$\!\{1, 4\}$\,};
\node[inner sep=0pt, anchor=east] (5) at (0, 5) {$\!\{1, 4, 5\}$\,};
\node[inner sep=0pt, anchor=east] (6) at (0, 6) {$\!\{1, 4\}$\,};
\node[inner sep=0pt, anchor=east] (7) at (0, 7) {$\!\{1\}$\,};
\node[inner sep=0pt, anchor=east] (8) at (0, 8) {$\!\{1, 4\}$\,};
\draw[walkv] (1) -- ($(1.east) + (1, 0)$) node[pos=0.5,mid] {$1$};
\draw[walkv] (2.east) .. controls +(0.50, 0.25) and +(0.50, -0.25) .. (3.east)
    node[pos=0.5,mid] {$2$};
\draw[walkv] (4.east) .. controls +(1.30, 0.25) and +(1.30, -0.25) .. (7.east)
    node[pos=0.5,mid] {$4$};
\draw[walkv] (5.east) .. controls +(0.50, 0.25) and +(0.50, -0.25) .. (6.east)
    node[pos=0.5,mid] {$5$};
\draw[walkv] (8) -- ($(8.east) + (1, 0)$) node[pos=0.5,mid] {$4$};
\end{tikzpicture}
    \\
     \rotatebox[origin=c]{90}{\scalebox{1.8}{\ $\ntrianglerighteq$\ }}
    \\
    \begin{tikzpicture}[ultra thick,yscale=0.5,
  baseline={(current bounding box.east)},
background rectangle/.style={fill=gray!25, rounded corners}, show background rectangle]
  \tikzstyle{mid}=[draw,thick,black,anchor=center,fill=white,
                   shape=circle,minimum size=2pt,inner sep=1pt]
  \tikzstyle{walkv}=[draw, line width=2pt]
\node[inner sep=0pt, anchor=east] (1) at (0, 1) {$\!\{1\}$\,};
\node[inner sep=0pt, anchor=east] (2) at (0, 2) {$\!\emptyset$\,};
\node[inner sep=0pt, anchor=east] (3) at (0, 3) {$\!\{3\}$\,};
\node[inner sep=0pt, anchor=east] (4) at (0, 4) {$\!\{3, 4\}$\,};
\node[inner sep=0pt, anchor=east] (5) at (0, 5) {$\!\{3, 4, 5\}$\,};
\node[inner sep=0pt, anchor=east] (6) at (0, 6) {$\!\{3, 4\}$\,};
\node[inner sep=0pt, anchor=east] (7) at (0, 7) {$\!\{3\}$\,};
\node[inner sep=0pt, anchor=east] (8) at (0, 8) {$\!\{3, 6\}$\,};
\draw[walkv] (1.east) .. controls +(0.50, 0.25) and +(0.50, -0.25) .. (2.east)
    node[pos=0.5,mid] {$1$};
\draw[walkv] (3) -- ($(3.east) + (1, 0)$) node[pos=0.5,mid] {$3$};
\draw[walkv] (4.east) .. controls +(1.30, 0.25) and +(1.30, -0.25) .. (7.east)
    node[pos=0.5,mid] {$4$};
\draw[walkv] (5.east) .. controls +(0.50, 0.25) and +(0.50, -0.25) .. (6.east)
    node[pos=0.5,mid] {$5$};
\draw[walkv] (8) -- ($(8.east) + (1, 0)$) node[pos=0.5,mid] {$6$};
\end{tikzpicture}
    \\
     \rotatebox[origin=c]{90}{\scalebox{1.8}{\ $\trianglelefteq$\ }}
    \\
  \begin{tikzpicture}[ultra thick,yscale=0.5,
  baseline={(current bounding box.east)},
background rectangle/.style={fill=gray!25, rounded corners}, show background rectangle]
  \tikzstyle{mid}=[draw,thick,black,anchor=center,fill=white,
                   shape=circle,minimum size=2pt,inner sep=1pt]
  \tikzstyle{walkv}=[draw, line width=2pt]
\node[inner sep=0pt, anchor=east] (1) at (0, 1) {$\!\{1\}$\,};
\node[inner sep=0pt, anchor=east] (2) at (0, 2) {$\!\emptyset$\,};
\node[inner sep=0pt, anchor=east] (3) at (0, 3) {$\!\{1\}$\,};
\node[inner sep=0pt, anchor=east] (4) at (0, 4) {$\!\{1, 4\}$\,};
\node[inner sep=0pt, anchor=east] (5) at (0, 5) {$\!\{1, 4, 5\}$\,};
\node[inner sep=0pt, anchor=east] (6) at (0, 6) {$\!\{1, 4\}$\,};
\node[inner sep=0pt, anchor=east] (7) at (0, 7) {$\!\{1\}$\,};
\node[inner sep=0pt, anchor=east] (8) at (0, 8) {$\!\emptyset$\,};
\draw[walkv] (1.east) .. controls +(0.50, 0.25) and +(0.50, -0.25) .. (2.east)
    node[pos=0.5,mid] {$1$};
\draw[walkv] (3.east) .. controls +(2.10, 0.25) and +(2.10, -0.25) .. (8.east)
    node[pos=0.5,mid] {$1$};
\draw[walkv] (4.east) .. controls +(1.30, 0.25) and +(1.30, -0.25) .. (7.east)
    node[pos=0.5,mid] {$4$};
\draw[walkv] (5.east) .. controls +(0.50, 0.25) and +(0.50, -0.25) .. (6.east)
    node[pos=0.5,mid] {$5$};
  \end{tikzpicture}
   \end{array}$
   %%%%%%%%%%%%%%%%%%%%%%%%%
   \qquad\qquad
  \begin{tikzpicture}[>=latex,line join=bevel,yscale=0.8,baseline=(current bounding box.center)]
\node (node_0) at (122.0bp,32.5bp) [draw,draw=none] {$\begin{tikzpicture}[ultra thick,yscale=0.5,  baseline={(current bounding box.east)},background rectangle/.style={ fill=gray!25, rounded corners }, show background rectangle]  \tikzstyle{mid}=[draw,thick,black,anchor=center,fill=white,                   shape=circle,minimum size=2pt,inner sep=1pt]  \tikzstyle{walkv}=[draw, line width=2pt]\node[inner sep=0pt, anchor=east] (1) at (0, 1) {$\{1\}\,$};\node[inner sep=0pt, anchor=east] (2) at (0, 2) {$\emptyset\,$};\node[inner sep=0pt, anchor=east] (3) at (0, 3) {$\{1\}\,$};\node[inner sep=0pt, anchor=east] (4) at (0, 4) {$\emptyset\,$};\draw[walkv] (1.east) .. controls +(0.50, 0.25) and +(0.50, -0.25) .. (2.east)    node[pos=0.5,mid] {$1$};\draw[walkv] (3.east) .. controls +(0.50, 0.25) and +(0.50, -0.25) .. (4.east)    node[pos=0.5,mid] {$1$};\end{tikzpicture}$};
  \node (node_1) at (122.0bp,137.5bp) [draw,draw=none] {$\begin{tikzpicture}[ultra thick,yscale=0.5,  baseline={(current bounding box.east)},background rectangle/.style={ fill=gray!25, rounded corners }, show background rectangle]  \tikzstyle{mid}=[draw,thick,black,anchor=center,fill=white,                   shape=circle,minimum size=2pt,inner sep=1pt]  \tikzstyle{walkv}=[draw, line width=2pt]\node[inner sep=0pt, anchor=east] (1) at (0, 1) {$\{1\}\,$};\node[inner sep=0pt, anchor=east] (2) at (0, 2) {$\emptyset\,$};\node[inner sep=0pt, anchor=east] (3) at (0, 3) {$\{1\}\,$};\node[inner sep=0pt, anchor=east] (4) at (0, 4) {$\{1, 2\}\,$};\draw[walkv] (1.east) .. controls +(0.50, 0.25) and +(0.50, -0.25) .. (2.east)    node[pos=0.5,mid] {$1$};\draw[walkv] (3) -- ($(3.east) + (1, 0)$) node[pos=0.5,mid] {$1$};\draw[walkv] (4) -- ($(4.east) + (1, 0)$) node[pos=0.5,mid] {$2$};\end{tikzpicture}$};
  \node (node_2) at (122.0bp,247.5bp) [draw,draw=none] {$\begin{tikzpicture}[ultra thick,yscale=0.5,  baseline={(current bounding box.east)},background rectangle/.style={ fill=gray!25, rounded corners }, show background rectangle]  \tikzstyle{mid}=[draw,thick,black,anchor=center,fill=white,                   shape=circle,minimum size=2pt,inner sep=1pt]  \tikzstyle{walkv}=[draw, line width=2pt]\node[inner sep=0pt, anchor=east] (1) at (0, 1) {$\{1\}\,$};\node[inner sep=0pt, anchor=east] (2) at (0, 2) {$\emptyset\,$};\node[inner sep=0pt, anchor=east] (3) at (0, 3) {$\{1\}\,$};\node[inner sep=0pt, anchor=east] (4) at (0, 4) {$\{1, 4\}\,$};\draw[walkv] (1.east) .. controls +(0.50, 0.25) and +(0.50, -0.25) .. (2.east)    node[pos=0.5,mid] {$1$};\draw[walkv] (3) -- ($(3.east) + (1, 0)$) node[pos=0.5,mid] {$1$};\draw[walkv] (4) -- ($(4.east) + (1, 0)$) node[pos=0.5,mid] {$4$};\end{tikzpicture}$};
  \node (node_8) at (122.0bp,358.5bp) [draw,draw=none] {$\begin{tikzpicture}[ultra thick,yscale=0.5,  baseline={(current bounding box.east)},background rectangle/.style={ fill=gray!25, rounded corners }, show background rectangle]  \tikzstyle{mid}=[draw,thick,black,anchor=center,fill=white,                   shape=circle,minimum size=2pt,inner sep=1pt]  \tikzstyle{walkv}=[draw, line width=2pt]\node[inner sep=0pt, anchor=east] (1) at (0, 1) {$\{1\}\,$};\node[inner sep=0pt, anchor=east] (2) at (0, 2) {$\emptyset\,$};\node[inner sep=0pt, anchor=east] (3) at (0, 3) {$\{3\}\,$};\node[inner sep=0pt, anchor=east] (4) at (0, 4) {$\{3, 4\}\,$};\draw[walkv] (1.east) .. controls +(0.50, 0.25) and +(0.50, -0.25) .. (2.east)    node[pos=0.5,mid] {$1$};\draw[walkv] (3) -- ($(3.east) + (1, 0)$) node[pos=0.5,mid] {$3$};\draw[walkv] (4) -- ($(4.east) + (1, 0)$) node[pos=0.5,mid] {$4$};\end{tikzpicture}$};
  \node (node_3) at (41.0bp,137.5bp) [draw,draw=none] {$\begin{tikzpicture}[ultra thick,yscale=0.5,  baseline={(current bounding box.east)},background rectangle/.style={ fill=gray!25, rounded corners }, show background rectangle]  \tikzstyle{mid}=[draw,thick,black,anchor=center,fill=white,                   shape=circle,minimum size=2pt,inner sep=1pt]  \tikzstyle{walkv}=[draw, line width=2pt]\node[inner sep=0pt, anchor=east] (1) at (0, 1) {$\{1\}\,$};\node[inner sep=0pt, anchor=east] (2) at (0, 2) {$\{1, 2\}\,$};\node[inner sep=0pt, anchor=east] (3) at (0, 3) {$\{1\}\,$};\node[inner sep=0pt, anchor=east] (4) at (0, 4) {$\emptyset\,$};\draw[walkv] (1.east) .. controls +(1.30, 0.25) and +(1.30, -0.25) .. (4.east)    node[pos=0.5,mid] {$1$};\draw[walkv] (2.east) .. controls +(0.50, 0.25) and +(0.50, -0.25) .. (3.east)    node[pos=0.5,mid] {$2$};\end{tikzpicture}$};
  \node (node_4) at (41.0bp,247.5bp) [draw,draw=none] {$\begin{tikzpicture}[ultra thick,yscale=0.5,  baseline={(current bounding box.east)},background rectangle/.style={ fill=gray!25, rounded corners }, show background rectangle]  \tikzstyle{mid}=[draw,thick,black,anchor=center,fill=white,                   shape=circle,minimum size=2pt,inner sep=1pt]  \tikzstyle{walkv}=[draw, line width=2pt]\node[inner sep=0pt, anchor=east] (1) at (0, 1) {$\{1\}\,$};\node[inner sep=0pt, anchor=east] (2) at (0, 2) {$\{1, 2\}\,$};\node[inner sep=0pt, anchor=east] (3) at (0, 3) {$\{1\}\,$};\node[inner sep=0pt, anchor=east] (4) at (0, 4) {$\{1, 2\}\,$};\draw[walkv] (1) -- ($(1.east) + (1, 0)$) node[pos=0.5,mid] {$1$};\draw[walkv] (2.east) .. controls +(0.50, 0.25) and +(0.50, -0.25) .. (3.east)    node[pos=0.5,mid] {$2$};\draw[walkv] (4) -- ($(4.east) + (1, 0)$) node[pos=0.5,mid] {$2$};\end{tikzpicture}$};
  \node (node_5) at (41.0bp,358.5bp) [draw,draw=none] {$\begin{tikzpicture}[ultra thick,yscale=0.5,  baseline={(current bounding box.east)},background rectangle/.style={ fill=gray!25, rounded corners }, show background rectangle]  \tikzstyle{mid}=[draw,thick,black,anchor=center,fill=white,                   shape=circle,minimum size=2pt,inner sep=1pt]  \tikzstyle{walkv}=[draw, line width=2pt]\node[inner sep=0pt, anchor=east] (1) at (0, 1) {$\{1\}\,$};\node[inner sep=0pt, anchor=east] (2) at (0, 2) {$\{1, 2\}\,$};\node[inner sep=0pt, anchor=east] (3) at (0, 3) {$\{1\}\,$};\node[inner sep=0pt, anchor=east] (4) at (0, 4) {$\{1, 4\}\,$};\draw[walkv] (1) -- ($(1.east) + (1, 0)$) node[pos=0.5,mid] {$1$};\draw[walkv] (2.east) .. controls +(0.50, 0.25) and +(0.50, -0.25) .. (3.east)    node[pos=0.5,mid] {$2$};\draw[walkv] (4) -- ($(4.east) + (1, 0)$) node[pos=0.5,mid] {$4$};\end{tikzpicture}$};
  \node (node_9) at (122.0bp,469.5bp) [draw,draw=none] {$\begin{tikzpicture}[ultra thick,yscale=0.5,  baseline={(current bounding box.east)},background rectangle/.style={ fill=gray!25, rounded corners }, show background rectangle]  \tikzstyle{mid}=[draw,thick,black,anchor=center,fill=white,                   shape=circle,minimum size=2pt,inner sep=1pt]  \tikzstyle{walkv}=[draw, line width=2pt]\node[inner sep=0pt, anchor=east] (1) at (0, 1) {$\{1\}\,$};\node[inner sep=0pt, anchor=east] (2) at (0, 2) {$\{1, 2\}\,$};\node[inner sep=0pt, anchor=east] (3) at (0, 3) {$\{1, 2, 3\}\,$};\node[inner sep=0pt, anchor=east] (4) at (0, 4) {$\{1, 2, 3, 4\}\,$};\draw[walkv] (1) -- ($(1.east) + (1, 0)$) node[pos=0.5,mid] {$1$};\draw[walkv] (2) -- ($(2.east) + (1, 0)$) node[pos=0.5,mid] {$2$};\draw[walkv] (3) -- ($(3.east) + (1, 0)$) node[pos=0.5,mid] {$3$};\draw[walkv] (4) -- ($(4.east) + (1, 0)$) node[pos=0.5,mid] {$4$};\end{tikzpicture}$};
  \node (node_6) at (208.0bp,247.5bp) [draw,draw=none] {$\begin{tikzpicture}[ultra thick,yscale=0.5,  baseline={(current bounding box.east)},background rectangle/.style={ fill=gray!25, rounded corners }, show background rectangle]  \tikzstyle{mid}=[draw,thick,black,anchor=center,fill=white,                   shape=circle,minimum size=2pt,inner sep=1pt]  \tikzstyle{walkv}=[draw, line width=2pt]\node[inner sep=0pt, anchor=east] (1) at (0, 1) {$\{1\}\,$};\node[inner sep=0pt, anchor=east] (2) at (0, 2) {$\emptyset\,$};\node[inner sep=0pt, anchor=east] (3) at (0, 3) {$\{3\}\,$};\node[inner sep=0pt, anchor=east] (4) at (0, 4) {$\emptyset\,$};\draw[walkv] (1.east) .. controls +(0.50, 0.25) and +(0.50, -0.25) .. (2.east)    node[pos=0.5,mid] {$1$};\draw[walkv] (3.east) .. controls +(0.50, 0.25) and +(0.50, -0.25) .. (4.east)    node[pos=0.5,mid] {$3$};\end{tikzpicture}$};
  \node (node_7) at (208.0bp,358.5bp) [draw,draw=none] {$\begin{tikzpicture}[ultra thick,yscale=0.5,  baseline={(current bounding box.east)},background rectangle/.style={ fill=gray!25, rounded corners }, show background rectangle]  \tikzstyle{mid}=[draw,thick,black,anchor=center,fill=white,                   shape=circle,minimum size=2pt,inner sep=1pt]  \tikzstyle{walkv}=[draw, line width=2pt]\node[inner sep=0pt, anchor=east] (1) at (0, 1) {$\{1\}\,$};\node[inner sep=0pt, anchor=east] (2) at (0, 2) {$\{1, 2\}\,$};\node[inner sep=0pt, anchor=east] (3) at (0, 3) {$\{1, 2, 3\}\,$};\node[inner sep=0pt, anchor=east] (4) at (0, 4) {$\{1, 2\}\,$};\draw[walkv] (1) -- ($(1.east) + (1, 0)$) node[pos=0.5,mid] {$1$};\draw[walkv] (2) -- ($(2.east) + (1, 0)$) node[pos=0.5,mid] {$2$};\draw[walkv] (3.east) .. controls +(0.50, 0.25) and +(0.50, -0.25) .. (4.east)    node[pos=0.5,mid] {$3$};\end{tikzpicture}$};
  \draw [black,->] (node_0) -- (node_1);
  \draw [black,->] (node_1) -- (node_2);
  \draw [black,->] (node_2) -- (node_8);
  \draw [black,->] (node_3) -- (node_4);
  \draw [black,->] (node_4) -- (node_5);
  \draw [black,->] (node_5) -- (node_9);
  \draw [black,->] (node_6) -- (node_7);
  \draw [black,->] (node_6) -- (node_8);
  \draw [black,->] (node_7) -- (node_9);
  \draw [black,->] (node_8) -- (node_9);
\end{tikzpicture}
\caption[The Green $\RR$ order for the Okada monoid]{
 On the left are three half diagrams: The bottom-most diagram is smaller
  (in Green $\RR$ order) than the
  middle diagram, while the top-most diagram is not. On the right is a picture of the Hasse diagram of the Green
  $\RR$ order for the Okada monoid $\Okada[4]$}.
  \label{fig:Green-order}
\end{figure}
\bigskip

We now present a combinatorial description of the right Green order. See ~\cref{fig:Green-order}
for an illustration of the Hasse diagram of this order when $N=4$.
We proceed backwards: First we define
an auxiliary order on half-diagrams, we then examine its combinatorial structure, and then finish by showing that this order
on half-diagrams is isomorphic to the right Green order.
\pagebreak[2]
\begin{definition}
\label{def:breleq-order}
  Let $ \breleq $ denote the binary relation on half diagrams where  $G \breleq H$ indicates that
    \begin{enumerate}
  \item $\arc{i}{\ell}{j}$ is an arc of $G$ whenever it is an arc of $H$
  \item the arc of $G$ incident to node $i$ (which could be propagating or not) has a label
    smaller or equal than $\ell$ whenever $\harc{i}{\ell}$ is a propagating arc of $H$.
  \end{enumerate}
  We write $G \brel H$ when $G \breleq H$ and $G \ne H$.
  \end{definition}
An immediate consequence of the definition is that $G \brel H$ implies that
$\PropLab(G)\isdom\PropLab(H)$.
\begin{proposition} \label{prop:r-combi-order}
  The relation $\breleq$ defines a ranked join-semilattice such that
  $G \brel H$ is a covering relation if and only if $G$ can be obtained by
  % $G \brel H$ is a covering relation if and only if $G$ can be obtained by
  % the elements covered by a half diagram $H$ are the half-diagrams $G$ obtained by
    \begin{enumerate}
    \item \label{cover-R-glue} either gluing together two propagating arcs
       in $H$, with consecutive labels $\ell$ and $\ell+1$, to obtain a non-propagating arc labelled $\ell$
    \item\label{cover-R-label} or replacing a label $\ell$ by $\ell-2$ in some
      propagating arc $\harc{i}{\ell}$ of $H$.
    \end{enumerate}
 Futhermore, the rank of $H$ in the dual poset is given by
    \[
      \frac12\left(\operatorname{NP}(H) + \sum_{i=1}^N i - \ell(i)\right)\,.
    \]
    where $\operatorname{NP}(H)$ is the number of non-propagating arcs of $H$
    and $\ell(i)$ is the label of the arc incident to node $i$ in $H$. The maximal rank is
    $\left\lfloor\frac{N^2}4\right\rfloor$ (\cite[Sequence A002620]{OEIS}).
\end{proposition}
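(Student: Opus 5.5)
The plan is to first check that $\breleq$ is a partial order on rank $N$ Okada half arc-diagrams. Then the two moves of the statement are shown to be strict covers that raise the proposed rank function by one in the dual poset, and every strict relation $G\brel H$ is shown to factor into such moves. The join-semilattice structure comes after that.

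\emph{Partial order.} Reflexivity is clear. For transitivity, suppose $G\breleq H\breleq K$.
\begin{itemize}
\item Full arcs of $K$ are full arcs of $H$, hence full arcs of $G$.
\item If $\harc{i}{\ell}$ is a propagating arc of $K$, then the arc of $H$ at $i$ has label at most $\ell$. If that arc of $H$ is propagating, the arc of $G$ at $i$ has label at most its label. If it is full, it reappears unchanged in $G$.
\end{itemize}
For antisymmetry, $G\breleq H\breleq G$ forces equal sets of full arcs, hence equal sets of propagating nodes, and then the labels are equal.

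\emph{Rank and covers.} Let $r(H)=\frac12\bigl(\operatorname{NP}(H)+\sum_i (i-\ell(i))\bigr)$.
\begin{itemize}
\item Move~(\ref{cover-R-glue}) glues propagating arcs at nodes $i<j$ labelled $\ell,\ell+1$. These arcs are necessarily consecutive propagating arcs, since propagating labels strictly increase and share no parity constraint conflict. The move adds one non-propagating arc and lowers $\ell(j)$ by one, so $r$ increases by exactly $1$.
\item Move~(\ref{cover-R-label}) lowers one label by $2$, so again $r$ increases by $1$.
\end{itemize}
I will check with \cref{okada-arc-cond-interv,okada-arc-cond-parity,okada-arc-cond-nested} that the result is an Okada half diagram whenever the move is admissible. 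In the label case, admissibility means the propagating label just below is $<\ell-2$. In the glue case, the arcs between $i$ and $j$ are non-propagating with labels $>\ell$. Since $r$ is integer valued and $G\brel H$ strictly increases $r$, each move is automatically a cover. The statement should therefore follow once every strict relation decomposes into moves.

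\emph{Descent lemma (main obstacle).} Given $G\brel H$, I must produce $K$ obtained from $H$ by one move with $G\breleq K$. Choose the largest node $i$ at which $H$ has a propagating arc $\harc{i}{\ell}$ and $G$ differs from $H$ at $i$. There are two cases.
\begin{itemize}
\item \emph{$G$ is propagating at $i$, with label $\ell'<\ell$.} By parity $\ell'\le\ell-2$. If lowering $\ell$ to $\ell-2$ is admissible, do it. Otherwise the next propagating arc below, at node $i'$, has label $\ell-1$. Then I will show that $G$ must either glue $i'$ to $i$ or have a smaller label at $i'$, and recurse on $i'$.
\item \emph{$G$ has a full arc at $i$, say $\arc{j}{m}{i}$.} All arcs of $H$ strictly between $j$ and $i$ are full (as full arcs of $H$ survive in $G$ and $G$ is non-crossing), so $j$ is the previous propagating node of $H$. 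Glue if the labels at $j$ and $i$ are consecutive, and lower the label at $i$ otherwise.
\end{itemize}
Making this case analysis airtight, especially the interaction with nesting constraints in the first case, is where I expect most of the work.

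\emph{Join and maximal rank.} For the join of $G$ and $H$, I keep the full arcs common to both, make every other node propagating, and assign labels greedily from bottom to top. Each label is the smallest value of the correct parity that is $\ge$ the labels of $G$ and $H$ at that node and larger than the previous propagating label. Such values exist because the top element $\topd{\setN}$ (the all-propagating diagram with $\ell(i)=i$) lies above both. Minimality of the greedy choice gives the universal property. Finally, the maximal value of $r$ is attained by the minimal elements, which are labellings as small as possible. Summing the minimal admissible values over these elements yields $\lfloor N^2/4\rfloor$.
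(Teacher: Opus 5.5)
Your verification that $\trianglelefteq$ is a partial order is fine. So is your check that both moves yield valid diagrams strictly below $H$, and that $r$ is strictly monotone and raised by exactly one by each move. The gap is in the descent lemma, which you need to show that every cover is a move and hence that $r$ is a rank function. It sits in Case~2, where you start from the \emph{largest} differing node.

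First, your claim that $j$ is the previous propagating node of $H$ is false. Propagating arcs of $H$ strictly between $j$ and $i$ are allowed: they simply become full arcs of $G$ nested under $\arc{j}{m}{i}$. Take $N=4$, $H=\rightd{\id_4}$ and $G=\{\arc{1}{1}{4},\arc{2}{2}{3}\}$. You get $i=4$, $j=1$ and non-consecutive labels $1,4$, so your rule lowers the label at node $4$ to $2$, which violates nesting with $\harc{3}{3}$. The cover of $H$ lying above $G$ is the one gluing nodes $2$ and $3$.

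Second, even when $j$ is the previous propagating node, the glued arc carries the label $\ell_H(j)$ of $H$ at $j$. Condition (1) in the definition of $\trianglelefteq$ compares \emph{labelled} arcs, so the result lies above $G$ only if $m=\ell_H(j)$. For $G=\{\arc{1}{1}{2},\arc{3}{1}{4}\}$ and $H=\rightd{\id_4}$ your rule produces $\arc{3}{3}{4}$, which is not an arc of $G$; the correct move glues nodes $1,2$.

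The paper works from the bottom instead: it takes the \emph{minimal} node $i_0$ at which the labels of $G$ and $H$ differ. Then $G$ and $H$ have the same restriction to $[i_0-1]$, and $H$ is propagating at $i_0$, say with label $\ell$. If $G$ has an arc $\arc{i_1}{k}{i_0}$ with $i_1<i_0$, agreement below $i_0$ forces $\ell_H(i_1)=k$ and makes $i_1$ the previous propagating node of $H$, so you glue when $k=\ell-1$. In all remaining cases, lowering $\ell$ to $\ell-2$ at $i_0$ is admissible. Either the previous propagating label is $k\neq\ell-1$, or an obstructing $\harc{j}{\ell-1}$ with $j<i_0$ would, by agreement below $i_0$, give an arc of $G$ nesting $G$'s arc at $i_0$, whose label is at most $\ell-2$.

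Your join construction has a similar defect. Keeping \emph{all} common full arcs and making the other nodes propagating can create crossings, since a common arc may enclose nodes on which $G$ and $H$ disagree. It can also create nesting violations, since the greedy labels below a common arc can exceed its label. For instance, take $N=6$, $G=\{\arc{1}{1}{4},\arc{2}{2}{3},\arc{5}{1}{6}\}$ and $H=\{\arc{1}{1}{2},\arc{3}{1}{4},\arc{5}{1}{6}\}$. The only common arc is $\arc{5}{1}{6}$, your greedy labels at nodes $1,\dots,4$ are $1,2,3,4$, and $\arc{5}{1}{6}$ then sits above $\harc{4}{4}$ with a smaller label. The actual join is $\rightd{\id_6}$, which is the only upper bound here.

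So you must discard common arcs that cannot be kept and then re-prove minimality. The paper instead derives the join-semilattice property from the cover description via the dual Bj\"orner--Edelman--Ziegler criterion. Finally, the minimal elements do not all have the same rank; for $N=4$ they have ranks $4,3,2$. The value $\lfloor N^2/4\rfloor$ should therefore be attributed to the specific element with arcs $\arc{2k-1}{1}{2k}$, plus $\harc{N}{1}$ when $N$ is odd.
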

\begin{proof}
  Reflexivity, antisymmetry and transitivity are immediate.
  It is also clear that $F \brel H$ whenever $F$ and $H$ satisfies either
  \cref{cover-R-glue} or \ref{cover-R-label}. Conversely, suppose that
  $F\brel H$. We'll show that a half arc-diagram $G$
  can always be constructed from $H$ using \cref{cover-R-glue} or \ref{cover-R-label}
  such that  $F\breleq G \brel H$. We need a technical lemma.

\begin{lemma}
\label{lemma:r-combi-order}
 The labels of the arcs incident to node $i$ of $F$ and $H$ must
 differ for some $i \in \setN$.
 \end{lemma}

 \begin{proof}
 There are two cases to consider: Either the underlying (i.e. unlabelled)
 arc-diagrams of $F$ and $H$ are the same or different. If they're the same
 the labels must clearly differ because $F \ne H$. If the underlying diagrams differ then,
 by condition (2) of the \cref{def:breleq-order}, there must exist a pair $i < j$ such that $\arc{i}{}{j}$
 is an arc in $F$ while $\harc{i}{}$ and $\harc{j}{}$ are
 propagating arcs in $H$. The nesting condition forces the labels $\ell(i)$ and $\ell(j)$ of the
 propagating arcs $\harc{i}{}$ and $\harc{j}{}$ to be different. So either the
 labels of the arcs of $F$ and $H$ incident to node $i$ differ or else
 the labels of the arcs of $F$ and $H$ incident to node $j$ differ.
 \end{proof}

\begin{remark}
\label{remark-r-combi-order}
Let $i_0 \in \setN$ be the minimal node where the labels of $F$ and $H$ differ.
The following observations are straight forward consequences of \cref{lemma:r-combi-order}:
\begin{enumerate}
\item The restrictions $F/\setN[j]$ and $H/\setN[j]$
  agree for all $1 \leq j < i_0$.
\item The arc incident to $i_0$ in $H$ must be propagating
 with some label $\ell$.
\end{enumerate}
\end{remark}
We now return to the proof of~\cref{prop:r-combi-order}: If the arc incident to
$i_0$ in $F$ is propagating with label $k$ then, by construction of $i_0$, it
must be the case that $k \ne \ell $. Since $F \brel H$ it must also be the
case that $k \leq \ell -2 $ (parity!).  Let $G$ be the half diagram obtained
from $H$ by replacing the label $\ell$ of the propagating arc attached to
$i_0$ by $\ell -2$. Note that this new label $\ell - 2$ does not violate the
nesting conditions. Indeed, the only possible violation could arise from a propagating
arc $\harc{j}{p}$ of $H$ incident to a node $j < i_0$ with label
$p \geq \ell -2 $.  However $\harc{j}{p}$ must also be present in $F$, by part
(2) of \cref{remark-r-combi-order}, and this would be in conflict with
$\harc{i_0}{k}$ since $k < \ell -2 $. Clearly $F \breleq  G \brel H$ and $G$ is
constructed from $H$ using ~\cref{cover-R-label}.

  If the arc incident to $i_0$ in $F$ is non-propagating, let $i_1$ be its other endpoint
  and let $k$ be its label. There are two cases to consider depending on whether $i_1>i_0$ or not: If $i_1>i_0$
  then $k \leq \ell  -2$ since $F \brel H$.
  As before $G$ will be the half diagram obtained from $H$ by replacing
  the label $\ell$ of the propagating arc attached to $i_0$ by $\ell -2$. Again $G$
  will satisfy the nesting conditions since any hypothetical violation would originate from
  $H/\setN[j] = F/\setN[j]$ for some $1 \leq j < i_0$. This would
  be in conflict with the arc $\arc{i_0}{k}{i_1}$ in $F$ whose label satisfies
  $k \leq \ell -2$. Thus $F \breleq G \brel H$ and $G$ is constructed from
  $H$ using ~\cref{cover-R-label}.

  If $i_1 < i_0$ then it must be the case that the propagating arc $\harc{i_1}{k}$ belongs to $H$. Two subcases
  emerge: Either $k = \ell -1$ or $k \leq \ell -2$.

  If $k \leq \ell -2$ then
  let $G$ be the half diagram obtained from $H$ by replacing
  the label $\ell$ of the propagating arc attached to $i_0$ by $\ell -2$.
  Any nesting conflict would have to arise from a propagating arc
  $\harc{j}{p}$ of $H$ incident to a node $j < i_0$
  with label $p \geq \ell  -2 $.  By part (2) of \cref{remark-r-combi-order}
  the arc $\harc{j}{p}$ also belongs to $F$, in which case $\harc{j}{p}$ must not cross
  the arc $\arc{i_0}{k}{i_1}$. This forces $j < i_1$ and then
  $\arc{i_0}{k}{i_1}$ would have to be nested in $\harc{j}{p}$. However
  this constitutes a nesting violation for $F$ since $k \leq \ell -2 \leq p$.
  Again $G$ is a legitimate Okada half arc-diagram and
  $F \breleq G \brel H$ where $G$ is constructed from $H$ using ~\cref{cover-R-label}.

  Finally, if $k = \ell -1$ then let $G$ be the half diagram obtained from $H$ by replacing
  the pair propagating arcs $\harc{i_0}{\ell}$ and $\harc{i_1}{\ell-1}$ by
  the single arc $\arc{i_0}{\ell-1}{i_1}$. There are no nesting conflicts (by the same kind
  of reasoning used above) and
   $F\breleq  G \brel H$ where $G$ is constructed from $H$ using ~\cref{cover-R-glue}.

  \medskip

  The formula for the rank is first checked on $\rightd{\id_N}$ which is maximal with
  respect to $\breleq$ and rank zero in the dual poset. One checks that whenever $G$
  and $H$ satisfy either \cref{cover-R-glue} or \ref{cover-R-label}
  then the value of the formula for $G$ is indeed one plus the value of the formula applied to $H$.

  Clearly, the maximal rank is obtained when all the edges are labelled $1$.
  This happens when $H$ is the half diagram with
  arcs $\arc{2k-1}{1}{2k}$ for
  $k=1\dots\left\lfloor\frac{N}{2}\right\rfloor$ together with $\harc{N}{1}$ when $N$ is
  odd. Its rank is
  $\frac12(N \bmod 2 + \sum(i-1)) =\frac12(N \bmod 2 + N(N-1)/2) =
  \left\lfloor\frac{N^2}4\right\rfloor$.  \medskip

  Finally the join-semilattice property is easily verified using the (dual)
  Björner-Edelman-Ziegler criterion (see~\cite[Lemma
  9-2.5]{Reading-PosetRegionsChapter}) together with
  the description of the cover relations
  given by Conditions \ref{cover-R-glue} and \ref{cover-R-label}.
\end{proof}
Going back to the Okada monoid, we have:
\begin{theorem}\label{theo:r-order}
  Let $   \mathsf{e}, \mathsf{f} \in \Okada$. Then $ \mathsf{e} \leRR  \mathsf{f} $ if and only
  $\rightd{ \mathsf{e} }\breleq\rightd{ \mathsf{f} }$\,.
\end{theorem}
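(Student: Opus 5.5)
Both implications reduce to single generators, together with the covering description of $\breleq$ in \cref{prop:r-combi-order}.

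For ($\Rightarrow$), suppose $\mathsf{e}=\mathsf{f}\mathsf{g}$. Writing $\mathsf{g}$ as a word in the generators and using transitivity of $\breleq$, it suffices to show $\rightd{\mathsf{f}\e{i}}\breleq\rightd{\mathsf{f}}$ for every $\mathsf{f}$ and $i$. We repeat the four-case analysis of \cref{lemma:left-eq-or-prop-le} on whether the arcs $\alpha_i,\alpha_{i+1}$ of $\mathsf{f}$ at $\ov{i},\ov{i+1}$ are propagating. In the cases where $\rightd{\mathsf{f}\e{i}}=\rightd{\mathsf{f}}$ there is nothing to prove. If $\alpha_i$ is non-propagating with label $\ell_i<\ell_{i+1}$ and $\alpha_{i+1}$ is propagating, the left half keeps all its full arcs. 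The propagating half arc at the left end $a_{i+1}$ of $\alpha_{i+1}$ changes its label from $\ell_{i+1}$ down to $\ell_i$, which is allowed by condition (2) of \cref{def:breleq-order}. If both $\alpha_i,\alpha_{i+1}$ are propagating, two half arcs $\harc{a_i}{\ell_i}$ and $\harc{a_{i+1}}{\ell_{i+1}}$ are closed into a full arc labelled $\ell_i\le\ell_{i+1}$, which again satisfies condition (2). In both cases full arcs of $\rightd{\mathsf{f}}$ are preserved, so condition (1) holds.

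For ($\Leftarrow$), suppose $\rightd{\mathsf{e}}\breleq\rightd{\mathsf{f}}$. By \cref{prop:r-class} only left halves matter, since any two elements with equal bras are $\RR$-equivalent. Because $\breleq$ is a finite ranked order, we can pass through a chain of covers. So it suffices to show: if $G\brel H$ is a cover of one of the two types in \cref{prop:r-combi-order} and $\rightd{\mathsf{f}}=H$, then some $\mathsf{f}\mathsf{g}$ has bra $G$. By \cref{prop:r-class} we may choose a convenient representative, namely $\mathsf{f}=\glue{H}{\freeld{S}}$ with $S=\PropLab(H)$, whose right propagating arcs sit at positions $s\in S$ with label $s$.

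For a cover of type \eqref{cover-R-glue}, the labels $\ell,\ell+1$ are consecutive in $S$, so $\ell$ and $\ell+1$ are adjacent right endpoints. Right-multiplying by $\e{\ell}$ joins the two propagating arcs into a left arc labelled $\min=\ell$, and the rest of the bra is unchanged, so the bra is $G$. For a cover of type \eqref{cover-R-label}, a label $\ell$ becomes $\ell-2$ and stays above the previous propagating label. Hence $\ell-2\notin S$, and in $\freeld{S}$ the positions $\ell-2,\ell-1$ carry a right arc $\arc{\ov{\ell-2}}{\ell-2}{\ov{\ell-1}}$, while position $\ell$ is propagating. Right-multiplying by $\e{\ell-1}$ reroutes the propagating arc through that small arc; as in the third case above, its label becomes $\ell-2$ and the other arcs are untouched. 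If instead $\ell-1\in S$, then $\ell-2$ cannot be placed, so this configuration does not occur.

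The main obstacle is this type \eqref{cover-R-label} step. There one must check carefully, using the free right half $\freeld{S}$, that the required small right arc exists exactly when $\ell-2$ is a valid new label. One must also check that the rerouted arc really ends at a propagating node with label $\ell-2$, rather than merging with another arc. If that check fails for $\freeld{S}$, I would instead pick the right half $\linkd{T}{S}$ for a suitable $T$, using \cref{lemma:connect-isdomeq}.
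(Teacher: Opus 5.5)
Your proposal is correct. The forward direction and the type~(1) covers match the paper. The paper treats $\rightd{\mathsf{fg}}\trianglelefteq\rightd{\mathsf f}$ as immediate from the min-labelling rule, whereas you route it through single generators and the case analysis of \cref{lemma:left-eq-or-prop-le}; both work.

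The genuine difference is the type~(2) cover, where a propagating label $\ell$ is lowered to $\ell-2$.

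\textbf{The paper's route.} It uses the product $\glue{\rightd{\mathsf f}}{\rightd{\mathsf f}}\cdot\glue{\rightd{\mathsf e}}{\rightd{\mathsf e}}$. Because $\rightd{\mathsf e}$ and $\rightd{\mathsf f}$ have the same underlying diagram, the middle closes into loops. Each propagating arc then passes straight through and takes the smaller of its two labels, which is the label in $\rightd{\mathsf e}$.

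\textbf{Your route.} You realize the cover by the single generator $\e{\ell-1}$ acting on the representative $\glue{H}{\freeld{S}}$. The check you flag as the main obstacle does go through:
\begin{itemize}
\item Let $p$ be the previous propagating label (or $p=0$ if $\ell=\min S$). Then $p<\ell-2$ forces $\ell-1,\ell-2\notin S$.
\item The nodes strictly between $p$ and $\ell$ are paired consecutively in $\freeld{S}$. So the last pair is the arc $\{\ell-2,\ell-1\}$, labelled $\ell-2$.
\item Case three of \cref{lemma:left-eq-or-prop-le}, with $i=\ell-1$, now applies verbatim. The path from the left node meets labels $\ell,\ell-1,\ell-2,\ell-2$ and ends at $\ov{\ell-2}$.
\item Every other propagating arc ends at some $\ov{s}$ with $s\notin\{\ell-2,\ell-1,\ell\}$. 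It passes through a horizontal arc of $\e{\ell-1}$ labelled $s$, so its label is unchanged.
\end{itemize}
So no fallback to $\linkd{T}{S}$ is needed.

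\textbf{Comparison.} Your route gives a slightly sharper statement: up to replacing $\mathsf f$ by an $\RR$-equivalent representative, every cover of the $\RR$-order is realized by right multiplication by one generator, $\e{\ell}$ or $\e{\ell-1}$. The paper's idempotent-product trick is shorter and never has to locate the small arcs of $\freeld{S}$.
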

\begin{proof}
Recall that $ \mathsf{e} \leRR  \mathsf{f} $ means that $ \mathsf{e} \ssp \Okada \! \subseteq  \mathsf{f} \ssp \Okada$ which is
equivalent to $ \mathsf{e} =  \mathsf{f} \mathsf{g}  $ for some $\mathsf{g}  \in \Okada$.
If this is the case, then  $\rightd{ \mathsf{f} \mathsf{g} } \breleq\rightd{ \mathsf{f} }$
is a direct consequence of how the product of labelled non-crossing arc-diagrams
is defined.

In order to prove the implication ($\Leftarrow$) it is sufficient, in light of  \cref{prop:r-class}, to show that whenever
$\rightd{ \mathsf{e} }\breleq\rightd{ \mathsf{f} }$ is a covering relation
between the left half arc-diagrams associated to $\mathsf{e}, \mathsf{f} \in \Okada$
we can find $\mathsf{h}, \mathsf{g} \in \Okada$ such that $\rightd{ \mathsf{h} } =  \rightd{ \mathsf{f} }$
and $\rightd{ \mathsf{h} \mathsf{g} } =  \rightd{ \mathsf{e} }$\,.
We consider the two kinds of covering relations described in \cref{prop:r-combi-order}:
\begin{enumerate}
  \item In this case let $S= \PropLab(\mathsf{f})$ and
   define $\mathsf{h}\eqdef \glue{ \rightd{\mathsf{f}}}{\freerd{S}}$ and take
    $\mathsf{g}=\e{\ell}$. The two propagating arcs of $\mathsf{h}$ labelled by $\ell$ and
    $\ell+1$ have their right end points situated at $\ell$ and $\ell+1$. Multiplying on the right by $\e{\ell}$ glues those
    propagating arcs together and we obtain $\rightd{ \mathsf{h} \mathsf{g} } =  \rightd{ \mathsf{e} }$\,.
  \item In this case we define
    $\mathsf{h} \eqdef \glue{\rightd{\mathsf{f}}}{\rightd{\mathsf{f}}}$
    and
    $\mathsf{g} = \glue{\rightd{\mathsf{e}}}{\rightd{\mathsf{e}}}$\,.
    The product rule of arc-diagrams shows that
    $\rightd{ \mathsf{h} \mathsf{g} } = \rightd{ \mathsf{e} }$\,.
    \qedhere
  \end{enumerate}
\end{proof}
\begin{remark}
  There is an alternative description of the right Green order related to the
  pointwise dominance order on saturated chains. Recall that if
  ${\bf C} =C_0\lessdot\cdots\lessdot C_N$ and
  ${\bf C}' =C_0' \lessdot\cdots\lessdot C_N'$ are two
  saturated chains in the $\YFS$-lattice, then
  ${\bf C} \isdomeq_{\text{ch}} {\bf C}'$
  if $C_r \isdomeq C'_r$ for all $r$.
  Using \cref{lemma:left-eq-or-prop-le}, it is easy to see
  that for any $\mathsf{e}, \mathsf{f} \in\Okada$ and $r\leq N$,
  \begin{equation}
    \PropLab(\,\rightd{\mathsf{ef}}/\setN[r])
    \isdomeq
    \PropLab(\,\rightd{\mathsf{e}}/\setN[r])\,.
  \end{equation}
  Consequently $\Chain(\mathsf{ef}) \isdomeq_{\text{ch}} \Chain(\mathsf{e})$.
  The converse, however, is false. Nevertheless one can compute $\leRR $ from $\isdomeq_{\text{ch}}$ thanks to the
  following observation: $\mathsf{e} <_{\RR} \mathsf{f}$ is a cover if and only if
  $\Chain(\mathsf{e})\isdom_{\text{ch}}\Chain(\mathsf{f})$ is a cover and
  $\PropLab(\mathsf{e})\neq\PropLab(\mathsf{f})$. Note that this extra
  condition can be rewritten as
  $\Chain(\mathsf{e})_N\neq\Chain(\mathsf{f})_N$.

  It is worth noting that this modified dominance order on chains makes sense for
  the Jones Monoid (see~\cite{Lau2006}) as well as for the symmetric group
  (where it defines an order on standard Young tableaux). This order is different
  and strictly stronger than the chain order considered by~\cite{Taskin2006}
  since she uses the pointwise comparison of the restriction to any interval
  $[i, j]$ while we only consider the restriction to intervals of the form
  $[1, j]$.  The usual, pointwise dominance order on chains in the Young lattice
  (standard Young tableaux) also appears
  in~\cite{Thomas2019}, where the authors study its restriction to tableaux of a
  fixed shape;  to get the correct analogue of the Green order, one would
  instead retain only covering relations of the pointwise dominance order 
  between tableaux with different shapes.
\end{remark}

\subsection{Length, descents and reduced words}
We finally export our results for loop configurations to arc-diagrams:
\begin{proposition}\label{prop:length-okada}
  Let $D$ be a rank $N$ Okada arc-diagram and let $\length(D) \eqdef \mathcal{l}(\sigma(D))$ be the length of
  the corresponding permutation $\sigma(D) \in\SG[N]$ then
%  The minimal length $\length(D)$ of a word for an Okada arc-diagram $D$ is
%  half the sum over all nodes $n$ of the difference between the absolute value
%  of the index of $n$ and the label of the arc starting at $n$. Otherwise said
  \begin{equation}\label{equ:okada-arc-length}
    \length(D) = \frac{N(N+1)}{2} - \sum \ell = \frac12\sum |i| + |j| - 2\ell\,,
  \end{equation}
  where both sums are taken over all arcs $\arc{i}{\ell}{j}$ in $D$.
\end{proposition}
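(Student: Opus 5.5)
The plan is to argue by induction on $N$, using the right factorization $\psi$ of \cref{prop:right-code-factor}. The second equality in \cref{equ:okada-arc-length} is immediate: every node $a\in\setN\cup\setNN$ is an endpoint of exactly one arc, so $\sum(|i|+|j|)=2\sum_{a=1}^N a=N(N+1)$. It therefore suffices to prove $\length(D)=\frac{N(N+1)}2-\sum\ell$.

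Recall that $\length(D)$ equals the Coxeter length of $\perm{D}$ (\cref{lemma:redword-Okada-Sn}), which is the length of the LexMin word, namely $\sum_i c_i$ where $(c_i)$ is the code. Write $D=\psi(D',d)=\iota_{N-1}(D')\,\A{N-1}\cdots\A{d}$ with $D'\in\OkArc[N-1]$. By \cref{prop:lexmin-normal-form}, the last diagonal of the normal form of $D$ contributes exactly $N-d$ letters, so $\length(D)=\length(D')+(N-d)$. Concretely, $\lexmin(\perm{D})=\lexmin(\perm{D'})\cdot(N-1)(N-2)\cdots d$, as in the proof of \cref{prop:loop-to-diag}.

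The main step is to show that the total label sum satisfies $\Sigma(D)=\Sigma(D')+d$, where $\Sigma$ denotes the sum of labels over all arcs. We read this off from \cref{def:flat_diagram} with $D^\flat=\iota_{N-1}(D')$:
\begin{itemize}
\item $D^\flat$ is obtained from $D$ by deleting the arc $\arcp{\ov d}{d}{\ov{d+1}}$, of label $d$.
\item Relabelling nodes via $\flat$ keeps all other labels unchanged.
\item Finally one adds the arc $\arc{N}{N}{\ov N}$, of label $N$.
\end{itemize}
Hence $\Sigma(D^\flat)=\Sigma(D)-d+N$. Since $\Sigma(\iota_{N-1}(D'))=\Sigma(D')+N$, we get $\Sigma(D)=\Sigma(D')+d$. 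In the degenerate case $d=N$, we have $D=\iota_{N-1}(D')$, so $\Sigma(D)=\Sigma(D')+N$ and $\length(D)=\length(D')$; this agrees with the same formula at $d=N$.

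The induction then closes:
\[
\length(D)=\tfrac{(N-1)N}{2}-\Sigma(D')+N-d=\tfrac{N(N+1)}2-\Sigma(D).
\]
The base cases are $N=0,1$, where $\id_N$ has length $0$ and label sum $N(N+1)/2$.

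The step I expect to need the most care is the claim that $\flat$ preserves every remaining label, including the arcs whose endpoint $N$ is sent to $\ov{N-1}$. This is true by definition, since $D^\flat$ keeps the label $\ell$ of each arc $\arc{i}{\ell}{j}$. The subtle part, checked in \cref{prop:right-code-factor}, is that the result is still a valid Okada diagram. Here we only need the label bookkeeping, so no further work is required.
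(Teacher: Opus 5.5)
Your proof is correct, but it takes a different route from the paper's.

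\textbf{The paper's route.} The argument is geometric. Take the normal-form FPLC $\mathcal{D}$ of $\perm{D}$. Then $\length(D)$ is the number of U-turn squares, i.e.\ half the number of U-turns. A normal form has no loops and no path that goes up and then down (\cref{prop:red-expr-loop}). The path realizing $\arc{i}{\ell}{j}$ has minimal height $\ell$ (\cref{prop:loop-to-diag}). Hence that path descends from $|i|$ to $\ell$ and climbs back to $|j|$, meeting exactly $(|i|-\ell)+(|j|-\ell)$ U-turns. Summing over arcs gives the second expression directly, and the first follows from the node count.

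\textbf{Your route.} You stay inside the arc-diagram monoid. You induct along $D=\iota_{N-1}(D')\,\A{N-1}\cdots\A{d}$ and compare two increments: the last code entry grows by $N-d$, while the label sum changes by $N-d$ under $\flat$ (remove label $d$, add label $N$). Your identification $\lexmin(\perm{D})=\lexmin(\perm{D'})\cdot(N-1)\cdots d$ is sound. The right-hand side is the LexMin word of the permutation with code $(\code(\perm{D'}),N-d)$, and it evaluates to $D$, so uniqueness of $\perm{D}$ applies.

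\textbf{What each buys.} Your argument avoids the FPLC height machinery entirely. Only the flat factorization and the code structure are needed, and as a byproduct you read off the last code entry $c_N=N-\LDes(D)$ from the diagram. The paper's argument gives finer, per-arc information: each arc accounts for its own $|i|+|j|-2\ell$ U-turns. That descent-then-ascent data is reused for the lattice walks in \cref{sec:coeffprod}. Your induction yields only the global identity.
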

\begin{proof}
  Each arc-diagram $D$ is associated to a unique normal form FPLC
  $\mathcal{D}$ which is obtained from the greedy diamond diagram of
  $\sigma(D)$. By \cref{prop:loop-to-diag}, the arcs of $D$ are in bijective
  correspondance with the arcs of $\mathcal{D}$. The right hand side of
  \cref{equ:okada-arc-length} can be seen as a sum over the arcs of
  $\mathcal{D}$.

  The length of $D$ equals the number of U-turn diamonds in $\mathcal{D}$
  which is half the total number of U-turns. By~\cref{prop:red-expr-loop},
  there are no loops. So each U-turn belongs to a unique arc in
  $\mathcal{D}$. The arc $\arc{i}{\ell}{j}$ traverses $|i|-\ell + |j|-\ell$
  many U-turns. Summing over all arcs gives the formula.
\end{proof}
\begin{proposition}\label{prop:simplify-desc}
  Let $\e{}$ be an arc-diagram of rank $N$ and let $i<N$. Then $i$ is a right $\Okada$-descent of
  $\e{}$ if and only if $\e{}$ contains the arc $\arc{\ov{i}}{i}{\ov{i+1}}$. In
  this case, there is a unique arc-diagram $\f{}$ such that $\e{} = \f{} \e{i}$ and
  $\length(\e{}) = \length(\f{}) + 1$\,.
\end{proposition}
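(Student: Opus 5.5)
The plan is to prove the two implications separately, then deduce the uniqueness statement from the cancellation lemma.

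For the forward direction, suppose $i$ is a right $\Okada$-descent of $\e{}$. By definition there is a reduced word $\ind{w}=w_1\cdots w_k$ for $\e{}$ with $w_k=i$. Writing $\e{}=\e{w_1\cdots w_{k-1}}\,\e{i}$, the product with $\A{i}$ on the right contains $\arc{\ov{i}}{i}{\ov{i+1}}$. This holds because the right arcs of $\A{i}$ survive unchanged in any product $D'\boldsymbol{\cdot}\A{i}$: the right arc of $\A{i}$ on nodes $\ov{i},\ov{i+1}$ is never merged with anything. This is exactly the observation made before \cref{lemma:id-arc}.

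For the converse, suppose $\e{}$ contains $\arc{\ov{i}}{i}{\ov{i+1}}$. I would build a factorization $\e{}=\f{}\,\e{i}$ with length dropping by one.

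\begin{itemize}
\item Define $\f{}$ from $\e{}$ by reopening the arc: delete $\arc{\ov{i}}{i}{\ov{i+1}}$ and replace it by two arcs incident to $\ov{i}$ and $\ov{i+1}$. Concretely, let $\gamma$ be the arc of $\e{}$ immediately "outside" (so the reconnection stays non-crossing), or, more simply, reroute it as follows.
\item Take the unique other arc $\beta$ of $\A{i}$-type through positions $i,i+1$. Set $\f{}$ to be obtained by cutting that arc at the right boundary of $\e{}$ and attaching the two ends to $\ov{i}$ and $\ov{i+1}$, so that $\f{}\boldsymbol{\cdot}\A{i}=\e{}$.
\end{itemize}

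The cleanest choice is this. The arc $\arc{\ov{i}}{i}{\ov{i+1}}$ has label $i$, so by the nesting and interval conditions every propagating arc below it has label $<i$, and the arcs at nodes $\ov{1},\dots,\ov{i-1}$ realize labels forcing the structure used in \cref{lemma:id-arc}. Therefore I would first treat the case where $\arc{i}{i}{\ov{i}}$-type reconnection works: pick the nearest arc $\gamma=\arc{a}{m}{b}$ enclosing position $\ov{i+1}$ from above among those with $|b|>i+1$, or a propagating arc. Replace $\{\arc{\ov{i}}{i}{\ov{i+1}},\gamma\}$ by $\arc{\ov{i}}{i}{\ov{i+1}}$'s endpoints re-attached: $\arc{a}{m}{\ov{i}}$ and $\arc{\ov{i+1}}{i+1}{b}$, or the symmetric version. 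Then I check the three conditions of \cref{def.okada.arc} and the identity $\f{}\boldsymbol{\cdot}\A{i}=\e{}$. The minimum rule gives label $\min(m,i+1,\dots)=m$ back on $\gamma$, since $m<i$ by nesting.

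The length count then follows from \cref{prop:length-okada}. Passing from $\f{}$ to $\e{}$ changes the label sum by exactly one: the arcs $\arc{\ov{i+1}}{i+1}{b}$ and $\arc{a}{m}{\ov i}$, with labels $i+1$ and $m$, become $\gamma$ (label $m$) and the new arc (label $i$). Hence $\length(\e{})=\length(\f{})+1$, and any reduced word of $\f{}$ followed by $i$ is a reduced word of $\e{}$, so $i$ is a descent.

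An alternative that avoids explicit surgery goes through the isomorphism with FPLCs. Using \cref{prop:loop-to-diag} and \cref{lemma:descents-loop}, the arc $\arc{\ov{i}}{i}{\ov{i+1}}$ of minimal possible height $i$ means its path is a single U-turn at the right boundary in row $i$. That U-turn is a black box in row $i$ with nothing to its right in rows $i\pm1$, which is exactly the descent criterion. I expect this translation (identifying the label-$i$ short arc with a rightmost black box) to be the main obstacle, and I would try it first since it avoids checking Okada conditions.

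Uniqueness of $\f{}$ then follows directly from the cancellation lemma \cref{lemma:cancel}, in its left-handed version: if $\f{}\e{i}=\f{}'\e{i}=\e{}$ with both products reduced, then $\f{}=\f{}'$.
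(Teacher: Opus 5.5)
Your forward direction is fine: any product $D'\boldsymbol{\cdot}\A{i}$ contains $\arc{\ov{i}}{i}{\ov{i+1}}$, and reducedness is not even needed. The length count via \cref{prop:length-okada} and the uniqueness via \cref{lemma:cancel} are also fine, provided you have a valid Okada arc-diagram $\f{}$ with $\f{}\boldsymbol{\cdot}\A{i}=\e{}$.

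\textbf{The gap is the surgery that produces $\f{}$.} The reconnection pattern $\arc{a}{m}{\ov{i}}$, $\arc{\ov{i+1}}{i+1}{b}$ is the right one. Your rule for choosing $\gamma=\arc{a}{m}{b}$ is not: you take an arc enclosing $\ov{i+1}$ with $|b|>i+1$, and justify ``$m<i$ by nesting''. Already $\e{}=\A{i}$ is a counterexample.
\begin{itemize}
\item The only valid answer there is $\f{}=\id_N$. It comes from reconnecting with $\gamma=\arc{i}{i}{i+1}$, which does not enclose $\ov{i+1}$, has $|b|=i+1$, and has label $m=i$. Each of your conditions excludes it.
\item The remaining candidates are the propagating arcs $\arc{a}{a}{\ov{a}}$.
\item For $a<i$, the arc $\arc{\ov{i+1}}{i+1}{a}$ violates \cref{okada-arc-cond-interv}, and the mirrored reattachment creates a crossing.
\item For $a\geq i+2$, the arc ending at $\ov{i}$ receives label $a>i$, again violating \cref{okada-arc-cond-interv}.
\item For $i=1$, nothing encloses $\arc{\ov{2}}{}{\ov{1}}$ at all.
\end{itemize}

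The correct choice, as in the construction the paper gives after the statement, is driven by labels. Let $j$ be the largest node, for the order \eqref{eq:orderN-NN}, strictly smaller than $\ov{i+1}$ whose arc $\arc{j}{\ell}{k}$ has $\ell\leq i$. Replace $\arc{\ov{i}}{i}{\ov{i+1}}$ and $\arc{j}{\ell}{k}$ by $\arc{j}{i+1}{\ov{i+1}}$ and $\arc{k}{\ell}{\ov{i}}$. Here $\ell=i$ is possible, and $k$ may lie on either side of $j$.

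Even with this rule, checking \cref{def.okada.arc} is real work, and the paper leaves it to the reader. You would need three facts, each proved by counting the parity of nodes in an interval:
\begin{itemize}
\item $|j|\geq i+1$;
\item every arc with an endpoint strictly between $j$ and $\ov{i+1}$ has label at least $i+2$, so nesting under the new arc labelled $i+1$ is respected;
\item $\ell\equiv\min(|k|,i) \pmod 2$.
\end{itemize}

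Your FPLC alternative is exactly the paper's proof: the statement is deduced directly from \cref{lemma:descents-loop} and \cref{prop:loop-to-diag}. It is less of an obstacle than you expect. In the canonical FPLC, a path realizing an arc $\arc{u}{\ell}{v}$ passes through exactly $|u|+|v|-2\ell$ U-turns (see the proof of \cref{prop:length-okada}). So $\arc{\ov{i}}{i}{\ov{i+1}}$ corresponds to a path with a single U-turn. That U-turn is a black box in row $i$ joined to the right boundary by straight tiles only. Since strands $i$ and $i+1$ run through rows $i-1,i$ and $i,i+1$, this means it is the rightmost black box of row $i$, with no black box to its right in rows $i\pm1$. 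Conversely, such a box produces this path. This is precisely the criterion of \cref{lemma:descents-loop}, so both implications follow without any surgery, and uniqueness comes from \cref{lemma:cancel} as you say.
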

\begin{proof}
  This is a consequence of~\cref{lemma:descents-loop} and
  \cref{prop:loop-to-diag}. Uniqueness follows from ~\cref{lemma:cancel}.
\end{proof}
We now describe a way to construct $\f{}$ in~\cref{prop:simplify-desc}. Recall
that the set $[N]\cup\ov{[N]}$ is ordered as in \cref{eq:orderN-NN}. Consider
the set $C$ of elements of $[N] \cup \ov{[N]}$ that are strictly smaller that
$\ov{i+1}$ and incident to an arc whose label is smaller or equal to $i$. The
set $C$ is not empty since it contains at least $1$. Let $j$ be the largest
element of $C$ and let $\arc{j}{\ell}{k}$ be the corresponding arc. We leave it to
the reader to check that the diagram
\[
  \f{} \eqdef
  (\e{} \setminus \{\arc{\ov{i}}{i+1}{\ov{i+1}},\ \arc{j}{\ell}{k}\})
  \cup\{\arc{j}{i+1}{\ov{i+1}},\ \arc{k}{\ell}{\ov{i}}\}
\]
is an Okada arc-diagram (in particular is non-crossing thanks to the choice of
$j$), that $\e{}=\f{}\e{i}$, and that $\length(\e{}) =
\length(\f{}) + 1$\,.
\medskip

\begin{remark} \label{rem:descents+chains}
\Cref{prop:simplify-desc}
can be reformulated in terms of propagating labelling sets.
Specifically, $i \in \setN[N-1]$ is a left $\Okada$-descent of  $D$ if and only if
\[ \PropLab(\ssp \rightd{D}/\setN[i])  \, = \, \PropLab(\ssp \rightd{D}/\setN[i+1] \ssp) \cup \ssp  \{ i \}. \]
Said differently,  $i \in \setN[N-1]$ is a left $\Okada$-descent of  $D$
if and only if $S_i = S_{i+1} \cup \{i \}$ in the saturated chain
$\Chain \ssp \rightd{D} =  S_0 \lessdot \cdots \lessdot S_N$
associated to $\rightd{D}$ under the RS-correspondence.
\end{remark}

\begin{proposition}
\label{prop:Okada-Eulerian-recurrence}
Let $\Des_{\ssp \YF}(D)$ denote the number of left $\Okada$-descents of
a rank $N$ Okada arc-diagram $D$, then the
\defn{Okada-Eulerian polynomial}
$a_N(t) \eqdef \ssp \sum_{D\in\Okada} t^{\Des_{\ssp \YF}(D)}$
satisfies the recurrence
\[ a_N(t) \, = \, N a_{N-1}(t) \, + \, (N-1)(t-1) a_{N-2}(t)  \]
starting with $a_0(t) = a_1(t) = 1$.
\end{proposition}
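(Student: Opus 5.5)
We will prove the recurrence by splitting $\Okada$ according to whether an arc-diagram $D$ contains the arc $\arc{N}{N}{\ov{N}}$, and we will track left $\Okada$-descents through the right factorization of \cref{prop:right-code-factor}. By \cref{rem:descents+chains}, left descents of $D$ are read off the left half diagram $\rightd{D}$ alone: $i$ is a left descent iff $\rightd{D}$ contains the arc $\arc{i}{i}{i+1}$. Hence
\[
a_N(t)=\sum_{H} \#\{R : \PropLab(R)=\PropLab(H)\}\, t^{d(H)},
\]
where $H$ ranges over Okada half arc-diagrams of rank $N$, $d(H)$ is the number of arcs of the form $\arc{i}{i}{i+1}$ in $H$, and $R$ ranges over right halves (\cref{lemma:gluing}).

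It is therefore cleaner to use the mirror-image factorization, which acts on the right half. Write $D=\A{\ell}\cdots\A{N-1}\,D'$ with $D'=(((D^\star)^\flat)^\star)$, so that $D'$ contains $\arc{N}{N}{\ov{N}}$ and $\ell=\LDes(D^\star)\in\setN$. The pairs $(D',\ell)$ range bijectively over $\OkArc[N-1]\times\setN$. The case $\ell=N$ corresponds to $D=\iota_{N-1}(D')$, and it contributes $a_{N-1}(t)$, since appending $\harc{N}{N}$ creates no arc $\arc{i}{i}{i+1}$. For $\ell<N$, the explicit form of $\A{\ell}\cdots\A{N-1}$ (the mirror of \cref{remark:descending-product}) shows how the left half changes. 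The left node $\ell$ becomes the endpoint of a new arc $\arc{\ell}{\ell}{\ell+1}$, and the old left nodes $\ell,\dots,N-2$ are shifted up by $2$ while keeping their labels. The old left node $N-1$, together with the arc $\arc{N}{N}{\ov N}$, gets rerouted.

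The key step is to compare $d(\rightd{D})$ with $d(\rightd{D'})$. A new descent at $\ell$ is always created. An old descent arc $\arc{i}{i}{i+1}$ with $i\ge \ell$ is shifted to $\arc{i+2}{i}{i+3}$, whose label is no longer $i+2$, so it stops being a descent. An old descent with $i<\ell-1$ survives. The boundary case $i=\ell-1$ needs separate care. So $d$ changes by $1-\#\{\text{old descents }\ge \ell\}$ up to boundary effects. Summing over $\ell$ therefore does not factor directly, and this is why the answer involves $a_{N-2}$.

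To close the argument, we will refine by conditioning on whether $D'$ also fixes $N-1$, i.e.\ contains $\arc{N-1}{N-1}{\ov{N-1}}$, in the spirit of the Watanabe double factorization ${}^\flat\!D^\flat$ above. We aim to show that
\[
\sum_{\ell<N} t^{d(\rightd{D})} \;=\; (N-1)\,t^{d(\rightd{D'})} + (N-1)(t-1)\cdot[\text{correction}],
\]
where, after summing over $D'$, the correction sums to $a_{N-2}(t)$. The plan is to telescope the contributions, $\sum_\ell$ over positions that kill the $k$ topmost old descents, so that summing over $\ell$ yields $(N-1)a_{N-1}(t)+(N-1)(t-1)a_{N-2}(t)$.

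The main obstacle is this telescoping bookkeeping: verifying exactly which descents survive the shift, including the boundary at $i=\ell-1$ and the rerouted arc at $N-1$. If that becomes unwieldy, we will fall back to a generating-function check. In that approach we count, via the $\Chain$ bijection (\cref{prop:chain-diag-bij}) and \cref{rem:descents+chains}, pairs of saturated chains by the number of steps $S_i=S_{i+1}\cup\{i\}$. We then split on whether the final Fibonacci word starts with $1$ or $2$, as in the alternative proof of the A060524 recurrence, and use $\dim(2v)=(N-1)\dim(v)$ together with the observation that prepending $2$ changes the descent count of exactly one chain in each of $N-1$ configurations by a factor $t$.
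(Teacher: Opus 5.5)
Your preliminary reductions are sound. By the mirror of \cref{prop:simplify-desc}, $i$ is a left descent of $D$ iff $\rightd{D}$ contains $\arc{i}{i}{i+1}$. The pairs $(D',\ell)$ parametrize $\OkArc$, and the case $\ell=N$ contributes $a_{N-1}(t)$. Your descent bookkeeping is also right once the boundary case is settled: the arc $\arc{\ell-1}{\ell-1}{\ell}$ of $D'$ becomes $\arc{\ell-1}{\ell-1}{\ell+2}$ (or a propagating arc if $\ell=N-1$), so it is killed. This gives exactly $d(\rightd{D})=1+d(\rightd{D'}/\setN[\ell-1])$, with $D'$ viewed in rank $N-1$.

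The gap is that the argument stops precisely where the work lies. The identity $\sum_{D'}\sum_{\ell<N}t^{d(\rightd{D})}=(N-1)a_{N-1}(t)+(N-1)(t-1)a_{N-2}(t)$ is only announced, never proved. Your per-diagram form, with a ``correction'' whose total is $a_{N-2}(t)$, merely restates the goal. Conditioning on whether $D'$ contains $\arc{N-1}{N-1}{\ov{N-1}}$ supplies no mechanism either. The contribution of $D'$ depends on the positions of all its descents, so the sum over $D'$ cannot be evaluated without knowing how many $D'$ share a given initial segment of left half.

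That count is the missing ingredient. The number of $D'\in\OkArc[N-1]$ with $\rightd{D'}/\setN[m]=H$ is $\sum_T\#\{\text{saturated chains }\PropLab(H)\to T\}\cdot\dim(T)=\langle\PropLab(H),\mathbf{D}^{N-1-m}\mathbf{U}^{N-1}\varnothing\rangle$. Iterating $\mathbf{D}\mathbf{U}^n\varnothing=n\,\mathbf{U}^{n-1}\varnothing$, this equals $\frac{(N-1)!}{m!}\dim(\PropLab H)$. Summing over $m=\ell-1$ gives the full-history recurrence $a_N=a_{N-1}+t\,(N-1)!\sum_{m=0}^{N-2}a_m/m!$. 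Subtracting $N-1$ times the same identity at rank $N-1$ is the telescoping you were after, and it yields $a_N=Na_{N-1}+(N-1)(t-1)a_{N-2}$.

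Your fallback is essentially the paper's route: write $a_N=\sum_T\dim(T)\dim_t(T)$, where $\dim_t(w)$ counts saturated chains ending at $w$ with weight $t^{\#\text{descents}}$, and split on the first letter of $\FibWord(T)$. As written, though, it omits both of its real steps.
\begin{enumerate}
\item $\dim_t(2v)=\sum_{u\lessdot 2v}\dim_t(u)+(t-1)\dim_t(v)$. This holds because a chain ending at $2v$ has a descent at $N-1$ exactly when it passes through $1v$, and $\dim_t(1v)=\dim_t(v)$.
\item The exchange $\sum_v\dim(v)\sum_{u\lessdot 2v}\dim_t(u)=\sum_u\dim_t(u)\sum_{v\lessdot u}\dim(v)=a_{N-1}(t)$. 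This uses that the words covered by $2v$ are exactly those covering $v$.
\end{enumerate}
Your phrase ``exactly one chain in each of $N-1$ configurations'' does not match this. The $t$-weighted chains ending at $2v$ number $\dim(v)$, and the factor $N-1$ comes from the right halves via $\dim(2v)=(N-1)\dim(v)$.
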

Here are the first few values:
\begin{alignat*}{2}
  a_2(t)&= t + 1&
  a_3(t)&=5 \, t + 1\\
  a_4(t)&=3 \, t^{2} + 20 \, t + 1&
  a_5(t)&=35 \, t^{2} + 84 \, t + 1\\
  a_6(t)&=15 \, t^{3} + 295 \, t^{2} + 409 \, t + 1\qquad\qquad&
  a_7(t)&=315 \, t^{3} + 2359 \, t^{2} + 2365 \, t + 1 \\
  %a_8(t)&=105 \, t^{4} + 4480 \, t^{3} + 19670 \, t^{2} + 16064 \, t + 1
  %        \qquad\qquad&
  %a_9(t)&= 3465 \, t^{4} + 56672 \, t^{3} + 177078 \, t^{2} + 125664 \, t + 1
\end{alignat*}
\begin{proof}[Sketch of the proof]
  Let \defn{$a_{N,T}(t)$} be a refined version of the Okada-Eulerian
  polynomial which depends upon a rank $N$ Fibonacci set $T$, namely
  \begin{equation*}
    a_{N,T}(t) \, \eqdef \, \sum_{\PropLab \ssp \rightd{D} \ssp = \ssp T}
    t^{\Des_{\ssp \YF}(D)}
  \end{equation*}
  where the sum is taken over all rank $N$ Okada arc-diagrams with a fixed
  propagating set $T$. Given a saturated chain $S_0 \lessdot \cdots \lessdot S_N$
  in the $\YFS$-lattice let
  \defn{$\Des(S_0 \lessdot \cdots \lessdot S_N)$} count the number of
  indices $i \in \setN[N-1]$ for which $S_{i+1} = S_i \cup \{ i \}$. Moreover,
  given a rank $N$ Fibonacci set $S$ let's introduce a $t$-deformed version of
  its dimension, namely
  \[
    \dim_t(S) \, \eqdef \,
    \sum_{S_0 \lessdot \cdots \lessdot S_N} t^{\Des(S_0 \lessdot \cdots
      \lessdot S_N)}
  \]
  where the sum is taken over all saturated chains in the $\YFS$-lattice
  starting at $S_0 = \emptyset_\text{\bf 0}$ and ending at $S_N = S$.  Since
  left $\Okada$-descents of $D$ only depend upon the half diagram $\rightd{D}$ it
  follows from \Cref{rem:descents+chains} that $a_{N,T} = \dim_t(T) \dim(T)$.
  The proposition is a consequence of the remark that
  $\dim_t(T) = \dim_t(T \setminus \{ \! N \! \})$ whenever $\max T = N$ and
  \begin{align*}
    \dim_t(T) &= t  \dim_t \big( T \cup \{ \!  N-1 \! \} \big)  \, +
                \sum_{\stackrel{\scriptstyle S \ssp \lessdot \ssp T}{S \ssp \ne \ssp T \cup \{ \! N-1 \! \} }}  \dim_t (S) \\
              &= (t-1)  \dim_t \big( T \cup \{ \!  N-1 \! \} \big)  \, +
                \sum_{ S \ssp \lessdot \ssp T} \dim_t (S)
  \end{align*}
  otherwise.
\end{proof}

\begin{corollary}
\label{cor:tridiagonal-determinant-okada-eulerian}
The recurrence from \Cref{prop:Okada-Eulerian-recurrence} implies
that
\begin{equation}
\label{eq:tridiagonal-determinant-okada-eulerian}
 a_N(t)  \eqdef
\det
\underbrace{\begin{pmatrix}
1 & 1 & 0 &0 & \cdots\\
1-t& 2 & 2 &0  &\\
0 & 1-t & 3 &3  & \\
0 & 0 & 1-t & 4 & \\
\vdots & & & &\ddots
\end{pmatrix}}_{N \times N \, \mathrm{tridiagonal} \, \mathrm{matrix}}\,. 
\end{equation}
Furthermore, the exponential generating function of the sequence $(a_N(t))_{N \geq 0}$ is given by:
\[ \sum_{N \geq 0}\frac{a_N(t)}{N!}z^N = e^{(1-t)z} (z-1)^{-t}\,. \]
\end{corollary}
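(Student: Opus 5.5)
The plan is to derive both assertions of \Cref{cor:tridiagonal-determinant-okada-eulerian} directly from the recurrence
\[ a_N(t) = N\, a_{N-1}(t) + (N-1)(t-1)\, a_{N-2}(t), \qquad a_0(t)=a_1(t)=1, \]
of \Cref{prop:Okada-Eulerian-recurrence}. No further combinatorics is needed: I show that the determinant and the proposed generating function satisfy the same recurrence with the same initial values.

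For the determinant, let $M_N$ be the $N\times N$ tridiagonal matrix with diagonal entries $(M_N)_{k,k}=k$, superdiagonal entries $(M_N)_{k,k+1}=k$ and subdiagonal entries $(M_N)_{k+1,k}=1-t$, and set $D_N \eqdef \det M_N$ with $D_0\eqdef 1$. Expanding along the last row (the standard continuant recurrence) gives
\[ D_N = N\,D_{N-1} - (M_N)_{N,N-1}(M_N)_{N-1,N}\,D_{N-2} = N\,D_{N-1} + (t-1)(N-1)\,D_{N-2}. \]
Since $D_0=D_1=1$, we get $D_N=a_N(t)$ for all $N$ by induction. The only point needing care is the index bookkeeping: the superdiagonal entry in row $N-1$ is $N-1$, and this is what produces the factor $(N-1)$.

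For the exponential generating function, put $F(z)\eqdef\sum_{N\ge0} a_N(t)\,z^N/N!$. Multiply the recurrence by $z^{N-1}/(N-1)!$ and sum over $N\ge1$, with the convention $a_{-1}=0$; the case $N=1$ is consistent because its second term vanishes. The three sums identify as $F'$, $zF'+F$ (using $N=(N-1)+1$) and $(t-1)zF$. This gives the first-order linear ODE
\[ (1-z)\,F'(z) = \bigl(1+(t-1)z\bigr)F(z), \qquad F(0)=1. \]
Writing $1+(t-1)z=(1-t)(1-z)+t$, we obtain
\[ \frac{F'}{F} = (1-t) + \frac{t}{1-z}, \]
and integrating with $F(0)=1$ yields $F(z)=e^{(1-t)z}(1-z)^{-t}$.

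I expect no real obstacle: both parts are routine. The one place to watch is the normalization of the power factor. As a formal power series with constant term $1$, the correct expression is $(1-z)^{-t}$. The displayed $(z-1)^{-t}$ agrees with it only up to the constant $(-1)^{-t}$, so I would present the result as $(1-z)^{-t}$, noting this as the intended branch. As a sanity check, I would verify the first coefficients against the listed $a_2(t)=t+1$ and $a_3(t)=5t+1$.
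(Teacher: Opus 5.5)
Your proposal is correct and is the argument the paper has in mind: the paper gives no separate proof and only asserts that the recurrence of \Cref{prop:Okada-Eulerian-recurrence} yields both the continuant expansion of the tridiagonal determinant and, via the ODE $(1-z)F'=(1+(t-1)z)F$ with $F(0)=1$, the generating function. You are also right about the normalization: the series with constant term $1$ is $e^{(1-t)z}(1-z)^{-t}$, so the displayed $(z-1)^{-t}$ should be read as $(1-z)^{-t}$.
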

\bigskip

The following proposition will be crucial in proving cellularity of the Okada algebra:
\begin{proposition}[LJR-factorization]\label{prop:canonical-fact}
  Let $\e{} \in\Okada$ and let $\e{\JJ}$ be the free representative of its
  $\JJ$-class. There exists a unique pair $(\e{\LL} , \e{\RR}) \in\Okada$ such
  that $\e{} = \e{\LL} \cdot \e{\JJ} \cdot \e{\RR}$ is a reduced
  factorization, i.e.
  \[
    \length(\e{})=\length(\e{\LL})+\length(\e{\JJ})+\length(\e{\RR})\,.
  \]
  Moreover $ \e{\LL}\cdot \e{\JJ}$ and $\e{\JJ} \cdot \e{\RR}$ are minimal
  representatives of the respective $\RR$ and $\LL$-classes of $\e{}$.
\end{proposition}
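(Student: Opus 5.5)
The plan is to construct the factorization explicitly from the half-diagram surgery of the previous subsection and then use the length formula of \cref{prop:length-okada} to check that it is reduced. Write $S\eqdef\PropLab(\e{})$ and $H\eqdef\freeld{S}$, so that $\e{\JJ}=\glue{H}{H}$ by \cref{JJclass-free-rep}. The natural candidates are
\[
  \e{\LL}\eqdef\glue{\rightd{\e{}}}{\topd{S}}\,,\qquad
  \e{\RR}\eqdef\glue{\topd{S}}{\leftd{\e{}}}\,.
\]
By \cref{lemma:mult-compat} applied twice (all three half diagrams $\rightd{\e{}}$, $H$, $\leftd{\e{}}$ share the propagating label set $S$), one expects $\e{\LL}\,\e{\JJ}=\glue{\rightd{\e{}}}{H}$ and $\glue{\rightd{\e{}}}{H}\,\e{\RR}=\glue{\rightd{\e{}}}{\leftd{\e{}}}=\e{}$. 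This needs care: \cref{lemma:mult-compat} requires the glued halves to match, and here $\topd{S}\neq H$ in general. I would therefore prove the slightly more general statement that $\glue{F}{G}\cdot\glue{G'}{K}=\glue{F}{K}$ whenever $G,G'$ have the same propagating labels, \emph{provided} every non-propagating arc of $G$ and $G'$ carries a label no smaller than the propagating labels it nests under. More simply, I would compute $\glue{\topd{S}}{\cdot}$ composed with $\glue{H}{\cdot}$ directly. Since $\topd{S}$ and $\freeld{S}$ both have only short arcs $\arc{i}{i}{i+1}$, the merged loops and arcs are easy to trace, and each propagating arc labelled $s$ passes only through arc fragments with labels $\geq s$.

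For reducedness I would use \cref{equ:okada-arc-length}, namely $\length(D)=\tfrac12\sum(|i|+|j|-2\ell)$, split into left half and right half contributions. A propagating arc $\arc{i}{\ell}{\ov{j}}$ contributes $\tfrac12(i-\ell)+\tfrac12(j-\ell)$, and a non-propagating arc lies entirely in one half. Hence $\length(\glue{L}{R})=\lambda(L)+\lambda(R)$, where
\[
  \lambda(L)\eqdef\tfrac12\sum_{i}\bigl(i-\ell_L(i)\bigr)
\]
is additive over half diagrams. This gives $\length(\e{\LL})+\length(\e{\JJ})+\length(\e{\RR})=\lambda(\rightd{\e{}})+\lambda(\topd{S})+2\lambda(H)+\lambda(\topd{S})+\lambda(\leftd{\e{}})$. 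This matches $\length(\e{})=\lambda(\rightd{\e{}})+\lambda(\leftd{\e{}})$ only if $\lambda(\topd{S})+\lambda(H)=0$, which is false. The correct choice must instead use halves whose $\lambda$-values cancel with $H$. The fix I would pursue is to replace $\topd{S}$ by a \emph{complementary} half in which the propagating arcs go to indices $S$ and the contribution telescopes. Concretely, I would choose $\e{\LL}=\glue{\rightd{\e{}}}{\freerd{S}}$, for which $\lambda(\freerd{S})=0$ because every label equals its index. With $\e{\JJ}=\glue{H}{H}$ of length $0$, however, $\e{\JJ}$ is not the identity, so I must recompute. $\lambda(H)=0$ indeed, since all arcs of $H$ are $\arc{i}{i}{i+1}$ or $\harc{i}{i}$ contributing $\tfrac12$ each for short arcs. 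I would then recount honestly with \cref{equ:okada-arc-length} and adjust: the goal identity is $\length(\glue{\rightd{\e{}}}{H})=\length(\e{})-\lambda(\leftd{\e{}})+\lambda(H)$. The bookkeeping should then show $\length(\e{\LL})=\length(\e{})-\length(\e{\JJ})-\length(\e{\RR})$, with $\e{\LL}$ chosen as the unique element satisfying $\e{\LL}\e{\JJ}=\glue{\rightd{\e{}}}{H}$ with additive length. Its existence is to be obtained by repeatedly stripping left descents using \cref{prop:simplify-desc}, descending from $\glue{\rightd{\e{}}}{H}$ until $\e{\JJ}$ is reached.

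Uniqueness follows from the Cancellation lemma \cref{lemma:cancel}. If $\e{\LL}\e{\JJ}\e{\RR}=\e{\LL}'\e{\JJ}\e{\RR}'$ are both reduced, first show $\rightd{\e{\LL}\e{\JJ}}=\rightd{\e{}}$ with right half $H$. This holds because the right half is forced by \cref{lemma:left-eq-or-prop-le}: multiplying by $\e{\RR}$ cannot change a left half without strictly lowering $\PropLab$, which would contradict $\PropLab(\e{})=S$. Hence $\e{\LL}\e{\JJ}=\e{\LL}'\e{\JJ}$, and cancellation yields $\e{\LL}=\e{\LL}'$ and then $\e{\RR}=\e{\RR}'$. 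Minimality within the $\RR$-class follows from \cref{prop:r-class}: every element of the $\RR$-class of $\e{}$ is $\glue{\rightd{\e{}}}{K}$, and $\lambda(K)\geq\lambda(H)$ for every $K$ with $\PropLab(K)=S$. This holds since labels are termwise maximal in $H$, as each arc in $H$ attains the upper bound $\min(|a|,|b|)$. The main obstacle is the reducedness bookkeeping, namely getting the length of the free element and of the glued pieces to add up exactly. I would settle it by proving additivity $\length(\glue{L}{R})=\lambda(L)+\lambda(R)$ first and then choosing the factors accordingly.
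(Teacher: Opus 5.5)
Your eventual strategy---define $\e{\LL}$ and $\e{\RR}$ through $\glue{\rightd{\e{}}}{H}=\e{\LL}\e{\JJ}$ and $\glue{H}{\leftd{\e{}}}=\e{\JJ}\e{\RR}$ with additive lengths---is the paper's. Your additivity $\length(\glue{L}{R})=\lambda(L)+\lambda(R)$, read off from \cref{equ:okada-arc-length}, is correct and a clean way to do the count. But the step that carries the statement, the existence of a \emph{reduced} factorization, is only announced (``the bookkeeping should then show''), and the values you would feed into it are wrong.

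The upper endpoint of each short arc $\arc{i}{i}{i+1}$ of $H$ has label $i\neq i+1$. Hence $\lambda(H)=\tfrac12|F(S)|$ and $\length(\e{\JJ})=2\lambda(H)=|F(S)|$, not $0$. Also $\freerd{S}$ and $\freeld{S}=H$ are the same half diagram, so your intermediate choice $\e{\LL}=\glue{\rightd{\e{}}}{\freerd{S}}$ satisfies $\e{\LL}\e{\JJ}=\e{\LL}$ by \cref{lemma:mult-compat}. That is never reduced unless $F(S)=\varnothing$.

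The missing idea is why $\e{\LL}$ exists. Every $i\in F(S)$ is a \emph{right} (not left) $\Okada$-descent of $g\eqdef\glue{\rightd{\e{}}}{H}$: $g$ has right half $H$, hence contains the arc $\arc{\ov{i}}{i}{\ov{i+1}}$, and \cref{prop:simplify-desc} applies. Equivalently, $g=g\,\e{\JJ}$. Since the indices in $F(S)$ are pairwise non-adjacent, these descents can be peeled off one after another; the construction following \cref{prop:simplify-desc} does not touch the arcs $\arc{\ov{j}}{j}{\ov{j+1}}$ for $j\in F(S)\setminus\{i\}$. This gives $g=\e{\LL}\e{\JJ}$ with $\length(g)=\length(\e{\LL})+|F(S)|$. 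Symmetrically, $g'\eqdef\glue{H}{\leftd{\e{}}}=\e{\JJ}\e{\RR}$ with $\length(g')=|F(S)|+\length(\e{\RR})$.

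With the correct $\lambda(H)$, your additivity gives $\length(g)+\length(g')=\length(\e{})+|F(S)|$; the paper phrases this as one loop per $i\in F(S)$ in $g\cdot g'$. Hence $\length(\e{\LL})+\length(\e{\JJ})+\length(\e{\RR})=\length(\e{})$. Finally, $\e{\LL}\e{\JJ}\e{\RR}=\e{\LL}\e{\JJ}^2\e{\RR}=g\,g'=\e{}$ by \cref{lemma:mult-compat}.

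Your uniqueness argument is sound, and more explicit than the paper's one-line appeal to \cref{lemma:cancel}. Add two points. First, $\leftd{\e{\LL}\e{\JJ}}=H$ as well, by the mirror form of \cref{lemma:left-eq-or-prop-le}, since $\PropLab(\e{\LL}\e{\JJ})=S=\PropLab(\e{\JJ})$ by \cref{cor:prod-inf-prop}. Second, sub-products of a reduced product are reduced, so cancellation applies.

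The minimality argument, however, rests on a false claim: the labels of $H$ are not termwise maximal. For $S=\{3\}_5$, $H$ has label $1$ at node $2$, whereas the half diagram with arcs $\arc{1}{1}{4}$, $\arc{2}{2}{3}$, $\harc{5}{3}$ has label $2$ there. The inequality $\lambda(K)\geq\lambda(H)$ is nonetheless true, but arc by arc rather than node by node:
\begin{itemize}
\item a non-propagating arc $\arc{u}{\ell}{v}$ with $u<v$ contributes $(u-\ell)+(v-\ell)\geq 1$ to $2\lambda(K)$, since $\ell\leq u<v$;
\item a propagating arc $\harc{t}{s}$ contributes $t-s\geq 0$;
\item every $K$ with $\PropLab(K)=S$ has exactly $(N-\card S)/2=|F(S)|$ non-propagating arcs;
\item $H$ attains all these bounds.
\end{itemize}
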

\begin{proof}
  By \cref{JJclass-free-rep} the free representative of $\e{}$
  is $\e{\JJ} = \glue{H}{H}$ where  $H\eqdef\freeld{S}$ and $S = \PropLab(\e{})$.
  Keep in mind that $\e{\JJ}$ can also be expressed as the (commutative) product
  $\prod_{i \in F(S)} \e{i}$ where $F(S)$ is the free set associated to $S$.
  By \cref{lemma:mult-compat} we also know that $\e{}$ factorizes as
  $\e{}=\glue{\rightd{\e{}}}{H}\cdot\glue{H}{\leftd{\e{}}}$.
  However \cref{prop:length-okada} tells us that
  this factorization is not reduced: The product contains a loop
  for each $i \in F(S)$ and thus
  \[
    \length(\ssp \glue{\rightd{\e{}}}{H} \ssp)+\length(\ssp \glue{H}{\leftd{\e{}}} \ssp) = \length(\e{}) + |F(S)|
  \]

  Each $i \in F(S)$ is an $\Okada$-descent of
  $\glue{\rightd{\e{}}}{H}$ since $\glue{\rightd{\e{}}}{H} = \glue{\rightd{\e{}}}{H} \cdot \glue{H}{H}$.
  So $\e{i}$ can be factored out (on the right) from
  $\glue{\rightd{\e{}}}{H}$ for each $i \in F(S)$.
  As a consequence $\e{\JJ}$ can be factored out (on the right)
  and we obtain an element $\e{\LL}$ satisfying $\glue{\rightd{\e{}}}{H}= \e{\LL} \cdot \e{\JJ}$ and
  $\length(\ssp   \glue{\rightd{\e{}}}{H})=\length(\e{\LL}) + |F(S)|$. Similarly one can factor
  $\glue{H}{\leftd{e}}= \e{\JJ} \cdot \e{\RR}$ with
  $\length(\glue{H}{\leftd{e}})=\length(\e{\RR})+|F(S)|$. The result follows
  from the equality $\e{\JJ}^2= \e{\JJ}$.  Uniqueness follows from ~\cref{lemma:cancel}.
\end{proof}
\begin{figure}[ht]
  \[\begin{tikzpicture}[scale=0.6, thick, baseline={(current bounding box.east)}]
\tikzstyle{vertex} = [shape=circle, minimum size=7pt, inner sep=1pt]
\tikzstyle{mid} = [draw, fill=white, shape=circle, minimum size=2pt, inner sep=1pt]
\node[vertex] (G--9) at (2.64, 10.40) {$\overline{9}$};
\node[vertex] (G--8) at (2.64, 9.10) {$\overline{8}$};
\node[vertex] (G--7) at (2.64, 7.80) {$\overline{7}$};
\node[vertex] (G--6) at (2.64, 6.50) {$\overline{6}$};
\node[vertex] (G--5) at (2.64, 5.20) {$\overline{5}$};
\node[vertex] (G--4) at (2.64, 3.90) {$\overline{4}$};
\node[vertex] (G--3) at (2.64, 2.60) {$\overline{3}$};
\node[vertex] (G--2) at (2.64, 1.30) {$\overline{2}$};
\node[vertex] (G--1) at (2.64, 0.00) {$\overline{1}$};
\node[vertex] (G-1) at (-1.44, 0.00) {$1$};
\node[vertex] (G-2) at (-1.44, 1.30) {$2$};
\node[vertex] (G-3) at (-1.44, 2.60) {$3$};
\node[vertex] (G-4) at (-1.44, 3.90) {$4$};
\node[vertex] (G-5) at (-1.44, 5.20) {$5$};
\node[vertex] (G-6) at (-1.44, 6.50) {$6$};
\node[vertex] (G-7) at (-1.44, 7.80) {$7$};
\node[vertex] (G-8) at (-1.44, 9.10) {$8$};
\node[vertex] (G-9) at (-1.44, 10.40) {$9$};
\draw (G--7) .. controls +(-2.59, -1.30) and +(1.63, 0.82) .. (G-3) node[pos=0.45,mid] (M-7-3) { 1 };
\draw (G--8) -- (G-6) node[pos=0.55,mid] (M-8-6) { 2 };
\draw (G-7) .. controls +(0.70, 0.35) and +(0.70, -0.35) .. (G-8) node[pos=0.5,mid] (M7-8) { 3 };
\draw (G--6) .. controls +(-2.10, -1.05) and +(-2.10, 1.05) .. (G--1) node[pos=0.5,mid] (M-6--1) { 1 };
\draw (G-4) .. controls +(0.70, 0.35) and +(0.70, -0.35) .. (G-5) node[pos=0.5,mid] (M4-5) { 2 };
\draw (G--5) .. controls +(-0.70, -0.35) and +(-0.70, 0.35) .. (G--4) node[pos=0.5,mid] (M-5--4) { 2 };
\draw (G--9) -- (G-9) node[pos=0.55,mid] (M-9-9) { 7 };
\draw (G-1) .. controls +(0.70, 0.35) and +(0.70, -0.35) .. (G-2) node[pos=0.5,mid] (M1-2) { 1 };
\draw (G--3) .. controls +(-0.70, -0.35) and +(-0.70, 0.35) .. (G--2) node[pos=0.5,mid] (M-3--2) { 2 };
\end{tikzpicture}
\ =\
\begin{tikzpicture}[scale=0.6, thick, baseline={(current bounding box.east)}]
\tikzstyle{vertex} = [shape=circle, minimum size=7pt, inner sep=1pt]
\tikzstyle{interm} = [shape=circle, minimum size=1pt, inner sep=1pt]
\tikzstyle{mid} = [draw, fill=white, shape=circle, minimum size=2pt, inner sep=1pt]
\foreach \i in {1,...,9} {
  \node[vertex] (G-A-\i) at ($(0, \i*1.3-1.3)$) {$\i$};
  \node[interm] (G-B-\i) at ($(4.5, \i*1.3-1.3)$) {$\ $};
  \node[interm] (G-C-\i) at ($(7, \i*1.3-1.3)$) {$\ $};
  \node[vertex] (G-D-\i) at ($(11.5, \i*1.3-1.3)$) {$\overline{\i}$};
}
\draw (G-B-1) -- (G-A-3) node[pos=0.5,mid] (M-1-3) { 1 };
\draw (G-B-2) .. controls +(-1.63, 0.82) and +(1.63, -0.82) .. (G-A-6) node[pos=0.5,mid] (M-2-6) { 2 };
\draw (G-B-3) .. controls +(-1.63, 0.82) and +(1.63, -0.82) .. (G-A-7) node[pos=0.5,mid] (M-3-7) { 3 };
\draw (G-A-4) .. controls +(0.70, 0.35) and +(0.70, -0.35) .. (G-A-5) node[pos=0.5,mid] (M4-5) { 2 };
\draw (G-B-5) .. controls +(-0.70, -0.35) and +(-0.70, 0.35) .. (G-B-4) node[pos=0.5,mid] (M-5--4) { 4 };
\draw (G-B-6) -- (G-A-8) node[pos=0.5,mid] (M-6-8) { 6 };
\draw (G-A-1) .. controls +(0.70, 0.35) and +(0.70, -0.35) .. (G-A-2) node[pos=0.5,mid] (M1-2) { 1 };
\draw (G-B-8) .. controls +(-0.70, -0.35) and +(-0.70, 0.35) .. (G-B-7) node[pos=0.5,mid] (M-8--7) { 7 };
\draw (G-B-9) -- (G-A-9) node[pos=0.5,mid] (M-9-9) { 9 };
%%%%%%%%
\draw (G-C-1) -- (G-B-1) node[pos=0.5,mid] (M-1-1) { 1 };
\draw (G-C-2) -- (G-B-2) node[pos=0.5,mid] (M-2-2) { 2 };
\draw (G-B-3) .. controls +(0.70, 0.35) and +(0.70, -0.35) .. (G-B-4) node[pos=0.5,mid] (M3-4) { 3 };
\draw (G-C-4) .. controls +(-0.70, -0.35) and +(-0.70, 0.35) .. (G-C-3) node[pos=0.5,mid] (M-4--3) { 3 };
\draw (G-B-5) .. controls +(0.70, 0.35) and +(0.70, -0.35) .. (G-B-6) node[pos=0.5,mid] (M5-6) { 5 };
\draw (G-C-6) .. controls +(-0.70, -0.35) and +(-0.70, 0.35) .. (G-C-5) node[pos=0.5,mid] (M-6--5) { 5 };
\draw (G-C-7) -- (G-B-7) node[pos=0.5,mid] (M-7-7) { 7 };
\draw (G-B-8) .. controls +(0.70, 0.35) and +(0.70, -0.35) .. (G-B-9) node[pos=0.5,mid] (M8-9) { 8 };
\draw (G-C-9) .. controls +(-0.70, -0.35) and +(-0.70, 0.35) .. (G-C-8) node[pos=0.5,mid] (M-9--8) { 8 };
%%%%%%%%
\draw (G-D-7) .. controls +(-2.59, -1.30) and +(1.63, 0.82) .. (G-C-1) node[pos=0.54,mid] (M-7-1) { 1 };
\draw (G-C-2) .. controls +(0.70, 0.35) and +(0.70, -0.35) .. (G-C-3) node[pos=0.5,mid] (M2-3) { 2 };
\draw (G-D-6) .. controls +(-2.10, -1.05) and +(-2.10, 1.05) .. (G-D-1) node[pos=0.5,mid] (M-6--1) { 1 };
\draw (G-C-4) .. controls +(0.70, 0.35) and +(0.70, -0.35) .. (G-C-5) node[pos=0.5,mid] (M4-5) { 4 };
\draw (G-D-5) .. controls +(-0.70, -0.35) and +(-0.70, 0.35) .. (G-D-4) node[pos=0.5,mid] (M-5--4) { 2 };
\draw (G-D-8) -- (G-C-6) node[pos=0.54,mid] (M-8-6) { 6 };
\draw (G-C-7) .. controls +(0.70, 0.35) and +(0.70, -0.35) .. (G-C-8) node[pos=0.5,mid] (M7-8) { 7 };
\draw (G-D-3) .. controls +(-0.70, -0.35) and +(-0.70, 0.35) .. (G-D-2) node[pos=0.5,mid] (M-3--2) { 2 };
\draw (G-D-9) -- (G-C-9) node[pos=0.54,mid] (M-9-9) { 9 };
\end{tikzpicture}\]
  \caption{Example of an $\mathrm{LJR}$-factorization}
  \label{fig.ljr-fact}
\end{figure}

\section{The Okada algebra as a diagram algebra}
\label{section:okada-algebra-cellular-structure}

We can now realize the Okada algebra as a diagram algebra. Recall that
$\perm{D}$ is the unique permutation such that $D=\e{\perm{D}}$.
\begin{definition}
  For any Okada arc-diagram $D\in\OkArc$ we define an element $\E{D}$ of the Okada
  algebra by $\E{D}\eqdef\E{\perm{D}}$.
\end{definition}
\begin{proposition}
\label{prop:lambda-coefficient}
  The family $(\E{D})_{D\in\OkArc}$ is a basis of the okada algebra
  $\Okada(X, Y)$ such that for any two Okada arc-diagrams $C, D$ there exists a scalar
  $\lambda(C, D)$ satsifying
  \begin{equation}
  \label{eq:lambda-coefficient}
    \E{C}\E{D} = \lambda(C,D)\,\E{C \boldsymbol{\cdot} D}\,,
  \end{equation}
  where $C \boldsymbol{\cdot} D$ is the diagram product in $\OkArc$. Moreover, if $\length(C \boldsymbol{\cdot}
  D)=\length(C)+\length(D)$ then $\lambda(C,D) = 1$.
\end{proposition}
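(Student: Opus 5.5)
The plan is to reduce everything to the rewriting theory of \cref{subsection:rewriting}, which already gives control of the Okada algebra $\Okada(X,Y)$ through diamond diagrams.

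\textbf{Basis.} By \cref{theo:bij-Sn_Okada}, the family $(\E{\sigma})_{\sigma\in\SG}$ is a basis of $\Okada(X,Y)$. By \cref{cor:isomEG} and \cref{def:arcdiagram-permutation-bijection}, the map $D\mapsto\perm{D}$ is a bijection $\OkArc\to\SG$. Hence $(\E{D})_{D\in\OkArc}$ is simply a reindexing of this basis, and the basis assertion is immediate.

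\textbf{Multiplication rule.} Take $C,D\in\OkArc$. Set $\ind{v}=\lexmin(\perm{C})$ and $\ind{w}=\lexmin(\perm{D})$, so that $\E{C}\E{D}=\E{\ind{v}\cdot\ind{w}}$. Choose any diamond diagram $\Delta$ whose reading is $\ind{v}\cdot\ind{w}$; for instance the greedy diagram $\greed{\ind{v}\cdot\ind{w}}$. By \cref{coro-weight-basis},
\[
\E{\Delta}=\weight(\Delta\rewto\norf(\Delta))\,\E{\norf(\Delta)},
\]
and this weight is well defined, independent of the rewriting path. The reading of $\norf(\Delta)$ is a lexmin word $\lexmin(\tau)$ (\cref{prop:lexmin-normal-form}). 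Evaluating in the monoid, where every rule preserves the value (\cref{lemma:diamond_rewriting}), gives
\[
\e{\tau}=\e{\ind{v}}\e{\ind{w}}=\e{\perm{C}}\e{\perm{D}}.
\]
Through $\Theta$ this element is $C\boldsymbol{\cdot} D$, so $\tau=\perm{C\boldsymbol{\cdot}D}$. Setting $\lambda(C,D)\eqdef\weight(\Delta\rewto\norf(\Delta))$ then yields \cref{eq:lambda-coefficient}. The scalar $\lambda(C,D)$ is a monomial in the $x_i,y_i$, and it does not depend on the choice of $\Delta$, since both sides of the equation are fixed elements of the algebra and $\E{C\boldsymbol{\cdot}D}$ is a basis vector.

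\textbf{The reduced case.} If $\length(C\boldsymbol{\cdot}D)=\length(C)+\length(D)$, then $\ind{v}\cdot\ind{w}$ has length $\length(C\boldsymbol{\cdot}D)$, so it is a reduced word for $C\boldsymbol{\cdot}D$. By the argument in \cref{prop:commutation-class}, rules $I$ and $S$ strictly decrease the number of black boxes. Since the black-box count of $\Delta$ already equals that of the normal form, the rewriting $\Delta\rewto\norf(\Delta)$ uses only rule $C$, whose weight is $1$. Therefore $\lambda(C,D)=1$.

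\textbf{Main obstacle.} The one point requiring care is well-definedness: $\E{\Delta}$ depends only on the reading of $\Delta$, while the weight must not depend on the rewriting path. Both facts are already supplied by the confluence theorem and \cref{coro-weight-basis}, so no step here is expected to be genuinely difficult.
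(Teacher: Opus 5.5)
Your proposal is correct and follows the paper's proof essentially step by step. The basis is a reindexing of $(\E{\sigma})_{\sigma\in\SG}$ via $D\mapsto\perm{D}$, and $\lambda(C,D)$ is the path-independent rewriting weight from \cref{coro-weight-basis} for the diagram read by $\lexmin(\perm{C})\lexmin(\perm{D})$. In the length-additive case only rule $C$ can occur, because rules $I$ and $S$ remove black boxes. The paper leaves two points implicit that you make explicit: $\tau=\perm{C\boldsymbol{\cdot}D}$ via $\Theta$, and uniqueness of the scalar because $\E{C\boldsymbol{\cdot}D}$ is a basis vector.
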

\begin{proof}
  Recall that $D\mapsto\perm{D}$ is a bijection between diagram and
  permutations. Therefore $(\E{D})_{D\in\OkArc}$ is just a re-indexing of the
  usual basis $\E{\sigma}$ of $\Okada(X,Y)$.

  By definition, for any $D$ the basis element $\E{D}$ is the monomial
  $\E{\lexmin(\perm{D})}$, so that $\E{C}\E{D}=\E{\ind{w}}$ where $\ind{w}$ is
  the concatenation $\lexmin(\perm{C})\lexmin(\perm{D})$. On one hand
  \cref{coro-weight-basis}, tells us that
  \begin{equation}
  \label{eq:product-basis-elements-weights-canonical}
    \E{C}\E{D} = \weight\big(\Delta(\ind{w})\rewto\norf(\Delta(\ind{w})) \big)
    \,\E{\norf(\Delta(\ind{w}))}\,.
  \end{equation}
  On the other hand, the diagram $\e{\norf(\Delta(\ind{w}))}$ is known to be
  the diagram product $C \boldsymbol{\cdot} D$. This settles \cref{eq:lambda-coefficient}. Now, if
  $\length(C \boldsymbol{\cdot} D)=\length(C)+\length(D)$ the reduction
  $\Delta(\ind{w})\rewto\norf(\Delta(\ind{w}))$ preserves the number of black
  squares which means that the only possible rewriting rule which can be applied is commutation
  $(C)$ whose weight is $1$. Consequently
  $\weight \big(\Delta(\ind{w})\rewto\norf(\Delta(\ind{w})) \big)=1$.
\end{proof}
In \cref{sec:coeffprod}, we'll provide a method to compute $\lambda(C, D)$.
\bigskip

Thanks to our diagram realization, we can now prove that the Okada algebra is
cellular. Recall that, a \defn{cell datum} for an algebra $A$ is a tuple
$(\Lambda, i, \mathfrak{M}, C)$ consisting of
\begin{itemize}
\item a finite partially ordered set $(\Lambda, \leq )$.
\item a $\K$-linear anti-automorphism $i:A\to A$ with $i^2=\id_A$.
\item for each $\lambda\in\Lambda$ a non-empty finite set $\mathfrak{M}(\lambda)$ of
  indices.
\item an injective map
  \[
    C:\bigsqcup_{\lambda\in\Lambda} \mathfrak{M}(\lambda)\times \mathfrak{M}(\lambda) \to A
  \]
  The image of $(\mathfrak{s},\mathfrak{t}) \in \mathfrak{M}(\lambda) \times \mathfrak{M}(\lambda)$ under this map is denoted $C_{\mathfrak{st}}^\lambda$
and the following conditions must be satisfied:
\end{itemize}
\begin{enumerate}
\item the image of $C$ is an $R$-basis of $A$
\item $i(C_{\mathfrak{st}}^\lambda)=C_{\mathfrak{ts}}^\lambda$
\item \label{cond-cell-indep}
  for every $\lambda\in\Lambda$, $\mathfrak{s}, \mathfrak{t} \in \mathfrak{M}(\lambda)$ and
  $a \in A$,
  \[
    aC^\lambda_{\mathfrak{st}} = \sum_{\mathfrak{u} \in \mathfrak{M}(\lambda)} r_a(\mathfrak{u},\mathfrak{s})C^\lambda_{\mathfrak{ut}} \pmod {A<\lambda}
  \]
  where the scalar $r_a(\mathfrak{u}, \mathfrak{s}) \in R$ depends only on $a, \mathfrak{u}$, and $\mathfrak{s}$, but {\bf not} on $\mathfrak{t}$.
\end{enumerate}
\begin{theorem}\label{theo-cellular}
  The Okada algebra $\Okada(X, Y)$ is cellular with the following data:
  \begin{enumerate}
  \item\label{cond-cell-basis} The cell-poset is $(\Lambda, \leq ) =(\YFS_N , \isdomeq)$.
  \item\label{cond-cell-invol} The involution is the mirror involution
    $i:\E{D}\mapsto \E{D^\star}$.
  \item\label{cond-cell-invar} For $S \in \YFS_N$ let $\mathfrak{M}(S)$
    be the set of Okada half arc-diagrams $H$ such that $\PropLab(H) = S$.
  \item The cell element associated to $L, R \in M(S)$ is
    $C^\lambda_{LR}\eqdef\E{\glue{L}{R}}$.
  \end{enumerate}
\end{theorem}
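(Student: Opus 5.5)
We check the axioms of a cell datum one at a time, relying on the diagram basis of \cref{prop:lambda-coefficient} and the gluing lemma (\cref{lemma:gluing}).

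\textbf{Basis condition.} The map $(L,R)\mapsto \glue{L}{R}$ is a bijection from $\bigsqcup_{S\in\YFS_N}\mathfrak{M}(S)\times\mathfrak{M}(S)$ onto $\OkArc$. Indeed, every Okada arc-diagram $D$ is recovered as $\glue{\rightd{D}}{\leftd{D}}$, and the two halves share the propagating label set $\PropLab(D)\in\YFS_N$ by \cref{prop:proplab-index-fibo}. Since $(\E{D})_{D\in\OkArc}$ is a basis, $C$ is injective and its image is a basis.

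\textbf{Involution.} The mirror map $\star$ reverses index words and preserves the defining relations of $\Okada(X,Y)$. Hence it induces a $\K$-linear anti-automorphism $i$ of order two. By \cref{prop:revword}, $\lexmin(\sigma)^\star$ is a reduced word for $\e{\sigma^{-1}}$, and by \cref{prop:commutation-class} it lies in the commutation class of $\lexmin(\sigma^{-1})$. Therefore $i(\E{\sigma})=\E{\sigma^{-1}}$ exactly, with no scalar, since commutations have weight $1$. Combined with $\Theta(\e{}^\star)=\Theta(\e{})^\star$ this gives $i(\E{D})=\E{D^\star}$. Because $(\glue{L}{R})^\star=\glue{R}{L}$, we get $i(C_{LR})=C_{RL}$.

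\textbf{Cell condition.} This is the main point. First, for every $S$, the span $A^{\preceq S}$ of $\{\E{D}:\PropLab(D)\isdomeq S\}$ is a two-sided ideal; likewise $A^{\prec S}$. This follows from \cref{prop:lambda-coefficient} together with \cref{cor:prod-inf-prop}: $\E{C}\E{D}=\lambda\,\E{C\cdot D}$ and $\PropLab(C\cdot D)\isdomeq\PropLab(D)$. It suffices to treat $a=\E{C}$ a basis element. Then
\[
\E{C}\,C_{LR}=\lambda(C,\glue{L}{R})\,\E{C\cdot\glue{L}{R}}.
\]
By \cref{lemma:left-eq-or-prop-le}, applied in mirrored form to left multiplication, there are two possibilities. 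Either $\PropLab(C\cdot\glue{L}{R})\prec S$, in which case the term vanishes modulo $A^{\prec S}$. Or $\leftd{C\cdot\glue{L}{R}}=R$, in which case $C\cdot\glue{L}{R}=\glue{L'}{R}$. What remains is to show that, in the second case, both $L'$ and the scalar are independent of $R$.

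For $L'$: the left half and the propagating labels of the product are determined by composing $C$ with the left half $L$ alone. The propagating arcs of $L$ are simply continued through to the right-hand side, which carries $R$, and $R$ is never touched. So whether we land in the top cell, and what $L'$ is, depend only on $C$ and $L$.

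For the scalar: $\lambda(C,\glue{L}{R})$ is the weight of rewriting the concatenated word to normal form. I would factor $\glue{L}{R}=\glue{L}{H}\cdot\glue{H}{R}$ with $H=\freeld{S}$, using \cref{lemma:mult-compat} and the LJR-factorization \cref{prop:canonical-fact}. This gives $\E{\glue{L}{R}}=\E{\e{\LL}}\E{\e{\JJ}}\E{\e{\RR}}$ with scalar $1$, by the reduced-length clause of \cref{prop:lambda-coefficient}. Here $\e{\LL}$ depends only on $L$, and $\e{\JJ}\e{\RR}$ only on $R$. Then
\[
\E{C}\E{\e{\LL}}\E{\e{\JJ}}=\mu\,\E{\glue{L'}{H}}
\]
for a scalar $\mu$ depending only on $C$ and $L$. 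Here we need that $\e{\JJ}$ is idempotent up to the scalar $\prod_{i\in F(S)}x_i$, and that the top-cell product $\glue{L'}{H}\cdot\glue{H}{R}$ is reduced, again by \cref{prop:canonical-fact}. Multiplying by $\E{\e{\RR}}$ then gives $\mu\,\E{\glue{L'}{R}}$ (possibly up to a fixed scalar depending only on $S$ coming from $\e{\JJ}^2$). Thus $r_a(L',L)$ is independent of $R$.

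The expected obstacle is controlling these scalars: making sure that no extra weight arises when the right factor $\e{\JJ}\e{\RR}$ is reattached. The plan is to handle this through the reduced-length clause of \cref{prop:lambda-coefficient}, applied to $\e{\LL'}\e{\JJ}\cdot\e{\RR}$, which is a reduced LJR-factorization of $\glue{L'}{R}$.
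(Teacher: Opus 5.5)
Your proposal is correct and is essentially the paper's argument: LJR-factorize $\glue{L}{R}=\e{\LL}\e{\JJ}\e{\RR}$, get the scalar from $\E{C}\E{\glue{L}{H}}$ alone (with $H=\freeld{S}$), and reattach $\E{\e{\RR}}$ with scalar $1$, because $\e{\LL'}\e{\JJ}\e{\RR}$ is the reduced LJR-factorization of $\glue{L'}{R}$ (\cref{prop:canonical-fact,prop:lambda-coefficient}). You treat an arbitrary basis element $a=\E{C}$, as the axiom requires, whereas the paper only writes out the case $a=\E{\glue{L_1}{R_1}}$ in the same cell $S$. One correction to your middle paragraph: $\glue{L'}{H}\cdot\glue{H}{R}$ is \emph{not} reduced (it closes one loop for each $i\in F(S)$), and no scalar from $\e{\JJ}^2$ ever arises. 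Neither claim is used, though, since you only multiply $\E{\glue{L'}{H}}$ by $\E{\e{\RR}}$.
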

\begin{proof}
  We already proved \cref{cond-cell-basis,cond-cell-invol}.  Suppose that
  $L_1,R_1,L_2,R_2$ are half arc-diagrams all sharing the same propagating labelling set $S$.
  Suppose also that the propagating labelling set of the
  product $D \eqdef\glue{L_1}{R_1} \boldsymbol{\cdot} \glue{L_2}{R_2}$ 
  is $S$. This forces $D =\glue{L_1}{R_2}$. One needs to
  prove that the scalar $\lambda$ appearing in
  $\E{\glue{L_1}{R_1}}\E{\glue{L_2}{R_2}} = \lambda \E{D}$
  depends only on $R_1$ and $L_2$. We claim that $\lambda = \mu$ where
  $\f{} = \mu  \ssp \E{\, \glue{\freeld{S}}{\ssp \freeld{S}}}$
  and $\f{}  \eqdef\E{\, \glue{\freeld{S}}{R_1}}\E{\glue{L_2}{\ssp \freeld{S}}}$

   Factor the elements
  $\glue{L_i}{R_i} = \mathsf{l}_i \boldsymbol{\cdot} \mathsf{j}_i   \boldsymbol{\cdot} \mathsf{r}_i$
  using \cref{prop:canonical-fact}.
  Since these products are reduced
  in the monoid, they are also reduced in the algebra so that
  \[
    \E{\glue{L_i}{R_i}} = \E{\mathsf{l}_i}\E{\mathsf{j}_i}\E{\mathsf{r}_i}\,.
  \]
  We know that
  \[
    \glue{\freeld{S}}{R_i}= \mathsf{j}_i \boldsymbol{\cdot} \mathsf{r}_i
    \qandq
    \glue{L_i}{\freerd{S}}= \mathsf{l}_i \boldsymbol{\cdot} \mathsf{j}_i\,,
  \]
  and thus
  \[
    \E{\ssp \glue{\freeld{S}}{R_i}}= \E{\mathsf{j}_i} \ssp \E{\mathsf{r}_i}
    \qandq
    \E{\glue{L_i}{\ssp \freerd{S}}}= \E{\mathsf{l}_i} \ssp \E{\mathsf{j}_i}\,.
  \]
  Now
  \begin{alignat*}{2}
    \E{\glue{L_1}{R_1}}\E{\glue{L_2}{R_2}}
    &=\E{\mathsf{l}_1}\E{\mathsf{j}_1}\E{\mathsf{r}_1}\E{\mathsf{l}_2}\E{\mathsf{j}_2}\E{\mathsf{r}_2}
    =\E{\mathsf{l}_1}\E{\mathsf{j}_1 \boldsymbol{\cdot} \mathsf{r}_1}\E{\mathsf{l}_2 \boldsymbol{\cdot} \mathsf{j}_2} \E{\mathsf{r}_2} \\
    &=\E{\mathsf{l}_1}\E{\ssp \glue{\freeld{S}}{R_1}}\E{\glue{L_2}{\ssp \freerd{S}}} \ssp \E{\mathsf{r}_2}
    =\E{\mathsf{l}_1} \f{} \ssp \E{\mathsf{r}_2} =  \mu \ssp \E{\mathsf{l}_1}\E{\ssp \glue{\freeld{S}}{\ssp \freerd{S}}} \ssp \E{\mathsf{r}_2} 
  \end{alignat*}
  However we know that the product $\mathsf{l}_1 \boldsymbol{\cdot} \glue{\freeld{S}}{\freerd{S}} \boldsymbol{\cdot} \mathsf{r}_2$ is
  reduced since it is the canonical factorization $\glue{L_1}{R_2} = D$.
\end{proof}

\subsection{Restriction and induction of cell modules}\label{sec:cell-module}
In this section we use the combinatorics of arc-diagrams to describe the
structure of cell modules and to recover and extend Okada's
results expressing restriction and induction of
simple $\Okada[N](X, Y)$-modules
in terms of the covering relations of the Young-Fibonacci lattice.

We fix two infinite sequences of generic scalars $X\eqdef(x_1,x_2,\dots)$ and
$Y\eqdef(y_1,y_2,\dots)$ and let
$\Okada(X,Y) \eqdef \Okada(x_1,\dots,x_{N-1},y_1,\dots,y_{N-2})$
for all $N$. The standard inclusion
makes $\Okada[N](X,Y)$  into a subalgebra of
$\Okada[N+1](X,Y)$ and leads to an
increasing sequence of algebras:
\[
  \mathbb{K}=\Okada[0](X,Y)\subset\Okada[1](X,Y)\subset\Okada[2](X,Y)\subset\Okada[3](X,Y)\subset\Okada[4](X,Y)\subset\cdots
\]

The left $\Okada(X,Y)$ \defn{cell module} $V^S$ associated to
$S \in \YFS_N$ can be realized by the vector space spanned by rank~$N$
half Okada arc-diagrams $\rightd{D}$ such that $\PropLab\rightd{D}=S$
equipped with the left action $\bullet$ obtained by extending linearly
the formula
\begin{equation}
  \label{eq.cell-module-action}
  \E{C}  \bullet \rightd{D} \ \eqdef
  \begin{cases}
    \lambda(C,D) \ssp \rightd{C \boldsymbol{\cdot} D}
    &\text{if $\PropLab(C \boldsymbol{\cdot} D) = S$} \\
    0
    &\text{otherwise}
  \end{cases}
\end{equation}
where $D \in \Okada(X,Y)$. See~\cref{fig.cell211}
for an example. According to~\cite{Okada1994}, the module $V^S$ is simple provided
the $X,Y$ parameters are sufficiently generic~(see \cref{thm:okada-semisimple-parameters}).
\bigskip
\begin{remark}\label{rem-cell-realisation-as-full-diags}
  The result of \cref{eq.cell-module-action} depends only on the half diagram
  $\rightd{D}$ and not on the choice of representative full diagram
  $D$.  In particular, given $S$ and a fixed half arc-diagram $L$ with
  $\PropLab(L)=S$, the module $V^S$ can be realized as the span 
  half arc-diagrams $\rightd{D}$ where $D$ varies over
  the set of full arc-diagram $D$ with $\leftd{D}=L$.
  \end{remark}
We now turn to the branching rule starting with a simple combinatorial lemma:
\begin{lemma}\label{lem:YF-pred_totally-ordered}
  Let $S$ be a Fibonacci set of rank $N$. The set of predecessors of $S$ in
  $\YF_{N-1}$ is totally ordered by the dominance order. Similarly the set of
  successors of $S$ in $\YF_{N+1}$ is totally ordered by the dominance order.
\end{lemma}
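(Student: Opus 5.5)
The plan is to work directly with the Fibonacci-set description of covers in \cref{def:YFS-lattice}: if $S\in\YFS_N$, a predecessor $T\in\YFS_{N-1}$ satisfies either $S = T\setminus\max T$ or $T = S\setminus \max S$. Symmetrically, a successor $U\in\YFS_{N+1}$ satisfies either $U = S\setminus\max S$ or $S = U\setminus \max U$. I will first list these sets explicitly and then compare them pairwise using the coordinatewise definition of $\isdomeq$, aligning elements from the top as in the definition.

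\emph{Predecessors.} There is at most one predecessor of the form $S\setminus\max S$; it exists exactly when $S\neq\varnothing$ and $\max S=N$. The other predecessors have the form $T=S\cup\{t\}$ with $t>\max S$, $t\le N-1$, and $T\in\YFS_{N-1}$. Parity forces $t\equiv \#S+1 \pmod 2$, so the possible values of $t$ form an arithmetic progression of step $2$ in the interval $(\max S, N-1]$. Two such sets $S\cup\{t\}$ and $S\cup\{t'\}$ with $t<t'$ have the same size and agree except in their top element, hence $S\cup\{t\}\isdom S\cup\{t'\}$. If $\max S = N$, then no such $t$ exists, because $t>N>N-1$. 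So in that case the only predecessor is $S\setminus\{N\}$, and the predecessor set is trivially totally ordered. Thus in every case the predecessors form a chain.

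\emph{Successors.} There is at most one successor of the form $U_0=S\setminus\max S$. It requires $S\ne\varnothing$, and the parity condition for rank $N+1$ holds automatically. The other successors are $U_u=S\cup\{u\}$ with $\max S<u\le N+1$ and $u\equiv \#S+1\pmod 2$. These again form a chain $U_u\isdom U_{u'}$ for $u<u'$. It remains to compare $U_0$ with each $U_u$. The set $U_0$ has one fewer element than $S$, and $U_u$ has one more. Aligning from the top, the $i$-th largest element of $U_0$ is $s_{k-1-i}$, while the $i$-th largest element of $U_u$ is $u$ for $i=0$ and $s_{k-i}$ for $i\ge1$. Since $s_{k-1-i}< s_{k-i}\le$ the corresponding entry of $U_u$, we get $U_0\isdom U_u$ for every $u$. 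Hence $U_0$ sits at the bottom of the chain, and the successors are totally ordered.

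The main point needing care is the bookkeeping of the top-aligned comparison when the cardinalities differ, and checking the edge case $S=\varnothing$. For $S=\varnothing$ only sets of the form $\{u\}$ occur; these are all comparable, since singletons are ordered by their element. One must also remember that the rank $N$ matters only through the parity constraints. After that, the lemma is a direct verification.
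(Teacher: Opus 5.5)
There is a gap in your predecessor analysis: you claim that $S\setminus\max S$ is a predecessor of $S$ only when $\max S=N$, and this is false. For every non-empty $S\in\YFS_N$, the set $S\setminus\max S$ lies in $\YFS_{N-1}$. The pair $S\setminus\max S\lessdot S$ has exactly the form ``lower set $=$ upper set minus its maximum'' allowed by \cref{def:YFS-lattice}, with no condition on $\max S$. In the word picture, $\FibWord(S)$ is obtained from $\FibWord(S\setminus\max S)$ by inserting a $1$ to the left of all its $1$'s, which is always a cover.

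For example, $\{1\}_3$ (the word $21$) covers both $\varnothing_2$ and $\{1,2\}_2$. The paper's own example lists $\{3\}_7$ among the predecessors of $\{3,4\}_8$. So when $S\neq\varnothing$ and $\max S<N$, both kinds of predecessor coexist. By parity $\max S\le N-2$ in that case, so $t=\max S+1$ is always available. Your argument only shows that the sets $S\cup\{t\}$ form a chain. It never compares $S\setminus\max S$ with them, so this case of the predecessor statement is left unproved.

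The repair uses your own computation. The top-aligned comparison you did for successors, showing $S\setminus\max S\isdom S\cup\{u\}$, uses only $u>\max S$ and never the bound $u\le N+1$. It therefore applies verbatim to $S\cup\{t\}$ with $t\le N-1$, which places $S\setminus\max S$ at the bottom of the predecessor chain. With this fix the predecessor and successor cases become the same argument, which is how the paper treats them.

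There is also a harmless indexing slip in that comparison. For $i\ge 1$, the $i$-th largest element of $S\cup\{u\}$ is $s_{k+1-i}$, not $s_{k-i}$. The inequality you need, $s_{k-1-i}\le s_{k+1-i}$, still holds, so the conclusion is unaffected.
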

For example, the Fibonacci set $\{3, 4\}_8$ covers the three sets
$\{3\}_7 \isdom \{3, 4, 5\}_7 \isdom \{3, 4, 7\}_7$
\begin{proof}
  If $S$ is empty (and thus $N$ is even), then the predecessors of $S$ are the
  singletons $\{i\}$ with $1\leq i <N$ odd. Two singletons satisfy
  the dominance order 
  $\{i\}\isdom\{j\}$ if and only if $i < j$. When $S$ is not empty write $S=\{s_1<\dots<s_\ell\}$. Then
  the predecessors of $S$ are $S\setminus\{s_{\ell}\}$ and $S\cup\{s_\ell+k\}$
  where $k$ odd and $s_\ell+k<N$. These predecessors are totally ordered as
  well. The proof for successors of $S$ is similar.
\end{proof}
\begin{proposition}\label{prop:restr-okada}
  Fix $N>0$ and $T$ a Young-Fibonacci set of rank $N$. Let
  $S_1\isdom S_2\isdom\cdots\isdom S_k$ be the list of predecessors of
  $T$ in $\YFSn[N-1]$, sorted by dominance order. For $i=1,\dots,k$, define
  $V^T_{\leq i}$ to be the span of the rank $N$ half arc-diagrams
  $H$ such that $\PropLab(H/\setN[N-1]) \isdomeq S_i$. We also set
  $V^T_{\leq 0}$ to be the zero vector space. Then the $V^T_{\leq i}$ form an
  increasing sequence of $\Okada[N-1](X,Y)$-submodules
  \[ 0 = V^T_{\leq 0}\subset V^T_{\leq 1}\subset\dots\subset V^T_{\leq k} =
    V^T\] such that $V^T_{\leq i}/V^T_{\leq i-1}\isom V^{S_i}$ for any
  $i=1,\dots,k$. In particular, if these modules are simple given the choice of
  parameters $X, Y$ defining $\Okada[N-1](X,Y)$, the following isomorphism
  holds:
  \begin{equation}\label{eq.restriction}
    \operatorname{Res}_{\ssp\Okada[N-1](X,Y)}^{\ssp\Okada[N](X,Y)}(V^T)
    \ssp \isom \ssp \bigoplus_{i=1}^k V^{S_i}
    \ssp \isom \ssp \bigoplus_{S\lessdot T} V^S\,.
  \end{equation}
  where $S\lessdot T$ means that $S$ is covered by $T$ in $\YFS$.
\end{proposition}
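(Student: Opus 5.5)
The approach is to work with the half-diagram model of $V^T$ from \cref{eq.cell-module-action} and to restrict half diagrams to $\setN[N-1]$. The first step is combinatorial. For a rank $N$ half diagram $H$ with $\PropLab(H)=T$, the restriction $H/\setN[N-1]$ has propagating label set $S$ with $S\lessdot T$, by \cref{prop:chain-diag-bij}. So $S$ is one of the $S_i$, and the $S_i$ are totally ordered by \cref{lem:YF-pred_totally-ordered}. Conversely, $H\mapsto H/\setN[N-1]$ is a bijection between $\{H : \PropLab(H)=T,\ \PropLab(H/\setN[N-1])=S_i\}$ and the rank $N-1$ half diagrams with propagating set $S_i$. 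Indeed, given the restriction and the cover $S_i\lessdot T$, the arc at node $N$ is forced: it either closes the top propagating arc or adds $\harc{N}{N}$. This is exactly the $\Diag$ construction applied to the extended chain.

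Next I would show that each $V^T_{\leq i}$ is stable under $\Okada[N-1](X,Y)$. For $C\in\OkArc[N-1]$, write $\iota_{N-1}(C)$ for the corresponding element of $\OkArc[N]$; it contains the arc $\arc{N}{N}{\ov{N}}$. The product $\iota_{N-1}(C)\cdot D$ does not touch node $N$ of the bra except through the propagating structure. Restricting to $\setN[N-1]$ commutes with the action, in the sense that
\[
(\iota_{N-1}(C)\cdot D)/\setN[N-1] = (C\cdot D')/\setN[N-1],
\]
where $D'$ is a rank $N-1$ diagram whose bra is $\rightd{D}/\setN[N-1]$. I would realize this through \cref{rem-cell-realisation-as-full-diags}, choosing $D=\glue{H}{R}$ with a convenient $R$. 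By \cref{cor:prod-inf-prop} (or \cref{lemma:left-eq-or-prop-le} applied in rank $N-1$), the propagating set of the restricted product is $\isdomeq S_i$. Hence the image of a basis vector of $V^T_{\leq i}$ is either $0$ or again lies in $V^T_{\leq i}$.

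Then I would identify the quotient $V^T_{\leq i}/V^T_{\leq i-1}$. It has basis the $H$ with $\PropLab(H/\setN[N-1])=S_i$ exactly, because the predecessors are totally ordered, so ``$\isdomeq S_i$ but not $\isdomeq S_{i-1}$'' means equality. The map $H\mapsto H/\setN[N-1]$ sends this basis bijectively onto the basis of $V^{S_i}$. It remains to check that the actions agree modulo $V^T_{\leq i-1}$, including the scalars $\lambda$. The nonzero terms in the quotient are exactly those where the restricted propagating set stays $S_i$. In that case the full product has propagating set $T$, and the new bra is recovered from its restriction by the forced rule at node $N$.

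The main obstacle is matching the scalars: I must show $\lambda(\iota_{N-1}(C),D)=\lambda(C,D')$. My plan is to pick $D$ so that the arc at node $N$ and its partner lie above everything $C$ interacts with. The extra arc $\arc{N}{N}{\ov{N}}$ of $\iota_{N-1}(C)$ only concatenates and creates no loops. Then the rewriting computing the weight in \cref{coro-weight-basis} involves the same black-box moves in rows $<N-1$, since a canonical word for $\iota_{N-1}(C)$ equals that of $C$. Finally, in the semisimple case, every short exact sequence in the filtration splits, which gives \cref{eq.restriction}.
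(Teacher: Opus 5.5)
Your overall architecture is right, and the submodule step is the paper's own argument made explicit. The paper's proof is only an appeal to \cref{lemma:left-eq-or-prop-le}. Your identity $\rightd{\iota_{N-1}(C)\cdot D}/\setN[N-1]=\rightd{C\cdot D'}$ is what makes that appeal legitimate, and it does hold: only the component through left node $N$ changes, and it either gains the endpoint $N$ or drops out of the restriction. The paper never spells out the identification of the subquotients, and that is where your argument has a genuine gap, exactly at the step you flag.

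The choice ``pick $D$ so that the arc at node $N$ and its partner lie above everything $C$ interacts with'' is not available. \Cref{rem-cell-realisation-as-full-diags} lets you vary only the ket of $D$; the bra $H$ is fixed and may contain $\arc{j}{\ell}{N}$ with $j$ small. Knowing that $\iota_{N-1}(C)$ and $C$ share a canonical word controls only the left factor. The weight of $\E{\iota_{N-1}(C)}\E{D}$ comes from rewriting $\lexmin(\perm{C})\cdot\lexmin(\perm{D})$, and you never relate $\lexmin(\perm{D})$ to a rank $N-1$ word. A second claim is also unproved: that a term surviving in the quotient has $\PropLab(\iota_{N-1}(C)\cdot D)=T$. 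This does not follow from dominance, since several successors of $S_i$ can lie below $T$; they form the chain $S_i\setminus\{\max S_i\}\isdom S_i\cup\{\max S_i+1\}\isdom S_i\cup\{\max S_i+3\}\isdom\cdots$.

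Both points are settled by taking $D'$ from \cref{prop:right-code-factor} instead of choosing $D$ geometrically.
\begin{itemize}
\item If $\arc{N}{N}{\ov{N}}\notin D$, write $D=\psi(D',d)=\iota_{N-1}(D')\,\A{N-1}\cdots\A{d}$ with $d=\LDes(D)$. Reading off \cref{def:flat_diagram}, $\rightd{\psi(G,d)}/\setN[N-1]=\rightd{G}$ for every $G\in\OkArc[N-1]$; in particular $\rightd{D'}=\rightd{D}/\setN[N-1]$.
\item By the proof of \cref{prop:loop-to-diag}, $\lexmin(\perm{G})\cdot(N-1)\cdots d$ is the reading of the normal form of $\psi(G,d)$. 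Hence $\E{\psi(G,d)}=\E{\iota_{N-1}(G)}\E{N-1}\cdots\E{d}$ with no rewriting at all.
\end{itemize}
It follows that
\[
\E{\iota_{N-1}(C)}\E{D}=\E{\iota_{N-1}(C)}\E{\iota_{N-1}(D')}\E{N-1}\cdots\E{d}=\lambda(C,D')\,\E{\psi(C\cdot D',d)}\,,
\]
which gives $\lambda(\iota_{N-1}(C),D)=\lambda(C,D')$ and $\iota_{N-1}(C)\cdot D=\psi(C\cdot D',d)$ at once. Now suppose $\PropLab(C\cdot D')=S_i$. The mirror of \cref{lemma:left-eq-or-prop-le} then gives $\leftd{C\cdot D'}=\leftd{D'}$. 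The ket of $\psi(G,d)$ depends only on $\leftd{G}$ and $d$, so $\leftd{\iota_{N-1}(C)\cdot D}=\leftd{D}$ and the propagating set is indeed $T$. The case $\arc{N}{N}{\ov{N}}\in D$ is immediate, since then $D=\iota_{N-1}(D')$.

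Two small slips. When $T=S_i\cup\{t\}$, the arc forced at node $N$ is $\harc{N}{t}$ with $t=\max T$, not $\harc{N}{N}$. For the final direct sum you do need semisimplicity of $\Okada[N-1](X,Y)$, as you assume. Simplicity of the subquotients alone would not make the filtration split.
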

\begin{proof}
  The fact that $V^T_{\leq i}$ is submodule comes from~\cref{lemma:left-eq-or-prop-le}.
\end{proof}
See~\cref{fig.cell211} for an illustration of~\cref{eq.restriction}.
In the semi-simple case we recover Okada's results (see in particular
\cite[Lemma 2.4]{Okada1994}).
\begin{figure}[ht]
  \includegraphics[width=0.95\textwidth]{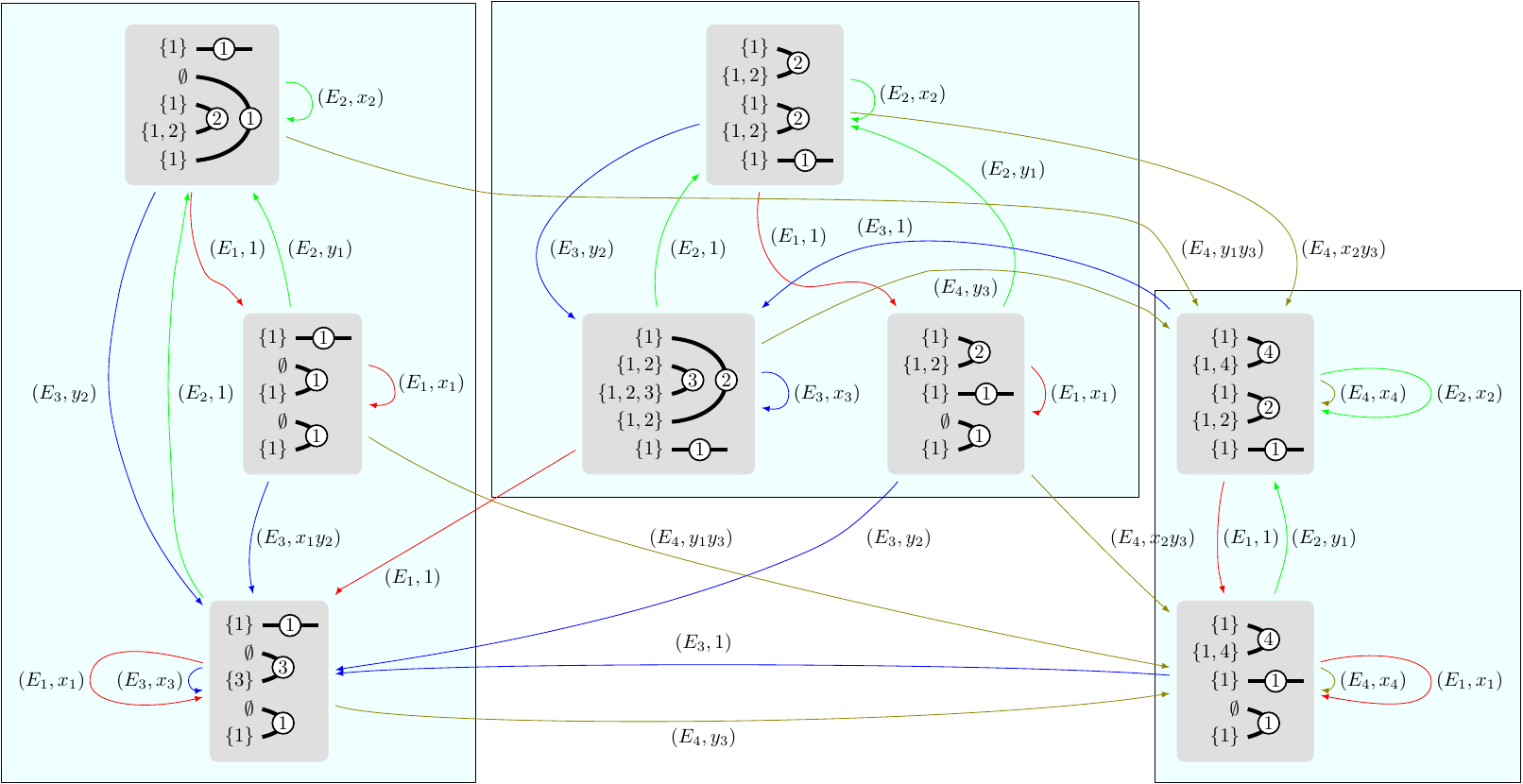}
  \caption[Restriction of a cell module]{The cell module associated to
    $S=\{1\}_5$ and its restriction to $\Okada[4]$. An edge $(H \mapsto K)$
    labelled by $(\E{i}, \lambda)$ means that $\E{i} \bullet H = \lambda  K
    $. The blue boxes show the irreducible components. Ignoring the gray arrow
    ($\E{4}$ generators), we see the following filtration:\\
    ${~}\qquad\quad
    \operatorname{Res}_{\Okada[3](X,Y)}^{\Okada[4](X,Y)}\left(V_{\{1\}_5}\right) =
    V_{\varnothing_4}\oplus V_{\{1,2\}_4}\oplus V_{\{1,4\}_4} \supset
    V_{\varnothing_4}\oplus V_{\{1,2\}_4} \supset
    V_{\varnothing_4}\,.$
  }
  \label{fig.cell211}
\end{figure}
Let $S$ be a Fibonacci set of rank $N$. By definition of the
Young-Fibonacci lattice, the successors of $S$ are the predecessors of
$S^\sharp$ (the same set as $S$ but with rank $N+2$). This is reflected 
by the following isomorphism:
\begin{proposition}
  \newcommand{\IndVS}{\widehat{V^S}}%
  Fix some $N\geq0$ and $S$ a Young-Fibonacci set of rank $N$ and let $V^S$ be
  the associated $\Okada(X,Y)$-cell module. One has the following
  isomorphim of $\Okada[N+1](X,Y)$-modules
  \[
    \operatorname{Ind}_{\ssp\Okada[N](X,Y)}^{\ssp\Okada[N+1](X,Y)}\left(V^S\right)
    \ssp \isom \ssp 
    \operatorname{Res}_{\ssp\Okada[N+1](X,Y)}^{\ssp\Okada[N+2](X,Y)}
    \left(V^{S^\sharp}\right)
  \]
  where $S^\sharp$ is the same set as $S$ but with rank $N+2$. In
  particular induced modules have the structure described in
  \cref{prop:restr-okada}.
\end{proposition}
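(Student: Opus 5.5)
Since $\Okada[N+1](X,Y)$ is free as a right $\Okada[N](X,Y)$-module, the plan is to work with the concrete realization $\Okada[N+1](X,Y)\otimes_{\Okada[N](X,Y)}V^S$. The right coset decomposition of \cref{eq:right-coset-decomposition} shows that $\Okada[N+1](X,Y)$ is free as a right $\Okada[N](X,Y)$-module with basis
\[
1,\ \E{N},\ \E{N-1}\E{N},\ \dots,\ \E{1}\E{2}\cdots\E{N}.
\]
Hence the induced module has a basis $b_j\otimes H$, where $b_j=\E{j}\E{j+1}\cdots\E{N}$ (with $b_{N+1}=1$) and $H$ ranges over half arc-diagrams with $\PropLab(H)=S$. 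Its dimension is therefore $(N+1)\dim V^S$.

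On the other side, $\operatorname{Res}(V^{S^\sharp})$ has a basis of rank $N+2$ half diagrams $K$ with $\PropLab(K)=S$. Restricting to $\setN[N]$ and $\setN[N+1]$, \cref{prop:chain-diag-bij} shows that the last two steps of $\Chain(K)$ run from $S'\lessdot S^\sharp$, with $S'$ a successor of $S$ at rank $N+1$. Counting via \cref{prop:restr-okada} and the differential identity $\mathrm{DU}-\mathrm{UD}=\mathrm{Id}$ confirms that the dimensions match.

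I will build the map explicitly, using \cref{rem-cell-realisation-as-full-diags} to represent half diagrams by full diagrams. Define
\[
\Phi(a\otimes H)=a\bullet\iota\big(\text{``$H$ with the extra arc }\arc{N+1}{N+1}{N+2}\text{''}\big),
\]
that is, send $\rightd{D}$ to the rank $N+2$ half diagram obtained by adjoining the arc $\arc{N+1}{N+1}{N+2}$. This arc is legal: its label $N+1$ is maximal, the parities agree, and it is nested in nothing. It keeps $\PropLab=S$, now viewed at rank $N+2$. Equivalently, $\Phi(\rightd{D})=\E{N+1}\bullet\rightd{\iota_{N+1}\iota_N D}$ inside $V^{S^\sharp}$. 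The map is $\Okada[N]$-linear because the $\Okada[N]$ action never touches nodes $N+1,N+2$. The labelled product leaves that arc alone, and the coefficient $\lambda$ is the same one as in rank $N$ by \cref{prop:lambda-coefficient}, with loops and labels unchanged. By the universal property of induction, $\Phi$ therefore extends to an $\Okada[N+1]$-module map.

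Surjectivity is the key step. Consider a target $K$ with $\PropLab(K)=S$ at rank $N+2$. Node $N+2$ must either close an arc or carry a propagating label. I would apply the mirror of \cref{prop:right-code-factor}, i.e.\ \cref{prop:simplify-desc} read on the left, to write $K$ as $\E{j}\E{j+1}\cdots\E{N}$ applied to a half diagram that contains $\arc{N+1}{N+1}{N+2}$. Concretely, the left factorization $D={}^\flat\!D\cdot\A{d}\cdots\A{N}$ strips the largest left descent. Since $\lambda=1$ on reduced products, $K=\Phi(b_j\otimes H)$ for suitable $H$. Injectivity then follows from the dimension count, and the dimension count together with \cref{prop:restr-okada} transports the filtration. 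I expect the main obstacle to be this surjectivity/factorization step. One must check that the relevant $K$ lie in the image with nonzero coefficient, and that the "$\PropLab=S$" truncation in the cell action does not kill $b_j\otimes H$. I would handle this by observing that the products involved are reduced, so the coefficient is $1$ and the dominance order does not drop.
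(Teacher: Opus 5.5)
Your proposal is correct and is essentially the paper's proof. It uses the same coset basis $\E{\ell}\cdots\E{N}\otimes H$ of the induced module and the same map: your $\E{d}\cdots\E{N}\bullet\bigl(H\cup\{\arc{N+1}{N+1}{N+2}\}\bigr)$ (insert $\arc{d}{d}{d+1}$ and shift the nodes $\geq d$ of $H$ up by two) is exactly the paper's $\rightd{\A{d}\cdots\A{N}\boldsymbol{\cdot}\iota_N(\glue{H}{L})}^\sharp$ for a fixed $L$ with $\PropLab(L)=S$. The difference is that you get the module property from $\Okada[N](X,Y)$-balancedness, and bijectivity from surjectivity plus $\dim V^{S^\sharp}=(N+1)\dim V^S$, rather than from the paper's chain of bijections.

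Some details need repair.
\begin{enumerate}
\item The arc $\arc{N+1}{N+1}{N+2}$ is not ``nested in nothing'': it is nested in every propagating arc of $H$. This is harmless, since those labels are at most $N$.
\item The equality of coefficients in your linearity check does not follow from \cref{prop:lambda-coefficient} alone. It comes from the reduced factorization $\E{\iota_{N+1}\iota_N(D)\boldsymbol{\cdot}\A{N+1}}=\E{\iota_{N+1}\iota_N(D)}\,\E{N+1}$.
\item In rank $N+2$, the mirror of \cref{prop:right-code-factor} gives $D=\A{d}\cdots\A{N+1}\boldsymbol{\cdot}D_1$ with $\arc{N+2}{N+2}{\ov{N+2}}\in D_1$, not the one-sided formula you wrote. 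Regroup it as $\A{d}\cdots\A{N}\boldsymbol{\cdot}(\A{N+1}D_1)$.
\item You must check directly that insert-and-shift keeps $\PropLab=S$. Reducedness only gives the coefficient $1$, not that the propagating labels are preserved.
\end{enumerate}
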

\begin{proof}
  Fix a Fibonacci set $S$ of rank $N$ together with an arbitrary half
  diagram $L$ such that $\PropLab{L}=S$, for example
  $L=\freerd{S}$. Acording to
  \cref{rem-cell-realisation-as-full-diags}, the cell module $V^S$ can
  be realized as the span of the arc-diagrams of the form
  $\glue{K}{L}$ with the natural action. By
  \cref{eq:right-coset-decomposition}, a basis of
  $
  \operatorname{Ind}_{\ssp\Okada[N](X,Y)}^{\ssp\Okada[N+1](X,Y)}\left(V^S\right)$
  consists of tensors of the form
  \[
    \E{\ell}\cdots\E{N} \otimes \glue{K}{L}
  \]
  where $K$ is any half diagram of with $\PropLab{K}=S$ and
  $1\leq\ell\leq N+1$ with the proviso that $\E{\ell}\cdots\E{N}=1$
  when $\ell=N+1$. Moreover the map
  \[
    \E{\ell}\cdots\E{N} \otimes \glue{K}{L}
    \ \longmapsto\
    \A{\ell} \ssp \boldsymbol{\cdots} \A{N} \boldsymbol{\cdot} \iota_N(\glue{K}{L})
  \]
  extends linearly to an $\Okada[N+1](X, Y)$-module isomorphism from
  $
  \operatorname{Ind}_{\ssp\Okada[N](X,Y)}^{\ssp\Okada[N+1](X,Y)}\left(V^S\right)$
  to the span of the set $\mathcal{S}$ consisting of rank $N+1$
  diagrams $D$ for which $\leftd{D}/\setN[N] = L$.  Furthermore, the
  map $D \mapsto \rightd{D}$ is a bijection between $\mathcal{S}$ and
  half diagrams $H$ such that $S\lessdot \PropLab(H)$.  The successors
  of $S$ in $\YF$ are precisely the predecessors on $S^\sharp$, so
  there is a unique way to complete $\rightd{D}$ to a rank $N+2$ half
  diagram $\rightd{D}^\sharp$ whose propagating labels is
  $S^\sharp$. It is easy to see that the bijection
  \[
    \E{\ell}\cdots\E{N} \otimes \glue{K}{L}
    \ \longmapsto\
    \rightd{\A{\ell} \ssp \boldsymbol{\cdots} \A{N} \boldsymbol{\cdot} \iota_N(\glue{K}{L})}^\sharp
  \]
  extends linearly to an $\Okada[N+1](X,Y)$-module morphism.
\end{proof}

One can summarize the entire discussion of this subsection as follows:
\begin{theorem}\label{theo:coherent-celullar}
  The sequence $\left(\Okada[N](X,Y)\right)_{N\geq0}$ is a coherent tower of
  cellular algebras in the sense of~\cite[Definition 2.15]{Goodman2011}.
\end{theorem}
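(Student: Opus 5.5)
Theorem~\ref{theo:coherent-celullar} asks us to check the axioms of \cite[Definition 2.15]{Goodman2011} one at a time. Each axiom is meant to follow from something already proved. The axioms are:
\begin{enumerate}
\item each $\Okada[N](X,Y)$ is cellular;
\item the involutions are compatible with the inclusions;
\item $\Okada[0](X,Y)=\K$;
\item restrictions of cell modules have cell filtrations;
\item inductions of cell modules have cell filtrations.
\end{enumerate}
Axiom (1) is \cref{theo-cellular}, with cell poset $(\YFSn,\isdomeq)$, cell indices the half diagrams with prescribed $\PropLab$, and cellular basis $\E{\glue{L}{R}}$.

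For the involutions, the mirror map $\E{D}\mapsto\E{D^\star}$ reverses words in the generators. It fixes each $\E{i}$, so it restricts along $\Okada[N](X,Y)\subset\Okada[N+1](X,Y)$. At the diagram level this matches $\iota_N(D)^\star=\iota_N(D^\star)$, since $\iota_N$ only adds the horizontal arc $\arc{N+1}{N+1}{\ov{N+1}}$. The base case is immediate: $\Okada[0](X,Y)=\Okada[1](X,Y)=\K$, and there is a single Fibonacci set in ranks $0$ and $1$.

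The substantive axiom is the restriction condition. For each $T\in\YFSn$, $\operatorname{Res}V^T$ must have a filtration by $\Okada[N-1](X,Y)$-submodules whose subquotients are cell modules $V^{S}$. The order must also be right: in Goodman--Graber, the subquotients appear in increasing dominance order going up the filtration, each $V^S$ occurring with multiplicity given by the Bratteli diagram. This is exactly \cref{prop:restr-okada}. Its filtration $V^T_{\le i}$ is indexed by the predecessors $S_1\isdom\cdots\isdom S_k$ of $T$, which are totally ordered by \cref{lem:YF-pred_totally-ordered}. This is where the main work lies. We must check that $V^T_{\le i}$ is stable under $\E{C}$ for $C\in\OkArc[N-1]$ embedded via $\iota_{N-1}$, and that the subquotient is isomorphic to $V^{S_i}$.

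\begin{itemize}
\item \emph{Stability.} We would use \cref{lemma:left-eq-or-prop-le} applied to the restricted half diagram $H/\setN[N-1]$. Left multiplication by $\iota_{N-1}(C)$ acts on $H$ only through its restriction to $\setN[N-1]$, and it either preserves the restricted bra or strictly lowers its $\PropLab$ in dominance.
\item \emph{Isomorphism.} The map $H\mapsto H/\setN[N-1]$ is a bijection between half diagrams with $\PropLab(H)=T$ and $\PropLab(H/\setN[N-1])=S_i$, and half diagrams with $\PropLab=S_i$. Its inverse, via \cref{prop:chain-diag-bij}, appends the last step $S_i\lessdot T$ of the chain.
\item \emph{Scalars.} We must check that the scalars $\lambda$ agree with those of $V^{S_i}$ modulo lower terms. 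This reduces, as in the proof of \cref{theo-cellular}, to the fact that the product scalar depends only on the arcs actually merged. Node $N$ is untouched by $\iota_{N-1}(C)$, and its arc is propagating or closes a propagating arc carrying the maximal label, so it contributes no loops.
\end{itemize}
I expect this scalar compatibility to be the main obstacle. Should it resist a direct argument, I would fall back on the LJR-factorization, \cref{prop:canonical-fact}, to isolate the contributing loops.

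Finally, the induction axiom follows from the preceding proposition, which identifies $\operatorname{Ind}V^S$ with $\operatorname{Res}V^{S^\sharp}$. Applying the restriction filtration to $V^{S^\sharp}$ then yields a cell filtration of $\operatorname{Ind}V^S$, indexed by the successors of $S$, which are the predecessors of $S^\sharp$. With all axioms verified, the tower $\bigl(\Okada[N](X,Y)\bigr)_{N\ge0}$ is coherent.
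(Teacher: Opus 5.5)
Your proposal follows the paper. There, \cref{theo:coherent-celullar} is presented as a summary of three results: \cref{theo-cellular}; \cref{prop:restr-okada}, the restriction filtration indexed by the totally ordered predecessors of \cref{lem:YF-pred_totally-ordered}; and the isomorphism $\operatorname{Ind}V^S\isom\operatorname{Res}V^{S^\sharp}$. Citing these already finishes the proof. If you nonetheless re-derive \cref{prop:restr-okada}, two steps of your sketch need repair, though neither endangers the result.

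First, the dichotomy ``preserves the restricted bra or strictly lowers its $\PropLab$'' is \cref{lemma:left-eq-or-prop-le}, which concerns \emph{right} multiplication. Under left multiplication the bra moves even when $\PropLab$ is unchanged: in rank $3$, $\e{2}$ sends the bra $\{\arc{1}{1}{2},\harc{3}{1}\}$ to $\{\harc{1}{1},\arc{2}{2}{3}\}$. What you actually need is that restriction commutes with the left action:
\[
\rightd{\iota_{N-1}(C)\cdot D}/\setN[N-1]=\rightd{C\cdot\tilde D}
\quad\text{for any } \tilde D\in\OkArc[N-1] \text{ with } \rightd{\tilde D}=\rightd{D}/\setN[N-1].
\]
Stability then follows from \cref{cor:prod-inf-prop}. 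The same identity also tells you which basis vector of $V^{S_i}$ the image is.

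Second, ``contributes no loops'' only controls the $x$-factors. By \cref{coro:coeff-mult}, the $y$-factors coming from paired steps along paths must match as well.
\begin{itemize}
\item Suppose $\rightd{D}$ contains $\arc{j}{\ell}{N}$, and let $\ov{k}$ be the ket end of the arc at $j$ in $\tilde D$. The path through node $N$ in $\iota_{N-1}(C)\circ D$ and the path through $\harc{j}{\ell}$ in $C\circ\tilde D$ have walks that coincide except for the extremal run between height $\ell$ and the endpoint. This run has $N-\ell$ steps in the first walk and $k-\ell$ in the second; the arc $\arc{N}{N}{\ov{N}}$ contributes the empty word. An extremal run is never paired, so the two walks give the same $y$-factor.
\item If instead node $N$ carries a propagating arc, its path has no paired steps at all.
\item All other propagating paths differ only by terminal \texttt{U}-runs at the ket, which are unpaired.
\end{itemize}
Hence $\lambda(\iota_{N-1}(C),D)=\lambda(C,\tilde D)$ whenever the image survives in $V^T_{\le i}/V^T_{\le i-1}$. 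This gives the isomorphism with $V^{S_i}$ directly, so no appeal to the LJR-factorization is needed.
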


\subsection{Computation of the coefficient of a product in the Okada Algebra}
\label{sec:coeffprod}
\newcommand{\U}{\textup{\texttt{U}}} %
\newcommand{\D}{\textup{\texttt{D}}} %

The goal of this section is to provide a combinatorial rule which allows us to
compute the coefficients of products of basis elements in the Okada
algebra $\Okada(X, Y)$. Until now, we've relied on weights of rewriting
sequences of diamond diagrams and their
associated fully-packed loop configurations. Here we describe a shortcut where we only
need to record up and down steps in the paths/loops occurring in products of arc-diagrams.
\smallskip

First, we recall how to compute the product of
$\E{C} \E{D}$ of two Okada arc-diagrams $C$ and $D$
using their associated, normal-form FPLCs 
$\mathcal{C}$ and $\mathcal{D}$.
Recall that $\mathcal{C}$ and $\mathcal{D}$ are
the unique, normal form FPLCs for which
$[\mathcal{C}]=C$ and $[\mathcal{D}]=D$.
Following the
prescription detailed in \Cref{rem:fusion-FPLCs}, create a new FPLC
$\mathcal{B} \eqdef \mathcal{C} \circ \mathcal{D}$ by gluing $\mathcal{C}$
and $\mathcal{D}$ together and filling the triangular region between
them with horizontal arc-segments; see \cref{fig.prod-arc-fplc} for an
example. Its normal form $\norf(\mathcal{B})$ satisfies
$[\norf(\mathcal{B})] = C \boldsymbol{\cdot} D$ and
\begin{equation}
  \E{C} \E{D} = \weight\big( \mathcal{B} \rewto \norf(\mathcal{B}) \big)\,\E{C \boldsymbol{\cdot} D}\,.
\end{equation}
We'll show that $\weight(\mathcal{B} \rewto \norf(\mathcal{B}))$ can be computed by looking at
paired steps in lattice walks which record the up and down moves taken when traveling
along paths in the loop diagram $\mathcal{C} \circ \mathcal{D}$. Moreover, these walks can be computed directly from the Okada
arc-diagrams $C$ and $D$.

\subsubsection{Lattice walks}

We consider lattice walks on the integer grid from $(0,a)$ to $(k,b)$
consisting only of steps of the form $\U=(+1,+1)$ and $\D=(+1,-1)$ and staying
within the vertical band bounded by $y=1$ and $y=N$. A walk of this kind is
encoded by a triple $\mathbb{w}=(a, \boldsymbol{w}, b)$ where $\boldsymbol{w}$ is a word in
the symbols $\{\U, \D\}$. The notation is redundant since the equality
$b = a + |\boldsymbol{w}|_{\U} -|\boldsymbol{w}|_{\D}$ must clearly hold, where $|\boldsymbol{w}|_\U$ and $|\boldsymbol{w}|_\D$
denote the number of occurrence of the letter $\U$ and $\D$ in $\boldsymbol{w}$. It is
customary in the context of Dyck paths to replace the $\U$ by an opening
parenthesis symbol $"("$ and $\D$ by a closing parenthesis symbol $")"$. In
this way we obtain a word in the alphabet of parenthesis $\{ ( \ssp , ) \}$
which supports a natural notion of pairing. We transfer this notion onto $\boldsymbol{w}$
itself:
\begin{definition}
  Let $\mathbb{w}=(a, \boldsymbol{w}, b)$ be a walk. A $\U$-step in $w$ \defn{occurs} at
  $p \in \{0, \dots, k \}$ if there is a step from $(p, h)$ to $(p+1, h+1)$
  along the walk $\mathbb{w}$. We denote $\height_\mathbb{w}(p)\eqdef h$. We say $p$ is \defn{paired
    with $q$} if there is a $\D$-step from $(q-1, h+1)$ to $(q, h)$ with
  $q > p$ and $q$ is minimal with that property. We emphasize that $p$ and
  $h$ are the coordinates of the \emph{left end-point} of the corresponding
  $\U$-step while $q$ is the \emph{right end-point} of its $\D$-step. Clearly
  $q-p$ is even, and so \defn{$\lpair_w(p)\eqdef\frac{q - p}{2}$} is an integer.

  Some $\U$ and $\D$-steps taken along a walk $\mathbb{w}=(a, \boldsymbol{w}, b)$ may
  be unpaired.  We denote by \defn{$\unpair(\mathbb{w})$} the walk obtained by removing
  the paired steps. It must be of the form $(a, \D^k\U^\ell, b)$.
\end{definition}
When $\mathbb{w}$ is obvious from the context, we suppress it and simply write \defn{$\lpair(p)$} and \defn{$\height(p)$}.
\begin{example}
  The following picture shows the walk
  $\mathbb{w}= (2, \D\U\D\D\U\U\U\D\D\U\U\U\D\D\U\D\D\U\U,3)$.
  \[
    \begin{tikzpicture}[scale=0.5]
      \draw[dotted] (0, 0) grid (19, 4);
      \foreach \x in {0,...,19} {
        \node [below] at (\x, 0) {$\x$};
      }
      \foreach \y in {0,...,4} {
        \node [left] at (0,\y) {$\y$};
      }
      \draw[rounded corners=1, color=black, line width=2]
      (0, 2) -- (1, 1) -- (2, 2) -- (3, 1) -- (4, 0) -- (5, 1) -- (6, 2)
      -- (7, 3) -- (8, 2) -- (9, 1) -- (10, 2) -- (11, 3) -- (12, 4)
      -- (13, 3) -- (14, 2) -- (15, 3) -- (16, 2) -- (17, 1) -- (18, 2)
      -- (19, 3);
      \draw[rounded corners=1, color=red, line width=2] (0, 2) -- (1, 1);
      \draw[rounded corners=1, color=red, line width=2] (3, 1) -- (4, 0) -- (5, 1);
      \draw[rounded corners=1, color=red, line width=2]
       (17, 1) -- (18, 2) -- (19, 3);
    \end{tikzpicture}
  \]
  Notice the $\U$-step occurring at $p=5$ is a paired with the $\D$-step at $q=9$ and
  so $\height(5)=1$ and $\lpair(5)=2$. Similarly $p=9$ is paired with $q=17$ and
  so $\height(9)=1$ and $\lpair(9)=4$. The unpaired occurrences of $\U$
  are $4, 17$ and $18$. The unpaired occurrences of $\D$ are $1$ and $4$
  (recall that $\D$ steps are indexed by their endpoints). The unpaired walk is
  $\unpair(\mathbb{w})=(2,DDUUU,3)$ and is depicted in red.
\end{example}

\subsubsection{Walks associated to FPLCs}

Consider an arbitrary FPLC $\mathcal{D}$. We begin by choosing
an origin on each path or loop
$\pi$ in $\mathcal{D}$ (the precise choice in not
very important, provided loops originate at their bottom row). For a path
we choose the leftmost and lowest endpoint. For a loop, we choose the
leftmost point in the lowest row. Starting
from the origin, we traverse (counter-clockwise) each path (or loop) $\pi$ and build a
corresponding lattice walk which mirrors how $\pi$ moves up and down.
The walk starts at $(0, a)$ where $a$
is the height of the row containing the origin and proceeds according to the
following local rules:
\begin{itemize}
\item take no step if $\pi$ moves horizontally within a square;
\item take a $\U$-step if $\pi$ makes a U-turn going up;
\item take a $\D$-step if $\pi$ makes a U-turn going down.
\end{itemize}
We continue this procedure until we reach the other end-point if $\pi$ is a path or we
return to the origin if $\pi$ is a loop. This algorithm generates the
\defn{lattice walk associated to $\pi$}.
\begin{definition}
  Given a loop configuration $\mathcal{D}$, let \defn{$\walks(\mathcal{D})$}
  denote the set of walks associated to its paths and loops; let
  \defn{$\loops(\mathcal{D})$} denote the subset of the walks associated to
  loops.
\end{definition}
See \cref{fig.walks} for an example.
\begin{remark}
  By \cref{lemma-FPLC-height-invariance},
  if $\mathbb{w} = (a, \boldsymbol{w}, b)$ is a walk associated to a path in
  $\mathcal{D}$, then $\unpair(\mathbb{w})$ is the walk associated to the corresponding
  unique path starting at $a$ and ending at $\ov{b}$ in the normal form FPLC
  $\norf(\mathcal{D})$.
\end{remark}
\medskip

We are now in a position to state the two main results of this section:
\begin{theorem}   \label{theorem-coeff-mult}
  Let $\mathcal{D}$ be an arbitrary fully packed loop configuration. Then the
  weight of any rewriting sequence $\mathcal{D} \rewto
  \norf(\mathcal{D})$ equals to $R(\mathcal{D})$ where
  \begin{equation}\label{equ:weight-rewriting-loops}
    R(\mathcal{D}) \eqdef
    \left(\prod_{(a, \boldsymbol{w}, a)\, \in\, \loops(\mathcal{D})} x_a\right)
    \left(
      \prod_{\mathbb{w}\, \in\, \walks(\mathcal{D})}
      \prod_{p} y_{\height_\mathbb{w}(p)}\right)\,,
  \end{equation}
  The last product is taken over all $p \in \{1, \dots, k \}$
  for which there is an occurrence of $\U$ at $p$
  such that $p$ is paired and $\lpair_\mathbb{w}(p)$ is even.
\end{theorem}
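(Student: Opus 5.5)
We argue by induction on the length of a rewriting sequence $\mathcal{D}=\mathcal{D}_0\mapsto\mathcal{D}_1\mapsto\cdots\mapsto\mathcal{D}_m=\norf(\mathcal{D})$. The theorem proved above already shows that the weight $\weight(\mathcal{D}\rewto\norf(\mathcal{D}))$ is independent of the chosen sequence. So it suffices to prove two things: the base identity $R(\norf(\mathcal{D}))=1$, and the local identity
\[
  R(\mathcal{D}_{j})=\weight(\mathcal{D}_j\mapsto\mathcal{D}_{j+1})\,R(\mathcal{D}_{j+1})
\]
for each single step. Multiplying these along the sequence gives the result.

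\textbf{Base case.} By \cref{prop:red-expr-loop}, a normal form FPLC has no loops and no path that goes up and later down. Hence every walk of $\norf(\mathcal{D})$ has the form $(a,\D^k\U^\ell,b)$ and contains no paired steps. Both products in \cref{equ:weight-rewriting-loops} are therefore empty, and $R(\norf(\mathcal{D}))=1$.

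\textbf{Inductive step.} We treat the three rules separately, using their FPLC form.
\begin{enumerate}
\item Rule $C$ has weight $1$. It only shifts horizontal segments, so it changes neither the sequence of U-turns met along any path or loop nor their heights. The walks are unchanged, and so is $R$.
\item Rule $I$ in row $i$ removes a small loop passing through two U-turns, a down-turn and an up-turn at height $i$; everything else stays intact. I expect the associated walk to be $(i,\U\D,i)$, whose single pair has $\lpair=1$, which is odd. The loop therefore contributes exactly $x_i$ and no $y$ factor. One must also check that the surrounding path walks are unchanged, except possibly for removing horizontal moves, which do not affect the walks.
\item Rule $S$ in row $i$ has weight $y_{i-1}$. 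Here the configuration with three U-turn squares becomes one with a single U-turn square. A strand that went down to height $i-1$ and back up around the middle square is replaced by a straight passage. In the walk of that strand, this deletes a paired factor $\U\D$ or $\D\U$ at height $i-1$ (or merges two loops/paths).
\end{enumerate}

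The main obstacle is rule $S$. Deleting a pair changes the $\lpair$ of every pair enclosing it by $1$, which flips parities, and it may also change the choice of origin or split and merge loops. The rule is not a single pair deletion: it reconnects strands among two or three paths or loops. My plan is to enumerate the few reconnection patterns of the $2\times2$ pattern together with its three U-turn squares, and write each affected walk as a concatenation $\boldsymbol{u}\,\boldsymbol{v}\,\boldsymbol{u}'$ around the modified spot. I would then show that the multiset of pairs with even $\lpair$ changes by exactly one pair at height $i-1$. The key bookkeeping fact to establish first is this: removing a peak $\U\D$ or valley $\D\U$ of width $2$ together with its enclosing pair alters $\lpair$ by $2$ for the other enclosing pairs, so their parity is preserved. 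I would therefore look for a decomposition in which rule $S$ removes a nested pair of pairs (an odd-width inner one, contributing nothing, and an even-width outer one at height $i-1$). For loops, I would also verify that the factor $x_a$ uses the bottom-row origin consistently and that the lowest height $a$ is invariant, which follows from \cref{lemma-FPLC-height-invariance}.
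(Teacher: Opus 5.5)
Your overall scheme is the paper's: the base case via \cref{prop:red-expr-loop}, invariance of the walks under rule $C$, and the extra loop $(i,\mathtt{UD},i)$ contributing only $x_i$ under rule $I$. However, the rule $S$ case, which is the only non-trivial one, is not proved, and your description of it is wrong. Rule $S$ does not reconnect strands, and it does not delete a single pair $\mathtt{UD}$ or $\mathtt{DU}$.

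The missing step is to trace the strands through the four squares $W,S,N,E$ of the pattern.
\begin{itemize}
\item On the left-hand side, the strand entering the south square through its lower-left edge turns up in $S$, turns up in $W$, crosses $N$ horizontally, turns down in $E$ and turns down in $S$, then leaves through the lower-right edge of $S$. After rewriting, this same strand simply crosses $S$ horizontally. Read in either direction, the removed factor is $\mathtt{UUDD}$, based at the height $i-1$ of the bottom row.
\item The three other strands (through the left side of $W$, the top of $N$, and the right side of $E$) keep their endpoints and their single U-turn or horizontal passage; only their horizontal moves change. The strand through the right side of $E$ now makes its U-turn in $W$.
\end{itemize}
So exactly one walk changes, from $(a,\boldsymbol{v}_1\mathtt{UUDD}\boldsymbol{v}_2,b)$ in $\mathcal{D}$ to $(a,\boldsymbol{v}_1\boldsymbol{v}_2,b)$ in $\mathcal{D}'$. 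Loops are neither created, merged nor split, and their minimal heights are unchanged.

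Once this is established, the bookkeeping you were aiming for at the end of your proposal follows at once.
\begin{itemize}
\item $\mathtt{UUDD}$ is a balanced Dyck factor, so inserting it changes neither the matching of the other steps nor their heights.
\item A pair lying entirely inside $\boldsymbol{v}_1$ or inside $\boldsymbol{v}_2$ keeps its $\lpair$.
\item A pair straddling the factor has $q-p$ increased by $4$, so its $\lpair$ changes by $2$ and keeps its parity.
\item The two new pairs have $\lpair$ equal to $1$ (no contribution) and $2$ (contribution $y_{i-1}$).
\end{itemize}
Hence $R(\mathcal{D})=y_{i-1}R(\mathcal{D}')$. As written, your argument instead relies on an ``enumeration of reconnection patterns'' that does not occur. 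Your own observation that deleting a single pair flips the parity of every enclosing pair shows the argument would actually fail if rule $S$ behaved as you describe. The proposal therefore has a genuine gap at the one case that carries the theorem.
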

\begin{example}
  \cref{fig.walks} details the computation of $R(\mathcal{D})$. The
  calculation of the factor associated to the single red path from $1$ to
  $\ov3$ is illustrated in~\cref{fig.walk-ref}.
\begin{figure}[ht]
  \[
    \begin{tikzpicture}[scale=0.5]
      \draw[dotted] (0, 0) grid (18, 4);
      \foreach \x in {0,...,18} {
        \node [below] at (\x, 0) {$\x$};
      }
      \foreach \y in {1,...,5} {
        \node [left] at ($(0,\y) + (0, -1)$) {$\y$};
      }
      \draw[rounded corners=1, color=black, line width=2] (0, 0) -- (1, 1) -- (2,
      2) -- (3, 1) -- (4, 0) -- (5, 1) -- (6, 2) -- (7, 3) -- (8, 2) -- (9, 1) --
      (10, 2) -- (11, 3) -- (12, 4) -- (13, 3) -- (14, 2) -- (15, 1) -- (16, 0) --
      (17, 1) -- (18, 2);
      \draw[red] (0, 0) -- (4, 0) node[pos=0.5, above] {$1$};
      \draw[red] (5, 1) -- (9, 1) node[pos=0.5, above] {$2$};
      \draw[red] (10, 2) -- (14, 2) node[pos=0.5, above] {$3$};
      \draw[red] (4, 0) -- (16, 0) node[pos=0.5, above] {$1$};
    \end{tikzpicture}
  \]
  \caption[Lattice walk and its associated factor]{Lattice walk $(1,\U\U\D\D\U\U\U\D\D\U\U\U\D\D\D\D\U\U, 3)$. The
    horizontal lines indicates paired steps $p$ for which $\lpair(p)$ is
    even. The number above each line is $\height(p)$. The factor associated to
    this walk is therefore $y_1^2y_2y_3$.}
\label{fig.walk-ref}
\end{figure}
\begin{figure}[ht]
\[
% sage: e = OkadaMonoid(8).from_heights_pair((1,2,0,2,0,4,5,0), (1,0,1,2,0,4,0,4))
% sage: f = OkadaMonoid(8).from_heights_pair((1,2,0,0,1,2,0,4), (1,0,1,2,0,4,5,0))
% sage: d = LCO(e).join(LCO(f))
% sage: d.walk_all(True)
  \begin{tikzpicture}[scale=0.6, thick, baseline={(current bounding box.east)}]
\tikzstyle{vertex} = [shape=circle, minimum size=7pt, inner sep=1pt]
\tikzstyle{mid} = [draw, fill=white, shape=circle, minimum size=2pt, inner sep=1pt]
\node[vertex] (G--8) at (1.44, 9.10) {};
\node[vertex] (G--7) at (1.44, 7.80) {};
\node[vertex] (G--6) at (1.44, 6.50) {};
\node[vertex] (G--5) at (1.44, 5.20) {};
\node[vertex] (G--4) at (1.44, 3.90) {};
\node[vertex] (G--3) at (1.44, 2.60) {};
\node[vertex] (G--2) at (1.44, 1.30) {};
\node[vertex] (G--1) at (1.44, 0.00) {};
\node[vertex] (G-1) at (-1.44, 0.00) {$1$};
\node[vertex] (G-2) at (-1.44, 1.30) {$2$};
\node[vertex] (G-3) at (-1.44, 2.60) {$3$};
\node[vertex] (G-4) at (-1.44, 3.90) {$4$};
\node[vertex] (G-5) at (-1.44, 5.20) {$5$};
\node[vertex] (G-6) at (-1.44, 6.50) {$6$};
\node[vertex] (G-7) at (-1.44, 7.80) {$7$};
\node[vertex] (G-8) at (-1.44, 9.10) {$8$};
\draw[red] (G--3) -- (G-1) node[pos=0.5,mid] (M-3-1) { 1 };
\draw[color=blue] (G-4) .. controls +(0.70, 0.35) and +(0.70, -0.35) .. (G-5) node[pos=0.5,mid] (M4-5) { 2 };
\draw[red] (G--5) .. controls +(-0.70, -0.35) and +(-0.70, 0.35) .. (G--4) node[pos=0.5,mid] (M-5--4) { 2 };
\draw[green] (G--8) -- (G-6) node[pos=0.5,mid] (M-8-6) { 4 };
\draw[cyan] (G-7) .. controls +(0.70, 0.35) and +(0.70, -0.35) .. (G-8) node[pos=0.5,mid] (M7-8) { 5 };
\draw[darkgray] (G--7) .. controls +(-0.70, -0.35) and +(-0.70, 0.35) .. (G--6) node[pos=0.5,mid] (M-7--6) { 4 };
\draw[Gold] (G-2) .. controls +(0.70, 0.35) and +(0.70, -0.35) .. (G-3) node[pos=0.5,mid] (M2-3) { 2 };
\draw[red] (G--2) .. controls +(-0.70, -0.35) and +(-0.70, 0.35) .. (G--1) node[pos=0.5,mid] (M-2--1) { 1 };
\end{tikzpicture}\!\!\!\!
\begin{tikzpicture}[scale=0.6, thick, baseline={(current bounding box.east)}]
\tikzstyle{vertex} = [shape=circle, minimum size=7pt, inner sep=1pt]
\tikzstyle{mid} = [draw, fill=white, shape=circle, minimum size=2pt, inner sep=1pt]
\node[vertex] (G--8) at (1.44, 9.10) {$\overline{8}$};
\node[vertex] (G--7) at (1.44, 7.80) {$\overline{7}$};
\node[vertex] (G--6) at (1.44, 6.50) {$\overline{6}$};
\node[vertex] (G--5) at (1.44, 5.20) {$\overline{5}$};
\node[vertex] (G--4) at (1.44, 3.90) {$\overline{4}$};
\node[vertex] (G--3) at (1.44, 2.60) {$\overline{3}$};
\node[vertex] (G--2) at (1.44, 1.30) {$\overline{2}$};
\node[vertex] (G--1) at (1.44, 0.00) {$\overline{1}$};
\node[vertex] (G-1) at (-1.44, 0.00) {};
\node[vertex] (G-2) at (-1.44, 1.30) {};
\node[vertex] (G-3) at (-1.44, 2.60) {};
\node[vertex] (G-4) at (-1.44, 3.90) {};
\node[vertex] (G-5) at (-1.44, 5.20) {};
\node[vertex] (G-6) at (-1.44, 6.50) {};
\node[vertex] (G-7) at (-1.44, 7.80) {};
\node[vertex] (G-8) at (-1.44, 9.10) {};
\draw[red] (G--3) -- (G-5) node[pos=0.35294117647058826,mid] (M-3-5) { 1 };
\draw[darkgray] (G-6) .. controls +(0.70, 0.35) and +(0.70, -0.35) .. (G-7) node[pos=0.5,mid] (M6-7) { 2 };
\draw[violet] (G--5) .. controls +(-0.70, -0.35) and +(-0.70, 0.35) .. (G--4) node[pos=0.5,mid] (M-5--4) { 2 };
\draw[green] (G--6) -- (G-8) node[pos=0.35294117647058826,mid] (M-6-8) { 4 };
\draw[red] (G-1) .. controls +(2.10, 1.05) and +(2.10, -1.05) .. (G-4) node[pos=0.5,mid] (M1-4) { 1 };
\draw[orange] (G--8) .. controls +(-0.70, -0.35) and +(-0.70, 0.35) .. (G--7) node[pos=0.5,mid] (M-8--7) { 5 };
\draw[red] (G-2) .. controls +(0.70, 0.35) and +(0.70, -0.35) .. (G-3) node[pos=0.5,mid] (M2-3) { 2 };
\draw[LightGreen] (G--2) .. controls +(-0.70, -0.35) and +(-0.70, 0.35) .. (G--1) node[pos=0.5,mid] (M-2--1) { 1 };
\end{tikzpicture}
=
\begin{tikzpicture}[scale=0.6, thick, baseline={(current bounding box.east)}]
\tikzstyle{vertex} = [shape=circle, minimum size=7pt, inner sep=1pt]
\tikzstyle{mid} = [draw, fill=white, shape=circle, minimum size=2pt, inner sep=1pt]
\node[vertex] (G--8) at (1.44, 9.10) {$\overline{8}$};
\node[vertex] (G--7) at (1.44, 7.80) {$\overline{7}$};
\node[vertex] (G--6) at (1.44, 6.50) {$\overline{6}$};
\node[vertex] (G--5) at (1.44, 5.20) {$\overline{5}$};
\node[vertex] (G--4) at (1.44, 3.90) {$\overline{4}$};
\node[vertex] (G--3) at (1.44, 2.60) {$\overline{3}$};
\node[vertex] (G--2) at (1.44, 1.30) {$\overline{2}$};
\node[vertex] (G--1) at (1.44, 0.00) {$\overline{1}$};
\node[vertex] (G-1) at (-1.44, 0.00) {$1$};
\node[vertex] (G-2) at (-1.44, 1.30) {$2$};
\node[vertex] (G-3) at (-1.44, 2.60) {$3$};
\node[vertex] (G-4) at (-1.44, 3.90) {$4$};
\node[vertex] (G-5) at (-1.44, 5.20) {$5$};
\node[vertex] (G-6) at (-1.44, 6.50) {$6$};
\node[vertex] (G-7) at (-1.44, 7.80) {$7$};
\node[vertex] (G-8) at (-1.44, 9.10) {$8$};
\draw[red] (G--3) -- (G-1) node[pos=0.5,mid] (M-3-1) { 1 };
\draw[blue] (G-4) .. controls +(0.70, 0.35) and +(0.70, -0.35) .. (G-5) node[pos=0.5,mid] (M4-5) { 2 };
\draw[violet] (G--5) .. controls +(-0.70, -0.35) and +(-0.70, 0.35) .. (G--4) node[pos=0.5,mid] (M-5--4) { 2 };
\draw[green] (G--6) -- (G-6) node[pos=0.5,mid] (M-6-6) { 4 };
\draw[cyan] (G-7) .. controls +(0.70, 0.35) and +(0.70, -0.35) .. (G-8) node[pos=0.5,mid] (M7-8) { 5 };
\draw[orange] (G--8) .. controls +(-0.70, -0.35) and +(-0.70, 0.35) .. (G--7) node[pos=0.5,mid] (M-8--7) { 5 };
\draw[Gold] (G-2) .. controls +(0.70, 0.35) and +(0.70, -0.35) .. (G-3) node[pos=0.5,mid] (M2-3) { 2 };
\draw[LightGreen] (G--2) .. controls +(-0.70, -0.35) and +(-0.70, 0.35) .. (G--1) node[pos=0.5,mid] (M-2--1) { 1 };
\end{tikzpicture},
\begin{array}{c}
    \begin{tikzpicture}[diamond, diamond/bound={ 7 }{ 9 }]
      \matrix[, diamond/matrix] {
    \TLlab{1}&\straightLoop{, draw=none}{red}{white}\\
\TLlab{2}&\straightLoop{}{Gold}{red}&\straightLoop{, draw=none}{red}{white}\\
\TLlab{3}&\tuC{Gold}{blue}&\straightLoop{}{blue}{red}&\straightLoop{, draw=none}{red}{white}\\
\TLlab{4}&\tuC{blue}{blue}&\tuC{blue}{red}&\straightLoop{}{red}{red}&\straightLoop{, draw=none}{red}{white}\\
\TLlab{5}&\tuC{blue}{green}&\straightLoop{}{green}{red}&\straightLoop{}{red}{red}&\straightLoop{}{red}{red}&\straightLoop{, draw=none}{red}{white}\\
\TLlab{6}&\tuC{green}{cyan}&\straightLoop{}{cyan}{green}&\straightLoop{}{green}{red}&\straightLoop{}{red}{red}&\straightLoop{}{red}{red}&\straightLoop{, draw=none}{red}{white}\\
\TLlab{7}&\tuC{cyan}{cyan}&\tuC{cyan}{green}&\tuC{green}{darkgray}&\straightLoop{}{darkgray}{red}&\straightLoop{}{red}{red}&\straightLoop{}{red}{red}&\straightLoop{, draw=none}{red}{white}\\
\TLlab{8}&\tuC{cyan}{green}&\tuC{green}{darkgray}&\tuC{darkgray}{darkgray}&\tuC{darkgray}{red}&\tuC{red}{red}&\tuC{red}{red}&\tuC{red}{red}&\straightLoop{, draw=none}{red}{white}\\
&\straightLoop{, draw=none}{white}{green}&\straightLoop{}{green}{darkgray}&\straightLoop{}{darkgray}{darkgray}&\straightLoop{}{darkgray}{red}&\straightLoop{}{red}{red}&\straightLoop{}{red}{red}&\straightLoop{}{red}{red}&\straightLoop{}{red}{red}&\straightLoop{, draw=none}{red}{white}\\
&&\straightLoop{, draw=none}{white}{green}&\straightLoop{}{green}{darkgray}&\straightLoop{}{darkgray}{darkgray}&\straightLoop{}{darkgray}{red}&\straightLoop{}{red}{red}&\straightLoop{}{red}{red}&\tuC{red}{red}&\tuC{red}{red}&\straightLoop{, draw=none}{red}{white}\\
&&&\straightLoop{, draw=none}{white}{green}&\straightLoop{}{green}{darkgray}&\straightLoop{}{darkgray}{darkgray}&\straightLoop{}{darkgray}{red}&\straightLoop{}{red}{red}&\tuC{red}{red}&\tuC{red}{darkgray}&\straightLoop{}{darkgray}{red}&\straightLoop{, draw=none}{red}{white}\\
&&&&\straightLoop{, draw=none}{white}{green}&\straightLoop{}{green}{darkgray}&\straightLoop{}{darkgray}{darkgray}&\straightLoop{}{darkgray}{red}&\tuC{red}{darkgray}&\tuC{darkgray}{darkgray}&\tuC{darkgray}{violet}&\straightLoop{}{violet}{red}&\straightLoop{, draw=none}{red}{white}\\
&&&&&\straightLoop{, draw=none}{white}{green}&\straightLoop{}{green}{darkgray}&\straightLoop{}{darkgray}{darkgray}&\tuC{darkgray}{darkgray}&\tuC{darkgray}{green}&\straightLoop{}{green}{violet}&\straightLoop{}{violet}{violet}&\straightLoop{}{violet}{red}&\straightLoop{, draw=none}{red}{white}\\
&&&&&&\straightLoop{, draw=none}{white}{green}&\straightLoop{}{green}{darkgray}&\tuC{darkgray}{green}&\tuC{green}{orange}&\straightLoop{}{orange}{green}&\straightLoop{}{green}{violet}&\straightLoop{}{violet}{violet}&\straightLoop{}{violet}{red}&\straightLoop{, draw=none}{red}{white}\\
&&&&&&&\straightLoop{, draw=none}{white}{green}&\tuC{green}{orange}&\tuC{orange}{orange}&\tuC{orange}{green}&\tuC{green}{violet}&\tuC{violet}{violet}&\tuC{violet}{red}&\tuC{red}{LightGreen}&\straightLoop{, draw=none}{LightGreen}{white}\\
&&&&&&&&\TLlab{\ov{8}}&\TLlab{\ov{7}}&\TLlab{\ov{6}}&\TLlab{\ov{5}}&\TLlab{\ov{4}}&\TLlab{\ov{3}}&\TLlab{\ov{2}}&\TLlab{\ov{1}}\\
    };
  \end{tikzpicture}\\
  \\
  \text{Paths:}\quad
  {\red1\U\U\D\D\U\U\U\D\D\U\U\U\D\D\D\D\U\U\ov3}, y_1^2y_2y_3\qquad{~} \\
  {\color{Gold}2\U3}, 1 \quad
  {\blue4\D\D\U\U\U5}, 1 \quad
  {\green6\D\D\U\U\U\U\D\D\D\D\U\U\ov6}, y_4y_6 \\
  {\cyan7\D\D\U\U\U8}, 1 \quad
  {\color{LightGreen}\ov1\U\ov2}, 1 \quad
  {\color{violet}\ov4\D\D\U\U\U\ov5}, 1 \quad
  {\color{orange}\ov7\D\D\U\U\U\ov8}, 1 \\
  \\
  \text{Loops:}\quad
  {\color{darkgray}2\U\U\U\U\U\D\D\D\U\U\D\D\D\D2}, x_2y_3y_4y_5\qquad\qquad{~}\\
\end{array}
\]
\medskip
\[
  \begin{array}{c|cccccc}
    \text{arcs} &
\arc{1}{1}{\ov3} & \arc{3}{2}{2} & \arc{\ov2}{1}{\ov1} & \arc{1}{1}{4}
& \arc{\ov4}{2}{\ov5} & \arc{5}{1}{\ov3}\\\hline\\[-3.5mm]
\text{walks} & \D^0\U^2&\D^1\U^0&\D^1\U^0&\D^0\U^3&\D^2\U^3&\D^4\U^2\\
\end{array}
\]
\caption[Computation of a product in the Okada algebra]{A product in the Okada algebra, the associated loop configuration and
  its associated walks for paths and loops together with their weights. The
  array underneath describes the walk corresponding to the red arc joining $1$ and $\ov{3}$.}
  \label{fig.walks}
\end{figure}
\end{example}
\begin{proof}
  We show that the equality $\weight(\mathcal{D} \rewto \norf(\mathcal{D})) = R(\mathcal{D})$
  holds for any loop configuration $\mathcal{D}$ by induction on the length of the
  rewriting sequence $\mathcal{D} \rewto \norf(\mathcal{D})$.

  If the rewriting sequence is of length $0$, then $\mathcal{D}$ is already a
  normal form. According to \cref{prop:red-expr-loop}, it contains no loops
  and the walks are all of the form $\D^i\U^j$ so that they have no paired
  $\U$-steps. As a consequence both side are equal to $1$.

  We now suppose that there is a non-trivial rewriting sequence
  $\mathcal{D}\mapsto\mathcal{D}'\rewto\norf(\mathcal{D})$. Note that
  $\norf(\mathcal{D})=\norf(\mathcal{D}')$. By induction hypothesis, we know that
  $\weight(\mathcal{D}' \rewto \norf(\mathcal{D}))=R(\mathcal{D}')$. There are three cases
  depending on which rewriting rule is applied in the step $\mathcal{D}\mapsto\mathcal{D}'$:
  \begin{itemize}
  \item[(C)] If $\mathcal{D}'$ is obtain from $\mathcal{D}$ by rule
    $C = \begin{tikzpicture}[diamond, diamond/bound={3}{0}]\matrix[diamond/matrix] {
        \nn \\
        & \st & \st \\
        & \st & \tu & \nn& \nn\\
      };
    \end{tikzpicture}
    \longmapsto
    \begin{tikzpicture}[diamond, diamond/bound={3}{0}]\matrix[diamond/matrix] {
        \nn \\
        & \tu & \st \\
        & \st & \st & \nn& \nn\\
      };
    \end{tikzpicture}$ then $\walks(\mathcal{D}) = \walks(\mathcal{D}')$ and
    $\loops(\mathcal{D}) = \loops(\mathcal{D}')$. Therefore $R(\mathcal{D})=R(\mathcal{D}')$. On the
    other hand, by definition, $\weight(\mathcal{D}\mapsto\mathcal{D}')=1$, so that
    $\weight(\mathcal{D}\rewto\norf(\mathcal{D}))=\weight(\mathcal{D}'\rewto\norf(\mathcal{D}))$.
    The conclusion follows from the induction hypothesis.

  \item[(I)] Suppose that $\mathcal{D}'$ is obtain from $\mathcal{D}$ by rule
    $I = \begin{tikzpicture}[diamond, diamond/bound={3}{0}]\matrix[diamond/matrix] {
        \nn \\
        & \tu & \st \\
        & \st & \tu & \nn& \nn\\
      };
    \end{tikzpicture}
    \longmapsto
    \begin{tikzpicture}[diamond, diamond/bound={3}{0}]\matrix[diamond/matrix] {
        \nn \\
        & \tu & \st \\
        & \st & \st & \nn& \nn\\
      };
    \end{tikzpicture}$. Let $h$ be the height of the middle row of the
    pattern. Then
    $\walks(\mathcal{D})=\walks(\mathcal{D}')\cup\{(h,\U\D,h)\}$. Moreover
    $\loops(\mathcal{D})=\loops(\mathcal{D}')\cup\{(h,\U\D,h)\}$. The only
    paired occurence of $\U$ in the loop walk $(h,\U\D,h)$ is $0$ and
    $\lpair(0) = 1$, which is odd. Hence the walk $(h,\U\D,h)$ doesn't
    contribute any $y$-parameter but, as a loop, it contributes
    $x_h$. Consequently $R(\mathcal{D}) = x_hR(\mathcal{D}')$. On the other
    hand, by definition of the weight,
    $\weight(\mathcal{D}\rewto\norf(\mathcal{D}))=x_h\weight(\mathcal{D}'\rewto\norf(\mathcal{D}))$.
    As in the previous case, the conclusion follows from the induction hypothesis.

  \item[(S)] The last case is when $\mathcal{D}'$ is obtained from $\mathcal{D}$ by rule
    $S=\begin{tikzpicture}[diamond,
      diamond/bound={3}{0}]\matrix[diamond/matrix] {
        \nn \\
        & \tu & \tu \\
        & \st & \tu & \nn& \nn\\
      };
    \end{tikzpicture}
    \longmapsto
    \begin{tikzpicture}[diamond, diamond/bound={3}{0}]\matrix[diamond/matrix] {
        \nn \\
        & \tu & \st \\
        & \st & \st & \nn& \nn\\
      };
    \end{tikzpicture}$.
    Let $h$ be the height of the bottom row of the pattern.  Let $\pi$ be the
    path (or loop) in $\mathcal{D}'$ passing though the bottom horizontal line 
    of the pattern on the right hand side of rule $S$.  The
    corresponding walk of $\mathcal{D}'$ is of the form
    $\mathbb{w}'=(a, \boldsymbol{v}_1 \boldsymbol{v}_2, b)$, where $\boldsymbol{v}_1$ is
    the portion of $\mathbb{w}'$ which precedes the horizontal line and
    $\boldsymbol{v}_2$ is the succeeding portion of $\mathbb{w}'$.
    The walk of the corresponding path/loop in $\mathcal{D}$
    is  $\mathbb{w}=(a,\boldsymbol{v}_1\U\U\D\D\boldsymbol{v}_2,b)$
    so that
    \[\walks(\mathcal{D}) =
      \walks(\mathcal{D}')\setminus\{\mathbb{w}'\}
      \cup\{\mathbb{w} \}\,.\]
     When comparing $R(\mathcal{D})$ and $R(\mathcal{D}')$ we
     only need to examine the $y$-parameter contributions of $\mathbb{w}'$
     and $\mathbb{w}$ respectively. Let's compare the paired steps in
    $\mathbb{w}'$ and $\mathbb{w}$. Suppose $\mathbb{w}'$
    contains a $\U$-step which occurs at $p'$ which is
    paired with a $\D$-step occuring at $q' > p'$.
    Let  $q > p$ denote the positions of the corresponding $\D$ and $\U$-steps in $\mathbb{w}$.
    There are three distinct cases to consider:
    \begin{itemize}
    \item the $\U$ and $\D$-steps are both in $\boldsymbol{v}_1$ (i.e. $p' < q' \leq |\boldsymbol{v}_1|$) so that $p = p'$ and $q = q'$
    \item the $\U$ and $\D$-steps are both in $\boldsymbol{v}_2$ (i.e. $ |\boldsymbol{v}_1| \leq p' < q'$) so that $p = p'+4$ and $q = q'+4$
    \item the $\U$-step is in $\boldsymbol{v}_1$ and the $\D$-step is in $\boldsymbol{v}_2$ (i.e. $p' < |\boldsymbol{v}_1| < q'$) so that
    $p = p'$ and $q = q'+4$.
    \end{itemize}
    In all cases it follows that
      \[
      \lpair_{\mathbb{w}'} (p')=\lpair_{\mathbb{w}}(p) \ssp \pmod 2
      \qandq
      \height_{\mathbb{w}'}(p')=\height_{\mathbb{w}}(p).
    \]
    This implies that both $p'$ and $p$ contribute the same factor in $R(\mathcal{D}')$ and
    $R(\mathcal{D})$ respectively. The extra $\U$ and $\D$-steps in $\mathbb{w}$
    which occur at positions $p = |\boldsymbol{v}_1|$ and $q = |\boldsymbol{v}_1| + 4$
    contribute a factor of $y_h$ to $R(\mathcal{D})$ because $\height_{\mathbb{w}}(p) = h$.
 The remaining $\U$ and $\D$-steps
    in $\mathbb{w}$ occurring at positions $p = |\boldsymbol{v}_1|+1$ and $q = |\boldsymbol{v}_1| +3$
    do not contribute a factor since $\lpair_{\mathbb{w}}(p) = 1$ is odd. This shows that
    \[
      \prod_{r \, : \,  \lpair_{\mathbb{w}}(r) \text{ is even}} y_{\height_{\mathbb{w}}(r)}
      \ =\  y_h
      \prod_{r \, : \, \lpair_{\mathbb{w}'}(r) \text{ is even}} y_{\height_{\mathbb{w}'}(r)}\,.
    \]
    and therefore $R(\mathcal{D}) = y_h R(\mathcal{D}')$. As in the two previous cases,
    the result follows from the induction hypothesis and the prescription for weights.
  \end{itemize}
  We proved that the equality $\weight(\mathcal{D} \rewto \norf(\mathcal{D})) =
  R(\mathcal{D})$ holds in all cases, which finishes the proof by induction.
\end{proof}

\subsubsection{Product of two Okada arc-diagrams}

We introduce a technique which uses \cref{theorem-coeff-mult}
to calculate the scalar coefficient $\lambda(C,D)$ arising in the product
\begin{equation*}
    \E{C}\E{D} = \lambda(C,D)\,\E{C \boldsymbol{\cdot}  D}
  \end{equation*}
where $C$ and $D$ are two Okada arc-diagrams.
To do this, let $\mathcal{C}$ and $\mathcal{D}$ be the
(unique) normal form FPLCs such that
$[\mathcal{C}] = C$ and $[\mathcal{D}] = D$.
Since $\mathcal{C}$ and $\mathcal{D}$ are
normal form FPLCs, each  arc/loop $\alpha$ in $C \circ D$
is isotopic to a unique arc/loop in $\mathcal{C} \circ \mathcal{D}$.
Now consider a path
$\alpha$ in $C \circ D$ composed of a sequence of arcs
$\alpha_1, \dots, \alpha_n$ in $C$ and $D$
where each $\alpha_k$ is of the form
$\arc{i_k}{\ell_k}{j_k}$ and where $i_1$
is the index of the leftmost and lowest
end-point of $\alpha$.
 We assign to $\alpha$ the
lattice walk $\mathbb{w} = (i_1, \boldsymbol{w}, |j_n|)$
where
\[ \boldsymbol{w} \,  =  \, \D^{|i_1|-\ell_1} \U^{|j_1|-\ell_1} \cdots \D^{|i_n|-\ell_n} \U^{|j_n|-\ell_n}.  \]
The procedure is slightly different for a loop $\alpha$ in $C \circ D$ due to the fact that
we must identify an origin for the corresponding loop in 
$\mathcal{C} \circ \mathcal{D}$. Let
$\mathcal{A}(\alpha)$ be the set of arc fragments
in $C \circ D$ which comprise the loop $\alpha$. Take the minimum label
\[ \ell_1 = \mathrm{min} \big\{ \ell : \arc{a}{\ell}{b} \in \mathcal{A}(\alpha) \big\} \]
and let $\mathcal{A}_C (\alpha)$ (resp. $\mathcal{A}_D (\alpha)$) be the subset of $\mathcal{A}(\alpha)$
consisting of those arcs $\arc{a}{\ell}{b}$
in $C$ (resp. $D$) for which $\ell = \ell_1$. If  $\mathcal{A}_C (\alpha)$ is
non-empty let $\arc{u}{\ell_1}{v}$ be the
arc in $\mathcal{A}_C (\alpha)$ such that
$\max\{u,v \}$ is maximal. Set $i_1 = \max\{u,v \}$
and $j_1= \min\{u,v \}$.
Otherwise let $\arc{u}{\ell_1}{v}$ be the arc in
$\mathcal{A}_D (\alpha)$
such that $\min\{u, v\}$ is minimal.
In this case set $i_1 = \min\{u,v \}$
and $j_1= \max\{u,v \}$.
In both cases, set
$\alpha_1 = \arc{i_1}{\ell_1}{j_1}$.
Starting with $\alpha_1$, and following the arc $\alpha$ counter-clockwise, we iteratively
list the remaining arcs $\alpha_2, \dots, \alpha_n$
in $\mathcal{A}(\alpha)$. We assign to $\alpha$
the lattice walk  $\mathbb{w} = (\ell_1, \boldsymbol{w}, \ell_1)$
where
\[ \boldsymbol{w} \,  =  \, \U^{|j_1|-\ell_1} \D^{|i_2|-\ell_2} U^{|j_2| - \ell_2}  \cdots
\D^{|i_n|-\ell_n} \U^{|j_n|-\ell_n} \D^{|i_1|-\ell_1}. \]
Note that the initial and terminal steps $\U^{|j_1|-\ell_1}$ and $\D^{|i_1|-\ell_1}$ in
$\boldsymbol{w}$ come from $\alpha_1$. The construction of the arc $\alpha_1$ and
the word $\boldsymbol{w}$ ensures that the walk $\mathbb{w}$
coincides with the walk of the associated loop in the
FPLC $\mathcal{C} \circ \mathcal{D}$.

Gather these walks into two sets
depending on whether $\alpha$ is an arc or a loop,
and denote them  $\walks(C, D)$ and $\loops(C, D)$ respectively,
then
\[ \walks(C,D) = \walks(\mathcal{C} \circ \mathcal{D}) \quad \text{and} \quad
\loops(C,D) = \loops(\mathcal{C} \circ \mathcal{D}). \]
We can now restate \cref{theorem-coeff-mult} as
\begin{corollary}\label{coro:coeff-mult}
  For Okada arc-diagrams $C , D$ the product $\E{C}\E{D}$ is given by
  \begin{equation}\label{equ:weight-rewriting-walks}
    \E{C}\E{D} =
    \left(\prod_{(a, \boldsymbol{w}, a)\, \in\, \loops(C, D)} x_a\right)
    \left(
      \prod_{\mathbb{w}\, \in\, \walks(C, D)}
      \prod_{p} y_{\height_\mathbb{w}(p)}\right) \E{C \boldsymbol{\cdot} D}
  \end{equation}
  The last product is taken over all $p \in \{1, \dots, k \}$
  for which there is an occurrence of $\U$ at $p$
  such that $p$ is paired and $\lpair_\mathbb{w}(p)$ is even.
\end{corollary}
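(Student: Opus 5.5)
The plan is to reduce \cref{coro:coeff-mult} to \cref{theorem-coeff-mult} applied to the glued configuration $\mathcal{B}\eqdef\mathcal{C}\circ\mathcal{D}$. Here $\mathcal{C},\mathcal{D}$ are the normal form FPLCs with $[\mathcal{C}]=C$ and $[\mathcal{D}]=D$.

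First I recall from \cref{prop:lambda-coefficient} and \cref{coro-weight-basis} that
\[
\E{C}\E{D}=\weight\big(\mathcal{B}\rewto\norf(\mathcal{B})\big)\,\E{\norf(\mathcal{B})}.
\]
This holds because the reading of the glued diamond diagram is the concatenation $\lexmin(\perm{C})\lexmin(\perm{D})$. Next, $[\norf(\mathcal{B})]=[\mathcal{B}]=C\boldsymbol{\cdot}D$ by \cref{lemma-FPLC-height-invariance} and \cref{prop:loop-to-diag}. \Cref{theorem-coeff-mult} then replaces the weight by $R(\mathcal{B})$. Hence everything reduces to the identities $\walks(C,D)=\walks(\mathcal{B})$ and $\loops(C,D)=\loops(\mathcal{B})$, as multisets of walks with their starting heights.

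To prove these identities I would analyse how a single arc $\arc{i}{\ell}{j}$ of $C$ or $D$ is realised in a normal form FPLC. By \cref{prop:red-expr-loop} such a path never goes up and then down. It therefore descends monotonically from height $|i|$ to its minimal height $\ell$ (\cref{lemma-FPLC-height-invariance}), and then ascends monotonically to $|j|$. Its walk is thus exactly $\D^{|i|-\ell}\U^{|j|-\ell}$ when it is traversed from $i$ to $j$. The triangular filler in $\mathcal{C}\circ\mathcal{D}$ consists only of straight squares, so it contributes horizontal steps and no walk steps. Consequently a path or loop of $\mathcal{B}$ has as its walk the concatenation of the fragment walks, in the order of traversal. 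One must check orientation: traversing counter-clockwise, each fragment is entered at one endpoint and left at the other, and the factor $\D^{|i_k|-\ell_k}\U^{|j_k|-\ell_k}$ correctly uses the entry endpoint $i_k$. For paths, the origin is the leftmost-lowest endpoint, which is precisely the convention for $i_1$ in the definition of $\walks(C,D)$. This settles $\walks$.

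The main obstacle is loops. Their walks depend on the chosen origin, which is the leftmost point of the lowest row of the loop in $\mathcal{B}$. The lowest row is at height $\ell_1$, the minimum label among the fragments. I would show that the origin lies on the fragment $\alpha_1$ singled out in the definition, and that the origin splits $\alpha_1$'s walk $\D^{|i_1|-\ell_1}\U^{|j_1|-\ell_1}$ between its bottom U-turn points. This yields the rotated word $\U^{|j_1|-\ell_1}\cdots\D^{|i_1|-\ell_1}$. To do this I would use the geometry of normal forms: in $\mathcal{C}$ (the left block) the bottom of an arc with label $\ell$ sits further left the larger its extreme endpoint $\max\{u,v\}$ is, whereas in $\mathcal{D}$ every bottom lies to the right of all of $\mathcal{C}$. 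This is why one prefers $C$-fragments with maximal $\max\{u,v\}$, and otherwise $D$-fragments with minimal $\min\{u,v\}$. I would verify this positional claim by induction using the factorisation $D=\iota_{N-1}(D')\A{N-1}\cdots\A{d}$ of \cref{prop:right-code-factor}, tracking where the diagonal of U-turns is added. Next I would check the counter-clockwise direction, which dictates that the walk leaves the origin upward along the $j_1$ side. With walks and loops identified, \cref{equ:weight-rewriting-walks} follows directly from \cref{equ:weight-rewriting-loops}.
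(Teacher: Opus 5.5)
Your proposal is correct and follows the paper's route: you apply \cref{theorem-coeff-mult} to $\mathcal{B}=\mathcal{C}\circ\mathcal{D}$ and identify $\walks(C,D)$ and $\loops(C,D)$ with the walks of $\mathcal{B}$, an identification the paper only asserts from its choice of $\alpha_1$, so your geometric check of the loop origins fills in a step the paper leaves implicit. You could in fact skip that check: rotating a closed walk at any point of minimal height, or reversing its direction, preserves every pairing together with the values $\lpair$ and $\height$, hence the weight, so it is enough that each loop walk starts on a fragment of minimal label.
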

\subsubsection{Combinatorial digression: Grammars for walks}

We make a small, combinatorial digression concerning the Dyck paths
that appear in walks associated to FPLCs. The results here are not used in the rest of
the paper and are presented without detailed proof in the hope that they
may interest combinatorists.

We consider the maximal factors consisting only of paired steps of walks in FPLCs.
Such a factor $\boldsymbol{w}$ is \defn{even reducible} if 
there exists a \defn{reduction sequence}, i.e. there exists a sequence
$(\boldsymbol{w}_i)_{0 \leq i \leq k}$ where $\boldsymbol{w}_0=\varepsilon$ and $\boldsymbol{w}_k=\boldsymbol{w}$ such that for any $i<k$,
the word $\boldsymbol{w}_{i+1}$ is obtained from $\boldsymbol{w}_i$ by inserting $\U\U\D\D$
somewhere. Similarly, a walk~$\mathbb{w}$ is \defn{odd reducible} if
there is a reduction sequence starting with $\U\D$ and ending at
$\boldsymbol{w}$. Even (resp. odd) reducible word are Dyck word of even (resp. odd)
semi-length.

\begin{definition}
  We define, inductively, two sets of Dyck words, namely the set $\evenD$
  (resp. $\oddD$ of \defn{even (resp. odd) tileable} Dyck words:
  \begin{itemize}
  \item $\evenD$ is the set of Dyck words which are either empty or of the
    form $\boldsymbol{w} = \U \boldsymbol{v}_1\D\U \boldsymbol{v}_k\D \dots\U \boldsymbol{v}_k\D$ where $\boldsymbol{v}_i\in\oddD$ for all $i$;
  \item $\oddD$ is the sets of Dyck words of the form
    $\boldsymbol{w} = \boldsymbol{v}_1\U \boldsymbol{v}_2\D \boldsymbol{v}_3$ where $\boldsymbol{v}_1,\boldsymbol{v}_2,\boldsymbol{v}_3\in\evenD$;
  \end{itemize}
\end{definition}
\begin{lemma}
  The factorization $\U \boldsymbol{v}_1\D\U \boldsymbol{v}_k\D \dots\U \boldsymbol{v}_k\D$ of a non-empty even
  tileable Dyck word is unique. Likewise, the factorization $\boldsymbol{v}_1\U \boldsymbol{v}_2\D \boldsymbol{v}_3$ of an odd
  tileable Dyck word is unique.
\end{lemma}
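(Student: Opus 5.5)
The plan is to reduce both uniqueness statements to the unique decomposition of a Dyck word into \emph{prime} Dyck words, i.e.\ factors of the form $\U\boldsymbol{u}\D$ with $\boldsymbol{u}$ a Dyck word. Such a factor touches height $0$ only at its two ends, and every Dyck word factors uniquely as a product of primes, the cut points being exactly its returns to height $0$. I will also use a parity invariant, proved first by simultaneous induction on length: every word in $\evenD$ is a Dyck word of even semi-length, and every word in $\oddD$ is a Dyck word of odd semi-length.

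For the parity invariant, the empty word has semi-length $0$. A non-empty word of $\evenD$ is a concatenation of blocks $\U\boldsymbol{v}_i\D$ with $\boldsymbol{v}_i\in\oddD$. By induction each $\boldsymbol{v}_i$ is Dyck of odd semi-length, so each block has even semi-length $1+\text{odd}$. A word $\boldsymbol{v}_1\U\boldsymbol{v}_2\D\boldsymbol{v}_3$ of $\oddD$ has semi-length $\text{even}+(1+\text{even})+\text{even}$, which is odd. Being Dyck is clearly preserved by both constructions.

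For the even case, let $\boldsymbol{w}=\U\boldsymbol{v}_1\D\,\U\boldsymbol{v}_2\D\cdots\U\boldsymbol{v}_k\D$ with each $\boldsymbol{v}_i\in\oddD$. Since each $\boldsymbol{v}_i$ is a Dyck word, each block $\U\boldsymbol{v}_i\D$ is a prime Dyck word. So the given factorization is the prime decomposition of $\boldsymbol{w}$. That decomposition is unique, hence so are $k$ and the $\boldsymbol{v}_i$.

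For the odd case, write $\boldsymbol{w}=\boldsymbol{v}_1\U\boldsymbol{v}_2\D\boldsymbol{v}_3$ with $\boldsymbol{v}_1,\boldsymbol{v}_2,\boldsymbol{v}_3\in\evenD$. The prime decomposition of $\boldsymbol{w}$ is the concatenation of the prime decompositions of $\boldsymbol{v}_1$ and $\boldsymbol{v}_3$ (by the even case) with the single prime $\U\boldsymbol{v}_2\D$ placed between them. Every prime occurring in $\boldsymbol{v}_1$ or $\boldsymbol{v}_3$ has the form $\U\boldsymbol{u}\D$ with $\boldsymbol{u}\in\oddD$, so by the parity invariant it has even semi-length. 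The middle prime $\U\boldsymbol{v}_2\D$ has odd semi-length. Thus $\U\boldsymbol{v}_2\D$ is the \emph{unique} prime factor of $\boldsymbol{w}$ of odd semi-length, and this is determined by $\boldsymbol{w}$ alone. Consequently its position in $\boldsymbol{w}$ is forced, and $\boldsymbol{v}_1$, $\boldsymbol{v}_2$, $\boldsymbol{v}_3$ are respectively the prefix before it, its interior, and the suffix after it.

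The only delicate point is this last step: excluding a second factorization in which a different prime plays the role of $\U\boldsymbol{v}_2\D$. The parity invariant is exactly what rules it out, which is why it should be established first.
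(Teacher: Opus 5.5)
Your proof is correct and complete. The paper states this lemma without proof (its whole digression on tileable Dyck words is explicitly given without detailed proofs), so there is no argument of the authors to compare against.

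Your route is the natural one. In the even case, the blocks $\texttt{U}\,\boldsymbol{v}_i\,\texttt{D}$ are exactly the prime (first-return) factors of the word. In the odd case, the semi-length parity invariant, which you rightly establish first by simultaneous induction, makes $\texttt{U}\,\boldsymbol{v}_2\,\texttt{D}$ the only prime factor of odd semi-length; this pins down $\boldsymbol{v}_1,\boldsymbol{v}_2,\boldsymbol{v}_3$.

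This uniqueness is precisely what justifies the generating-function identities $\operatorname{Even}(z)=(1-z\operatorname{Odd}(z))^{-1}$ and $\operatorname{Odd}(z)=z\operatorname{Even}(z)^3$ stated shortly afterwards. You also correctly read the repeated $\boldsymbol{v}_k$ in the statement as a typo for $\boldsymbol{v}_1,\boldsymbol{v}_2,\dots,\boldsymbol{v}_k$.
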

\begin{proposition}
  A word $\boldsymbol{w}$ is even (resp. odd) tileable if and only if  $\boldsymbol{w}$ is even
  (resp. odd) reducible.
\end{proposition}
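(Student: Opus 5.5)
We prove both equivalences together, taking the definition of $\evenD$ in its intended form: a non-empty even tileable word is $\U \boldsymbol{v}_1\D\,\U \boldsymbol{v}_2\D \cdots \U \boldsymbol{v}_k\D$ with every $\boldsymbol{v}_i\in\oddD$. The two implications are handled by different mutual inductions. Reducible $\Rightarrow$ tileable follows from closure of $\evenD$ and $\oddD$ under insertion of $\U\U\D\D$. Tileable $\Rightarrow$ reducible follows by building reduction sequences from the unique factorizations. We treat the first implication first, since it is where the case analysis lies.

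\emph{Reducible $\Rightarrow$ tileable.} First, $\varepsilon\in\evenD$, and $\U\D=\varepsilon\,\U\,\varepsilon\,\D\,\varepsilon\in\oddD$. By induction on the number of insertions, it therefore suffices to prove the following claim: inserting $\U\U\D\D$ at any position of a word of $\evenD$ (resp. $\oddD$) yields a word of $\evenD$ (resp. $\oddD$). We prove this claim by induction on the length of the word, using the factorizations from the preceding lemma.
\begin{enumerate}
\item Let $\boldsymbol{w}=\U \boldsymbol{v}_1\D\cdots\U \boldsymbol{v}_k\D\in\evenD$. Every insertion position is of one of two kinds.
\begin{enumerate}
\item It is a boundary between blocks, or an end of $\boldsymbol{w}$. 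Then $\U\U\D\D=\U(\U\D)\D$ is inserted as a new block, since $\U\D\in\oddD$, so the result is in $\evenD$.
\item It lies in some $\boldsymbol{v}_i$, including its two ends, that is, just after the block's $\U$ or just before its $\D$. Then the induction hypothesis for $\oddD$ applies to $\boldsymbol{v}_i$, and the result is in $\evenD$.
\end{enumerate}
\item Let $\boldsymbol{w}=\boldsymbol{v}_1\U\boldsymbol{v}_2\D\boldsymbol{v}_3\in\oddD$. Every position, including those adjacent to the distinguished $\U$ and $\D$, is a position of some $\boldsymbol{v}_j\in\evenD$. So the induction hypothesis for $\evenD$ applies, and the result is in $\oddD$.
\end{enumerate}
The only point to check carefully is that these cases exhaust all insertion positions. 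This holds because every position in a factorized word is either a block boundary or a position of one of the factors, endpoints of the factors included. It also uses uniqueness of the factorization only to make the case split well defined, and any choice of factorization would in fact do.

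\emph{Tileable $\Rightarrow$ reducible.} We use one elementary fact: if $\boldsymbol{v}$ is reachable from $\boldsymbol{u}$ by insertions of $\U\U\D\D$, then $\boldsymbol{x}\boldsymbol{v}\boldsymbol{y}$ is reachable from $\boldsymbol{x}\boldsymbol{u}\boldsymbol{y}$ by performing the same insertions shifted by $|\boldsymbol{x}|$. With this fact we argue by induction on length.
\begin{enumerate}
\item Let $\boldsymbol{v}_1\U\boldsymbol{v}_2\D\boldsymbol{v}_3\in\oddD$. Start from $\U\D$ and successively grow $\boldsymbol{v}_1$ in front, $\boldsymbol{v}_2$ between $\U$ and $\D$, and $\boldsymbol{v}_3$ at the end. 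Each of these is even reducible by induction, that is, reachable from $\varepsilon$.
\item Let $\U\boldsymbol{v}\D$ be a single block of an $\evenD$ word. Start from $\varepsilon\mapsto\U\U\D\D=\U(\U\D)\D$. Then apply an odd reduction sequence of $\boldsymbol{v}$, which runs from $\U\D$ to $\boldsymbol{v}$, inside the context $\U\cdots\D$.
\item For a general $\evenD$ word, concatenate: grow each block from $\varepsilon$ at its own position, one block after another.
\end{enumerate}
This direction is routine. It also shows that semi-lengths have the expected parities, which matches the remark that even (resp. odd) reducible words are Dyck words of even (resp. odd) semi-length.
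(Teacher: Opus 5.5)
Your proof is correct and complete. The paper gives no proof of this proposition: the digression containing it is explicitly presented ``without detailed proof'', so there is nothing to compare against, and your argument fills the gap in the natural way.

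For reducible $\Rightarrow$ tileable, the key claim is that $\evenD$ and $\oddD$ are closed under inserting $\U\U\D\D$. Your mutual induction on length proving it is well founded, since every factor $\boldsymbol{v}_i$ is strictly shorter than $\boldsymbol{w}$. Your case split also covers every insertion gap:
\begin{itemize}
\item in $\U\boldsymbol{v}_1\D\cdots\U\boldsymbol{v}_k\D$, a gap is either a block boundary or an end of the word, or a gap of some $\boldsymbol{v}_i$ with its two ends included;
\item in $\boldsymbol{v}_1\U\boldsymbol{v}_2\D\boldsymbol{v}_3$, every gap is a gap of some $\boldsymbol{v}_j$.
\end{itemize}

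For tileable $\Rightarrow$ reducible, the key observation is that an insertion sequence can be performed unchanged inside any context $\boldsymbol{x}\cdots\boldsymbol{y}$. This is exactly what lets you assemble reduction sequences factor by factor. The case of $\varepsilon$ is the trivial empty sequence.

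You are also right that the uniqueness lemma is never actually used: any factorization witnessing membership in $\evenD$ or $\oddD$ suffices. Finally, reading the definition of $\evenD$ as $\U\boldsymbol{v}_1\D\,\U\boldsymbol{v}_2\D\cdots\U\boldsymbol{v}_k\D$ is the intended one, as the identity $\Even(z)=(1-z\Odd(z))^{-1}$ confirms.
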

\newcommand{\Even}{\operatorname{Even}}
\newcommand{\Odd}{\operatorname{Odd}}
As a consequence the generating functions which track the
semi-length of, respectively, even and odd reducible Dyck words
are related by the identities $\Even(z) = (1 - z\Odd(z))^{-1}$ and
$\Odd(z) = z(\Even(z))^3$. One finds that
\begin{align*}
  \Even(z) &= 1 +  z^{2} + 4 z^{4} + 22 z^{6} + 140 z^{8} + 969 z^{10} + 7084
             z^{12} + 53820 z^{14} + O(z^{15}) \\
  \Odd(z) &= z + 3 z^{3} + 15 z^{5} + 91 z^{7} + 612 z^{9} + 4389 z^{11} + 32890 z^{13} + O(z^{14})
\end{align*}
The sequence of coefficients 
of $\Even$ is~\cite[Sequence A002293]{OEIS}, the
coefficients of $z^{2n}$ is $\frac{1}{n}\binom{4n}{n-1}$. The sequence of
coefficients of $\Odd$ is~\cite[Sequence A006632]{OEIS} and the coefficient of
$z^{2n-1}$ is $\frac{3}{4n-1}\binom{4n-1}{n-1}$. Finally $\Even+\Odd$
is~\cite[Sequence A369082]{OEIS}.

%%%%%%%%%%%%%%%%%%%%%%%%%%%%%%%%%%%%%%

\subsection{Okada, Temperley-Lieb and Blob algebras}
In this section, we comment on the morphisms between the Okada algebra
and the blob and Temperley-Lieb algebras.

Recall that the Temperley-Lieb algebra $\TL(\delta)$ is the algebra
generated by $\tl{1}, \dots, \tl{n-1}$ with relations:
\begin{alignat*}{3}
  \tl{i}^2&=\delta \ssp \tl{i} &\qquad& 1\leq i \leq N-1,\\
  \tl{i}\tl{j}&=\tl{j}\tl{i} &\qquad&\vert i-j\vert \geq 2,\\
  \tl{i+1}\tl{i}\tl{i+1}&=\tl{i+1} &\qquad&  1\leq i \leq N-2,\\
  \tl{i}\tl{i+1}\tl{i}&=\tl{i} &\qquad&  1\leq i \leq N-2,
\end{alignat*}
The reader will recognize a specialized version of the Okada relations after removing the last relation.
This leads to the following observation:

\begin{proposition}
  The map $\E{i}\mapsto\tl{i}$ extends uniquely to an algebra
  epimorphism from the specialized Okada algebra
  $\Okada[N](\delta,\dots,\delta; 1,\dots,1)$ to $\TL(\delta)$. At the level of the diagram basis,
  this amount to removing all arc labels. By specializing $\delta =1$, one gets an
  epimorphism from the Okada monoid to the Jones monoid.
\end{proposition}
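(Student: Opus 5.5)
Existence and uniqueness of the epimorphism is a presentation argument. The specialized Okada algebra $\Okada[N](\delta,\dots,\delta;1,\dots,1)$ is presented by generators $\E{i}$ subject to \eqref{relIxy}, \eqref{relCxy}, \eqref{relSxy} with $x_i=\delta$ and $y_i=1$. These are exactly the first three defining relations of $\TL(\delta)$ after renaming $\E{i}$ as $\tl{i}$. By the universal property of the presentation, $\E{i}\mapsto\tl{i}$ therefore extends to an algebra morphism $\pi$. It is unique because the $\E{i}$ generate, and surjective because the $\tl{i}$ generate $\TL(\delta)$. Its kernel is the two-sided ideal generated by the elements $\E{i}\E{i+1}\E{i}-\E{i}$. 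The same argument with $\delta=1$ gives a monoid epimorphism $\Okada\to\Jones$, since the Jones monoid is presented by the Temperley--Lieb relations at $\delta=1$.

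The diagrammatic statement takes more work. I would show that for every Okada arc-diagram $D$, $\pi(\E{D})$ equals the unlabelled Temperley--Lieb diagram $\overline{D}$ obtained by erasing the labels. The plan is as follows.
\begin{enumerate}
\item Check generators. The Temperley--Lieb diagram of $\tl{i}$ is $\overline{\A{i}}$, using the standard diagrammatic realisation of $\TL(\delta)$ with loops evaluated to $\delta$ and the same arc conventions as in \cref{def:gen-arc-okada}.
\item Use \cref{prop:lambda-coefficient}: $\E{D}=\E{\lexmin(\perm{D})}$ is a monomial $\E{i_1}\cdots\E{i_k}$ whose reading is a reduced word, so $D=\A{i_1}\boldsymbol{\cdot}\cdots\boldsymbol{\cdot}\A{i_k}$ in $\OkArc$.
\item Apply $\pi$, giving $\tl{i_1}\cdots\tl{i_k}=\overline{\A{i_1}}\cdots\overline{\A{i_k}}$ in the diagram basis of $\TL(\delta)$. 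This product equals $\delta^{m}$ times the unlabelled diagram product, where $m$ is the number of loops created.
\item Conclude using that the unlabelled underlying diagram of a labelled product is the unlabelled product. The labelled product only adds labels by taking minima, so $\overline{D}$ is the unlabelled concatenation of the $\overline{\A{i_j}}$.
\end{enumerate}

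The main obstacle is showing $m=0$, i.e.\ that no loops appear when concatenating the generator diagrams along a reduced word. Here I would use \cref{prop:red-expr-loop} together with \cref{prop:loop-to-diag}. The FPLC of the greedy diamond diagram of a reduced word has no loops, and its strands realise exactly the composition $\A{i_1}\circ\cdots\circ\A{i_k}$ up to isotopy. Hence $\pi(\E{D})=\overline{D}$.

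For general products $\E{C}\E{D}=\lambda(C,D)\E{C\boldsymbol{\cdot}D}$, applying $\pi$ then shows that $\pi(\lambda(C,D))$ — a monomial in the $x$'s now specialised to $\delta$ — equals $\delta^{\#\text{loops}}$. This is consistent with \cref{coro:coeff-mult} at $y=1$ and confirms compatibility. Setting $\delta=1$ makes all scalars $1$, giving the monoid statement that the label-forgetting map $\OkArc\to\Jones$ is the epimorphism.
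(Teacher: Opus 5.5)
Your proof is correct. The existence, uniqueness and surjectivity part is exactly the paper's justification. The paper only observes that the specialized Okada relations are the first three Temperley--Lieb relations, records the proposition as an observation, and gives no argument for the diagram-basis claim.

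Your reduced-word argument for $\pi(\E{D})=\overline{D}$ fills that gap. The one step you should write out is that the FPLC of a diamond diagram is, up to planar isotopy, the vertical stacking of $\overline{\A{i_1}},\dots,\overline{\A{i_k}}$. This is needed because \cref{prop:loop-to-diag} by itself controls only the open paths and says nothing about closed loops. The justification is short:
\begin{itemize}
\item a U-turn square in row $i$ is locally $\tl{i}$, and straight squares are identity strands;
\item boxes in a common vertical slice lie in rows of equal parity, so they give commuting generators;
\item for rows differing by one, the reading order agrees with the left-to-right order.
\end{itemize}

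A more packaged route is \cref{coro:coeff-mult}. At $x_i=\delta$, $y_i=1$ it gives $\lambda(C,D)=\delta^{\#\text{loops of }C\circ D}$, which is exactly the Temperley--Lieb rule for $\overline{C}\,\overline{D}$. Hence $\E{D}\mapsto\overline{D}$ is multiplicative and agrees with $\pi$ on the generators, so it equals $\pi$.

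For the monoid statement at $\delta=1$ no loop count is needed at all. Both the Okada and Jones products discard loops, and the underlying diagram of a labelled product is the unlabelled product.
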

This raises the natural, combinatorial problem of computing the
cardinality of the preimage of a Jones monoid element within the Okada
monoid:
\begin{question}
  Is there a formula to compute the number of valid Okada labelling
  of a given unlabelled, non-crossing arc-diagram?
\end{question}
We haven't found a simple answer to this question. For example, the
preimage of $\tl{1}\tl{4}\tl{7}$ in $\Okada[8]$ has
cardinality $61$ which is a prime number, thus ruling out the possibility of
a simple, multiplicative counting formula. 
However, the same question, when posed for half arc-diagrams, has
a simple and elegant answer: Thanks to
\cref{LinearExtentions-to-HLabels}, there is a bijection between
Okada half arc-diagrams with a prescribed underlying (unlabelled) 
half arc-diagram and linear extensions of the associated nesting order. Since the nesting order 
has the structure of a forest, the number of its linear extensions can be obtained from the
hook-length formula for forest \cite[\S5.1.4, Exer.20]{AOCP3}.
% \begin{remark}
%   Note that starting with $N=4$, the specialized Okada algebra
%   $\Okada[N](q,\dots,q; 1,\dots,1)$ is never semisimple whereas
%   $\TL(q)$ might be depending on $q$.
% \end{remark}
\bigskip

We now turn to the blob algebra~\cite{Martin1994,Martin1991}. This is the
two parameter algebra $\Blob[N](\gamma, \delta)$ generated
by $\tl{1}, \dots, \tl{n-1}$ obeying the Temperley-Lieb relations together with an extra generator $\blob$
satisfying
\[
  \blob^2= \blob, \qquad\qquad
  \tl{1}\blob\,\tl{1}= \gamma \ssp \tl{i}\qquad\text{and}\qquad
  \blob\ssp\tl{i}=\tl{i}\blob\quad\text{for $i \geq 2$}\,.
\]
As in the Temperley-Lieb case, these relations contain a specialized
version of Okada's relations, and so the blob algebra is also a quotient
of the Okada algebra:
\begin{proposition}\label{prop:morph-to-blob}
  The map $\E{i}\mapsto\tl{i-1}$ and $\E{1}\mapsto\blob$ extends
  uniquely to an algebra epimorphism from the specialized Okada
  algebra $\Okada[N+1](1,\delta,\dots,\delta; \gamma,1,\dots,1)$ to
  $\Blob[N](\gamma, \delta)$. Specializing both $\delta=1$ and $\gamma=1$, one gets an
  epimorphism from the Okada monoid to the Blob monoid.
\end{proposition}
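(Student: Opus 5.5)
Write $\Okada[N+1]' \eqdef \Okada[N+1](1,\delta,\dots,\delta;\gamma,1,\dots,1)$. The plan is to define an algebra morphism $\Phi$ by generators and relations and then check surjectivity. First, $\Okada[N+1]'$ is the algebra generated by $\E{1},\dots,\E{N}$ subject to the Okada relations with $x_1=1$, $x_i=\delta$ for $i\ge2$, $y_1=\gamma$, $y_i=1$ for $i\ge 2$. Define $\Phi$ on generators by $\E{1}\mapsto\blob$ and $\E{i}\mapsto\tl{i-1}$ for $2\le i\le N$. By the universal property of a presented algebra, $\Phi$ extends uniquely to an algebra morphism provided the images satisfy every defining relation. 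Once this holds, surjectivity is immediate: the image contains $\blob$ and all $\tl{1},\dots,\tl{N-1}$, which generate $\Blob[N](\gamma,\delta)$. Uniqueness is automatic, since the $\E{i}$ generate.

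I will check the relations case by case, starting with the idempotent relations. For $i=1$, $\E{1}^2=x_1\E{1}=\E{1}$ maps to $\blob^2=\blob$, which holds. For $i\ge2$, $\E{i}^2=\delta\E{i}$ maps to $\tl{i-1}^2=\delta\tl{i-1}$, which holds.

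Next come the commutation relations $\E{i}\E{j}=\E{j}\E{i}$ for $|i-j|\ge2$. If both $i,j\ge2$, this is the Temperley–Lieb commutation $\tl{i-1}\tl{j-1}=\tl{j-1}\tl{i-1}$. If $i=1$ and $j\ge3$, it becomes $\blob\,\tl{j-1}=\tl{j-1}\blob$ with $j-1\ge2$, which is a blob relation.

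Then come the braid-type relations $\E{i+1}\E{i}\E{i+1}=y_i\E{i+1}$. For $i=1$, it reads $\E{2}\E{1}\E{2}=\gamma\E{2}$, which maps to $\tl{1}\blob\,\tl{1}=\gamma\,\tl{1}$. This is the blob relation; the statement's "$\gamma\,\tl{i}$" there is to be read as $\tl{1}$. For $i\ge2$, it reads $\E{i+1}\E{i}\E{i+1}=\E{i+1}$, which maps to $\tl{i}\tl{i-1}\tl{i}=\tl{i}$. This is one of the two Temperley–Lieb braid relations, in the index range $1\le i-1\le N-2$.

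The main point to watch is this index bookkeeping: the parameters $x_1=1$ and $y_1=\gamma$ must land exactly on the blob relations $\blob^2=\blob$ and $\tl{1}\blob\tl{1}=\gamma\tl{1}$, and only the relation $\tl{i}\tl{i-1}\tl{i}=\tl{i}$ (never $\tl{i-1}\tl{i}\tl{i-1}$) may be required. Finally, setting $\delta=\gamma=1$, every parameter equals $1$, so $\Phi$ restricts to the monoid level. It sends the Okada monoid $\Okada[N+1]$ generators $\e{i}$ to the blob monoid generators, the same relation check applies verbatim, and surjectivity again follows from the generators.
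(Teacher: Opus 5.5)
Your proposal is correct and follows the same route as the paper. The paper simply observes that the defining relations of $\Blob[N](\gamma,\delta)$ contain the specialized Okada relations under the shift $\E{1}\mapsto\blob$, $\E{i}\mapsto\tl{i-1}$, so the blob algebra is a quotient. Your case-by-case check makes that verification explicit, including reading the paper's relation as $\tl{1}\blob\,\tl{1}=\gamma\,\tl{1}$ and noting that only $\tl{i}\tl{i-1}\tl{i}=\tl{i}$ is ever required.
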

Recall that the blob algebra is also realized by 
Temperley-Lieb diagrams~\cite{Martin1994}
whose arcs are marked according to the
following rule: Each arc is either
{\it unblobbed} (and left unmarked) or is
{\it blobbed} (and marked with a bullet $\bullet$). However,
only arcs which are not nested may be blobbed. 
The underlying (unmarked) diagrams in the blob algebra are multiplied as diagrams in 
Temperley-Lieb algebra. An arc created 
in the product of two diagrams $C \boldsymbol{\cdot} D$ 
is blobbed whenever
any of the arcs from $C$ or $D$ which
form it are blobbed (i.e. the blob mark $\bullet$ is idemptotent).
As usual, all loops formed in product $C \boldsymbol{\cdot} D$ 
are removed, and a scalar factor $\delta$ or $\gamma$ 
is introduced for each loop depending upon whether or not
it is unblobbed or blobbed.

We now explain how to compute the image of an Okada arc-diagram $D$ of
rank $N+1$ within the blob algebra $\Blob[N](\gamma, \delta)$.
The idea is to remove vertices $1$ and $\ov{1}$ and the arcs incident
to them, retain the arcs of $D$ whose labels are larger than $2$, 
and finally to link together (with a blobed
arc) each consecutive pair of vertices which are not joined by an arc in $D$ but
which are both incident to an arc labelled $1$.
\begin{definition}
  Consider an Okada arc-diagram $D$ of rank $N+1$. Notice first that
  the Okada labelling conditions ensure that vertices $1$ and $\ov{1}$ are incident only to
  arc(s) labelled $1$. Moreover the arcs labelled $1$ cannot be
  nested in one another, so there is a well-defined sequence of vertices
  $(v_i)_{i=1\dots 2k}\in\setN \cup \setNN$ which is strictly
  increasing with respect to the total order on $\setN \cup \setNN$
  and such that the set of arcs labelled $1$ in $D$ is exactly
  \[\left\{
    1=\arc{v_1}{1}{v_2},\ \arc{v_3}{1}{v_4},\ \dots,\
    \arc{v_{2k-1}}{1}{v_{2k}}=\ov{1}\right\}\,.
  \]
  We define \defn{$\mathbcal{b}_N(D)$} to be the blobed arc-diagram of rank $N$ with
  \begin{itemize}
  \item a unblobed arc $\arc{i-1}{}{j-1}$ whenever there is an arc
    $\arc{i}{\ell}{j}$ in $D$ with $\ell>1$\,;
  \item a blobed arc $v_{2i}-1\barc v_{2i+1}-1$ for all $i<k$\,.
  \end{itemize}
\end{definition}
Note that if $1$ is joined to $\ov{1}$ by an arc in $D$ then it is the only arc
labelled $1$ and so $\mathbcal{b}_N(D)$ is just the unlabelled version of $D$
where this arc has been removed. See \cref{fig.compo_Blob} for some
examples.
\begin{proposition}
\label{prop:blob-map}
  The map $\E{D} \mapsto \mathbcal{b}_N(D)$ coincides with the morphism of
  \cref{prop:morph-to-blob}.
\end{proposition}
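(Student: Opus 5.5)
The plan is to check the statement on generators and then propagate it to all of $\OkArc[N+1]$ by an induction along the right factorization $D = \iota_N(D')\,\A{N}\A{N-1}\cdots\A{d}$ of \cref{prop:right-code-factor}. Write $\beta$ for the epimorphism of \cref{prop:morph-to-blob}, defined by $\beta(\E{1})=\blob$ and $\beta(\E{i})=\tl{i-1}$ for $i\geq 2$. The goal is the identity $\beta(\E{D}) = \mathbcal{b}_N(D)$ in $\Blob[N](\gamma,\delta)$, including the scalar, which is $1$ for basis diagrams.

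First I would verify the generators directly. $\mathbcal{b}_N(\id_{N+1})$ is the identity Temperley--Lieb diagram, because only the arc $\arc{1}{1}{\ov1}$ carries label $1$ and it is removed. Next, $\A{1}$ has the arcs $\arc{1}{1}{2}$ and $\arc{\ov2}{1}{\ov1}$ labelled $1$, so $v=(1,2,\ov2,\ov1)$. After removal and the shift by one this produces a single blobbed propagating arc $1\barc \ov1$ together with identity arcs elsewhere, and this is exactly the blob generator $\blob$. Finally, for $i\geq2$ the diagram $\A{i}$ has label $1$ only on $\arc{1}{1}{\ov1}$, so $\mathbcal{b}_N(\A{i})=\tl{i-1}$.

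Second, I would show that $\mathbcal{b}_N$ is multiplicative up to the correct scalar. Concretely, for any $C,D\in\OkArc[N+1]$,
\[
  \mathbcal{b}_N(C)\,\mathbcal{b}_N(D) = \lambda'(C,D)\,\mathbcal{b}_N(C\boldsymbol{\cdot} D),
\]
where $\lambda'$ is the specialization of $\lambda(C,D)$ at $x=(1,\delta,\delta,\dots)$ and $y=(\gamma,1,1,\dots)$. Combined with \cref{prop:lambda-coefficient} and with the fact that the $\E{i}$ generate the algebra, this gives the claim by induction on the length of a reduced word. On the diagram side the steps are as follows. An arc of $C\boldsymbol{\cdot}D$ has label $1$ exactly when one of its fragments has label $1$, since labels are combined by the minimum. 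Unblobbed arcs of the blob product therefore correspond to arcs with label $\geq2$, and blobbed arcs come from chains passing through label-$1$ fragments. Removing the vertices $1,\ov1$ cuts every label-$1$ path into the gaps $v_{2i}\to v_{2i+1}$, and gluing these gaps in the composition reproduces the blobbed arcs of the product, with the idempotence of $\bullet$ playing the role of the minimum. For the scalar I would use \cref{coro:coeff-mult}. A loop with minimal label $a\geq2$ gives $x_a=\delta$, which matches an unblobbed loop. A loop with minimal label $1$ gives $x_1=1$, and every return to height $1$ at an even pairing distance gives $y_1=\gamma$, while all other $y$ factors are $1$.

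The main obstacle is exactly this scalar matching: the number of $y_1$ factors must equal the number of blobbed loops created in the blob product. Blobbed loops arise both from loops of label $1$ and from paths of label $1$ that close up after the vertices $1,\ov1$ are deleted. To handle this, I would avoid the general walk analysis and prove multiplicativity only against a generator on the right, that is, for products $D\cdot\A{i}$, since that already suffices for the induction. For $i\geq2$ the Temperley--Lieb loop count agrees with the Okada count: a new loop has label $\geq2$ when it avoids label-$1$ arcs, and otherwise becomes a blobbed loop. For $i=1$ the relevant cases are read off from the four cases in the proof of \cref{lemma:left-eq-or-prop-le}, where a factor $y_1=\gamma$ appears precisely when a blobbed loop closes.
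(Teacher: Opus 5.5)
Your skeleton (check the generators, then prove $\mathbcal{b}_N(D)\,\mathbcal{b}_N(\A{i})=\lambda'(D,\A{i})\,\mathbcal{b}_N(D\boldsymbol{\cdot}\A{i})$ and induct on word length) is the argument the paper's one-line proof leaves to the reader, and your generator computations are correct. The gap is in the step you yourself identify as the crux: the scalar bookkeeping you propose is false in both directions.

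\emph{Label-$1$ loops do not give blobbed loops.} For $i\geq2$, a new Okada loop through a label-$1$ arc does not become a blobbed loop. Take $D=\A{1}\A{3}\A{2}\A{1}\in\OkArc[4]$, whose arcs are $\arc{1}{1}{2}$, $\arc{3}{3}{4}$, $\arc{\ov{1}}{1}{\ov{2}}$ and $\arc{\ov{3}}{1}{\ov{4}}$. The composition $D\circ\A{3}$ has a loop through $\arc{\ov{3}}{1}{\ov{4}}$, and $\E{D}\E{3}=\E{1}\E{3}\E{2}\E{3}\E{1}=x_1y_2\,\E{1}\E{3}$, which specializes to $1$. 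Correspondingly $\mathbcal{b}_3(D)\,\tl{2}$ closes no loop at all, since $\mathbcal{b}_3$ erases $\arc{\ov{3}}{1}{\ov{4}}$. Your rule would produce a spurious $\gamma$ here.

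\emph{Blobbed loops occur without Okada loops.} For $i\geq2$ a factor $\gamma$ can arise with no Okada loop. For $D=\A{2}\A{1}\in\OkArc[3]$ one has $\E{D}\E{2}=y_1\E{2}$, although $D\circ\A{2}$ contains no loop. Meanwhile $\mathbcal{b}_2(D)\,\tl{1}=\tl{1}\blob\,\tl{1}=\gamma\,\tl{1}$ closes a blobbed loop made of a gap arc. So the loop counts do not agree for $i\geq2$, and this is exactly where every factor $\gamma$ comes from.

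\emph{The case $i=1$ is empty.} Here nothing happens: $\lambda(D,\A{1})\in\{1,x_1\}$, and right multiplication by $\blob$ never closes a loop.

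What the argument actually needs is the following. Let $\alpha,\alpha'$ be the arcs of $D$ at $\ov{i}$ and $\ov{i+1}$, with labels $\ell,\ell'$. Running through the cases of \cref{lemma:left-eq-or-prop-le} with \cref{coro:coeff-mult} gives
\begin{itemize}
\item $\lambda(D,\A{i})=x_\ell\,y_{\ell+1}y_{\ell+3}\cdots y_{i-1}$ if $\alpha=\alpha'$;
\item $\lambda(D,\A{i})=y_hy_{h+2}\cdots y_{i-1}$ with $h=\max(\ell,\ell')$ otherwise.
\end{itemize}
Empty products are $1$. After specialization this scalar is:
\begin{itemize}
\item $\delta$ iff $\alpha=\alpha'$ with $\ell\geq2$;
\item $\gamma$ iff $\alpha\neq\alpha'$ with $\ell=\ell'=1$, which forces $i$ even;
\item $1$ otherwise.
\end{itemize}
On the blob side, for $i\geq2$, $\tl{i-1}$ closes a loop iff the images of $\ov{i}$ and $\ov{i+1}$ are joined in $\mathbcal{b}_N(D)$. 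They are joined by an unblobbed arc exactly when $\alpha=\alpha'$ has label at least $2$. They are joined by a blobbed gap arc exactly when $\ov{i+1},\ov{i}$ are consecutive entries $v_{2j},v_{2j+1}$, that is, when $\alpha\neq\alpha'$ are both labelled $1$. This case-by-case matching of scalars, together with an actual check that the resulting diagrams agree in each case (your proposal only sketches this heuristically), is what makes the induction go through.
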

\begin{proof}
  It is easy to check that this map is compatible with the product and
  that it agrees with the map in \cref{prop:morph-to-blob} when applied to
  generators. Details are left to the reader.
\end{proof}
\begin{figure}[ht]
  %\makebox[\textwidth]{
  \begin{tikzpicture}[xscale=0.55,yscale=0.45, thick, baseline={(current bounding box.east)}]
\tikzstyle{vertex} = [shape=rectangle, minimum height=15pt, minimum width=0pt, inner sep=0pt]
\tikzstyle{mid} = [draw, fill=white, shape=circle, minimum size=2pt, inner sep=1pt]
\foreach \i in {1,...,8} {
  \node[vertex] (G-\i) at ($(-1.44, \i*1.5-1.5)$) {};
  \node[vertex] (G--\i) at ($(1.44, \i*1.5-1.5)$) {};
}
\draw (G--3) -- (G-3) node[pos=0.5,mid] (M-3-3) { 3 };
\draw (G--6) -- (G-4) node[pos=0.5,mid] (M-6-4) { 4 };
\draw (G-1)  -- (G--1) node[pos=0.5,mid] (M-1-1) { 1 };
\draw (G-2)  -- (G--2) node[pos=0.5,mid] (M-2-2) { 2 };
\draw (G--7) -- (G-5) node[pos=0.5,mid] (M-7-5) {$5$};
\draw (G--8) -- (G-8) node[pos=0.5,mid] (M-8-8) { 6 };
\draw (G-6) .. controls +(0.70, 0.35) and +(0.70, -0.35) .. (G-7) node[pos=0.5,mid] (M6-7) { 6 };
\draw (G--5) .. controls +(-0.70, -0.35) and +(-0.70, 0.35) .. (G--4) node[pos=0.5,mid] (M-5--4) { 4 };
\end{tikzpicture}
\begin{tikzpicture}[xscale=0.55,yscale=0.45, thick, baseline={(current bounding box.east)}]
\tikzstyle{vertex} = [shape=rectangle, minimum height=15pt, minimum width=0pt, inner sep=0pt]
\tikzstyle{mid} = [draw, fill=white, shape=circle, minimum size=2pt, inner sep=1pt]
\foreach \i in {1,...,8} {
  \node[vertex] (G-\i) at ($(-1.44, \i*1.5-1.5)$) {};
  \node[vertex] (G--\i) at ($(1.44, \i*1.5-1.5)$) {};
}
\draw (G--7) -- (G-7) node[pos=0.5,mid] (M-7-7) { 1 };
\draw (G-5) .. controls +(0.70, 0.35) and +(0.70, -0.35) .. (G-6) node[pos=0.5,mid] (M5-6) { 1 };
\draw (G--4) .. controls +(-2.10, -1.05) and +(-2.10, 1.05) .. (G--1) node[pos=0.5,mid] (M-4--1) { 1 };
\draw (G-3) .. controls +(0.70, 0.35) and +(0.70, -0.35) .. (G-4) node[pos=0.5,mid] (M3-4) { 3 };
\draw (G--3) .. controls +(-0.70, -0.35) and +(-0.70, 0.35) .. (G--2) node[pos=0.5,mid] (M-3--2) { 2 };
\draw (G-1) .. controls +(0.70, 0.35) and +(0.70, -0.35) .. (G-2) node[pos=0.5,mid] (M1-2) { 1 };
\draw (G--6) .. controls +(-0.70, -0.35) and +(-0.70, 0.35) .. (G--5) node[pos=0.5,mid] (M-6--5) { 5 };
\draw (G--8) -- (G-8) node[pos=0.5,mid] (M-8-8) { 4 };
% \draw[color=red, <-] (M-3--2) -- (M-4--1);
% \draw[color=red, <-] (M-8-8) -- (M-7-7);
\end{tikzpicture}
\ = \
\begin{tikzpicture}[xscale=0.55,yscale=0.45, thick, baseline={(current bounding box.east)}]
\tikzstyle{vertex} = [shape=rectangle, minimum height=15pt, minimum width=0pt, inner sep=0pt]
\tikzstyle{mid} = [draw, fill=white, shape=circle, minimum size=2pt, inner sep=1pt]
\foreach \i in {1,...,8} {
  \node[vertex] (G-\i) at ($(-1.44, \i*1.5-1.5)$) {};
  \node[vertex] (G--\i) at ($(1.54, \i*1.5-1.5)$) {};
}
\draw (G--7) -- (G-5) node[pos=0.5,mid] (M-7-5) { 1 };
\draw (G-3) .. controls +(0.70, 0.35) and +(0.70, -0.35) .. (G-4) node[pos=0.5,mid] (M3-4) { 1 };
\draw (G--4) .. controls +(-2.10, -1.05) and +(-2.10, 1.05) .. (G--1) node[pos=0.5,mid] (M-4--1) { 1 };
\draw (G-1) .. controls +(0.70, 0.35) and +(0.70, -0.35) .. (G-2) node[pos=0.5,mid] (M1-2) { 1 };
\draw (G--3) .. controls +(-0.70, -0.35) and +(-0.70, 0.35) .. (G--2) node[pos=0.5,mid] (M-3--2) { 2 };
\draw (G--8) -- (G-8) node[pos=0.5,mid] (M-8-8) { 4 };
\draw (G-6) .. controls +(0.70, 0.35) and +(0.70, -0.35) .. (G-7) node[pos=0.5,mid] (M6-7) { 6 };
\draw (G--6) .. controls +(-0.70, -0.35) and +(-0.70, 0.35) .. (G--5) node[pos=0.5,mid] (M-6--5) { 5 };
\end{tikzpicture}
%%%%%%%%%%%%%%%%%%%%%%%%%%%%%%%%%%%%%%%%%%%%
\hfil\hfil %%%%%%%%%%%%%%%%%%%%%%%%%%%%%%%%%
%%%%%%%%%%%%%%%%%%%%%%%%%%%%%%%%%%%%%%%%%%%%
\begin{tikzpicture}[xscale=0.55,yscale=0.45, thick, baseline={(current bounding box.east)}]
\tikzstyle{vertex} = [shape=rectangle, minimum height=15pt, minimum width=0pt, inner sep=0pt]
\tikzstyle{mid} = [draw, fill=white, shape=circle, minimum size=2pt, inner sep=1pt]
\foreach \i in {1,...,8} {
  \node[vertex] (G-\i) at ($(-1.44, \i*1.5-1.5)$) {};
  \node[vertex] (G--\i) at ($(1.44, \i*1.5-1.5)$) {};
}
\draw (G--3) -- (G-3);
\draw (G--6) -- (G-4);
\draw (G-2)  -- (G--2);
\draw (G--7) -- (G-5);
\draw (G--8) -- (G-8);
\draw (G-6) .. controls +(0.70, 0.35) and +(0.70, -0.35) .. (G-7);
\draw (G--5) .. controls +(-0.70, -0.35) and +(-0.70, 0.35) .. (G--4);
\end{tikzpicture}
\begin{tikzpicture}[xscale=0.55,yscale=0.45, thick, baseline={(current bounding box.east)}]
\tikzstyle{vertex} = [shape=rectangle, minimum height=15pt, minimum width=0pt, inner sep=0pt]
\tikzstyle{blob} = [draw, fill=black, shape=circle, minimum size=5pt, inner sep=0pt]
\foreach \i in {1,...,8} {
  \node[vertex] (G-\i) at ($(-1.44, \i*1.5-1.5)$) {};
  \node[vertex] (G--\i) at ($(1.44, \i*1.5-1.5)$) {};
}
\draw (G-6) .. controls +(0.70, 0.35) and +(0.70, -0.35) .. (G-7) node[pos=0.5,blob] { };
\draw (G--7) .. controls +(-2.10, -1.05) and +(-2.10, 1.05) .. (G--4) node[pos=0.5,blob] { };
\draw (G-3) .. controls +(0.70, 0.35) and +(0.70, -0.35) .. (G-4);
\draw (G--3) .. controls +(-0.70, -0.35) and +(-0.70, 0.35) .. (G--2);
\draw (G--6) .. controls +(-0.70, -0.35) and +(-0.70, 0.35) .. (G--5);
\draw (G--8) -- (G-8);
\draw (G-5) .. controls +(2.10, -1.05) and +(2.10, 1.05) .. (G-2) node[pos=0.5,blob] { };
\end{tikzpicture}
\ = \
\begin{tikzpicture}[xscale=0.55,yscale=0.45, thick, baseline={(current bounding box.east)}]
\tikzstyle{vertex} = [shape=rectangle, minimum height=15pt, minimum width=0pt, inner sep=0pt]
\tikzstyle{blob} = [draw, fill=black, shape=circle, minimum size=5pt, inner sep=0pt]
\foreach \i in {1,...,8} {
  \node[vertex] (G-\i) at ($(-1.44, \i*1.5-1.5)$) {};
  \node[vertex] (G--\i) at ($(1.54, \i*1.5-1.5)$) {};
}
\draw (G-2) .. controls +(0.70, 0.35) and +(0.70, -0.35) .. (G-3) node[pos=0.5,blob] {};
\draw (G-4) .. controls +(0.70, 0.35) and +(0.70, -0.35) .. (G-5) node[pos=0.5,blob] {};
\draw (G--7) .. controls +(-2.10, -1.05) and +(-2.10, 1.05) .. (G--4) node[pos=0.5,blob] (M-4--1) {};
\draw (G--3) .. controls +(-0.70, -0.35) and +(-0.70, 0.35) .. (G--2);
\draw (G--8) -- (G-8);
\draw (G-6) .. controls +(0.70, 0.35) and +(0.70, -0.35) .. (G-7);
\draw (G--6) .. controls +(-0.70, -0.35) and +(-0.70, 0.35) .. (G--5);
\end{tikzpicture}
%
%}
\caption[{Products of arc-diagrams}]{A product in the Okada monoid
  $\Okada[8]$ and its image in the blob monoid $\Blob[7]$ under the morphism $\mathbcal{b}_7$.}
  \label{fig.compo_Blob}
\end{figure}
\Cref{prop:blob-map} gives an alternative diagrammatic realization of the blob algebra:
\begin{corollary}
  The quotient of the Okada algebra
  $\Okada[N+1](1,\delta,\dots,\delta; \gamma, 1,\dots,1)$ (resp. monoid) obtained
  by identifying Okada arc-diagrams whose underlying (unlabelled) 
  arc-diagrams are identical
  and whose arcs labelled $1$ coincide
  is isomorphic to the blob algebra $\Blob[N](\gamma, \delta)$ (resp. monoid).
\end{corollary}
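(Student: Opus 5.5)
The plan is to treat the quotient as a monoid/algebra quotient and identify it with the blob algebra using \cref{prop:blob-map}. By \cref{prop:morph-to-blob} and \cref{prop:blob-map}, the map $\E{D}\mapsto \mathbcal{b}_N(D)$ is a surjective algebra morphism from $\Okada[N+1](1,\delta,\dots,\delta;\gamma,1,\dots,1)$ onto $\Blob[N](\gamma,\delta)$. Since it sends each basis element $\E{D}$ to a single blob diagram, with coefficient $1$, its kernel is spanned by the differences $\E{D}-\E{D'}$ with $\mathbcal{b}_N(D)=\mathbcal{b}_N(D')$. (The blob diagrams are linearly independent in $\Blob[N](\gamma,\delta)$.) The quotient by this kernel is therefore isomorphic to $\Blob[N](\gamma,\delta)$. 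The monoid statement follows in the same way, by specializing $\gamma=\delta=1$.

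What remains is to show that the fibers of $\mathbcal{b}_N$ are exactly the classes described in the statement. Two diagrams $D,D'$ are identified when their underlying unlabelled diagrams agree and their sets of arcs labelled $1$ coincide. I will show this happens precisely when $\mathbcal{b}_N(D)=\mathbcal{b}_N(D')$.

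For the forward direction, suppose $D$ and $D'$ have the same unlabelled diagram and the same arcs labelled $1$. Then they have the same arcs with label $>1$, since every other arc has label $>1$. They also have the same sequence $(v_i)$. The definition of $\mathbcal{b}_N$ then gives equal images.

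For the converse, suppose $\mathbcal{b}_N(D)=\mathbcal{b}_N(D')$. The unblobbed arcs of the image, shifted back by one, recover exactly the arcs of $D$ with label $>1$, so $D$ and $D'$ share these. It remains to recover the arcs labelled $1$. By the Okada conditions, these arcs are unnested and pairwise non-nested. The vertices they occupy are exactly the vertices not used by arcs of label $>1$, and these vertices are determined. Non-crossing together with non-nesting forces the label-$1$ arcs to pair these remaining vertices consecutively, $v_1v_2, v_3v_4,\dots$, in the total order. So the label-$1$ arcs are determined, and $D=D'$ as unlabelled diagrams with the same label-$1$ arcs.

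The main obstacle is this last determination step: checking that non-nested, non-crossing arcs on a fixed ordered vertex set must pair consecutive vertices. If two of them were $\arc{v_a}{}{v_b}$ and $\arc{v_c}{}{v_d}$ with $v_a<v_c<v_b$, then either they cross or one is nested in the other, which is impossible. Hence each label-$1$ arc joins consecutive free vertices, and the argument is complete.
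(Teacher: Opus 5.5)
Your proposal is correct and follows the paper's route. The paper reads the corollary off directly from \cref{prop:blob-map}: $\E{D}\mapsto\mathbcal{b}_N(D)$ sends the diagram basis onto the blob-diagram basis, so its kernel is spanned by differences within fibers, and the fibers are identified using the same fact you prove, namely that non-nested, non-crossing label-$1$ arcs pair consecutive vertices (the fact behind the paper's sequence $(v_i)$), which the paper leaves implicit and you spell out.
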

  
  In other words, by introducing blobs onto arcs which are labelled $1$, 
  we get rank
  $N+1$ blobbed arc-diagrams with the extra condition that the arcs
  incident to $1$ and $\ov{1}$ must be blobbed. In particular this
  observation gives us an embedding of the blob
  monoid $\Blob[N]$ into $\Blob[N+1]$
  as a sub-semigroup, namely as $\blob \boldsymbol{\cdot} \Blob[N+1] \boldsymbol{\cdot} \blob \subset \Blob[N+1]$.

\section{Biserial clone Schur functions and Gram determimants}
\label{sec:clone}

In this section, we relate our construction of cell modules with
Okada's description of simple $\Okada(X,Y)$-modules. We start by reviewing 
aspects of Okada's theory of clone Schur functions
which are significant in his treatment of simple modules.

\subsection{Okada's clone Schur functions}
\newcommand{\XY}{\left(X\!\mid\!Y\right)}

Throughout this section, we fix two \emph{infinite},
$\mathbb{K}$-valued sequences $X = (x_1, x_2, x_3, \dots)$ and
$Y= (y_1, y_2, y_3, \dots)$ which serve as a common system of
structure constants for the associated tower of Okada algebras.  The
parameters defining $\Okada(X, Y)$ are understood to be
$(x_1, \dots , x_{N-1} )$ and $(y_1, \dots , y_{N-2} )$ for each
$N \geq 0$.
\begin{definition}
\label{biserial-clone-schur-definition}
The {\bf biserial clone Schur function} $s_w\XY$
associated to a fibonacci word $w \in \YFn$
is defined recursively (with respect to the rank of the fibonacci word) by
\begin{equation}
  s_w\XY \eqdef
  \begin{cases}
    A_k\XY
    &\text{if $w = 1^k$ for some $k \geq 0$} \\
    B_k \Big( X + |v| \, \Big| \, Y+ |v|  \Big) \cdot s_v\XY
    &\text{if $w=1^k2v$ for some $k \geq 0$\,.}
  \end{cases}
\end{equation}
where
\[
A_\ell (X \,  | \, Y) \eqdef
\det
\underbrace{\begin{pmatrix}
x_1 & y_1 & 0 & \cdots\\
1 & x_2 & y_2 &\\
0 & 1 & x_3  & \\
\vdots & & & \ddots
\end{pmatrix}}_{\ell \times \ell \, \mathrm{tridiagonal} \, \mathrm{matrix}}
\quad
B_{\ell -1} (X \, | \, Y) \eqdef
\det
\underbrace{\begin{pmatrix}
y_1 & x_1y_2 & 0 & \cdots\\
1 & x_3 & y_3 &\\
0 & 1 & x_4 & \\
\vdots & & & \ddots
\end{pmatrix}}_{\ell \times \ell \,  \mathrm{tridiagonal} \, \mathrm{matrix}}
\]
for integers $\ell \geq 1$. We use the notation $X + r\eqdef (x_{1+r}, x_{2+r}, x_{3+r}, \dots)$ for any integer $r \geq 0$.
\end{definition}

The biserial clone Schur function are defined in Section 2 of
\cite{Okada1994} and arise
as $\mathbb{K}$-valued {\it evaluations} of associated
{\bf non-commutative} clone Schur functions $s_w(\mathrm{\bf x}, \mathrm{\bf y})$
which are elements in the
free algebra $\mathbb{K}\langle \mathrm{\bf x}, \mathrm{\bf y} \rangle$
generated by two non-commutative variables
$ \mathrm{\bf x}, \mathrm{\bf y} $.  The eponymous use of "Schur"
reflects, in part, on the celebrated relationship
between the symmetric groups and the
classical Schur functions, where the structure constants
for the induction product of irreducible symmetric group representations
coincide with the Littlewood-Richardson
coefficients which govern the multiplication of Schur functions.
The same relationship holds for the
induction product of simple Okada algebra representations
and an analogous Littlewood-Richardson identity
for non-commutative clone Schur functions.
Below, we recall Okada's clone Pieri rule, which is the simplest instance of
the clone Littlewood-Richardson rule and which recapitulates
the structure of covering relations of the $\YF$-lattice.
\begin{proposition}[{\cite[Proposition 3.3]{Okada1994}}]
\label{eq:clone-pieri}
For any $v \in \YF$ we have
\[
  s_1 \big(X + |v|\ \big|\ Y + |v| \ssp \big) \cdot s_v\XY \, = \,
  x_{|v|} \cdot s_v\XY \, = \,
  \sum_{w \gtrdot v} \ssp s_w\XY\,.
\]
\end{proposition}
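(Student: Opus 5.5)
The plan is a direct verification from the recursive definition in \cref{biserial-clone-schur-definition}, using the three-term recurrences of the tridiagonal determinants $A_\ell$ and $B_\ell$. Write $r\eqdef|v|$. First I would record that $s_1(X+r\,|\,Y+r)=A_1(X+r\,|\,Y+r)=x_{r+1}$. Presumably this is what the middle expression $x_{|v|}$ in the statement means, up to the indexing convention: for $v=\varnothing$ the only cover is $1$ and $s_1=x_1$. So the identity to prove is
\[
  \sum_{w\gtrdot v} s_w\XY \;=\; x_{r+1}\, s_v\XY .
\]
Next I would extract the recurrences by expanding along the last row. For $A$ this gives $A_{\ell+1}=x_{\ell+1}A_\ell-y_\ell A_{\ell-1}$. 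For $B$, the superdiagonal entry in position $(j,j+1)$ is $y_{j+1}$ for $j\geq 2$, so $B_{k+1}=x_{k+2}B_k-y_{k+2}B_{k-1}$ for $k\geq1$. The low case is $B_1=y_1x_3-x_1y_2$, which fits the same recurrence if one sets $B_{-1}\eqdef x_1$.

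Then I would split according to the shape of $v$, using the covering rules of $\YF$.

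If $v=1^k2u$ with $k\geq1$, the covers are only $1v$ and $21^{k-1}2u$. By definition, $s_{1v}=B_{k+1}(X+|u|\,|\,Y+|u|)\,s_u$. Also $s_{21^{k-1}2u}=B_0(X+r-2\,|\,\cdots)\,s_{1^{k-1}2u}=y_{r}\,B_{k-1}(X+|u|\,|\,Y+|u|)\,s_u$. Since $r=k+2+|u|$, the shifted $B$-recurrence turns the sum into $x_{k+2+|u|}B_k(X+|u|)s_u=x_{r+1}s_v$. The case $v=1^k$ with $k\geq 1$ is the same computation: the covers are $1^{k+1}$ and $21^{k-1}$, and one uses the $A$-recurrence together with $s_{21^{k-1}}=y_{k}A_{k-1}$.

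If $v$ begins with $2$, write $v=2v'$, so that $s_v=y_{r}\,s_{v'}$. The covers of $v$ are $1v$ together with $2w'$ for each cover $w'\gtrdot v'$. Indeed, inserting $1$ into the leading block of $2$'s, or replacing the leftmost $1$, commutes with prefixing a $2$. This requires checking that the covers of $v'$ are exactly these, including the case $v'=\varnothing$. Inducting on the rank, I get
\[
  \sum_{w\gtrdot v}s_w=s_{12v'}+y_{r}\,x_{r-1}\,s_{v'} .
\]
Here $s_{12v'}=B_1(X+r-2\,|\,Y+r-2)\,s_{v'}=(y_{r-1}x_{r+1}-x_{r-1}y_{r})\,s_{v'}$. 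The $x_{r-1}y_r$ terms cancel, leaving $x_{r+1}y_{r-1}s_{v'}$, and this should equal $x_{r+1}s_v$.

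The main obstacle I expect is exactly this index bookkeeping. The shifts $X+|v|$ in the definition, the offset $x_1y_2$ in the first row of $B$, and the convention for $B_0$ and $B_{-1}$ all have to line up. My quick check above already produced $y_{r-1}$ versus $y_r$ in the last case, so I would recompute these small cases by hand, for example $v=2$ and $v=22$ against \cref{fig.YF-lattice}, to fix the correct shift conventions before writing the general argument.
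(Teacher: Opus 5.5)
Your approach works, and once three index slips are repaired it is a complete proof. In particular, the discrepancy you flag in the last case is an arithmetic error, not a real obstacle.

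You are right that the middle term of the statement should read $x_{|v|+1}$. Indeed $s_1(X+|v|\mid Y+|v|)=A_1(X+|v|\mid Y+|v|)=x_{|v|+1}$. Already $v=\varnothing$ forces this, and so does $v=2$, where $s_{12}+s_{21}=x_3y_1=x_3s_2$.

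The three fixes are as follows.

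(i) The last diagonal entry of the $(k+2)\times(k+2)$ matrix defining $B_{k+1}$ is $x_{k+3}$, not $x_{k+2}$. So the last-row expansion gives $B_{k+1}=x_{k+3}B_k-y_{k+2}B_{k-1}$ for $k\ge 1$. This is also the form compatible with your convention $B_{-1}=x_1$. After the shift by $|u|$ it yields the coefficient $x_{k+3+|u|}=x_{r+1}$ that you need; your $x_{k+2+|u|}$ is $x_r$.

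(ii) In $s_{21^{k-1}2u}$ the shift is $|1^{k-1}2u|=r-1$, so $s_{21^{k-1}2u}=B_0(X+r-1\mid Y+r-1)\,s_{1^{k-1}2u}$. Your resulting value $y_r$ is correct.

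(iii) For $v=2v'$ one has $s_v=B_0(X+|v'|\mid Y+|v'|)\,s_{v'}=y_{|v'|+1}s_{v'}=y_{r-1}s_{v'}$, not $y_r s_{v'}$. Your evaluation of the sum, $s_{12v'}+y_r x_{r-1}s_{v'}=x_{r+1}y_{r-1}s_{v'}$, is correct: the covers $2w'$ use $B_0$ shifted by $|w'|=r-1$, which gives the factor $y_r$. So the sum is exactly $x_{r+1}s_v$. For instance, for $v=22$ one gets $s_{122}+s_{212}+s_{221}=x_5y_3y_1=x_5s_{22}$.

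Your description of the covers of $2v'$ as $\{12v'\}\sqcup\{2w' : w'\gtrdot v'\}$ is correct, including for $v'=\varnothing$. The four cases $\varnothing$, $1^k$, $1^k2u$ with $k\ge 1$, and $2v'$ are exhaustive.

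As for the route, the paper gives no argument for this statement; it is quoted from \cite[Proposition 3.3]{Okada1994}. Your proof is a self-contained verification, by induction on rank, directly from the paper's recursive definition of $s_w$ through the continuants $A_\ell$ and $B_\ell$. Its advantage is that it checks the identity in the paper's own normalization, which is exactly what exposes the misprint $x_{|v|}$. The price is the index bookkeeping above.

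Your $B_{-1}=x_1$ also has a genuine role. With it, the inductive case $v=2v'$ is literally the $k=0$ instance of the $1^k2u$ computation. There, $\sum_{w'\gtrdot v'}s_{2w'}=y_r x_{r-1}s_{v'}$ plays the part of the term $y_r B_{-1}(X+|v'|\mid Y+|v'|)\,s_{v'}$.
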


The general form of the \defn{clone Littlewood-Richardson rule} is given here:
\begin{proposition}[{\cite[Theorem 4.4]{Okada1994}.}]
For any triple of Fibonacci words $u, v, w \in \YF$ such
that $|w| = |u| + |v|$  there exist non-negative integers $c_{u,v}^w$
such that
\begin{equation}
  \label{eq:clone-littlewood-richardson}
  s_u \big(X + |v|\ \big|\ Y + |v| \ssp \big) \cdot s_v\XY \, = \,
\sum_{|w| \, = \, |u| + |v|} \ssp c_{u,v}^w s_w\XY\,.
\end{equation}
Furthermore the coefficient $c_{u,v}^w$ counts certain saturated chains in
the $\YF$-lattice determined by $u$, $v$, and $w$.
\end{proposition}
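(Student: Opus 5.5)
The plan is to lift everything to Okada's non-commutative setting, where both the shift $X\mapsto X+|v|$ and the product become a single algebraic operation. Grade the free algebra $\mathbb{K}\langle \mathrm{\bf x},\mathrm{\bf y}\rangle$ by $\deg\mathrm{\bf x}=1$ and $\deg\mathrm{\bf y}=2$. A monomial of degree $n$ is then exactly a Fibonacci word of rank $n$ (read $\mathrm{\bf x}\leftrightarrow 1$, $\mathrm{\bf y}\leftrightarrow 2$), so the degree-$n$ component has dimension $F_n$. Define the evaluation $\mathrm{ev}$ on monomials by reading the word from the right and substituting the appropriately shifted parameter. Concretely, $\mathrm{ev}(\mathrm{\bf x}\,m)=x_{\deg m}\,\mathrm{ev}(m)$ and $\mathrm{ev}(\mathrm{\bf y}\,m)=y_{\deg m+1}\,\mathrm{ev}(m)$, with indices normalised to match \cref{eq:clone-pieri}. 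The key structural identity is then
\[
\mathrm{ev}(f\,g)=f\big(X+\deg g\ \big|\ Y+\deg g\big)\cdot \mathrm{ev}(g)
\]
for $g$ homogeneous. This is immediate on monomials. It turns the left-hand side of \eqref{eq:clone-littlewood-richardson} into $\mathrm{ev}(s_u\,s_v)$, where $s_u,s_v$ denote the non-commutative clone Schur functions whose evaluations are the $s_w\XY$ of \cref{biserial-clone-schur-definition}.

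The second step is to show that $\{s_w\}_{|w|=n}$ is a basis of the degree-$n$ component. Unwinding the tridiagonal determinants $A_k$ and $B_k$ shows that $s_w$ equals its own monomial $w$ plus monomials that come strictly earlier in a suitable order. This unitriangularity reduces the claim to the dimension count $F_n$. Consequently $s_u s_v=\sum_w c^w_{u,v}s_w$ holds for unique scalars $c^w_{u,v}$, and applying $\mathrm{ev}$ gives \eqref{eq:clone-littlewood-richardson}. At this stage it is only an identity with undetermined coefficients.

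To identify the coefficients, I would read the Pieri rule \cref{eq:clone-pieri} as an operator statement. Left multiplication by $\mathrm{\bf x}$ acts on the $s$-basis as the up operator $\mathrm{\bf U}$ of $\YF$. Left multiplication by $\mathrm{\bf y}$ acts as $s_v\mapsto s_{2v}$, because $B_0(X+|v|\,|\,Y+|v|)$ is a single $y$-parameter. Hence $s_u s_v=s_u(\mathrm{\bf U},\mathrm{\bf Y})\,s_v$, where $\mathrm{\bf Y}v=2v$, so $c^w_{u,v}$ is the coefficient of $w$ in $s_u(\mathrm{\bf U},\mathrm{\bf Y})\,v$. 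For $v=\varnothing$ this recovers $s_u$ itself, which gives a consistency check: $s_u(\mathrm{\bf U},\mathrm{\bf Y})\varnothing=u$.

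The main obstacle is integrality and non-negativity, because the determinantal expansions of $A_k$ and $B_k$ carry signs. I would first expand each tridiagonal determinant as a signed sum over tilings of $[k]$ by monominoes and dominoes. The resulting operator is then a signed sum of words in $\mathrm{\bf U}$ and $\mathrm{\bf Y}$, and applying it to $v$ produces a signed sum over saturated chains from $v$ (with forced $2$-steps). I would then construct a sign-reversing involution on these chains. The natural candidate uses the differential relation $\mathrm{\bf D}\mathrm{\bf U}-\mathrm{\bf U}\mathrm{\bf D}=\mathrm{\bf Id}$ together with the fact that $\mathrm{\bf U}\mathrm{\bf Y}$ and $\mathrm{\bf Y}\mathrm{\bf U}$ differ only by the \emph{leftmost-1} convention. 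The involution should cancel a chain with a domino against the chain obtained by replacing it with two consecutive $\mathrm{\bf U}$-steps that coincide with a $\mathrm{\bf Y}$-step. The surviving chains would be exactly the saturated chains from $v$ to $w$ whose ``shape'' read off via \cref{prop:chain-diag-bij} is $u$, which gives $c^w_{u,v}\in\N$ and its chain interpretation. Making this involution well defined is where I expect most of the work to lie. As a fallback, I would argue by induction on $|u|$ using the recursion $s_{1^k2u'}=B_k(\cdots)\,s_{u'}$ and the base case $s_{1^k}=A_k$ handled by iterated Pieri.
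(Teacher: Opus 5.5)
Your reduction is correct as far as it goes. The lift $s_w\XY=\mathrm{ev}(S_w)$, the identity $\mathrm{ev}(fg)=\mathrm{ev}(f)(X+\deg g\mid Y+\deg g)\,\mathrm{ev}(g)$, the basis property, and the reformulation of $c^w_{u,v}$ as the coefficient of $w$ in $S_u(\mathbf{U},\mathbf{Y})\,v$ all hold. Two small repairs are needed:
\begin{itemize}
\item The index must be $\mathrm{ev}(\mathbf{x}m)=x_{\deg m+1}\,\mathrm{ev}(m)$, because $s_1=A_1=x_1$. The $x_{|v|}$ in the displayed Pieri rule has the same off-by-one.
\item The paper states Pieri only after evaluation. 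You should either check $\mathbf{x}S_v=\sum_{w\gtrdot v}S_w$ directly from the recursions, or note that $\mathrm{ev}$ is injective for indeterminate $X,Y$.
\end{itemize}
Integrality is then automatic, since $S_u$ has integer coefficients and $\mathbf{U},\mathbf{Y}$ have $0/1$ matrices.

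The genuine gap is non-negativity, which is the whole content of the statement, and you do not prove it.
\begin{itemize}
\item The involution is never defined. There are in general several two-step routes from $w'$ to $2w'$ (e.g.\ $2\lessdot 12\lessdot 22$ and $2\lessdot 21\lessdot 22$). You do not say which route a domino is traded for, which domino is toggled, or why toggling does not create or destroy other occurrences. The differential relation $\mathbf{D}\mathbf{U}-\mathbf{U}\mathbf{D}=\mathrm{Id}$ is not what drives the cancellation.
\item Your description of the survivors is not meaningful. The $\Chain$/$\Diag$ bijection of \cref{prop:chain-diag-bij} only concerns chains starting at $\varnothing$, so a chain starting at $v$ has no ``shape'' to read off.
\item The fallback fails at its base case. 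Iterating Pieri produces $\mathbf{x}^k=\sum_{|u|=k}\dim(u)\,S_u$, not $S_{1^k}$; the signs in $A_k$ are precisely the difficulty.
\end{itemize}

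Two observations close the gap inside your framework.

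First, by the recursive definition of $s_w\XY$, if $u'$ is empty or ends with $2$ then $S_{u'}S_{w'}=S_{u'w'}$. For $u'=1^k2$ this is exactly $s_{1^k2w'}=B_k(X+|w'|\mid Y+|w'|)\,s_{w'}$; iterate over the blocks of $u'$ for the general case. So, writing $u=u'1^k$, you get $S_uS_v=S_{u'}(a_kS_v)$, where $a_k$ is the lift of $A_k$, and everything reduces to $u=1^k$.

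Second, the last-row expansion $A_k=x_kA_{k-1}-y_{k-1}A_{k-2}$ lifts to $a_k=\mathbf{x}a_{k-1}-\mathbf{y}a_{k-2}$ with $a_{-1}=0$, $a_0=1$. Induction on $k$ then gives $a_kS_v=\sum_w N_k(v,w)\,S_w$. Here $N_k(v,w)$ counts saturated chains $v=w_0\lessdot\cdots\lessdot w_k=w$ containing no two consecutive steps of the form $w'\lessdot 1w'\lessdot 2w'$. The induction step works as follows:
\begin{itemize}
\item The term $\mathbf{x}a_{k-1}S_v$ counts chains avoiding this pattern among $w_0,\dots,w_{k-1}$.
\item The term $\mathbf{y}a_{k-2}S_v$ removes exactly those chains whose last two steps are $w_{k-2}\lessdot 1w_{k-2}\lessdot 2w_{k-2}$.
\item Appending $1w_{k-2}$ cannot complete an earlier occurrence, because $1w_{k-2}$ does not begin with $2$.
\end{itemize}

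Hence $c^w_{u,v}=N_k(v,w')$ if $w=u'w'$, and $c^w_{u,v}=0$ otherwise. This gives both $c^w_{u,v}\in\N$ and an explicit chain description. It is the cancellation your involution was aiming at (a domino traded for the specific route through $1w'$), but organised by the three-term recursion, which makes it automatic.
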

One must bear in mind that the order of the factors in
\Cref{eq:clone-littlewood-richardson} matters, since the indices of
the variables occurring in the left factor are shifted by the rank of
the Fibonacci word attached to the right factor. This is a reflection
of the fundamental non-commutativity inherited from free algebra under
specialization and expresses the fact that, in contrast with the
symmetric groups, the two induced modules
\[
  \operatorname{Ind}_{\ssp \Okada[M](X,Y)\otimes\Okada[N](X,Y)}^{\ssp\Okada[M+N](X,Y)}
  \left(V^S\otimes V^T\right)
  \qandq
  \operatorname{Ind}_{\ssp \Okada[N](X,Y)\otimes\Okada[M](X,Y)}^{\ssp\Okada[N+M](X,Y)}
  \left(V^T\otimes V^S\right)
\]
are not isomorphic. The vanishing of the clone Schur functions
characterizes parameter sequences $X, Y$ for which the Okada algebra
$\Okada(X,Y)$ is semi-simple. Specfically,
\begin{theorem}[{\cite[Theorem 2.9]{Okada1994}}]
\label{thm:okada-semisimple-parameters}
$\Okada(X,Y)$ is semi-simple if and only if
$s_w\XY \ne 0$ for Fibonacci words $w \in \YF$
of rank $|w| < N$.
\end{theorem}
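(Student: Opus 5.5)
One natural route is to characterize semisimplicity through the cellular structure of \cref{theo-cellular}, together with the Gram matrices $G^S_N$ of the invariant bilinear forms $\varphi_S$ on the cell modules $V^S$. The Graham--Lehrer theory gives the criterion: a cellular algebra is semisimple if and only if every cell form $\varphi_S$, $S\in\YFS_N$, is nondegenerate, that is, $\det G^S_N\neq 0$ for all $S$. The statement therefore reduces to showing that the vanishing loci of the Gram determinants are exactly the union of the zero sets of $s_w\XY$ for $|w|<N$.

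I would argue by induction on $N$, using the coherent tower structure of \cref{theo:coherent-celullar} and the restriction filtration of \cref{prop:restr-okada}.

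\textbf{Direction ($\Rightarrow$).} Suppose $\Okada(X,Y)$ is semisimple. Choose a word $w$ with $|w|=n<N$ and $s_w\XY=0$. To reach a contradiction, I would exhibit a nonzero nilpotent ideal, or equivalently a degenerate cell form in some rank $\le N$.

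The mechanism I expect is the clone Pieri rule \cref{eq:clone-pieri}. Take $\E{n}$ acting on a suitable cell module of rank $n+1$. Its trace against the Gelfand--Tsetlin-type vector indexed by $w$ should equal the ratio $s_{1w}/s_w$ or $s_{2v}/s_v$ appearing in Okada's matrix coefficients. Degeneracy then shows up as a vanishing of the diagonal entry of $G^{S}$ for $S=\FibSet(w)$ viewed in ranks $n+1$ or $n+2\le N$.

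\textbf{Direction ($\Leftarrow$).} Assume $s_w\neq0$ for all $|w|<N$. I would compute $\det G^S_N$ recursively using the filtration $V^T_{\le i}$. Restricting $\varphi_T$ along this filtration, Gram--Schmidt with respect to the totally ordered predecessors of \cref{lem:YF-pred_totally-ordered} should give a block factorization
\[
\det G^T_N=\prod_{S\lessdot T}\det G^S_{N-1}\cdot c_{S,T}^{\dim V^S}.
\]
Here each $c_{S,T}$ is a ratio of biserial clone Schur functions of rank $<N$. This step is where Okada's \cite[Lemma 2.4]{Okada1994} Gelfand--Tsetlin matrix coefficients, which are quotients of $s_w$'s, would be matched with our diagram basis.

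Given this factorization, nonvanishing of every $s_w$ with $|w|<N$ forces every Gram determinant to be nonzero. Hence all cell modules are simple and pairwise nonisomorphic, and $\Okada(X,Y)$ is semisimple.

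\textbf{Main obstacle.} The hard step is establishing the Gram determinant factorization in terms of clone Schur functions, which the paper itself states only as a conjecture (\cref{conj::SGram-Determinant}). If it cannot be proved directly, I would fall back on a generic-to-special argument.
\begin{itemize}
\item Over generic parameters, Okada's explicit Gelfand--Tsetlin representations exist.
\item Their matrix entries have denominators only among the $s_w$ with $|w|<N$.
\item Hence they specialize to every parameter set where these $s_w$ are nonzero, giving $\sum_S(\dim V^S)^2=N!$ pairwise nonisomorphic simple modules there. Semisimplicity then follows from the dimension count of \cref{theo:bij-Sn_Okada}.
\item For the converse, vanishing of some $s_w$ should produce a nonzero vector in the radical of some $\varphi_S$, via the Pieri relation above.
\end{itemize}
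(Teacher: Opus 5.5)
\textbf{Overall.} The ($\Leftarrow$) direction can be completed along your fallback once one step is supplied. The ($\Rightarrow$) direction has a genuine gap: no working argument is given.

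\textbf{The main route.} The recursive factorization of $\det G^T_N$ into clone-Schur ratios is essentially \cref{conj::SGram-Determinant} written recursively. The paper leaves that open; it does not prove this theorem itself, only quotes \cite[Theorem 2.9]{Okada1994}. So everything rests on your fallback.

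\textbf{The fallback for ($\Leftarrow$).} This is sound in outline. The seminormal matrices of \cref{action-generators-clone-schur} have denominators $s_{v_{k-1}}$ with $|v_{k-1}|\le N-2$, so they specialize and still satisfy the relations. What you assert without proof is that the specialized $\varrho^w$ are absolutely simple and pairwise non-isomorphic. That is the real content and needs an argument, for instance by induction on $N$:
\begin{itemize}
\item $\operatorname{Res}\varrho^w=\bigoplus_{v\lessdot w}\varrho^v$ is multiplicity free.
\item For $w=1w'$ this restriction is just $\varrho^{w'}$.
\item For $w=2w'$ the summands are indexed by the $u\gtrdot w'$, and each contains chains through $w'$ at level $N-2$. $\E{N-1}$ sends such a chain to $\sum_{u\gtrdot w'}\frac{s_u}{s_{w'}}(\cdots)$, and every coefficient is nonzero because $|u|=N-1<N$. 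Hence $\E{N-1}$ links every summand.
\item Distinct words of rank $\ge 3$ have distinct sets of lower covers in $\YF$, and $11$, $2$ are separated by $\E{1}$ (acting by $0$ versus $x_1=s_1$).
\end{itemize}
Density together with $\sum_w\dim(w)^2=N!=\dim\Okada(X,Y)$ (\cref{theo:bij-Sn_Okada}) then gives semisimplicity.

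\textbf{The gap in ($\Rightarrow$).} Your witness $s_{1w}/s_w$ divides by the quantity assumed to vanish, so it is undefined at the special parameters. The alternative $s_{2v}/s_v$ only gives a zero Gelfand--Tsetlin diagonal coefficient, and this fails for three reasons:
\begin{itemize}
\item It is not an entry of $G^S_N$.
\item The entries of $G^S_N$ are monomials (or $0$) in the $x_i,y_i$. So degeneracy at, say, $x_1x_2=y_1$ with all parameters nonzero is invisible entrywise.
\item A zero diagonal entry would not make a Gram matrix singular anyway.
\end{itemize}
The ``radical vector via Pieri'' in your fallback is likewise only asserted, not constructed.

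\textbf{Passing from rank $n+1$ to rank $N$.} You also need this step, and it is easy. A rank-$N$ half diagram whose propagating label set contains $N$ must contain $\harc{N}{N}$. So $H\mapsto H\sqcup\harc{N}{N}$ identifies $V^S$ with $V^{S\cup\{N\}}$ and $\varphi_S$ with $\varphi_{S\cup\{N\}}$ (compare $122$ with $22$ in \cref{app:table-Gram-det}). Hence non-semisimplicity propagates up the tower.

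\textbf{A workable core step.} Take $n$ minimal with some $s_w=0$, $|w|=n$, and pick $v'\lessdot w$.
\begin{itemize}
\item If $v'=\varnothing$, then $w=1$ and $x_1=0$, so $\E{1}$ spans a nilpotent ideal of $\Okada[2](X,Y)$.
\item Otherwise the seminormal matrices of $\Okada[n+1](X,Y)$ still specialize, since their denominators have rank $\le n-1$.
\item In the specialized $\varrho^{2v'}$, the coefficient $s_w/s_{v'}$ of $\E{n}$ vanishes. So $\bigoplus_{u\gtrdot v',\,u\neq w}\varrho^u$ is a proper nonzero submodule.
\end{itemize}
Now compare this lattice with the cell lattice of the same generic simple module, using Brauer--Nesbitt over a discrete valuation ring at a generic line through the special point. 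If $\Okada[n+1](X,Y)$ were semisimple, every cell module would be simple there, which forces this specialized module to be simple. That contradicts the proper submodule just found.
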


Recall that in the semi-simple case there is a simple  $\Okada(X,Y)$-module $V^w$
associated to each each Fibonacci word $w \in \YFn[N]$
whose basis $\Omega^w$ consists of all saturated chains
$\underline{\boldsymbol{v}} = (v_0 \lessdot \ssp \cdots \ssp \lessdot v_N)$
in the $\YF$-lattice which terminate at $v_N =w$.
The biserial clone Schur functions appear
in the formula for the action of the generators
$\E{1}, \dots, \E{N-1}$ on $V^w$ with
respect to this basis.

\begin{proposition}[{\cite[Equation 2.11]{Okada1994}}]
\label{action-generators-clone-schur}
For $w \in \YFn[N]$, let
$\varrho^w: \Okada(X,Y) \rightarrow \End(V^w)$ be the corresponding
irreducible representation, and let $\underline{\boldsymbol{v}} \in \Omega^w$ be a
saturated chain terminating at $w$, then for each $k = 1, \dots, N-1$
the action of the generator $\E{k}$ on $V^w$ is given by
\begin{equation*}
\varrho^w(\E{k}) \ssp \underline{\boldsymbol{v}} =
\begin{cases}
\displaystyle
\sum_{u \ssp \gtrdot \ssp v_{k-1}}  \frac{s_u\XY}{s_{v_{k-1}}\XY}  \,
\big( v_0 \lessdot \cdots \lessdot v_{k-1} \lessdot u \lessdot v_{k+1} \lessdot \cdots \lessdot v_N  \big)
&\text{if $v_{k+1} = 2v_{k-1}$,} \\
0 &\text{otherwise.}
\end{cases}
\end{equation*}
\end{proposition}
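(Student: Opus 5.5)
The plan is to verify, directly in the basis $\Omega^w$, that the proposed operators $\varrho^w(\E{k})$ satisfy the three defining relations of $\Okada(X,Y)$. Semisimplicity then gives, via Okada's branching rule and the dimension count $\sum_w (\dim V^w)^2 = N! = \dim\Okada(X,Y)$ of \cref{theo:bij-Sn_Okada}, that the representations $\varrho^w$ are the pairwise distinct simple modules. The formula is local: $\varrho^w(\E{k})$ changes only the $k$-th entry of a chain, and only when $v_{k+1}=2v_{k-1}$. So each relation reduces to a computation involving at most the entries $v_{k-2},\dots,v_{k+2}$ of a chain.

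\textbf{Commutation relation.} For $|i-j|\geq 2$ the operators $\E{i}$ and $\E{j}$ alter different positions $i$ and $j$. Each one's nonvanishing condition reads only positions $i\pm1$ (resp.\ $j\pm1$), which the other operator leaves unchanged, and each coefficient depends only on $v_{i-1}$ (resp.\ $v_{j-1}$) and the new entry. The two operators therefore commute.

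\textbf{Idempotent relation.} Applying $\E{k}$ once produces chains whose $(k-1)$-th and $(k+1)$-th entries are still $v_{k-1}$ and $2v_{k-1}$. Applying it a second time gives
\[
\sum_{u\gtrdot v_{k-1}}\frac{s_u\XY}{s_{v_{k-1}}\XY}
\]
times the first image. By the clone Pieri rule (\cref{eq:clone-pieri}) this scalar is $x_{|v_{k-1}|+1}$, after matching indices to the convention $X+|v|$. Since $|v_{k-1}|=k-1$, this equals $x_k$, as required by $\E{k}^2=x_k\E{k}$.

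\textbf{Braid-type relation $\E{k+1}\E{k}\E{k+1}=y_k\E{k+1}$.} I expect this to be the main obstacle. Fix $v=v_{k-1}$ and start from a chain with $v_{k+2}=2v_k$. Applying $\E{k+1}$ produces chains $(v,v_k,u',2v_k)$ with $u'\gtrdot v_k$. The operator $\E{k}$ then acts only when $u'=2v$, which forces $v_k\gtrdot v$ and $u'=2v$, hence $v_{k+2}=2v_k$ together with $2v\gtrdot v_k$. The covers of $2v$ are $v$ itself and the words $1\cdot 2v$ type, so this forces $v_k=v$ up to the tail structure. Next, $\E{k}$ sums over $u\gtrdot v$. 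Finally, $\E{k+1}$ requires $v_{k+2}=2u$, and $2u=2v_k$ only when $u=v_k$, so just one term survives.

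Tracking coefficients, the surviving term carries the product
\[
\frac{s_{2v}}{s_{v_k}}\cdot\frac{s_{v_k}}{s_v}\cdot\frac{s_{u''}}{s_{v_k}}.
\]
I would reduce the identity $s_{2v}\XY/s_v\XY=y_k$ (with the appropriate shift) to the definition of $s_{1^02v}$ through $B_0$. I expect that $B_0(X+|v|\mid Y+|v|)=y_{|v|+1}$, which matches $y_k$. The remaining factors $s_{u''}/s_{v_k}$ reproduce exactly the action of $\E{k+1}$. This bookkeeping of which covers survive is the delicate step, and I would organize it by the two cover types in $\YF$: either prepending a $1$, or replacing the leftmost $1$ by a $2$.
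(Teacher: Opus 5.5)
Your relation checks are the right mechanism; this is the standard way to show that these formulas define representations. The scalars you obtain are correct:
\begin{itemize}
\item by the clone Pieri rule, $\sum_{u\gtrdot v_{k-1}} s_u/s_{v_{k-1}} = x_{|v_{k-1}|+1}=x_k$ (the index is $|v|+1$, as you noted);
\item the braid relation reduces to $s_{2v}=B_0(X+|v|\mid Y+|v|)\,s_v=y_{|v|+1}\,s_v$.
\end{itemize}
However, one sentence in your braid step is false. The elements covered by $2v$ are not ``$v$ itself and words of type $1\cdot 2v$'': $v$ has rank two less than $2v$, and $v_k\neq v_{k-1}$ for rank reasons. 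The fact you actually need is that in $\YF$ the lower covers of $2v$ are exactly the upper covers of $v$. This fact is also what makes the formula well defined and what keeps $v_{k+1}=2v_{k-1}$ in your idempotent check. From it, $v_k\gtrdot v_{k-1}$ gives $v_k\lessdot 2v_{k-1}\lessdot 2v_k$. So the term $u'=2v_{k-1}$ is \emph{always} present in $\varrho^w(\E{k+1})\underline{\boldsymbol v}$, with coefficient $s_{2v_{k-1}}/s_{v_k}$, and the rest of your bookkeeping goes through.

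The genuine gap is the step that identifies these representations with the irreducible ones. The identity $\sum_w(\dim V^w)^2=N!$ is purely combinatorial and cannot detect reducibility, and restriction alone cannot either. For $w=21$ one has $\dim V^{21}=2=\dim V^{111}+\dim V^{12}$ and $\operatorname{Res}V^{21}\cong V^{11}\oplus V^{2}\cong\operatorname{Res}(V^{111}\oplus V^{12})$. What is missing is an inductive irreducibility argument:
\begin{itemize}
\item The restriction of $\varrho^w$ to $\Okada[N-1](X,Y)$ is literally $\bigoplus_{v\lessdot w}\varrho^v$, since $\E{1},\dots,\E{N-2}$ never touch $v_{N-1}$.
\item By induction these summands are simple and pairwise non-isomorphic, so every submodule is a sum of some of them.
\item If $w=1v$ there is only one summand.
\item If $w=2v'$, a nonzero submodule contains some summand $\varrho^{u_0}$, hence a chain $(\dots,v',u_0,w)$. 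Applying $\varrho^w(\E{N-1})$ to it gives a vector whose component in every summand $\varrho^u$, $u\gtrdot v'$, has coefficient $s_u/s_{v'}$. This forces irreducibility provided $s_u(X\mid Y)\neq 0$.
\end{itemize}
You also need pairwise non-isomorphism. For $N\geq 3$, distinct Fibonacci words have distinct sets of lower covers; for $N=2$, $\E{1}$ acts by $x_1$ on $V^2$ and by $0$ on $V^{11}$. Only after this does $N!=\dim\Okada(X,Y)$ give exhaustion and fix the labelling $w\leftrightarrow\FibSet(w)$.

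The hypothesis $s_u(X\mid Y)\neq 0$ for $|u|<N$ (Okada's criterion, \cref{thm:okada-semisimple-parameters}) is exactly what is needed, both for the denominators in the formula and for the mixing step. Your proposal never uses it beyond the word ``semisimplicity''.
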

Again, in the semi-simple case, $V^w$ is isomorphic as an
$\Okada(X,Y)$-module to the cell module $V^S$ where
$S =\FibSet(w)$ is the Fibonacci set
of rank $N = |w|$ associated to $w$. By construction, the dimension of $V^w$
counts the number of saturated chains $\underline{\boldsymbol{v}}$
in the $\YF$-lattice terminating at $v_N = w$,
i.e. the cardinality of $\Omega^w$.
This cardinality can be obtained as a value of the corresponding biserial
clone Schur function, namely
\[ \dim V^w \, = \
 \# \ssp  \Omega^w = \,  \, s_w(X_{\scriptscriptstyle \mathrm{PL}} \, | \, Y_{\scriptscriptstyle \mathrm{PL}} )  \]

\noindent
where
$X_{\scriptscriptstyle \mathrm{PL} } = Y_{\scriptscriptstyle \mathrm{PL}} =
(1, 2, 3, \dots )$ is the \defn{Plancherel specialisation}
--- so named because of its relationship with the system of coherent
Plancherel measures supported the $\YF$-lattice (owing to its differential
structure).  Alternatively $\dim V^w$ can be computed using the
\defn{hook-length} formula for rooted trees
(see~\cite[Proposition 2.3]{Nzeutchap2009}).

\subsection{Cellular invariant bilinear forms and Gram determinants}

We introduce the following system of biserial weights on
covering relations and saturated chains of the $\YF$-lattice.

\begin{definition}
If $v \lessdot w$ is a covering relation
in the $\YF$-lattice
between Fibonacci words 
the corresponding \defn{$\gamma$-weight} is defined by
\[ \gamma_{\scriptscriptstyle X,Y} \big( v \lessdot w \big)
 \eqdef \,
 \frac{ s_v\XY}{s_u\XY} \]
where $u$ is the maximal common suffix of $v$ and $w$ (that is
either $v = 2^ku$ and $w = 2^k1u$ or
else $v= 2^k1u$ and $w= 2^{k+1}u$ for
some $k \geq 0$).
For a saturated chain $\underline{\boldsymbol{v}} = \big(v_0 \lessdot \cdots \lessdot v_N \big) $ in $\YF$ we declare that
\[ \gamma_{\scriptscriptstyle X,Y} \big( \underline{\boldsymbol{v}} \big)
 \eqdef \, \prod_{k \ssp = \ssp 1}^N  \, \gamma_{\scriptscriptstyle X,Y} \big( v_{k-1} \lessdot v_k \big) \, .
\]
\end{definition}

\begin{definition}[{\cite[Definition 2.3]{Graham1996CellularA}}]
Let $S$ be a rank $N$ Fibonacci set and let $V^S$ be the associated
cell module of $\Okada(X,Y)$.
Attached to this module is an \defn{$\Okada(X,Y)$-invariant bilinear form}
$\varphi_S: V^S \times V^S \rightarrow \mathbb{K}$
implicitly defined for two half diagrams $H, K$ with propagating sets
$\PropLab(H) = \PropLab(K) = S$ by
\[
\E{\glue{H}{H}} \bullet K \, = \,
 \varphi_S\big(H, K\big) H\,. \]
\end{definition}
In general $\varphi_S\big(H, K\big)$ will be expressed
as a monomial in the $X$ and $Y$ parameters that can be computed by the
algorithm described in~\cref{sec:coeffprod}. The {\it Gram matrix}
$G^S_N$ of the bilinear form $\varphi_S$ is the
$\dim V^S \times \dim V^S$ matrix whose entries
are given by
\[ \big[ G^S_N \big]_{H, K} \, \eqdef \, \varphi_S\big(H, K\big)\,.\]
According to \cite[Proposition 3.2]{Graham1996CellularA}, $V^S$ is
irreducible if and only if $\det G^S_N \neq0$. Here is a conjectural
formula which computes the value of this determinant:
\begin{conjecture}
\label{conj::SGram-Determinant}
Let $S \in \YFSn[N]$ be a Fibonacci set
of rank $N$ then
\begin{equation}
\label{eqn:SGram-Determinant}
\det G^S_N \, = \,
\prod_{v_0 \lessdot v_1
    \lessdot \, \cdots \, \lessdot v_N}
\gamma_{\scriptscriptstyle X,Y}
\big(v_0  \lessdot  v_1 \lessdot \,  \cdots \, \lessdot v_N \big)
 \end{equation}
 where the product is taken over all saturated chains $v_0  \lessdot  v_1
 \lessdot \,  \cdots \, \lessdot v_N$ in the $\YF$-lattice
 beginning at $v_0 = \varnothing$ and terminating at $v_N = \FibWord(S)$.
\end{conjecture}
See \cref{expl::SGram-Determinant} for an example and
\cref{app:table-Gram-det} for a table listing the Gram determinants
up to rank five.

Notice that while the $\gamma_{\scriptscriptstyle X,Y}$-weights are
rational functions of $X$ and $Y$-parameters, the matrix entries
of $G^S_N$ are monomials and so the Gram determinant $\det G^S_N$ is
always a polynomial in the $X$'s and $Y$'s.
\cref{eqn:SGram-Determinant} is an adaptation of Proposition 5.15 in
\cite{DeGier2009} where the Gram-determinant of a certain
$2^N$-dimensional module for the two-boundary Tempereley-Lieb algebra
$\boldsymbol{\mathrm{2BTL}}_N$ is expressed as a product of local edge
weights for all saturated chains terminating at level $N$ in the
corresponding Bratteli diagram (which is the Pascal lattice). Unlike
Proposition 5.15 in \cite{DeGier2009}, the individual factors
$\gamma_{\scriptscriptstyle X,Y} \big(v_0 \lessdot v_1 \lessdot \,
\cdots \, \lessdot v_N \big)$ in \cref{eqn:SGram-Determinant} are not,
in general, eigenvalues of the Gram matrix $G^S_N$.

Recall the {\it discriminant} of the Okada algebra
is determinant of the $N! \times N!$ matrix $G_N$
with entries given by
\[ \big[ G_N \big]_{\sigma, \tau}
\, \eqdef \,
\mathrm{tr} \big[ \varrho^{\mathrm{reg}}(\E{\sigma}) \varrho^{\mathrm{reg}}( \E{\tau}) \big]
\]
where $\varrho^{\mathrm{reg}}$ is the left-regular representation of $\Okada(X,Y)$
and $\sigma, \tau \in \SG$. Recently, R.P. Stanley raised the issue
of computing this discriminant in MathOverflow \cite{StanleyMathOverflow}.
We propose the following:
\begin{conjecture}
\label{conj:discriminant-factorization}
\[ \det  G_N  \, = \,
\prod_{S \, \in \, \YFSn[N]}\,  \Big[ \det G^S_N \Big]^{2 \dim(\mathrm{Fib}(S))}\,. \]
\end{conjecture}

\begin{example}\label{expl::SGram-Determinant} As an example of
  \cref{conj::SGram-Determinant} consider the Fibonacci set $S=\{3\}_\text{\bf 5}$ whose
  associated Fibonacci word is $212$. The four Okada half arc-diagrams
  with propagating labelling $S$ are shown below, together with the Gram matrix $G^S_5$
  (where the ordering of the row and columns of Gram matrix matches the order of the list of half arc-diagrams).
%   sage: R = PolynomialRing(QQ, 'x1,x2,x3,x4,y1,y2,y3')
%   sage: R.inject_variables()
%   Defining x1, x2, x3, x4, y1, y2, y3
%   sage: A = OkadaAlgebra(R, [x1, x2, x3, x4], [y1, y2, y3])
%   sage: ord, m = A.gram_matrix_of_shape((2,1,2))
%   sage: view(ord)
%   sage: ordt = tuple(YFSetChain(FibSet_of_FibWord(w) for w in chain_from_word(e.perm())) for e in ord)
\[
  \begin{pNiceMatrix}[first-row]
    \begin{tikzpicture}[ultra thick,yscale=0.4,
      baseline={(current bounding box.east)}]
      \tikzstyle{mid}=[draw,thick,black,anchor=center,fill=white,
      shape=circle,minimum size=2pt,inner sep=1pt]
      \tikzstyle{walkv}=[draw, line width=2pt]
      \foreach \i in {1,...,5} {
        \node[anchor=east] (\i) at ($(0, \i)$) {$\i$};
      }
      \draw[walkv] (1.east) .. controls +(0.50, 0.25) and +(0.50, -0.25) .. (2.east)
      node[pos=0.5,mid] {$1$};
      \draw[walkv] (3.east) .. controls +(0.50, 0.25) and +(0.50, -0.25) .. (4.east)
      node[pos=0.5,mid] {$1$};
      \draw[walkv] (5) -- ($(5.east) + (1, 0)$) node[pos=0.5,mid] {$3$};
    \end{tikzpicture}
    & %%%%%%%%%%%%%%%%%%%%%%
    \begin{tikzpicture}[ultra thick,yscale=0.4,
      baseline={(current bounding box.east)}]
      \tikzstyle{mid}=[draw,thick,black,anchor=center,fill=white,
      shape=circle,minimum size=2pt,inner sep=1pt]
      \tikzstyle{walkv}=[draw, line width=2pt]
      \foreach \i in {1,...,5} {
        \node[anchor=east] (\i) at ($(0, \i)$) {$\i$};
      }
      \draw[walkv] (1.east) .. controls +(1.30, 0.25) and +(1.30, -0.25) .. (4.east)
      node[pos=0.5,mid] {$1$};
      \draw[walkv] (2.east) .. controls +(0.50, 0.25) and +(0.50, -0.25) .. (3.east)
      node[pos=0.5,mid] {$2$};
      \draw[walkv] (5) -- ($(5.east) + (1, 0)$) node[pos=0.5,mid] {$3$};
    \end{tikzpicture}
    & %%%%%%%%%%%%%%%%%%%%%%
    \begin{tikzpicture}[ultra thick,yscale=0.4,
      baseline={(current bounding box.east)}]
      \tikzstyle{mid}=[draw,thick,black,anchor=center,fill=white,
      shape=circle,minimum size=2pt,inner sep=1pt]
      \tikzstyle{walkv}=[draw, line width=2pt]
      \foreach \i in {1,...,5} {
        \node[anchor=east] (\i) at ($(0, \i)$) {$\i$};
      }
      \draw[walkv] (1.east) .. controls +(0.50, 0.25) and +(0.50, -0.25) .. (2.east)
      node[pos=0.5,mid] {$1$};
      \draw[walkv] (3.east) .. controls +(0.50, 0.25) and +(0.50, -0.25) .. (4.east)
      node[pos=0.5,mid] {$3$};
      \draw[walkv] (5) -- ($(5.east) + (1, 0)$) node[pos=0.5,mid] {$3$};
    \end{tikzpicture}
    & %%%%%%%%%%%%%%%%%%%%%%
    \begin{tikzpicture}[ultra thick,yscale=0.4,
      baseline={(current bounding box.east)}]
      \tikzstyle{mid}=[draw,thick,black,anchor=center,fill=white,
      shape=circle,minimum size=2pt,inner sep=1pt]
      \tikzstyle{walkv}=[draw, line width=2pt]
      \foreach \i in {1,...,5} {
        \node[anchor=east] (\i) at ($(0, \i)$) {$\i$};
      }
      \draw[walkv] (1.east) .. controls +(0.50, 0.25) and +(0.50, -0.25) .. (2.east)
      node[pos=0.5,mid] {$1$};
      \draw[walkv] (3) -- ($(3.east) + (1, 0)$) node[pos=0.5,mid] {$3$};
      \draw[walkv] (4.east) .. controls +(0.50, 0.25) and +(0.50, -0.25) .. (5.east)
      node[pos=0.5,mid] {$4$};
    \end{tikzpicture}\\[10mm]
    x_{1}^{2} y_{1} y_{2} y_{3} & x_{1} y_{1} y_{2} y_{3} & x_{1}^{2} y_{2} y_{3} & 0 \\
    x_{1} y_{1} y_{2} y_{3} & x_{1} x_{2} y_{2} y_{3} & x_{1} y_{2} y_{3} & 0 \\
    x_{1}^{2} y_{2} y_{3} & x_{1} y_{2} y_{3} & x_{1} x_{3} y_{3} & x_{1} y_{3} \\
    0 & 0 & x_{1} y_{3} & x_{1} x_{4}
  \end{pNiceMatrix}\]
The determinant of $G^S_5$ is
\[
  x_1^4 \, y_2^2 \, y_3^3 \, (x_1x_2- y_1) \, (x_3x_4y_1 - x_1x_4y_2 -y_1y_3 )\,.
\]
The four saturated chains terminating at $212$ in the $\YF$-lattice,
which correspond to the Okada half arc-diagrams, are
listed below along with their  $\gamma$-weights.
The reader can check that the product of all $\gamma$-weights equals the determinant of $G^S_5$.

%interleaved with their
%$\gamma$-weights. for space reason we didn't write the first edge
%$\varnothing\lessdot(1)$ of $\gamma$-weight $1$ at the beginning nor the
%endpoint $(2,1,2)$.
%\[
%  \begin{array}{ccccccc}
%    x_{1}& \left(2\right)& 1& \left(1, 2\right)& 1& \left(1, 1, 2\right)& \frac{x_{3} x_{4} y_{1} - x_{1} x_{4} y_{2} - y_{1} y_{3}}{x_{3} y_{1} - x_{1} y_{2}} \\ % & \frac{{\left(x_{3} x_{4} y_{1} - x_{1} x_{4} y_{2} - y_{1} y_{3}\right)}x_{1}}{x_{3} y_{1} - x_{1} y_{2}}\\
%    x_{1}& \left(2\right)& 1& \left(1, 2\right)& \frac{x_{3} y_{1} - x_{1} y_{2}}{y_{1}}& \left(2, 2\right)& y_{3} \\ % & \frac{{\left(x_{3} y_{1} - x_{1} y_{2}\right)} x_{1} y_{3}}{y_{1}}\\
%    1& \left(1, 1\right)& \frac{x_{1} x_{2} - y_{1}}{x_{1}}& \left(2, 1\right)& x_{1} y_{2}& \left(2, 2\right)& y_{3}\\ % & {\left(x_{1} x_{2} - y_{1}\right)} y_{2} y_{3}\\
%    x_{1}& \left(2\right)& y_{1}& \left(2, 1\right)& x_{1} y_{2}& \left(2, 2\right)& y_{3}\\ % & x_{1}^{2} y_{1} y_{2} y_{3}
%  \end{array}
%\]

\[
\newcommand{\XYs}{\left(X\mid Y\right)}
\setlength\arraycolsep{1.0pt}
\renewcommand\arraystretch{1.6}
\begin{array}{lllllll}
\gamma_{\scriptscriptstyle X,Y}
\big( \varnothing
&\lessdot \ 1
&\lessdot \ 2
&\lessdot \ 21
&\lessdot \ 22
&\lessdot \ 212 \ \big)\ {}
&= \ \frac{s_\varnothing\XYs}{s_\varnothing\XYs}  \cdot   \frac{\, s_{1}\XYs}{s_\varnothing\XYs}
 \cdot   \frac{\, s_{2}\XYs}{s_\varnothing\XYs}
 \cdot   \frac{s_{21}\XYs}{\, s_\varnothing\XYs}
 \cdot   \frac{s_{22}\XYs}{\ s_{2}\XYs}
 \\
 & & & & & &= \ \frac{1}{1} \cdot \frac{x_1}{1} \cdot  \frac{y_1}{1} \cdot \frac{y_2 x_1}{1}  \cdot
\frac{y_3y_1}{y_1}
\\
\gamma_{\scriptscriptstyle X,Y}
 \big( \varnothing
&\lessdot  \ 1
&\lessdot  \ 11
&\lessdot  \ 21
&\lessdot  \ 22
&\lessdot  \ 212 \ \big)
&= \  \frac{ s_\varnothing\XYs}{s_\varnothing\XYs}  \cdot   \frac{ s_{1}\XYs}{ s_1\XYs}
 \cdot   \frac{s_{11}\XYs}{\  s_{1}\XYs}
 \cdot   \frac{s_{21}\XYs}{\, s_\varnothing\XYs}
 \cdot   \frac{s_{22}\XYs}{\  s_{2}\XYs}
\\
& & & & & &= \ \frac{1}{1} \cdot \frac{1}{1} \cdot  \frac{x_1 x_2 - y_1}{x_1} \cdot \frac{y_2 x_1}{1}  \cdot
\frac{y_3y_1}{y_1}
\\
\gamma_{\scriptscriptstyle X,Y}
\big( \varnothing
&\lessdot \ 1
&\lessdot \ 2
&\lessdot \ 12
&\lessdot \ 22
&\lessdot \ 212 \ \big)
&= \ \frac{s_\varnothing\XYs}{s_\varnothing\XYs}  \cdot   \frac{\, s_{1}\XYs}{s_\varnothing\XYs}
 \cdot   \frac{s_{2}\XYs}{s_{2}\XYs}
 \cdot   \frac{s_{12}\XYs}{\ s_{2}\XYs}
 \cdot   \frac{s_{22}\XYs}{\ s_{2}\XYs}
\\
& & & & & &= \ \frac{1}{1} \cdot \frac{x_1}{1} \cdot  \frac{y_1}{y_1} \cdot \frac{x_3y_1 - y_2x_1}{y_1}  \cdot
\frac{y_3y_1}{y_1}
\\
\gamma_{\scriptscriptstyle X,Y}
\big( \varnothing
&\lessdot \ 1
&\lessdot \ 2
&\lessdot \ 12
&\lessdot \ 112
&\lessdot \ 212  \ \big)
&= \ \frac{s_\varnothing\XYs}{s_\varnothing\XYs}  \cdot   \frac{\, s_{1}\XYs}{s_\varnothing\XYs}
 \cdot   \frac{s_{2}\XYs}{s_{2}\XYs}
 \cdot   \frac{s_{12}\XYs}{s_{12}\XYs}
 \cdot   \frac{s_{112}\XYs}{\ s_{12}\XYs}
\\
 & & & & & &= \ \frac{1}{1} \cdot \frac{x_1}{1} \cdot  \frac{y_1}{y_1} \cdot \frac{x_3y_1 - y_2x_1}{x_3y_1 - y_2x_1}  \cdot
\frac{x_4x_3y_1 -y_3y_1 - x_4y_2 x_1}{x_3y_1 - y_2x_1}
\end{array}
\]
\end{example}
\medskip

\subsection{Okada's Markov trace, the Jones map, and meanders}

The biserial clone Schur functions also appear in
Okada's construction
of a normalized trace $\underline{\mathrm{Tr}}^{\scriptscriptstyle (N)} \! : \Okada(X,Y) \rightarrow \mathbb{K}$
defined for elements $A \in \Okada(X,Y)$ by

\begin{equation}
\label{def:markov-trace}
\mathrm{\underline{Tr}}^{\scriptscriptstyle (N)} (A)
\eqdef \, (x_1 \cdots x_N)^{-1} \sum_{w \ssp \in \ssp \YFn}
s_w\XY \ssp \mathrm{tr} \big[ \varrho^w(A) \big].
\end{equation}
See \cite{Okada1994} for details. It is not hard to see that
\begin{itemize}
\item[(1)] $\mathrm{\underline{Tr}}^{\scriptscriptstyle (N)} (1) = 1$
\item[(2)] $\mathrm{\underline{Tr}}^{\scriptscriptstyle (N)} (AB) = \mathrm{\underline{Tr}}^{\scriptscriptstyle (N)} (BA) $
\end{itemize}

\noindent
Indeed, property (1) is just a consequence of the clone {\it Pieri rule}, i.e.
\[ x_1 \cdots x_N = \sum_{w \ssp \in \ssp \YF_N} \dim(w) \ssp s_w\XY  \]
while property (2) follows from the fact that the right hand side of
\Cref{def:markov-trace} is a linear combination of traces.
In addition, Okada showed that  $\mathrm{\underline{Tr}}^{\scriptscriptstyle (N)}$
is {\it Markovian}, in the sense that

\begin{itemize}
\item[(3)]  $\mathrm{\underline{Tr}}^{\scriptscriptstyle (N)} (A\E{N-1}) = \frac{y_{N-1}}{x_N} \ssp
\mathrm{\underline{Tr}}^{\scriptscriptstyle (N)} (A) $ whenever $A \in \Okada[N-1](X,Y)$
\item[(4)] $\mathrm{\underline{Tr}}^{\scriptscriptstyle (N)} (A) =
\mathrm{\underline{Tr}}^{\scriptscriptstyle (N-1)} (A) $ whenever $A \in \Okada[N-1](X,Y)$
\end{itemize}
Properties (1)-(4) in fact characterize
$\mathrm{\underline{Tr}}^{\scriptscriptstyle (N)}$.
The trace $\mathrm{\underline{Tr}}^{\scriptscriptstyle (N)}$ can be realized using a version
of the Jones basic contruction~\cite{Halverson1995CharactersOA} for the tower of Okada algebras.

%\begin{definition}
%The relations $\E{N} \E{k} \ssp = \ssp \E{k} \E{N}$ for $1 \leq k \leq N-2$ along with
% $\E{N} \E{N-1} \E{N} \ssp = \ssp y_{N-1} \E{N}$
% imply that for any $A \in \Okada[N](X,Y) \subset \Okada[N+1](X,Y)$ there exists a unique element
% $\partial_N(A)  \in \Okada[N-1](X,Y)$ such that $\E{N} A \E{N} \ssp =  \partial_N (A) \E{N}$.
%The  map \defn{$A \mapsto  \partial_N (A)$} from $\Okada[N](X,Y)$ to  $\Okada[N-1](X,Y)$
%is called the \defn{$N\ordinal$ Jones map}.
%\end{definition}

\begin{definition}
The relations $\E{N} \E{k} \ssp = \ssp \E{k} \E{N}$ for $1 \leq k \leq N-2$ along with
 $\E{N} \E{N-1} \E{N} \ssp = \ssp y_{N-1} \E{N}$
 imply that for any $A \in \Okada[N](X,Y) \subset \Okada[N+1](X,Y)$ there exists a unique element
 $\partial_N(A)  \in \Okada[N-1](X,Y)$ such that $\E{N} A \E{N} \ssp =  x_N \partial_N (A) \E{N}$.
The  map \defn{$A \mapsto  \partial_N (A)$} from $\Okada[N](X,Y)$ to  $\Okada[N-1](X,Y)$
is called the \defn{$N\ordinal$ Jones map}.
\end{definition}

Since $\E{N}$ centralizes $\Okada[N-1](X,Y)$ within $\Okada[N+1](X,Y)$ it follows that
$\partial_N(A) = A$ whenever $A \in  \Okada[N-1](X,Y) \subset \Okada[N](X,Y)$ and so
$\partial_N :\Okada[N](X,Y) \rightarrow
\Okada[N-1](X,Y)$ is both a left and right $\Okada[N-1](X,Y)$-module
epimorphism.
This construction makes sense for the Okada monoid $\Okada[N]$
and (abusing notion) we get a $\Okada[N-1]$ equivariant projection
$\partial_N : \Okada[N] \rightarrow \Okada[N-1]$ .
Seen from this perspective, the Jones map is a monoid counterpart of the Vershik-Okounkov $\SG[N-1]$ equivariant
projection $p_N : \SG[N] \rightarrow \SG[N-1]$ where $p_N(\sigma)$
is the permutation in $\SG[N-1]$ obtained by removing $N$ from the disjoint cycle decomposition
of $\sigma \in \SG[N]$ (see~\cite{Okounkov1996}). Just as \defn{virtual permutations} are defined using the
the inverse system $\varprojlim\SG[N]$, so too are \defn{virtual Okada arc-diagrams}
defined using the inverse system $\varprojlim\Okada[N]$ associated to the
Jones maps.
The following proposition relates the Jones maps with Okada's trace.

\pagebreak[2]
\begin{proposition} $\underline{\mathrm{Tr}}^{\scriptscriptstyle (N)}  (A) =  \partial_1 \circ \cdots \circ \partial_N (A)$
for any $A \in \Okada(X,Y)$\,.
\end{proposition}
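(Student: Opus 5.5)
The plan is to prove the identity by the uniqueness statement quoted just above: properties (1)--(4) characterize $\underline{\mathrm{Tr}}^{\scriptscriptstyle (N)}$. Set $\tau_N \eqdef \partial_1\circ\cdots\circ\partial_N : \Okada(X,Y)\to\Okada[0](X,Y)=\K$, with $\tau_0=\id$. I will show by induction on $N$ that the family $(\tau_N)_N$ satisfies (1)--(4). Throughout I assume the parameters are generic, so that $x_N$ is invertible and the definition of $\partial_N$ makes sense.

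Three of the four properties are direct. For (1), the relation $\E{N}^2=x_N\E{N}$ gives $\partial_N(1)=1$ for every $N$, hence $\tau_N(1)=1$. For (4), if $A\in\Okada[N-1](X,Y)$ then $\partial_N(A)=A$, as noted after the definition, so $\tau_N(A)=\tau_{N-1}(A)$. For (3), take $A\in\Okada[N-1](X,Y)$. Since $A$ commutes with $\E{N}$, we get $\E{N}A\E{N-1}\E{N}=A\,\E{N}\E{N-1}\E{N}=y_{N-1}A\E{N}$. Hence $\partial_N(A\E{N-1})=\frac{y_{N-1}}{x_N}A$, and applying $\tau_{N-1}$ gives (3). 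The mirror computation gives $\partial_N(\E{N-1}B)=\frac{y_{N-1}}{x_N}B$ for $B\in\Okada[N-1](X,Y)$; this will be used below.

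The real content is the trace property (2), which I also prove by induction on $N$. Because $\partial_N$ is an $\Okada[N-1](X,Y)$-bimodule map and $\tau_{N-1}$ is a trace by induction, we have $\tau_N(aB)=\tau_N(Ba)$ for all $a\in\Okada[N-1](X,Y)$ and $B\in\Okada(X,Y)$. The algebra $\Okada(X,Y)$ is generated by $\Okada[N-1](X,Y)$ and $\E{N-1}$. So it suffices to prove $\tau_N(\E{N-1}B)=\tau_N(B\E{N-1})$ for $B$ running over a spanning set.

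For the spanning set I use the coset decomposition \eqref{eq:left-coset-decomposition}, or directly the fact that $\lexmin(\sigma)$ contains the letter $N-1$ at most once. This gives
\[
  \Okada(X,Y)=\Okada[N-1](X,Y)+\Okada[N-1](X,Y)\,\E{N-1}\,\Okada[N-1](X,Y).
\]
The two kinds of spanning elements are handled as follows.
\begin{enumerate}
\item If $B\in\Okada[N-1](X,Y)$, both sides of the desired identity equal $\frac{y_{N-1}}{x_N}\tau_{N-1}(B)$, by (3) and its mirror.
\item If $B=b_1\E{N-1}b_2$ with $b_i\in\Okada[N-1](X,Y)$, apply the defining identity of $\partial_{N-1}$ in the form $\E{N-1}b_1\E{N-1}=x_{N-1}\partial_{N-1}(b_1)\E{N-1}$. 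Combined with the mirror of (3), this yields
\[
  \tau_N(\E{N-1}b_1\E{N-1}b_2)=\tfrac{x_{N-1}y_{N-1}}{x_N}\,\tau_{N-1}\big(\partial_{N-1}(b_1)\,b_2\big).
\]
Symmetrically, $\tau_N(b_1\E{N-1}b_2\E{N-1})=\frac{x_{N-1}y_{N-1}}{x_N}\,\tau_{N-1}\big(b_1\,\partial_{N-1}(b_2)\big)$.
\end{enumerate}

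The expected obstacle is reconciling the two expressions in case (2). I plan to rewrite both of them as $\tau_{N-2}(\partial_{N-1}(b_1)\partial_{N-1}(b_2))$, up to the same scalar. This uses that $\partial_{N-1}$ is an $\Okada[N-2](X,Y)$-bimodule map, together with the trace property at rank $N-1$. It may require a further decomposition of $b_1$ and $b_2$ over $\Okada[N-2](X,Y)$ using the same coset trick one level down; I would set this up as a simultaneous induction.

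Once (1)--(4) are established for the $\tau_N$, Okada's uniqueness statement gives $\tau_N=\underline{\mathrm{Tr}}^{\scriptscriptstyle (N)}$, which is the claim.
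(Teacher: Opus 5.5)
Your proof is correct, but it runs in the opposite direction from the paper's.

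The paper proves nothing about $\tau_N=\partial_1\circ\cdots\circ\partial_N$ beyond the recursion $\tau_N=\tau_{N-1}\circ\partial_N$. It writes $A=A_0+\sum_{k\geq 1}A_k\,\E{N-1}\E{N-2}\cdots\E{N-k}$ with $A_k\in\Okada[N-1](X,Y)$ and computes $\partial_N(A)$ explicitly. It then evaluates $\underline{\mathrm{Tr}}^{(N)}(A)$ term by term, using the already known properties (2)--(4) of Okada's trace: cycle $\E{N-2}\cdots\E{N-k}$ to the front, apply the Markov property, restrict, and cycle back. This lands on $\underline{\mathrm{Tr}}^{(N-1)}(\partial_N(A))=\tau_N(A)$. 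In effect the paper runs the uniqueness argument inline for the specific pair $(\underline{\mathrm{Tr}},\tau)$, and cyclicity of $\tau_N$ then comes for free as a corollary.

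You instead quote uniqueness as a black box; its proof is exactly that coset computation. You pay for this by proving cyclicity of $\tau_N$ directly, through a conditional-expectation argument. The paper's route is shorter, because cyclicity of $\underline{\mathrm{Tr}}^{(N)}$ is immediate from its definition as a combination of characters. Your route has the merit of showing, with no representation theory, that the iterated Jones maps form a normalized Markov trace for any parameters with all $x_i$ nonzero. This includes non-semisimple cases, where $\underline{\mathrm{Tr}}^{(N)}$ cannot be defined through the $\varrho^w$.

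The step you flagged as the main obstacle closes in one line. You need no further decomposition, no simultaneous induction, and no cyclicity at rank $N-1$. Since $\tau_{N-1}=\tau_{N-2}\circ\partial_{N-1}$ and $\partial_{N-1}$ is an $\Okada[N-2](X,Y)$-bimodule map with values in $\Okada[N-2](X,Y)$,
\[
\tau_{N-1}\big(\partial_{N-1}(b_1)\,b_2\big)=\tau_{N-2}\big(\partial_{N-1}(b_1)\,\partial_{N-1}(b_2)\big)=\tau_{N-1}\big(b_1\,\partial_{N-1}(b_2)\big).
\]
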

\begin{proof}
We proceed by induction on $N$. Let $\tau_N(A) \eqdef  \partial_1 \circ \cdots \circ \partial_N (A)$.
It's easy to check that $\tau_N(A) = \underline{\mathrm{Tr}}^{\scriptscriptstyle (N)} (A)$ when
$N=1$. Now assume that $\tau_M(A) = \underline{\mathrm{Tr}}^{\scriptscriptstyle (M)} (A)$
for all $M < N$ with $N \geq 2$. By induction we have
\[ \tau_N(A) \, = \,  \tau_{N-1} \big( \partial_N(A ) \big) \, = \,
\underline{\mathrm{Tr}}^{\scriptscriptstyle (N-1)}
\big( \partial_N(A) \big)\,.  \]
According to \cref{prop:right-code-factor} we may uniquely express $A$ as 
\[ A \, = \, A_0 \ + \ \sum_{k \ssp = \ssp 1}^{N-1} \,  A_k \ssp \E{N-1} \E{N-2} \cdots \E{N-k} \]
for some elements $A_k \in \Okada[N-1](X,Y)$ with $0 \leq k \leq N-1$. Therefore
\[ \partial_N (A) \, = \, A_0 \ + \ {y_{N-1} \over {x_N}} \ssp A_1 \  + \ 
{y_{N-1} \over {x_N}} \ssp \sum_{k \ssp = \ssp 2}^{N-1} \, A_k  \ssp \E{N-2} \cdots \E{N-k}   \]
and so
\[ \tau_N(A) \, =  \,
%\underline{\mathrm{Tr}}^{\scriptscriptstyle (N-1)}  \big( \partial_N(A) \big)
\underline{\mathrm{Tr}}^{\scriptscriptstyle (N-1)} (A_0)  \ + \ {y_{N-1} \over {x_N}} \, \underline{\mathrm{Tr}}^{\scriptscriptstyle (N-1)} (A_1) \ + \ 
{y_{N-1} \over {x_N}} \ssp \sum_{k=2}^{N-1}  \, \underline{\mathrm{Tr}}^{\scriptscriptstyle (N-1)}  \big( A_k \ssp \E{N-2} \cdots \E{N-k} \big).  \]
On the other hand
\bigskip
\bigskip
\[ \begin{array}{ll}
\underline{\mathrm{Tr}}^{\scriptscriptstyle (N)} (A)
&=  \,  \underline{\mathrm{Tr}}^{\scriptscriptstyle (N)} (A_0) \, + \,
\sum_{k \ssp = \ssp 1}^{N-1} \, \underline{\mathrm{Tr}}^{\scriptscriptstyle (N)} \big( A_k \ssp \E{N-1} \cdots \E{k} \big) \\ \\
&=  \,  \underline{\mathrm{Tr}}^{\scriptscriptstyle (N)} (A_0) \, + \, \underline{\mathrm{Tr}}^{\scriptscriptstyle (N)} (A_1 \E{N-1})  \, + \,
\sum_{k \ssp = \ssp 2}^{N-1} \, \underline{\mathrm{Tr}}^{\scriptscriptstyle (N)} \big(\E{N-2} \cdots \E{k} A_k \ssp \E{N-1} \big) \\ \\
&=  \,  \underline{\mathrm{Tr}}^{\scriptscriptstyle (N-1)} (A_0) \, + \, {y_{N-1} \over {x_N}} \ssp \underline{\mathrm{Tr}}^{\scriptscriptstyle (N-1)} (A_1)  \, + \,
{y_{N-1} \over {x_N}} \ssp \sum_{k \ssp = \ssp 2}^{N-1} \, \underline{\mathrm{Tr}}^{\scriptscriptstyle (N-1)} \big(\E{N-2} \cdots \E{k} A_k \big) \\ \\
&=  \,  \underline{\mathrm{Tr}}^{\scriptscriptstyle (N-1)} (A_0) \, + \, {y_{N-1} \over {x_N}} \ssp \underline{\mathrm{Tr}}^{\scriptscriptstyle (N-1)} (A_1)  \, + \,
{y_{N-1} \over {x_N}} \ssp \sum_{k \ssp = \ssp 2}^{N-1} \, \underline{\mathrm{Tr}}^{\scriptscriptstyle (N-1)} \big(A_k \E{N-2} \cdots \E{k} \big) \\ \\
\end{array}    \]
and we are done.
\end{proof}

Finally, there is a diagrammatic way to interpret
$\mathrm{\underline{Tr}}^{\scriptscriptstyle (N)}(\A{D})$ for an Okada
arc-diagram $D$.  The \defn{height-labelled meandric system} associated
to a rank $N$ Okada arc-diagram $D$ is the system of loops obtained by
placing $D$ on the forward facing part of a vertical cylinder and
linking each of its endpoints $a$ and $\ov{a}$ by an arc of \emph{height
$a+1$} passing behind the cylinder. \Cref{fig:trace} illustrates this construction
where dashed arcs pass behind the cylinder
(see also~\cite[Figure 10]{Di_Francesco1998-ot}). This creates a
collection of loops each of which contributes a monomial weight in the
$X,Y$-parameters as in \cref{theorem-coeff-mult}. Define
\begin{equation}
\label{def:meander-trace}
\mathrm{Tr}^{\scriptscriptstyle (N)}(A) \eqdef \,
(x_1 \cdots x_N) \ssp \mathrm{\underline{Tr}}^{\scriptscriptstyle (N)}(A)\,,
\end{equation}
then the value of $\mathrm{Tr}^{\scriptscriptstyle (N)}(\E{D})$
is just the product of these loop monomials.
Furthermore $\mathrm{Tr}^{\scriptscriptstyle (N)}(A)$
coincides with the standard trace $\mathrm{tr}[ \varrho^{\mathrm{reg}}(A)]$
associated to the left-regular representation $\varrho^{\mathrm{reg}}$
of $\Okada(X,Y)$ when $X_{\scriptscriptstyle \mathrm{PL} } = Y_{\scriptscriptstyle \mathrm{PL}} =
(1, 2, 3, \dots )$ is the Plancherel specialization.

\begin{figure}[ht]
  \[
  \begin{tikzpicture}[scale=0.6, thick, baseline={(current bounding box.east)}]
\tikzstyle{vertex} = [shape=rectangle, minimum height=15pt, minimum width=0pt, inner sep=0pt]
\tikzstyle{mid} = [draw, fill=white, shape=circle, minimum size=2pt, inner sep=1pt]
\foreach \i in {1,...,7} {
  \node[vertex] (G-\i) at ($(-1.44, \i*1.3-1.3)$) {$\i$};
  \node[vertex] (G--\i) at ($(2.64, \i*1.3-1.3)$) {$\ov{\i}$};
}

\foreach \i / \col [evaluate=\i as \j using int(\i+1)] in {1/black,2/black,3/blue,4/red,5/red,6/blue,7/blue} {
  \draw[color=\col] (G-\i)  -- +(-0.2, 0) .. controls +(-0.5, 0) and +(-0.5, 0)
  .. +(-0.2, +0.8) node[pos=0.5] (C-\j) {};
  \draw[color=\col] (G--\i) -- +(+0.2, 0) .. controls +(+0.5, 0) and +(+0.5, 0)
  .. +(+0.2, +0.8);
}
\foreach \i / \col [evaluate=\i as \j using int(\i+1)] in {1/black,2/black,3/blue,4/red,5/red,6/blue,7/blue} {
  \draw[color=\col, dashed] (G-\i) +(-0.2,+0.8) -- ($(G--\i) +(+0.2,+0.8)$)
  node[pos=0.5] (M-\j) {};
}
\foreach \j / \col in {2/black,3/black,4/blue,7/blue,8/blue} {
  \node[color=\col,mid, dashed] at (M-\j) {$\j$};
}
\node[color=red,mid, dashed] at ($(G-5)!0.25!(G--5) +(+0.2,+0.8)$) {$6$};
\node[color=red,mid] at (C-5) {$5$};

\draw[blue] (G--7) .. controls +(-2.59, -1.30) and +(1.63, 0.82) .. (G-3) node[pos=0.6470588235294118,mid] (M-7-3) { 1 };
\draw[red] (G-4) .. controls +(0.70, 0.35) and +(0.70, -0.35) .. (G-5) node[pos=0.5,mid] (M4-5) { 2 };
\draw (G--2) .. controls +(-0.70, -0.35) and +(-0.70, 0.35) .. (G--1) node[pos=0.5,mid] (M-2--1) { 1 };
\draw[blue] (G-6) .. controls +(0.70, 0.35) and +(0.70, -0.35) .. (G-7) node[pos=0.5,mid] (M6-7) { 4 };
\draw[blue] (G--6) .. controls +(-2.10, -1.05) and +(-2.10, 1.05) .. (G--3) node[pos=0.5,mid] (M-6--3) { 3 };
\draw (G-1) .. controls +(0.70, 0.35) and +(0.70, -0.35) .. (G-2) node[pos=0.5,mid] (M1-2) { 1 };
\draw[red] (G--5) .. controls +(-0.70, -0.35) and +(-0.70, 0.35) .. (G--4) node[pos=0.5,mid] (M-5--4) { 4 };
\end{tikzpicture} \quad
\begin{array}{c}
  \begin{array}{cc}
    \text{walk} & \text{weight} \\[2mm]
  {(1,\U^2\D^2\U\D,1)} & x_{1}y_{1}\\
  \blue{(1,\U^7\D^4\U^3\D^4\U\D^3,1)} & x_{1}y_{2}y_{3}y_{4}y_{5}y_{6}\\
  \red{(2,\U^4\D^2\U\D^3,2)} & x_{2}y_{3}y_{4}
  \end{array} \\[1.5cm]
  \text{total}\\[2mm]
x_{1}^2x_{2}y_{1}y_{2} y_{3}^{2} y_{4}^{2} y_{5} y_{6}
\end{array}
\]
  \caption{Computating of $\mathrm{Tr}^{\scriptscriptstyle (N)}(\E{D})$ using the meandric
  system of $D$.}
  \label{fig:trace}
\end{figure}

There is an associated $N! \times N!$ Gram matrix $G^{\ssp \scriptscriptstyle \mathrm{Tr} }_N$
with entries given by
\[ \big[ G^{\ssp \scriptscriptstyle \mathrm{Tr} }_N \big]_{C, D}
\, \eqdef \,
\mathrm{Tr}^{\scriptscriptstyle (N) } \big[ \E{C} \E{D} \big]
\]
whose determinant $\det G^{\ssp \scriptscriptstyle \mathrm{Tr} }_N$
we conjecture factorizes into biserial clone Schur functions
similar to \Cref{conj::SGram-Determinant}
and \Cref{conj:discriminant-factorization}.

\section{Follow-up projects}
We start by presenting two short term projects we are actively working on.
\subsection{Truncations and higher blob algebras}
For a fixed choice of threshold $k \geq 1$, we can \defn{truncate} any
Okada arc-diagram $D$, replacing its height labels $h$ by
$\min(h, k)$. The $k$-truncated Okada arc-diagrams form a
multiplicative basis for a \defn{higher blob algebra}
$\mathrm{Blob}^{(k)}_N$, which can be realized as a quotient of the
Okada algebra $\Okada(X,Y)$ after specializing the $X,Y$ parameters
appropriately. In particular the Temperely-Lieb and Martin-Saleur blob
algebras~\cite{Martin1994a} are recovered for $k = 1,2$
respectively. The corresponding Bratelli diagram $\YF^{(k)}$ naturally
embeds into the $\YF$-lattice and can be seen as a Fibonacci
counterpart of the sublattice of integer partitions with at most $k$
parts. Furthermore, most issues and questions raised in the present
work extend naturally to higher blob algebras. We are currently working on a
draft paper dealing with these truncations.

\subsection{Height labels and diagram algebras}

It should be possible to incorporate height labels into other diagram
algebras such as the partition and Brauer algebras. Izumi Watanabe has
made first steps in studying the case of the symmetric group and its
representation theory~\cite{Watanabe2026}; more work is being done on Brauer
algebras. Going even further, can one define a suitable notion of
height labelled {\it braids} together with {\it skein relations}
consistent with these labels? A satisfactory answer might shed light
on the problem of identifying appropriate {\it Jucys-Murphy}
elements for the Okada algebras; see \cref{subsection:jucys-murphy-theory}
below.

\section{Prospectives}

We conclude with several long term prospectives.
\subsection{Schur-Weyl duality}
At its simplest level, classical \defn{Schur-Weyl} duality is
the observation that the simultaneous actions of $\SG$ and $\mathrm{GL} (V)$ on
$V^{\otimes N}$ commute with one another and that
\begin{equation}
\label{eq:classical-schur-weyl-duality}  \C[\SG] \cong \End_{\ssp \mathrm{GL}(V) } \big( V^{\otimes N} \big)
\end{equation}
where $V$ is a $\C$-linear space of dimension $\dim(V) \gg N$.
Likewise the action of $\mathrm{GL}(V)$ on $V^{\otimes N}$ factors through
$\End_{\SG} \big( V^{\otimes N} \big)$ while the categories of finite dimensional representations of
$\mathrm{GL}(V)$ and $\bigoplus_{N \geq 0}  \End_{\SG} \big( V^{\otimes N} \big)$
are equivalent.

A more
nuanced view of this relationship can formalized as a
statement about the decomposition
of $V^{\otimes N}$ as a $\mathrm{GL}(V) \times \SG$ representation,
specifically
\begin{equation}
\label{eq:nuanced-classical-schur-weyl-duality} V^{\otimes N} \, \cong \,
\bigoplus_{\lambda \vdash N} \ssp  \mathbb{S}_\lambda (V)
\otimes W_\lambda
\end{equation}
where $W_\lambda$ and $\mathbb{S}_\lambda(V)$ are, respectively, the irreducible
representations of $\SG$ and $\mathrm{GL}(V)$ corresponding to an integer
partition $\lambda \vdash N$. An immediate algebraic consequence
of \Cref{eq:nuanced-classical-schur-weyl-duality} is that the Schur function $s_\lambda$,
understood as the {\it character} of $\mathbb{S}_\lambda(V)$, must branch following the
covering relations of the Young lattice $\mathbb{Y}$, i.e.
$s_\Box \ssp s_\lambda = \sum_{\lambda \ssp \subset \!\!\! \boldsymbol{\cdot} \ \mu} s_\mu$
where $s_\Box$ is the character of $V = \mathbb{S}_\Box(V)$.
\Cref{eq:nuanced-classical-schur-weyl-duality} also tells us that
\begin{equation}
\label{eq:RSK-numerology}
\displaystyle s_{\Box}^N  \, = \,
\sum_{\lambda \vdash N} \,
s_\lambda  \dim W_\lambda
\end{equation}
which, from a combinatorial standpoint, implies
the celebrated RSK-correspondence
between positive integer sequences of length $N$ and
pairs of {\it semi-standard} and {\it standard tableaux}
sharing a common shape of size $N$.

A central question, which remains unresolved, is whether
some kind of \defn{Schur-Weyl} duality holds for the tower of Okada algebras
and the $\YF$-lattice. As in the classical case, this question is
both algebraic and combinatorial in nature. Because the $\mathbb{Y}$ and $\mathbb{YF}$-lattices
agree up to rank four, there are incidental isomorphisms
$\Okada(X,Y) \cong \C [ \SG ]$ whenever
$1 \leq N \leq 4$. For this reason
$\Okada(X,Y)$ can be identified with the $\mathrm{GL}(V)$ intertwinor algebra
of $V^{\otimes N}$ within the range  $1 \leq N \leq 4$.
When $N=5$ we have a strange isomorphism
\[  \Okada[5](X,Y) \, \cong \, \End_{\ssp \mathrm{GL}(V)} \Big( V^{\otimes 4} \otimes V^* \Big)  \]
which is valid provided $\dim(V) \geq 5$. However attempts to realize $\Okada(X,Y)$ as a $\mathrm{GL}(V)$ intertwinor algebra beyond
the threshold $N > 5$ have so far failed.

The presence of clone Schur functions, and the fact
that they branch following the covering relations of the $\mathbb{YF}$-lattice,
is an indication that Schur-Weyl duality might exist in some form.
Clone Schur functions, however, are not {\it characters} of anybody we
know yet (e.g. groups, algebras, etc.) and this frustrates attempts
to identify partners of the Okada algebra under such a duality.
 While the classical Schur functions
are generating functions for semi-standard tableaux,
clone Schur functions are not generating functions
of any obvious combinatorial denomination.
As indicated in \cref{subsection:YF-tableaux}, there are $\mathbb{YF}$-versions
of both standard and semi-standard tableaux. Nevertheless clone Schur functions
are not generating functions for any of these families of tableaux.
Indeed, clone Schur functions are expressed as products of tridiagonal determinants
which involve terms with signs and, as such, are not manifestly positive.
It is non-trivial even to determine specializations of the parameters
$X = (x_1, x_2, x_3, \dots)$ and $Y = (y_1, y_2, y_3, \dots)$ which ensure
that the clone Schur functions $s_w\XY$ are simultaneously
{\it positive} for all $w \in \mathbb{YF}$ (see~\cite{Petrov_Scott_2026}).

Since the classical RSK-correspondence is a combinatorial
avatar of Schur-Weyl duality, one might try
to discern the existence of a $\YF$-variant of the Schur-Weyl duality
by probing an appropriate version of the
RSK-correspondence for the $\YF$-lattice.
Such a correspondence exists
and can be formalized using, for example, Nzeutchap's theory
of standard and semi-standard $\YF$-tableaux.
However troubles present themselves in this approach.
One difficulty is that Nzeutchap's variant of the RSK-correspondence is only an {\it injection}, mapping 
positive integer sequences of length $N$ to pairs
of semi-standard and standard $\YF$-tableaux
sharing a common shape in $\YF_N$. Practically this
means that $1 \leq \mathrm{mult}(Q) \leq \dim(w)$
where $Q$ is a semi-standard Young-Fibonacci tableau of shape $w \in \YF$
and $\mathrm{mult}(Q)$ is the number of standard Fibonacci
tableaux $P$ of shape $w$ for which the pair $P \times Q$ lies in
the image of Nzeutchap's RSK-map.

At this point two obvious generating functions
$\mathbcal{s}_w$ and $\mathbcal{s}_w'$ in $\C[t_1, t_2, t_3, \dots ]$
can be introduced, both of which reproduce to the standard
description of the classical Schur function $s_\lambda$ as a generating function
of semi-standard tableaux of shape $\lambda$, but which
are distinct in the Young-Fibonacci setting, namely:

\[   \mathbcal{s}_w  \, \eqdef \,
\sum_{Q} \,  \prod_{k \geq 1} t_k^{i_k(Q)} \qandq
\mathbcal{s}_w'  \, \eqdef \,
\sum_{Q} \,  \frac{\mathrm{mult}(Q)}{ \dim(w) } \ssp \prod_{k \geq 1} t_k^{i_k(Q)}\,.
\]
Both sums are taken over semi-standard $\YF$-tableaux $Q$
of shape $w \in \YF$ and the statistic $i_k(Q)$ counts the number of times $k$ occurs in $Q$.
These generating functions are in fact {\it quasi-symmetric} owing to
{\it stability} properties of the recording tableau in Nzeutchap's RSK-correspondence.
The function $\mathbcal{s}_w'$ is probably more interesting since
it registers the defect in surjectivity of the RSK-map.
Unfortunately, a Pieri rule governing the expansion of either
 $\mathbcal{s}_1 \cdot \mathbcal{s}_w$ or $\mathbcal{s}_1' \cdot \mathbcal{s}_w'$
is not presently known, and neither is expected to express the covering relations
of the $\YF$-lattice.

\subsection{Jucy-Murphy Theory}
\label{subsection:jucys-murphy-theory}
\newcommand{\BChain}[1]{\mathrm{\bf \underline{#1}}}
\newcommand{\GT}[1]{\mathbcal{g}_{\scriptscriptstyle\BChain{#1}}}
As with any tower of semi-simple algebras, one can define a $\YF$-version of
the \defn{Gelfand-Tsetlin} algebra $\mathrm{\bf GTO}_N(X,Y)$
to be the subalgebra of $\Okada(X,Y)$ generated by
the centers $\mathrm{\bf Z}\Okada[k](X,Y)$ for $1 \leq k \leq N$.
General results of Okounkov-Vershik~\cite{Okounkov1996}
tell us that $\mathrm{\bf GTO}_N(X,Y)$ is a maximal commutative subalgebra
of $\Okada(X,Y)$ with a distinguished basis
$\{\GT{S}\}_{\BChain{S} \ssp \in \ssp \Omega_N}$
indexed by saturated chains $\BChain{S} = S_0 \lessdot \ssp  \cdots \lessdot S_N$
in the $\YFS$-lattice and having the form
$\mathbcal{g}_{\scriptscriptstyle\BChain{S}} = \mathcal{z}_{S_0} \cdots \ssp \mathcal{z}_{S_N}$
where $\mathcal{z}_S$ denotes the central idempotent of $\Okada[k](X,Y)$ corresponding to
the Fibonacci set $S \in \YFS_k$.

One outstanding problem is to identify an (additive) system of so-called \defn{Jucys-Murphy elements}
$\{ J_N \}_{N \geq 1}$ with $J_N \in \Okada(X,Y)$
together with \defn{content} values $\mathrm{\bf c}(S \lessdot T) \in \C$
for each covering relation $S \lessdot T$ in the $\YFS$-lattice
such that the following four characteristic properties hold:
\begin{enumerate}
\item \label{JM1} the Jucys-Murphy elements pairwise commute
\item \label{JM2} the sum $J_1 + \cdots + J_N \in \mathrm{\bf Z}\Okada(X,Y)$ for all $N \geq 1$
\item \label{JM3} $J_k \cdot \GT{S} = \mathrm{\bf c} (S_{k-1}  \lessdot S_k ) \ssp \GT{S}$
\item \label{JM4} the map $\BChain{S} \mapsto \big( \mathrm{\bf c}( S_{0} \lessdot S_1),  \mathrm{\bf c}( S_1 \lessdot S_2), \dots,
\mathrm{\bf c}( S_{N-1} \lessdot S_N) \big)$ is one-to-one.
\end{enumerate}
Once \cref{JM1,JM2,JM3,JM4} are met, the results
of~\cite{DotyLauveSeelinger2018} imply that the Jucys-Murphy elements
$J_1, \dots, J_N$ will generate $\mathrm{\bf GTO}_N(X,Y)$ as an
algebra. Furthermore, if
$\mathbcal{a}_N = \sum_{S \ssp \in \ssp \YFS_N} a_S \ssp
\mathcal{z}_S$ is a sufficiently generic central element in
$\mathrm{\bf Z}\Okada(X,Y)$ for each $N$ such that the values
$\mathrm{\bf c}(S \lessdot T) \eqdef a_T - a_S$ satisfy \cref{JM4} then
$J_N \eqdef \mathbcal{a}_N - \mathbcal{a}_{N-1}$ will automatically
satisfy \cref{JM1,JM2,JM3}. This approach can be used to recover the
classical Jucys-Murphy elements and content values for the tower of
symmetric groups: In this case
$\mathbcal{a}_N \in \mathrm{\bf Z} \C[\SG]$ is the central indicator
function $\delta_{21^{N-2}} \eqdef \sum_{1 \leq i < j \leq N} \ssp (i,j)$
where $(i,j) \in \SG$ denotes the transposition switching
$i, j \in \setN$. The attending contents values
$\mathrm{\bf c}(\lambda \YoungCover \mu)$ can be expressed in terms of
character values.

The problem, as stated, of finding generators of this kind is
underdetermined and some care is needed in selecting a natural system
of Jucys-Murphy elements which conforms best with constraints coming
from the character theory and cellular structure of the Okada algebras
as well as from Okada's clone theory. Clearly accessing any system of
Jucys-Murphy elements entails understanding the Gelfand-Tsetlin basis,
in particular the expansion of each $\GT{S}$ element in the diagram
basis. Based on calculations, we've found that this expansion
exhibits a lot of structure at the combinatorial level.

\subsection{Random ensembles of permutations and Okada monoid elements}

Special families of \defn{coherent measures} $\{M_N \}_{N \geq 0}$
on the $\YF$-lattice are constructed in~\cite{Petrov_Scott_2026}
using \defn{Fibonacci positive} specializations of the $X$ and $Y$ parameters
(i.e. certain values of the $X$ and $Y$ parameters which ensure that the biserial clone Schur functions
$s_w\XY$ are simultaneously
{\it positive} for all Fibonacci words $w \in \mathbb{YF}$).
These measures can be transported to probability distributions on $\SG$ for each $N \geq 0$
by means of Fomin's RS-correspondence for the $\YF$-lattice.
The uniform distribution on $\SG$, for example, corresponds to the coherent system
arising from the Plancherel specialization
$X_{\scriptscriptstyle \mathrm{PL} } = Y_{\scriptscriptstyle \mathrm{PL}} =
(1, 2, 3, \dots )$ of clone Schur functions.

Permutations $\sigma \in \SG$
are synonymous with Okada monoid elements $\e{\sigma} \in \Okada$,
and so the models introduced by~\cite{Petrov_Scott_2026}
for random permutations equally make sense
as models for random Okada monoid elements.
Rather than tracking statistics specific to permutations
(such as the number of right descents of $\sigma \in \SG$) one can instead examine
statistics which are native to Okada monoid elements. 
For example, it is easy to see that the expected number of left $\Okada$-{\it descents} 
of a random monoid element $\e{\sigma}$
drawn uniformly from $\Okada$ is
\[ {\partial \over {\partial t}} \ssp \bigg|_{t = 1} a_N(t)/N! \]
while the probability that $\e{\sigma}$ has $d$ left $\Okada$-descents 
is the coefficient of $t^d$ in $a_N(t)/N!$
where $a_N(t)$ is the {\it Okada-Eulerian}
polynomial addressed in \cref{prop:Okada-Eulerian-recurrence}
and \Cref{cor:tridiagonal-determinant-okada-eulerian}.

Some statistics, such as {\it length}, coincide for both
permutations and Okada monoid elements
while other statistics, such as length or the number of {\it fixed-points} of a permutation,
are simply better adapted to Fomin's RS-correspondence than to the classical
RS-correspondence. It would be interesting to calculate the corresponding expectation values
for those models of random permutations/diagrams coming from particularly
well-behaved deformations of the Plancherel specialization
$X_{\scriptscriptstyle \mathrm{PL} } = Y_{\scriptscriptstyle \mathrm{PL}}$,
such as the {\it Charlier}, {\it shifted-Plancherel}, and {\it shifted-Charlier} specializations
considered in ~\cite{Petrov_Scott_2026}.

% \todo{I think I understand what you mean by transitive below, but I
%   don't think this is a proper use of the word,}
% Unlike multiplication in $\SG$, the monoid product in $\Okada$ is not
% transitive, and consequently a random element $\e{\sigma} \in \Okada$ obtained
% by taking the product $\e{\tau_1} \e{\tau_2}$ of
% two independent and uniformly distributed elements $\e{\tau_1}, \e{\tau_2} \in \Okada$
% is itself not uniformly distributed. Clearly the
% relevant distribution is determined by counting the
% number of ordered, monoid factorizations of the element $\e{\sigma} \in \Okada$.
% This problem can be studied in general,
% taking the monoid product of two elements
% $\e{\tau_1}, \e{\tau_2} \in \Okada$
% sampled with respect to a pair of independent
% probability measures coming from
% the Petrov-Scott construction.

\subsection{Connections with statistical mechanics}
\subsubsection{Razumov-Stroganov conjecture(s)}
Consider the $\delta =1$ specialization of the Temperley-Lieb algebra $\mathrm{TL}_{N}(\delta)$
and assume for simplicity's sake that $N$ is even.
Following prescriptions outlined in~\cite{Pearce2002, DeGier2005}
\[ \mathrm{H}^{\scriptscriptstyle \mathrm{TL}} = \sum_{k=1}^{N-1} \big(1 - \tl{k} \big) \]
defines a {\it loop energy operator}
acting, by diagram multiplication, on the space $V_N$ spanned by all (unlabelled) half-diagrams with $N$ vertices
and no propagating arcs. Here $\tl{1}, \dots, \tl{N-1}$
are the generators of $\mathrm{TL}_N(\delta)$.
The operator $\mathrm{H}^{\scriptscriptstyle \mathrm{TL}}$ determines an {\it intensity matrix}
with respect to the basis of half-diagrams,
meaning that the entries of the operator satisfy $\mathrm{H}_{A, B}^{\scriptscriptstyle \mathrm{TL}} \leq 0$ whenever $A \ne B$ and
$\sum_{B \ne A} \mathrm{H}_{A,B}^{\scriptscriptstyle \mathrm{TL}} = 0$ for all half-diagrams $A$. These properties ensure that
the system of ODEs
\[ \partial_t \ssp P_N(t) \ssp = \ssp  -\mathrm{H}^{\scriptscriptstyle \mathrm{TL}} P_N(t) \quad \text{and its solution} \quad P_N(t) = \sum_{D \in V_N} c_D(t) D  \]
govern a continuous time Markov chain on the set of half-diagrams with no propagating arcs,
whose stationary probability distribution is given by
\[ D \mapsto \lim_{t \rightarrow \infty} c_D(t) \ssp \| P_N(t) \|^{-1}. \]
In this incarnation, the Razumov-Stroganov conjecture stipulates that
$\lim_{t \rightarrow \infty} c_D(t)$ equals the number of fully-packed
loop configurations in a $(N-1) \times  N/2$ rectangular grid
whose non-crossing link pattern is $D$. An analogous
loop energy operator, along with an appropriate variant the Razumov-Stroganov conjecture, also makes sense for the Martin-Saleur blob
algebra $\mathrm{Blob}_N(\gamma,\delta)$ (see~\cite{DeGier2004}).

It's tempting to ask whether there are real parameters (non-negative integers?)  $\varrho_1, \dots, \varrho_{N-1} \geq 0$
and a corresponding loop energy operator
$\mathrm{H} = \sum_{k=1}^{N-1} \varrho_k \big(1 - \e{k} \big)$
which drives a natural
stochastic process within the Okada
monoid $\Okada$.
There is already a well-defined concept
of {\it labelled} fully-packed loop configurations (FPLCs)
as put forth in \cref{subsection:loop-configurations}.
There is also an obvious combinatorial
problem of counting the number
of labelled FPLCs $\mathcal{D}$ of fixed height $N$ and width $L$
whose labelled isotopy class $[\mathcal{D}]$ coincides with
a given Okada arc-diagram $D$ of rank $N$.
In light of this, can one formulate a version of the
Razumov-Stroganov conjecture
which relates the
stationary probability distribution
associated to $\mathrm{H}$
and  this counting problem for
{\it labelled}
fully-packed loop configurations?

\subsubsection{Meander determinants}
There is a trace $\mathrm{Tr}^{\scriptscriptstyle (N)}_{\scriptscriptstyle \mathrm{TL}}$ on the Temperley-Lieb algebra
$\mathrm{TL}_{N}(\delta)$ analogous to
Okada's (un-normalized) Markov trace
$\mathrm{Tr}^{\scriptscriptstyle (N)}$
which obeys an unlabelled version of the
{\it meandric} construction for
$\mathrm{Tr}^{\scriptscriptstyle (N)}$
illustrated in \cref{fig:trace}. Given
an (unlabelled) non-crossing arc-diagram $D$
of rank $N$
the value of
$\mathrm{Tr}^{\scriptscriptstyle (N)}_{\scriptscriptstyle \mathrm{TL}}(D)$
is $\delta^{\kappa(D)}$ where $\kappa(D)$ is the number of
connected components (loops) formed
by placing $D$ on a cylinder and  identifying
the vertices on its left and right hand sides.
A corresponding {\it meander determinant}
is obtained by taking the
determinant of the Gram matrix $\Gamma_N(\delta)$ of the
associated pairing, i.e. $[\Gamma_N]_{A,B} (\delta)\eqdef
\mathrm{Tr}^{\scriptscriptstyle (N)}_{\scriptscriptstyle \mathrm{TL}}(A \boldsymbol{\cdot} B^\star)$
where $B \mapsto B^\star$ is the mirror-involution.
P. Di Francesco has shown in~\cite{Di_Francesco1998-ot}
that the meander determinant admits a factorization
into powers of Chebyshev polynomials of the first kind, specifically
\[  \det \Gamma_N(\delta) \, = \, \prod_{k=1}^N \ssp U_k^{a_k}(\delta) \]
where each exponent is expressed as a difference of
$a_k = c_{2N,2k} - c_{2N,2k+2}$
and where $c_{n,h}$ counts the number of
(unlabelled) half-diagrams with $n$ vertices and
$h$ propagating arcs. Similar results hold
for the blob and two-boundary Temperley-Lieb algebras;
we point readers to the survey~\cite{Jacobsen2008}.

We expect the determinant of the
Gram matrix $G^{\ssp \scriptscriptstyle \mathrm{Tr} }_N$
of Okada's un-normalized trace to factor
into powers of biserial clone Schur functions $s_w^{a_w}\XY$
for Fibonacci words $w$ of rank $|w| < N$. If this is indeed
the case, it would be helpful to understand what the
exponents $a_w$ are counting. Furthermore, the relationship
between this Okada variant of the meander determinant and the
discriminant of the Okada algebra $\Okada(X,Y)$,
which also factorizes according to \cref{conj:discriminant-factorization},
remains unclear.

\printbibliography
\label{sec:biblio}

\newpage

\appendix
\section{Example of the bijection \texorpdfstring{$\SG\to\Okada$}{between permutations and Okada arc-diagrams} }
\begin{figure}[ht]
  \includegraphics[height=\textwidth]{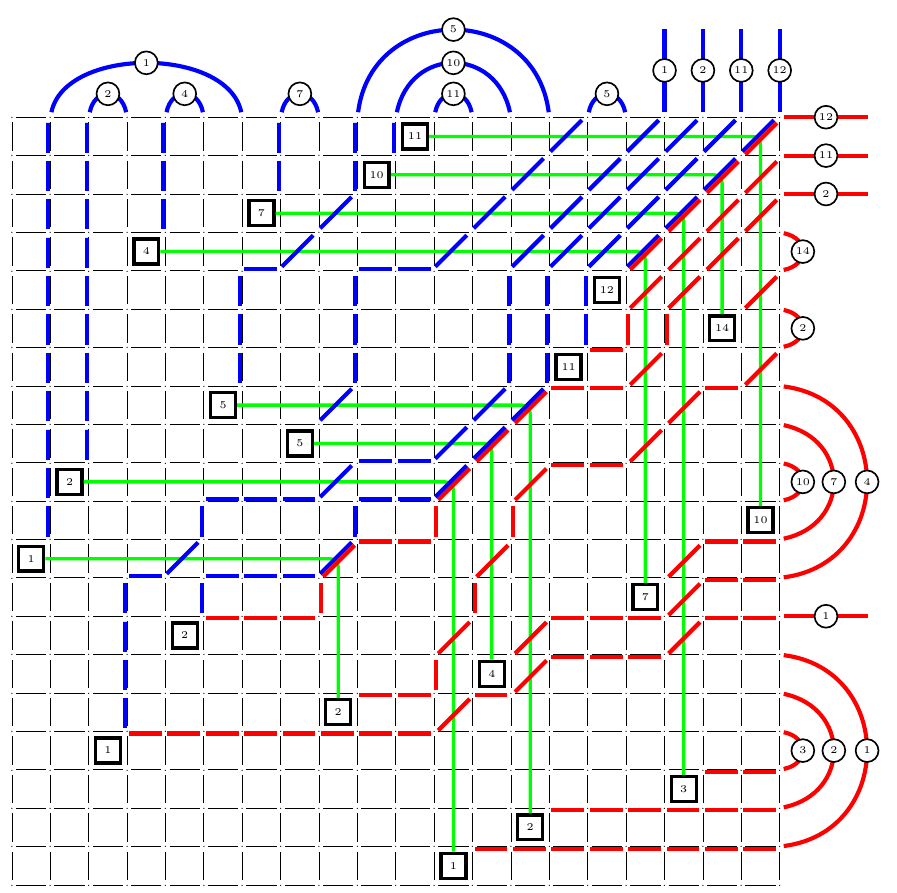}
  \bigskip
  \bigskip

  \caption[An example of the bijection between arc-diagrams and permutations]{The pair of half Okada arc-diagrams
  (red and blue) corresponding to the permutation $\pi =(9, 11, 4, 17, 7, 13, 18, 12, 5, 19, 20, 1, 6, 2, 14, 16, 8, 3, 15, 10)$.  }
  \label{fig.compute-arc-large}
\end{figure}
  \bigskip
  \bigskip
\pagebreak
\section{Table of Gram determinants}\label{app:table-Gram-det}
The following table lists the Fibonacci words of
rank $n$ (for $n=2,3,4,5$), their associated Fibonacci sets, the dimensions of the
associated simple modules, and their Gram determinants.
\medskip

\[\begin{array}{ccll}
  11  & \{1, 2\}    & 1 & 1 \\
  2    & \varnothing & 1 & x_{1}\\
  \hline
  111 & \{1,2,3\} & 1 & 1 \\
  12    & \{3\}     & 1 & x_{1} \\
  21    & \{1\}     & 2 & y_{1}(x_{1} x_{2} - y_{1})\\
  \hline
  1111 & \{1,2,3,4\} & 1 & 1\\
  112    & \{3,4\}     & 1 &x_{1}\\
  121    & \{1,4\}     & 2 & y_{1} (x_{1} x_{2} - y_{1}) \\
  211    & \{1,2\}     & 3 &
     y_{1} y_{2}^{2} (x_{1} x_{2} x_{3} - x_{3} y_{1} + x_{1} y_{2}) \\
  22    & \varnothing & 3 &
     y_{2}^{2}x_{1}^{3}(x_{1} x_{2} - y_{1})(x_{3} y_{1} - x_{1} y_{2}) \\
  \hline
  11111 & \{1,2,3,4,5\} & 1 & 1 \\
  1112 & \{    3,4,5\} & 1 & x_1 \\
  1121 & \{1,    4,5\} & 2 & y_1(x_1x_2 - y_1) \\
  1211 & \{1,2,    5\} & 3 & y_1y_2^2(x_1x_2x_3 - x_3y_1 - x_1y_2)\\
  122   & \{        5\} & 3 & y_2^2x_1^3(x_1x_2 - y_1)(x_3y_1 - x_1y_2) \\
  2111 & \{1,2,3    \} & 4 &
    y_1y_2^2y_3^3(x_1x_2x_3x_4 - x_3x_4y_1 - x_1x_4y_2 - x_1x_2y_3 + y_1y_3) \\
  212    & \{    3    \} & 4 &
    y_2^2y_3^3x_1^4(x_1x_2 - y_1)(x_3x_4y_1 - x_1x_4y_2 - y_1y_3) \\
  221    & \{1        \} & 8 & y_1^5y_2^2y_3^6(x_3y_1 - x_1y_2)(x_4y_2 - x_2y_3)^2(x_1x_2 - y_1)^5 (x_1x_2x_3 - x_3y_1 - x_1y_2) \\
%  & & & \hspace{4cm}(x_1x_2x_3 - x_3y_1 - x_1y_2)
\end{array}\]
\medskip

\end{document}